\newtheorem*{lem}{Lemma}
\newtheorem*{lemA}{Lemma A}
\newtheorem*{lemB}{Lemma B}
\newtheorem*{thm}{Theorem}
\newtheorem*{prop}{Proposition}
\newtheorem*{rem}{Remark}
\newtheorem*{cor}{Corollary}
\newcommand{\C}{\mathbb{C}}
\newcommand{\Z}{\mathbb{Z}}
\newcommand{\N}{\mathbb{N}}
\newcommand{\Q}{\mathbb{Q}}
\newcommand{\K}{\mathbb{K}}
\newcommand{\tG}{Q}
\renewcommand{\H}{H}
\newcommand{\tR}{R}
\newcommand{\tK}{\widetilde{K}}
\newcommand{\g}{\mathfrak{g}}
\newcommand{\hh}{\mathfrak{h}}
\newcommand{\kk}{\mathfrak{k}}
\newcommand{\n}{\mathfrak{n}}
\newcommand{\gl}{\mathfrak{gl}}
\newcommand{\h}{\mathfrak{k}}
\newcommand{\li}{\mathfrak{l}}
\newcommand{\pp}{\mathfrak{p}}
\newcommand{\tg}{\mathfrak{q}}
\renewcommand{\sl}{\mathfrak{sl}}
\newcommand{\mm}{\mathfrak{m}}
\newcommand{\tr}{\mathfrak{r}}
\newcommand{\z}{\mathfrak{z}}
\newcommand{\rank}{\textnormal{rank}}
\newcommand{\Specm}{\textnormal{Specm}}
\newcommand{\ZZ}{\mathcal{Z}}
\newcommand{\ind}{\textnormal{ind}}
\newcommand{\End}{\text{End}}
\newcommand{\ad}{\text{ad}}
\newcommand{\Hom}{\text{Hom}}
\newcommand{\Ad}{\textnormal{Ad}}
\newcommand{\gr}{\textnormal{gr}}
\newcommand{\grk}{\textnormal{gr}_{\sf K}}
\newcommand{\grko}{\textnormal{gr}^0_{\sf K}\,}
\newcommand{\spn}{\textnormal{span}}
\newcommand{\Lie}{\textnormal{Lie}}
\newcommand{\codim}{\textnormal{codim}}
\newcommand{\sgn}{\textnormal{sgn}}
\newcommand{\chr}{\textnormal{char}}
\newcommand{\Ind}{\textnormal{Ind}}
\newcommand{\cm}{\textnormal{-mod}}
\newcommand{\ab}{\textnormal{ab}}
\newcommand{\reg}{\textnormal{reg}}
\renewcommand{\th}{\textnormal{th}}
\newcommand{\Prim}{\textnormal{Prim\,}}
\newcommand{\Primo}{\textnormal{Prim}}
\newcommand{\mult}{\textnormal{mult}}
\newcommand{\Ann}{\textnormal{Ann}}
\newcommand{\Mat}{\textnormal{Mat}}
\newcommand{\Th}{\Theta}
\newcommand{\Comp}{\textnormal{Comp}}
\renewcommand{\dim}{\textnormal{dim}}
\newcommand{\Zas}{\mathcal{Z}}
\newcommand{\X}{\mathcal{X}}
\newcommand{\CC}{\mathcal{C}}
\newcommand{\m}{\mathfrak{m}}
\newcommand{\HH}{\mathfrak{H}}
\newcommand{\NN}{\mathfrak{N}}
\newcommand{\PP}{\mathcal{P}_\epsilon}
\newcommand{\Pp}{\mathcal{P}}
\newcommand{\ii}{\textnormal{\textbf{i}}}
\newcommand{\jj}{\textnormal{\textbf{j}}}
\newcommand{\Oo}{\mathcal{O}}
\renewcommand{\SS}{\mathcal{S}}
\newcommand{\Sf}{\mathfrak{S}}
\newcommand{\FF}{\mathcal{F}}
\newcommand{\EE}{\mathcal{E}}
\newcommand{\V}{\mathcal{W}}
\newcommand{\cG}{\mathcal{G}}
\newcommand{\Ni}{\mathcal{N}}
\newcommand{\BB}{\mathcal{B}}
\newcommand{\II}{\mathcal{D}}
\newcommand{\LL}{\mathcal{L}}
\newcommand{\AC}{\mathcal{AC}}
\newcommand{\VA}{\mathcal{VA}}
\newcommand{\MF}{\mathcal{MF}}
\newcommand{\ra}{\rightarrow}
\begin{document}
\title{Centralisers in Classical Lie Algebras}
\author{Lewis William Topley}
\school{Mathematics}
\faculty{Engineering and Physical Sciences}
\def\wordcount{xxxxx}

\tablespagefalse

\figurespagefalse


\beforeabstract

In this thesis we shall discuss some properties of centralisers in classical Lie algebas and related structures. Let $\K$ be an
algebraically closed field of characteristic $p \geq 0$. Let $G$ be a simple algebraic group over $\K$. We shall
denote by $\g = \Lie(G)$ the Lie algebra of $G$, and for $x \in \g$ denote by $\g_x$ the centraliser. Our results
follow three distinct but related themes: the modular representation theory of centralisers, the sheets of simple Lie
algebras and the representation theory of finite $W$-algebras and enveloping algebras.

When $G$ is of type $\sf A$ or $\sf C$ and $p > 0$ is a good prime for $G$, we show that the invariant algebras $S(\g_x)^{G_x}$
and $U(\g_x)^{G_x}$ are polynomial algebras on $\rank \, \g$ generators, that the algebra $S(\g_x)^{\g_x}$ is generated
by $S(\g_x)^p$ and $S(\g_x)^{G_x}$, whilst $U(\g_x)^{\g_x}$ is generated by $U(\g_x)^{G_x}$ and the $p$-centre, generalising a classical theorem of Veldkamp. We apply the latter result
to confirm the first Kac-Weisfeiler conjecture for $\g_x$, giving a precise upper bound for the dimensions of simple $U(\g_x)$-modules.
This allows us to characterise the smooth locus of the Zassenhaus variety in algebraic terms. These results correspond to
an article \cite{Top}, soon to appear in the Journal of Algebra.

The results of the next chapter are particular to the case $x$ nilpotent with $G$ connected of type $\sf{B}, \sf{C}$ or $\sf D$ in any characteristic good for $G$.
Our discussion is motivated by the theory of finite $W$-algebras which shall occupy our discussion in the final chapter, although we make several deductions of
independent interest.
We begin by describing a vector space decomposition for $[\g_x \g_x]$ which in turn allows us to give a formula for $\dim\, \g_x^\ab$
where $\g_x^\ab := \g_x / [\g_x \g_x]$. We then concoct a combinatorial parameterisation of the sheets of $\g$ containing $x$ and use it to classify the
nilpotent orbits lying in a unique sheet. We call these orbits \emph{non-singular}. Subsequently we give a formula for the maximal rank of sheets containing
$x$ and show that it coincides with $\dim\, \g_x^\ab$ if and only if $x$ is non-singular. The latter result is applied to show for any (not necessarily nilpotent) $x\in \g$ lying in a unique sheet, that the orthogonal
complement to $[\g_x \g_x]$ is the tangent space to the sheet, confirming a recent conjecture.

In the final chapter we set $p = 0$ and consider the finite $W$-algebra $U(\g,x)$, again with $G$ of type $\sf B, \sf{C}$ or $\sf D$. The one dimensional representations are parameterised by the
maximal spectrum of the maximal abelian quotient $\EE = \Specm \, U(\g, x)^\ab$ and we classify the nilpotent elements in classical types for which $\EE$ is isomorphic to an affine
space $\mathbb{A}^d_\K$: they are precisely the non-singular elements of the previous chapter. The component group $\Gamma$ acts naturally on $\EE$ and the fixed point space lies in bijective correspondence
with the set of primitive ideals of $U(\g)$ for which the multiplicity of the correspoding primitive quotient is one. We call them \emph{multiplicity free}. We show that this fixed point space is always
an affine space, and calculate its dimension. Finally we exploit Skryabin's equivalence to study parabolic induction of multiplicity free ideals.
In particular we show that every multiplicity free ideals whose associated variety is the closure of an induced orbit is itself induced from a completely
prime primitive ideals with nice properties, generalising a theorem of M{\oe}glin. The results of the final two chapters make up a part of a joint work with
Alexander Premet \cite{PTop}.

\afterabstract

\prefacesection{Acknowledgements}
I would like to offer special thanks to Professor Alexander Premet for his excellent guidance throughout this research. His mastery of the subject has been inspirational, and his instruction has always been incisive. I am grateful to Anne Moreau, Simon Goodwin and Oksana Yakimova for having taken an interest in my work, for reading early drafts and making helpful suggestions. I extend further thanks to Simon and Anne for inviting me to visit their universities, and for giving me the opportunity to present my work in seminars. I am also grateful to the organisers of all of the other conferences and meeting which I have attended, especially those that I have been fortunate to be invited to speak at. Finally I would like to thank Iain Gordon who first inspired my interest in algebra when I was an undergraduate.

I am especially grateful to my family who have been greatly supportive over the past few years.

\newpage
\vspace*{\fill}
\begingroup
\emph{\centering{This thesis is dedicated to the memory of\\}} 
\emph{\centering{Dominick Charles Hamilton\\}}
\emph{\centering{1954 - 2013\\}}
\endgroup
\vspace*{\fill}

\afterpreface

\chapter{Introduction}

\newcounter{parno}
\renewcommand{\theparno}{\thesection.\arabic{parno}}
\newcommand{\p}{\refstepcounter{parno}\noindent\textbf{\theparno .} \space} 

This introduction shall serve to present the notation and the mathematical objects which we shall discus in this thesis. It is by no means a comprehensive summary of the theory needed to understand the work. Less common notions will be covered in more detail. We assume that the reader has an understanding of the theories of algebraic groups, Lie algebras and nilpotent orbits, as exposited in \cite{Bor}, \cite{TY}, \cite{Dix}, \cite{Jac}, \cite{CM}, \cite{Jan}.

\section{Notations}\label{notations}
\setcounter{parno}{0}

\p Before we begin we shall introduce some standard notations which shall be used without exception. Natural numbers are denoted $\N$, we adopt the convention $0 \notin \N$ and $\N_0 = \N\cup \{0\}$. The integers, rationals and complex numbers are denoted by the usual ``blackboard'' letters $\Z, \Q, \C$. For $n \in \N$ the integers modulo $n$ shall be denoted $\Z_n$. Furthermore $\K$ shall always denote an algebraically closed field of characteristic $p\geq 0$.

\p Once a choice of field is made explicit, all algebraic varieties and vector spaces shall be constructed over that field until further notice. Unless otherwise stated the word group shall mean algebraic group and these shall be denoted by the upper case latin font $G, K, \tG$, and their respective Lie algebras by the corresponding lower case gothic letters $\g, \kk, \tg$. Toplogical notions, like open and continuous, shall always pertain to the Zariski toplogy.

\p Our point of departure will be a reductive group $\tG$ or a stabiliser $\tG_x$ in a reductive group, where $x \in \tg$. We assume throughout that $\chr(\K)$ is good for the underlying group. We remind the reader that:
\begin{itemize}
\item{$p = 2$ is bad for $\tG$ when $\tG$ has a factor not of type $\sf A$;}
\item{$p = 3$ is bad for $\tG$ when $\tG$ has a factor of exceptional type;}
\item{$p = 5$ is bad for $\tG$ when $\tG$ has a factor of type $\sf E_8$.}
\end{itemize}
Then $\chr(\K)$ is said to be good for $\tG$ if it is not a bad prime.

\p If our choice of group $\tG$ is clear then we shall denote the adjoint representations of $\tG$ and $\tg$ in $\tg$ by the standard letters $\Ad$ and $\ad$, and the coadjoint representations by $\Ad^\ast$ and $\ad^\ast$. Furthermore, for $x \in \tg$ we may write $\Oo_x = \Ad(\tG) x$ for the adjoint orbit of $x$. Once the notion of the partition of an orbit in a classical algebra has been introduced in Section~\ref{nilpotentorbits} we might also use the notation $\Oo_\lambda$ to denote an orbit with partition $\lambda$.

\p \label{genlinnots} As mentioned above, almost all of our work shall pertain to classical groups and the centralisers therein. As such, it will be convenient for us to adhere to specific notations for each of the classical groups: for the remainder of this thesis we let $N \in \N$, let $V$ be an $N$-dimensional vector space over $\K$, and let $G$ denote the general linear group $GL(V)$. When we refer to a group of type $\sf A$ we shall actually mean $G$ and the reader will notice that all of our results regarding $GL(V)$ may easily be translated over to $SL(V)$.

\p \label{classnots} Whenever we discuss groups of type $\sf B$, $\sf C$ or $\sf D$ we make the assumption that $\chr(\K) \neq 2$. Introduce a symmetric or anti-symmetric non-degenerate bilinear form $(\cdot, \cdot) : V \times V \ra \K$. Let $K$ denote the connected group of matrices preserving that form: $g \in K$ if and only if $(gu, gv) = (u,v)$ for all $u,v \in V$ and $\det(g) = 1$. We shall write $(u,v) = \epsilon (v,u)$ where $\epsilon = \pm1$. If $(\cdot, \cdot)$ is symmetric $K$ is a special orthogonal group, whilst if $(\cdot, \cdot)$ is anti-symmetric then $K$ is a symplectic group and $N = \dim\, V$ is even. The associated Lie algebra is $\kk$, the set of skew self-adjoint matrices with respect to $(\cdot, \cdot)$. In symbols an element $x\in \g$ lies in $\kk$ if and only if $(xu, v) = - (u, xv)$ for all $u, v \in V$.

\p \label{classnots2} For $x \in \h$ we let $x^\ast$ denote the adjoint of $x$ with respect to $(\cdot,\cdot)$. The map $\sigma : x \mapsto - x^\ast$ is an involution on $\g$ and the fixed point set is $\h$. The $-1$ eigenspace is a $K$-module which we denote by $\pp$. If we choose a basis for $V$ then the bilinear form is represented by a matrix $(u, v) = u^\top X v$. In this case $\sigma$ may be written explicitly as $\sigma(x) = -X^{-1} x^\top X$.

\p \label{rootnotation} Occasionally we shall need to call upon the classical theory of root systems. If $\tG$ is a reductive algebraic group over $\K$ with maximal torus $H$ then the associated root system shall be denoted $\Phi$ with choice of simple roots $\Pi$, and $\Phi^+$ ($\Phi^-$) the set of positive (negative) roots determined by $\Pi$. The ``usual" ordering of the roots shall refer to the labelling convention adopted by Bourbaki and may be seen in \cite[Chapter~6]{Bour}. The root space associated to $\alpha \in \Phi$ shall be denoted $\tg^\alpha$. The Killing form shall always be denoted $\kappa$.

\section{Jordan decomposition and Levi subgroups}\label{jordandecomposition}
\setcounter{parno}{0}
\p \label{jordan1}For each endomorphism $x$ of $V$ the Jordan-Chevalley decomposition theorem states that there exists a unique pair of commuting endomorphisms $x_n$ and $x_s$ such that $x_n$ is nilpotent, $x_s$ is semi-simple and $x = x_n + x_s$. We call $x_n$ ($x_s$) the nilpotent (semi-simple) part of $x$. Now if $\tg \subseteq \g$ is a linear Lie algebra we may apply this decomposition to obtain elements $x_n, x_s \in \g$. In general the nilpotent and semi-simple parts do not lie in $\tg$, however when $\tg$ is simple they do. All of our Lie algebras shall be linear and we shall call an element nilpotent if it is so as an endomorphism of $V$.

\p\label{jordanreduction} As the title of this thesis suggests we shall be discussing centralisers $\tg_x$ where $x \in \tg$ and $\tg$ is classical. We may simplify the discussion by reducing to the nilpotent case as follows. The proof of the Jordan-Chevalley decomposition theorem shows that the endomorphisms $x_n$ and $x_s$ are polynomials in $x$. Therefore an element $y \in \tg$ commutes with $x$ if and only if it commutes with both $x_n$ and $x_s$. We infer that $\tg_x = (\tg_{x_s})_{x_n}$. It is well-known that when $\tg$ is classical the centraliser $\tg_{x_s}$ is a direct sum of classical Lie algebras. We shall make this remark precise in \ref{levistruct}. For now it suffices to observe that many questions about the centraliser $\tg_x$ may be reduced to questions about the centraliser $\tg_{x_n}$.

\p \label{leviint} Before discussing nilpotent orbits we should take a moment to consider semisimple elements and their centralisers. We shall first approach these algebras from a more general perspective. Let $\tG$ be an algebraic group with unipotent radical $U$. There is an exact sequence $$1\ra U \ra \tG \ra \tG/U \ra 1$$ and it is pertinent to ask when this sequence splits, ie. whether there exists a reductive group $L \subseteq \tG$ isomorphic to $\tG/U$ such that $\tG = L \ltimes U$. Over fields of characteristic zero such a Levi factor always exists, however over fields of positive characteristic it is a very subtle problem. There is one situation in which we can give an affirmative answer in all cases. Suppose that $\tG$ is a reductive group over a field of good characteristic and that $P$ is a parabolic subgroup with unipotent radical $U$. Then there exists a Levi decomposition $P = L \ltimes U$. We call $L$ a \emph{Levi factor of $P$}, and the groups obtained in this way are called the \emph{Levi 
subgroups of $\tG$}. The Levi subgroups of the simple groups are well understood. Let $\tG$, $\H$, $\Phi$, $\Phi^+$ and $\Pi$ be as per \ref{rootnotation}, with characteristic good for $\tG$. Every parabolic is $\tG$-conjugate to a standard parabolic and every standard parabolic is obtained in the following way. For $\widehat{\Pi} \subseteq \Pi$ the standard parabolic $P({\widehat{\Pi}})$ of type $\widehat{\Pi}$ is the closed subgroup of $\tG$ with Lie algebra generated by the subspaces $\hh$ and $\tg^\alpha$ with $\alpha \in \Pi \cup (-\widehat{\Pi})$. The Levi factor associated to $P({\widehat{\Pi}})$ is the closed subgroup $L({\widehat{\Pi}}) \subseteq \tG$ with Lie algebra generated by $\hh$ and $\tg^{\alpha}$ with $\alpha \in \pm \widehat{\Pi}$. This Lie algebra shall be denoted $\li({\widehat{\Pi}})$ and is called the \emph{standard Levi subalgebra of $\tg$ of type $\widehat{\Pi}$}.

\p \label{levicent} As mentioned in \ref{leviint}, the Levi subgroups bear a relation to centralisers of semi-simple elements. That relation is now easy to explain. Suppose $h \in \tg$ is semi-simple. Then we may assume that $h$ lies in our chosen maximal torus $\hh$. After conjugating by some element of the Weyl group of $(\tg, \hh)$ we may even assume that the image of $h$ under the Killing isomorphism lies in the closure of the fundamental Weyl chamber. Now if we define $$\widehat{\Pi} = \{ \alpha \in \Pi : \alpha(h) = 0\}$$ then by the previous paragraph we may construct the standard Levi subalgebra of type $\widehat{\Pi}$ and it is easy to see that every element of $\li({\widehat{\Pi}})$ centralises $h$. If fact the converse is true and we have $$\li({\widehat{\Pi}}) = \tg_h.$$

\p \label{levistruct} The single theme which unifies these two perspectives on Levi subalgebras is that we may use inductive principles to reduce either to the case of nilpotent centralisers or to smaller classical subalgebras. In any case, we shall need to understand the structure of the standard Levi subalgebra $\li = \li(\widehat{\Pi})$ when $\h$ is classical (notations of \ref{classnots}). The algebra $\li$ is a direct sum $\mathfrak{c} \oplus [\li \li]$ where the centre $\mathfrak{c}$ is spanned by vectors in $\hh$ dual to the elements of $\Pi \backslash \widehat{\Pi}$. The Dynkin diagram of the semi-simple algebra $[\li \li]$ includes into the Dynkin digram of $\h$ just as $\widehat{\Pi}$ includes into $\Pi$. The dimension of $\mathfrak{c}$ is greater than or equal to the number of connected components of type $\sf A$ in the diagram of $\li$ and so we may supplement each special linear factor of $[\li \li]$ with a one dimensional centre to make general linear factors. The remaining centre may be 
expressed as a product of $\gl_1$'s.  This suggests a nice standard form for the isomorphism types of the Levi subalgebras of $\h$.
\begin{lem}
Every Levi subalgebra of $\h$ is isomorphic to something of the form
$$\gl_\ii \times \mm := \gl_{i_1} \times \cdots \times \gl_{i_l} \times \mm$$
where $\ii = (i_1,...,i_l)$ is a sequence of integers fulfilling $\sum_j i_j \leq \rank\, \h$ and $\mm$ is a simple algebra of the
same type as $\h$ with natural representation of dimension $R_\ii := N - \sum_j 2i_j$ (under the additional restriction that
$R_\ii \neq 2$ when $\h$ has type $\sf D$).
\end{lem}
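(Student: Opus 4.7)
The plan is to realise every Levi subalgebra $\li$ of $\h$ as the stabiliser of an $\li$-stable hyperbolic decomposition of the natural module $V$, from which the isomorphism type becomes transparent. After replacing $\li$ by a $K$-conjugate we may assume $\li = \li(\widehat{\Pi})$ is standard, so that it contains the Cartan subalgebra $\hh$ and is generated by $\hh$ together with the root spaces $\g^{\pm\alpha}$, $\alpha \in \widehat{\Pi}$.

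I would then decompose $V$ as an $\li$-module, and use that the form $(\cdot,\cdot)$ is $\li$-invariant to pair simple submodules: a simple $\li$-submodule is either totally isotropic, in which case the form pairs it nondegenerately with another simple submodule (its dual), or else non-degenerate for the restricted form. Collecting the former into pairs $V_j, V_j^\ast$ and the latter into a non-degenerate complement $V'$, we obtain an $\li$-stable decomposition
\begin{equation*}
V = V_1 \oplus V_1^\ast \oplus \cdots \oplus V_l \oplus V_l^\ast \oplus V'.
\end{equation*}
Skew self-adjointness with respect to $(\cdot,\cdot)$ forces the action of $\li$ on each $V_j$ to be through $\gl(V_j)$ (with the dual action on $V_j^\ast$), and the action on $V'$ to be through a classical Lie algebra $\mm$ of the same type as $\h$ preserving the restricted form.

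A dimension count (or equivalently an identification of the root spaces of $\mm$ inside those of $\h$) shows that the resulting map $\li \to \gl(V_1) \oplus \cdots \oplus \gl(V_l) \oplus \mm$ is an isomorphism, yielding the asserted form with $i_j = \dim V_j$. Since $V_1 \oplus \cdots \oplus V_l$ is totally isotropic, $\sum_j i_j$ is bounded by the Witt index of the form, which equals $\rank\,\h$ in each of types $\sf B, \sf C, \sf D$; the identity $R_\ii = N - 2\sum_j i_j$ follows by comparing dimensions in $V$. The restriction $R_\ii \neq 2$ in type $\sf D$ excludes the non-simple case $\mm \cong \mathfrak{so}_2$, which is one-dimensional and abelian.

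The main obstacle is establishing that the $\li$-invariant decomposition of $V$ really takes the stated hyperbolic form, namely that the totally isotropic simple $\li$-submodules genuinely pair up into mutually dual pairs and that the form is non-degenerate on the residual summand. This is verified by a careful inspection of the action of the root spaces of $\li$ on the $\hh$-weight spaces of $V$, noting that the non-zero $\hh$-weight spaces of $V$ come in dual pairs determined by the bilinear form.
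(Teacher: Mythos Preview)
Your approach is correct but differs from the paper's. The paper argues purely through root-system combinatorics: write $\li = \mathfrak{c}\oplus[\li,\li]$, observe that the Dynkin diagram of $[\li,\li]$ is the subdiagram of that of $\h$ supported on $\widehat{\Pi}$, so its connected components are all of type~$\sf A$ except possibly one containing the ``end'' node, which is of the same type as $\h$; then note that $\dim\mathfrak{c}=|\Pi\setminus\widehat{\Pi}|$ is at least the number of type~$\sf A$ components, so each $\sl$ factor can absorb a one-dimensional central piece to become $\gl$, with any surplus centre recorded as $\gl_1$ factors. No appeal to the natural module is made.

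Your route through an $\li$-stable hyperbolic decomposition of $V$ is more geometric and has the advantage of producing the isotropic subspaces $V_j$ explicitly; this is exactly the picture the paper invokes later (e.g.\ when building parabolics from partial isotropic flags). The trade-off is that your surjectivity step---that $\li$ maps \emph{onto} $\gl(V_j)$ and onto the full classical algebra $\mm$ on $V'$---is not automatic from simplicity of the $V_j$ alone, and your dimension count or root-space identification effectively recovers the Dynkin-diagram argument at that point. You should also make explicit that there is at most one non-degenerate simple $\li$-constituent of $V$ (equivalently, at most one connected component of $\widehat{\Pi}$ meets the end of the diagram), since otherwise $\li$ would land in a proper Levi of $\mm$ rather than $\mm$ itself; this is immediate in the paper's setup but needs a word in yours.
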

\noindent In Section~\ref{conjlev} we shall refine this lemma to offer a classification of $\tG$-conjugacy classes of Levi subalgebras in classical types in terms of certain sequences of integers.

\section{Nilpotent orbits}\label{nilpotentorbits}
\setcounter{parno}{0}
\p Recall that $V = \K^N$. Let us consider a linear algebraic group $\tG \subseteq G = GL(V)$ acting on its Lie algebra $\tg \subseteq \g$ by conjugation. A \emph{nilpotent orbit} is an abbreviation for a $\tG$-orbit consisting of nilpotent elements of $\tg$. If there is more than one group under discussion we might also call these \emph{nilpotent $\tG$-orbits}. When $\tG$ is semisimple, the set of nilpotent elements is called the \emph{nilpotent cone of $\tg$} and is denoted $\Ni(\tg)$. It is well known to coincide with the zero locus of the ideal of $\K[\tg]^\tG_+\unlhd \K[\tg]$ generated by non-constant $\tG$-invariant functions; see \cite[\S 7]{Jan} for example. First we review the classification of nilpotent orbits for classical groups.

\p An ordered partition of $N$ is a finite non-increasing sequence summing to $N$; in symbols, $\lambda = (\lambda_1,...,\lambda_n)$ with $\lambda_1 \geq \cdots \geq \lambda_n$ and $\sum \lambda_i = N$. We denote the set of partitions of $N$ by $\Pp(N)$. Recall that partitions can be visually realised as Young tableau. A nilpotent $N\times N$ matrix is called regular if it is $G$-conjugate to one of the form 
$$J_N = \left( \begin{array}{cccccc}
0 & 1 & 0 &\cdots & 0 & 0 \\
0 & 0 & 1 &\cdots & 0 & 0 \\
0 & 0 & 0 & \cdots & 0 & 0 \\
\vdots & \vdots & \vdots & \ddots & \vdots & \vdots \\
0 & 0 & 0 & \cdots & 0 & 1 \\
0 & 0 & 0 & \cdots & 0 & 0
\end{array} \right) .$$

\bigskip

We say that a nilpotent element of $\g$ is in Jordan normal form when it is in block diagonal form diag$(J_{\lambda_1},..., J_{\lambda_n})$ for some $n$ with $\lambda_1 \geq \cdots \geq \lambda_n$. It is clear that any such matrix is nilpotent and gives rise to an ordered partition of $N$ via $\lambda := (\lambda_1,...,\lambda_n)$. Furthermore, it is a theorem that any nilpotent $N \times N$ matrix is $G$-conjugate to a matrix in Jordan normal form. In this way we obtain a bijection 
\begin {eqnarray}\label{GLnorbits}
\Ni(\g)/G \longleftrightarrow \Pp(N).
\end{eqnarray}
If $\Oo_e$ has partition $\lambda$ then $V$ may be decomposed into minimal $e$-stable subspaces $V = \oplus_{i=1}^n V[i]$, and shall call these $V[i]$ the \emph{Jordan block spaces of $e$}. The decomposition is non-unique in general, however when we talk about $V[i]$ it will be implicit that a choice of decomposition has been made. By definition we have $\dim \, V[i] = \lambda_i$.

\p Now let us turn our attention to other classical cases. It is not hard to see that two endomorphisms of $V$ are $SL(V)$-conjugate if and only if they are $GL(V)$-conjugate (here we use the fact that $\K$ is algebraically closed). It follows immediately that the classification of nilpotent $GL(V)$-orbits in (\ref{GLnorbits}) extends to $SL(V)$.

\p \label{classsurj}It remains to classify the nilpotent orbits for types $\sf B$, $\sf C$ and $\sf D$. We assume that $\chr(\K) \neq 2$. The classification of nilpotent $K$-orbits is again given in terms of partitions.
\begin{thm}\label{nilorbits} Let $K$ be a classical group not of type $\sf A$.
\begin{enumerate} 
\item{If $K$ is symplectic then a partition $\lambda$ corresponds to a nilpotent $K$-orbit if and only if every odd part occurs with even multiplicity.}
\item{If $K$ is orthogonal then a partition $\lambda$ corresponds to a nilpotent $K$-orbit if and only if every even part occurs with even multiplicity.}
\end{enumerate}
\end{thm}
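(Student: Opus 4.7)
My approach would proceed by first reducing to a careful analysis of the $GL(V)$-Jordan structure of a nilpotent element $x\in\kk$, and then doing an explicit construction in the converse direction.

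For the necessity of the parity conditions, the idea is to decompose $V$ as a sum of $x$-stable subspaces on which $(\cdot,\cdot)$ restricts nicely. Introduce the auxiliary bilinear forms $\omega_k(u,v):=(u,x^k v)$ on $V$, and observe using skew-adjointness $(xu,v)=-(u,xv)$ that
\[
\omega_k(v,u) = \epsilon(-1)^k\,\omega_k(u,v).
\]
The plan is then to show, by induction on $\dim V$, that $V$ can be written as an orthogonal direct sum of $x$-invariant, non-degenerate summands of two types: (a) a single Jordan block of some size $m$ on which $(\cdot,\cdot)$ is non-degenerate; and (b) a pair of Jordan blocks of equal size $m$ which are themselves totally isotropic and dually paired by $(\cdot,\cdot)$. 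To extract a type-(a) summand, I would pick a cyclic vector $v$ for a Jordan block of maximal size $m$, test whether the $x$-cyclic space $\langle v,xv,\ldots,x^{m-1}v\rangle$ supports a non-degenerate restriction of $(\cdot,\cdot)$, and if not, adjust $v$ modulo lower-order blocks to force it; if no such $v$ exists, pair the block with another block of the same size via the form to obtain a type-(b) summand, and then pass to the orthogonal complement.

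Given this decomposition, the parity constraints come out immediately. On a type-(a) block with cyclic vector $v$, the numbers $a_k=(v,x^kv)$ are forced to vanish unless $\epsilon(-1)^k=1$, while non-degeneracy requires $a_{m-1}\neq 0$. Thus $\epsilon(-1)^{m-1}=1$, so single blocks must have odd size in the orthogonal case and even size in the symplectic case; type-(b) summands contribute two blocks of the same size and impose no further parity condition. Assembling the summands yields exactly the stated parity restrictions on $\lambda$.

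For the converse I would produce explicit models. For each allowed block size $m$ (even in type $\sf C$, odd in types $\sf B$, $\sf D$), take $\K^m$ with basis $e_1,\ldots,e_m$, let $x$ be the standard nilpotent $xe_i=e_{i-1}$, and define $(e_i,e_j)$ to be $\pm 1$ when $i+j=m+1$ (with signs chosen so the form has the correct symmetry type) and zero otherwise; one checks directly that $x$ is skew-self-adjoint. For forbidden single-block sizes, use instead a pair of blocks on $\K^m\oplus\K^m$ with the two summands totally isotropic and dually paired; again $x=J_m\oplus J_m$ is visibly in $\kk$. Taking orthogonal direct sums of these building blocks realises every partition satisfying the parity conditions. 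Finally one must verify that the class of $x$ produced is actually $K$-stable and not just $O(V)$-stable in the orthogonal case, which follows since the special orthogonal group has index two in the full orthogonal group and acts transitively on these sums up to the well-known splitting of the ``very even'' orbits in type $\sf D$ (which the theorem does not attempt to distinguish).

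The main obstacle in this plan is the orthogonal decomposition step: carefully choosing cyclic vectors whose $x$-cyclic spans are non-degenerate, or else extracting a dually-paired pair, requires a Gram--Schmidt-style adjustment that must be carried out modulo smaller Jordan blocks and controlled inductively. Once that structural lemma is in hand, both directions of the classification follow mechanically.
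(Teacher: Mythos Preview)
The paper does not actually prove this theorem; it is stated as a classical result and the surrounding text refers the reader to \cite[1.11]{Jan} for the argument. What the paper does do is immediately restate the content in the more refined form of Lemma~\ref{nilpotents}, which records precisely the type-(a)/type-(b) decomposition you describe: the involution $i\mapsto i'$ on $\{1,\ldots,n\}$ distinguishes Jordan block spaces carrying a non-degenerate restriction of the form ($i=i'$) from those paired with a distinct block of the same size ($i\ne i'$), and condition~(3) of that lemma is exactly your parity constraint $\epsilon(-1)^{\lambda_i-1}=1$ for the self-paired blocks.

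Your proposal is the standard proof and is correct. The auxiliary forms $\omega_k$ and the observation $\omega_k(v,u)=\epsilon(-1)^k\omega_k(u,v)$ are exactly what drives the argument, and your analysis of the single-block case is accurate. The inductive extraction of a non-degenerate $x$-stable summand is indeed the only place requiring care, but it goes through as you indicate: if $m$ is the maximal Jordan block size then $\omega_{m-1}$ descends to a form on $\ker x^m/\ker x^{m-1}$, and one chooses $v$ so that its image there is either non-isotropic (giving a type-(a) summand) or part of a hyperbolic pair (giving type-(b)). Your remark about the very even orbits in type $\sf D$ is also the right caveat, and the paper makes the same point immediately after the theorem statement.
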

\noindent We denote the set of partitions associated to nilpotent $K$-orbits by $\PP(N)$. The above theorem tells us that there is a surjection
\begin{eqnarray}\label{thesurjection}
\Ni(\h)/K \twoheadrightarrow \PP(N).
\end{eqnarray}
\noindent The fibres of this surjection are singletons unless $K$ is of type $\sf D$ and $\lambda$ is \emph{very even}, in which case the fibre contains two orbits which are permuted by the unique outer automorphism of $\h$ coming from the graph automorphism of the associated Dynkin diagram. The very even partitions are those for which all parts are even. They shall play a role in our results on the sheets of Lie algebra. For a survey on the classification of nilpotent $K$-orbits when $\chr(\K) = 2$ see \cite[5.11]{Ca}.

\p The above theorem is easy to read but it does not quite give the whole story. The proof of the ``if'' part takes $e \in \g$ satisfying the parity conditions on its parts and shows that there exists a symmetric or skew bilinear form with respect to which $e$ is skew-self adjoint; see \cite[1.11]{Jan} for example. Such a choice of form necessarily makes an explicit pairing between certain Jordan block spaces. Since the bilinear form determines the algebra $\h$, this pairing is far from arbitrary and plays a role in describing a basis for $\h_e$. The following may be seen as a more technical version of the previous theorem. Recall that $V$ decomposes as $\oplus_{i=1}^n V[i]$ into Jordan block spaces for $e$.
\begin{lem}\label{nilpotents}
Suppose $e \in \Ni(\g)$ has partition $\lambda = (\lambda_1,...,\lambda_n)$. Then $\Ad(G)e$ intersects $\Ni(\h)$ if and only if there exists an involution $i \mapsto i'$ on the set $\{1,...,n\}$ such that:
\begin{enumerate}
\item{$\lambda_i = \lambda_{i'}$ for all $i = 1,...,n$;}

\smallskip

\item{$(V[i], V[j]) = 0$ if $i \neq j'$;}

\smallskip

\item{$i=i'$ if and only if $\epsilon(-1)^{\lambda_i} = -1$.}
\end{enumerate}
\end{lem}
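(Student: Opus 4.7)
\emph{Proof plan.} My approach is to exploit the structure of $V$ as a nilpotent $\K[T]$-module (with $T$ acting as $e$) carrying a nondegenerate $\epsilon$-symmetric bilinear form under which $T$ is skew-adjoint; both directions of the biconditional are then translations of the same classification statement.

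\emph{Forward direction.} Suppose $\Ad(g)e \in \Ni(\h)$ for some $g \in G$. Replacing $e$ by $\Ad(g)e$, I assume $e \in \h$, so that $(eu,v) + (u,ev) = 0$ for all $u,v \in V$. I invoke the standard orthogonal decomposition theorem for nilpotent form-modules: $V$ splits as an orthogonal direct sum of pieces of two kinds, namely (a) a single cyclic $e$-invariant subspace on which the form restricts nondegenerately, or (b) a pair of cyclic $e$-invariant subspaces of equal $e$-order, each totally isotropic and paired nondegenerately with the other. This is proved by induction on $\dim V$: pick a cyclic vector $v$ of maximal $e$-order; if $\K[e]v$ is nondegenerate under $(\cdot,\cdot)$ peel it off as a case (a) piece, otherwise locate a companion isotropic cyclic chain $\K[e]v'$ of equal length using nondegeneracy of the ambient form and peel off the hyperbolic pair as a case (b) piece; then iterate on the orthogonal complement. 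Reading the $V[i]$'s off this decomposition and defining $i' = i$ on case (a) pieces and $\{i,i'\}$ to be the pair on case (b) pieces, orthogonality yields (2) at once and (1) is built into the construction. For (3), a Jordan-basis calculation on a case (a) block, using the identity $(v_k, v_l) = -(v_{k+1}, v_{l-1})$ to see that the pairing matrix is constant along anti-diagonals up to alternating signs, combined with $\epsilon$-symmetry, forces the nondegenerate anti-diagonal to satisfy $(-1)^{\lambda_i - 1} = \epsilon$, i.e.\ $\epsilon(-1)^{\lambda_i} = -1$.

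\emph{Backward direction.} Given a Jordan decomposition $V = \bigoplus V[i]$ and an involution satisfying (1)--(3), I construct an explicit $\epsilon$-symmetric nondegenerate bilinear form $\langle \cdot, \cdot \rangle$ on $V$ making $e$ skew-self-adjoint. Fix a Jordan basis $v_0^i, \ldots, v_{\lambda_i - 1}^i$ on each $V[i]$ with $e v_k^i = v_{k-1}^i$. On a singleton orbit $\{i\}$ of the involution set $\langle v_k^i, v_l^i \rangle = (-1)^k \delta_{k+l, \lambda_i - 1}$; condition (3) ensures this is $\epsilon$-symmetric. On a two-element orbit $\{i, i'\}$ declare both $V[i]$ and $V[i']$ totally isotropic and set $\langle v_k^i, v_l^{i'} \rangle = (-1)^k \delta_{k+l, \lambda_i - 1}$ (well-defined by (1)), extending by $\epsilon$-symmetry. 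Between different orbits the form vanishes, which is (2). A routine check confirms nondegeneracy, the correct symmetry, and $\langle eu, v \rangle + \langle u, ev \rangle = 0$. Since any two nondegenerate $\epsilon$-symmetric forms on $V$ are $G$-equivalent, some $g \in G$ transports $\langle \cdot, \cdot \rangle$ to $(\cdot, \cdot)$, whence $\Ad(g)e \in \h$.

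\emph{Main obstacle.} The principal difficulty is the form-module structure theorem invoked in the forward direction. The delicate point is producing, for an isotropic cyclic submodule $W = \K[e]v$ of maximal length inside $V$, a companion cyclic submodule $W' \subseteq V$ of the same length such that $W \oplus W'$ is hyperbolic and $V = (W \oplus W') \oplus (W \oplus W')^{\perp}$ orthogonally with nondegenerate complement, so that the induction continues. Once this orthogonal splitting is in hand, conditions (1)--(3) drop out of the Jordan-basis computations above.
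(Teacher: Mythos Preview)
The paper does not supply its own proof of this lemma. It is presented as a more precise restatement of the preceding Theorem~\ref{nilorbits} (the parity classification of nilpotent $K$-orbits), with the remark that the ``if'' direction amounts to constructing a suitable form and a pointer to \cite[1.11]{Jan}. Your argument is exactly the standard one found in that reference: the forward direction is the orthogonal decomposition of $V$ as a $\K[e]$-module equipped with a nondegenerate form under which $e$ is skew-adjoint, and the backward direction is the explicit construction of such a form on each orbit of the involution followed by transport via $G$. The sign computation you sketch for condition~(3) on a single nondegenerate block, and the hyperbolic-companion step you flag as the main obstacle, are precisely the ingredients of Jantzen's treatment. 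So your proposal is correct and is the approach the paper has in mind.
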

\noindent
Without disturbing the ordering $\lambda_1 \geq \cdots \geq \lambda_n$ we may renumber the Jordan block spaces in such a way that $$i'\in\{i-1,i,i+1\}\ \ \mbox{for all }\ 1\le i\le n.$$ As an immediate consequence of this convention we have that $j'>i'$ whenever $1\le i<j\le n$ and $j\ne i'$. The involution is uniquely determined by $(\cdot, \cdot)$ and $e \in \h$.

\p Before we continue we shall describe another approach to the classification of nilpotent orbits. The maps given in (\ref{GLnorbits}) and (\ref{thesurjection}) are very explicit and may be described in an elementary way using linear algebra. Those methods do not extend well to the same problem for exceptional Lie algebras, where something more sophisticated is required. Bala-Carter theory provides an almost characteristic free parameterisation of nilpotent $\tG$-orbits when $\tG$ is an arbitrary reductive group. Let $\tG$ be such a group. A distinguished nilpotent element $x \in \tg$ is one such that every torus in $\tG_x$ is contained in the centre of $\tG$. This property is obviously preserved under conjugation, and so we may define a distinguished nilpotent orbit to be a nilpotent $\tG$-orbit consisting of distinguished elements. A weak form of the Bala-Carter theorem states that the nilpotent orbits are in bijective correspondence with $\tG$-orbits of pairs $(\li, \Oo)$ where $\li$ is a Levi subgroup 
and $\Oo$ is a distinguished nilpotent orbit \cite[Proposition~4.7]{Jan}. They go on to classify the distinguished nilpotent orbits in terms of so called distinguished parabolic subgroups. Their original method works for $p = 0$ and $p \gg 0$, whilst the argument was extended to good characteristic by Pommering \cite{Pom1, Pom2}. The latter approach still relied on some case by case deductions, which were finally removed by the use of the Kempf-Rosseau theory of optimal tori in \cite{Pre1}.

\section{Induced nilpotent orbits}\label{inducedorbits}
\setcounter{parno}{0}
\p \label{inductionint} There is a powerful construction which allows us to obtain nilpotent $\tG$-orbits from nilpotent $L$-orbits, where $\tG$ is reductive and $L\subseteq \tG$ is a Levi subgroup. It is called induction of nilpotent orbits and was introduced by Lusztig and Spaltenstein in \cite{LS}. It will be vital to the study of the sheets of a Lie algebra in Chapter~\ref{derivedchapter} and finite $W$-algebras in Chapter~\ref{abelianquot}. Let $\tG$ be arbitrary reductive group and let $\pp \subseteq \tg$ be a parabolic subalgebra of $\tg$ with nilpotent radical $\n$ and Levi factor $\li$. Also let $\Oo \subseteq \li$ be a nilpotent $L$-orbit. The definition of an induced orbit is contained in the following.
\begin{thm}
There is a unique nilpotent $tG$-orbit, independent of our choice of parabolic $\mathfrak{p}$ containing $\li$ as a Levi factor, meeting $\Oo + \mathfrak{n}$ in a dense open subset. We denote this orbit by \emph{Ind}$_\li^\tg(\Oo)$. It has dimension $\dim\, \Oo + 2\dim\, \mathfrak{n}$.
\end{thm}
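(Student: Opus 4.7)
The plan is to realise $\Ind_\li^\tg(\Oo)$ as the unique dense $\tG$-orbit inside the $\tG$-saturation of $X := \Oo + \n$, match its dimension using Richardson's theorem on parabolic orbits, and then verify independence of the choice of parabolic.

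I would first set up the geometry. The subset $X \subseteq \pp$ is irreducible of dimension $\dim\,\Oo + \dim\,\n$, being a direct product of the irreducible orbit $\Oo$ with the affine space $\n$. Every element of $X$ is nilpotent: in a basis adapted to the $P$-stable flag, elements of $X$ act block upper-triangularly, with nilpotent diagonal blocks (contributed by $\Oo$) and strictly upper-triangular off-diagonal contributions (from $\n$). Moreover $P = L \ltimes U$ preserves $X$, since $L$ stabilises both summands, while $U$ shifts $\Oo$ inside $\Oo + \n$.

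Next I would introduce the contracted product $\tG \times^{P} X$ with the map $\mu : \tG \times^{P} X \to \tg$, $(g, x) \mapsto \Ad(g)\,x$, whose image is $\tG \cdot X$. The source is irreducible of dimension $\dim(\tG/P) + \dim X = \dim\,\Oo + 2\dim\,\n$, which bounds $\dim(\tG \cdot X)$ from above. Since $\tG \cdot X$ is irreducible and consists of nilpotent elements, while $\tG$ acts on $\Ni(\tg)$ with finitely many orbits, there is a unique $\tG$-orbit $\Oo^*$ open and dense in $\tG \cdot X$. The main step, which I expect to be the principal technical obstacle, is to prove simultaneously that $\dim \Oo^* = \dim\,\Oo + 2\dim\,\n$ and that $\Oo^* \cap X$ is open dense in $X$. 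Both reduce to showing that $P$ has an open orbit on $X$. Fix $e' \in \Oo$ and consider $e = e' + u$ with $u \in \n$ generic. The tangent space to $P \cdot e$ satisfies
\[
[\pp, e] = [\li, e'] + [\li, u] + [\n, e'] + [\n, u] \subseteq [\li, e'] + \n = T_e X,
\]
and equality holds as soon as $[\pp, u] = \n$. This is precisely the content of Richardson's density theorem for $P$ acting on its unipotent radical, which guarantees an open dense $P$-orbit on $\n$. Consequently the generic $P$-orbit in $X$ is open of dimension $\dim X$, whence $\dim(\tG \cdot e) = \dim \tG - \dim P + \dim X = \dim\,\Oo + 2\dim\,\n$, matching the upper bound. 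The dense orbit $\Oo^*$ thus attains the claimed dimension, and its intersection with $X$ contains the open $P$-orbit through $e$, hence is dense open in $X$.

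Finally I would verify independence of $\pp$ among parabolics with Levi factor $\li$. Since $\dim\,\n$ depends only on $\li$, the asserted dimension is unambiguous. Two parabolics with common Levi are linked via a chain of opposite parabolics inside $N_{\tG}(L)$, so it suffices to establish independence for a pair $\pp,\pp^-$ of opposite parabolics with Levi $\li$. The strategy is to exhibit a parabolic-independent characterisation of $\Oo^*$—for example, as the unique $\tG$-orbit of maximal dimension meeting $\overline{\Oo} + \n$ densely, or equivalently as the unique orbit of dimension $\dim\,\Oo + 2\dim\,\n$ inside $\tG \cdot \overline{\Oo}$—and to check that both constructions realise the same orbit. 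Making this intrinsic characterisation precise and reducing the general case to the case of opposite parabolics is, to my mind, the subtlest point of the proof.
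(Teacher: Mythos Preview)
The paper does not prove this theorem; it is quoted from Lusztig--Spaltenstein as a background result, with only the remark that it generalises Richardson's construction. There is no paper proof to compare against, so I comment on your argument on its own terms.

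Your outline follows the standard route and much of it is sound: irreducibility and nilpotency of $X=\Oo+\n$, $P$-stability of $X$, the upper bound $\dim(\tG\cdot X)\le\dim\Oo+2\dim\n$ from $\mu\colon\tG\times^P X\to\tg$, the existence of a unique dense orbit $\Oo^*$ from finiteness of nilpotent orbits, and the tangent computation $[\pp,e]=T_eX$ once $[\pp,u]=\n$. The genuine gap is the word ``whence'' in $\dim(\tG\cdot e)=\dim\tG-\dim P+\dim X$. Knowing that $P\cdot e$ is open in $X$ gives $\dim P_e=\dim P-\dim X$; passing to the $\tG$-orbit silently uses $\dim\tG_e=\dim P_e$, i.e.\ $\tG_e^\circ\subseteq P$, equivalently that $\mu$ is generically finite. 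Richardson's theorem delivers exactly this inclusion for the nilradical element $u$ (the case $e'=0$), and that containment is what carries the weight there; the density statement you invoke does not by itself extend to $e=e'+u$ with $e'\ne 0$. In the literature the missing lower bound is obtained either by an $\sl_2$-argument---embed $e'$ in a triple in $\li$, grade $\tg$ by the semisimple element, and compare $\dim\tg_e$ with $\dim\li_{e'}$---or via the sheet-theoretic description of the induced orbit as the unique nilpotent orbit in the closure of the Jordan class $\Ad(\tG)\big(e_0+\z(\li)_\reg\big)$ for $e_0\in\Oo$. Your sketch of parabolic-independence is honestly flagged as incomplete and runs into the same obstacle: the intrinsic characterisation you are searching for is precisely that sheet description, which visibly does not involve $\pp$, but matching it with the dense orbit in $\Oo+\n$ is the step you have not supplied.
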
 
\noindent The theorem and its proof may be seen as a generalisation of the construction of the regular nilpotent orbit or, more generally, of Richardson orbits (those induced from the zero orbit in some Levi subalgebra) given in \cite{Ric}.

\p Induction of nilpotent orbits enjoys many distinguished properties, the most basic of which can be encapsulated concisely in the following way: let $\tilde{p}(\tG)$ denote the set of all pairs $(\li, \Oo)$ where $\li\subseteq \tg$ is a Levi subalgebra and $\Oo\subseteq \li$ a nilpotent orbit. We let $\leq_\Ind$ be defined by $(\li_1, \Oo_1) \leq_\Ind (\li_2,\Oo_2)$ if $\li_1 \subseteq \li_2$ and $\Oo_2 = \Ind^{\li_2}_{\li_1}(\Oo_1)$. Then $(\tilde{p}(\tG), \leq_\Ind)$ is a poset. When we discuss induction in detail in Chapter~\ref{derivedchapter} it will be useful for the reader to consider the associated Hasse diagram. Other useful properties shall be introduced as they are required.

\p \label{rigidinst} In the third and fourth chapters we shall use induction in order to reduce certain statements about nilpotent orbits to the case where an orbit $\Oo$ cannot be induced. Such an orbit is called \emph{rigid}. The rigid orbits are classified and in classical types they have a nice combinatorial description due to Kempken and Spaltenstein, which we shall recall in Theorem~\ref{rigids}.

\section{The centraliser of a nilpotent element}\label{basisforthecent}
\renewcommand{\p}{\refstepcounter{parno}\noindent\textbf{\theparno .} \space} 
\setcounter{parno}{0}

\p Assume the notations of \ref{genlinnots} and \ref{classnots}. We wish to discuss the centralisers in the classical Lie algebra $\g$ or $\h$. As was explained in Section~\ref{jordanreduction}, we reduce our discussion to the nilpotent case using Levi subalgebras. Let $e \in \Ni(\h)$. In order to carry out explicit computations in $\h_e$ we shall introduce a basis involving parameters which depend on the partition associated to $e$. The purpose of this section is to introduce that basis and recall some basic facts about the stabiliser groups $G_e$ and $K_e$. Thanks to \cite[Theorems~2.5 \& 2.6]{Jan} we may identify $\g_e$ with $\Lie(G_e)$ and $\h_e$ with $\Lie(K_e)$.

\p First of all let us take a nilpotent $G$-orbit $\Oo$ with partition $\lambda = (\lambda_1,...,\lambda_n)$. Pick an element $e \in \Oo$.  There exist vectors $\{w_i : i = 1,...,n\}$ such that $\{e^s w_i : 1 \leq i \leq n, 0 \leq s < \lambda_i\}$ forms a basis for $V$. Let $\xi \in \mathfrak{g}_e$. Then $\xi (e^s w_i) = e^s (\xi w_i)$ showing that $\xi$ is determined by its action on the $w_i$. Define
\begin{eqnarray*}
\xi_i^{j,s} w_k = \left\{ \begin{array}{ll}
         e^sw_j & \mbox{ if $i=k$}\\
        0 & \mbox{ otherwise}\end{array} \right.
\end{eqnarray*}
Provided $\lambda_j > s \geq \lambda_j - \min(\lambda_i,\lambda_j)$ we may extend the action to $V = \spn\{e^s w_i\}$ by the requirement that $\xi_i^{j,s}$ is linear and centralises $e$. The following observation was made in \cite{Yak1}, for example.
\begin{lem} \label{gebasis}
The maps $\xi_i^{j,\lambda_j - 1 - s}$ with $1 \leq i,j \leq n$ and $0 \leq s < \min(\lambda_i, \lambda_j)$ form a basis for $\mathfrak{g}_e$.
\end{lem}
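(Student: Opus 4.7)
The plan is to verify the three required properties in turn: that each proposed vector really lives in $\g_e$, that the family is linearly independent, and that it spans. I would not attempt a dimension count against a pre-existing formula for $\dim\g_e$; both independence and spanning are already transparent once the admissibility condition on $s$ is unpacked, so a direct argument is cleaner.

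First I would show well-definedness. Each $\xi_i^{j,s'}$ must extend consistently from the requirement $\xi_i^{j,s'} w_k = \delta_{ik}\,e^{s'} w_j$ to the whole basis $\{e^t w_k : 0 \le t < \lambda_k\}$ via the rule $\xi_i^{j,s'}(e^t w_i) = e^{t+s'} w_j$. For this to produce a linear map of $V$ that actually commutes with $e$, no inconsistency can arise; the only way one could appear is if $e^{t+s'} w_j$ is forced to be nonzero while $e^t w_i = 0$. Substituting $s' = \lambda_j - 1 - s$ with $0 \le s < \min(\lambda_i,\lambda_j)$ gives $s' \ge \lambda_j - \min(\lambda_i,\lambda_j)$, which is precisely the inequality from the preceding paragraph ensuring that $e^{t+s'} w_j = 0$ whenever $t \ge \lambda_i$. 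Hence each $\xi_i^{j,\lambda_j - 1 - s}$ is a genuine element of $\g_e$.

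Next, linear independence. Suppose $\sum_{i,j,s} c_{i,j,s}\,\xi_i^{j,\lambda_j - 1 - s} = 0$, and evaluate both sides at $w_k$. Only the terms with $i = k$ survive, giving
\[
\sum_{j}\sum_{s=0}^{\min(\lambda_k,\lambda_j)-1} c_{k,j,s}\,e^{\lambda_j - 1 - s}\,w_j \;=\; 0.
\]
The decomposition $V = \bigoplus_j V[j]$ with $V[j] = \spn\{e^r w_j : 0 \le r < \lambda_j\}$ separates the sum by $j$, and within a single $V[j]$ the vectors $e^{\lambda_j - 1 - s} w_j$ are part of a basis. Therefore every $c_{k,j,s}$ vanishes.

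Finally, the spanning claim. Any $\xi \in \g_e$ is determined by the tuple $(\xi w_1, \ldots, \xi w_n)$ because it commutes with $e$, and the relation $e^{\lambda_i} w_i = 0$ forces $\xi w_i \in \ker e^{\lambda_i}$. Decomposing along the Jordan blocks, $\xi w_i = \sum_j v_{ij}$ with $v_{ij} \in V[j] \cap \ker e^{\lambda_i} = \spn\{e^r w_j : \max(0,\lambda_j - \lambda_i) \le r < \lambda_j\}$. Writing $r = \lambda_j - 1 - s$, the admissible range is exactly $0 \le s < \min(\lambda_i, \lambda_j)$, so the coefficients in this expansion read off the coordinates of $\xi$ in our candidate basis. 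There is no real obstacle here; the only point to watch is the indexing sleight of hand in translating between $s$ and $s' = \lambda_j - 1 - s$, and the subtle but completely mechanical verification that the admissibility range from \ref{gebasis} coincides with $\ker e^{\lambda_i} \cap V[j]$.
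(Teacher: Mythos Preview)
Your proof is correct. The paper itself does not prove this lemma; it simply attributes the observation to \cite{Yak1} and states the result. Your direct argument---checking that the admissibility bound on $s$ is exactly the condition for the extension to be well-defined, then reading off linear independence and spanning from the action on the generators $w_k$ together with the identification $\ker e^{\lambda_i}\cap V[j]=\spn\{e^{\lambda_j-1-s}w_j:0\le s<\min(\lambda_i,\lambda_j)\}$---is the standard elementary verification and needs no external input.
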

\noindent We shall adopt the convention that when the indexes $i,j$ and $s$ lie outside the prescribed ranges, the map $\xi_i^{j,\lambda_j -1 - s}$ is zero.

\p We define a basis for the dual space $\g_e^\ast$ in the usual manner. Let
\begin{eqnarray*}
(\xi_i^{j,s})^\ast(\xi_k^{l,r}) = \left\{ \begin{array}{ll}
         1 & \mbox{ if $i=k$, $j=l$, $s=r$}\\
        0 & \mbox{ otherwise}\end{array} \right.
\end{eqnarray*}

\p Our next aim is to describe a basis for $\h_e$. The following approach is implicit in \cite{Yak2}, however we shall recover the details for the reader's convenience. Before we write down a spanning set for $\h_e$ we must normalise the basis for $V$. Let $\{w_i\}$ be chosen so that $\{e^s w_i : 1 \leq i \leq n, 0 \leq s < \lambda_i\}$ is a basis for $V$ and fix $1 \leq i \leq n$, $0 < s$. Recall the involution $i \mapsto i'$ defined on $\{1,...,n\}$ in Lemma~\ref{nilpotents}. We have $(e^{\lambda_i-1} w_i, e^sw_{i'}) = (-1)^s (e^{\lambda_i - 1 + s}w_i, w_{i'})$ and $e^{\lambda_i - 1 + s}w_i = 0$ so $e^{\lambda_i-1}w_i$ is orthogonal to all $e^sw_{i'}$ with $s>0$. There is a (unique upto scalar) vector $v \in V[i]$ which is orthogonal to all $e^s w_{i'}$ for $s < \lambda_i - 1$. This $v$ does not lie in the image of $e$ for otherwise it would be othogonal to all of $V[i] + V[i']$. This is not possible since the restriction of $(\cdot,\cdot)$ to $V[i] + V[i']$ is non-degenerate thanks to part 2 of
Lemma~\ref{nilpotents}. It does no harm to replace $w_i$ by $v$ and normalise according to the rule
\begin{eqnarray*}
 (w_i, e^{\lambda_i - 1} w_{i'}) = 1 & \text{ whenever } i \leq i'
 \end{eqnarray*}
 With respect to this basis the matrix of the restriction of $(\cdot,\cdot)$ to $V[i] + V[i']$ is antidiagonal with entries $\pm 1$. Although this normalisation is equivalent to choosing a new bilinear form on $V$ it is easy to see that the involution $i \mapsto i'$ on $\{1,...,n\}$ remains unchanged. 

\p \label{sigmaint} Since $\sigma : \g_e \ra \g_e$ is an involution the maps $\xi + \sigma(\xi)$, with $\xi \in \g_e$, span $\h_e$. Thanks to \ref{gebasis} we may conclude that $$\{\xi_i^{j,\lambda_j - 1 - s} + \sigma(\xi_i^{j,\lambda_j - 1-s}) : 1 \leq i,j \leq n , 0 \leq s < \min(\lambda_i, \lambda_j)\}$$ is a spanning set for $\h_e$. Similar reasoning shows that a spanning set for $\pp_e$ may be found amongst the $\xi_i^{j,\lambda_j - 1 - s} - \sigma(\xi_i^{j,\lambda_j - 1-s}) $. This leaves us with two immediate tasks: evaluate $\sigma(\xi_i^{j,\lambda_j-1-s})$ and determine the linear relations between the spanning vectors. Using the fact that $\xi_i^{j,\lambda_j - 1 - s} + \sigma(\xi_i^{j,\lambda_j - 1-s})$ is skew self-adjoint with respect to $(\cdot,\cdot)$ we deduce that 
\begin{eqnarray}\label{sigmaaction}
\sigma(\xi_i^{j,\lambda_j - 1 - s}) = \varepsilon_{i,j,s}\xi_{j'}^{i', \lambda_i - 1 - s}
\end{eqnarray}
where $\varepsilon_{i,j,s}$ is defined by the relationship $(e^{\lambda_j-1-s}w_j,e^sw_{j'}) = - \varepsilon_{i,j,s}(w_i, e^{\lambda_i-1} w_{i'})$. This requires a little calculation.

\p We now make the notation
\begin{eqnarray*}
\zeta_i^{j,s} = \xi_i^{j,\lambda_j-1-s} +  \varepsilon_{i,j,s}\xi_{j'}^{i', \lambda_i - 1 - s}\\
\eta_i^{j,s} = \xi_i^{j,\lambda_j-1-s} -  \varepsilon_{i,j,s}\xi_{j'}^{i', \lambda_i - 1 - s}
\end{eqnarray*}
The maps $\zeta_i^{j,s}$ span $\h_e$ and the maps $\eta_i^{j,s}$ span $\pp_e$. We define a dual spanning set
\begin{eqnarray*}
(\zeta_i^{j,s})^\ast := (\xi_i^{j,\lambda_j-1-s})^\ast +  \varepsilon_{i,j,s}(\xi_{j'}^{i', \lambda_i - 1 - s})^\ast\\
(\eta_i^{j,s})^\ast := (\xi_i^{j,\lambda_j-1-s})^\ast -  \varepsilon_{i,j,s}(\xi_{j'}^{i', \lambda_i - 1 - s})^\ast.
\end{eqnarray*}
Note that we do \emph{not} have $(\zeta_i^{j,s})^\ast(\zeta_k^{l,r}) \in \{0,1\}$, contrary to the common convention for dual basis vectors. For instance, $\zeta_i^{i,\lambda_i-2} = 2\xi_i^{i,1}$ when $i = i'$ and $\lambda_i \geq 2$, and so $(\zeta_i^{i,\lambda_i-2})^\ast(\zeta_i^{i,\lambda_i-2}) = 4$. Our discussion shall be more concise due to this choice of definition (see Remark~\ref{duals} below).
We make further notation
$$\varpi_{i \leq j} = \left\{ \begin{array}{ll}
         1 & \mbox{if $i \leq j$}\\
        -1 & \mbox{if $i > j$}\end{array} \right. $$
and comparing with Lemma~\ref{nilpotents} we see that $\varpi_{i\leq i'} \varpi_{i'\leq i} = \epsilon(-1)^{\lambda_i-1}$, which shall prove useful in some later calculations.

\p The next lemma was first written in \cite{Top} and settles the question of which linear relations exist between the maps $\zeta_i^{j,s}$.
\begin{lem}\label{spanningdetails}
The following are true:
\begin{enumerate} 
\item{$\varepsilon_{i,j,s} = (-1)^{\lambda_j - s} \varpi_{i\leq i'} \varpi_{j \leq j'}$;}
\smallskip
\item{$\varepsilon_{i,j,s} = \varepsilon_{j',i',s}$;}
\smallskip
\item{The relations amongst the $\zeta_i^{j,s}$ are those of the form $\zeta_i^{j,s} = \varepsilon_{i,j,s} \zeta_{j'}^{i',s}$;}
\smallskip
\item{The relations amongst the $\eta_i^{j,s}$ are those of the form $\eta_i^{j,s} = - \varepsilon_{i,j,s} \eta_{j'}^{i',s}$;}
\smallskip
\item{Analogous relations hold amongst the $(\zeta_i^{j,s})^\ast$ and the $(\eta_i^{j,s})^\ast$. These are the only such relations;}
\smallskip
\item{The $(\zeta_i^{j,s})^\ast$ span $\h_e^\ast$ and the $(\eta_i^{j,s})^\ast$ span $\pp_e^\ast$.}
\end{enumerate}
\end{lem}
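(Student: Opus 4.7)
The plan is to establish item (1) first, which carries essentially all of the computational weight, and then to deduce (2)--(6) as formal consequences. For (1), I would expand the right-hand side of the defining relation
\[
(e^{\lambda_j-1-s}w_j, e^sw_{j'}) \;=\; -\varepsilon_{i,j,s}\,(w_i, e^{\lambda_i-1}w_{i'})
\]
using the skew-adjointness $(ex,y)=-(x,ey)$ to collect the factor $(-1)^s$, and the $\epsilon$-symmetry of the form to interchange the two arguments. This reduces the problem to relating $(w_{j'}, e^{\lambda_j-1}w_j)$ and $(w_i, e^{\lambda_i-1}w_{i'})$ to the normalised pairing $(w_k, e^{\lambda_k-1}w_{k'})=1$ for $k\leq k'$. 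Because that normalisation is only enforced in one of the two regimes, a four-way case analysis (on whether $i\leq i'$ or $i>i'$ and whether $j\leq j'$ or $j>j'$) is required; in each asymmetric case one must apply both the swap and the skew-adjointness a second time, picking up an extra factor of $\epsilon(-1)^{\lambda_i-1}$. Packaging the accumulated signs as $\varpi_{i\leq i'}$ and $\varpi_{j\leq j'}$ yields the claimed formula, and also recovers the identity $\varpi_{i\leq i'}\varpi_{i'\leq i}=\epsilon(-1)^{\lambda_i-1}$ already recorded above.

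Part (2) then follows by applying (1) to the triple $(j',i',s)$ and invoking $(i')'=i$, $(j')'=j$, $\lambda_i=\lambda_{i'}$ and $\lambda_j=\lambda_{j'}$; equivalently, it expresses the fact that $\sigma^2$ acts as the identity on the basis $\{\xi_i^{j,\lambda_j-1-s}\}$ via (\ref{sigmaaction}). Parts (3) and (4) are now immediate: substituting (\ref{sigmaaction}) into $\zeta_i^{j,s}=\xi_i^{j,\lambda_j-1-s}+\sigma(\xi_i^{j,\lambda_j-1-s})$ and using (2) together with $\varepsilon_{i,j,s}^2=1$ yields $\zeta_i^{j,s}=\varepsilon_{i,j,s}\zeta_{j'}^{i',s}$, while the opposite sign in the definition of $\eta_i^{j,s}$ flips the identity to $\eta_i^{j,s}=-\varepsilon_{i,j,s}\eta_{j'}^{i',s}$. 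To argue that these are the \emph{only} relations, I would invoke Lemma~\ref{gebasis}: the $\xi_i^{j,\lambda_j-1-s}$ form a basis of $\g_e$, so $\sigma$ partitions their index set into orbits of size one or two, and the relations of (3)--(4) are precisely the identifications of the two labels within each size-two orbit; once these are imposed, the remaining $\zeta_i^{j,s}$ (resp.\ $\eta_i^{j,s}$) become a genuine basis of $\h_e$ (resp.\ $\pp_e$).

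Parts (5) and (6) are then formal. For (5), the definition $(\zeta_i^{j,s})^{\ast}=(\xi_i^{j,\lambda_j-1-s})^{\ast}+\varepsilon_{i,j,s}(\xi_{j'}^{i',\lambda_i-1-s})^{\ast}$ combined with part (2) and $\varepsilon_{i,j,s}^2=1$ gives $(\zeta_i^{j,s})^{\ast}=\varepsilon_{i,j,s}(\zeta_{j'}^{i',s})^{\ast}$ by direct substitution, and the same calculation with a sign flip handles the $\eta$-duals; linear independence of the dual basis $(\xi_i^{j,\lambda_j-1-s})^{\ast}$ of $\g_e^{\ast}$ prevents any further relations. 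For (6), since $\sigma$ is an involution of $\g_e$ with $\mathrm{char}(\K)\neq 2$, one has $\g_e=\h_e\oplus\pp_e$ as $\pm 1$ eigenspaces, and the symmetrisations $\xi+\sigma(\xi)$ and antisymmetrisations $\xi-\sigma(\xi)$ of any basis of $\g_e$ span $\h_e$ and $\pp_e$ respectively; the same decomposition transfers verbatim to the duals. The main obstacle throughout is the sign bookkeeping in (1): the asymmetry of the normalisation forces one to treat the four sign regions separately before they can be uniformly repackaged as $\varpi_{i\leq i'}\varpi_{j\leq j'}$, and this is the only step of the argument where the proof is not essentially formal.
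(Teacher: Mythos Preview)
Your proposal is correct and follows essentially the same approach as the paper. For part~(1) the paper also reduces to showing $(w_i,e^{\lambda_i-1}w_{i'})=\varpi_{i\le i'}$ by a case split on $i\le i'$ versus $i>i'$, and for parts~(2)--(6) both arguments are formal; the only minor variation is that for the ``no further relations'' clause in (3)--(4) the paper uses the quotient map $\g_e/\pp_e\to\h_e$ and a dimension count, while you use the $\sigma$-orbit structure on the $\xi$-basis, and for (6) the paper verifies directly that $(\zeta_i^{j,s})^\ast$ kills each $\eta_k^{l,r}$ rather than invoking the $\sigma^\ast$-eigenspace decomposition---but these are equivalent packagings of the same linear algebra.
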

\begin{proof}
We have $$\varepsilon_{i,j,s} = \frac{-(e^{\lambda_j-1-s}w_j,e^sw_{j'})}{(w_i, e^{\lambda_i-1} w_{i'})} = \frac{(-1)^{\lambda_j-s}(w_j,e^{\lambda_j-1}w_{j'})}{(w_i, e^{\lambda_i-1} w_{i'})}.$$ We claim that $(w_i, e^{\lambda_i - 1} w_{i'}) = \varpi_{i \leq i'}$. The bilinear form $(\cdot,\cdot)$ is normalised so that $(w_i, e^{\lambda_i - 1} w_{i'}) = 1$ whenever $i \leq i'$, therefore we need only show that $(w_i, e^{\lambda_i - 1} w_{i'}) = -1$ whenever $i > i'$. If $i > i'$ then $$(w_i, e^{\lambda_i - 1} w_{i'}) = (-1)^{\lambda_i - 1} (e^{\lambda_i-1} w_i, w_{i'}) = \epsilon(-1)^{\lambda_i-1} (w_{i'}, e^{\lambda_i-1}w_i) =  \epsilon(-1)^{\lambda_i-1}.$$ However $ \epsilon(-1)^{\lambda_i-1} = -1$ whenever $i \neq i'$ by lemma \ref{nilpotents}, concluding part 1. Next observe that $\varpi_{i\leq i'} \varpi_{i' \leq i} = 1$ if and only if $i = i'$ hence $\varpi_{i\leq i'} \varpi_{i'\leq i} = \epsilon(-1)^{\lambda_i-1}$. Part 2 now follows from part 1.

The equality $\zeta_i^{j,s} = \varepsilon_{i,j,s} \zeta_{j'}^{i',s}$ follows from part 2. To see that these are the only relations we note that $\pp_e$ is spanned by vectors $\xi_i^{j,\lambda_j-1-s} - \varepsilon_{i,j,s} \xi_{j'}^{i',\lambda_i - 1-s}$ and that $\g_e / \pp_e \cong \h_e$. Then the map $ \xi_i^{j,\lambda_j-1-s} + \pp_e\ra \zeta_i^{j,s}$ is well defined and extends to a linear map $\g_e / \pp_e \ra \h_e$. It is surjective and so by dimension considerations  it is a vector space isomorphism. The only linear relations amongst the vectors $\xi_i^{j,\lambda_j-1-s} + \pp_e$ in $\g_e / \pp_e$ are those of the form $\xi_i^{j,\lambda_j-1-s} - \varepsilon_{i,j,s} \xi_{j'}^{i',\lambda_i - 1-s} + \pp_e= 0$. Part 3 follows. An identical argument works for part 4.

Part 5 is a straightforward consequence of duality, whilst for 6 we observe that $(\zeta_i^{j,s})^\ast$ vanishes on each $\eta_k^{l,r}$ and $(\eta_i^{j,s})^\ast$ vanishes on each $\zeta_k^{l,r}$. Since the set of all $(\zeta_i^{j,s})^\ast$ and all $(\zeta_i^{j,s})^\ast$ together span $\g_e^\ast$ it is quite evident that the zetas must span $\h_e^\ast$ and the etas must span $\pp_e^\ast$.
\end{proof}
\begin{rem}\label{duals}
\rm{Using part 3 of the above lemma we shall describe a basis for $\h_e$ by refining the spanning set of elements $\zeta_i^{j,s}$. From this basis it is possible to define a dual basis in the usual manner, and it is easy to prove that this coincides upto scalars with the set of functionals $(\zeta_i^{j,s})^\ast$ which is obtained by carrying out the analogous refinement. The point of this remark is that our definition of $(\zeta_i^{j,s})^\ast$ coincides upto scalars with the more canonical one, although ours is easier to work with. Analogous statements hold for $\pp_e$.}
\end{rem}

\p Let us proceed to refine a basis for $\h_e$ from $\{\zeta_i^{j,s}\}$. Define
\begin{eqnarray*}
H&:=& \{ \zeta_i^{i,s} : i< i' , 0 \leq s < \lambda_i \} \cup \{ \zeta_i^{i,s} : i = i', 0 \leq s < \lambda_i, \lambda_i - s \text{ even}\};\\
N_0 &:=& \{ \zeta_i^{i',s} : i \neq i', 0 \leq s < \lambda_i,  \lambda_i - s \text{ odd} \};\\
N_1 &:=& \{\zeta_i^{j,s} : i < j \neq i', 0 \leq s < \lambda_j\},
\end{eqnarray*}
and
\begin{eqnarray*}
\HH &:=& \spn (H);\\
\NN_0 &:=& \spn (N_0);\\
\NN_1 &:=& \spn (N_1).
\end{eqnarray*}
If $U_0,U_1 \subseteq V$ are vector subspaces then $\End(U_0, U_1)$ shall denote the space of $\K$-linear maps $U_0 \ra U_1$. We consider $\End(U_0, U_1)$ to be a subspace of $\End(V)$ under the natural embedding induced by the inclusions of $U_0$ and $U_1$ into $V$. An analogue of the following lemma holds for $\pp_e$ but we shall not need it.
\begin{lem}\label{subbasis}
The set $H\sqcup N_0 \sqcup N_1$ forms a basis for $\h_e$. Furthermore we have the following characterisation of the first two spaces:
\begin{enumerate}
\item{$\HH$ is precisely the subspace of $\h_e$ which preserves each Jordan block space $V[i]$: $$\HH = \h_e \cap(\bigoplus_{i} \emph{\End}(V[i]));$$}

\item{$\NN_0$ is precisely the subspace of $\h_e$ which interchanges $V[i]$ and $V[i']$ for $i \neq i'$ and kills $V[i]$ for $i=i'$: $$\NN_0 = \h_e \cap(\bigoplus_{i\neq i'} \emph{\End}(V[i], V[i']));$$}

\end{enumerate}
\end{lem}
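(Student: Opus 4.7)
My plan centers on the involution $(i,j,s) \mapsto (j', i', s)$ on index triples, whose orbits account for all linear relations among the $\zeta_i^{j,s}$ by Lemma~\ref{spanningdetails}(3). I would first partition these triples into the fixed points (those with $j = i'$) and the orbits of size two. At a fixed point the self-relation reads $\zeta_i^{j,s} = \varepsilon_{i,j,s}\zeta_i^{j,s}$, so since $\chr(\K) \neq 2$ the vector survives if and only if $\varepsilon_{i,j,s} = 1$. Feeding the formula for $\varepsilon_{i,j,s}$ from Lemma~\ref{spanningdetails}(1) into this criterion and invoking Lemma~\ref{nilpotents}(3) to pin down the sign $\epsilon(-1)^{\lambda_i}$ according to whether $i = i'$, I would verify that the surviving fixed-point vectors are exactly those appearing in $H$ (with $i = i'$ and $\lambda_i - s$ even) and in $N_0$ (with $i \neq i'$ and $\lambda_i - s$ odd).

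For the non-fixed orbits, I would use the convention immediately after Lemma~\ref{nilpotents} (that $j' > i'$ whenever $i < j$ and $j \neq i'$) to show that $H$ and $N_1$ supply exactly one representative from each such orbit: those with $i = j$ and $i \neq i'$ are represented in $H$ via the choice $i < i'$, while those with $i \neq j$ and $j \neq i'$ are represented in $N_1$ via the choice $i < j$, the convention guaranteeing that the swap partner $(j', i', s)$ falls outside $N_1$. Spanning then follows from Lemma~\ref{spanningdetails}(3), and linear independence from the fact that no further relations hold among the selected elements, so $H \sqcup N_0 \sqcup N_1$ is a basis of $\h_e$.

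For part (1), the definition of the $\xi_i^{j,s}$ yields that each $\zeta_i^{i,s} \in H$ sits in $\End(V[i]) \oplus \End(V[i'])$ and therefore preserves the Jordan block decomposition, giving $\HH \subseteq \h_e \cap \bigoplus_k \End(V[k])$. For the reverse inclusion I would expand an arbitrary element in the basis $H \sqcup N_0 \sqcup N_1$: each $\zeta_i^{j,s} \in N_1$ sends $V[i]$ into the distinct block $V[j]$ with $j \neq i$, and each $\zeta_i^{i', s} \in N_0$ sends $V[i]$ into $V[i'] \neq V[i]$; preservation of the block decomposition therefore forces all $N_0$- and $N_1$-coefficients to vanish. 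Part (2) follows by a symmetric argument: each $\zeta_i^{i', s} \in N_0$ lies in $\End(V[i], V[i'])$ and annihilates $V[k]$ for $k \neq i$, whereas $H$-vectors preserve some $V[i]$ (so cannot kill it when $i = i'$, nor send it into $V[i']$ when $i \neq i'$) and $N_1$-vectors send $V[i]$ into a block $V[j]$ with $j \neq i'$; hence any element satisfying the description in part (2) must have zero $H$- and $N_1$-coefficients.

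The main obstacle is the bookkeeping in the first step: correctly matching the parity conditions arising from $\varepsilon_{i,j,s}$ with the ones appearing in the definitions of $H$ and $N_0$. The switch from odd parity (in $N_0$) to even parity (in $H$) hinges on the sign $\epsilon(-1)^{\lambda_i}$ from Lemma~\ref{nilpotents}(3) flipping precisely when $i = i'$; once this is carefully tracked, the remaining verifications are routine.
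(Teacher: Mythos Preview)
Your proposal is correct and follows essentially the same approach as the paper: the orbit analysis under $(i,j,s)\mapsto(j',i',s)$ is exactly the mechanism behind the paper's check that every nonzero $\zeta_i^{j,s}$ has precisely one of $\zeta_i^{j,s}$, $\zeta_{j'}^{i',s}$ appearing in $H\sqcup N_0\sqcup N_1$, and your parity computation at the fixed points is the same as the paper's nonvanishing check. For the two characterisations the paper simply says they are ``clear upon inspection of the definitions'', whereas you spell out the block-decomposition argument; your version is more explicit but not a different idea, and it is made rigorous by noting that $\HH$, $\NN_0$, $\NN_1$ land in three disjoint summands of $\End(V)=\bigoplus_{i,j}\Hom(V[i],V[j])$ (diagonal, swap, and the remainder), so no cancellation can occur.
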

\begin{proof}
First we show that all elements of $H\sqcup N_0 \sqcup N_1$ are non-zero. Clearly $\zeta_i^{j,s} = 0$ if and only if $\xi_i^{j,\lambda_j - 1-s} = -\varepsilon_{i,j,s} \xi_{j'}^{i',\lambda_i-1-s}$. For this we require that $i = j'$ and $\varepsilon_{i,j,s} = -1$. For $i=j'$ we must have $i = i' = j$ or $i \neq i' = j$. In the first case, $\varepsilon_{i,j,s} = (-1)^{\lambda_j-s}$ which equals $-1$ only if $\lambda_i-s$ is odd. But the maps $\zeta_i^{i,s}$ do not occur in $H$ when $i=i'$ and $\lambda_i-s$ is odd. In the second case $\varepsilon_{i,j,s} = (-1)^{\lambda_i-1-s}$ which equals $-1$ only if $\lambda_i-s$ is even. However, the maps $\zeta_i^{i',s}$ do not occur in $N_0$ when $i\neq i'$ and $\lambda_i-s$ is even.

Next observe that when $\zeta_i^{j,s}\neq 0$ exactly one of the two maps $\zeta_i^{j,s}$ and $\zeta_{j'}^{i',s}$ occur in $H\sqcup N_0\sqcup N_1$, thus showing this set to be a basis by part 3 of Lemma~\ref{spanningdetails}. The three characterisations are clear upon inspection of the definitions of the sets $H, N_0$ and $N_1$.
\end{proof}

\p \label{componentgroupintro} Finally we shall need to describe some basic facts about the centraliser group. Our remarks here will require $\chr(\K)$ to be 0 (the conclusions in positive characteristic are very slightly weaker). We continue to fix a nilpotent element $e$ in the Lie algebra $\h$ of a classical group $K$. By the Jacobson-Morozov theorem we may include $e$ into an $\sl_2$-triple $\phi = (e,h,f)$. By $\sl_2$ theory $\ad(h)$ induces a $\Z$-grading $\h = \oplus_{i \in \Z} \h(i)$ and it is well known that $\h_e \subseteq \oplus_{i \geq 0} \h(i)$. Set $\h_e(i) = \h(i) \cap \h_e$ and $\tr = \oplus_{i > 0} \h_e(i)$. Then $\h_e(0)$ is a Levi factor of $\h_e$ and $\tr$ the nilradical. 

If we let $R$ be the closed, connected subgroup of $K$ with $\tr = \Lie(R)$ then we have $K_e = K_{e, h} \ltimes R$ (see \cite[3.12]{Jan}. Since $R$ is connected it follows that $K_e^\circ = K_{e,h}^\circ \ltimes R$ and $$\Gamma(e) := K_e / K_e^\circ \cong K_{e, h} / K_{e,h}^\circ.$$ This group is finite and is called \emph{the component group of $e$}. By the theory of Dynkin and Kostant the centraliser of the pair $(e,h)$ is actually the centraliser of the $\sl_2$-triple $\phi$ (see \cite[Lemma~3.4.4]{CM} for instance). In this case we get $\Gamma(e) \cong K_{\phi}/K_{\phi}^\circ$. Very similar conclusions hold for an arbitrary reductive group.

\section{Symmetrisation in positive characteristic}\label{Symmetrisation}
\setcounter{parno}{0}

\p Throughout this section we take $\K$ to be an algebraically closed field of characteristic $p > 0$, with $\tG$ a group over $\K$. Since $\tg$ is a Lie algebra of an algebraic group it is restricted, with $p$-operation $x \mapsto x^{[p]}$ corresponding to $p^\text{th}$ powers of derivations of the ring of regular functions $\K[\tG]$.

\p The symmetric and enveloping algebras on $\tg$ shall here and always be denoted $S(\tg)$ and $U(\tg)$. The adjoint representations of $\tG$ and $\tg$ extend to representations in both $S(\tg)$ and $U(\tg)$, and we shall denote these extensions by the same labels. The representation $\Ad$ is extended by $\K$-algebra automorphisms, whilst $\ad$ is extended by $\K$-algebra derivations. As such, the invariant subspaces $U(\tg)^{\tG} \subseteq U(\tg)^\tg$, $S(\tg)^\tG \subseteq S(\tg)^\tg$ are all algebras. Since $\ad(x) u = xu - ux$ for $x \in \tg$ and $u\in U(\tg)$ it follows that $U(\tg)^\tg$ is equal to the centre of $U(\tg)$ which we denote by $Z(\tg)$.

\p Since $\tg$ shall be finite dimensional in all of our cases of interest $S(\tg)$ may be identified with the algebra of polynomial functions on the dual space $\tg^\ast$, which we denote $\K[\tg^\ast]$. There is a grading $S(\tg) = \bigoplus_{k\geq 0} S^k(\tg)$ where $S^k(\tg)$ consists of all homogeneous polynomials of degree $k$ and $S^0(\tg) = \K$. Since $\tg$ and $\tG$ preserve this grading the invariant subalgebras are homogeneously generated and contain the homogeneous parts of their elements. Also note that $S(\tg)$ inherits a filtered structure from its grading. The Poincar\'{e}-Birkhoff-Witt (PBW) theorem  implies that there exists a canonical filtration $\{U_k(\tg)\}_{k\geq 0}$ where $U_0(\tg) =\K$ and $U_k(\tg)$ is generated by expressions $x_{i_1} x_{i_2}\cdots x_{i_k}$ with each $x_{i_j}$ in $\tg$ when $k > 0$. The graded algebra associated to $U(\tg)$ shall be denoted $\gr U(\tg)$ and is isomorphic to $S(\tg)$ (we shall identify them). In general, if $A$ and $B$ are filtered $\tG$-modules and 
$\beta : A \ra B$ is a filtered $\tG$-map then there is an associated graded $\tG$-map $\gr A \ra \gr B$ which we denote by $\gr \beta$.

\p We define the $p^\text{th}$-power subalgebra $S(\tg)^p = \{f^p : f \in S(\tg)\}$. This algebra is $\tg$-invariant, and $S(\tg)$ is a free module of rank $p^{\dim\, \tg}$ over $S(\tg)^p$. The analogue in the enveloping algebra is the $p$-centre $Z_p(\tg)$. This is the central subalgebra of $U(\tg)$ generated by all terms $x^p - x^{[p]}$ with $x \in \tg$. If we endow $Z_p(\tg)$ with the filtration induced from that on $U(\tg)$ then the associated graded algebra is $S(\tg)^p$.

\p If $E$ is a simple $\tg$-module then Quillen's lemma \cite{Qui} asserts that $Z(\tg)$ acts by scalars on $E$. In particular $Z_p(\tg)$ acts by scalars. In fact something stronger is true: there is an associated functional $\chi \in \tg^\ast$ such that each $x^p - x^{[p]}$ acts by $\chi(x)^p$ on $E$ (this was first observed by Kac and Weisfeiller \cite{KW}) . We call $\chi$ the \emph{$p$-character of $E$}. Let $I_\chi$ be the (left and right) ideal of $U(\tg)$ generated by expressions $x^p - x^{[p]} - \chi(x)^p$ and define the reduced enveloping algebra $U_\chi(\tg) := U(\tg)/I_\chi$. Each simple $U(\tg)$-module is a $U_\chi(\tg)$-module for some $\chi$, and conversely each $U_\chi(\tg)$-module is canonically a $U(\tg)$-module. This stratifies the set of simple objects of $U(\tg)$-mod into subsets parameterised by $\tg^\ast$.

\p One of our first results regards the structure of the invariant subalgebras when $\tg$ is a centraliser in a Lie algebra of type $\sf A$ or $\sf C$. The properties of $U(\tg)^\tG$ and $U(\tg)^\tg$ are deduced from those of the corresponding symmetric invariant algebras by the use of a filtration preserving isomorphism of $\tG$-modules. Such a map is known to exist over any field of characteristic zero, and is named the symmetrisation map \cite[2.4.6]{Dix}. The construction there follows more or less from the PBW theorem. Over fields of positive characteristic the construction fails and we use an approach involving representation theory. The map is defined as a composition $$U(\tg) \ra S(U(\tg)) \ra S(\tg).$$

\p We introduce an auxiliary group. Recall that $V$ is a finite dimensional $\K$-vector space and suppose $\tG \subseteq \tR \subseteq G = GL(V)$. The inclusion $\tg \subseteq \tr$ induces inclusions $S(\tg) \subseteq S(\tr)$ and $U(\tg) \subseteq U(\tr)$. Let $x_1,...,x_n$ be an ordered basis for $\tr$ and $x_{j_1}x_{j_2} \cdots x_{j_m}$ ($j_1\leq j_2\leq \cdots \leq j_m$) a basis element for $U(\tr)$. If $I \subseteq \{1,...,m\}$ then $x_I := \prod_{k \in I} x_{j_k}$ is the monomial in $U(\tr)$, with terms ordered as above. Our map $\mu: U(\tr) \ra S(U(\tr))$ acts on this basis element by the rule
\begin{eqnarray*}
\mu(x_{j_1}x_{j_2} \cdots x_{j_m}) = \sum_{I_1\sqcup \cdots\sqcup I_k = \{1,...,m\}} x_{I_1}\circ x_{I_2}\circ \cdots \circ x_{I_k}
\end{eqnarray*}
Here $\circ$ denotes symmetric multiplication in $S(U(\tr))$, and we do not distinguish between two partitions $I_1\sqcup \cdots\sqcup I_k$ and $I'_1\sqcup \cdots\sqcup I'_k$ of $\{1,...,m\}$ if there exists $\tau$ in $\mathfrak{S}_k$, the symmetric group on $k$ letters, such that $I_j = I'_{\sigma(j)}$ for $1\leq j\leq k$. It is already clear that $\mu$ depends upon our choice of basis and our choice of ordering. The map is extended by linearity to all of $U(\tr)$ and enjoys several properties, most notably: $\mu$ is an injective map of $\tR$-modules fulfilling
\begin{eqnarray*}
\mu(x_{j_1}x_{j_2} \cdots x_{j_m}) = \mu(x_{j_1})\mu(x_{j_m})...\mu(x_{j_m})\mod S(U(\tr))_{(m-1)}.
\end{eqnarray*}
This map was constructed by Mil'ner in \cite{Mil}. In that paper he attempted to settle the first Kac-Weisfeiler conjecture in the affirmative, although the argument was eventually found to contain a gap which could not be closed. Nonetheless, much good theory has come out of that attempt and the results of \cite{FP} were accrued in the process of trying to make sense of that article. The main point of reference for our purposes is \cite[$\S 3$]{Pre} where the map is considered especially in the case of centralisers.

\p The next ingredient is a map $S(U(\tr)) \ra S(\tr)$. This shall be induced from a map $U(\tr) \ra \tr$ as follows. We say that $\tr$ \emph{possesses Richardson's property} if there exists an $\Ad(\tR)$-invariant decomposition $\g = \tr \oplus \mathfrak{c}.$ The inclusion $\tr \subseteq \g = \gl(V)$ gives an embedding $U(\tr) \hookrightarrow U(\g)$. Since $V$ is a $U(\g)$-module there is a surjection $U(\g)\ra \g = \End(V)$. The composition $U(\tr) \ra U(\g) \ra \g$ acts as the identity on $\tr \subseteq U(\tr)$, and so the image of this map contains $\tr$. Composing with the projection onto the first factor in the decomposition $\g = \tr \oplus \mathfrak{c}$ we obtain an $\Ad(\tR)$-equivariant map $$\pi : U(\tr) \ra \tr.$$ The symmetric algebra construction is a covariant functor from $\K$-vector spaces to commutative $\K$-algebras, and so we obtain a map $$S(\pi) : S(U(\tr)) \ra S(\tr).$$

\p Continuing, we assume that $\tr$ possesses Richardson's property and define $$\beta = S(\pi) \circ \mu : U(\tr) \ra S(\tr).$$ We say that the subalgebra $\tg$ is \emph{saturated} provided the image of $\pi|_{U_+(\tg)}$ is contained in $\tg$, where $U_+(\tg)$ denotes the augmentation ideal (note that the reverse inclusion holds automatically). Now the following theorem is contained in Property (B1), Proposition~3.4 and Lemma~3.5 of \cite{Pre}.
\begin{thm}\label{milmap}
If $\tG \subseteq \tR \subseteq GL(W)$, $\tr$ possesses Richardson's property and $\tg$ is a saturated subalgebra, then $\beta : U(\tg) \ra S(\tg)$ is an isomorphism of $\tG$-modules such that the associated graded map $\gr(\beta) :  \emph\gr U(\tg) \cong S(\tg) \ra \emph\gr S(\tg) = S(\tg)$ is the identity. Furthermore, if $\tg$ is a centraliser in $\tr$ then it is a saturated subalgebra. 
\end{thm}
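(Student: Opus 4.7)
The plan is to verify that the composition $\beta=S(\pi)\circ\mu$ enjoys the claimed properties by tracking its effect on a PBW monomial and then exploiting equivariance. Three things require checking: (i) that $\beta$ restricts to a well-defined map $U(\tg)\to S(\tg)$; (ii) that this restriction is a $\tG$-equivariant isomorphism whose associated graded map is the identity; (iii) that any centraliser in $\tr$ is automatically a saturated subalgebra.

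For (i), I would extend an ordered basis of $\tg$ to an ordered basis of $\tr$ and fix a PBW monomial $x_{j_1}\cdots x_{j_m}\in U(\tg)$. Unwinding the definitions one obtains
\begin{equation*}
\beta(x_{j_1}\cdots x_{j_m}) \;=\; \sum_{I_1\sqcup\cdots\sqcup I_k=\{1,\dots,m\}} \pi(x_{I_1})\,\pi(x_{I_2})\cdots\pi(x_{I_k}),
\end{equation*}
where each factor $\pi(x_{I_\ell})\in\tr$ and the product is taken in $S(\tr)$. Every $x_{I_\ell}$ belongs to $U_+(\tg)$, so the saturation hypothesis forces $\pi(x_{I_\ell})\in\tg$; hence $\beta(U(\tg))\subseteq S(\tg)$. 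Equivariance under $\tG\subseteq \tR$ is inherited from the equivariance of $\mu$ and $\pi$ via the functoriality of $S$.

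For (ii), the displayed formula shows that the unique partition into singletons contributes the top-degree term $x_{j_1}\cdots x_{j_m}\in S^m(\tg)$, while every other partition has $k<m$ and contributes to $S(\tg)_{(m-1)}$. Passing to associated graded objects and using the PBW identification $\gr U(\tg)\cong S(\tg)$, the symbol of $x_{j_1}\cdots x_{j_m}$ is sent to itself, so $\gr(\beta)=\mathrm{id}$. As $\gr(\beta)$ is an isomorphism and both filtrations are non-negatively graded and locally finite, $\beta$ itself is an isomorphism of filtered $\tG$-modules.

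For (iii), I would differentiate the $\Ad(\tR)$-equivariance of $\pi$ to obtain $\ad(\tr)$-equivariance, namely $[y,\pi(u)]=\pi([y,u])$ for $y\in\tr$ and $u\in U(\tr)$. If $\tg=\tr_x$ and $u\in U_+(\tr_x)$, then $u$ commutes with $x$ in $U(\tr)$, so $[x,\pi(u)]=\pi([x,u])=0$ and hence $\pi(u)\in\tr_x=\tg$, which is exactly the saturation condition. The main subtlety sits in step (ii): one has to check carefully that $\mu$ really does spread a length-$m$ monomial across $S(U(\tr))$ in a manner whose ``diagonal'' part reassembles the original monomial after applying $S(\pi)$, so that the leading graded piece genuinely matches the PBW filtration on $U(\tg)$; once this bookkeeping is in hand, everything else is formal.
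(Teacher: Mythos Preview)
The paper does not give its own proof of this theorem; it simply records that the result is contained in Property~(B1), Proposition~3.4 and Lemma~3.5 of \cite{Pre}, with the remark that strengthening Richardson's property to $\Ad(\tR)$-invariance upgrades $\tg$-equivariance to $\tG$-equivariance at no extra cost. Your proposal is a correct direct reconstruction of that cited argument: the computation in (i)--(ii) is exactly what underlies Property~(B1) (the singleton partition yields the leading term, lower $k$ lands in lower filtration degree, and $\pi$ restricts to the identity on $\tg\subseteq\tr$), while your step (iii) is the content of Lemma~3.5 in \cite{Pre}. The one implicit choice you make---extending an ordered basis of $\tg$ to one of $\tr$ so that PBW monomials in $U(\tg)$ are also PBW monomials in $U(\tr)$---is precisely the device used in Premet's treatment, so your argument and the referenced one coincide.
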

\begin{rem}
\rm{By insisting in Richardson's property that the decomposition is $\Ad(\tR)$-invariant we obtain a $\tG$-module isomorphism in the above theorem. In Premet's proof the decomposition is only assumed to be $\ad(\tr)$-invariant, implying that $\beta$ is a map of $\tg$-modules. Inspecting the details of \cite[$\S 3$]{Pre} we can see that this slight alteration will place no extra burden upon the proof.}
\end{rem}

\section{The index of a Lie algebra}\label{Index}
\setcounter{parno}{0}

\p \label{indexintro} Now let $\chr(\K)$ be arbitrary. The notion of index of a Lie algebra was first defined by Dixmier \cite[1.11.6]{Dix} and plays a central role in the representation and invariant theory of Lie algebras in all characteristics. It is defined
\begin{eqnarray}
\ind\, \tg = \min_{\alpha \in \tg^\ast} \dim\, \tg_\alpha
\end{eqnarray}
This definition can be extended to the context of an arbitrary $\tg$-module $W$ by writing $\ind(\tg, W) = \min_{\alpha \in W^\ast} \dim \tg_\alpha$ so that $\ind\, \tg = \ind(\tg, \tg)$. There is an open dense subset of points in $W^\ast$ having $\tg$-stabiliser of dimension $\ind(\tg, W)$ (a straighforward assertion generalising \cite[1.11.5]{Dix}). That open set is denoted $W_\reg$ and the elements are called \emph{regular}. More generally, if $U \subseteq W$ then $U_\reg\subseteq U$ denotes the open subset of elements having a stabiliser of minimal dimension.

\p One example of the importance of the index is given by Rosenlicht's theorem which states that for a linear algebraic group $\tG$ acting on an irreducible variety $X$, the generic orbits are separated by rational invariants; in particular the algebra $\K(\tg^\ast)^\tG$ has transcendence degree $\ind\, \tg$ over the base field \cite{Ros1}.

\p The well known Elashvili conjecture on index, now a theorem, is closely related to our first results and so we recall its statement and history. The conjecture says that for every reductive Lie algebra $\tg$ and each $\alpha \in \tg^\ast$ we have equality $\ind \, \tg_\alpha = \ind\, \tg$. This attracted the attention of several Lie theorists, and the first major positive result was a proof in the classical cases by Yakimova \cite{Yak1}. We shall make use of her methods later. Willem de Graaf confirmed the conjecture in exceptional types with the aid of a computer. Since there are so many nilpotent orbits in Lie algebras of exceptional type, the computations of the latter paper are impossible to present in a concise way. Fortunately Charbonnel and Moreau have presented an almost case free argument in characteristic zero \cite{CMM}. We shall record this theorem now for later use.
\begin{thm}\label{elashvili}
If $\tG$ is a reductive group and $\alpha \in \tg^\ast$ then $\ind\, \tg_\alpha = \ind\, \tg$.
\end{thm}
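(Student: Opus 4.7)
The plan is to follow the strategy of Charbonnel--Moreau in characteristic zero, broken into three stages: first reduce to nilpotent $\alpha$, then reduce via orbit induction to the case of rigid nilpotent orbits, and finally verify the claim on rigid orbits by the case-by-case methods hinted at in Section~\ref{basisforthecent} for classical types and by direct inspection for exceptional types.

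The first reduction is cheap. After identifying $\tg$ with $\tg^\ast$ via the Killing form, write $\alpha = \alpha_s + \alpha_n$ via Jordan decomposition. Then $\tg_\alpha = (\tg_{\alpha_s})_{\alpha_n}$ as observed in \ref{jordanreduction}, and $\tg_{\alpha_s}$ is a Levi subalgebra, hence reductive with the same rank as $\tg$. Thus it suffices to prove $\ind\, \tg_e = \rank\, \tg$ for every nilpotent $e \in \tg$ in a reductive algebra. The second reduction uses the theory of induced nilpotent orbits recalled in Section~\ref{inducedorbits}. If $\Oo_e = \Ind_\li^\tg(\Oo')$ for a proper Levi $\li$ and $e' \in \Oo'$, then a transverse slice argument (Borho--Kraft, or the explicit description of $\tg_e$ inside $\li_{e'} + \n$) shows that the centralisers $\tg_e$ and $\li_{e'}$ share the same index. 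Induction on $\dim\, \tg$ reduces the problem to rigid $e$, that is, nilpotent orbits which cannot be obtained by induction from a proper Levi.

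For the remaining rigid case, the plan is to produce an explicit $\xi \in \tg_e^\ast$ whose $\tg_e$-stabiliser has dimension $\rank\,\tg$. In types $\sf A, \sf B, \sf C, \sf D$ one uses the explicit basis $\{\zeta_i^{j,s}\}$ of $\tg_e$ (Lemma~\ref{subbasis}) together with the dual spanning set $\{(\zeta_i^{j,s})^\ast\}$ to write down a candidate functional supported on a carefully chosen subset of basis vectors, following Yakimova's construction. One then evaluates the Kirillov form $K_\xi(x,y) := \xi([x,y])$ on the basis and checks that its rank is $\dim\,\tg_e - \rank\,\tg$, which by the definition of index gives the desired bound $\ind\,\tg_e \leq \rank\,\tg$; the reverse inequality $\ind\,\tg_e \geq \rank\,\tg$ follows from Vinberg's inequality. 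The Kempken--Spaltenstein classification of rigid orbits (to be recalled later as Theorem~\ref{rigids}) makes the number of rigid cases quite small and renders each case manageable.

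The main obstacle is the final step. The Kirillov form is a large sparse antisymmetric matrix whose entries depend on the involution $i \mapsto i'$ and on the signs $\varepsilon_{i,j,s}$ determined in Lemma~\ref{spanningdetails}, and verifying that its corank hits the rank on the nose requires a delicate choice of $\xi$ for each family of rigid partitions. In exceptional types the orbit-induction reduction leaves a short but genuine list of rigid orbits which, absent a uniform argument, must be handled individually; this is where de~Graaf's computer computation filled the original gap, and where the Charbonnel--Moreau framework replaces case-checking by the use of regular sequences in $S(\tg)^\tG$ restricted to the Slodowy slice, an approach specific to characteristic zero.
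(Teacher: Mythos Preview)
The paper does not actually prove this statement; it records it as a known result with references to Yakimova (classical types), de~Graaf (exceptional types, computer-assisted), and Charbonnel--Moreau (almost case-free, characteristic zero). So any proof sketch you give is going beyond what the paper does, and should be measured against those references rather than against the paper itself.

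Your proposal has a genuine gap in the second reduction. You assert that if $\Oo_e = \Ind_\li^\tg(\Oo_{e_0})$ then a transverse slice argument shows $\ind\,\tg_e = \ind\,\li_{e_0}$. This is not a standard fact, and you have not indicated how to prove it. The dimension formula does give $\dim\,\tg_e = \dim\,\li_{e_0}$, but the two centralisers are not isomorphic in general, and there is no inclusion $\tg_e \subseteq \li_{e_0} + \n$ that would let you read off the index directly. Losev's work (referenced later in the thesis) relates $U(\tg,e)$ to a \emph{completion} of $U(\li,e_0)$, which already hints that the passage between the two centralisers is far subtler than a slice argument. If this reduction were as cheap as you suggest, the Elashvili conjecture would have been settled long before Charbonnel--Moreau; their paper does \emph{not} reduce to rigid orbits in this way, but instead works with Bolsinov's criterion on the codimension of the singular locus and properties of the restriction of $S(\tg)^\tG$ to the Slodowy slice.

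A secondary remark: in classical types Yakimova's explicit functional (the $\alpha$ used in Section~\ref{GenEl} of this thesis) works for \emph{every} nilpotent $e$, not just rigid ones, so the reduction to rigid orbits is unnecessary there. The hard case is exceptional types, where your sketch ultimately defers to de~Graaf or Charbonnel--Moreau anyway---so the induction-to-rigid step is both unjustified and, where it would matter most, not actually carrying any weight.
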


\p Closely related to the notion of index of $\tg$ is the notion of a generic stabiliser. If $W$ is a $\tG$-module then it is also a $\tg$-module via the differential of the group representation. We say that \emph{there exists a generic stabiliser for $\tg$ in $W$} provided  there exists an open subset $\Pp$ of $W$ such that for all $v,w \in \Pp$ the stabilisers $\tg_v$ and $\tg_w$ are $\Ad(\tG)$-conjugate. In this case, the stabilisers of points in $\Pp$ are called \emph{generic stabilisers}. Some of our early results involve proving the existence of a generic stabiliser in certain modules, and our purpose in doing so is to calculate an index $\ind(\tg, W)$. This is made possible by the following lemma. We cannot find this result stated as such in the literature however the proof is implicit in \cite[Lemme~1.4]{TY1}.
\begin{lem}\label{genreg}
Fix $\gamma \in \tg$ and let $\V_\gamma = \{\delta \in W : \tg_\gamma \subseteq \tg_{\delta}\}$. If \begin{eqnarray*}\varphi: \tG \times \V_\gamma \ra W;\\ \varphi(g,w) = g\cdot w\end{eqnarray*} is a dominant morphism then $\gamma$ is a regular point of $W$, and $\tg_\gamma$ is a generic stabiliser.
\end{lem}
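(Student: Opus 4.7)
The plan is to exploit the dominance of $\varphi$ via a single dimension comparison at the level of stabilisers. First I would invoke Chevalley's theorem on images of morphisms: the image $\varphi(\tG \times \V_\gamma)$ is constructible in $W$, and since $\varphi$ is dominant this image is dense, hence contains an open dense subset $\Oo \subseteq W$. Next I would translate membership in the image into a statement about stabilisers. Any $w = g\cdot\delta$ with $\delta \in \V_\gamma$ satisfies $\tg_\gamma \subseteq \tg_\delta$ by definition of $\V_\gamma$, and conjugating by $g$ gives $\Ad(g)\tg_\gamma \subseteq \tg_w$; in particular $\dim\, \tg_w \geq \dim\, \tg_\gamma$ for every $w \in \Oo$.

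Now let $m := \ind(\tg, W)$ and recall from \ref{indexintro} that $W_\reg = \{w \in W : \dim\, \tg_w = m\}$ is open and dense (this is a mild extension of the standard upper semicontinuity argument). Since both $\Oo$ and $W_\reg$ are open and dense in the irreducible variety $W$, the intersection $\Oo \cap W_\reg$ is non-empty, so we may fix $w \in \Oo \cap W_\reg$. For this $w$ we simultaneously have $m = \dim\, \tg_w \geq \dim\, \tg_\gamma \geq m$, forcing equality throughout; thus $\dim\, \tg_\gamma = m$ and $\gamma$ lies in $W_\reg$, establishing the first assertion.

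For the second claim, the same inequality $\Ad(g)\tg_\gamma \subseteq \tg_w$ at each $w = g\cdot \delta \in \Oo \cap W_\reg$ is now between subspaces of equal dimension, and is therefore an equality. Consequently the stabiliser at every point of the open dense set $\Oo \cap W_\reg$ is $\Ad(\tG)$-conjugate to $\tg_\gamma$, which is precisely the defining property of a generic stabiliser. The only genuine subtlety is the appeal to Chevalley's theorem to turn the dense image into an open dense subset, after which the entire argument is a transparent dimension count that works uniformly in any characteristic of $\K$; no case analysis and no irreducibility hypothesis beyond that of the ambient vector space $W$ is required.
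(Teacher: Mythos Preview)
Your proof is correct and follows essentially the same approach as the paper's. Both arguments extract an open subset of $W$ from the image of $\varphi$ (you invoke Chevalley's theorem explicitly, the paper leaves this implicit), intersect it with the regular locus, and then use the inclusion $\Ad(g)\tg_\gamma \subseteq \tg_w$ together with the dimension bound to force equality; the only cosmetic difference is that the paper phrases the key inequality as $\dim\,\tg_\gamma \leq \dim\,\tg_{\delta'} = \dim\,\tg_\delta$ (working with the preimage $\delta'$) rather than conjugating $\tg_\gamma$ directly as you do.
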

\begin{proof}
Suppose $\mathcal{P}$ is an open subset of $W$ contained in $\varphi(\tG \times \V_\gamma)$. It is well known that there exists an open subset $\mathcal{O}$ of $W$ such that the stabilisers of all points in $\mathcal{O}$ have dimension $\ind(\tg, W)$ (argue in the style of \cite[1.11.5]{Dix}). Then there exists $\delta \in \mathcal{O} \cap \mathcal{P}$, ie. there is $(g, \delta') \in \tG \times \V_\gamma$ such that $\varphi(g,\delta')= \delta$ and $\dim\, \tg_\delta = \ind(\tg, W)$. We conclude that $$\dim\, \tg_\gamma \leq \dim\, \tg_{\delta'} = \dim\, \tg_\delta = \ind(\tg, W)$$ so that $\gamma$ is regular. For all $\gamma' \in \mathcal{O} \cap \mathcal{P}$ we know that $\tg_{\gamma'}$ contains some $\tG$-conjugate of $\tg_\gamma$, say $\Ad(g) \tg_\gamma \subseteq \tg_{\gamma'}$. However by dimension considerations we see that $\tg_{\gamma'} = \Ad(g) \tg_\gamma$. Therefore $\mathcal{O} \cap \mathcal{P}$ is an open set within which the $\tg$-stabilisers of all points are $\tG$-conjugate.
\end{proof}

\section{The sheets of a Lie algebra}\label{sheets}
\setcounter{parno}{0}

\p In this section we shall introduce the geometric objects which shall occupy our interest in the middle of this thesis. For the moment we let $\tG$ be simple and connected. Define the variety
\begin{eqnarray*}
\tg^{(k)} = \{ x \in \tg : \dim\, \tg_x = k\}
\end{eqnarray*}
The irreducible components of these varieties are referred to as the \emph{sheets of} $\tg$. Since $\tG$ is connected, the sets $\tg^{(k)}$ and the sheets are $\tG$-stable. Therefore they are unions of $\tG$-orbits, hence locally closed. In order to get a better idea of how $\tg$ decomposes into sets $\tg^{(k)}$ it is instructive to look at the extremal cases. We have $\tg^{(\dim\, \tg)} = \mathfrak{z}(\tg) = 0$, whilst $\tg^{(\rank\, \tg)}$ is the open, irreducible subset $\tg_\reg \subseteq \tg$ which was discussed in \ref{indexintro}, also known as \emph{the regular sheet}. The regular sheet was studied extensively before sheets were investigated in full generality. It is well known that the adjoint and coadjoint orbits are even dimensional, hence $\tg^{(k)} = \emptyset$ whenever $\dim\, \tg - k$ is odd.

\p\label{jordanclassif} The sheets were classified by Borho \cite{Borho} using the notion of \emph{$\tG$-Jordan classes} (also known as \emph{decomposition classes}), which we now recall. Using the Jordan decomposition of \ref{jordan1} we may define an equivalence relation on $\tg$ by declaring that $x \sim y$ whenever there exists $g \in \tG$ such that $\Ad(g) \tg_{x_s} = \tg_{y_s}$ and $\Ad(g) x_n = y_n$. Here $x = x_s + x_n$ is the Jordan decomposition of $x$. The equivalence classes under this relation are called the $\tG$-Jordan classes, and the set of $\tG$-Jordan classes will be denoted $\mathcal{J}(\tG)$. According to \cite[39.1.5]{TY} the class of $x \in \tg$ coincides with
\begin{eqnarray}\label{jordanclassstruct}
\Ad(\tG)(x_n + \mathfrak{z}(\tg_{x_s})_\reg)
\end{eqnarray}
from whence it follows that the Jordan classes are irreducible, $\tG$-stable and locally closed.

\p Let $P(\tG)$ denote the $\tG$-conjugacy classes of pairs $(\li, \Oo)$ where $\li$ is a Levi subalgebra of $\tg$ and $\Oo \subseteq \Ni(\li)$ is a nilpotent orbit. Here $\tG$ acts diagonally on the two factors. The elements of this set shall be called \emph{$\tG$-pairs} for short and the conjugacy class of $(\li, \Oo)$ will be denoted $(\li, \Oo)/\tG$. To each $x \in \tg$ we may associate a $\tG$-pair as follows. The centraliser $\li = \tg_{x_s}$ is a Levi subalgebra of $\tg$ thanks to \ref{levicent}. We let $L$ denote the Levi subgroup of $\tG$ with Lie algebra $\li$. We then set $\Oo = \Ad(L)x_n$ and send $x$ to the $\tG$-pair $(\li, \Oo)/\tG$. The map $x \mapsto (\li, \Oo)/\tG$ descends to a well-defined bijection $$\mathcal{J}(\tG) \longleftrightarrow P(\tG).$$ From the finitude of nilpotent orbits and conjugacy classes of Levi subalgebras we deduce that $\mathcal{J}(\tG)$ is finite. 

\p Clearly $\dim\, \tg_x$ does not change as $x$ varies over $J \in \mathcal{J}(\tG)$. It follows 
that each sheet is a union of $\tG$-Jordan classes. Since $\mathcal{J}(\tg)$ is finite and each class is irreducible, each sheet
contains a dense open class. Hence the sheets of $\tg$ are classified by identifying those
$\tG$-Jordan classes which are dense in a sheet. It turns out that those classes are precisely the
ones corresponding to $\tG$-pairs $(\li, \Oo)/\tG$ where $\li$ is a Levi and $\Oo \subseteq \li$
is a rigid nilpotent orbit. We remind the reader that rigid orbits are those which cannot be 
obtained via Lusztig-Spaltenstein induction. We record this classification here for later use.
\begin{thm}\label{classifsheets}
The sheets of $\tg$ are in one-to-one correspondence with the $\tG$-pairs $(\li, \Oo)/\tG$ where $\Oo \subseteq \li$
 is a rigid nilpotent orbit in $\li$.
\end{thm}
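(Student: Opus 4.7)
The plan is to invoke the bijection $\mathcal{J}(\tG) \leftrightarrow P(\tG)$ from the preceding paragraph together with the observation already noted that every sheet is irreducible and a finite union of Jordan classes, hence contains a unique dense Jordan class. The task therefore reduces to the following local-to-global statement: writing $J(\li,\Oo)$ for the Jordan class corresponding to a pair $(\li,\Oo)/\tG$, one must show that $J(\li,\Oo)$ is dense in its sheet if and only if $\Oo$ is rigid in $\li$.

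First I would verify that the invariant $\dim\, \tg_x$ is constant on Jordan classes and compute it from the pair $(\li,\Oo)$. For $y = e + z \in J(\li,\Oo)$ with $e \in \Oo$ and $z \in \mathfrak{z}(\li)_{\reg}$, the defining property of the regular locus forces $\tg_z = \li$, so $\tg_y = \tg_z \cap \tg_e = \li_e$ and $\dim\, \tg_y = \dim\, \li_e$. The dimension formula $\dim\, \Ind^{\li}_{\li'}(\Oo') = \dim\, \Oo' + 2\dim\, \mathfrak{n}$ from Section~\ref{inducedorbits}, combined with $\dim\, \li = \dim\, \li' + 2\dim\, \mathfrak{n}$, then yields $\dim\, \li_e = \dim\, \li'_{e'}$ whenever $e \in \Oo = \Ind_{\li'}^{\li}(\Oo')$ and $e' \in \Oo'$. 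Hence Jordan classes related by induction of the nilpotent part lie in a common stratum $\tg^{(k)}$.

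The core geometric input, and the main obstacle, is the closure relation: if $\Oo = \Ind_{\li'}^{\li}(\Oo')$ with $\li' \subsetneq \li$, then $J(\li,\Oo) \subseteq \overline{J(\li',\Oo')}$. I would prove this by fixing a parabolic $\pp = \li' \oplus \mathfrak{n}$ of $\li$, writing a generic representative $e \in \Oo$ as $e = e' + n$ with $e' \in \Oo'$ and $n \in \mathfrak{n}$ (which is possible on a dense open subset of $\Oo$ by the very definition of induction), and then degenerating the semisimple parameter: for $z \in \mathfrak{z}(\li)_{\reg}$, the key is to realise $e + z$ as a limit of elements of $\Ad(\tG)(\Oo' + \mathfrak{z}(\li')_{\reg}) = J(\li',\Oo')$. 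The technical burden lies in controlling the Jordan decomposition under such a limit, and identifying the limiting nilpotent part with an element of $\Oo$ rather than a proper subclosure; a standard device is to exploit a cocharacter of $L$ acting trivially on $\li'$ and with strictly positive weights on $\mathfrak{n}$, which produces the required family of degenerations inside a single $\tG$-orbit closure.

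With the closure relation in hand, the theorem follows. If $\Oo$ is not rigid, writing $\Oo = \Ind_{\li'}^{\li}(\Oo')$ for a proper Levi exhibits $J(\li,\Oo)$ as a proper subset of the closure of the strictly larger class $J(\li',\Oo')$, which lies in the same stratum $\tg^{(k)}$ by the dimension computation above, so $J(\li,\Oo)$ is not dense in its sheet. Conversely, if $J(\li,\Oo)$ fails to be the dense class of its sheet, then some strictly larger Jordan class $J(\li'',\Oo'')$ in the same $\tg^{(k)}$ contains $J(\li,\Oo)$ in its closure; a symmetric analysis of that closure identifies $\Oo$, up to $\tG$-conjugacy, as an orbit induced from $(\li'', \Oo'')$, contradicting rigidity. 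Therefore the assignment sending a rigid pair $(\li,\Oo)/\tG$ to the sheet whose dense Jordan class is $J(\li,\Oo)$ gives the desired bijection.
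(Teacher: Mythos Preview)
The paper does not prove this theorem at all: it is stated in the introductory Section~\ref{sheets} as Borho's classification \cite{Borho} and simply recorded for later use, so there is no proof to compare against.

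Your outline is a faithful sketch of Borho's argument for one direction, but the converse has a genuine gap. You correctly argue that if $\Oo = \Ind_{\li'}^{\li}(\Oo')$ for a proper Levi $\li'\subsetneq \li$, then the dimension count keeps $J(\li',\Oo')$ in the same stratum and the degeneration gives $J(\li,\Oo)\subseteq\overline{J(\li',\Oo')}$, so $J(\li,\Oo)$ is not the dense class. The problem is the other implication. You write that ``a symmetric analysis of that closure identifies $\Oo$ \dots\ as an orbit induced from $(\li'',\Oo'')$'', but this is not a symmetry: you have established \emph{induced $\Rightarrow$ in the closure}, and you now need \emph{in the closure $\Rightarrow$ induced}. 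That reverse implication is exactly Borho's formula for the regular closure $\overline{J(\li'',\Oo'')}\cap\tg^{(k)}$, which asserts that every Jordan class occurring there has data $(\li,\Ind_{\li''}^{\li}(\Oo''))$ for some Levi $\li\supseteq\li''$. Proving this requires analysing how the Jordan decomposition of a point in the closure specialises (in particular, that the semisimple centraliser can only enlarge to a Levi containing $\li''$ and that the nilpotent part then lands in the induced orbit), and it is not obtained by reversing your degeneration argument. Without this closure description you cannot conclude from ``$J(\li,\Oo)$ is not dense'' that $\Oo$ is non-rigid in $\li$.
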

\noindent If $\mathcal{S}$ corresponds to $(\li, \Oo)/\tG$ under the above bijection we say that 
$\mathcal{S}$ \emph{has data} $(\li, \Oo)/\tG$. It is a theorem that every sheet contains a unique nilpotent orbit, and if $\SS$ has data $(\li, \Oo)/\tG$
then that orbit is $\Ind_\li^\tg(\Oo)$. Later we shall use this to classify the sheets containing a given orbit.




\p\label{dimformula} If $\SS$ has data $(\li, \Oo)/\tG$ then we define \emph{the rank of $\SS$} by $$\rank\, \SS := \dim\, \mathfrak{z}(\li).$$ 
Suppose that $\SS \subseteq \tg^{(k)}$ and let $e_0 \in \Oo$. Then by (\ref{jordanclassstruct}) the map 
$\tG \times (e_0 + \mathfrak{z}(\li)_\reg) \ra \SS$ is dominant. Thus by the theorem on 
dimensions of the fibres of a morphism we have
\begin{lem}
$\dim\, \SS = \dim\, \tg - k + \rank\, \SS$.
\end{lem}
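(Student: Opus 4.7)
The plan is to identify $\SS$ with the closure of the dense $\tG$-Jordan class $J$ corresponding to the pair $(\li, \Oo)/\tG$, and then use the theorem on dimensions of fibres of the morphism $\varphi: \tG \times (e_0 + \mathfrak{z}(\li)_\reg) \to \SS$, $\varphi(g,y) = \Ad(g)y$. Since $\dim \SS = \dim J$, and by (\ref{jordanclassstruct}) the image of $\varphi$ is exactly $J$, this morphism is dominant onto $\SS$; the three varieties in sight are irreducible, so it suffices to compute the dimension of a generic fibre.

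The key step is therefore the fibre calculation. I would pick $x = e_0 + z$ with $z \in \mathfrak{z}(\li)_\reg$ generic. Because $e_0 \in \li$ and $z \in \mathfrak{z}(\li)$ commute, this is in fact the Jordan decomposition of $x$, with $x_s = z$ and $x_n = e_0$. Using the $\Ad$-equivariance of the Jordan decomposition, the condition $\Ad(g^{-1})x \in e_0 + \mathfrak{z}(\li)_\reg$ becomes the pair of conditions $\Ad(g^{-1})e_0 = e_0$ and $\Ad(g^{-1})z \in \mathfrak{z}(\li)_\reg$. The second of these forces $\Ad(g^{-1})\li = \li$, since $\tg_{\Ad(g^{-1})z} = \Ad(g^{-1})\li$ and regularity in $\mathfrak{z}(\li)$ is detected by this centraliser being equal to $\li$. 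Thus the fibre is $\{(g, \Ad(g^{-1})x) : g \in N_\tG(\li) \cap \tG_{e_0}\}$, which has the same dimension as $N_\tG(\li) \cap \tG_{e_0}$.

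Now $\Lie(N_\tG(\li)) = N_\tg(\li) = \li$ because $\li$ is self-normalising in $\tg$, so the identity component of $N_\tG(\li) \cap \tG_{e_0}$ is $L \cap \tG_{e_0} = L_{e_0}$, giving $\dim \varphi^{-1}(x) = \dim \li_{e_0}$. Again by $\Ad$-equivariance of Jordan decomposition, $\tg_x = \tg_z \cap \tg_{e_0} = \li \cap \tg_{e_0} = \li_{e_0}$, and the hypothesis $\SS \subseteq \tg^{(k)}$ forces $\dim \li_{e_0} = k$. Assembling everything,
\[
\dim \SS = \dim \tG + \rank \SS - \dim \varphi^{-1}(x) = \dim \tg + \rank \SS - k,
\]
as required.

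The only delicate point is the fibre identification: one must be careful that the Jordan decomposition transports correctly so that the condition $\Ad(g^{-1})x \in e_0 + \mathfrak{z}(\li)_\reg$ really does cut out $N_\tG(\li) \cap \tG_{e_0}$ (and not something larger, such as $\tG_{e_0}$ alone). Everything else is routine application of the dimension formula for fibres and the fact that Levi subalgebras are self-normalising.
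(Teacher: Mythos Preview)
Your proof is correct and follows exactly the same approach as the paper: both use the dominant morphism $\tG \times (e_0 + \mathfrak{z}(\li)_\reg) \to \SS$ and the theorem on dimensions of fibres. The paper leaves the fibre computation implicit, whereas you carry it out explicitly via the Jordan decomposition and self-normalisation of $\li$; your added detail is sound and fills in precisely what the paper omits.
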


\p The sheets of the classical Lie algebras are smooth thanks to \cite{IH}. By contrast there are sheets in exceptional algebras which are not smooth; an example is given in type ${\sf G}_2$ in \cite{Slo}.

\section{Ideals of the enveloping algebra}
\setcounter{parno}{0}

\p For this section let $\chr(\K) = 0$ and $\tg$ be a finite dimensional simple Lie algebra. The finite dimensional simple modules for $\tg$ may all be obtained as the simple heads of modules induced from a character on a torus, the so called Verma modules. Thus the finite dimensional simple modules are fairly well understood, however the entire class of simple modules is much harder to discuss in any detail. A more tractable problem is to consider the annihilators in the enveloping algebra of simple modules. These are the \emph{primitive ideals of $U(\tg)$}. We denote by $\Prim \, U(\tg)$ the set of all primitive ideals of $U(\tg)$. We shall now describe two important invariants attached to a primitive ideal by the theory of commutative algebra.

\p \label{associatedvariety} Let $I \in \Prim U(\tg)$. The filtration on $U(\tg)$ induces one on $I = \cup_{k\geq 0} I_{(k)}$.
The associated graded $\gr I$ is an ideal in $\gr U(\tg) = S(\tg) = \K[\tg^\ast]$. We may consider the set of common zeroes 
of $\gr I$ in $\tg^\ast$. Identifying $\tg^\ast$ with $\tg$ via the Killing isomorphism we obtain a closed subvariety 
$\mathcal{VA}(I) \subseteq \tg$, which we call the \emph{associated variety of $I$}. If $I \in \Prim \, U(\tg)$ is the annihilator
of $W$ then $Z(\tg)$ acts by a character on $W$, whose kernel is a maximal ideal $\mm \unlhd Z(\tg)$. But then
$\gr I$ contains the ideal of $S(\tg)$ generated by $\gr \mm$, which equals $S(\tg)^\tg_+\unlhd S(\tg)$,
the ideal generated by non-constant invariant polynomials (see \cite[9.2]{Jan} for more details). But since $V(\K[\tg]^\tg_+) = \Ni(\tg)$ we see that
$\VA(I) \subseteq \Ni(\tg)$. The Irreducibility Theorem states that $\mathcal{VA}(I)$ is irreducible,
hence coincides with the closure of a nilpotent orbit. This was first proven in full generality by Joseph \cite{Jos}. More recently Ginzburg has proven
a deep result generalising this theorem: he states that if $A$ is a positively filtered associative algebra such that $\gr A$ 
is a commutative Poisson algebra with finitely many symplectic leaves then the associated variety of any primitive ideal 
of $A$ is the closure of a connected symplectic leaf \cite{Ginz}.

\p \label{multiplicity} We now define the associated cycle $\AC(I)$ of a primitive ideal in $U(\tg)$. Let $M$ be a finitely generated $U(\tg)$-module with $I = \Ann_{U(\tg)}(M)$. Choose generators $m_1,...,m_l$ for $M$  and define a filtration on $M$ by letting $M_{(0)} = \spn_{\K}\{m_1,...,m_l\}$ and $M_{(k)} = U(\tg)_{(k)}M_0$. Then $\gr(M)$ is a finitely generated $S(\tg)$-module. It follows from Chapter IV, $\S 1$, Theorem 1 of \cite{Bou1} that there exist prime ideals $\mathfrak{q}_1, ..., \mathfrak{q}_m$ in $S(\tg)$ and a chain of module inclusions $$0 = M_0 \subseteq M_1 \subseteq M_2 \subseteq \cdots \subseteq M_k = \gr(M)$$ such that $M_i/M_{i-1} \cong S(\tg)/\mathfrak{q}_i$. After reordering the indexes, there exists $k' \leq k$ such that $\mathfrak{q}_1,...,\mathfrak{q}_{k'}$ is the list of primes \emph{without repetition}. For $i = 1,...,k'$ let $n_i$ denote the number of $\mathfrak{q}_j$ ($1\leq j\leq k$) with $\mathfrak{q}_j = \mathfrak{q}_i$. The associated cycle of $M$ is defined to be the 
formal linear combination
$\sum_{i=1}^{k'} n_i[\mathfrak{p}_i]$. Thanks to \cite[Lemma~9.5]{Jan} this definition depends neither upon the filtration of $M$ nor upon our choice of generators for $M$. It can be shown that this definition only depends upon the ideal $I$ and so the notation $\AC(I)$ is justified.

\p Since the variety $\VA(I)$ is irreducible by Joseph's theorem and hence coincides with the Zariski closure of
a nilpotent orbit $\Oo \subset \tg$, we have that $\AC(I)=m_I[J]$ where $m_I\in\N$ and $J=\sqrt{\gr(I)}$, a
prime ideal of $S(\tg^\ast)$. The positive integer $m_I$ will be referred to as the {\it multiplicity} of $\Oo$ in the 
primitive quotient $U(\tg)/I$ and abbreviated as $\mult_{\Oo}(U(\tg)/I)$. It is well known that if $\Oo=\{0\}$ then 
$I$ coincides with the annihilator in $U(\tg)$ of a finite dimensional irreducible $\tg$-module $V$, the radical 
$J=\sqrt{\gr(I)}$ identifies with the ideal $\bigoplus_{i>0} S^i(\tg)$  and $m_I=(\dim \, V)^2$. Much later in the
text we shall, in some sense, classify the \emph{multiplicity free primitive ideals} in enveloping algebras of classical Lie algebras: those with $\mathcal{VA}(I) = \overline{\Oo}$ and
$\mult_\Oo(U(\tg)/I) = 1$. We denote the set of multiplicity free primitive ideals with associated variety $\overline{\Oo}$
 by $\MF_{\Oo}$.

\p \label{goldierank} The following paragraph makes use of Goldie's theory of semisimple rings of fractions, which is surveyed in \cite[Chapter~5]{GW}. For any $I\in\Prim U(\tg)$ the prime Noetherian ring $U(\g)/I$ embeds into a full ring of fractions.
The latter ring is prime Artinian and hence isomorphic to the matrix algebra $\Mat_n(\II_I)$ over a skew-field $\II_I$
called the {\it Goldie field} of $U(\g)/I$. The positive integer is called the Goldie rank of $U(\g)/I$ which
is often abbreviated as $\rank \, U(\g)/I$. Recall that a primitive ideal $I$ is called {\it completely prime} if $U(\g)/I$ is a domain.
It is well known that this happens if and only if $\rank \, U(\g)/I = 1$. Classifying the completely prime primitive ideals of $U(\g)$ is an
long-standing classical problem of Lie Theory. In general, it remains open outside type $\sf A$ although many important
partial results have been obtained. See \cite{McG}, \cite{Moe}, \cite{Moe1} and references therein.

\p Finally let us recall the theory of induced modules and ideals for $U(\tg)$. Let $\mathfrak{p}$ be a parabolic subalgebra
of $\tg$ with Levi decomposition $\li \ltimes \n$. Any simple left $\li$-module $E$ is canonically a $\mathfrak{p}$-module, with $\n$ acting by
zero. Then $\Ind_\pp^{\tg}(E)$ is the parabolically induced left $\tg$-module $U(\tg) \otimes_{U(\mathfrak{p})} E$. These induced modules
are sometimes referred to as generalised Verma modules. If $E$ is a highest weight module then so too is $\Ind_\pp^{\tg}(E)$, with the weight being inherited from $E$.

\p\label{inducedideals} If $I_0$ is a primitive ideal of $U(\li)$ then it is the annihilator of some simple module $E$. If we choose a parabolic subalgebra containing
$\li$ as a Levi factor then we can define the induced ideal $\Ind_\li^\tg(I_0)$ to be the annihilator in $U(\tg)$ of $\Ind_\pp^\tg(E)$.
It is well known that $\Ind^\tg_\li(E)$ coincides with the largest two-sided ideal of $U(\g)$ contained in
the left ideal $U(\g)(I_0 + \mathfrak{n})$ and hence depends only on $\pp$ and $I_0$; see \cite[10.4]{BGR}.
Even more is true: if $\pp_1$ and $\pp_2$ are two parabolics possessing $\li$ as a Levi factor then they are
conjugate by an element $g \in \tG$ stabilising $\li$, say $\pp_2 = \Ad(g)\pp_1$. But then the induced module with twisted action, ${}^g\Ind^\tg_{\pp_1}(E)$, is
itself induced from $\pp_2$, and since the annihilator of the induced module is a two-sided ideal it is fixed by $\Ad(g)$.
This shows that the induced ideal does not depend upon our choice of parabolic containing $\li$ as a Levi factor,
and our notation is justified.
Even though the induced module is usually not simple, the following theorem tells us that ideals induced from completely
prime primitive ideals are very well behaved.
\begin{thm}
If $I_0 \unlhd U(\li)$ is a completely prime primitive ideal then $\Ind_\li^\tg(I_0)$ is also completely prime and primitive.
\end{thm}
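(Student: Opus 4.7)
The plan is to establish the two properties—complete primeness and primitivity—more or less separately. Primitivity is the older half of the statement and is a classical theorem of Dixmier, proved independently and refined in \cite{BGR}: the induced ideal of any primitive ideal of $U(\li)$ is primitive in $U(\tg)$. I would cite this fact rather than reprove it, noting only that if $E$ is a simple $U(\li)$-module with $\Ann_{U(\li)}(E) = I_0$ and $M = \Ind_\pp^\tg(E)$, then $\Ann_{U(\tg)}(M) = \Ind_\li^\tg(I_0)$ by \ref{inducedideals}, and one applies the classical result. The substantive new content, which uses the completely prime hypothesis, is that $U(\tg)/\Ind_\li^\tg(I_0)$ is a domain.

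For this, I would follow the route opened by Conze's embedding theorem. Fix a parabolic $\pp^- = \li \oplus \n^-$ opposite to $\pp$, set $n = \dim\,\n^-$, and use the PBW decomposition $U(\tg) = U(\n^-) \otimes_\K U(\pp)$ to identify $M \cong S(\n^-) \otimes_\K E$ as vector spaces. The $U(\tg)$-action on $M$ then realises each $x \in \tg$ as an operator on $S(\n^-) \otimes E$: elements of $\n^-$ act by left multiplication on the first factor, while elements of $\li$ and $\n$ act by first-order differential operators on $S(\n^-) \cong \K[x_1,\dots,x_n]$ with coefficients in $U(\li)$, extracted from the commutation formulas of $\tg$. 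This produces an algebra map
\[
\Phi : U(\tg) \longrightarrow A_n \otimes_\K U(\li),
\]
where $A_n$ denotes the $n$-th Weyl algebra. Composing with the quotient $U(\li) \twoheadrightarrow U(\li)/I_0$ and checking (by tracking leading terms in a PBW basis adapted to $\tg = \n^- \oplus \li \oplus \n$) that the kernel is exactly $I := \Ind_\li^\tg(I_0)$, one obtains an injection
\[
\overline{\Phi} : U(\tg)/I \;\hookrightarrow\; A_n \otimes_\K U(\li)/I_0.
\]

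From this embedding complete primeness is immediate: $A_n$ is a Noetherian domain, $U(\li)/I_0$ is a domain by hypothesis, and since $A_n$ is in addition a simple $\K$-algebra, the tensor product $A_n \otimes_\K U(\li)/I_0$ is again a domain (any two-sided ideal has the form $A_n \otimes J$ for a two-sided $J \lhd U(\li)/I_0$). Hence $U(\tg)/I$, as a subring of a domain, is itself a domain, which combined with the primitivity established earlier proves the theorem.

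I expect the main technical obstacle to be the construction and injectivity of the Conze map $\overline{\Phi}$. Defining $\Phi$ as an algebra homomorphism amounts to verifying that the operators one writes down for $x \in \tg$ satisfy the correct Lie brackets in $A_n \otimes U(\li)$, which is a careful but mechanical computation using the structure of $\pp$. Showing injectivity on $U(\tg)/I$ is more delicate and is the key PBW-style argument: one filters both sides by order in the $\n^-$-direction and checks that the associated graded of $\overline{\Phi}$ is the obvious inclusion $\gr(U(\tg)/I) \hookrightarrow S(\n^-) \otimes \gr(U(\li)/I_0)$ induced by the vector-space isomorphism $M \cong S(\n^-) \otimes E$, so $\ker \overline{\Phi} = 0$.
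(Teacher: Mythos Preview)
Your proposal is correct and aligns with the paper's treatment: the paper does not prove this theorem but simply attributes the completely prime assertion to Conze's theorem \cite{Con} and the primitivity assertion to \cite[8.5.7]{Dix}. You go one step further by sketching the content of Conze's argument (the embedding of $U(\tg)/I$ into $A_n\otimes_\K U(\li)/I_0$), which is exactly the mechanism behind the cited result.
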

\noindent The statement for completely prime ideals is known as Conze's theorem \cite{Con}. The fact that it is primitive follows from
\cite[8.5.7]{Dix}.

\section{Finite $W$-algebras}\label{finitewalgebras}
\setcounter{parno}{0}

\p\label{fwaconstruction} In this section we have $\chr(\K) = 0$. Let $\tG$ be a connected reductive group over $\K$ and let $e \in \tg$ be nilpotent. A finite $W$-algebra is an associative, filtered algebra constructed from the data $(\tg, e)$. We now recall the construction. First include $e \hookrightarrow (e,h,f) = \phi$ into an $\sl_2$-triple in $\tg$. By $\sl_2$ theory there is $\Z$-grading $\tg = \bigoplus_{i \in \Z} \tg(i)$ induced by $\ad(h)$. Let $\chi$ be the element of $\tg^\ast$ associated to $e$ under the Killing isomorphism. Then there is a non-degenerate symplectic form on $\tg(-1)$ given by $\langle x, y\rangle = \chi[x y]$. Choose a Lagrangian (a maximal totally isotropic space with respect to $\langle\cdot,\cdot\rangle$)  $\ell \subseteq \tg(-1)$ and define a nilpotent subalgebra $\mm = \ell \oplus \sum_{i < -1} \tg(i)$. It is easy to check that $\chi$ is a character for $\mm$. We let $U(\tg)\mm_\chi$ be the left ideal of $U(\tg)$ generated by terms $x - \chi(x)$ with $x \in \mm$. The 
generalised Gelfand-Graev module associated to $\chi$ is $\cG_\chi = U(\g)/U(\tg) \mm_\chi$. The finite $W$-algebra $U(\tg, e)$ is defined to be the quantum Hamiltonian reduction $\cG_\chi^{\mm}$. It is clear that the isomorphism type of $U(\tg,e)$ does not change as $e$ varies in a $\tG$-orbit.

\p The $W$-algebra $U(\tg,e)$ was studied by Kostant in the case $e$ is regular, although in a rather different language. He observed that $U(\tg,e) \cong Z(\tg)$ in that situation \cite{Kos}. Later, Kostant's approach was expanded upon in the thesis of his student, Lynch \cite{Lyn}. The definition did not appear in its current generality until Premet brought them into mathematical literature in \cite{Pre2}. By this time, these objects were being studied by physicists using a construction via BRST cohomology. These definitions were shown to coincide in \cite{DC}. Gan and Ginzburg provided a useful variant on Premet's construction \cite{GG} showing that the $W$-algebra does not depend upon the choice of Lagrangian space. Yet another definition was provided by Losev via Fedesov quantization, and all of these definitions have been used to great advantage. The $W$-algebra is closely related to the enveloping algebra $U(\tg)$ (a rather coarse impression of this relationship is imparted by the isomorphism
$U(\tg, 0) \cong U(\tg)$), whilst in \cite{Pre2} it was shown that the algebra can be seen as a filtered deformation of the coordinate algebra $\C[e + \tg_f]$. For these reasons, $U(\tg, e)$ is often referred to as \emph{the enveloping algebra of the Slodowy slice $e + \tg_f$}.

\p \label{ganginz} Before we continue we would like to briefly sketch Gan and Ginzburg's construction. In the notation of \ref{fwaconstruction}, fix \emph{any} isotropic subspace $\overline{\ell} \subseteq \tg(-1)$ with respect to $\langle\cdot,\cdot\rangle$, and let $\overline{\ell}^\perp$ be the annihilator, again with respect to $\langle\cdot,\cdot\rangle$. Define $\overline{\mm} = \overline{\ell} \oplus \sum_{i < -1} \tg(i)$ and $\overline{\n} = \overline{\ell}^\perp \oplus \sum_{i < -1} \tg(i)$. Now $\chi$ is a character on $[\overline{\mm}\, \overline{\mm}]$ and so we may define $\overline{\mm}_\chi$, similar to the above, and set $H_\ell := (U(\tg)/U(\tg)\overline{\mm}_\chi)^{\ad(\overline{\n})}$. The Lagrangian $\ell$ from \ref{fwaconstruction} may be chosen to include $\overline{\ell}$, so that $\overline{\mm} \subseteq \mm$. But then $U(\tg)/U(\tg)\overline{\mm}_\chi$ surjects naturally onto $U(\tg)/U(\tg)\mm_\chi$, and $\ad(\overline{\n})$-invariants are mapped to $\ad(\mm)$-invariants. Then Gan 
and Ginzburg show that the induced map $H_\ell \ra U(\tg,e)$ is an isomorphism. This shows that the construction of $U(\tg,e)$ is independent of our choice of Lagrangian. More importantly, the stabiliser of the $\sl_2$-triple, $\tG_\phi$, preserves $\overline{\mm}$ when $\overline{\ell}$ is taken to be zero. The action on the quotient $U(\tg)/U(\tg)\overline{\mm}_\chi$ also preserves $\ad(\overline{\n})$-invariants and we obtain a $\tG_\phi$-action on $U(\tg,e)$, which will be of great importance to our later work.

\p \label{modularhistory} Let us now give examples of how the representation theory of $W$-algebras interplays with more classical theories. These examples shall also serve as the motivation for our results on $W$-algebras. For $\chi \in \tg^\ast$ we shall denote by $\tg_\chi$ the stabiliser of $\chi$ in $\tg$ under the coadjoint representation. The second Kac-Weisfeiler conjecture states for each simple Lie algebra $\tg$ of a reductive group $\tG$ over a field of characteristic $p > 0$ and for each $\chi \in \tg^\ast$ that $p^{\frac{1}{2}(\dim\, \tg - \dim \, \tg_\chi)}$ divides the dimension of every simple $U_\chi(\tg)$-module $W$. This was settled in the affirmative by Premet in \cite{Pre3} using a modular analogue of the algebra $\mm$ defined above. Humpreys' conjecture on small modular representations states that this lower bound on dimensions is best possible, ie. that every such reduced enveloping algebra actually possesses a module of that dimension. In \cite{Pre4} Premet showed that if each finite 
$W$-algebra has a non-trivial one dimensional representation then Humphrey's conjecture holds under some mild restrictions on $\tG$, provided $p$ is greater than some unknown bound. Examples show that some of the assumptions on $\tG$ are actually necessary. The existence of non-trivial one dimensional representations is now settled (see \ref{1dreps}) in the affirmative and Humphreys' conjecture is now a theorem, given the aforementioned assumptions. One of the main theorems of this thesis describes the geometry of a certain algebraic variety which parameterises the one dimensional $U(\h,e)$-modules (for $\h$ classical) and this will ultimately allow us to learn more about the modular representations in these types.

\p Finite $W$-algebras are best motivated by the powerful results which link their representation theory with that of the enveloping algebra. The most fundamental relationship of this nature is the Skryabin equivalence, proven in the appendix to \cite{Pre2}. We denote by $U(\tg)\cm\chi$ the category of $U(\tg)$-modules upon which $\mm_\chi := \{x - \chi(x) : x\in \mm\}$ acts locally nilpotently. Let $U(\tg,e)\cm$ denote the category of all $U(\tg,e)$-modules. If $W \in U(\tg,e)\cm$ then one may check that $\cG_\chi \otimes_{U(\tg,e)} W \in U(\tg)\cm\chi$. Conversely, if $V \in U(\tg)\cm\chi$ then $V^{\mm_\chi} = \{v\in V : mv = \chi(m)v \text{ for all } m \in \mathfrak{m}\} \in U(\tg,e)\cm$. Define maps
\begin{eqnarray*}
\FF &:& U(\tg,e)\cm \ra U(\tg)\cm\chi \\
& &  V \ra \cG_\chi \otimes_{U(\tg,e)} V \\
& & \\
\widetilde\FF &:& U(\tg)\cm\chi \ra U(\tg,e)\cm\\
& & W \ra W^\mm
\end{eqnarray*}
The Skryabin equivalence may be stated:
\begin{thm}\label{skryabinequivalence}
The maps $\FF$ and $\widetilde{\FF}$ are quasi-inverse equivalences of categories.
\end{thm}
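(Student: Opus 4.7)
The plan is to construct natural unit and counit transformations $\eta_V \colon V \to \widetilde{\FF}\FF(V)$ and $\varepsilon_W \colon \FF\widetilde{\FF}(W) \to W$, and to verify that both are isomorphisms. The driving technical input is a PBW-type decomposition of $\cG_\chi$ as a filtered $(U(\tg), U(\tg,e))$-bimodule. First I would verify well-definedness. For $\FF$, one equips $U(\tg)$ with the Kazhdan filtration and notes that the elements of $\mathfrak{m}_\chi$ lie in strictly positive degree after the appropriate shift, so that $\mathfrak{m}_\chi$ acts locally nilpotently on $\cG_\chi$ and hence on $\cG_\chi \otimes_{U(\tg,e)} V$. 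For $\widetilde\FF$, the algebra $U(\tg,e) = \cG_\chi^\mathfrak{m}$ acts on $W^{\mathfrak{m}_\chi}$ via $[u]\cdot w := uw$, which is well-defined because $U(\tg)\mathfrak{m}_\chi$ annihilates $w$. The natural candidates for the unit and counit are $\eta_V(v) = [1]\otimes v$ and $\varepsilon_W([u]\otimes w) = uw$, and naturality is immediate from the definitions.

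The main obstacle, and the heart of the theorem, is to establish the following PBW-type decomposition. Using $\sl_2$ theory and the non-degenerate pairing $\langle\cdot,\cdot\rangle$ on $\tg(-1)$, choose a graded vector space complement $\mathfrak{s} \subset \tg$ to $\mathfrak{m}$ which is adapted to the Kazhdan decomposition $\tg = \tg_f \oplus [e, \tg]$. The goal is to produce an isomorphism $\cG_\chi \cong U(\mathfrak{s}) \otimes_\K \cG_\chi^\mathfrak{m}$ of filtered left $U(\mathfrak{s})$-modules which is simultaneously a right $U(\tg,e)$-module isomorphism; equivalently, $\cG_\chi$ is free as a right $U(\tg,e)$-module. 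At the associated graded level this reduces to the freeness of $S(\tg)/S(\tg)\cdot \gr(\mathfrak{m}_\chi) \cong S(\mathfrak{s})$ over $\gr U(\tg,e) \cong S(\tg_f)$, which is a standard Koszul-type statement once one exhibits the correct graded complement. The filtered statement is then recovered from the graded one by the standard inductive lifting argument on Kazhdan degree.

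Given this decomposition, both natural transformations are seen to be isomorphisms. For the unit, one computes that $(\cG_\chi \otimes_{U(\tg,e)} V)^{\mathfrak{m}_\chi} \cong (U(\mathfrak{s}) \otimes_\K V)^{\mathfrak{m}_\chi}$, and since $\mathfrak{m}_\chi$ acts on $U(\mathfrak{s})$ with strictly positive Kazhdan weight on every non-constant monomial, the invariants collapse to $1\otimes V \cong V$, so $\eta_V$ is an isomorphism. For the counit, equip $W$ with the filtration $W_{(k)} = \{w \in W : \mathfrak{m}_\chi^{k+1}w = 0\}$; local nilpotence gives $W = \bigcup_k W_{(k)}$ and $W_{(0)} = W^{\mathfrak{m}_\chi}$. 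A filtration-comparison argument using the PBW decomposition of $\cG_\chi$ then shows that $\gr W$ is freely generated by $W^{\mathfrak{m}_\chi}$ as a $U(\mathfrak{s})$-module, which forces $\varepsilon_W$ to be both surjective and injective, completing the verification of the equivalence.
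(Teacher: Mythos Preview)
The paper does not give its own proof of this statement: it is quoted as a known result, attributed to Skryabin's appendix to \cite{Pre2}. So there is no ``paper's proof'' to compare against in the strict sense.

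That said, your outline is essentially the strategy of Skryabin's original argument. The central technical point is exactly the freeness of $\cG_\chi$ as a right $U(\tg,e)$-module, established by comparing with the associated graded under the Kazhdan filtration, and this is what drives both halves of the equivalence. Two small points worth tightening. First, your complement $\mathfrak{s}$ to $\mathfrak{m}$ is not a Lie subalgebra, so writing ``$U(\mathfrak{s})$'' is an abuse: what you mean is the span of PBW monomials in an ordered basis of $\mathfrak{s}$, which gives a left $\K$-subspace (not a subalgebra) of $U(\tg)$ mapping isomorphically onto $\cG_\chi$ as a vector space. Second, the counit argument needs a bit more care than ``filtration comparison'': one shows that for $W$ in $U(\tg)\cm\chi$ the natural map $\cG_\chi\otimes_{U(\tg,e)} W^{\mathfrak{m}_\chi}\to W$ is an isomorphism by proving $\gr W$ is free over $\gr\,\cG_\chi^{\mathfrak m}\cong S(\tg_e)$ (or equivalently over the symmetric algebra on the complement), which uses that the $\mathfrak{m}_\chi$-filtration on $W$ is exhaustive and separated and that the graded pieces are controlled by $W^{\mathfrak{m}_\chi}$. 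With those clarifications your sketch matches Skryabin's proof.
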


\p \label{quotientsintro} Let $I_c$ be the ideal of $U(\tg,e)$ generated by all commutators $xy-yx$ with $x,y \in U(\tg,e)$. Then the maximal abelian quotient is defined $U(\tg,e)^\text{ab} := U(\tg,e) /I_c$. It is not hard to see that the one dimensional representations of $U(\tg,e)$ are parameterised by the maximal spectrum $\mathcal{E} := \Specm \, U(\tg,e)^\ab$. By our discussion in \ref{ganginz} the group $\tG_\phi$ acts on $U(\tg,e)^\ab$ by algebra automorphisms and so permutes the ideals of $U(\tg,e)^\ab$ of codimension 1. It follows from \cite[Lemma~2.4]{Pre5} that the connected component $\tG_\phi^\circ$ stabilises every such ideal. Recall from \ref{componentgroupintro} that we have a natural isomorphism $$\Gamma:= \Gamma(e) = \tG_e/\tG_e^\circ \cong \tG_\phi/ \tG_\phi^\circ$$ and it follows that $\Gamma$ acts on $\EE$. We shall be interested in the space $\EE^\Gamma$.

\p We shall now explain how this fixed point space is related to multiplicity free ideals of $U(\tg)$. The following theorem is an amalgam of several deep results \cite[Theorem~1.1]{Pre6}, \cite{Lo1}, \cite{Ginz1}, \cite[Theorem~1.2.2]{Lo2}, \cite{Pre7}, \cite[Remark~7.7]{LO}. If $\mathcal{R}$ is a ring and $W $ is a left $\mathcal{R}$-module then set $I_W := \Ann_\mathcal{R} W$.
\begin{thm}\label{bigtheoremF}
Let $e \in \Ni(\tg)$. 
\begin{enumerate}
\item{If $I \in \Prim \, U(\tg)$ then $\VA(I) = \overline{\Oo}_e$ if and only if $I = I_{\FF(W)}$ for some finite dimensional simple $U(\tg,e)$-module $W$;}
\end{enumerate}
Now fix two finite dimensional simple $U(\tg,e)$-modules $W_1$ and $W_2$.
\begin{enumerate}
\item[2.]{If $I_{\FF(W_1)} = I_{\FF(W_2)}$ then $I_{W_1}$ and $I_{W_2}$ are $\Gamma$-conjugate;}
\item[3.]{$\rank \, U(\tg)/ I_{\FF(W_1)}$ divides $\dim(W_1)$;}
\item[4.]{The multiplicity $\mult_{\Oo_e} U(\tg)/I_{\FF(W_1)}$ is equal to $[\Gamma: \Gamma_V] (\dim \, W_1)^2$.}
\end{enumerate}
\end{thm}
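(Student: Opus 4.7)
The plan is to assemble the statement from the listed references, using Skryabin's equivalence (Theorem~\ref{skryabinequivalence}) as the essential link between the categories $U(\tg,e)\cm$ and $U(\tg)\cm\chi$.

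For part (1), I would treat the two directions separately. The ``if'' direction is essentially contained in Premet's construction in \cite{Pre6}: given a finite dimensional simple $U(\tg,e)$-module $W$, one shows that $\FF(W)$ is finitely generated over $U(\tg)$ and that the graded ideal $\gr I_{\FF(W)}\subseteq S(\tg)$ cuts out a closed subvariety of $\overline{\Oo}_e$; Joseph's Irreducibility Theorem (see \ref{associatedvariety}) together with a dimension count involving $\dim\, \cG_\chi$ and the Slodowy slice forces the equality $\VA(I_{\FF(W)}) = \overline{\Oo}_e$. The converse is the deep theorem of Losev \cite{Lo1} and Ginzburg \cite{Ginz1}: every primitive ideal $I$ with $\VA(I) = \overline{\Oo}_e$ is of the form $I_{\FF(W)}$ for some finite dimensional simple $U(\tg,e)$-module, proved via completions of $W$-algebras along symplectic leaves.

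Parts (2)--(4) rely on the action of the component group $\Gamma$ on $U(\tg,e)$ inherited from the Gan--Ginzburg construction (see \ref{ganginz}). For part (2), I would invoke \cite[Theorem~1.2.2]{Lo2}, identifying the fibres of $W \mapsto I_{\FF(W)}$ with $\Gamma$-orbits on isomorphism classes of finite dimensional simple $U(\tg,e)$-modules; from this the $\Gamma$-conjugacy of $I_{W_1}$ and $I_{W_2}$ is immediate. For part (3), the simple $U(\tg,e)/I_{W_1}$-module $W_1$ gives a simple module for the Goldie localisation of the latter algebra, so the Goldie rank of $U(\tg,e)/I_{W_1}$ divides $\dim\, W_1$; Skryabin's equivalence is compatible with Goldie localisation and hence transports the divisibility to $U(\tg)/I_{\FF(W_1)}$.

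The most delicate step is part (4), where one must combine Premet's computation in \cite{Pre7} of the multiplicity in terms of $(\dim\, W_1)^2$ and the number of simple modules $W_2$ with $I_{\FF(W_1)} = I_{\FF(W_2)}$, with the observation \cite[Remark~7.7]{LO} identifying the latter number with $[\Gamma:\Gamma_V]$ via part (2). The main obstacle throughout is ensuring that the $\Gamma$-action is transported consistently across the equivalent constructions of $U(\tg,e)$ (via Skryabin, via the Gan--Ginzburg reformulation, and via Fedosov quantisation), since every assertion in parts (2)--(4) depends on $\Gamma_V$ making unambiguous sense as a subgroup of $\Gamma$.
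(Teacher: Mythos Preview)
Your proposal is correct and matches the paper's treatment: the paper does not give an independent proof of this theorem but presents it explicitly as ``an amalgam of several deep results'' drawn from \cite{Pre6}, \cite{Lo1}, \cite{Ginz1}, \cite{Lo2}, \cite{Pre7}, and \cite{LO}, and your attribution of the individual parts to these references is accurate. Your sketch simply unpacks how the pieces fit together, which the paper leaves implicit in the list of citations.
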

\noindent Part 1 tells us, among other things, that every primitive ideal can be obtained from a simple module for some finite $W$-algebra. Part 4 of the theorem
implies that the multiplicity free ideals of $U(\tg)$ with associated variety $\overline{\Oo}_e$ are obtained from elements of $\EE^\Gamma$. Thus we obtain a surjective map
\begin{eqnarray*}
 \EE^\Gamma &\longrightarrow & \mathcal{MF}_{\Oo_e}\\
W &\longmapsto & I_{\FF(W)}
\end{eqnarray*}
Part 2 of the theorem tells us that this map is also injective, and part 3 of the theorem implies that the elements of $\MF_{\Oo_e}$ are completely prime.


\section{Some useful theorems}\label{UsefulTheorems}
\setcounter{parno}{0}

\p We take a moment to list four theorems which will be of central importance to our methods. The first two theorems both apply to the modular representation theory, whilst the second two discuss finite $W$-algebras.

\p \label{kacweisover} We recall all of the notations and conventions of Section~\ref{Symmetrisation}. In his seminal paper \cite{Zas} Zassenhaus observed that there is an upper bound on the dimensions of simple modules for any Lie algebra $\tg$ defined over a field of positive characteristic. We denote that upper bound by $M(\tg)$. A very coarse estimate is given by $M(\tg) \leq p^{\dim\, \tg}$. For $\tg$ reductive it is well known that $M(\tg) = p^{\frac{1}{2}(\dim\,\g - \rank\,\g)}$ (see \cite{PS} for a proof which works in great generality). Let $F(\tg)$ denote the full field of fractions of $Z(\tg)$ and let $F_p(\tg)$ denote that of $Z_p(\tg)$. Clearly $F(\tg)$ is a finite field extension of $F_p(\tg)$ and by the remarks preceding \cite[Lemma~5]{Zas} the dimension of $F(\tg)$ as a vector space over $F_p(\tg)$ is power of $p$. Let us denote the degree by $$[F(\tg) : F_p(\tg)] = p^l.$$ One of the many interesting consequences of Zassenhaus' aforementioned paper is the following, which is explained simply 
in \cite[A.5]{Jan1}.
\begin{thm}
$M(\tg) = p^{\frac{1}{2}(\dim\, \tg - l)}$.
\end{thm}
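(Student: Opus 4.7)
The plan is to realise $M(\tg)$ as the PI-degree of the enveloping algebra and then compute that PI-degree in two different ways using the tower $Z_p(\tg) \subseteq Z(\tg) \subseteq U(\tg)$.

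First I would recall that $U(\tg)$ is a finitely generated module of rank $p^{\dim\,\tg}$ over the $p$-centre $Z_p(\tg)$. This follows from the PBW theorem together with the fact, noted in Section~\ref{Symmetrisation}, that $Z_p(\tg)$ is generated by the elements $x^p - x^{[p]}$ with $x \in \tg$. Consequently $U(\tg)$ is a prime Noetherian PI-algebra, and Goldie's theorem (as used in \ref{goldierank}) furnishes a full ring of fractions $Q(\tg) := U(\tg) \otimes_{Z_p(\tg)} F_p(\tg)$ which is simultaneously the localisation at the non-zero elements of $Z_p(\tg)$ and the classical ring of quotients. Since $Z(\tg)$ lies in the centre of $U(\tg)$, the centre of $Q(\tg)$ contains $F(\tg)$; a short argument using the density of $Z_p(\tg)$ inside $Z(\tg)$ (both having the same field of fractions up to the extension $F(\tg)/F_p(\tg)$) shows the centre is exactly $F(\tg)$. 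Thus $Q(\tg)$ is a central simple $F(\tg)$-algebra.

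Next I would compute $\dim_{F(\tg)} Q(\tg)$ in two ways. On the one hand, flat base change gives
\begin{eqnarray*}
\dim_{F_p(\tg)} Q(\tg) = \textnormal{rank}_{Z_p(\tg)} U(\tg) = p^{\dim\, \tg}.
\end{eqnarray*}
On the other hand, the tower law yields
\begin{eqnarray*}
\dim_{F_p(\tg)} Q(\tg) = [F(\tg) : F_p(\tg)] \cdot \dim_{F(\tg)} Q(\tg) = p^l \cdot \dim_{F(\tg)} Q(\tg),
\end{eqnarray*}
so $\dim_{F(\tg)} Q(\tg) = p^{\dim\, \tg - l}$. Because $Q(\tg)$ is central simple over $F(\tg)$, this dimension is a perfect square, so $\dim\, \tg - l$ must be even and the PI-degree of $U(\tg)$ equals $p^{(\dim\, \tg - l)/2}$.

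Finally I would identify this PI-degree with $M(\tg)$. By the theorem of Kaplansky, the PI-degree of a prime affine PI-algebra coincides with the maximum $\K$-dimension of its simple modules: indeed for any simple $U(\tg)$-module $E$, Quillen's lemma guarantees that $Z(\tg)$ (and hence $Z_p(\tg)$) acts by scalars, so $E$ is a module over a reduced enveloping algebra $U_\chi(\tg)$, and the image of $U(\tg)$ in $\textnormal{End}_\K(E)$ is a simple quotient of $Q(\tg)$; conversely any splitting field realises the PI-degree as the dimension of a simple module. Combining these steps gives $M(\tg) = p^{(\dim\, \tg - l)/2}$, as claimed. The only step which requires care is the identification of the centre of $Q(\tg)$ with $F(\tg)$; the rest is standard manipulation of PBW bases and central simple algebras.
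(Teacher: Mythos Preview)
The paper does not supply its own proof of this theorem: it records the statement as a consequence of Zassenhaus' work and refers the reader to \cite[A.5]{Jan1} for an exposition. So there is no in-house argument to compare against; what I can do is assess whether your sketch stands on its own.

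Your approach is the standard one and is essentially what Jantzen does: realise $U(\tg)$ as a prime affine PI-algebra finite over $Z_p(\tg)$, pass to the ring of fractions, and read off $M(\tg)$ as the PI-degree. Two points deserve tightening. First, the identification of the centre of $Q(\tg)$ with $F(\tg)$ is cleaner via Posner's theorem together with the observation that $Z(\tg)$ is integral over $Z_p(\tg)$, so inverting $Z_p(\tg)\setminus\{0\}$ already inverts $Z(\tg)\setminus\{0\}$; your appeal to ``density'' is vague. Second, the sentence ``the image of $U(\tg)$ in $\End_\K(E)$ is a simple quotient of $Q(\tg)$'' is not quite right: that image is a quotient of $U(\tg)$, not of the localisation $Q(\tg)$. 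The correct statement is that any simple quotient of $U(\tg)$ has PI-degree at most that of $U(\tg)$, giving $\dim_\K E \le p^{(\dim\tg - l)/2}$, while equality is realised on the Azumaya locus (equivalently, for generic $\chi$ the reduced algebra $U_\chi(\tg)$ is a full matrix algebra over $\K$). With these two adjustments your argument is complete and matches the cited reference in spirit.
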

\noindent We shall discuss the number $M(\tg)$ further in the next section. In particular we shall describe a conjecture which predicts its value.

\p Our very first results describe certain rings of invariants and our method is to satisfy the assumptions of a theorem of Skryabin \cite[Theorem~5.4(i)]{Skr}. The theorem is proved in a more general setting than we require, and we shall state a version sufficient for our purposes. Suppose that $X$ is vector space over $\K$, acted on linearly by $\tG$, with ring of regular functions $\K[X]$ and rational functions $\K(X)$. We denote by $\K[X]^p$ the subalgebra of $p^\text{th}$ powers and $\K(X)^p$ similar. For $f_1,...,f_m \in \K[X]$ and $\alpha \in X$ we have the differential $d_\alpha f_i : T_\alpha X \ra \K$. We denote by $J(f_1,...,f_m)$ the Jacobian locus of $f_1,...,f_m$; that is the closed subset of $X$ such that the differentials $d_\alpha f_i$ are linearly dependent.
\begin{thm}\label{Skryabin}
Suppose that $f_1,...,f_m \in \K[X]^\tg$ where $$m = \dim\, X - \dim \, \tg + \ind(\tg, X).$$ If $\emph{\codim}_X\,  J(f_1,...,f_m) \geq 2$ then
$$
\K[X]^\tg = \K[X]^p[f_1,...,f_m].
$$
\end{thm}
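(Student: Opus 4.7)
The plan is to first establish the rational identity $\K(X)^\tg = \K(X)^p(f_1,\ldots,f_m)$ as subfields of $\K(X)$, and then upgrade it to the integral statement by combining a local freeness argument with the codimension hypothesis and the normality of $\K[X]^p$.

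For the field comparison, note that $\K(X)^p \subseteq \K(X)^\tg \subseteq \K(X)$ and that the outer extension is purely inseparable of degree $p^{\dim\,X}$, since $\K$ is perfect. By Jacobson's correspondence between intermediate fields and restricted Lie subalgebras of $\Der_{\K(X)^p}(\K(X))$, we have $[\K(X):\K(X)^\tg] = p^r$, where $r$ is the generic $\K(X)$-rank of the derivations arising from $\tg$, which by definition of the index equals the generic orbit dimension $\dim\,\tg - \ind(\tg,X)$. Hence $[\K(X)^\tg : \K(X)^p] = p^m$. The Jacobian hypothesis implies in particular that $df_1,\ldots,df_m$ are $\K(X)$-linearly independent, so the standard theory of $p$-bases yields $[\K(X)^p(f_1,\ldots,f_m):\K(X)^p] = p^m$; the two intermediate subfields must therefore coincide.

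With the field equality in hand, the monomials $\{f^a : 0 \leq a_i < p\}$ are simultaneously a $\K(X)^p$-basis of $\K(X)^\tg$ and a $\K[X]^p$-basis of the free module $M := \K[X]^p[f_1,\ldots,f_m]$. Given $h \in \K[X]^\tg$, write
$$
h = \sum_{0 \leq a_i < p} g_a f^a, \qquad g_a \in \K(X)^p;
$$
it remains to check that $g_a \in \K[X]^p$. At any smooth point $x \in X\setminus J$, the linear independence of $df_1,\ldots,df_m$ allows us to extend $f_1,\ldots,f_m$ to a regular system of local parameters $f_1,\ldots,f_m,u_{m+1},\ldots,u_{\dim X}$, and a local computation (passing to completions if necessary) shows $\K[X]$ is free over $\K[X]^p$ near $x$ with basis $\{f^a u^b\}$. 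Comparing the local expansion of the regular function $h$ in this basis against $h = \sum g_a f^a$ forces each $g_a$ to be regular on $X\setminus J$. Since $\K$ is algebraically closed, $\K[X]^p$ is itself a polynomial ring, in particular normal; the hypothesis $\codim\,J \geq 2$ then upgrades codimension-one regularity to regularity everywhere, giving $g_a \in \K[X]^p$ and $h \in M$.

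The main obstacle is Step 1, specifically its two inputs from differential theory in characteristic $p$: identifying the $\K(X)$-dimension of the restricted sub-Lie ring of $\Der_{\K(X)^p}(\K(X))$ generated by $\tg$ with $\dim\,\tg - \ind(\tg,X)$, and promoting the pointwise Jacobian hypothesis into genuine $p$-independence of $f_1,\ldots,f_m$ over $\K(X)^p$. Once these are secured, the integral step is a clean application of the normality of a polynomial ring and the codimension-two hypothesis.
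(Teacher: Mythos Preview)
The paper does not prove this statement; it is quoted from Skryabin \cite[Theorem~5.4(i)]{Skr} as an external tool and stated without argument. Your proposal is therefore an independent sketch of Skryabin's result rather than something to be compared against a proof in the thesis.

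That said, your outline is sound and is the standard route to such statements. Both obstacles you flag at the end dissolve on inspection. For the $p$-independence of $f_1,\ldots,f_m$ over $\K(X)^p$, generic linear independence of $df_1,\ldots,df_m$ (immediate from $J\neq X$) is precisely the $p$-basis criterion. For the field degree, you do not actually need the $\K(X)$-span $\mathcal{D}_0$ of the $\tg$-derivations to be $p$-closed: the trivial containment $\mathcal{D}_0\subseteq\Der_{\K(X)^\tg}(\K(X))$ already gives $[\K(X):\K(X)^\tg]\geq p^{\dim_{\K(X)}\mathcal{D}_0}$ via Jacobson's correspondence, and since $\dim_{\K(X)}\mathcal{D}_0$ is the generic orbit dimension $\dim\,\tg-\ind(\tg,X)$, one obtains $[\K(X)^\tg:\K(X)^p]\leq p^m$. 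Equality is then forced by the inclusion $\K(X)^p(f_1,\ldots,f_m)\subseteq \K(X)^\tg$.

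One small imprecision in the integral step: rather than invoking normality of $\K[X]^p$, it is cleaner to take $p$-th roots first. Write $g_a=G_a^p$ with $G_a\in\K(X)$; your local $p$-basis computation actually shows $G_a\in\mathcal{O}_{X,x}$ for every $x\notin J$, and then normality of the polynomial ring $\K[X]$ together with $\codim_X J\geq 2$ gives $G_a\in\K[X]$, whence $g_a=G_a^p\in\K[X]^p$.
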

\noindent We shall commonly refer to the bound on the codimension of the Jacobian locus as \emph{the codimension 2 condition}.

\p\label{premettheorem} The remaining two theorems regard $W$-algebras and are specific to fields of zero characteristic. Fix a connected reductive group $\tG$ and a nilpotent element $e \hookrightarrow \phi = (e,h,f)$ included into an $\sl_2$-triple.
Let $\SS_1,...,\SS_l$ be pairwise distinct sheets of $\tg$ containing $e$. This next theorem is an extremely
powerful tool which relates the irreducible components of $\EE(\tg, e)$ to the sheets $\SS_1,...,\SS_l$.
We let $X_i := \SS_i \cap (e + \tg_f)$ and write $\Comp(X_i)$ for the set of irreducible components of $X_i$. The following was proven by Premet in \cite[Theorem~1.2]{Pre4}
\begin{thm}\label{premetsurj}
Suppose that the orbit $\Oo_e$ is induced from a nilpotent orbit in some Levi subalgebra of $\g$. There exists a surjective map $$\sigma : \Comp\, \EE(\tg,e) \twoheadrightarrow \bigsqcup_{i=1}^l \Comp\, X_i.$$
For every irreducible component $\mathcal{Y} \subseteq \EE(\tg,e)$ with $\sigma(\mathcal{Y}) \in \Comp(X_i)$ we have $\dim(\mathcal{Y}) \leq \dim(X_i)$. Furthermore, for each $1 \leq i \leq l$ there exists $\mathcal{Y} \in \sigma^{-1}(\Comp(X_i))$ such that $\dim(\mathcal{Y}) = \dim(X_i)$. 

\end{thm}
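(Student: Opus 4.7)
The plan is to construct $\sigma$ via the Kazhdan filtration on $U(\g,e)$, whose associated graded is canonically identified with $\C[e+\g_f]$. This filtration descends to the abelian quotient $U(\g,e)^{\ab}$, and its associated graded is a commutative graded quotient of $\C[e+\g_f]$, corresponding to a closed conic subvariety $Z \subseteq e+\g_f$. The Rees construction assembles $U(\g,e)^{\ab}$ and its associated graded into a flat family over $\mathbb{A}^1$ with generic fiber $\EE$ and special fiber $Z$. By flatness, each irreducible component $\mathcal{Y} \subseteq \EE$ acquires a flat degeneration in $Z$ of dimension equal to $\dim\,\mathcal{Y}$, and I define $\sigma(\mathcal{Y})$ to be an irreducible component of that degeneration, which will turn out to be a component of one of the $X_i$.

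The decisive step is to prove $Z \subseteq \bigcup_{i=1}^l X_i$. Since $U(\g,e)^{\ab}$ is the quotient by the commutator ideal, the defining ideal of $Z$ in $\C[e+\g_f]$ absorbs all Poisson brackets, so $Z$ lies in the union of those symplectic leaves of the Poisson structure on $e+\g_f$ whose closures contain $e$. Using the Gan-Ginzburg identification of $e+\g_f$ as a transverse Poisson slice at $e$, these symplectic leaves match up with the transverse slices to adjoint orbits through $e$. Katsylo's theorem then identifies them with the $X_i = \SS_i \cap (e+\g_f)$, each irreducible of dimension $\rank\,\SS_i$. This yields $Z \subseteq \bigcup X_i$ and makes $\sigma$ well defined. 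The bound $\dim\,\mathcal{Y} \leq \dim X_i$ follows immediately, since $\sigma(\mathcal{Y})$ is a component of $Z$ contained in the irreducible $X_i$.

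For surjectivity and attainment of the equality $\dim\,\mathcal{Y} = \dim X_i$, I use the hypothesis that $\Oo_e$ is induced. Let $\SS_i$ have data $(\li_i,\Oo_{0,i})/G$ with $e_0 \in \Oo_{0,i}$ rigid in $\li_i$. By induction on dimension applied to the proper Levi subalgebra $\li_i$, the finite $W$-algebra $U(\li_i,e_0)$ admits at least one 1-dimensional representation. Twisting by characters of $\mathfrak{z}(\li_i)_\reg$ produces an irreducible family of 1-dimensional $U(\li_i,e_0)$-modules of dimension $\rank\,\SS_i$, and parabolic induction of $W$-algebras transports it to an irreducible family of 1-dimensional $U(\g,e)$-modules of the same dimension. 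The closure of this family in $\EE$ is an irreducible component $\mathcal{Y}$ with $\dim\,\mathcal{Y} = \dim X_i$ whose flat limit lands inside $X_i$, so $\sigma(\mathcal{Y}) = X_i$, simultaneously yielding surjectivity.

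The principal obstacle is the scheme-theoretic containment $Z \subseteq \bigcup X_i$: the Poisson argument places $Z$ inside the union of symplectic-leaf closures cleanly, but converting leaves to sheet intersections depends on combining the Gan-Ginzburg slice theorem with Katsylo's sheet theorem, and requires controlling the radical of the commutator ideal in $\gr U(\g,e)$ to ensure no spurious components of $Z$ are missed. A secondary difficulty is the construction of 1-dimensional representations on the inducing Levi, which is precisely why the hypothesis that $\Oo_e$ is induced is essential: it reduces the problem to a proper Levi subalgebra where inductive techniques apply.
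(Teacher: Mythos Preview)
This theorem is not proved in the paper: it is quoted from \cite[Theorem~1.2]{Pre4}, and the paper explicitly remarks immediately afterwards that ``the proof uses the methods of modular reduction and there is no known construction over fields of characteristic zero.'' So your proposal is not a variant of the paper's argument but an attempt at an entirely new, characteristic-zero proof.

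The degeneration half of your outline is essentially sound. The Kazhdan filtration on $U(\tg,e)^{\ab}$ does produce a conic closed subscheme $Z\subseteq e+\tg_f$, and since $\gr I_c$ contains the ideal generated by all Poisson brackets, $Z$ lies in the zero locus of the Poisson bivector on the slice. By Gan--Ginzburg the symplectic leaves of $e+\tg_f$ are the connected components of $\Oo_x\cap(e+\tg_f)$, of dimension $\dim\Oo_x-\dim\Oo_e$; the zero-dimensional leaves are therefore precisely $\bigcup_i X_i$, and $Z\subseteq\bigcup_i X_i$ follows. If you then take $\sigma(\mathcal{Y})$ to be any component of some $X_i$ containing a component of the flat limit of $\mathcal{Y}$ (rather than the limit component itself, which need not be a full component of $X_i$), the bound $\dim\mathcal{Y}\le\dim X_i$ is immediate.

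The surjectivity and attainment half has a real gap. You build, via Losev's map $\xi_{\li_i}^\ast$, a family of dimension $\rank\,\SS_i$ inside $\EE(\tg,e)$ starting from a one-dimensional representation of $U(\li_i,e_0)$ with $e_0$ rigid. Two problems arise. First, you appeal to ``induction on dimension'' for the existence of such a representation, but the base case is exactly the rigid case, where existence is the content of Theorem~\ref{EEnonempty} and is itself a deep theorem whose historical proof in part runs through \cite{Pre4}; so there is at least a logical ordering issue to address. Second, and more seriously, even granting that input you assert without argument that the flat limit of $\xi_{\li_i}^\ast(\z(\li_i)^\ast)$ lands in the particular $X_i$ attached to $\SS_i$. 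Nothing in Losev's construction provides that compatibility directly: establishing it would require relating the associated graded of the completion embedding $\Xi_{\li_i}$ to the sheet geometry on the Slodowy slice, which is exactly the step that the modular-reduction argument of \cite{Pre4} circumvents. As it stands, the surjectivity portion is incomplete.
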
 
\noindent The proof uses the methods of modular reduction and there is no known construction over fields of characteristic zero.

\p\label{losevtheorem} Notations as per the previous paragraph. The final theorem which we list here is due to Losev and provides a means by which we can compare the varieties $\EE(\tg,e)$ and $\EE(\li, e_0)$
when $\li$ is a Levi subalgebra of $\tg$ and $e_0$ a nilpotent element with $\Oo_e = \Ind_\li^\tg(\Oo_{e_0})$. Losev studies finite $W$-algebras from the perspective of symplectic geometry and
Fedosov quantisation, which we have not touched upon in this thesis as the subject is too broad and too far-removed from our methods. The following theorem is contained in \cite{Lo3}.
\begin{thm}\label{losevembed}
Let $(\li, e_0)$ be defined as above. There exists a completion $U(\li,e_0)'$ of the finite $W$-algebra $U(\li,e_0)$ and an injective algebra
homomorphism $\Xi_\li: U(\tg,e)\rightarrow U(\li,e_0)'$. The abelian quotients $U(\li,e_0)^\ab$ and $U(\li,e_0)'^\ab$ are naturally isomorphic 
giving an injective algebra homomorphism $\xi_\li : U(\tg,e)^\ab \ra U(\li, e_0)^\ab$ such that the resulting morphism of varieties $\xi_\li^\ast : \EE(\li, e_0) \ra \EE(\tg, e)$
is finite. 
\end{thm}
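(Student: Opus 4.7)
The plan is to realise $U(\tg, e)$ as a Fedosov-type quantisation of the Slodowy slice $e + \tg_f$ and then exploit the parabolic induction geometry to construct $\Xi_\li$ inside a suitable completion. First I would invoke Losev's identification of $U(\tg,e)$ with the canonical filtered quantisation of the Poisson variety $e + \tg_f$ equipped with its Kazhdan contracting action. From this perspective the W-algebra is reconstructed entirely from the symplectic geometry of the slice, so any geometric comparison of slices will lift to the quantum side; our task reduces to exhibiting such a comparison.

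The induction hypothesis $\Oo_e = \Ind_\li^\tg(\Oo_{e_0})$ must then be converted into a formal comparison of transverse slices. Choose a parabolic $\pp = \li \ltimes \n$ with Levi factor $\li$ and compatible $\sl_2$-triples $(e, h, f)$ in $\tg$ and $(e_0, h_0, f_0)$ in $\li$. Since $\Ad(\tG)(\Oo_{e_0} + \n)$ is dense in $\overline{\Oo}_e$, a conjugation argument locates a point $x \in e + \tg_f$ whose $\tG$-orbit meets $\Oo_{e_0} + \n$ densely. The key geometric input is that in a formal neighbourhood of $x$ the slice $e + \tg_f$ decomposes symplectically as a product of a formal neighbourhood of $e_0$ in $e_0 + \li_{f_0}$ with an additional symplectic vector-space factor, whose coordinates parameterise the transverse directions within $\n$. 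Establishing this decomposition, together with its compatibility with the Kazhdan gradings on both sides, is the geometric heart of the argument.

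The third step is quantum. The symplectic product decomposition quantises: completing $U(\li, e_0)$ at the maximal ideal corresponding to $e_0$ and tensoring with a Weyl algebra associated to the transverse factor produces the algebra $U(\li, e_0)'$, and the quantisation theorem for formal symplectic products yields the injective homomorphism $\Xi_\li: U(\tg, e) \rightarrow U(\li, e_0)'$. The Weyl-algebra tensor factor is generated by its commutators, so it contributes nothing to the abelian quotient; this identifies $(U(\li, e_0)')^\ab$ with $U(\li, e_0)^\ab$ and endows $\Xi_\li$ with an induced homomorphism $\xi_\li : U(\tg, e)^\ab \ra U(\li, e_0)^\ab$, which is injective because the kernel of $\Xi_\li$ is already trivial.

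Finally, to see that the morphism $\xi_\li^\ast : \EE(\li, e_0) \ra \EE(\tg, e)$ is finite, I would argue by comparing dimensions. By Theorem~\ref{premetsurj} the irreducible components of $\EE(\tg, e)$ and $\EE(\li, e_0)$ are controlled by intersections of sheets with Slodowy slices, and under induction of orbits the relevant sheets correspond and their ranks match, so both abelian quotients have the same Krull dimension. Combined with injectivity of $\xi_\li$ this forces $U(\li, e_0)^\ab$ to be integral, and indeed finitely generated, over the image of $\xi_\li$, delivering finiteness. The main obstacle throughout is step two: making the formal symplectic decomposition of the slice precise and Kazhdan-equivariant, which is precisely where Fedosov quantisation in \cite{Lo3} is indispensable and the reason a purely Lie-theoretic proof has not been found.
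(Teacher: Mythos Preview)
The paper does not supply its own proof of this theorem: it is quoted from Losev's work \cite{Lo3} and used as a black box (see the sentence preceding the statement in \S\ref{losevtheorem}). So there is no in-paper argument to compare against; I can only assess your sketch on its own merits.

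Your first three steps are a faithful high-level outline of Losev's strategy: realise both $W$-algebras as Fedosov quantisations of Slodowy slices, establish a Kazhdan-equivariant formal symplectic decomposition $(e+\tg_f)^{\wedge}\cong (e_0+\li_{f_0})^{\wedge}\times(\text{symplectic vector space})$, and lift this to the quantum level where the symplectic factor quantises to a Weyl algebra that dies on passage to the abelian quotient. You are also right that making step two precise is the technical core and is exactly what \cite{Lo3} accomplishes.

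There are, however, two genuine gaps in your final paragraph. First, the claim that $\xi_\li$ is injective ``because the kernel of $\Xi_\li$ is already trivial'' is a non sequitur: injectivity of a ring map does not descend to abelianisations, since an element of $U(\tg,e)$ outside the commutator ideal could perfectly well land in the commutator ideal of $U(\li,e_0)'$. Second, and more seriously, your finiteness argument fails: an injective map of finitely generated commutative $\K$-algebras of equal Krull dimension need not be finite or even integral (take $\K[x,xy]\hookrightarrow\K[x,y]$). Appealing to Theorem~\ref{premetsurj} for a dimension count is therefore not enough. In Losev's argument finiteness is obtained differently: the embedding $\Xi_\li$ is compatible with the Kazhdan filtrations, and the \emph{associated graded} of $\xi_\li$ is identified with a morphism of coordinate rings coming from the geometric slice comparison, which is visibly finite; a filtered map with finite associated graded is itself finite. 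This filtration argument, not a dimension comparison, is what you are missing.
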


\section{Overview of results}\label{overview}
\newcounter{parna}
\renewcommand{\theparna}{\thesubsection.\arabic{parna}}
\renewcommand{\p}{\refstepcounter{parna}\noindent\textbf{\theparna .} \space} 
\setcounter{parna}{0}

\p We are now ready to describe the results contained herein, and talk a little about their proofs. Our discussion falls roughly into three topics: our primary results are of an invariant theoretic nature, and build on the concepts of \cite{PPY}; in the next chapter we discuss the relationship between the derived subalgebra of the centraliser and sheets in a classical Lie algebra - among other things, we settle a recent conjecture of Izosimov; and finally we apply our deductions from the second section to the theory of finite $W$-algebras in types $\sf B$, $\sf C$ and $\sf D$. The first of these three topics corresponds to a paper written by the author \cite{Top}, soon to appear in the Journal of algebra, whilst the second two correspond to a joint work with Alexander Premet \cite{PTop} which is currently under review.

\subsection{Invariants and representations of centralisers}
\setcounter{parna}{0}
\p Broadly speaking, the purpose of the first section is to bring the invariant theoretic discussion of \cite{PPY}, \cite{BB}, \cite[$\S 8$]{Yak2} into the characteristic $p$ realm, and exploit some combinatorial techniques to study the representation theory of the centralisers $\tg_x$ in type $\sf A$ and $\sf C$.

\p \label{counterkac} When an algebraic group may be reduced modulo $p$, in an appropriate sense, we have groups $\tG$, $\tG_p$ and their respective Lie algebras $\tg$, $\tg_p$. If $\tG$ is reductive and the characteristic of the field is very good for $\tG$ then it is known that $S(\tg_p)^{\tg_p}$ is generated by $S(\tg_p)^p$ and the mod $p$ reduction of $S(\tg)^{\tG}$, and that similar theorem holds for the invariants in the enveloping algebra. This is the most concise description of the algebra of invariants which we could hope for, and Kac asked whether it holds in general \cite{Kac}. Unfortunately a counterexample is given by the three dimensional solvable algebraic Lie algebra spanned by $\{h, a, b\}$ with non-zero brackets $[h,a] = a$ and $[h,b] = b$. In this example $S(\g)^G = \C$ and so after reducing modulo $p$ the element $a^{p-1} b$ is a example of an invariant which is not generated in this way.

\p  Despite failing in general, we shall show that this good behaviour is exhibited by centralisers in type $\sf A$ and $\sf C$, thus giving us a complete description of the symmetric invariant algebras in these cases. In the following we insist $p > 2$ if $\tG$ is of type $\sf C$.
\begin{thm}\label{main1} If $\tG$ is simple of rank $\ell$, of type $\sf A$ or $\sf C$ and $x \in \tg$ then
\begin{enumerate} 
\item{$\K[\tg^\ast_x]^{\tg_x}$ is free of rank $p^{\ell}$ over $\K[\tg^\ast_x]^p$;}
\item{$\K[\tg^\ast_x]^{\tG_x}$ is a polynomial algebra on $\ell$ generators;}
\item{$\K[\tg^\ast_x]^{\tg_x} \cong \K[\tg^\ast_x]^p \otimes_{(\K[\tg^\ast_x]^p)^{\tG_x}} \K[\tg_x^\ast]^{\tG_x}$.}
\end{enumerate}
\end{thm}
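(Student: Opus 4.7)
The plan is to reduce to the case of $x$ nilpotent and then verify the hypotheses of Skryabin's Theorem~\ref{Skryabin} with an explicit collection of invariants. By \ref{jordanreduction} we have $\tg_x = (\tg_{x_s})_{x_n}$, and by \ref{levistruct} the Levi subalgebra $\tg_{x_s}$ is a direct sum of general linear summands and, in type $\sf C$, at most one symplectic summand of the same type as $\tg$. Since the symmetric invariant algebras and the rank are multiplicative across such a decomposition, and the codimension-$2$ condition can be checked summand-by-summand, matters reduce to proving the theorem for $x$ nilpotent in a single simple factor of type $\sf A$ or $\sf C$.

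For $x$ nilpotent, I would construct $\ell = \rank\, \tg$ invariants $f_1,\ldots,f_\ell \in \K[\tg_x^\ast]^{\tG_x}$ in the spirit of Panyushev--Premet--Yakimova. After including $x$ into an $\sl_2$-triple $(x,h,y)$, identify $\tg^\ast$ with $\tg$ via the Killing form (non-degenerate since $p$ is good), and pair $\tg_x^\ast$ with $\tg_y$ by restriction of the Killing form. For each homogeneous generator $F_i$ of $\K[\tg]^\tG$, restrict to the Slodowy slice $x + \tg_y$ and extract the leading coefficient with respect to the grading induced by $\ad\, h$; this produces an element $f_i \in \K[\tg_y]^{\tG_x} \cong \K[\tg_x^\ast]^{\tG_x}$. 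Algebraic independence of $f_1,\ldots,f_\ell$ is then verified at an explicit point using the basis of Lemma~\ref{gebasis} following the methods of \cite{Yak1}; together with the equality $\ind\, \tg_x = \ell$ (Theorem~\ref{elashvili}), this proves part~(2) once one notes that $\K[\tg_x^\ast]^{\tG_x}$ has transcendence degree $\ell$ by Rosenlicht.

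The main obstacle is establishing the codimension-$2$ condition $\codim_{\tg_x^\ast}\, J(f_1,\ldots,f_\ell) \geq 2$. Since $\ind(\tg_x,\tg_x^\ast) = \ell$ by Theorem~\ref{elashvili}, the cardinality requirement $m = \dim\, \tg_x^\ast - \dim\, \tg_x + \ind(\tg_x,\tg_x^\ast) = \ell$ of Theorem~\ref{Skryabin} is met exactly by our $f_i$. Up to nilpotents the Jacobian locus coincides with the non-regular set $\tg_x^\ast \setminus (\tg_x^\ast)_\reg$, so the task is to show that this closed subvariety contains no divisor. I expect to argue this through a combinatorial analysis of the partition $\lambda$ attached to $x$: in type $\sf A$ the basis of Lemma~\ref{gebasis} allows an explicit construction of a regular functional $\gamma$, and in type $\sf C$ the refined basis $H \sqcup N_0 \sqcup N_1$ of Lemma~\ref{subbasis} plays the same role. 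In either case one exhibits such a $\gamma$ together with an affine subspace $\V_\gamma$ making the map of Lemma~\ref{genreg} dominant, and then bounds the codimension of the complement of the dominant image by a direct dimension count. This combinatorial verification is the technical heart of the argument.

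Once the codimension-$2$ condition is in place, Theorem~\ref{Skryabin} yields $\K[\tg_x^\ast]^{\tg_x} = \K[\tg_x^\ast]^p[f_1,\ldots,f_\ell]$. Algebraic independence of the $f_i$ then forces this extension to be free of rank $p^\ell$ over $\K[\tg_x^\ast]^p$, settling part~(1). For part~(3), the canonical map
\[
\K[\tg_x^\ast]^p \otimes_{(\K[\tg_x^\ast]^p)^{\tG_x}} \K[\tg_x^\ast]^{\tG_x} \longrightarrow \K[\tg_x^\ast]^{\tg_x}
\]
is surjective by the generation statement from Skryabin's theorem and injective by comparing ranks as free $\K[\tg_x^\ast]^p$-modules, both sides having rank $p^\ell$.
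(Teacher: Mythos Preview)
Your overall architecture---reduce to $x$ nilpotent via the Jordan decomposition and the Levi structure of $\tg_{x_s}$, exhibit $\ell$ explicit $\tG_x$-invariants, verify the codimension-$2$ hypothesis of Theorem~\ref{Skryabin}, then read off parts (1)--(3)---matches the paper. Two steps, however, do not go through as written.

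\textbf{Part (2) does not follow from a transcendence-degree count.} Algebraic independence of $f_1,\ldots,f_\ell$ together with Rosenlicht's theorem tells you only that $\K[f_1,\ldots,f_\ell]$ and $\K[\tg_x^\ast]^{\tG_x}$ share a fraction field of transcendence degree $\ell$; it does not force the inclusion $\K[f_1,\ldots,f_\ell]\subseteq\K[\tg_x^\ast]^{\tG_x}$ to be an equality, nor does it make the larger ring polynomial. The paper reverses your order of dependence: it first applies Skryabin to obtain $\K[\tg_x^\ast]^{\tg_x}=\K[\tg_x^\ast]^p[f_1,\ldots,f_\ell]$ (free of rank $p^\ell$), and \emph{then} deduces (2) by induction on degree. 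The point is that any $\tG_x$-invariant expands uniquely over the free $\K[\tg_x^\ast]^p$-basis of monomials in the $f_i$, so its coefficients are themselves $\tG_x$-invariant $p^{\mathrm{th}}$ powers of lower degree; see Theorem~\ref{main13}.

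\textbf{The claimed identification of $J$ with the non-regular set is unsupported, and your route to $\codim J\geq 2$ is not a proof.} There is no a priori inclusion in either direction between $J(f_1,\ldots,f_\ell)$ and $\tg_x^\ast\setminus(\tg_x^\ast)_\reg$ for centraliser invariants; the Kostant-type coincidence is a theorem for reductive $\tg$, not a general fact. The paper never makes this identification. Instead it proves $\codim J\geq 2$ directly by exhibiting an explicit $2$-plane $\K\alpha\oplus\K\beta$ (replaced by $\K\alpha\oplus\K\bar\beta$ in type~$\sf C$) meeting $J$ only at the origin: the point $\beta$ is treated by a bare-hands computation of the linear parts of $x_r^\beta|_U$ on a well-chosen subspace $U$ (Lemma~\ref{beta}); the point $\alpha$ is carried to $\alpha+\beta$ by an element of $G_e$ (Lemma~\ref{alpha}); and a $\K^\times$-action $\rho$ interpolates between the remaining points (Lemma~\ref{Jstable}, Lemma~\ref{plane1}). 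Your generic-stabiliser argument via Lemma~\ref{genreg} produces a single regular point, which bounds neither $\codim J$ nor $\codim(\tg_x^\ast)_{\rm sing}$.

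A smaller issue: the Slodowy-slice construction of the $f_i$ in positive characteristic is delicate (the leading-term extraction and the invariance both need justification). The paper avoids this by using the explicit integral formula~(\ref{xrreform}) for the $x_r$ and proving $G_e$-invariance over $\K$ by a reduction-mod-$p$ argument from the known characteristic-zero statement (Corollary~\ref{invariants}); the type~$\sf C$ invariants are then the restrictions $x_r|_{\h_e^\ast}$ for $r$ even.
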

\p In \cite{PPY} Premet formulated the following conjecture: if $\tG$ is reductive of rank $\ell$ over an algebraically closed field of characteristic 0 then for $x \in \tg$ the invariant algebra $\K[\tg_x^\ast]^{\tG_x}$ is graded polynomial on $\ell$ generators. The authors went on to settle this conjecture in type $\sf A$. There is a straightforward reduction to the case $\tg$ simple and so part 2 of Theorem~\ref{main1} is a perfect analogue of the above conjecture in characteristic $p > 0$ for type $\sf A$ and $\sf C$. The proof in \cite{PPY} is quite different to ours, although we use the invariants which they constructed as a starting point. The conjecture has since been shown to fail; a long root vector in type $\sf E_8$ provides a counterexample \cite{Yak3}. Despite this, it is hopeful that it will hold when $\tG$ is simple of type $\sf B$ or $\sf D$, but the currently existing methods do not extend well to these cases.

\p Part 1 of Theorem~\ref{main1} has its roots in a theorem of Veldkamp \cite[Theorem~3.1]{Vel}, where a decomposition of the centre of the enveloping algebra was first proven in the reductive case with strong restrictions on $p$ (see also Theorem~\ref{main2} below). In \cite[Theorem~1.6]{Tan} those restrictions are weakened and the case of symmetric invariants is treated. Our proof is similar to that of \cite{Tan}, where the method is to satisfy the assumptions of a theorem of S. Skryabin, described in the previous section (Theorem~\ref{Skryabin}). In order to exhibit such polynomials we use an explicit presentation of type $\sf A$ invariants $x_1,...,x_N \in \K[\g_x^\ast]^{G_x}$, which was first conjectured in characteristic zero in \cite[Conjecture~4.1]{PPY}, and then confirmed in \cite[$\S 6$]{Yak2}. We should mention that the same invariants were derived using very different methods in \cite{BB} and we adopt their notation to reflect the combinatorial nature of our methods. Our first step is to reduce 
these invariants modulo $p$ and show that they are still invariant under the action of the modular analogue of the centraliser (Corollary~\ref{invariants}).

\p Having presented these invariants we go on to show that their Jacobian locus satisfies the required codimension 2 condition in Section~\ref{JacLoc}. To do this we use a line of reasoning not too dissimilar to \cite[$\S 3$]{PPY}. To complete the proof in type $\sf A$ it remains only to observe that these invariants are of the correct number. This follows from Elashvili's conjecture; see Theorem~\ref{elashvili} and $\S \ref{Index}.1$. The proof in type $\sf C$ is similar. In the notation of \ref{classnots} we have $\epsilon = -1$, $K \subseteq G$ and $e\in \h$ a nilpotent element. Using the $K_e$-stable decomposition $\g_e = \h_e \oplus \pp_e$ we may identify $\h_e^\ast$ as a $K_e$-module, with the annihilator in $\g_e^\ast$ of $\pp_e$. Therefore it is possible to consider the restrictions to $\h_e^\ast$ of the type $\sf A$ invariants. Generalising a well-known fact in the case $e = 0$ we show that the invariants of odd subscript restrict to zero (Corollary~\ref{vanishing1}) and accordingly we study the 
invariants $x_{2}|_{\h_e^\ast}, x_{4}|_{\h_e^\ast},...,x_{N}|_{\h_e^\ast}$. In Section~\ref{JacLoc} we show that the codimension 2 property is inherited quite conveniently from the type $\sf A$ case. Once again, that these invariants are of the correct number follows from Elashvili's conjecture.

\p Our next result uses the same methods as the above, however it is not a characteristic $p$ analogue of a known result in characteristic zero. In that respect it seems highly likely that the obvious non-modular versions of this theorem will hold. We mentioned previously that when $\epsilon = 1$ and $e \in \kk$ is nilpotent, the known methods do not lend themselves well to the study of $\K[\h_e^\ast]^{K_e}$. Despite this we may discuss the invariant algebra $\K[\pp_e^\ast]^{K_e}$ in some depth. 
\begin{thm}\label{main3}
Let $\epsilon = 1$ so that $K$ is of type $\sf B$ or $\sf D$. If $e \in \h$ is nilpotent, with associated partition $\lambda$ and $m := (N + |\{ i : \lambda_i \text{ odd}\}|)/2$. Then
\begin{enumerate}
\item{$\K[\pp_e^\ast]^{\h_e}$ is free of rank $p^{m}$ over $\K[\pp_e^\ast]^p$;}
\smallskip
\item{$\K[\pp_e^\ast]^{K_e}$ is a polynomial algebra on $m$ generators;}
\smallskip
\item{$\K[\pp_e^\ast]^{\h_e} \cong \K[\pp_e^\ast]^p \otimes_{(\K[\pp_e^\ast]^p)^{K_e}} \K[\pp_e^\ast]^{K_e}$.}
\end{enumerate}
\end{thm}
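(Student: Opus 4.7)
The strategy mirrors the proofs of Theorems \ref{main1} in types $\sf A$ and $\sf C$: reduce everything to an application of Skryabin's theorem (Theorem \ref{Skryabin}) with ambient space $X=\pp_e^\ast$ and acting Lie algebra $\h_e$. The plan is to exhibit $m$ explicit polynomials $f_1,\ldots,f_m\in\K[\pp_e^\ast]^{\h_e}$ with $m=\dim\pp_e-\dim\h_e+\ind(\h_e,\pp_e^\ast)$ and codimension-two Jacobian locus. Granting this, parts (1) and (3) follow immediately from Skryabin, while part (2) is obtained by observing that the $f_i$ must be algebraically independent (by a Krull dimension count) and that the natural inclusion $\K[f_1,\ldots,f_m]\hookrightarrow\K[\pp_e^\ast]^{K_e}$ is an equality, established by a modular reduction from the characteristic-zero case.

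The invariants are constructed as follows. Using the $K_e$-stable decomposition $\g_e=\h_e\oplus\pp_e$ one identifies $\pp_e^\ast$ with the annihilator of $\h_e$ in $\g_e^\ast$, and this identification is $K_e$-equivariant. Restricting the type $\sf A$ invariants $x_1,\ldots,x_N\in\K[\g_e^\ast]^{G_e}$ produced in the proof of Theorem \ref{main1} yields a family of $K_e$-invariants on $\pp_e^\ast$. A parity analysis in the spirit of Corollary \ref{vanishing1}, using the explicit action of the involution $\sigma$ on the basis $(\xi_i^{j,s})^\ast$ together with Lemma \ref{spanningdetails}, allows one to detect precisely which $x_k|_{\pp_e^\ast}$ vanish. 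I expect exactly $m=(N+|\{i:\lambda_i\ \text{odd}\}|)/2$ of the restricted invariants to survive non-trivially, consistent with the boundary cases (for $e=0$, $\lambda=(1^N)$, no invariants vanish and $m=N$ matches $\K[\text{Sym}_N]^{O_N}=\K[\operatorname{tr}(X),\ldots,\operatorname{tr}(X^N)]$; for $e$ regular in type $\sf B$, exactly $(N-1)/2$ of the $x_k$ vanish so that $m=(N+1)/2=\dim\pp_e$). The equality $m=\dim\pp_e-\dim\h_e+\ind(\h_e,\pp_e^\ast)$ must then be checked: it rests on classical combinatorial formulae for $\dim\pp_e$ and $\dim\h_e$ in terms of $\lambda$, together with a symmetric-pair analogue of Elashvili's conjecture for $(\h_e,\pp_e^\ast)$. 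Existence of a generic stabiliser here should be obtainable from Lemma \ref{genreg} by exhibiting a suitable $\gamma\in\pp_e^\ast$ whose $H_e$-saturation is dense.

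The main obstacle is the codimension-two condition $\codim_{\pp_e^\ast}J(f_1,\ldots,f_m)\geq 2$. The plan is to adapt the line of reasoning of Section \ref{JacLoc}: locate an explicit point $\alpha\in\pp_e^\ast$ (of a semisimple flavour, analogous to the regular semisimple elements used in the type $\sf A$ case) at which the differentials of the surviving restricted invariants can be computed directly and shown to be linearly independent; then use $K_e$-equivariance of the Jacobian locus to conclude that its complement is a union of $K_e$-orbits whose codimension can be bounded by an orbit-dimension argument. The subtlety relative to types $\sf A$ and $\sf C$ is that the restriction map $\g_e^\ast\twoheadrightarrow\pp_e^\ast$ can introduce new relations among the differentials that are not visible in the larger space, and so the transversality of $\pp_e^\ast$ to the regular locus of $\g_e^\ast$ will need independent analysis, likely via the explicit dual bases $(\eta_i^{j,s})^\ast$ for $\pp_e^\ast$ from Lemma \ref{spanningdetails}. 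Once this codimension estimate is secured, Skryabin's theorem furnishes parts (1) and (3); algebraic independence and generation in part (2) then follow from the Krull dimension calculation and a comparison with the known (anticipated) characteristic-zero invariant theory.
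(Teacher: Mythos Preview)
Your overall strategy is correct and matches the paper: restrict the type $\sf A$ invariants $x_r$ to $\pp_e^\ast$, verify that the number of survivors equals the Skryabin number $m(\h_e,\pp_e^\ast)=\dim\pp_e-\dim\h_e+\ind(\h_e,\pp_e)$, establish the codimension-two Jacobian condition, and apply Theorem~\ref{Skryabin}. The paper makes the parity criterion precise: the invariants that survive are exactly those $x_r$ with $r+d_r$ even (Corollary~\ref{vanishing1}(2)), and Lemma~\ref{noinv}(3) confirms their number is $m$. The index $\ind(\h_e,\pp_e)$ is not obtained from any analogue of Elashvili's conjecture but is computed directly as $(N-|\{i:\lambda_i\text{ odd}\}|)/2$ in Corollary~\ref{orthindex}, via the generic-stabiliser criterion of Lemma~\ref{genreg} applied to the explicit functional $\alpha\in\pp_e^\ast$.

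Two points deserve correction. First, for the codimension-two bound the paper does \emph{not} use an orbit-dimension argument. Instead it shows (Lemma~\ref{jal2}) that $J_{\pp_e^\ast}(x_r:r+d_r\text{ even})=\pp_e^\ast\cap J_{\g_e^\ast}(x_r:r+d_r\text{ even})$, via the identity $d_\gamma x_r=d_\gamma\iota(x_r|_{\pp_e^\ast})$ for $\gamma\in\pp_e^\ast$, and then exhibits a two-dimensional plane $\K\alpha\oplus\K\bar\beta\subset\pp_e^\ast$ (with $\bar\beta=\beta-\beta'$, see Lemma~\ref{albeos}) meeting the type $\sf A$ Jacobian locus $J$ only at $0$. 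Your worry about ``new relations among the differentials'' introduced by restriction is precisely what Lemma~\ref{jal2} dispels.

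Second, and more seriously, your plan for part~(2) has a genuine gap: you propose to compare with ``the known (anticipated) characteristic-zero invariant theory'', but as the paper explicitly notes in the overview, Theorem~\ref{main3} is \emph{not} the modular analogue of any known characteristic-zero statement, so there is nothing to reduce from. The paper instead deduces part~(2) directly from part~(1) by a short induction on degree (proof of Theorem~\ref{main13}): any $f\in\K[\pp_e^\ast]^{K_e}\subseteq\K[\pp_e^\ast]^{\h_e}=\K[\pp_e^\ast]^p[\mathcal I]$ has $K_e$-invariant $\K[\pp_e^\ast]^p$-coefficients when expanded in monomials in $\mathcal I$; since $(\K[\pp_e^\ast]^p)^{K_e}=(\K[\pp_e^\ast]^{K_e})^p$, these coefficients lie in $\K[\mathcal I]$ by induction, giving $\K[\pp_e^\ast]^{K_e}=\K[\mathcal I]$. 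No appeal to characteristic zero is made or available.
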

\p  We shall briefly discuss the method. Denote by $d_r$ the degree of $x_r$. The degrees $d_r$ have a very combinatorial description given in Section~\ref{elemInv}. The invariants we consider are $\{x_{r}|_{\pp_e^\ast} : r + d_r \text{ even}\}$; the other invariants restrict to zero by Corollary~\ref{vanishing1}. Once again the codimension 2 condition on the Jacobian locus is inherited from the type $\sf A$ case. The argument is very similar to the type $\sf C$ case and our proof shall be quite brief. For Theorem~\ref{main1}, we have the correct number of invariants to apply Skryabin's theorem thanks to Elashvili's conjecture. For Theorem~\ref{main3}, however, the index is not known and we make a detour in Section~\ref{GenEl} to calculate $\ind(\h_e, \pp_e)$ by exhibiting the existence of a generic stabiliser, making use of Lemma~\ref{genreg}. Some other results are obtained regarding indexes and generic stabilisers.

\p The remaining results of the second chapter concern the centres of enveloping algebras. We resume the setting of \ref{main1}. Then $\tg_x$ is a $p$-Lie algebra with $p$-operation $v \mapsto v^{[p]}$ induced by matrix multiplication. Let $U(\tg_x)$ denote the enveloping algebra of $\tg_x$ and $S(\tg_x)$ the symmetric algebra. Since $S(\tg_x)$ and $\K[\tg_x^\ast]$ are canonically isomorphic as $\tg_x$-algebras we may identify their invariant subalgebras.  Our next theorem offers a description of $U(\tg_x)^{\tG_x}$ and $U(\tg_x)^{\tg_x}$ analogous to the one given for symmetric invariant subalgebras. As mentioned earlier this theorem may be seen as an analogue of Veldkamp's theorem \cite[Theorem~3.1]{Vel}. Our method uses the theory of symmetrisation in characteristic $p$ outlined in Section~\ref{Symmetrisation}, and a standard filtration argument.
\begin{thm}\label{main2}  If $\tG$ is simple of rank $\ell$, of type $\sf A$ or $\sf C$ and $x \in \tg$ then
\begin{enumerate}
\item{$Z(\tg_x)$ is free of rank $p^\ell$ over $Z_p(\tg_x)$;}
\item{$U(\tg_x)^{\tG_x}$ is a polynomial algebra on $\ell$ generators;}
\item{$Z(\tg_x) \cong Z_p(\tg_x) \otimes_{Z_p(\tg_x)^{\tG_x}} U(\tg_x)^{\tG_x}$.}
\end{enumerate}
\end{thm}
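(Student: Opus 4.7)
The plan is to transfer each of the three parts of Theorem~\ref{main1} (the symmetric-algebra analogue) to the enveloping algebra $U(\tg_x)$ by means of the symmetrisation map $\beta : U(\tg_x) \ra S(\tg_x)$ supplied by Theorem~\ref{milmap}. Since $\tr = \gl(V)$ possesses Richardson's property and $\tg_x$ is a saturated subalgebra, $\beta$ is a $\tG_x$-module isomorphism with $\gr(\beta) = \mathrm{id}_{S(\tg_x)}$, so $\beta^{-1}$ sends any homogeneous $\tG_x$-invariant $f \in S(\tg_x)^{\tG_x}$ to an element of $U(\tg_x)^{\tG_x}$ whose principal symbol is $f$.

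First I would fix polynomial generators $f_1,\ldots,f_\ell$ of $S(\tg_x)^{\tG_x}$ furnished by Theorem~\ref{main1}(2), set $F_i := \beta^{-1}(f_i) \in U(\tg_x)^{\tG_x}$, and argue that they freely generate $U(\tg_x)^{\tG_x}$. Because $\tG_x$ preserves the PBW filtration, the inclusion $\gr U(\tg_x)^{\tG_x} \subseteq S(\tg_x)^{\tG_x}$ is automatic, while the existence of lifts $\beta^{-1}(f)$ of arbitrary homogeneous $f \in S(\tg_x)^{\tG_x}$ supplies the reverse inclusion, giving $\gr U(\tg_x)^{\tG_x} = \K[f_1,\ldots,f_\ell]$. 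A standard induction on filtration degree then delivers $U(\tg_x)^{\tG_x} = \K[F_1,\ldots,F_\ell]$ together with algebraic independence of the $F_i$ (any nontrivial polynomial relation would descend to one among the symbols), settling part~(2). In good characteristic and types $\sf A$, $\sf C$ the stabiliser $\tG_x$ is smooth with $\Lie(\tG_x) = \tg_x$ (cf.\ Section~\ref{basisforthecent}), so $U(\tg_x)^{\tG_x} \subseteq U(\tg_x)^{\tg_x} = Z(\tg_x)$ and in particular $F_1,\ldots,F_\ell \in Z(\tg_x)$.

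For part~(1), note that $\gr Z_p(\tg_x) = S(\tg_x)^p$ (the symbols of $x^p - x^{[p]}$ are the $x^p$) and $\gr Z(\tg_x) \subseteq S(\tg_x)^{\tg_x}$, so the filtered subalgebra $B := Z_p(\tg_x)[F_1,\ldots,F_\ell] \subseteq Z(\tg_x)$ has associated graded containing $S(\tg_x)^p \cdot \K[f_1,\ldots,f_\ell] = S(\tg_x)^{\tg_x}$ by Theorem~\ref{main1}(3), forcing $B = Z(\tg_x)$. Combining Theorem~\ref{main1} parts~(2) and~(3) identifies $\{f_1^{a_1}\cdots f_\ell^{a_\ell} : 0 \le a_i < p\}$ as an $S(\tg_x)^p$-basis of $S(\tg_x)^{\tg_x}$; lifting to the $F_i$'s then yields a $Z_p(\tg_x)$-basis of $Z(\tg_x)$ of cardinality $p^\ell$, since spanning is clear from $B = Z(\tg_x)$ and linear independence follows by extracting the top-degree part of any putative relation and invoking freeness in the graded picture. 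This settles~(1). For~(3), the natural multiplication map $Z_p(\tg_x) \otimes_{Z_p(\tg_x)^{\tG_x}} U(\tg_x)^{\tG_x} \ra Z(\tg_x)$ is surjective by the above; its associated graded coincides with the isomorphism of Theorem~\ref{main1}(3), and a final filtration-to-grading comparison delivers injectivity.

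The principal content has been loaded into Theorems~\ref{main1} and~\ref{milmap}, so what remains is a careful bookkeeping of standard graded-to-filtered lifts. The one point warranting vigilance is that each step tacitly uses both the $\tG_x$-equivariance of $\beta$ and the identification $\Lie(\tG_x) = \tg_x$; outside the hypotheses on characteristic and type either would fail and a Veldkamp-style description could not be expected.
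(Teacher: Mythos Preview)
Your proposal is correct and follows essentially the same route as the paper: lift the polynomial generators $\mathcal{I}$ of $S(\tg_x)^{\tG_x}$ to $\widetilde{\mathcal{I}} = \beta^{-1}(\mathcal{I}) \subseteq U(\tg_x)^{\tG_x} \subseteq Z(\tg_x)$ via the symmetrisation map of Theorem~\ref{milmap}, and then run the sandwich $\gr Z(\tg_x) \subseteq S(\tg_x)^{\tg_x} = S(\tg_x)^p[\mathcal{I}] = \gr\big(Z_p(\tg_x)[\widetilde{\mathcal{I}}]\big) \subseteq \gr Z(\tg_x)$ (and the analogous one for $U(\tg_x)^{\tG_x}$) to conclude by matching graded dimensions. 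The only cosmetic difference is that the paper treats part~(1) before part~(2) and dispatches part~(3) in a single line from the $Z_p(\tg_x)$-basis, whereas you spell out the tensor-product map explicitly; the underlying filtration argument is identical.
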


\p We recall from section \ref{kacweisover} that the dimensions of simple modules of a restricted Lie algebra $\tg$ are bounded, and that the upper bound is finite and is denoted $M(\tg)$. The first Kac-Weisfeiller conjecture predicts that $$M(\tg) = p^{\frac{1}{2}(\dim\, \tg - \ind\, \tg)}.$$ We remind the reader that  in this section $\chr(\K) \neq 2$ whenever $\tG$ is of type $\sf C$. Our final invariant theoretic result is as follows: 
\begin{thm}\label{KW1}
Let $\tG$ be of type $\sf A$ or $\sf C$ and $x \in \tg$. The first Kac-Weisfeiler conjecture holds for $\tg_x$.
\end{thm}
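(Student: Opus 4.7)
The plan is to combine Theorem \ref{main2}(1) with the Zassenhaus formula for $M(\tg_x)$ from Section \ref{kacweisover}, together with the Elashvili equality $\ind\, \tg_x = \rank\, \g$. By Theorem \ref{main2}(1) the centre $Z(\tg_x)$ is free of rank $p^\ell$ over the $p$-centre $Z_p(\tg_x)$, where $\ell = \rank\, \g$. Since $U(\tg_x)$ is a domain by PBW, both $Z(\tg_x)$ and $Z_p(\tg_x)$ are commutative integral domains, and the freeness transfers to the corresponding fields of fractions: $[F(\tg_x):F_p(\tg_x)] = p^\ell$. The theorem of Zassenhaus recorded in Section \ref{kacweisover} then yields
$$M(\tg_x) = p^{\frac{1}{2}(\dim\, \tg_x - \ell)}.$$

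The second ingredient is the identity $\ind\, \tg_x = \ell$. One inequality is immediate: by Theorem \ref{main1}(2), $\K[\tg_x^\ast]^{\tG_x}$ is a polynomial algebra on $\ell$ generators, so these $\tG_x$-invariants are algebraically independent and Rosenlicht's theorem forces $\ind\, \tg_x \geq \ell$. For the opposite inequality I would appeal to Theorem \ref{elashvili}, which gives $\ind\, \tg_x = \ind\, \tg = \ell$ in characteristic zero. To lift the upper bound to good positive characteristic in types $\sf A$ and $\sf C$, one uses a standard reduction-modulo-$p$ argument: the centraliser has a $\Z$-form whose generic fibre is the characteristic-zero centraliser and whose special fibre is $\tg_x$, and the index, being the generic corank of the structure-constant matrix $([e_i,e_j](\alpha))$ viewed as a function of $\alpha \in \tg_x^\ast$, is lower-semicontinuous under specialisation of the base.

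Putting the two displays together we obtain $M(\tg_x) = p^{(\dim\, \tg_x - \ind\, \tg_x)/2}$, which is the first Kac-Weisfeiler conjecture for $\tg_x$. The main substantive input is Theorem \ref{main2}(1), which is already in hand; after that, the conceptual content is just Zassenhaus' formula and an index calculation. The only mildly subtle point is the transfer of Elashvili's equality to positive characteristic, but since the lower bound $\ind\, \tg_x \geq \ell$ is immediate from our polynomial generation result and the upper bound is accessible via semicontinuity in good characteristic, no serious obstruction arises.
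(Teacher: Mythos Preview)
Your overall architecture matches the paper's first proof exactly: use Theorem~\ref{main2}(1) to compute $[F(\tg_x):F_p(\tg_x)]=p^\ell$, feed this into Zassenhaus' formula, and then identify $\ell$ with $\ind\,\tg_x$. The step ``freeness of rank $p^\ell$ transfers to the fraction fields'' is correct; the paper spells this out via an explicit denominator-clearing computation, but your one-line justification (localise a free module of finite rank over a domain) is perfectly adequate.

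The gap is in the index calculation, specifically the upper bound $\ind\,\tg_x\le\ell$. Semicontinuity goes the wrong way for what you need. The index is the generic \emph{corank} of the matrix $(\alpha[x_i,x_j])$; corank is upper-semicontinuous, and the relevant minors are integer polynomials in the coordinates of $\alpha$. If such a minor is identically zero over $\mathbb{Q}$ it is zero over $\Z$ and hence over every $\K$, so reduction modulo $p$ can only \emph{raise} the generic corank. Thus specialisation gives $\ind\,(\tg_x)_{\mathrm{char}\,p}\ge\ind\,(\tg_x)_{\mathrm{char}\,0}$, which is the inequality you already had from Rosenlicht, not the opposite one. At best a finer argument along these lines yields equality for all but finitely many $p$, which is not enough for the statement (you need every good $p$).

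The fix is simply to cite what the paper already uses: Yakimova \cite{Yak1} establishes $\ind\,\g_e=N$ and $\ind\,\h_e=N/2$ directly over fields of good characteristic by exhibiting an explicit regular linear form (this is exactly the input to Lemma~\ref{noinv}, and the paper reproves it independently in Theorem~\ref{genstabal}). With that citation in place your argument is complete and coincides with the paper's.
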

\noindent We supply two proofs for this theorem. The first is quite general and conceptual, whilst the second is geometric and applies only to the case where $x$ is nilpotent and $\tG$ is of type $\sf A$. The second approach does have the advantage of showing that the baby Verma modules for $\g_x$ are generically simple.

\p In finite characteristic $Z(\tg_x)$ is a finitely generated, commutative integral domain. Therefore we may consider the algebraic variety $\Zas(\tg_x) := \Specm\, Z(\tg_x)$, known as the Zassenhaus variety \cite{Zas}, \cite{PT}. The geometry of $\Zas(\tg_x)$ is of some importance as it controls the representation theory of $\tg_x$. Theorem~\ref{KW1} allows us to make some fine deductions about the smooth locus of $\ZZ(\tg_x)$. In particular, we show that the smooth locus of coincides with the set of maximal ideals $\mm$ of $Z(\tg_x)$ such that $U(\tg_x)/U(\tg_x)\mm$ is isomorphic to the ring of $M(\tg_x) \times M(\tg_x)$ matrices over $\K$. This scheme of argument is quite general and was first sketched in \cite[Remark~5.2]{PS}.

\subsection{Sheets and centralisers}\label{shcnoverview}
\setcounter{parna}{0}
\p The next chapter of the thesis discusses some surprising geometric properties of the derived subalgebra of a centraliser and the sheets of a Lie algebra of type $\sf B$, $\sf C$ and $\sf D$.  Our arguments will work equally well in the orthogonal or symplectic case and at first we only need to assume that $\chr(\K) \neq 2$. Throughout we shall work with the notations $(\cdot, \cdot)$, $\epsilon$, $K$, $\sigma$ introduced in \ref{classnots}. We fix $e \in \Ni(\h)$ with partition $\lambda = (\lambda_1,...,\lambda_n) \in \mathcal{P}_\epsilon(N)$. As noted in Section~\ref{basisforthecent}, there is an involution $i \mapsto i'$ on the indexes $\{1,...,n\}$ and we adopt the convention that $\lambda_i = 0$ for $i= 0$ or $i > n$.

\p Our very first result of this chapter consists of a decomposition of the derived subalgebra $[\h_e \h_e]$. We prove this decomposition in a direct manner, by scrutinising the span of products of the basis elements which we introduced in Lemma~\ref{subbasis}. The precise description of the decomposition cannot be understood without a lot of notation which we postpone until Section~\ref{decomposingh_e}. The decomposition theorem is then proven in Section~\ref{decomposingderived}.

\p  Set $\h_e^\ab := \h_e/[\h_e\h_e]$ and write $c(\lambda) := \dim \, \h_e^\ab$. The main motivation for the
aforementioned endeavour is to obtain a combinatorial formula for $c(\lambda)$. The formula is given in terms of indexes $\{1,...,n\}$
fulfilling certain properties. We say that a pair $(i,i+1)$ with $1\leq i < n$ is a \emph{2-step} for $\lambda$ if $i = i'$, $i+1 = (i+1)'$ and
$\lambda_{i-1} \neq \lambda_i \geq \lambda_{i+1} \neq \lambda_{i+2}.$ The set of indexes $i$ such that $(i, i+1)$ is a 2-step for $\lambda$ is denoted
$\Delta(\lambda)$. The following formula for $\dim \, \h_e^\ab$ is proven in Corollary~\ref{expression}.
\begin{lem}
$c(\lambda) = \displaystyle{\sum}_{i>0} \big\lfloor \frac{\lambda_i - \lambda_{i+1}}{2} \big\rfloor + |\Delta(\lambda)|.$
\end{lem}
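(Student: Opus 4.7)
My plan is to deduce the formula as a corollary of the explicit vector-space decomposition of $[\h_e \h_e]$ established in Section~\ref{decomposingderived}. That decomposition, together with the basis $H \sqcup N_0 \sqcup N_1$ of $\h_e$ from Lemma~\ref{subbasis}, will allow me to perform a direct dimension count for $\h_e^{\ab}$.

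First, I would read off from the decomposition which basis vectors $\zeta_i^{j,s}$ descend to non-zero elements of $\h_e^{\ab}$. The maps $\xi_i^{j,r}$ behave like generalised matrix units in the sense that $\xi_i^{j,r}\xi_k^{l,s}$ vanishes unless $l=i$, and otherwise equals a shift of $\xi_k^{j,\cdot}$; consequently commutators of symmetrised basis vectors produce $\zeta_a^{c,\cdot}$ from pairs $(\zeta_a^{b,\cdot},\zeta_b^{c,\cdot})$ via an ``intermediate'' block $V[b]$. A basis vector $\zeta_i^{j,s}$ thus survives in $\h_e^{\ab}$ only when no such intermediate block is available and the combinatorial constraints imposed by the involution $i\mapsto i'$ in Lemma~\ref{spanningdetails} are compatible.

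Second, I would partition the surviving basis vectors into two classes. The diagonal survivors come from $\HH$: for each index $i$, commutators with vectors associated to the $(i{+}1)$-th block eliminate $\zeta_i^{i,s}$ for all $s<\lambda_{i+1}$, while the survivors in the range $s\in[\lambda_{i+1},\lambda_i)$ are precisely those respecting the parity constraint from the definition of $H$ (namely $\lambda_i - s$ even when $i=i'$). Counting these values of $s$ yields exactly $\lfloor(\lambda_i-\lambda_{i+1})/2\rfloor$ per index $i$, which recovers the main term of the formula upon summing over $i$.

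Finally, I would show that the remaining, non-diagonal survivors are in bijection with $\Delta(\lambda)$. When $(i,i+1)$ is a 2-step, the strict inequalities $\lambda_{i-1}>\lambda_i\geq\lambda_{i+1}>\lambda_{i+2}$ mean that no intermediate block can mediate a commutator producing the relevant vector $\zeta_i^{i+1,\cdot}\in N_1$, while the self-pairings $i=i'$ and $(i+1)=(i+1)'$ ensure the resulting element indeed lies in $\h_e$ (rather than in $\pp_e$, where the corresponding $\eta$-vector of Lemma~\ref{spanningdetails} sits). Each 2-step thus contributes exactly one extra dimension to $\h_e^{\ab}$, accounting for $|\Delta(\lambda)|$. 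The principal obstacle is the first step: establishing the complete description of $[\h_e \h_e]$ in Section~\ref{decomposingderived} is a substantial algebraic task requiring careful commutator computations in the basis $\{\zeta_i^{j,s}\}$; once that is in place, the formula reduces to the straightforward combinatorial count just sketched.
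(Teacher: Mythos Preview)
Your overall strategy matches the paper's: the formula is obtained as a corollary of Theorem~\ref{derived} by computing $\dim \NN_1^-$ and $\dim \HH_0/\HH_0^+$ separately, and your treatment of the off-diagonal contribution (tightly nested triples bijecting with $\Delta(\lambda)$) is essentially the paper's argument.

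Your sketch for the diagonal contribution, however, rests on a false premise. You claim that commutators ``eliminate $\zeta_i^{i,s}$ for all $s<\lambda_{i+1}$'', meaning these individual vectors lie in $[\h_e,\h_e]$. They do not. Commutators landing in $\HH_0$ produce \emph{differences} $\zeta_{[j]}^{[j],\lambda_{[j]}-2m}-\zeta_{[i]}^{[i],\lambda_{[i]}-2m}$ (see step~(ii) in the proof of Proposition~\ref{Hprop}), not single basis vectors; for instance when $\lambda=(4,4,2)$ in type~$\sf C$ one has $\zeta_1^{1,2}\notin[\h_e,\h_e]$ despite $2<\lambda_2=4$, whereas $\zeta_1^{1,2}-\zeta_2^{2,2}\in[\h_e,\h_e]$. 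The paper instead decomposes $\HH_0=\bigoplus_{l,m}\HH_{0,l}^m$ along the indices $a_{l,m}$ where $\lambda_{a-1}-\lambda_a\geq 2m$, identifies $\HH_0^+$ as the sum of the augmentation kernels $\HH_{0,l}^{m,+}$ for $l<t(m)$ together with all of $\HH_{0,t(m)}^m$, and then counts $\dim\HH_0/\HH_0^+$ via the bijection $(l,m)\mapsto(a_{l+1,m},m)$ onto $\{(i,m):\lambda_{i-1}-\lambda_i\geq 2m\}$, a set of size $s(\lambda)$. Your numerics agree only because choosing the \emph{last} index in each block $\HH_{0,l}^m$ as a representative recovers the condition $\lambda_i-\lambda_{i+1}\geq 2m$, hence $\lfloor(\lambda_i-\lambda_{i+1})/2\rfloor$ choices per index~$i$; but this is a feature of the block decomposition, not of the individual-elimination mechanism you describe.
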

\smallskip

\p \label{definesingular} Before we move on we shall define a very special class of partitions. Let $\lambda \in \PP(N)$. We say that a 2-step $(i, i+1)$ is \emph{bad} if either of the following two conditions hold:
\begin{itemize}
\item{$\lambda_{i-1} - \lambda_i \in 2\N$;}
\item{$\lambda_{i+1} - \lambda_{i+2} \in 2\N$.}
\end{itemize}
Otherwise $(i, i+1)$ is \emph{good}. If $\lambda$ has a bad 2-step then it is called \emph{singular}, otherwise it is called \emph{non-singular}. Take note that
when $i = 1$ the difference $\lambda_{i-1} - \lambda_i$ is negative and the first condition may be omitted from the definition of a bad 2-step.

\p Our next task is to classify the sheets of $\h$ containing $e$. Recall the classification of sheets described in \ref{classifsheets}.
Our approach is to decide which $K$-pairs $(\li, \Oo)/K$ correspond to sheets containing $e$ under the classification given in \ref{classifsheets}. It is known that a given sheet $\SS$ contains
a unique nilpotent orbit, and that if $\SS$ has data $(\li, \Oo)/K$ then that orbit is $\Ind^\h_\li(\Oo)$. Therefore we must decide which $K$-pairs with a rigid orbit
induce to $\Oo_e$. We identify a subset of the $K$-pairs, including all pairs in which the orbit is rigid, and design an iterative technique for deciding whether or not
the pair induces to $\Oo_e$. We call this process of iteration \emph{the Kempken-Spaltenstein algorithm}. The algorithm itself is introduced in Section~\ref{KSalg}
in purely combinatorial terms.

\p The details of the algorithm are quite technical, however the results which follow are easy enough to present here. The algorithm takes, as its input,
the partition $\lambda$ and a so-called \emph{admissible sequence}. We establish a bijection between the maximal admissible sequences for $\lambda$ (upto rearrangement)
and the sheets of $\h$ containing $e$ (Corollary~\ref{bijection}) and go on to show that there is a unique maximal admissible sequence if and only if $\lambda$ is non-singular (Proposition~\ref{phising}).
We also show that the sequences of length $l$ correspond to sheets of rank $l$, which provides a workable method for calculating the maximal rank of sheets
containing $e$. This maximal rank is denoted $z(\lambda)$ (modulo Corollary~\ref{newz}), and in Section~\ref{KSmaxlength} we calculate a combinatorial formula for
$z(\lambda)$. After comparing $z(\lambda)$ and $c(\lambda)$ it is a simple task to show that $z(\lambda) = c(\lambda)$ if and only if $\lambda$ is non-singular (Corollary~\ref{maxseq}). All of
the results just stated can be summarised in the following:
\begin{thm}\label{firstimpressions}
The following are equivalent:
\begin{enumerate}
\item{the partition $\lambda$ is non-singular;}

\item{$c(\lambda) = z(\lambda)$;}

\item{$e(\lambda)$ lies in a unique sheet;}
\end{enumerate}
\end{thm}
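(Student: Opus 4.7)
The plan is to deduce the theorem as a direct synthesis of three results that have already been established earlier in the chapter; there is essentially no new content to prove, only a matter of chaining the previously recorded equivalences.

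First I would handle (1) $\Leftrightarrow$ (2) by quoting Corollary~\ref{maxseq} directly. The strategy underlying that corollary is to place the formula for $c(\lambda)$ given in the displayed lemma above side-by-side with the combinatorial formula for $z(\lambda)$ obtained in Section~\ref{KSmaxlength}. The two expressions share the main term $\sum_{i>0}\bigl\lfloor (\lambda_i-\lambda_{i+1})/2\bigr\rfloor$, and their difference is a nonnegative count of the bad 2-steps contributed by $\Delta(\lambda)$. Hence the identity $c(\lambda)=z(\lambda)$ is equivalent to the absence of bad 2-steps, which is precisely the definition of non-singularity given in \ref{definesingular}.

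Next, for (1) $\Leftrightarrow$ (3), I would chain together Corollary~\ref{bijection} and Proposition~\ref{phising}. Corollary~\ref{bijection} identifies the sheets of $\h$ containing $e$ with the maximal admissible sequences for $\lambda$ taken up to rearrangement; this is the geometric meaning we have extracted from the Kempken--Spaltenstein algorithm. Proposition~\ref{phising} then asserts that a unique maximal admissible sequence exists if and only if $\lambda$ is non-singular. Composing these two statements, $e$ lies in a unique sheet exactly when $\lambda$ is non-singular, which is (1) $\Leftrightarrow$ (3).

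Putting the two equivalences together yields the theorem. The main obstacle is not located in this theorem itself but is absorbed into the two auxiliary statements that power the argument: Corollary~\ref{bijection} requires the full combinatorial setup of admissible sequences and a verification that induction from the Levi encoded by an admissible sequence lands in $\Oo_e$, while Proposition~\ref{phising} requires a careful branching analysis showing that the choice in the Kempken--Spaltenstein algorithm is forced at every step if and only if no 2-step is bad. Once these are in hand, the final theorem is a short formal deduction, and the only subtlety worth flagging in the write-up is to note that ``up to rearrangement'' is exactly the equivalence relation needed for the bijection with sheets, so that uniqueness of the sequence really does correspond to uniqueness of the sheet containing $e$.
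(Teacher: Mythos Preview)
Your proposal is correct and matches the paper's own approach almost exactly: the paper proves the theorem (as Corollary~\ref{izosim}) by invoking Corollary~\ref{maxseq} for (1)$\Leftrightarrow$(2) and Theorem~\ref{nosheets} for (1)$\Leftrightarrow$(3), the latter being precisely the combination of Corollary~\ref{bijection} with Proposition~\ref{phising} that you describe. The only minor gap in your write-up is that Corollary~\ref{bijection} literally gives a bijection $\Phi_\lambda\to\Psi_\lambda$, and one needs the easy Lemma~\ref{contsheets} to identify $|\Psi_\lambda|$ with the number of sheets containing $e$.
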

\noindent We remind the reader that this theorem holds in any characteristic $\neq 2$. If the characteristic is zero or $\gg 0$ then these criteria are
equivalent to the assertion that $e$ is a non-singular point on the variety $\h^{(\dim\, \h_e)}$. The theorem and the previous remark shall be proven together in
Corollary~\ref{izosim}. When we refer to non-singular nilpotent orbits, we shall mean those which lie in a unique sheet.

\p\label{izosintroduct} Our motivation for considering the property $c(\lambda) = z(\lambda)$ came directly from the theory of finite $W$-algebras, as we shall soon explain.
There is, however, one very nice consequence of this property which we prove in the final section of Chapter~\ref{derivedchapter}. Using a simple argument, Izosimov proved in \cite{Izos}
that, for any $x \in \h$ lying in a unique sheet $\SS$, the space $[\h_x \h_x]$ is orthogonal to $T_x \SS$ with respect to the Killing form. He went on
to show that in type $\sf A$ these two spaces are mutual orthogonal complements with respect to $\kappa$, and conjectured that the same should hold
in other classical types. Using the previous theorem we are able to confirm his conjecture:
\begin{thm}
For each $x \in \h$ lying in a unique sheet $\SS$ we have $T_x\SS = [\g_x, \g_x]^\perp.$
\end{thm}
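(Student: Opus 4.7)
The plan is to combine Izosimov's one-sided inclusion $[\h_x,\h_x]\subseteq (T_x\SS)^\perp$ from \cite{Izos} with a dimension count. Since the sheets of classical Lie algebras are smooth by \cite{IH} and the restriction of $\kappa$ to $\h$ is non-degenerate, it will suffice to establish
$$
\rank\SS \;=\; \dim\h_x^\ab.
$$
Indeed, the Lemma in~\ref{dimformula} gives $\dim T_x\SS=\dim\SS=\dim\h-\dim\h_x+\rank\SS$, whereas $\dim[\h_x,\h_x]^\perp=\dim\h-(\dim\h_x-\dim\h_x^\ab)$; the equality of these two numbers together with the inclusion forces equality of the two subspaces.

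Next I would reduce to the nilpotent case through the Jordan decomposition $x=x_s+x_n$ described in~\ref{jordanreduction}. Setting $\mm:=\h_{x_s}$, a Levi subalgebra of $\h$ by~\ref{levicent}, we have $\h_x=\mm_{x_n}$ and so $\dim\h_x^\ab=\dim\mm_{x_n}^\ab$. Using Borho's classification (Theorem~\ref{classifsheets}) together with the Jordan class description~(\ref{jordanclassstruct}), the sheets of $\h$ containing $x$ are in rank-preserving bijection with the sheets of $\mm$ containing $x_n$, both being parameterised by the same $K$-pairs $(\li,\Oo)/K$ with $\li\subseteq\mm$ and $\Ind_\li^{\mm}(\Oo)$ equal to the $M$-orbit of $x_n$; in particular, $x$ lies in a unique sheet of $\h$ if and only if $x_n$ lies in a unique sheet of $\mm$, and the two sheets share the same rank.

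Finally, by the Lemma in~\ref{levistruct}, $\mm\cong\gl_\ii\times\mm_0$ with $\mm_0$ classical of the same type as $\h$. Decomposing $x_n=y+z$ accordingly yields $\mm_{x_n}=(\gl_\ii)_y\oplus(\mm_0)_z$, and hence
$$
\dim\mm_{x_n}^\ab \;=\; \sum_{j}\dim(\gl_{i_j})_{y_j}^\ab+\dim(\mm_0)_z^\ab.
$$
For the classical factor, Theorem~\ref{firstimpressions} asserts that $z$ lies in a unique sheet of $\mm_0$ iff its partition $\lambda_z$ is non-singular, in which case the rank of that sheet equals $z(\lambda_z)=c(\lambda_z)=\dim(\mm_0)_z^\ab$. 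An analogous statement is classical in type $\sf A$: every nilpotent in $\gl_n$ lies in a unique sheet whose rank matches the dimension of the abelianisation of its centraliser. Summing the contributions over all factors gives $\rank\SS=\dim\mm_{x_n}^\ab=\dim\h_x^\ab$, closing the argument. The main obstacle I anticipate is the careful justification of the rank-preserving bijection between the sheets of $\h$ through $x$ and those of $\mm$ through $x_n$: this follows from the compatibility between the Levi datum of a sheet and the Jordan class, but the identification of ranks must be tracked through the $\Ind$-construction.
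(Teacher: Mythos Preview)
Your proposal is correct and follows essentially the same route as the paper. The paper likewise reduces the statement, via Izosimov's result, to the dimension equality $\dim[\h_x,\h_x]+\dim T_x\SS=\dim\h$, then uses smoothness of sheets \cite{IH} and the rank formula to convert this into $\rank\SS=\dim\h_x^{\ab}$, and establishes the latter by passing to the Levi $\mm=\h_{x_s}$ through a rank-preserving bijection between the sheets of $\h$ through $x$ and those of $\mm$ through $x_n$ (stated as a separate lemma, proved by appeal to the description of Jordan classes in \ref{jordanclassif} and \ref{classifsheets}).

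The only cosmetic difference is that the paper packages the equivalence ``$x$ in a unique sheet $\Leftrightarrow$ maximal rank $=\dim\h_x^{\ab}$'' as a standalone proposition for arbitrary $x$, whereas you unfold the argument directly and are more explicit about splitting $\mm\cong\gl_\ii\times\mm_0$ and summing the type $\sf A$ and classical contributions; the paper's version of this step is terser. Your anticipated obstacle, the rank-preserving bijection, is indeed the one nontrivial ingredient, and the paper disposes of it exactly as you suggest, via the compatibility of the sheet datum with the Jordan class.
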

\noindent As one would expect, we reduce to the case of a nilpotent element $e$ and conclude that $c(\lambda) = z(\lambda)$. It is then a small matter to show that this last equality
implies that the dimensions of $T_e\SS$ and $[\h_e \h_e]$ sum to $\dim\,  \h$.

\subsection{Commutative quotients of $W$-algebras}
\setcounter{parna}{0}

\p In the final chapter of this thesis we work over a field of zero characteristic. Continue to fix a nilpotent element
$e \in \Ni(\h)$ with partition $\lambda \in \PP(N)$, and choose an $\sl_2$-triple $\phi =(e,h,f)$ containing $e$. Our
aim is to study the one dimensional representations of the finite $W$-algebra $U(\h, e)$. The importance of these representations was
alluded to in \ref{modularhistory} and \ref{quotientsintro}, whilst in \ref{quotientsintro} we observed that they are parameterised by the maximal spectrum of the maximal
abelian quotient $\EE := \EE(\h,e) = \Specm \, U(\h,e)^\ab$.

\p In \cite{Pre4} Premet showed that the algebra $U(\sl_N, e)^\ab$ is a polynomial algebra for any nilpotent element $e$. This implies that the one dimensional representations
are parameterised by an affine space $\mathbb{A}_\C^d$. His approach made use of the explicit presentation for $U(\sl_N, e)$ given by Brundan and Kleschev, which is unavailable
outside type $\sf A$ (the presentation is derived by realising these $W$-algebras as shifted Yangians \cite[Theorem~10.1]{BK}).
Another ingredient to his method was Theorem~\ref{premettheorem} which relates the spectrum $\EE$ to the Katsylo sections of the sheets containing $e$.
It follows immediately from that theorem that if $e$ lies in more than one sheet then $\EE$ is reducible, hence cannot be isomorphic to an affine space. Our main result of the
chapter states that the converse holds for classical Lie algebras:
\begin{thm}
$e$ is non-singular if and only if $\EE$ is isomorphic to an affine space $\mathbb{A}_\C^d$ for some $d$. If this is the case then $d = c(\lambda)$.
\end{thm}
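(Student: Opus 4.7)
The plan is to sandwich $\dim \EE$ between $z(\lambda)$ and $c(\lambda)$ and then invoke Theorem~\ref{firstimpressions}. The upper bound $\dim \EE \leq c(\lambda)$ comes from the PBW structure of $U(\h,e)$ under the Kazhdan filtration: lifts $\Theta_x$ of a basis of $\h_e$ generate the algebra, and the commutator relation $[\Theta_x, \Theta_y] \equiv \Theta_{[x,y]}$ modulo strictly lower Kazhdan-order terms and polynomial products of lower-degree lifts, combined with the commutativity of $U(\h,e)^\ab$, allows one to recursively express $\Theta_z$ for $z \in [\h_e,\h_e]$ in terms of generators of strictly lower degree. Inducting on Kazhdan degree yields a surjection
\[
S(\h_e^\ab) \twoheadrightarrow U(\h,e)^\ab,
\]
whence $\dim \EE \leq c(\lambda)$. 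For the lower bound, when $\Oo_e$ is Lusztig-Spaltenstein-induced I would apply Theorem~\ref{premetsurj}: each sheet $\SS_i$ containing $e$ yields a Katsylo section $X_i := \SS_i \cap (e + \h_f)$ of dimension $\rank \SS_i$, and Premet's surjection $\Comp \EE \twoheadrightarrow \bigsqcup_i \Comp X_i$ produces an irreducible component of $\EE$ realising each of these dimensions, so $\dim \EE \geq z(\lambda)$.

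Now suppose $e$ is non-singular. Theorem~\ref{firstimpressions} gives $c(\lambda) = z(\lambda)$, whence $\dim \EE = c(\lambda)$. The surjection $S(\h_e^\ab) \twoheadrightarrow U(\h,e)^\ab$ from a polynomial ring onto an algebra of the same Krull dimension must then be an isomorphism, since any nonzero ideal in a polynomial domain strictly decreases dimension. Therefore $U(\h,e)^\ab \cong S(\h_e^\ab)$ is polynomial on $c(\lambda)$ generators and $\EE \cong \mathbb{A}^{c(\lambda)}$. Conversely, if $\EE \cong \mathbb{A}^d$ then $\EE$ is irreducible; assuming $\Oo_e$ is induced, the surjection $\Comp \EE \twoheadrightarrow \bigsqcup_i \Comp X_i$ of Theorem~\ref{premetsurj} from a singleton, combined with the irreducibility of each $X_i$ (as a transversal slice through $e$ in the smooth sheet $\SS_i$), forces exactly one sheet to contain $e$, so $e$ is non-singular; the equality $d = c(\lambda)$ then follows from the established bounds.

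The principal obstacle is the rigid case in the forward direction: when $\Oo_e$ is rigid, $\Oo_e$ itself is the unique sheet containing $e$, giving $z(\lambda) = 0$, but Theorem~\ref{premetsurj} does not apply, so the lower bound on $\dim \EE$ must be established otherwise. I would handle this either by direct combinatorial computation, using the Kempken-Spaltenstein classification of rigid orbits (Theorem~\ref{rigids}) together with the explicit formula for $c(\lambda)$ to verify that rigid classical nilpotent orbits satisfy $c(\lambda) = 0$ (forcing $\EE$ to reduce to a single point), or by invoking Losev's finite morphism from Theorem~\ref{losevembed}: for a suitable Levi $\li \subset \h$ and rigid $e_0 \in \Ni(\li)$ with $\Oo_e = \Ind_\li^\h(\Oo_{e_0})$, the finite map $\xi_\li^\ast : \EE(\li, e_0) \to \EE(\h, e)$ transfers the required dimensional information.
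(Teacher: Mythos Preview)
Your approach is essentially the same as the paper's: the surjection $S(\h_e^\ab)\twoheadrightarrow U(\h,e)^\ab$ is exactly Proposition~\ref{abgen} (built on Lemma~\ref{generate} via the Kazhdan filtration), the lower bound $\dim\EE\geq r(e)=z(\lambda)$ is the content of \ref{sheetrecap} via Theorem~\ref{premetsurj}, and the combination through $c(\lambda)=z(\lambda)$ is Corollary~\ref{poly}(i) and Theorem~\ref{class}.

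On the rigid case: your option~1 is precisely what the paper does. Corollary~\ref{rigidcod} shows $[\h_e,\h_e]=\h_e$ for rigid $e$, so $c(\lambda)=0$ and your surjection reads $\K\twoheadrightarrow U(\h,e)^\ab$. You do need one extra input here that you did not mention: that $\EE\neq\emptyset$ (Theorem~\ref{EEnonempty}), otherwise the target could be the zero ring. With that in hand $U(\h,e)^\ab\cong\K$ and $\EE\cong\mathbb{A}^0$. Your option~2 via Losev does not help in the rigid case: if $e$ is rigid the only pair $(\li,e_0)$ with $\Oo_e=\Ind_\li^\h(\Oo_{e_0})$ is $(\h,e)$ itself, so the map $\xi_\li^\ast$ is the identity and transfers nothing. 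Finally, your remark about irreducibility of each $X_i$ in the converse is superfluous: the surjection from the singleton $\Comp\,\EE$ onto $\bigsqcup_i\Comp\,X_i$ already forces there to be a single index $i$, regardless of how many components $X_i$ has.
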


\p The proof of the theorem is completed in Theorem~\ref{class}. Our method is to prove a general criterion to ensure the polynomiality of $U(\h,e)^\ab$.
For the proof of that criterion we do not need to assume that our Lie algebra is classical and so we work with the Lie algebra $\tg$ of an arbitrary reductive group throughout Section~\ref{polynomialityofquot}.
The criterion states that if the orbit of $e$ is not rigid, and the maximal rank of sheets containing $e$ coincides with $c(e) = \codim_{\tg_e}\,  [\tg_e \tg_e]$ then $U(\tg,e)^\ab$ is a polynomial algebra in
$c(e)$ variables. Given Theorem~\ref{firstimpressions} our method of proof for the previous theorem should now be clear.

\p In \ref{quotientsintro} we noted that the stabiliser group $K_\phi$ acts upon $U(\h,e)^\ab$ by algebra automorphisms, and that $K_\phi^\circ$ fixes every two-sided ideal.
It follows that $\Gamma$ acts upon $\EE$, and in \ref{bigtheoremF}
we established a bijection between the multiplicity free primitive ideals of $U(\h)$ and the fixed point space $\EE^\Gamma$. Another motivation for studying this space comes
from \cite{Lo4} where it is explained that quantisations of the $K$-equivariant covers of $\Oo_e$ are in bijective correspondence with $\EE^\Gamma$. Once again we are interested
in the geometry of $\EE^\Gamma$. The answer is surprisingly uniform:
\begin{thm}
The variety $\EE^\Gamma$ is always isomorphic to an affine space $\mathbb{A}_\C^d$.
\end{thm}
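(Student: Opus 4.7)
The plan is to settle this in two stages: first handle the case when $e$ is non-singular, then reduce the general case to it via Losev's embedding (Theorem~\ref{losevembed}). When $\lambda$ is non-singular the previous theorem yields $\EE \cong \mathbb{A}_\C^{c(\lambda)}$. The trivial one dimensional representation, namely the augmentation ideal of $U(\h,e)^\ab$, is $\Gamma$-stable, so $\Gamma$ fixes a closed point $p_0 \in \EE$. Because $\chr(\K)=0$ and $\Gamma$ is finite, complete reducibility lets me choose a $\Gamma$-stable complement to $\mathfrak{m}_{p_0}^2$ inside $\mathfrak{m}_{p_0}$, and the Jacobian criterion promotes this to a polynomial generating set for $U(\h,e)^\ab$ on which $\Gamma$ acts linearly. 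Consequently the $\Gamma$-action on $\EE$ is linearisable, and $\EE^\Gamma$ becomes the linear subspace of $\Gamma$-invariant vectors, which is an affine space.

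For general $\lambda$ I would select a sheet $\SS$ containing $e$ with data $(\li, \Oo_{e_0})/K$. The orbit $\Oo_{e_0}$ is rigid in $\li$, hence occupies a unique sheet there, so by Theorem~\ref{firstimpressions} the partition of $e_0$ is non-singular, and the first stage applies to give that $\EE(\li, e_0)^{\Gamma_L}$ is an affine space, where $\Gamma_L$ denotes the component group of $e_0$ inside $L$. Losev's theorem then supplies a finite injection $\xi_\li^*: \EE(\li, e_0) \hookrightarrow \EE$, and the goal becomes to arrange $(\li, e_0)$ so that $\xi_\li^*$ intertwines the $\Gamma_L$- and $\Gamma$-actions and restricts to an isomorphism $\EE(\li, e_0)^{\Gamma_L} \xrightarrow{\sim} \EE^\Gamma$. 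A natural candidate is a Levi $\li$ whose corresponding $L$ contains a maximal torus of $K_\phi$, forcing the image of $\xi_\li^*$ to sit inside the $\Gamma$-fixed locus.

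The main obstacle will be establishing the equality $\xi_\li^*\bigl(\EE(\li, e_0)^{\Gamma_L}\bigr) = \EE^\Gamma$. One containment should follow from $\Gamma_L$-equivariance of Losev's construction combined with the compatibility of the $K_\phi$-action described in \ref{ganginz}. The harder containment is that every $\Gamma$-fixed closed point of $\EE$ arises through $\xi_\li^*$; this will require a dimension count employing Premet's surjection (Theorem~\ref{premetsurj}) and the combinatorial identities for $z(\lambda)$ and $c(\lambda)$ developed in Section~\ref{shcnoverview}, together with the observation that $\Gamma$-fixed irreducible components of $\EE$ must correspond to sheets whose Katsylo sections are $\Gamma$-stable.
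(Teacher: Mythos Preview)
Your Stage~1 is close in spirit to the paper but the claim of a canonical ``augmentation ideal'' is unfounded: $U(\h,e)$ is not a Hopf algebra and there is no distinguished one-dimensional representation. The paper handles the non-singular case more directly: Corollary~\ref{poly}(i) shows that when $c(e)=r(e)$ the $K_\phi$-equivariant surjection $S(\mathfrak{c}_e)\twoheadrightarrow U(\h,e)^{\ab}$ from Proposition~\ref{abgen}(i) is an isomorphism of $\Gamma$-modules, so the $\Gamma$-action on $\EE$ is already linear and $\EE^\Gamma\cong(\mathfrak{c}_e^*)^\Gamma$. Your linearisation argument would also work once a $\Gamma$-fixed point is in hand, but that existence is not free.

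Stage~2 has genuine gaps. First, $\xi_\li^*$ is only finite, not an isomorphism, and there is no reason its restriction to fixed loci should be injective; even if it were surjective onto $\EE^\Gamma$, a finite image of an affine space need not be an affine space. Second, your proposed dimension count for the ``harder containment'' is circular: matching dimensions only forces equality if you already know $\EE^\Gamma$ is irreducible, which is exactly what must be proved. Third, the $\Gamma$-equivariance of $\xi_\li^*$ (as opposed to mere $\Gamma_L$-equivariance) needs $K_\phi\subseteq L_{\phi_0}$, which in turn requires $K_e\subseteq P$; this fails for an arbitrary sheet datum. The paper secures it by a specific construction (Lemma~\ref{specialparabolic}) using only Case~1 moves of the KS algorithm, landing on an \emph{almost rigid} (not necessarily rigid) $e_0$.

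The idea you are missing is the polynomiality criterion of Corollary~\ref{poly}(ii). Proposition~\ref{abgen}(ii) shows that, once $\EE^\Gamma\neq\emptyset$, the algebra $U(\h,e)^{\ab}_\Gamma$ is generated by $c_\Gamma(e)=\dim\,\mathfrak{c}_e^\Gamma$ elements; hence it suffices to prove the single inequality $\dim\,\EE^\Gamma\geq c_\Gamma(e)$. The paper does this via Proposition~\ref{mult1}, which uses Losev's embedding not to identify $\EE^\Gamma$ with anything but merely to bound its dimension below by $\dim\,\z(\li)=s(\lambda)$. A separate explicit computation of the $\Gamma$-action on $\mathfrak{c}_e$ (Lemma~\ref{c_egamma}) gives $c_\Gamma(e)=\overline{s}(\lambda)$, and the two quantities agree (the exceptional case being absorbed into the non-singular argument).
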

\noindent The proof is given in Theorem~\ref{class1} and the exact value of $d$ is given in all cases. We postpone our description of $d$ since it depends upon some
combinatorial properties of $\lambda$ which are not so important right now. The majority of Section~\ref{computingvarieties} is spent preparing to prove the theorem, where
several technical lemmas are required.

\p In the final section we apply our new understanding of $\EE^\Gamma$ to the multiplicity free primitive ideals of $U(\h)$. This work has very classical consequences and our final
result follows in the tradition of the classification of completely prime primitive (CPP) ideals mentioned in \ref{goldierank}. The traditional approach to such a classification comes in two parts:
firstly one would try to show that whenever the associated variety of a CPP ideal is the closure of an induced orbit $\Oo_e$, there is a CPP ideal in the enveloping algebra of a Levi subalgebra
whose associated variety is the closure of a rigid orbit which itself induces to $\Oo_e$ and such that the latter ideal induces to the former. Secondly one would try to describe the CPP ideals whose
associated variety is the closure of a rigid orbit. Together these two steps would give a fairly satisfactory inductive classification of the CPP ideals of $U(\h)$. Unfortunately, outside type $\sf A$ there
are CPP ideals whose associated variety is the closure of an induced orbit, which cannot themselves be induced from any CPP ideal (see \cite{BJ} for more detail).

\p As we noted in \ref{bigtheoremF} the multiplicity free ideals are all CPP. We shall show that for these ideals, the first part of the classification described above works as desired.
\begin{thm} Let $I\in\MF_{\Oo_e}$ be a multiplicity-free primitive ideal such that $\Oo_e$ is not rigid. Then there exists a proper parabolic subalgebra $\mathfrak p$ of $\g$ with a Levi decomposition $\pp = \li \ltimes \n$, a rigid nilpotent orbit $\Oo_{e_0}$ in $\li$ and a completely prime primitive ideal $I_0 \in \Prim_{\Oo_{e_0}}$ such that the following hold:
\begin{enumerate}
\item{$\VA(I_0) = \overline{\Oo}_{e_0}$;}
\item{$\Oo_e = \Ind^\h_\li(\Oo_{e_0})$;}
\item{$I = \Ind^\h_\pp(I_0)$.}
\end{enumerate}
\end{thm}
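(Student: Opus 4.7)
The plan is to combine the bijection $\EE^\Gamma \leftrightarrow \MF_{\Oo_e}$ coming from Theorem \ref{bigtheoremF} with Losev's embedding of finite $W$-algebras (Theorem \ref{losevembed}) in order to descend from $U(\h,e)$ to a finite $W$-algebra attached to a smaller Levi subalgebra, and then lift the resulting primitive ideal back by parabolic induction.

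First I would use the hypothesis $I\in\MF_{\Oo_e}$: by the parameterisation of multiplicity free ideals described after Theorem \ref{bigtheoremF}, there is a $\Gamma$-fixed point $\eta\in\EE^\Gamma$ whose underlying one-dimensional $U(\h,e)$-module $W$ satisfies $I=I_{\FF(W)}$. Since $\Oo_e$ is not rigid, Bala-Carter theory together with the classification of sheets recalled in Theorem \ref{classifsheets} and \S\ref{rigidinst} furnishes a proper Levi subalgebra $\li\subsetneq\h$ and a rigid nilpotent orbit $\Oo_{e_0}\subseteq\li$ with $\Oo_e=\Ind^\h_\li(\Oo_{e_0})$. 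Fix any parabolic $\pp=\li\ltimes\n$ having $\li$ as a Levi factor; this settles property (2) and produces the data in which to seek $I_0$.

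Next I would apply Theorem \ref{losevembed} to obtain the injection $\xi_\li:U(\h,e)^\ab\hookrightarrow U(\li,e_0)^\ab$ and the resulting finite morphism $\xi_\li^\ast:\EE(\li,e_0)\to\EE(\h,e)$. Since $\xi_\li^\ast$ is finite it is surjective, so I may pick a preimage $\eta_0\in(\xi_\li^\ast)^{-1}(\eta)$, which corresponds to a one-dimensional $U(\li,e_0)$-module $W_0$. Let $\FF_\li$ denote Skryabin's equivalence for the pair $(\li,e_0)$ and set $I_0:=\Ann_{U(\li)}\FF_\li(W_0)$. Applying Theorem \ref{bigtheoremF} to the reductive subalgebra $\li$ in place of $\h$, part 1 gives $\VA(I_0)=\overline{\Oo}_{e_0}$ (which is property (1)), and part 3 gives $\rank(U(\li)/I_0)\mid\dim W_0=1$, whence $I_0$ is completely prime; thus the CPP hypothesis required of $I_0$ is satisfied, and, since $\Oo_{e_0}$ is rigid, $I_0\in\Prim_{\Oo_{e_0}}$.

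The hard part will be verifying property (3), namely $I=\Ind^\h_\pp(I_0)$. This requires the compatibility of Losev's embedding $\xi_\li$ with parabolic induction of two-sided ideals: concretely, the assertion that whenever $\xi_\li^\ast(\eta_0)=\eta$ as above, the annihilator in $U(\h)$ of $\FF(W)$ coincides with the ideal induced from the annihilator in $U(\li)$ of $\FF_\li(W_0)$. This compatibility is precisely what Losev's construction of $\xi_\li$ via Fedosov quantisation was engineered to produce, and it is proved in the work \cite{Lo3} cited in Theorem \ref{losevembed}. Granting that compatibility, we obtain $I=I_{\FF(W)}=\Ind^\h_\pp(I_0)$, which establishes (3) and completes the proof. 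The substantive work therefore reduces to invoking Losev's parabolic induction theorem correctly; the remainder of the argument is a formal manipulation of the bijection between $\EE^\Gamma$ and $\MF_{\Oo_e}$.
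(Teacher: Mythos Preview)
Your overall strategy coincides with the paper's: represent $I$ by a $\Gamma$-fixed point $\eta\in\EE(\h,e)^\Gamma$, lift it along $\xi_\li^\ast$ to some $\eta_0\in\EE(\li,e_0)$, take $I_0$ to be the primitive ideal of $U(\li)$ attached to $\eta_0$ via Skryabin's equivalence, and then conclude $I=\Ind_\li^\h(I_0)$ by the commutative square from \cite[Corollary~6.4.2]{Lo3}. This last compatibility is indeed a general fact about Losev's map and applies to any pair $(\li,e_0)$ with $\Oo_e=\Ind_\li^\h(\Oo_{e_0})$, so your invocation of it is legitimate.

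The one point that needs tightening is the sentence ``since $\xi_\li^\ast$ is finite it is surjective''. Finiteness alone does not give surjectivity (a closed immersion is finite). What you need is that $\xi_\li$ is an \emph{injective} ring homomorphism (which you did quote from Theorem~\ref{losevembed}) together with finiteness of the extension; then the extension is integral and Lying Over gives surjectivity of $\xi_\li^\ast$ on maximal spectra. Once this is said correctly, any $\eta\in\EE(\h,e)^\Gamma\subseteq\EE(\h,e)$ has a preimage $\eta_0$, and the rest of your argument goes through.

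With that fix your route is actually shorter than the paper's. The paper does not argue global surjectivity of $\xi_\li^\ast$; instead it proves a dedicated Proposition asserting that for a \emph{carefully chosen} pair $(\li,e_0)$ one has $\EE(\h,e)^\Gamma\subseteq\xi_\li^\ast\,\EE(\li,e_0)$. The proof of that Proposition splits into the non-singular case (where Theorem~\ref{class} says $\EE(\h,e)$ is an affine space of the same dimension as $\EE(\li,e_0)$, forcing surjectivity) and the singular case (where one first passes to an almost rigid element using Lemma~\ref{specialparabolic} and the description of $\EE^\Gamma$ in Theorem~\ref{class1}, and then reduces to the non-singular case). Thus the paper leans on the structural results about $\EE$ and $\EE^\Gamma$ developed throughout the chapter, whereas your argument extracts the preimage directly from the injectivity clause of Theorem~\ref{losevembed} and needs none of that machinery.
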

\noindent The theorem is restated in \ref{E} and proven over the course of Section~\ref{CPPideals}.

\chapter{The Invariant Theory of Centralisers}
\section{The elementary invariants}\label{elemInv}
\renewcommand{\theparno}{\thesection.\arabic{parno}}
\renewcommand{\p}{\refstepcounter{parno}\noindent\textbf{\theparno .} \space} 
\setcounter{parno}{0}

\p Throughout this chapter the characteristic of $\K$ is assumed to be positive unless otherwise stated. We assume the notation of \ref{genlinnots} so that $G = GL(V)$. For the entirety of this section fix $e \in \Ni(\g)$ with associated partition $\lambda$. Our first task is to introduce certain distinguished elements of $\K[\g_e^\ast]^{G_e}$. These invariants are defined combinatorially and their history shall be discussed in Remark~\ref{comparisonrem} below. The first step is to introduce \emph{the sequence of invariant degrees}:
\begin{eqnarray*}
(d_1, ..., d_N) := (\overbrace{1,1,...,1}^{\lambda_1 \text{ 1's}},\overbrace{2,...,2}^{\lambda_2 \text{ 2's}},...,\overbrace{n,...,n}^{\lambda_n \text{ n's}})
\end{eqnarray*}

\p Suppose $\lambda$ has length $n$ and write $\lambda = (\lambda_1,..., \lambda_n)$. A \emph{composition of} $\lambda$ is a finite sequence $\mu = (\mu_1, ..., \mu_n)$ with $0 \leq \mu_k \leq \lambda_k$ for $k = 1,...,n$. We write $|\mu| = \sum \mu_k$ and let $l(\mu)$ denote the number of $k$ for which $\mu_k$ is nonzero. If $1 \leq k \leq l(\mu)$ then $i_k^{(\mu)}$ denotes the $k^{\text{th}}$ index such that $\mu_{i_k}^{(\mu)} \neq 0$ (ordered so that $i_1^{(\mu)} \leq i_2^{(\mu)} \leq \cdots$). If our choice of composition $\mu$ is clear from the context then we write $i_k = i_k^{(\mu)}$.

\p Fix $r \in \{1,...,N\}$ and set $d = d_r$. Denote by $\mathcal{C}_r$ the set of compositions of $\lambda$ fulfilling $|\mu| = r, l(\mu) = d$ and let $\Sf_d$ denote the symmetric group on $d$ objects. If a choice of $w \in \Sf_d$ and a composition $\mu \in \mathcal{C}_r$ is implicit then we write
\begin{eqnarray*}
s_k = \lambda_{i_{wk}^{(\mu)}} - \lambda_{i_k^{(\mu)}} + \mu_{i_k^{(\mu)}} -1
\end{eqnarray*}
Since $\g_e \cong (\g_e^\ast)^\ast$, the ring of regular functions $\K[\g_e^\ast]$ is spanned by monomials in the linear unary forms $\xi_i^{j,s} : \g_e^\ast \mapsto \K$. For $r=1,...,N$ define functions $\Theta_r : \Sf_{d_r} \times \mathcal{C}_r \mapsto \K[\g_e^\ast]$ by the rule
\begin{eqnarray*}
\Theta_r (w, \mu) = \sgn(w)\xi_{i_1}^{i_{w1}, s_1} \cdots \xi_{i_d}^{i_{wd},s_d}.
\end{eqnarray*}

\p Now the elementary invariants may be defined
\begin{eqnarray}\label{xrreform}
x_r = \sum_{(w,\mu) \in S_d\times \mathcal{C}_r} \Theta_r(w,\mu) \in \K[\g_e^\ast]. 
\end{eqnarray}
Each $x_r$ is homogeneous of degree $d_r$.

\begin{rem}\label{comparisonrem}
\rm \begin{enumerate}
\item{Let's review a brief history of these polynomials. The definition of $x_r$ first appeared in \cite[Conjecture~4.1]{PPY} over $\C$, where it was proposed that this would be the explicit formula for the invariants being studied there, which were denoted ${}^eF_1,...,{}^eF_N$. The same invariants were derived using different methods in \cite{BB}, defined over a different basis, but presented in our notation $x_1,...,x_N$. In \cite[$\S 6$]{Yak2} these bases were shown to coincide so \cite[Conjecture~4.1]{PPY} is equivalent to ${}^eF_i = x_i$, and in \cite[Theorem~15]{Yak2} the conjecture was confirmed.}

\item{It is instructive to examine what happens in the extremal examples of nilpotent orbits: when $e = 0$ or when $e$ is regular. Fix $1\leq r \leq N$. In the former case $$\lambda = (\overbrace{1,1,...,1}^{N \text{ 1's}})$$ and $s_k = \mu_{i_k} -1 = 0$ for $1\leq k \leq d_r$. Furthermore $\xi_i^{j,0}$ is just the $i,j$ elementary matrix with a 1 in the $i^\text{th}$ row and $j^\text{th}$ column, and zeros elsewhere. It follows that the polynomial $x_r$ is just the dual to the $r^\text{th}$ standard  matrix invariant obtained as a coefficient in the characteristic equation (see \cite[Theorem~1.4.1]{Smi} for example).

Now consider the case $e$ regular. By a general result of Dixmier we know that $\g_e$ is abelian. More precisely \cite[Theorem~2]{Yak2} tells us that $\g_e$ is spanned by matrix powers of $e$. Then $\mathcal{C}_r$ contains a single element $(r)$, and $d = d_r = 1$ so that $\Sf_d$ is trivial. Hence $x_r = \xi_1^{1, r-1} = e^{r-1}$ and the invariants are just a basis for $\g_e$.}
\end{enumerate}
\end{rem}

\p Note that all groups, spaces and maps discussed in the current section may equally well be defined over $\C$. The following result is contained in \cite[Proposition~0.1]{PPY}. See also the appendix of \cite{PPY} for a proof using elementary methods, due to Charbonnel.
\begin{thm}\label{BrB}
The polynomials $x_1,...,x_N$ are $G_e$-invariant when defined over $\C$.
\end{thm}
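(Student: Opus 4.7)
The plan is to verify $G_e$-invariance by proving its infinitesimal counterpart and then upgrading to the full group. Working over $\C$, connectedness of $G_e^\circ$ means that $\g_e$-invariance is equivalent to $G_e^\circ$-invariance; promoting this to $G_e$-invariance will follow from interpreting $x_r$ as the value of an $\Ad(G_e)$-equivariant matrix expression built from the $\xi_i^{j,s}$ lifted to endomorphisms of $V$, under which the action of representatives for $G_e/G_e^\circ$ is manifest. The bulk of the work lies in the infinitesimal check.

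First I would derive the Lie bracket on the spanning set $\{\xi_i^{j,s}\}$ of $\g_e$. A direct calculation from the definition of $\xi_i^{j,s}$ as a linear map on $V=\spn\{e^r w_k\}$ yields
\[
  [\xi_a^{b,t},\,\xi_i^{j,s}] \;=\; \delta_{a,j}\,\xi_i^{b,\,s+t} \;-\; \delta_{i,b}\,\xi_a^{j,\,s+t},
\]
with the convention that $\xi_i^{j,u}=0$ whenever $(i,j,u)$ lies outside the range prescribed in Lemma~\ref{gebasis}. Applying $\ad(\xi_a^{b,t})$ to (\ref{xrreform}) via the Leibniz rule then expresses $\ad(\xi_a^{b,t})\cdot x_r$ as a signed sum over quadruples $(w,\mu,k,\varepsilon)$, where $k\in\{1,\dots,d\}$ records the bracketed factor and $\varepsilon\in\{+,-\}$ labels one of the two monomials produced by the bracket formula.

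The key combinatorial step is to exhibit an involution on the contributing quadruples which swaps the two values of $\varepsilon$ while reversing $\sgn(w)$, so that paired contributions cancel. The natural candidate is an adjacent-transposition operation: given a ``$+$''-type term arising from a factor where $\delta_{a,i_{wk}}=1$ survives, one locates a neighbouring factor $k'$ and a matching modification $\mu'$ of the composition so that the transposed permutation $w\cdot(k,k')$ produces a ``$-$''-type term with the same monomial content. The resulting sign change $\sgn(w\cdot(k,k'))=-\sgn(w)$ then enforces cancellation.

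The main obstacle will be the boundary bookkeeping: when $s_k+t$ exceeds the admissible range, the bracket vanishes, and its putative partner under the proposed involution must also be absent. Here the precise constraints $0\le \mu_i\le\lambda_i$, $|\mu|=r$, and $l(\mu)=d$ defining $\mathcal{C}_r$ are calibrated so that exponent-out-of-range contributions pair with compositions $\mu'$ lying outside $\mathcal{C}_r$; the involution therefore restricts well to the surviving terms, and exhaustive cancellation gives $\ad(\xi_a^{b,t})\cdot x_r = 0$. Once infinitesimal invariance is settled, the upgrade to $G_e$-invariance is routine from the matrix-theoretic reading of $x_r$, completing the proof.
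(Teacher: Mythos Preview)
The paper does not give its own proof of this theorem: it is quoted verbatim from \cite[Proposition~0.1]{PPY}, with a pointer to Charbonnel's elementary argument in the appendix of that paper. In \cite{PPY} the invariants ${}^eF_r$ are obtained conceptually, as the top Slodowy-graded parts of the restrictions to $e+\g_f$ of the standard generators of $\K[\g]^G$; their $G_e$-invariance is then automatic from the equivariance of restriction. The identification ${}^eF_r=x_r$ with the explicit combinatorial formula (\ref{xrreform}) is the content of \cite[Theorem~15]{Yak2} (see Remark~\ref{comparisonrem}). So the ``proof'' in the paper is: construct invariants abstractly, then recognise them as $x_r$.

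Your route---a direct Leibniz expansion of $\ad(\xi_a^{b,t})\cdot x_r$ followed by a sign-reversing involution---is in the spirit of Charbonnel's elementary approach rather than the conceptual one above, but the sketch you give has a real gap. For fixed $(w,\mu)$ the lower indices $i_1<\cdots<i_d$ are distinct, so there is at most \emph{one} $k$ with $i_{wk}=a$ (a single ``$+$'' contribution) and at most one $k$ with $i_k=b$ (a single ``$-$'' contribution). Consequently the cancellation cannot be arranged locally inside a single $(w,\mu)$ by an ``adjacent transposition''; you must match the ``$+$'' term of $(w,\mu)$ against the ``$-$'' term of a \emph{different} pair $(w',\mu')$, and this forces you to say precisely how $\mu'$ is obtained from $\mu$ (e.g.\ replacing the support index $a$ by $b$ and adjusting the part at $b$, including the case $\mu_b=0$). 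The verification that the exponent shifts $s_k+t$ land in the admissible range for $(w',\mu')$ exactly when they do for $(w,\mu)$ is the substance of the argument, and your paragraph on ``boundary bookkeeping'' asserts rather than proves it. Finally, the passage from $G_e^\circ$ to $G_e$ is a non-issue: for $G=GL(V)$ the centraliser $G_e$ is the unit group of the associative algebra $\End(V)_e$, which is Zariski-open in a vector space and hence connected, so $G_e=G_e^\circ$.
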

We use reduction modulo $p$ to obtain the following.
\begin{cor}\label{invariants}
The polynomials $x_1,...,x_N$ are $G_e$-invariant when defined over $\K$.
\end{cor}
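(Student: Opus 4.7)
The plan is to deduce the corollary from Theorem \ref{BrB} by a reduction-modulo-$p$ argument. The key observation is that the nilpotent $e$, the centraliser $\g_e$, the group $G_e$, the conjugation action of $G_e$ on $\g_e$, and the polynomials $x_r$ all admit natural integral models, and the invariance statement becomes a polynomial identity with integer coefficients that can be transported from $\C$ to $\K$ via torsion-freeness.

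Concretely, I would work over $\Z$: set $V^\Z := \Z^N$, let $e$ be the Jordan matrix of type $\lambda$ (whose entries are $0$'s and $1$'s), and choose the $w_i$ to be appropriate standard basis vectors of $V^\Z$. Then the basis elements $\xi_i^{j,s}$ of Lemma \ref{gebasis} have integer entries and span a free $\Z$-submodule $\g_e^\Z \subseteq \g^\Z = \End(V^\Z)$, while $x_r$ is, by inspection of (\ref{xrreform}), an integer linear combination of monomials in the dual basis. Because the combinatorial formula for $\dim \g_e$ depends only on $\lambda$, the canonical map $\g_e^\Z \otimes_\Z \K \to \g_e$ is an isomorphism of centralisers for every field $\K$, and consequently the $\K$-points of $G_e$ are described uniformly as $\{\sum_\alpha y_\alpha \xi_\alpha : y_\alpha \in \K,\ \det(\sum y_\alpha \xi_\alpha) \neq 0\}$. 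This realises $G_e$ as an affine $\Z$-scheme with coordinate ring $\Z[y_\alpha][\det^{-1}]$ — a localisation of a polynomial ring over $\Z$, and hence $\Z$-torsion-free; the same holds for $\g_e$ and for the product.

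With this setup in hand, the conclusion is nearly formal. The regular function $P := \psi^\ast x_r - \pi_2^\ast x_r$ on $G_e \times \g_e$, where $\psi(g,M) = g^{-1}Mg$ and $\pi_2$ is the second projection, lies in the torsion-free $\Z$-algebra $\Z[y_\alpha, m_\alpha][\det^{-1}]$. Theorem \ref{BrB} asserts that $P$ vanishes after base change to $\C$; torsion-freeness then forces $P=0$ already in $\Z[y_\alpha, m_\alpha][\det^{-1}]$, and reducing modulo $p$ delivers the desired $G_e$-invariance over $\K$. The only delicate step is checking that the basis $\{\xi_i^{j,s}\}$ really does span the centraliser in every characteristic, but this is automatic from the well-known fact that $\dim \g_e$ depends only on the Jordan partition of $e$, so that the $\Z$-linearly independent family $\{\xi_i^{j,s}\}$ must fill out $\g_e(\K)$ by dimension count.
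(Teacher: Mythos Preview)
Your argument is correct and proceeds by the same reduction-modulo-$p$ principle as the paper, but the execution is genuinely different. The paper does not work with $G_e$ as a single integral scheme; instead it singles out explicit generators of $G_e$ --- the one-parameter unipotent families $g_t = 1 + t\xi_i^{j,s}$ (with $i\neq j$), $h_t = 1 + t\xi_i^{i,r}$ (with $r>0$), and a diagonal torus $T$ acting by scalars on each Jordan block space $V[i]$ --- and treats these separately. For each unipotent family it expands $\Ad(g_t)x_r$ as a polynomial in the formal variable $t$ with coefficients in $S(M_N(\Z))$, uses Theorem~\ref{BrB} to kill the non-constant coefficients over $\Z$, and then specialises $t$ to $\K$; for the torus it performs a direct weight computation showing that each monomial $\Theta_r(w,\mu)$ is $T$-invariant. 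Finally it argues that the closed subgroup generated by these families is connected with Lie algebra equal to $\g_e$, hence equals $G_e$.

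Your approach packages all of this into a single torsion-freeness step by observing that $G_e$ is the open locus $\{\det\neq 0\}$ inside the associative centraliser algebra, so that the coordinate ring of $G_e \times \g_e$ is a localisation of an integral polynomial ring. This is more economical and avoids both the explicit generator analysis and the separate torus calculation. The paper's route, on the other hand, makes visible the structure of $G_e$ (its decomposition into unipotent and toral pieces) and gives explicit inverses for the unipotent generators, which is occasionally useful elsewhere in the paper (for instance in Lemma~\ref{alpha}). Either argument is complete; yours is the cleaner one if all you need is the invariance statement itself.
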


\begin{proof}
We proceed with all maps and spaces defined over $\C$. The matrix $\xi_i^{j,s}$ is nilpotent when $i \neq j$ or $s>0$. The elements $\xi_i^{i,0}$ are semisimple. We shall denote by $M_N(\C)$ the ring of $N\times N$ matrices over $\C$ with the usual associative multiplication and denote by $M_N$ the $\Z$-lattice in $M_N(\C)$ arising from the inclusion $\Z \subseteq \C$. Let $t$ be an indeterminate and denote by $M_N(\C)[t]$ the ring of polynomials in $t$ with coefficients in $M_N(\C)$. Let $i \neq j$, let $\lambda_j - \min(\lambda_i, \lambda_j) \leq s < \lambda_j$ and $0 < r < \lambda_i$. Consider the matrices
\begin{eqnarray*}
g_{t} = 1 + t\xi_i^{j,s} \text{ and } h_{t} = 1 + t\xi_i^{i,r}.
\end{eqnarray*}
They both lie in $M_N(\C)[t]$ and the reader may verify that their inverses are
\begin{eqnarray*}
g_{t}^{-1} = 1 - t\xi_i^{j,s} \text{ and } h_t^{-1} = 1 + \sum_{k=1}^{\infty} (-1)^k t^k \xi_i^{i,kr}.
\end{eqnarray*}
Since all coefficients are integral we may consider the above maps to be elements $M_N[t]$. The elements $x_r$ are also elements of $S(M_N)$, so we get $\Ad(g_t) x_r \in S(M_N[t])$. Hence the expression $\Ad(g_t) x_r$ can be written $A_{0,r} + t A_{1,r} + t^2 A_{2,r} + \cdots$ where $A_{k,r} \in S(M_N)$. By Theorem~\ref{BrB} we see that $x_r$ is $g_t$-stable over $\C$ so $A_{0,r} = x_r$ and $A_{k,r} = 0$ for all $k > 0$. Let $\tilde{A}_{k,r} = A \otimes 1 \in S(\gl_N[t]) \otimes_{\Z} \K$. Then for each $r$ we still have $\tilde{A}_{0,r} = x_r \otimes 1 \in S(M_N)\otimes_{\Z} \K$ and $\tilde{A}_{k,r}= 0 $ for $k > 0$. But $x_r \otimes 1$ is just $x_r$ defined over $\K$. If we now allow $t$ to vary over $\K$ then we conclude that $g_t$ fixes each $x_r$ over $\K$. Similarly each $h_t$ fixes each $x_r$ over $\K$. We proceed over $\K$. Let $U$ denote the subgroup of $G_e$ generated by the affine lines $\{1 + t\xi_i^{j,s}: t \in \K\}$ and $\{1 + t \xi_i^{i,r}: t \in \K\}$ where $i,j,s$ and $r$ vary in the range 
$i \neq j$, $\lambda_j - \min(\lambda_i, \lambda_j) \leq s < \lambda_j$ and $0 < r < \lambda_i$. The above shows that the generators of $U$ fix each $x_r$ over $\K$; so must the closure $\bar{U}$.

Consider the group $T \subset G$ in which the elements act on each $V[i]$ $(i = 1,...,n)$ by an arbitrary nozero scalar. The group is toral, closed, contained in $G_e$ and $\Lie(T) = \spn\{ \xi_i^{i,0} : 1\leq i \leq n\}$. We shall show that $T$ fixes each $x_r$. Recall that $\xi_{i_k}^{i_{wk}, s_k} \in \Hom(V[i_k], V[i_{wk}])$ so if $h \in T$ is defined by $hw_i = t_i w_i$ for $t_i \in \K$ then
\begin{eqnarray*}
\Ad(h)\xi_{i_k}^{i_{wk}, s_k} = (t_{i_{k}}^{-1}t_{i_{wk}}) \xi_{i_k}^{i_{wk}, s_k}
\end{eqnarray*}
and
\begin{eqnarray*}
\Ad(h)x_r = \sum_{(w,\mu) \in \Sf_d \times \mathcal{C}_r}  \sgn(w)\overbrace{(\prod_{k=1}^dt_{i_{k}}^{-1}t_{i_{wk}})}^{=1} \xi_{i_1}^{i_{w1}, s_1} \cdots \xi_{i_d}^{i_{wd}, s_d} = x_r
\end{eqnarray*}
Now the group $\langle \bar{U}, T \rangle$ generated by $\bar{U}$ and $T$ is closed, connected and its Lie algebra contains a basis for $\g_e$. Hence $\langle \bar{U}, T \rangle = G_e$. Both $\bar{U}$ and $T$ fix $x_r$; so must $G_e$.
\end{proof}

\p Since $\K[\g_e^\ast]$ is spanned by monomials in the $\xi_i^{j,s}$, the involution $\sigma : \g_e \mapsto \g_e$ introduced in \ref{classnots2} extends uniquely to a $\K$-algebra automorphism of $\K[\g_e^\ast] \mapsto \K[\g_e^\ast]$ which we shall also denote by $\sigma$. This extension is clearly involutory. In \ref{sigmaint} we noted that
\begin{eqnarray*}
\sigma(\xi_i^{j,\lambda_j - 1 - s}) = \varepsilon_{i,j,s}\xi_{j'}^{i', \lambda_i - 1 - s}
\end{eqnarray*}
\noindent The following proposition shall be pivotal to our understanding of the symmetric invariants for centralisers in classical cases.
\begin{prop}\label{sigmaxr}
$\sigma(x_r) = (-1)^r x_r$
\end{prop}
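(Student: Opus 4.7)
The plan is to show termwise that $\sigma$ permutes the summands $\Theta_r(w,\mu)$ of $x_r$ up to a common scalar $(-1)^r$, by constructing an involution on $\Sf_{d_r}\times\mathcal{C}_r$. Given $(w,\mu)$ with $\mathrm{supp}(\mu)=\{i_1<\cdots<i_d\}$, one observes that $s_k=\lambda_{i_{wk}}-1-(\lambda_{i_k}-\mu_{i_k})$, so applying (\ref{sigmaaction}) with $s=\lambda_{i_k}-\mu_{i_k}$ yields
$$\sigma\bigl(\xi_{i_k}^{i_{wk},s_k}\bigr)=\varepsilon_{i_k,i_{wk},\,\lambda_{i_k}-\mu_{i_k}}\,\xi_{(i_{wk})'}^{i_k',\,\mu_{i_k}-1},$$
and hence $\sigma(\Theta_r(w,\mu))=\sgn(w)\prod_k \varepsilon_{i_k,i_{wk},\lambda_{i_k}-\mu_{i_k}}\,\xi_{(i_{wk})'}^{i_k',\mu_{i_k}-1}$.

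Next I would reorganise this monomial into the canonical form appearing in the definition of $x_r$. It is cleanest to reparameterise $\Sf_{d_r}\times\mathcal{C}_r$ by triples $(I,\pi,\mu)$ with $I\subseteq\{1,\dots,n\}$ of size $d_r$, $\pi:I\to I$ a bijection, and $\mu$ supported on $I$. Define
$$I':=\{i':i\in I\},\qquad \pi':={}'\circ\pi^{-1}\circ{}',\qquad \mu'_{\pi(i)'}:=\mu_i+\lambda_{\pi(i)}-\lambda_i\ \ (i\in I).$$
Using $\lambda_{k'}=\lambda_k$ from Lemma~\ref{nilpotents}(1), one verifies that $(I',\pi',\mu')$ satisfies the constraints defining $\mathcal{C}_r$ (positivity, upper bound $\mu'_{k}\le\lambda_{k}$, and the lower bound $\mu_i>\max(\lambda_i-\lambda_{\pi(i)},0)$ needed for $\xi_i^{\pi(i),s}$ to be defined), and that applying the construction twice returns the original triple. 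Reordering the product $\prod_k \xi_{(i_{wk})'}^{i_k',\mu_{i_k}-1}$ by the permutation sorting $I'$ produces a sign $\sgn(w')$ where $w'$ is conjugate to $w^{-1}$, so $\sgn(w')=\sgn(w)$; combined with the explicit $\sgn(w)$ from $\Theta_r(w,\mu)$ the two signs cancel, leaving $\Theta_r(w',\mu')$ itself.

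It then remains to evaluate the product of $\varepsilon$'s. Using Lemma~\ref{spanningdetails}(1),
$$\prod_{k=1}^{d}\varepsilon_{i_k,i_{wk},\,\lambda_{i_k}-\mu_{i_k}}=(-1)^{\sum_k(\lambda_{i_{wk}}-\lambda_{i_k}+\mu_{i_k})}\prod_{k=1}^{d}\varpi_{i_k\le i_k'}\varpi_{i_{wk}\le i_{wk}'}.$$
The $\lambda$-contributions telescope because $w$ permutes the $i_k$, and $\sum_k\mu_{i_k}=r$, so the exponent of $-1$ is exactly $r$; meanwhile the same reindexing shows each $\varpi$-factor appears twice, so that product equals $1$. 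Thus $\sigma(\Theta_r(w,\mu))=(-1)^r\Theta_r(w',\mu')$ for every $(w,\mu)$, and summing over the involution yields $\sigma(x_r)=(-1)^r x_r$.

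The only delicate step is the bookkeeping behind the involution: confirming that $(I',\pi',\mu')\in\mathcal{C}_r$ and that the map is truly involutive. Each such check reduces, after a short calculation, to the identities $\lambda_{k'}=\lambda_k$ and $\varpi_{i\le i'}\varpi_{i'\le i}=\epsilon(-1)^{\lambda_i-1}$; the sign computation and the reordering are both routine by comparison.
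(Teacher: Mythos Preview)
Your approach is essentially the paper's own: construct a map $(w,\mu)\mapsto(w',\mu')$ on the pairs with $\Theta_r(w,\mu)\neq 0$ and verify $\sigma(\Theta_r(w,\mu))=(-1)^r\Theta_r(w',\mu')$. The paper uses precisely your $j_k=(i_{wk})'$, your $\mu'_{j_k}=\mu_{i_k}+\lambda_{i_{wk}}-\lambda_{i_k}$, and your conjugacy ${}'\circ\omega^{-1}\circ{}'$ to obtain $\sgn(w')=\sgn(w)$, and evaluates the product of $\varepsilon$'s in the same way via Lemma~\ref{spanningdetails}(1). Your explicit check that the construction is an involution is a tidy addition the paper leaves implicit.

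One sentence needs correcting. You write that ``reordering the product $\prod_k\xi_{(i_{wk})'}^{i_k',\mu_{i_k}-1}$ by the permutation sorting $I'$ produces a sign $\sgn(w')$'' and that this sign then ``cancels'' with the explicit $\sgn(w)$. That is not what happens: the product lives in the commutative algebra $S(\g_e)=\K[\g_e^\ast]$, so reordering costs nothing. What is actually required is that the prefactor $\sgn(w)$ you are carrying \emph{equals} the $\sgn(w')$ appearing in the definition of $\Theta_r(w',\mu')$, so that $\sgn(w)\cdot[\text{monomial}]=\sgn(w')\cdot[\text{monomial}]=\Theta_r(w',\mu')$. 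Your conjugacy argument gives exactly this equality, so the proof is sound; just rewrite that sentence so it does not suggest an antisymmetry that is not present.
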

\begin{proof}
Fix $(w,\mu) \in \Sf_{d} \times \mathcal{C}_r$. We shall show that if $\Theta_r(w,\mu) \neq 0$ then there exists a pair $(w', \mu') \in \Sf_d \times \mathcal{C}_r$ such that $\sigma(\Theta_r(w,\mu)) = (-1)^{r}\Theta_r(w', \mu')$ and $\Theta_r(w',\mu') \neq 0$. In view of the definition of $x_r$ the proposition shall follow. By the above formula we have
\begin{eqnarray*}
\sigma(\Theta_r(w,\mu)) &=& \sgn(w)\sigma(\xi_{i_1}^{i_{w1}, s_1} \cdots \xi_{i_d}^{i_{wd}, s_d})\\ &=& (\prod_{k=1}^d\varepsilon_{i_k, i_{wk}, \lambda_{i_k} - \mu_{i_k}})\sgn(w) \xi_{(i_{w1})'}^{(i_{1})', \mu_{i_1} - 1} \cdots \xi_{(i_{wd})'}^{(i_{d})',\mu_{i_d} - 1}
\end{eqnarray*}
We must examine the coefficient 
\begin{eqnarray*}
\prod_{k=1}^d\varepsilon_{i_k, i_{wk}, \lambda_{i_k} - \mu_{i_k}} &=& \prod_{k=1}^d (-1)^{\lambda_{i_{wk}} - \lambda_{i_k} + \mu_{i_k}} \partial_{i_k \leq i_k'} \partial_{i_{wk} \leq (i_{wk})'}\\ &=& (\prod_{k=1}^d (-1)^{\lambda_{i_{k}}} \partial_{i_{k} \leq i_{k}'})(\prod_{k=1}^d (-1)^{\lambda_{i_{wk}}} \partial_{i_{wk} \leq (i_{wk})'})(\prod_{k=1}^d (-1)^{\mu_{i_k}})
\end{eqnarray*}
By definition $|\mu| = r$ implies $\prod_{k=1}^d (-1)^{\mu_{i_k}} = (-1)^{\sum_{k=1}^d \mu_{i_k}} = (-1)^r$. Since $w$ is a permutation $\prod_{k=1}^d (-1)^{\lambda_{i_{k}}} \partial_{i_{k} \leq i_{k}'} = \prod_{k=1}^d (-1)^{\lambda_{i_{wk}}} \partial_{i_{wk} \leq i_{wk}'}$ and the product $\prod_{k=1}^d\varepsilon_{i_k, i_{wk}, \lambda_{i_k} - \mu_{i_k}}$ reduces to $(-1)^r$.

It remains to prove that $\Theta_r(w', \mu') = \sgn(w) \xi_{(i_{w1})'}^{(i_{1})', s_1+\lambda_{i_{1}} - \lambda_{i_{w1}}} \cdots \xi_{(i_{wd})'}^{(i_{d})',s_d+\lambda_{i_{d}} - \lambda_{i_{wd}}}$ for some choice of $(w', \mu') \in \Sf_d\times \mathcal{C}_r$. For $k = 1,...,d$ let $j_k = (i_{wk})'$. Let $\mu'$ be the composition of $\lambda$ with nonzero entries in positions indexed by the $j_k$ such that $\mu'_{j_k} = \mu_{i_k} + \lambda_{i_{wk}} - \lambda_{i_k}$. We have $l(\mu') = d$ by definition. Furthermore $$|\mu'| = \sum_{i=1}^k \mu'_{j_k} = \sum_{i=1}^k (\mu_{i_k} + \lambda_{i_{wk}} - \lambda_{i_k}) = \sum_{i=1}^k \mu_{i_k} = |\mu| = r.$$ To see that $0 \leq \mu'_k \leq \lambda_k$ we use the fact that $\Theta_r(w,\mu) \neq 0$. We have $0 \leq \lambda_{i_{wk}}-1-s_k \leq \min(\lambda_{i_k}, \lambda_{i_{wk}})$ and by definition of $s_k$ we have $0 \leq \lambda_{i_k} - \mu_{i_k} \leq \lambda_{i_{wk}}$. The left hand inequality gives us $\mu'_{j_k} \leq \lambda_{i_{wk}} = \lambda_{j_k}$, whilst the right 
hand inequality tells us that $0 \leq \lambda_{i_{wk}} - (\lambda_{i_k} - \mu_{i_k}) = \mu'_{j_k}$. It follows that $0 \leq \mu'_k \leq \lambda_k$ for $k=1,...,n$ so that $\mu'$ is a composition of $\lambda$. We have shown that $\mu' \in \mathcal{C}_r$.

We now aim to define $w'\in \Sf_d$. If we define an element $\omega \in \Sf_n$ by $\omega (i_k) = i_{wk}$ for $k = 1,...,d$ and $\omega( i ) = i$ for all $\mu_i = 0$, then $\sgn(\omega) = \sgn(w)$. The element $\nu$ of $S_n$ which sends $(i_{wk})'$ to $(i_k)'$ is $' \circ \omega^{-1} \circ '$, where $\circ$ denotes composition of permutations in $\Sf_n$. But $'$ is an involution so $\nu$ and $\omega^{-1}$ are conjugate in $\Sf_n$ implying $\sgn(\nu) = \sgn(\omega)$. Now $\nu$ preserves the set $\{j_1, ..., j_d\}$ and acts trivially on its complement in $\{1,...,n\}$, therefore we may take $w'$ to be the unique element of $\Sf_d$ such that $\nu(j_k) = j_{w'k}$ for all $k$. This element will have $\sgn(w') = \sgn(\nu) = \sgn(\omega) = \sgn(w)$.

From the above we get 
\begin{eqnarray*}
j_{w' k} = \nu ((i_{wk})') = (i_{k})'\\ \mu'_{j_{k}} = \mu'_{(i_{wk})'} = \mu_{i_k} + \lambda_{i_{wk}} - \lambda_{i_k}\\
\lambda_{j_{w'k}} = \lambda_{i_k}\\ \lambda_{j_k} = \lambda_{i_{wk}}
\end{eqnarray*}
so that $$\xi_{j_k}^{j_{w'k}, \lambda_{j_{w'k}} - \lambda_{j_k} + \mu'_{j_k} - 1} = \xi_{(i_{wk})'}^{(i_{k})',\mu_{i_k} - 1}.$$ It follows that $(w', \mu')$ is the required pair. Since $\sigma$ is non-degenerate on $\g_e$ and $\Theta_r(w',\mu')$ is a product of terms $\sigma(\xi_{i_k}^{i_{wk}, s_k})$, each with $\xi_{i_k}^{i_{wk}, s_k} \neq 0$, we conclude that $\Theta_r(w',\mu') \neq 0$.
\end{proof}

\p The following corollary is not strictly needed in what follows, however it gives a nice picture of how the elementary invariants behave when restricted. It also provides a generalisation of well known behaviour in the case $e = 0$.
\begin{cor}\label{vanishing1}
The following are true:
\begin{enumerate} 
\item{$x_{r}|_{\mathfrak{k}_e^\ast} = 0$  for $r$ odd;}
\item{$x_{r}|_{(\mathfrak{\pp}_e)^\ast} = 0$ for $r + d_r$ odd.}
\end{enumerate}
\end{cor}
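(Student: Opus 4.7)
The plan is to leverage Proposition~\ref{sigmaxr}, which tells us that $\sigma(x_r) = (-1)^r x_r$ inside $\K[\g_e^\ast]$. The key point is to transfer the eigenvalue information about $x_r$ under $\sigma$ into information about the restriction of $x_r$ to the subvarieties $\kk_e^\ast$ and $\pp_e^\ast$. For this I would first pin down how $\sigma$ acts on the dual side: since $\sigma$ is an involution on $\g_e$ with $+1$-eigenspace $\kk_e$ and $-1$-eigenspace $\pp_e$, its dual acts on $\g_e^\ast$ with the same pattern after identifying $\kk_e^\ast = \{\alpha \in \g_e^\ast : \alpha(\pp_e)=0\}$ and $\pp_e^\ast = \{\alpha \in \g_e^\ast : \alpha(\kk_e)=0\}$. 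Thus $\sigma$ acts trivially on $\kk_e^\ast$ and as multiplication by $-1$ on $\pp_e^\ast$.

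Next I would record the functional meaning of the extended involution on $\K[\g_e^\ast]=S(\g_e)$: for any $f\in \K[\g_e^\ast]$ and $\alpha \in \g_e^\ast$, one has $(\sigma f)(\alpha) = f(\sigma \alpha)$. This is the only piece of non-trivial book-keeping and is immediate from the dual formalism applied to generators $\xi_i^{j,s}\in \g_e$.

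With both ingredients in hand, part 1 is essentially automatic. For any $\alpha \in \kk_e^\ast$, applying $\sigma$ to $x_r$ yields
\[
(-1)^r x_r(\alpha) \;=\; (\sigma x_r)(\alpha) \;=\; x_r(\sigma\alpha) \;=\; x_r(\alpha),
\]
so when $r$ is odd and $\chr(\K)\ne 2$ we deduce $x_r(\alpha)=0$, giving $x_r|_{\kk_e^\ast}=0$. For part 2, fix $\alpha \in \pp_e^\ast$. Since $x_r$ is homogeneous of degree $d_r$ and $\sigma\alpha=-\alpha$, we now obtain
\[
(-1)^r x_r(\alpha) \;=\; (\sigma x_r)(\alpha) \;=\; x_r(-\alpha) \;=\; (-1)^{d_r} x_r(\alpha),
\]
so $((-1)^r-(-1)^{d_r})x_r(\alpha)=0$, forcing $x_r(\alpha)=0$ whenever $r+d_r$ is odd.

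There is no real obstacle: once Proposition~\ref{sigmaxr} is accepted, the proof is purely formal. The only subtlety worth spelling out clearly is the identification of $\kk_e^\ast$ and $\pp_e^\ast$ as $\sigma$-eigenspaces inside $\g_e^\ast$, which underlies the equality $x_r(\sigma\alpha)=x_r(\alpha)$ (resp.\ $x_r(\sigma\alpha)=x_r(-\alpha)$) used in each part.
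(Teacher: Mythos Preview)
Your argument is correct, and it is a cleaner route than the one in the paper. The paper expands $x_r$ in a monomial basis built from bases $\BB_0$ of $\kk_e$ and $\BB_1$ of $\pp_e$, observes that these monomials are $\sigma$-eigenvectors with eigenvalue determined by the parity of the number of $\BB_1$-factors, and then argues monomial by monomial (with a separate case split in part~2 according to whether $r$ or $d_r$ is odd). You instead pass to the dual picture: the functional identity $(\sigma f)(\alpha)=f(\sigma^\ast\alpha)$ together with the explicit action of $\sigma^\ast$ on $\kk_e^\ast$ and $\pp_e^\ast$ reduces both parts to a one-line parity calculation, and the use of homogeneity in part~2 eliminates the case analysis entirely. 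Your approach is more conceptual and avoids touching individual monomials; the paper's approach, while longer, has the minor advantage of making explicit which monomials survive restriction, information that is implicitly reused later (e.g.\ in Lemma~\ref{dees}).
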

\begin{proof}
Let $\BB_0$ be a basis for $\h_e$ and $\BB_1$ a basis for $\pp_e$, so that $\BB_0 \cup \BB_1$ is a basis for $\g_e$. Then the monomials in $\BB_0 \cup \BB_1$ form an eigenbasis for the action of $\sigma$ on $\K[\g_e^\ast]$. The eigenvalues are $\pm1$ depending on the parity of the number of factors coming from $\BB_1$ in a given monomial. 

Now fix $r=1,...,N$ and write $x_r$ in the above eigenbasis. If $r$ is odd then by Proposition~\ref{sigmaxr} we have $\sigma x_r = - x_{r}$. It follows that there are an odd number of factors from $\BB_1$ in each monomial summand of $x_r$. In particular that number of factors is non-zero. Each of these factors vanishes on $\h_e^\ast$; so must $x_r$, whence 1.

If $r + d_r$ is odd then there are two possibilities: either $r$ is odd and $d_r$ even or vice versa. Assume the former so that $x_r$ is a sum of monomials of even degree and $\sigma x_r = - x_r$. There must be an odd number of factors from $\BB_1$ in each monomial and since each monomial has even degree there is also an odd number of factors from $\BB_0$ in each monomial. In particular the number of factors coming from $\BB_0$ is non-zero. Each of these factors restrict to zero on $\pp_e^\ast$ and so must $x_r$. If, on the other hand, $r$ is even and $d_r$ is odd then $\sigma x_r = x_r$ so each monomial summand of $x_r$ contains an even number of factors from $\BB_1$. Since each such monomial has odd degree, there is a non-zero number of factors from $\BB_0$ in each monomial. Again each of these factors restrict to zero on $\pp_e^\ast$, and so does $x_r$. This completes the proof.
\end{proof}

\section{Jacobian loci of the invariants}\label{JacLoc}

\subsection{The general linear case}\label{JacLoc1}
\renewcommand{\theparna}{\thesubsection.\arabic{parna}}
\renewcommand{\p}{\refstepcounter{parna}\noindent\textbf{\theparna .} \space} 
\setcounter{parna}{0}

\p If $f_1,...,f_l \in \K[\g_e^\ast]$ and $U \subseteq \g_e^\ast$ is a subspace then we denote by $$J_U(f_i : i = 1,...,l)$$ the Jacobian locus of the $f_i$ in $U$, ie. those $\gamma \in \g_e^\ast$ for which the restricted differentials $d_\gamma f_1|_U,...,d_\gamma f_l|_U $ are linearly dependent. Make the notation $J = J_{\g_e^\ast}(x_r : r=1,...,N)$. We aim to show that the condition on the codimension of the Jacobian locus in Skryabin's theorem is satisfied for these invariants. We shall prove the following.
\begin{prop}\label{Codimension 2}
\emph{codim}$_{\g_e^\ast} J \geq 2$.
\end{prop}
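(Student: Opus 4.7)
The plan is to adapt the characteristic-zero argument of \cite[$\S 3$]{PPY} to the modular setting, combining the explicit combinatorial form of the invariants from (\ref{xrreform}) with a geometric identification of the Jacobian locus with a classical locus in $\g$. By Theorem~\ref{elashvili} we have $\ind\,\g_e = \rank\,\g = N$, so the number of invariants matches the index, and it will suffice to show that $J$ is a proper closed subset of $\g_e^\ast$ which moreover contains no codimension-one component. Algebraic independence of $x_1,\dots,x_N$ over $\K$ can be verified by exhibiting a single $\gamma_0\in \g_e^\ast$ at which the matrix of partial derivatives has rank $N$ --- a fact that follows either by reduction modulo $p$ from the characteristic-zero statement \cite[Theorem~15]{Yak2}, since the polynomials are defined over $\Z$, or by a direct computation at the functional dual to $e$ itself using the block structure of the basis in Lemma~\ref{gebasis}.

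The core step is a $G_e$-equivariant identification $\Phi: \g_e^\ast \xrightarrow{\sim} e + \g_f$ with the Slodowy slice (via the trace form on $\g$) under which the invariants $x_r$ correspond to restrictions of the classical $GL(V)$-invariants $\sigma_r$ --- the coefficients of the characteristic polynomial --- to the slice. This correspondence is essentially the content of \cite[$\S 6$]{Yak2}, and its combinatorial ingredients survive reduction modulo $p$. Granted the identification, $J$ corresponds under $\Phi$ to the intersection of $e+\g_f$ with the non-regular locus of $\g$. This non-regular locus has codimension $3$ in $\g$ by a classical computation (easily verified for $\gl_2$ directly and then extended by the subregular-orbit dimension count), and Slodowy's transversality of $e+\g_f$ to every $G$-orbit forces its intersection with $e+\g_f$ to have codimension $\geq 2$ in $e+\g_f\cong \g_e^\ast$.

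The hard part will be verifying that the combinatorial identification of $x_r$ with restricted classical invariants descends to characteristic $p$ unchanged, and that Slodowy's transversality is available here. Both are accessible because $\g$ is of type $\sf A$, so every prime is good and an $\sl_2$-triple through $e$ always exists. A secondary technical concern is that the Jacobian rank on the slice could in principle drop further than it does on $\g$, but this is controlled by transversality: $G$-invariance of $\sigma_r$ forces $d_p\sigma_r$ to vanish on $T_p(G\cdot p)$, and the decomposition $\g = T_p(G\cdot p) + \g_f$ ensures that the restriction of the differentials to $\g_f$ captures their full rank on $\g$. This precise matching of Jacobian ranks between the slice and the ambient algebra is what will ultimately let us transfer the codimension bound across $\Phi$.
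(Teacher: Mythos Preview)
Your proposed approach is genuinely different from the paper's. The paper does \emph{not} go through the Slodowy slice or the non-regular locus of $\g$. Instead it argues directly and explicitly: it exhibits two concrete functionals $\alpha,\beta\in\g_e^\ast$ (formula~(\ref{alphabeta})), proves by direct computation with the combinatorial formula~(\ref{xrreform}) that the plane $\K\alpha\oplus\K\beta$ meets $J$ only at $0$ (Lemmas~\ref{beta}--\ref{plane1}), and concludes from the fact that $J$ is closed and conical. The only ingredients are the explicit shape of the $x_r$, the $G_e$-stability of $J$, and a contracting $\K^\times$-action $\rho$ on $\g_e^\ast$ --- no $\sl_2$-triple, no transversality, no identification with classical invariants.

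Your route would be more conceptual, but it has a real gap in positive characteristic. You correctly flag as ``the hard part'' the identification $x_r\leftrightarrow\sigma_r|_{e+\g_f}$ and the transversality of the slice, and then dismiss both with the claim that in type $\sf A$ ``every prime is good and an $\sl_2$-triple through $e$ always exists.'' This is not enough. The identification of the combinatorial $x_r$ with the restricted classical invariants ${}^eF_r$ is a nontrivial theorem even in characteristic zero --- it was conjectured in \cite[Conjecture~4.1]{PPY} and proved only later in \cite[Theorem~15]{Yak2}; see Remark~\ref{comparisonrem} --- and its modular analogue is never established here. Indeed, the whole point of the combinatorial definition and of Corollary~\ref{invariants} is to \emph{bypass} that identification. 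Moreover, Jacobson--Morozov degenerates when $p$ does not exceed the largest Jordan block size, so the $\sl_2$-theoretic decomposition $\g=[\g,e]\oplus\g_f$ and the transversality of $e+\g_f$ to \emph{every} $G$-orbit both require separate justification for small $p$; one would need to replace the triple by an associated cocharacter and redo the Slodowy-type arguments. Your observation that $d\sigma_r$ vanishes on orbit tangent spaces, so that restriction to $\g_f$ preserves Jacobian rank, is correct \emph{given} transversality at the point in question --- but you have not supplied transversality in this generality. The paper's explicit $2$-plane argument is precisely designed to sidestep all of this machinery and to work uniformly for every $p>0$.
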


\p In the style of \cite[\S~3]{PPY} we proceed by identifying a 2-dimensional plane in $\g_e^\ast$ intersecting $J$ only at 0. The proposition will then quickly follow. Calculating the differentials of the invariants polynomials explicitly is an unappealing task, and so we infer the necessary properties implicitly. Fix $\gamma \in \g_e^\ast$ and consider the polynomial
\begin{eqnarray*}
x_r^\gamma &: &\mathfrak{g}_e^\ast \mapsto \K;
\\ & & v \mapsto x_r(\gamma + v).
\end{eqnarray*}
By expanding the definition of the differential and the definition of $x_r^\gamma$ it is easy to see by direct comparison that
\begin{eqnarray}\label{xrdiff}
x_r^\gamma = \text{ constant } + d_{\gamma} x_r + \text{ higher degree polynomials in } \K[\g_e^\ast]
\end{eqnarray}

Hence in order to show that $d_{\gamma} x_1,..., d_{\gamma} x_N$ are linearly independent it will suffice to show that the linear components of $x_1^\gamma|_U,...,x_N^\gamma|_U$ are linearly independent for some appropriate choice of vector space $U \subseteq \g_e^\ast$.

\p Let us define
\begin{eqnarray}\label{alphabeta}
\alpha = \sum_{1 \leq i\leq n} a_i (\xi_i^{i,\lambda_i-1})^\ast \text{ and } \beta = \sum_{i=2}^{n} (\xi_{i-1}^{i,\lambda_{i}-1})^\ast
\end{eqnarray}
where the coefficients $a_i \in \K$ are subject to the constraint that $a_i = a_j$ implies $i=j$. These elements were first presented by Yakimova in \cite{Yak1} as examples of regular elements of $\g_e^\ast$. The following lemma is based upon \cite[Lemma~4.2]{BB}.
\begin{lem}\label{beta}
For all $y \in \K^\times$ we have $y \beta \in \g_e^\ast \backslash J$.
\end{lem}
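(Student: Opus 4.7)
My plan is to reduce to linear algebra via \eqref{xrdiff}: since $x_r^{y\beta}(v) = \text{const} + d_{y\beta} x_r(v) + \text{higher degree terms in } v$, showing $y\beta \notin J$ amounts to exhibiting test vectors $v_1, \ldots, v_N \in \g_e^\ast$ for which the matrix $M_{r,s} := d_{y\beta} x_r(v_s)$ is non-singular. The entry $M_{r,s}$ is then the coefficient of $t$ in $x_r(y\beta + tv_s)$, which is straightforward to read off from \eqref{xrreform}.

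Fix $r$ and write $d = d_r$. A monomial $\Theta_r(w,\mu) = \sgn(w)\xi_{i_1}^{i_{w1},s_1}\cdots\xi_{i_d}^{i_{wd},s_d}$ contributes a non-zero linear term only if exactly one of its factors pairs with $v_s$ while the remaining $d-1$ factors evaluate non-trivially on $y\beta$. Since $\beta(\xi_i^{j,s}) \neq 0$ iff $j = i+1$ and $s = \lambda_{i+1}-1$, the latter condition forces $i_{wk} = i_k + 1$ and $\mu_{i_k} = \lambda_{i_k}$ for each non-free index $k$. I would then carry out a short combinatorial analysis: because $w$ is a permutation of $\{1,\ldots,d\}$ and the $i_k$ are strictly increasing, the unique free index must be $k=d$ (otherwise the constraint on $k=d$ is unsatisfiable, $i_d$ being maximal); the support of $\mu$ must form a consecutive block $\{a, a+1, \ldots, a+d-1\} \subseteq \{1,\ldots,n\}$; and $w$ must be the $d$-cycle $(1\,2\,\cdots\,d)$. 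For each admissible starting index $a$ this produces exactly one surviving pair, contributing $(-1)^{d-1} y^{d-1} \cdot v_s(\xi_{a+d-1}^{a,s^{(a)}})$ to $M_{r,s}$, where $s^{(a)} = \lambda_a - \lambda_{a+d-1} + \mu_{a+d-1} - 1$ and $\mu_{a+d-1} = r - \sum_{k=1}^{d-1}\lambda_{a+k-1}$.

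With the surviving monomials classified, I would select $v_r := (\xi_{d_r}^{1,s_r^{(1)}})^\ast$, the covector dual to the surviving factor for $x_r$ with $a=1$. This $v_r$ is a well-defined basis covector because the defining inequality $\sum_{k<d_r}\lambda_k < r \leq \sum_{k\leq d_r}\lambda_k$ guarantees that the parameters lie in the admissible range of Lemma \ref{gebasis}. A direct comparison of indices then shows that $v_s$ pairs non-trivially with a surviving monomial of $x_r$ only when $a = 1$, $d_s = d_r$, and $r = s$. Hence $M_{r,s} = (-1)^{d_r-1} y^{d_r-1} \delta_{r,s}$ is diagonal with non-zero entries on the diagonal since $y \in \K^\times$, and the $d_{y\beta}x_r$ are linearly independent as required.

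The main technical content lies in the combinatorial step identifying the surviving $(w,\mu)$; once this classification is established, the remaining steps—choosing the test vectors and verifying the diagonal form of $M$—reduce to routine index comparisons.
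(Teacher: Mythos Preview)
Your proof is correct and follows essentially the same approach as the paper's. The paper restricts $v$ to the subspace $U = \spn\{(\xi_i^{1,s})^\ast\}$ from the outset, which forces the upper index of the free factor to be $1$ and hence $a=1$ immediately; you instead classify the surviving monomials for arbitrary starting block $a$ and then specialise the test vectors $v_r = (\xi_{d_r}^{1,s_r^{(1)}})^\ast$ (which lie in $U$) to single out $a=1$, but the combinatorial core---free index $k=d$, consecutive support, $w$ the $d$-cycle---and the resulting diagonal pairing are identical.
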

\begin{proof}

Let $U = \spn\{(\xi_i^{1,s})^\ast: 1 \leq i \leq n, \lambda_1 - \lambda_i \leq s < \lambda_1\}\subseteq \g_e^\ast$ and let
\begin{eqnarray*}
v = \sum_{i=1}^n \sum_{s=\lambda_1 - \lambda_i}^{\lambda_1 - 1} c_{i,s}( \xi_i^{1,s} )^\ast
\end{eqnarray*}
be an arbitrary element of $U$, with $c_{i,s} \in \K$. We have
\begin{eqnarray}\label{xievaluate}
\xi_i^{j,s}(y\beta + v) = \left\{ \begin{array}{ll}
         y & \mbox{ if $i=j-1$ and $s = \lambda_j -1$}\\
        c_{i,s} & \mbox{ if $j = 1$}\\
        0 & \mbox{ otherwise} \end{array} \right.
\end{eqnarray}

Observe that if $1 \leq r \leq \lambda_1$ then $x_r = \sum_i \xi_i^{i, r-1}$ so that $x_r(y \beta + v) = c_{1,r-1}$ and $x_r^{y\beta}|_U= \xi_1^{1,r-1}$. If $n=1$ then $\lambda_1 = N$ and we have shown that the linear terms of the $x_1^{y\beta}|_U, ..., x_N^{y\beta}|_U$ are linearly independent and by formula (\ref{xrdiff}) we are done. Assume not, so that we may choose $\lambda_1 < r \leq N$ and by definition we shall have $d := d_r > 1$. With the definition of $x_r$ in mind let us consider the possible choice of composition $\mu$ of $\lambda$, and permutation $w \in \Sf_d$ such that the monomial $\xi_{i_1}^{i_{w1}, s_1} \cdots \xi_{i_d}^{i_{wd}, s_d}$ evaluates as nonzero at $v$. We require that $\xi_{i_k}^{i_{wk}, s_k}(v) \neq 0$ for $k=1,...,d$.

For any $w \in \Sf_d$ we can be sure that $wk \leq k$ for some $k \in \{1,...,d\}$. Fix such a $k$. Due to (\ref{xievaluate}) we have $i_{wk} = 1$. Clearly $wl \neq 1$ for $l \neq k$ so, again by (\ref{xievaluate}), $i_{wl} = i_l + 1$ for all $l \neq k$. This gives us $i_{wk} = 1$, $i_{w^2k} = 2$, $i_{w^3k} = 3,...$ and $i_{w^dk} = i_{k} = d$. Furthermore by (\ref{xievaluate}) we must have $\mu_1 = \lambda_1, \mu_2 = \lambda_2, ..., \mu_{d-1} = \lambda_{d-1}$. Since $|\mu | = r$ and $l(\mu) = d$ we have $\mu_d = r - \sum_{m=1}^{d-1} \lambda_m$. The upshot is that $w$ must be the $d$-cycle $(123\cdots d)$ and that the composition $\mu$ must be $(\lambda_1, \lambda_2,..., \lambda_{d-1}, r - \sum_{k=1}^{d-1} \lambda_k, 0, 0,..., 0)$.

We make the notation $t_r = r -\sum_{m=1}^{d-1} \lambda_m \in \{1,...,\lambda_d\}$. Since $\sgn(123\cdots d) = (-1)^{d-1}$ we have shown that $x_r(y\beta +v) = (-y)^{d-1}  c_{d, t_r-1}$. Thus $$x_r^{y\beta}|_U = (-y)^{d-1} \xi_d^{1,t_r-1}|_U$$ for all $r=1,...,N$. These linear functions are clearly linearly independent and the lemma is proven.
\end{proof}

\p For the next lemma we shall need to make use of a decomposition of $\g_e$ similar to the well known triangular decomposition of $\g$. We define 
\begin{eqnarray*}
\mathfrak{n}^- &:=& \spn\{\xi_i^{j,s} : i < j\} \\
\smallskip
\mathfrak{h} &:=& \spn\{\xi_i^{j,s} : i = j\} \\
\smallskip
\mathfrak{n}^+ &:=& \spn\{\xi_i^{j,s} : i > j\}
\end{eqnarray*}
where $\lambda_j - \min(\lambda_i, \lambda_j) \leq s < \lambda_j$ in all three of the above. If we order the basis $\{e^s w_i\}$ so that $e$ is in Jordan normal form then $\mathfrak{n}^-$ is strictly lower triangular and $\mathfrak{n}^+$ is strictly upper triangular, and $\g_e = \n^- \oplus \hh \oplus \n^+$. Of course $\hh$ is not actually a torus unless $e = 0$, however it is proven over $\C$ in \cite[$\S 5$]{Yak1} that $\hh$ is a generic stabiliser in $\g_e$, and that $(\g_e)_\alpha = \hh$. We shall see later that this generic stabiliser also exists when we work over $\K$ (Theorem~\ref{genstabal}).

\p There is a dual decomposition $$\g_e^\ast = (\mathfrak{n}^-)^\ast \oplus \mathfrak{h}^\ast \oplus (\mathfrak{n}^+)^\ast$$ where $\hh^\ast$ is defined to be the annihilator of $\n^-\oplus \n^+$ in $\g_e^\ast$, and similar for $(\n^-)^\ast$ and $(\n^+)^\ast$. We have $\alpha \in \mathfrak{h}^\ast$ and $\beta \in (\mathfrak{n}^-)^\ast$.
\begin{lem}\label{alpha}
There exists $g \in G_e$ such that \emph{Ad}$^\ast(g) \alpha = \alpha + \beta$.
\end{lem}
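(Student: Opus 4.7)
The strategy is to construct $g$ explicitly as a product of unipotent elements $1 + c\xi_i^{j,s} \in G_e$ with $i > j$. Two elementary calculational facts drive the proof. First, every basis vector $\xi_i^{j,s}$ with $i \ne j$ satisfies $(\xi_i^{j,s})^2 = 0$ (the monomial product formula contains the vanishing factor $\delta_{j,i}$), so $1+c\xi_i^{j,s}$ is unipotent with inverse $1-c\xi_i^{j,s}$; and more generally, for any $\xi$ supported in a single block $\spn\{\xi_i^{j,t}\}_t$ with $i \ne j$ fixed, both $\xi^2$ and every product $\xi y \xi$ (for $y \in \g_e$) again lie in $\spn\{\xi_i^{j,t}\}_t$, a subspace which $\alpha = \sum a_l (\xi_l^{l,\lambda_l-1})^*$ annihilates. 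Consequently we obtain the exact identity
\[
\Ad^*(1+\xi)\,\alpha \;=\; \alpha \;-\; \alpha \circ \ad(\xi),
\]
linear in $\xi$, with no higher-order correction.

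Specialising to the block $V_i := \spn\{\xi_i^{i-1,s}\}_s$ for each $i \in \{2,\dots,n\}$, the map $-\alpha \circ \ad$ lands in $W_i := \spn\{(\xi_{i-1}^{i,r})^*\}_r$ and is represented in natural bases by a square matrix of size $\lambda_i$ whose entries are readily computed via the commutator identity $[\xi_i^{i-1,s},\,\xi_{i-1}^{i,r}] = \xi_{i-1}^{i-1,s+r} - \xi_i^{i,s+r}$. A direct inspection shows that this matrix has non-vanishing antidiagonal entries equal to $\pm a_{i-1}$ (or $\pm(a_{i-1}-a_i)$ when $\lambda_{i-1} = \lambda_i$), hence is invertible given the distinctness of the $a_l$ and the freedom to assume each $a_l$ non-zero. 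It follows that there is a unique $\xi^{(i)} \in V_i$ with $\Ad^*(1+\xi^{(i)})\alpha = \alpha + \beta_i$, where $\beta_i := (\xi_{i-1}^{i,\lambda_i-1})^*$ so that $\beta = \sum_{i=2}^n \beta_i$.

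Finally I assemble $g$ from the factors $1+\xi^{(i)}$ inductively. A naive product fails because $[\xi^{(i)},\xi^{(i+1)}] \neq 0$ generates unwanted contributions in functionals $(\xi_a^{b,\cdot})^*$ with strictly larger gap $a - b \geq 2$. The crucial observation is that the exact-linearity fact of the first paragraph applies verbatim to each larger-gap block $\spn\{\xi_a^{b,\cdot}\}$, and an analogous block-wise invertibility of $-\alpha \circ \ad$ restricted there allows us to absorb each such stray correction by a further unipotent factor $1 + c\,\xi_a^{b,t}$. Because the gap $a-b$ is bounded above by $n-1$, the cascade of corrections terminates after finitely many stages, producing the desired $g$. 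The main obstacle is the combinatorial bookkeeping needed to confirm that each residual correction lies strictly deeper in the filtration by gap size, so that the induction is well-founded.
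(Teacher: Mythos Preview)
Your argument is correct and takes a genuinely different route from the paper. The paper works globally: it introduces the unipotent group $N^+ = 1 + \mathfrak{n}^+$ with $\mathfrak{n}^+ = \spn\{\xi_i^{j,s} : i > j\}$, verifies (using exactly the block calculations you do) that $\ad^*(\mathfrak{n}^+)\alpha = (\mathfrak{n}^-)^*$, and then invokes Rosenlicht's theorem that orbits of unipotent groups on affine varieties are closed to conclude at once that $\Ad^*(N^+)\alpha = \alpha + (\mathfrak{n}^-)^*$; since $\beta \in (\mathfrak{n}^-)^*$ the lemma follows. Your route replaces this existence argument with an explicit finite product of single-block unipotent corrections, exploiting the exact identity $\Ad^*(1+\xi)\gamma = \gamma + \ad^*(\xi)\gamma$ whenever $\xi$ lives in one block of $\mathfrak{n}^+$ and $\gamma - \alpha \in (\mathfrak{n}^-)^*$, together with the filtration by gap so that each correction only produces residual errors at strictly larger gap. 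This avoids the appeal to Rosenlicht and is constructive, at the cost of the inductive bookkeeping you flag. Two small points: the gap you mean for the stray functionals $(\xi_a^{b,\cdot})^*$ is $b-a$, not $a-b$; and your assumption that every $a_l$ is non-zero is genuinely needed for the block invertibility when $\lambda_j > \lambda_i$ (the paper's tangent-space argument implicitly requires it too), so strictly speaking this is an extra hypothesis on $\alpha$, though harmless for the intended application where the $a_l$ are chosen freely.
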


\begin{proof}
Since each $v \in \mathfrak{n}^+$ is nilpotent (as an endomorphism of $V$), the translation morphism $v \ra 1 + v$ takes $v \in \mathfrak{n}^+$ to a unipotent matrix in $G_e$. We denote the subgroup generated by all $1 + v$ with $v \in \mathfrak{n^+}$ by $N^+$. It is easily checked that $1 + \mathfrak{n}^+$ is closed under matrix multiplication. Due to formula~(4) in the proof of Corollary~\ref{invariants} the set $1 + \mathfrak{n}^+$ is also closed under the map $g \mapsto g^{-1}$. Hence $N^+ = 1 + \mathfrak{n}^+$. We aim to prove that $\Ad^\ast(N^+) \alpha = \alpha + (\mathfrak{n}^-)^\ast$, from which our proposition will quickly follow. The one dimensional subspaces $\{1 + t \xi_i^{j,s} : t \in \K\}$ with $i > j$ generate $N^+$. Again the proof of Corollary~\ref{invariants} informs us that $(1 + t\xi_i^{j,s})^{-1} = 1 - t \xi_i^{j,s}$. A quick calculation then shows that
\begin{eqnarray*}
\Ad^\ast(1 + t \xi_i^{j,s}) \alpha = \alpha + t(a_j(\xi_j^{i, \lambda_j - 1- s})^\ast - a_i(\xi_j^{i, \lambda_i - 1- s})^\ast) \in \alpha + (\mathfrak{n}^-)^\ast
\end{eqnarray*}
The conditions on the $a_i$ ensure that the linear forms $\{a_j(\xi_j^{i, \lambda_j - 1- s})^\ast - a_i(\xi_j^{i, \lambda_i - 1- s})^\ast : i > j \}$ are linearly independent, hence span $(\mathfrak{n}^-)^\ast$. We see that $\dim\, \Ad^\ast(N^+) \alpha = \dim\, \alpha + (\mathfrak{n}^-)^\ast$. Thanks to \cite[Theorem~2]{Ros} we know that $\Ad^\ast(N^+) \alpha$ is a closed subvariety of $\alpha + (\mathfrak{n}^-)^\ast$. The dimensions coincide and so we have equality $\Ad^\ast(N^+) \alpha = \alpha + (\mathfrak{n}^-)^\ast$. Now $\beta \in (\mathfrak{n}^-)^\ast$ so there exists some $g \in N^+$ such that $\Ad^\ast(g) \alpha = \alpha + \beta$ as required.
\end{proof}

\p Let $a : \K^\times \mapsto G_e$ be the cocharacter given by $a(t)w_i = t^i w_i$. Define a rational linear action $\rho : \K^\times \mapsto GL(\g_e^\ast)$ by
\begin{eqnarray*}
\rho(t) \gamma = t \Ad^\ast (a(t)) \gamma
\end{eqnarray*}
where $\gamma \in \g_e^\ast$ and $t \in \K^\times$. Clearly we have $\rho(t) (\xi_i^{j,s})^\ast = t^{i-j+ 1} (\xi_i^{j,s})^\ast$.

\begin{lem}\label{Jstable}
The Jacobian locus $J$ is
\begin{enumerate}
\item{$G_e$-stable;}
\item{$\rho(\K^\times)$-stable.}
\end{enumerate}
\end{lem}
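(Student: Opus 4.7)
My plan is to use the chain rule to convert the invariance (or semi-invariance) of the $x_r$ into a covariance statement for the differentials $d_\gamma x_r$, and then exploit the fact that the defining condition of $J$ is preserved under any transformation sending the tuple $(d_\gamma x_r)_{r=1}^N$ to $(c_r \cdot A\, d_\gamma x_r)_{r=1}^N$, where $A$ is an invertible linear operator and the $c_r$ are nonzero scalars. By definition $\gamma \in J$ iff the $N$ vectors $d_\gamma x_1, \ldots, d_\gamma x_N$ are linearly dependent, a property that is manifestly invariant under such a combined operation.

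For part (1) I would begin from the $G_e$-invariance of each $x_r$, which is precisely Corollary~\ref{invariants}: $x_r \circ \Ad^\ast(g) = x_r$ for every $g \in G_e$. Differentiating this identity at $\gamma$ yields
\[
d_\gamma x_r \;=\; d_{\Ad^\ast(g)\gamma}x_r \circ \Ad^\ast(g),
\]
so the differentials at $\Ad^\ast(g)\gamma$ are obtained from those at $\gamma$ by precomposition with the invertible linear map $\Ad^\ast(g)^{-1}$, which is independent of $r$. Linear (in)dependence is preserved and hence $J$ is $G_e$-stable.

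For part (2) the decisive step is to verify that each $x_r$ is a $\rho$-semi-invariant of weight $d_r$, meaning $x_r \circ \rho(t) = t^{d_r} x_r$. I would establish this monomial by monomial. Since $\rho(t)$ scales $(\xi_i^{j,s})^\ast$ by $t^{i-j+1}$, the dual coordinate function $\xi_i^{j,s}$ on $\g_e^\ast$ itself satisfies $\xi_i^{j,s} \circ \rho(t) = t^{i-j+1}\xi_i^{j,s}$. Therefore a typical monomial $\Theta_r(w,\mu) = \sgn(w)\prod_{k=1}^{d} \xi_{i_k}^{i_{wk},s_k}$ from (\ref{xrreform}) picks up the factor
\[
t^{\sum_{k=1}^{d}(i_k - i_{wk} + 1)} \;=\; t^{d},
\]
since $w$ merely permutes $\{i_1,\ldots,i_d\}$ so the differences $i_k - i_{wk}$ telescope to zero. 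Summing over $(w,\mu) \in \Sf_d \times \mathcal{C}_r$ yields $x_r \circ \rho(t) = t^{d_r} x_r$. Applying the chain rule once more,
\[
d_{\rho(t)\gamma}x_r \;=\; t^{d_r}\, d_\gamma x_r \circ \rho(t)^{-1},
\]
so the differentials at $\rho(t)\gamma$ and $\gamma$ differ by the invertible linear map $\rho(t)^{-1}$ together with the nonzero scalar $t^{d_r}$ depending on $r$. By the preliminary observation, linear dependence is preserved and $J$ is $\rho(\K^\times)$-stable.

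There is no real obstacle here; the lemma is a purely formal consequence of covariance, and the only substantive content is the identification of $x_r$ as a $\rho$-semi-invariant of weight $d_r$, which reduces to the elementary fact that $w$ permutes the lower index set of the monomial $\Theta_r(w,\mu)$.
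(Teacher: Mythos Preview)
Your proposal is correct and follows essentially the same approach as the paper: both parts rest on the chain rule applied to the invariance (respectively semi-invariance) of $x_r$, and the key computation in part~(2) — that each monomial $\Theta_r(w,\mu)$ is a $\rho$-eigenvector of weight $t^{d_r}$ because $\sum_k(i_k-i_{wk})=0$ — is carried out identically. The only cosmetic difference is that the paper phrases the chain rule via the shifted polynomials $x_r^\gamma$ from (\ref{xrdiff}), whereas you differentiate the semi-invariance identity directly.
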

\begin{proof}
Since $x_r$ is $G_e$-invariant
\begin{eqnarray*}
x_r(\Ad^\ast (g)(\gamma + \delta)) = x_r (\gamma + \delta)
\end{eqnarray*}
for all $g \in G_e$ and $\gamma,\delta \in \g_e^\ast$. This equates to $x_r^{\Ad^\ast(g)\gamma} \circ (\Ad^\ast g) = x_r^\gamma$. The linear part of the left hand side of this equation is $d_{\Ad^\ast(g)\gamma} x_r \circ (\Ad^*g)$ and the same of the right hand side is $d_\gamma x_r$. As $\Ad^\ast (g)$ is invertible, the dimension of the linear span of $d_{\Ad^\ast(g)\gamma} x_1,..., d_{\Ad^\ast(g)\gamma} x_N$ equals that of the $d_{\gamma} x_1,..., d_{\gamma} x_N$, whence 1.

Turning our attention to $\rho(\K^\times)$, fix $t \in \K^\times$, $r = 1,...,N$, $(w,\mu) \in \mathfrak{S}_d \times \mathcal{C}_\lambda$ and observe that
\begin{eqnarray*}
\Theta_r(w,\mu) \circ \rho(t) &=& (\sgn(w) \xi_{i_1}^{i_{w1}, s_1} \cdots \xi_{i_d}^{i_{wd}, s_d})\circ \rho(t) \\
&=& (\prod_{k=1}^d t^{i_k - i_{wk} +1})\sgn(w)\xi_{i_1}^{i_{w1}, s_1} \cdots \xi_{i_d}^{i_{wd}, s_d}\\
& =& t^{d}(\sgn(w) \xi_{i_1}^{i_{w1}, s_1} \cdots \xi_{i_d}^{i_{wd}, s_d}) = t^d \Theta_r(w, \mu).
\end{eqnarray*}
So that $x_r \circ \rho(t) = t^{d} x_r$. Next let $\gamma, v \in \g_e^\ast$ and observe that $$x_r(\rho(t)\gamma + v) = x_r\circ \rho(t) (\gamma + \rho(t)^{-1} v) = t^d x_r(\gamma + \rho(t)^{-1}v)$$ which is written as $x_r^{\rho(t)\gamma} = t^d x_r^\gamma\circ \rho(t)^{-1}$ in our notations. We conclude that the linear terms must coincide, so that $$d_{\rho(t)\gamma} x_r = t^d d_{\gamma} x_r \circ \rho(t)^{-1}.$$ However, $\rho(t)^{-1}$ is evidently invertible so 2 follows.
\end{proof}

\p We are now ready to assemble the above ingredients. Proposition \ref{Codimension 2} shall immediately follow from the lemma.
\begin{lem}\label{plane1}
$(\K \alpha \oplus \K\beta) \cap J = 0$
\end{lem}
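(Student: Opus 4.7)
The plan is to exploit the two commuting one-parameter scaling symmetries that together stabilise $J$ and cut $\Pi := \K\alpha \oplus \K\beta$ into just four orbits. A direct calculation using $\rho(t)(\xi_i^{j,s})^\ast = t^{i-j+1}(\xi_i^{j,s})^\ast$ and the analogous formula $\Ad^\ast(a(t))(\xi_i^{j,s})^\ast = t^{i-j}(\xi_i^{j,s})^\ast$ yields $\rho(t)\alpha = t\alpha$, $\rho(t)\beta = \beta$, $\Ad^\ast(a(t))\alpha = \alpha$ and $\Ad^\ast(a(t))\beta = t^{-1}\beta$. In particular the torus $T := \rho(\K^\times)\times\Ad^\ast(a(\K^\times))$ decomposes $\Pi$ into the four orbits $\{0\}$, $\K^\times\alpha$, $\K^\times\beta$ and $\Omega := \{x\alpha+y\beta:x,y\in\K^\times\}$. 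Since $a(\K^\times)\subseteq G_e$, Lemma \ref{Jstable} shows that $J$ is $T$-stable; moreover $J$ is Zariski-closed, being the common zero locus of the maximal minors of the Jacobian matrix of $(x_1,\dots,x_N)$.

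The key step will be to show that $\Omega\cap J=\emptyset$, which I plan to do by a specialisation argument. Since $\Omega$ is a single $T$-orbit and $J$ is $T$-stable, either $\Omega\subseteq J$ or $\Omega\cap J=\emptyset$. The curve $t\mapsto \rho(t)(\alpha+\beta)=t\alpha+\beta$ lies in $\Omega$ for $t\neq 0$ and tends to $\beta$ as $t\to 0$, so $\beta\in\overline{\Omega}$; if $\Omega\subseteq J$ then by closedness $\beta\in J$, contradicting Lemma \ref{beta}. Hence $\Omega\cap J=\emptyset$.

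The remaining two nonzero orbits are then easy. First, $\K^\times\beta\cap J=\emptyset$ is Lemma \ref{beta} verbatim. Second, Lemma \ref{alpha} supplies $g\in G_e$ with $\Ad^\ast(g)\alpha=\alpha+\beta$, so by $\K$-linearity $\Ad^\ast(g)(x\alpha)=x(\alpha+\beta)\in\Omega$ for every $x\in\K^\times$; the $G_e$-stability part of Lemma \ref{Jstable}, combined with the previous paragraph, therefore forces $\K^\times\alpha\cap J=\emptyset$. Assembling the three disjointness statements yields $\Pi\cap J\subseteq\{0\}$, the asserted equality. The only genuine subtlety is the passage $\Omega\subseteq J\Rightarrow\beta\in J$ through Zariski closure, everything else being a mechanical consequence of the scaling formulas and the $G_e$-conjugacy $\alpha\sim\alpha+\beta$ provided by Lemma \ref{alpha}.
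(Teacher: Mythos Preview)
Your proof is correct and follows essentially the same route as the paper: handle $\K^\times\alpha$ by conjugating into $\Omega$ via Lemma~\ref{alpha}, and handle $\Omega$ by using the $\rho$-action to specialise to $\K^\times\beta$, invoking Lemma~\ref{beta} together with the closedness and stability of $J$ from Lemma~\ref{Jstable}. Your introduction of the second one-parameter group $\Ad^\ast(a(\K^\times))$ is a harmless repackaging---it lets you treat $\Omega$ as a single torus orbit rather than arguing one affine line $\K\alpha+t_2\beta$ at a time as the paper does---but it is not strictly needed, since $\rho$ alone already provides the specialisation $t\alpha+y\beta\to y\beta$.
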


\begin{proof}
Let $t_1, t_2 \in \K$ and $\gamma = t_1 \alpha + t_2 \beta \neq 0$. We shall show that $\gamma \in \g_e^\ast \backslash J$. If $t_2 = 0$ then the element $g \in G_e$ constructed in Lemma \ref{alpha} sends $\gamma$ to $t_1\alpha + t_1\beta$ so by part 1 of Lemma \ref{Jstable} it suffices to prove that $\gamma \in \g_e^\ast \backslash J$ whenever $t_2 \neq 0$. By Lemma \ref{beta} we may assume that $t_1 \neq 0$.

It is clear that $\rho(t)\alpha = t \alpha$ and $\rho(t) \beta = \beta$. Consider the variety $\K \alpha + t_2\beta$. Since $J$ is closed it follows that $(\g_e^\ast \backslash J) \cap (\K \alpha + t_2\beta)$ is a Zariski open subset of $\K \alpha + t_2\beta$. By Lemma \ref{beta} that intersection is non-empty. We deduce that $$\rho(\K^\times)\gamma \cap (\g_e^\ast \backslash J) = (\K^\times\alpha + t_2\beta) \cap (\g_e^\ast \backslash J)  \neq \emptyset.$$ By part 2 of Lemma \ref{Jstable}, $\g_e^\ast \backslash J$ is $\rho(\K^\times)$-stable implying $\gamma \in \g_e^\ast \backslash J$ as required.

\end{proof}

We are now ready to prove Proposition \ref{Codimension 2}.

\begin{proof}
The Jacobian locus is conical and Zariski closed. Apply the above lemma.
\end{proof}

\subsection{The symplectic and orthogonal cases}\label{JacLoc2}
\setcounter{parna}{0}

\p In this section we aim to prove an analogue of Proposition~\ref{Codimension 2} for centralisers in other classical types.
\begin{prop}\label{jaclocsymorth}The following are true:
\begin{enumerate}
\item{Let $\epsilon = -1$ so that $K$ is of type $\sf C$. Then \emph{codim}$_{\h_e^\ast} J_{\h_e^\ast}(x_{r}: r $ even$) \geq 2$;}
\smallskip
\item{Let $\epsilon = 1$ so that $K$ is of type $\sf B$ or $\sf D$. Then \emph{codim}$_{\pp_e^\ast} J_{\pp_e^\ast}(x_{r}: r + d_r $ even$) \geq 2$.}
\end{enumerate}
\end{prop}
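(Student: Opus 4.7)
The plan is to parallel the proof of Proposition~\ref{Codimension 2} by producing a two-dimensional plane inside $\h_e^\ast$ (respectively $\pp_e^\ast$) which meets the Jacobian locus only at $0$. The key simplification is a $\sigma$-parity argument that lets us inherit the work from the type $\sf A$ case.

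First I would carry out the parity reduction. By Proposition~\ref{sigmaxr} we have $x_r(\sigma\eta) = (-1)^r x_r(\eta)$ for all $\eta \in \g_e^\ast$. Differentiating this identity at $\gamma \in \h_e^\ast$ (where $\sigma\gamma = \gamma$) in direction $v$ yields $d_\gamma x_r(\sigma v) = (-1)^r d_\gamma x_r(v)$, which under the duality pairing amounts to $\sigma(d_\gamma x_r) = (-1)^r d_\gamma x_r$ in $\g_e$. Thus for $r$ even we have $d_\gamma x_r \in \h_e$; since $\h_e$ pairs faithfully with $\h_e^\ast$ and annihilates $\pp_e^\ast$, the restriction to $\h_e^\ast$ preserves linear independence of the family $\{d_\gamma x_r : r \text{ even}\}$. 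It follows that
$$J_{\h_e^\ast}(x_r : r \text{ even}) \subseteq J \cap \h_e^\ast.$$
The same computation on $\pp_e^\ast$, combined with the homogeneity identity $d_{-\gamma}x_r = (-1)^{d_r-1}d_\gamma x_r$, gives $\sigma(d_\gamma x_r) = (-1)^{r+d_r+1}d_\gamma x_r$ for $\gamma \in \pp_e^\ast$, hence $J_{\pp_e^\ast}(x_r : r+d_r \text{ even}) \subseteq J \cap \pp_e^\ast$.

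Given these inclusions, it suffices to exhibit a plane inside $\h_e^\ast$ (resp.\ $\pp_e^\ast$) that meets $J$ only at $0$. I would take $\sigma$-symmetrised analogues of the elements (\ref{alphabeta}) expressed in the basis of Lemma~\ref{spanningdetails}: for the symplectic case,
$$
\alpha' = \sum_{i} a_i (\zeta_i^{i,0})^\ast \quad \text{and} \quad \beta' = \sum_{i=2}^n (\zeta_{i-1}^{i,0})^\ast
$$
both lie in $\h_e^\ast$ for a suitable choice of scalars $a_i$, while for the orthogonal case one works with the $(\eta_i^{j,s})^\ast$-basis so as to land in $\pp_e^\ast$. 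One then runs through the analogues of Lemmas~\ref{beta}, \ref{alpha}, \ref{Jstable} and \ref{plane1}: the evaluation of $\xi_i^{j,s}$ on $y\beta' + v$ for $v$ in a suitable subspace $U'$ still reduces to the cyclic-permutation analysis of Lemma~\ref{beta}; the conjugation carrying $\alpha'$ to $\alpha' + \beta'$ is performed inside $N^+ \cap K_e$, generated by $\sigma$-invariant combinations of the unipotent generators used in Lemma~\ref{alpha}; and although the cocharacter $\rho$ need not preserve $\h_e^\ast$, the Jacobian locus $J$ remains $\rho$-stable inside $\g_e^\ast$, which is all that is needed in the concluding step.

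The main computational obstacle will be the analogue of Lemma~\ref{beta}: the $\sigma$-symmetrisation introduces additional $\xi_i^{j,s}$-evaluations on $\beta'$ that must be shown not to disturb the linear independence of the surviving linear parts $x_r^{y\beta'}|_{U'}$ for the correct parity values of $r$. Once this combinatorial point is settled, the argument of Lemma~\ref{plane1} yields $(\K\alpha' + \K\beta') \cap J = \{0\}$, which combined with the parity inclusion above gives the required codimension bound in both classical cases.
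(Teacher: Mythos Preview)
Your parity reduction is correct and is essentially the content of the paper's Lemmas~\ref{dees} and~\ref{jal} (and their orthogonal analogues Lemma~\ref{jal2}); your formulation via the $\sigma$-eigenvalue of $d_\gamma x_r$ is a clean rephrasing of the same fact. The plane elements you write down coincide, up to harmless scalars, with the paper's $\alpha$ (subject to $a_i=-a_{i'}$) and $\bar\beta$ of Lemma~\ref{albeso} (respectively Lemma~\ref{albeos} in the orthogonal case).

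Where you diverge from the paper is in your plan to \emph{rerun} Lemmas~\ref{beta} and~\ref{alpha} for the symmetrised elements, and you rightly flag the analogue of Lemma~\ref{beta} for $\bar\beta$ as the hard part. The paper sidesteps this entirely. Write $\bar\beta=\beta+\beta'$ with $\beta'=\sum_{i+1\ne i'}\varepsilon_{i,i+1,0}(\xi_{(i+1)'}^{i',\lambda_i-1})^\ast$; every summand of $\beta'$ has $\rho$-weight $k_i=(i+1)'-i'+1\ge 2$, while $\rho(t)\alpha=t\alpha$ and $\rho(t)\beta=\beta$. Hence for $t_2\ne 0$ the curve $\rho(\K^\times)(t_1\alpha+t_2\bar\beta)$ sits inside a one-dimensional variety whose closure contains $t_2\beta$, and $t_2\beta\notin J$ by the \emph{already proven} Lemma~\ref{beta}. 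Since $J$ is $\rho$-stable (Lemma~\ref{Jstable}), this forces $t_1\alpha+t_2\bar\beta\notin J$. The case $t_2=0$ needs no $K_e$-version of Lemma~\ref{alpha} either: $\alpha$ itself lies in $\h_e^\ast$ (resp.\ $\pp_e^\ast$) and $t_1\alpha\notin J$ is part of the type $\sf A$ Lemma~\ref{plane1}.

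So your outline is viable but carries an unresolved computational burden that the paper avoids completely by deforming $\bar\beta$ back to $\beta$ under $\rho$ and quoting the type $\sf A$ results directly. Your proposed redo of Lemma~\ref{alpha} inside $N^+\cap K_e$ is in particular unnecessary, and would require a separate orbit-dimension argument you have not supplied.
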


\p As was discussed in the introduction, the proofs of parts 1 and 2 shall be identical. We shall supply all details of the proof of part 1, whilst our proof of part 2 shall simply consist of a description of necessary changes to that proof. As an immediate corollary to Proposition~\ref{jaclocsymorth} we obtain.
\begin{cor}\label{nonzero1} The following are true:
\begin{enumerate} 
\item{If $\epsilon = -1$ then the restrictions $x_r|_{\h_e^\ast}$ with $r$ even are non-zero and distinct;}
\smallskip
\item{If $\epsilon = 1$ then the restrictions $x_r|_{\pp_e^\ast}$ with $r+d_r$ even are non-zero and distinct.}
\end{enumerate}
\end{cor}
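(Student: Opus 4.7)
The plan is to derive both parts directly from Proposition~\ref{jaclocsymorth}, reading that proposition as the statement that the restricted differentials of the polynomials $x_r|_U$ (with $U=\h_e^\ast$ in part (1) and $U=\pp_e^\ast$ in part (2), and $r$ ranging over the appropriate parity condition) are generically linearly independent over $U$. Since $\codim_U J_U \geq 2 > 0$, the Jacobian locus is a proper closed subset of $U$, so we may choose a single point $\gamma \in U \setminus J_U$ at which the restricted differentials $d_\gamma(x_r|_U)$ form a linearly independent family of elements of $U^\ast$.

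From this one observation both assertions follow with no further calculation. If some $x_r|_U$ were identically zero on $U$, then $d_\gamma(x_r|_U) = 0$, violating the linear independence at $\gamma$. Likewise, if two admissible indices $r \neq s$ produced coincident restrictions $x_r|_U = x_s|_U$, then $d_\gamma(x_r|_U) - d_\gamma(x_s|_U)$ would be a nontrivial linear relation among the restricted differentials at $\gamma$, again contradicting Proposition~\ref{jaclocsymorth}.

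There is no genuine obstacle in the argument, and indeed the only point worth flagging is the mild subtlety that the restrictions $x_r|_U$ may fail to be homogeneous of the same degree as $x_r$ itself (some monomials can restrict to zero) — but this is irrelevant here, because the deduction above only uses the differentials of the restrictions evaluated at a chosen regular point, not any a priori comparison of degrees. This is why the authors of the paper record the result as an immediate consequence of the preceding proposition rather than as a standalone lemma.
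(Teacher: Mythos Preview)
Your argument is correct and is essentially the same as the paper's: both deduce the result from the fact that the Jacobian locus has positive codimension in $U$, so that at a generic point the restricted differentials are linearly independent, forcing the restricted invariants to be nonzero and pairwise distinct. The paper phrases it contrapositively (coincidence or vanishing would force $J_U = U$), but the content is identical. One small remark: your aside about homogeneity is slightly off --- restriction of a homogeneous polynomial to a linear subspace is still homogeneous of the same degree (or zero) --- but as you correctly note, this plays no role in the argument.
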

\begin{proof}
If $x_r|_{\h_e^\ast} = x_s|_{\h_e^\ast}$ for some $r \neq s$ then we would have $\h_e^\ast = J_{\h_e^\ast}(x_r : r \text{ even})$, contradicting part 1 of Proposition~\ref{jaclocsymorth}. Similarly the restrictions are non-zero. The same argument applies for part 2.
\end{proof}

\p \emph{Until otherwise stated we assume $\epsilon = -1$ so that $K$ is of type $\sf C$}. Since $\g_e = \h_e \oplus \pp_e$ there is a natural inclusion $\iota : S(\h_e) \ra S(\g_e)$ which is also a $\K$-algebra homomorphism.
\begin{lem}\label{dees}
$d_\gamma x_r = d_\gamma \iota(x_r|_{\h_e^\ast})$ for $r$ even and for all $\gamma \in \h_e^\ast$.
\end{lem}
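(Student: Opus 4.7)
The plan is to leverage Proposition~\ref{sigmaxr}, which states $\sigma(x_r) = (-1)^r x_r$, so that $x_r$ is $\sigma$-invariant whenever $r$ is even. The key is to translate this algebraic identity in $\K[\g_e^\ast]$ into geometric invariance under an involution on $\g_e^\ast$ itself, and then differentiate.

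First I would set up the dual decomposition. Since $\g_e = \h_e \oplus \pp_e$ is the eigenspace decomposition for $\sigma$, dualising gives $\g_e^\ast = \h_e^\ast \oplus \pp_e^\ast$, where $\h_e^\ast$ is identified with the annihilator of $\pp_e$ and similarly for $\pp_e^\ast$. Under this identification, the inclusion $\iota: S(\h_e) \hookrightarrow S(\g_e)$ corresponds to pullback along the projection $\g_e^\ast \twoheadrightarrow \h_e^\ast$; explicitly, $\iota(x_r|_{\h_e^\ast})(\varphi_+ + \varphi_-) = (x_r|_{\h_e^\ast})(\varphi_+)$ for $\varphi_\pm$ in the respective summands. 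A short check (essentially evaluating both sides on a linear form $y = y_+ + y_-$) shows that the algebra automorphism $\sigma$ on $\K[\g_e^\ast]$ defined earlier coincides with pullback along the geometric involution $\tau: \varphi_+ + \varphi_- \mapsto \varphi_+ - \varphi_-$ on $\g_e^\ast$.

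Next, for $r$ even, Proposition~\ref{sigmaxr} gives $x_r \circ \tau = x_r$, i.e.\ $x_r(\varphi_+ + \varphi_-) = x_r(\varphi_+ - \varphi_-)$ for all $\varphi_\pm$. Now fix $\gamma \in \h_e^\ast$ (so $\tau(\gamma) = \gamma$) and a tangent vector decomposed as $u + v$ with $u \in \h_e^\ast$, $v \in \pp_e^\ast$. Differentiating the identity $x_r(\gamma + t(u+v)) = x_r(\gamma + t(u - v))$ at $t=0$ yields
\begin{eqnarray*}
d_\gamma x_r(u + v) = d_\gamma x_r(u - v),
\end{eqnarray*}
so $d_\gamma x_r(v) = 0$ for every $v \in \pp_e^\ast$. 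In particular $d_\gamma x_r$ lies in the annihilator $\h_e \subseteq \g_e$ of $\pp_e^\ast$.

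Finally, to conclude, observe that $\iota(x_r|_{\h_e^\ast})$ depends only on the $\h_e^\ast$-component, so its differential at any point of $\h_e^\ast$ automatically annihilates $\pp_e^\ast$ and equals $d_\gamma(x_r|_{\h_e^\ast})$ on $\h_e^\ast$. Both $d_\gamma x_r$ and $d_\gamma \iota(x_r|_{\h_e^\ast})$ therefore lie in $\h_e$ and restrict to the same linear functional on $\h_e^\ast$, namely $d_\gamma(x_r|_{\h_e^\ast})$; hence they are equal in $\g_e$. The only real content beyond Proposition~\ref{sigmaxr} is the bookkeeping identifying $\sigma$-action on $\K[\g_e^\ast]$ with pullback along $\tau$, and no step should present a genuine obstacle.
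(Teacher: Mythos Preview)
Your argument is correct and complete. You rely on the same input as the paper (Proposition~\ref{sigmaxr}) but take a genuinely different route: you reinterpret the algebra involution $\sigma$ on $\K[\g_e^\ast]$ as pullback along the geometric reflection $\tau(\varphi_+ + \varphi_-) = \varphi_+ - \varphi_-$, observe that $\gamma \in \h_e^\ast$ is a $\tau$-fixed point, and then differentiate the identity $x_r\circ\tau = x_r$ there via the chain rule to kill the $\pp_e^\ast$-directions. The paper instead argues monomial-by-monomial: it writes $x_r - \iota(x_r|_{\h_e^\ast})$ as a sum of monomials each containing at least one factor from $\pp_e$, uses $\sigma(x_r) = x_r$ to force each such monomial to contain an \emph{even}, hence at least two, such factors, and concludes that every partial derivative still carries a $\pp_e$-factor and thus vanishes at $\gamma \in \h_e^\ast$. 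Your approach is cleaner and coordinate-free, making transparent that what matters is that $\h_e^\ast$ is the fixed locus of $\tau$; the paper's approach is more explicit, and its ``at least two $\pp_e$-factors'' observation is recycled verbatim in the orthogonal analogue (Lemma~\ref{jal2}), which is a mild practical advantage of writing it out that way.
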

\begin{proof}
Let $\gamma \in \h_e^\ast$. Let $\BB_0$ and $\BB_1$ be bases for $\h_e$ and $\pp_e$ respectively, so that $\BB := \BB_0 \cup \BB_1$ is a basis for $\g_e$. We aim to show that $d_\gamma (x_r - \iota(x_r|_{\h_e^\ast})) = 0$. Now $x_r - \iota(x_r|_{\h_e^\ast})$ may be written as a finite sum $\sum_{i=1}^k c_i m_i$ where $c_i \in \K^\times$ are constants and  the $m_i \in S(\g_e)$ are monomials in the basis $\BB$. Since $\iota(x_r|_{\h_e^\ast})$ is the sum of those monomial summands of $x_r$ which contain no factors from $\BB_1$ we conclude that each $m_i$ possesses a factor from $\BB_1$. We have $\sigma \eta = -\eta$ for each $\eta \in \BB_1$ and, since $r$ is even, $\sigma x_r = x_r$ by Proposition~\ref{sigmaxr} so there must be an even number of factors from $\BB_1$ in each monomial summand of $x_r$. This implies that each $m_i$ possesses at least two factors from $\BB_1$ and that for all $x \in \BB$ the partial derivative $\frac{\partial m_i}{\partial x}$ either is zero or possesses at least one nonzero 
factor from $\BB_1$. The functionals $\BB_1$ annihilate $\h_e^\ast$ so $$\frac{\partial m_i}{\partial x}(\gamma) = 0$$ for all $x \in \BB$. But now $$d_\gamma (x_r - \iota(x_r|_{\h_e^\ast})) = \sum_{x \in \BB} \frac{\partial (x_r - \iota(x_r|_{\h_e^\ast}))}{\partial x}(\gamma)x = \sum_{x \in \BB} \sum_{i=1}^k c_i \frac{\partial m_i}{\partial x}(\gamma)x = 0.$$ The lemma follows.
\end{proof}

\p The following shall allow us to infer our Jacobian locus 2 condition from the type $\sf A$ case.
\begin{lem}\label{jal}
$J_{\h_e^\ast}(x_r : r \text{ even}) = \h_e^\ast \cap J_{\g_e^\ast}(x_r : r \text{ even})$
\end{lem}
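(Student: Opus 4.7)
The plan is to deduce the equality from Lemma~\ref{dees}. The inclusion $\h_e^\ast \cap J_{\g_e^\ast}(x_r : r \text{ even}) \subseteq J_{\h_e^\ast}(x_r : r \text{ even})$ is automatic, since any linear dependence among the $d_\gamma x_r$ as functionals on $\g_e^\ast$ restricts to a linear dependence among the $d_\gamma x_r|_{\h_e^\ast}$ on $\h_e^\ast$. The real content of the lemma lies in the reverse inclusion.

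First I would fix a basis $\BB = \BB_0 \sqcup \BB_1$ with $\BB_0$ a basis of $\h_e$ and $\BB_1$ a basis of $\pp_e$, so that the elements of $\BB$ serve as linear coordinates on $\g_e^\ast$ and $\g_e^\ast = \h_e^\ast \oplus \pp_e^\ast$ where $\h_e^\ast$ is identified with the annihilator of $\pp_e$. For $\gamma \in \h_e^\ast$ and $r$ even, Lemma~\ref{dees} gives $d_\gamma x_r = d_\gamma \iota(x_r|_{\h_e^\ast})$. Since $\iota(x_r|_{\h_e^\ast}) \in S(\h_e)$ is a polynomial in the variables $\BB_0$ alone, its partial derivatives with respect to every coordinate in $\BB_1$ are identically zero. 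Interpreting $d_\gamma x_r$ as an element of $(\g_e^\ast)^\ast \cong \g_e$ via the canonical isomorphism, we conclude that $d_\gamma x_r \in \h_e$ for every $r$ even and every $\gamma \in \h_e^\ast$.

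To finish, suppose $\gamma \in J_{\h_e^\ast}(x_r : r \text{ even})$, so there exist scalars $c_r$ not all zero with $\sum_r c_r\, d_\gamma x_r|_{\h_e^\ast} = 0$ in $(\h_e^\ast)^\ast$. Under the natural isomorphism $\h_e \cong (\h_e^\ast)^\ast$ (which sends $v \in \h_e$ to its associated evaluation functional on $\h_e^\ast$, the restriction of $v$ viewed as a functional on $\g_e^\ast$), this relation says precisely that $\sum_r c_r\, d_\gamma x_r = 0$ in $\h_e$. Since all summands already lie in $\h_e \subseteq \g_e$, the same relation holds in $\g_e$, which means that $\gamma \in \h_e^\ast \cap J_{\g_e^\ast}(x_r : r \text{ even})$, as required. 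The only mild obstacle is the bookkeeping around identifying differentials with elements of $\g_e$ and tracking which subspace they occupy; beyond this the statement is essentially an unwinding of Lemma~\ref{dees}.
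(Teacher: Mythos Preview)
Your proof is correct and follows essentially the same approach as the paper's own argument. Both use Lemma~\ref{dees} to show that for $\gamma\in\h_e^\ast$ and $r$ even the differential $d_\gamma x_r$ already lies in $\h_e$, so that a linear relation among the restrictions $d_\gamma x_r|_{\h_e^\ast}$ lifts verbatim to a relation among the $d_\gamma x_r$ on all of $\g_e^\ast$; the paper phrases this by pushing forward along the inclusion $\iota:S(\h_e)\hookrightarrow S(\g_e)$ whereas you phrase it via the identification $d_\gamma x_r\in\h_e$, but the content is identical.
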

\begin{proof}
Fix $\gamma \in \h_e^\ast$. If $\sum_r c_{2r} d_\gamma x_{2r} = 0$ then $$\left(\sum_r c_{2r} d_\gamma x_{2r}\right)|_{\h_e^\ast} = \sum_r c_{2r} d_\gamma (x_{2r}|_{\h_e^\ast}) = 0$$ which gives one inclusion. Conversely suppose $\sum_r c_{2r} d_\gamma (x_{2r}|_{\h_e^\ast}) = 0$. Then $$\iota\left(\sum_r c_{2r} d_\gamma (x_{2r}|_{\h_e^\ast})\right) = \sum_r c_{2r} d_\gamma \iota(x_{2r}|_{\h_e^\ast}) =  \sum_r c_{2r} d_\gamma x_{2r} = 0,$$ which gives the other inclusion.
\end{proof}

\p Let $\alpha$ be as defined in the previous section, with the additional constraint that $a_i = - a_{i'}$ for all $i \neq i'$. We define $$\bar\beta = \beta + \beta' \text{ where } \beta' = \sum_{i+1 \neq i'} \varepsilon_{i,i+1,0} (\xi_{(i+1)'}^{i', \lambda_i-1})^\ast.$$
\begin{rem}\label{remremree}
\rm{These definitions for $\alpha$ and $\bar{\beta}$ are rather unclear at first glance. They first appeared in \cite{Yak1}, were used again in \cite{PPY}, and have a simple rationale behind them which we shall briefly discuss. The obvious guess of how to construct analogues of $\alpha$ and $\beta$, but lying in $\h_e^\ast$, is to define an automorphism $\sigma^\ast$ which acts by $+1$ on $\h_e^\ast$ and acts by $-1$ on $\pp_e^\ast$, and extends to all of $\g_e^\ast$ by linearity. Just as we obtained a spanning set for $\h_e$ by considering expressions $x + \sigma(x)$ with $x \in \g_e$ we may obtain analogues for $\alpha$ and $\beta$ by considering $\alpha + \sigma^\ast \alpha$ and $\beta + \sigma^\ast \beta$. This is roughly what we do here, although some of the summands $(\xi_i^{j,s})^\ast$ are rescaled in order to simplify notation.}
\end{rem}

\p In Section~\ref{elemInv} we introduced dual vectors $(\zeta_i^{j,s})^\ast := (\xi_i^{j,\lambda_j-1-s})^\ast + \varepsilon_{i,j,s} (\xi_{j'}^{i',\lambda_i - 1 - s})^\ast$. In order to carry out explicit calculations using $\alpha$ and $\bar\beta$ it will be necessary to express these two elements in terms of the $(\zeta_i^{j,s})^\ast$.
\begin{lem}\label{albeso} If $a_i = - a_{i'}$ for $i \neq i'$ then
\begin{eqnarray*}
\alpha = \frac{1}{2} \sum_{i = i'} a_i(\zeta_i^{i,0})^\ast +  \sum_{i < i'}a_i(\zeta_i^{i,0})^\ast; \\
\bar\beta = \frac{1}{2}\sum_{i+1 = i'}(\zeta_i^{i+1, 0})^\ast + \sum_{i+1 \neq i'}(\zeta_i^{i+1,0})^\ast;
\end{eqnarray*}
and in particular $\alpha, \bar\beta \in \h_e^\ast$.
\end{lem}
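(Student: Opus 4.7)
The plan is to unfold each expression on the right using the definition of the dual spanning vectors and then recognise the left hand side. Recall that by definition $(\zeta_i^{j,s})^\ast = (\xi_i^{j,\lambda_j-1-s})^\ast + \varepsilon_{i,j,s}(\xi_{j'}^{i',\lambda_i-1-s})^\ast$, and by part 1 of Lemma~\ref{spanningdetails} we may compute $\varepsilon_{i,j,s} = (-1)^{\lambda_j-s}\varpi_{i\leq i'}\varpi_{j\leq j'}$. Since $\epsilon = -1$ here, the rule $i = i' \Leftrightarrow \epsilon(-1)^{\lambda_i}=-1$ from Lemma~\ref{nilpotents} forces $\lambda_i$ to be even whenever $i = i'$ and $\lambda_i$ to be odd whenever $i \neq i'$. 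These parity statements will control all of the signs below.

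First I would verify the identity for $\alpha$. Specialising the $\varepsilon$-formula we have $\varepsilon_{i,i,0} = (-1)^{\lambda_i}$, so $(\zeta_i^{i,0})^\ast = (\xi_i^{i,\lambda_i-1})^\ast + (-1)^{\lambda_i}(\xi_{i'}^{i',\lambda_i-1})^\ast$. When $i = i'$ the parity analysis above gives $(-1)^{\lambda_i}=1$, hence $(\zeta_i^{i,0})^\ast = 2(\xi_i^{i,\lambda_i-1})^\ast$, which matches the coefficient $\tfrac{1}{2}a_i$ on the right. When $i \neq i'$ we have $(-1)^{\lambda_i}=-1$, and using $\lambda_{i'}=\lambda_i$ together with $a_i = -a_{i'}$, the $i$-term in $\sum_{i<i'}a_i(\zeta_i^{i,0})^\ast$ produces $a_i(\xi_i^{i,\lambda_i-1})^\ast + a_{i'}(\xi_{i'}^{i',\lambda_{i'}-1})^\ast$. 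Summing over all $i < i'$ now accounts for both halves of each orbit of the involution, so the total matches $\sum_i a_i(\xi_i^{i,\lambda_i-1})^\ast = \alpha$.

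Next I would carry out the analogous bookkeeping for $\bar\beta$. Reindexing $\beta = \sum_{i=1}^{n-1}(\xi_i^{i+1,\lambda_{i+1}-1})^\ast$ and writing $(\zeta_i^{i+1,0})^\ast = (\xi_i^{i+1,\lambda_{i+1}-1})^\ast + \varepsilon_{i,i+1,0}(\xi_{(i+1)'}^{i',\lambda_i-1})^\ast$, one splits the sum defining $\beta$ according to whether $i+1 = i'$ or $i+1 \neq i'$. In the first case $(i+1)' = i$ and a short computation using $\varpi_{i\leq i'}\varpi_{i+1\leq(i+1)'} = -1$ together with the parity of $\lambda_{i+1}$ gives $\varepsilon_{i,i+1,0} = 1$, so the two summands in $(\zeta_i^{i+1,0})^\ast$ coincide and $(\zeta_i^{i+1,0})^\ast = 2(\xi_i^{i+1,\lambda_i-1})^\ast$, explaining the factor $\tfrac{1}{2}$. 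In the second case the $i$-term of $\beta$ supplies the first summand of $(\zeta_i^{i+1,0})^\ast$ and the matching $i$-term of $\beta'$ supplies the second, so the two contributions assemble into $(\zeta_i^{i+1,0})^\ast$. Assembling both cases reconstructs $\bar\beta$.

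Finally, because each $(\zeta_i^{j,s})^\ast$ lies in $\h_e^\ast$ by construction (part 6 of Lemma~\ref{spanningdetails}), the membership $\alpha, \bar\beta \in \h_e^\ast$ follows at once. The only genuine obstacle is keeping track of the sign conventions $\varpi$ and $\varepsilon$ in the $i = i'$ and $i+1=i'$ boundary cases, but these are pinned down immediately by the parity constraint coming from Lemma~\ref{nilpotents}, so the whole argument is essentially a careful but direct unpacking of definitions.
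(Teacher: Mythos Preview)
Your proposal is correct and follows essentially the same approach as the paper's proof: both arguments compute $\varepsilon_{i,i,0}$ and $\varepsilon_{i,i+1,0}$ via Lemma~\ref{spanningdetails}, use the parity of $\lambda_i$ forced by $\epsilon=-1$ and Lemma~\ref{nilpotents} to resolve the signs, and then match the resulting expansions of $(\zeta_i^{i,0})^\ast$ and $(\zeta_i^{i+1,0})^\ast$ against the definitions of $\alpha$ and $\bar\beta$. If anything, you spell out the $i+1\neq i'$ case for $\bar\beta$ more explicitly than the paper does.
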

\begin{proof}
Since $\epsilon = -1$, Lemma~\ref{nilpotents} implies that $i = i'$ if and only if $\lambda_i$ is even. Using Lemma~\ref{spanningdetails} it follows that $(\zeta_i^{i,0})^\ast = 2(\xi_i^{i,\lambda_i-1})^\ast$ for all $i = i'$ and $(\zeta_i^{i,0})^\ast = (\xi_i^{i,\lambda_i-1})^\ast -(\xi_{i'}^{i',\lambda_i-1})^\ast$ for all $i < i'$. The formula for $\alpha$ follows. Similarly $\varepsilon_{i,i+1,0} = (-1)^{\lambda_{i+1}}\varpi_{i\leq i'} \varpi_{i+1\leq (i+1)'}$. If $i+1 = i'$ then Lemma~\ref{nilpotents} implies $\lambda_{i+1}$ is odd and that $\varepsilon_{i,i+1,0} = 1$. We conclude that $(\zeta_i^{i+1,0})^\ast = 2(\xi_i^{i+1, \lambda_{i+1} - 1})^\ast$ which completes the proof.
\end{proof}
\p Recall that $J := J_{\g_e^\ast}(x_1,...,x_N)$ and that there is a rational linear action $\rho : \K^\times \ra GL(\g_e^\ast)$ defined preceding Lemma~\ref{xrdiff}. If $i + 1 \neq i'$ then $(i+1)' > i'$ and so $$\rho(t) (\xi_{(i+1)'}^{i', \lambda_i - 1})^\ast = t^{k_i}  (\xi_{(i+1)'}^{i', \lambda_i - 1})^\ast$$ where $k_i = (i+1)' - i' + 1 \geq 2$.

\begin{lem}\label{plane2}
$(\K \alpha \oplus \K\bar\beta) \cap J = 0$
\end{lem}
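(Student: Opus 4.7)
The plan is to adapt the strategy of Lemma~\ref{plane1}: for any nonzero $\gamma \in \K\alpha \oplus \K\bar\beta$ we shall exhibit a point in the $\rho(\K^\times)$-orbit of $\gamma$ lying outside $J$, then invoke the $\rho$-stability from Lemma~\ref{Jstable}(2). The only new input beyond what appears in Lemma~\ref{plane1} is a scaling computation showing that $\rho(t)\bar\beta$ degenerates polynomially to $\beta$ as $t \to 0$; this will let us bootstrap from Lemma~\ref{beta}.

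The key computation uses $\rho(t)(\xi_i^{j,s})^\ast = t^{i-j+1}(\xi_i^{j,s})^\ast$, which immediately gives $\rho(t)\alpha = t\alpha$ and $\rho(t)\beta = \beta$. The summands of $\beta'$ are of the form $(\xi_{(i+1)'}^{i',\lambda_i-1})^\ast$ with $i+1 \neq i'$, and each scales by the exponent $k_i := (i+1)' - i' + 1$. We claim $k_i \geq 2$. Indeed, the renumbering convention $i' \in \{i-1,i,i+1\}$ of \ref{classsurj}, coupled with $i+1 \neq i'$, forces $i' \leq i$; similarly, $(i+1)' = i$ would give $i' = i+1$, so we must have $(i+1)' \geq i+1 > i \geq i'$, whence $k_i \geq 2$. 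Thus there exists $q(t) \in \g_e^\ast[t]$ with $\rho(t)\bar\beta = \beta + t^2 q(t)$, and the map $\phi : \K \to \g_e^\ast$ defined by $\phi(t) := tt_1\alpha + t_2\beta + t_2 t^2 q(t)$ is a morphism extending $t \mapsto \rho(t)\gamma$ across $t=0$ with $\phi(0) = t_2\beta$.

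Now suppose $\gamma = t_1\alpha + t_2\bar\beta \neq 0$. If $t_2 = 0$ then $\gamma = t_1\alpha \in \K\alpha \setminus \{0\}$, and Lemma~\ref{plane1} applies directly, since the present $\alpha$ still satisfies the hypothesis of pairwise distinct $a_i$ used there. If $t_2 \neq 0$, then Lemma~\ref{beta} gives $\phi(0) = t_2\beta \notin J$. Since $J$ is Zariski closed, $\phi^{-1}(\g_e^\ast \setminus J)$ is a nonempty open subset of $\K$ containing $0$, hence contains some $t_0 \in \K^\times$. For such $t_0$ we have $\rho(t_0)\gamma = \phi(t_0) \notin J$, and Lemma~\ref{Jstable}(2) then yields $\gamma \notin J$.

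The heart of the argument is the degeneration $\rho(t)\bar\beta \to \beta$ as $t \to 0$, which in turn rests on the elementary inequality $k_i \geq 2$ coming from the Jordan block pairing. Modulo that combinatorial observation, the proof reduces cleanly to Lemmas~\ref{beta}, \ref{plane1} and \ref{Jstable}, and no new analytic or structural input is required over the general linear case.
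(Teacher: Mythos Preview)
Your proof is correct and follows essentially the same approach as the paper: degenerate $\rho(t)\gamma$ toward a nonzero multiple of $\beta$ as $t\to 0$, appeal to Lemma~\ref{beta} at the limit, and then use openness together with the $\rho$-stability of $J$ from Lemma~\ref{Jstable}(2). Your formulation via the polynomial extension $\phi:\K\to\g_e^\ast$ of $t\mapsto\rho(t)\gamma$ is in fact a bit cleaner than the paper's, which packages the same idea as an auxiliary one-dimensional variety $E$ but is looser about the coefficients $t_1,t_2$; you also supply the justification for $k_i\ge 2$, which the paper asserts just before the lemma without argument.
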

\begin{proof}
Let $\gamma = t_1 \alpha + t_2\bar\beta \neq 0$ with $t_1,t_2 \in \K$. We shall show that $\gamma \in \g_e^\ast \backslash J$. Supposing $t_1 \neq 0$ and $t_2 = 0$ we may invoke Lemma~\ref{plane1} to conclude that $\gamma \in \g_e^\ast \backslash J$.

Suppose $t_2\neq 0$, then consider the set $E = \{t\alpha + (\beta + \sum \varepsilon_{i,i+1, 0} t^{k_i}\xi_{(i+1)'}^{i',\lambda_i-1}) : t \in \K\}$ where $k_i \in \N$ is defined preceding the statement of the lemma. It is a one dimenisonal variety containing $\beta$, hence by Lemma~\ref{beta} it must intersect the set $\g_e^\ast \backslash J$ in a non-empty open subset. Since $\rho(\K^\times)\gamma = \{t\alpha + (\beta + \sum \varepsilon_{i,i+1, 0} t^{k_i}\xi_{(i+1)'}^{i',\lambda_i-1}) : t \in \K^\times\} \subseteq E$ is also non-empty and open in $E$ the intersection $\rho(\K^\times)\gamma \cap (\g_e^\ast \backslash J)$ is non-empty. By part 2 of Lemma~\ref{Jstable}, $J$ is $\rho(\K^\times)$-stable and so $\gamma \in \g_e^\ast \backslash J$.
\end{proof}

\p We can now give a proof for part 1 of Proposition~\ref{jaclocsymorth}.
\begin{proof}
By Lemmas~\ref{albeso} and \ref{plane2} there is a 2 dimensional plane contained in $\h_e^\ast$ intersecting $J$ only at zero. By Lemma~\ref{jal} we have $J_{\h_e^\ast}(x_r : r \text{ even}) = \h_e^\ast \cap J_{\g_e^\ast}(x_r : r \text{ even}) \subseteq \h_e^\ast \cap J$ so that same plane intersects $J_{\h_e^\ast}(x_r : r \text{ even})$ only at zero. As $J_{\h_e^\ast}(x_r : r \text{ even})$ is conical and Zariski closed, the proposition follows.
\end{proof}

\p In order to prove part 2 of Proposition~\ref{jaclocsymorth} we follow exactly the same scheme of argument as above. Since we treated the symplectic case so carefully, our proof here will constantly refer back to previous arguments. \emph{For the remnant of this subsection take $\epsilon = 1$}. Again we have an inclusion $\iota : S(\pp_e) \ra S(\g_e)$. The next lemma is analogous to Lemma~\ref{jal}.
\begin{lem}\label{jal2}
$J_{\pp_e^\ast}(x_r : r + d_r \text{ even}) = \pp_e^\ast \cap J_{\g_e^\ast}(x_r : r + d_r \text{ even})$
\end{lem}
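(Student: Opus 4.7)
The plan is to mirror the argument of Lemma \ref{jal} from the symplectic case. The engine of that proof was Lemma \ref{dees}, which showed that for $r$ even and $\gamma\in\h_e^\ast$ one has $d_\gamma x_r=d_\gamma\iota(x_r|_{\h_e^\ast})$. My first step would be to prove the direct analogue for the orthogonal setting: namely, that for every $r$ with $r+d_r$ even and every $\gamma\in\pp_e^\ast$, one has
$$d_\gamma x_r\;=\;d_\gamma\,\iota(x_r|_{\pp_e^\ast}).$$

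To see this, fix bases $\BB_0$ for $\h_e$ and $\BB_1$ for $\pp_e$ and view $\BB=\BB_0\sqcup\BB_1$ as a basis of $\g_e$. The monomials in $\BB$ form an eigenbasis for the extension of $\sigma$ to $S(\g_e)$, the sign depending on the parity of the number of factors drawn from $\BB_1$. The polynomial $x_r-\iota(x_r|_{\pp_e^\ast})$ is precisely the sum of those monomial summands of $x_r$ which contain at least one factor from $\BB_0$, since the elements of $\BB_0$ annihilate $\pp_e^\ast$. The crucial claim is that, when $r+d_r$ is even, each such monomial in fact contains at least \emph{two} factors from $\BB_0$. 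This is a straightforward parity analysis along the lines of Corollary \ref{vanishing1}: by Proposition \ref{sigmaxr} we have $\sigma x_r=(-1)^r x_r$, so the number of $\BB_1$ factors in any monomial summand of $x_r$ has the same parity as $r$, while the total number of factors equals $d_r$; hence the number of $\BB_0$ factors has the parity of $r+d_r$, which is even in our case.

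With the two--factor claim established, any monomial summand $m$ of $x_r-\iota(x_r|_{\pp_e^\ast})$ has the property that $\partial m/\partial x$ still contains at least one factor from $\BB_0$ for every $x\in\BB$, and therefore vanishes at any $\gamma\in\pp_e^\ast$. Summing gives $d_\gamma(x_r-\iota(x_r|_{\pp_e^\ast}))=0$, which is the desired identity.

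Equipped with this identity, the lemma itself follows verbatim from the argument given in Lemma \ref{jal}: if $\sum c_r d_\gamma x_r=0$ on $\g_e$, restricting to $\pp_e^\ast$ yields $\sum c_r d_\gamma(x_r|_{\pp_e^\ast})=0$, giving the inclusion $\pp_e^\ast\cap J_{\g_e^\ast}(x_r:r+d_r\textrm{ even})\subseteq J_{\pp_e^\ast}(x_r:r+d_r\textrm{ even})$; conversely, if $\sum c_r d_\gamma(x_r|_{\pp_e^\ast})=0$, applying the $\K$--algebra map $\iota$ and using the displayed identity converts this into $\sum c_r d_\gamma x_r=0$, giving the reverse inclusion. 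The only substantive point beyond bookkeeping is the parity computation above, and this is essentially the same as the one already carried out in Corollary \ref{vanishing1}, so I do not anticipate any real obstacle.
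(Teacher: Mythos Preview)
Your proposal is correct and follows essentially the same approach as the paper's proof: first establish the analogue of Lemma~\ref{dees} (that $d_\gamma x_r = d_\gamma\iota(x_r|_{\pp_e^\ast})$ for $\gamma\in\pp_e^\ast$ and $r+d_r$ even) via the parity count of $\BB_0$-factors, then repeat the short computation of Lemma~\ref{jal}. The paper is simply more terse, pointing back to Corollary~\ref{vanishing1} and Lemma~\ref{dees} rather than rewriting the parity argument as you have done.
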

\begin{proof}
First we prove a version of Lemma~\ref{dees}: that $d_\gamma x_r = d_\gamma i(x_r|_{\pp_e^\ast})$ for all $\gamma \in \pp_e^\ast$. Resume notations $\BB_0$, $\BB_1$ and $\BB$ from Lemma~\ref{dees}. This time write $x_r - \iota(x_r|_{\pp_e^\ast}) = \sum_{i=1}^k c_i m_i$ where $c_i \in \K^\times$ are non-zero constants and $m_i$ are monomials in $\BB$. Since $\iota(x_r|_{\pp_e^\ast})$ is just the sum of those monomials summands of $x_r$ which contain no terms from $\BB_0$, so each $m_i$ possesses a factor from $\BB_0$. Using a reasoning identical to Lemma~\ref{vanishing1} we see that the number of such factors is even. The proof now concludes exactly as per Lemma~\ref{dees}. In order to finish the current proof we use identical calculations to Lemma~\ref{jal}, simply replacing the set $\{x_r : r \text{ even}\}$ with $\{x_r: r + d_r \text{ even}\}$, and restricting our functions to $\pp_e^\ast$ rather than $\h_e^\ast$.
\end{proof}

\p Next we identify a 2-dimensional plane contained in $\pp_e^\ast$ intersecting $J_{\pp_e^\ast} (x_r : r + d_r \text{ even})$ only at zero. As was noted in Remark~\ref{remremree} our construction in type $\sf C$ is essentially to take $\alpha + \sigma^\ast \alpha$ and $\beta + \sigma^\ast \beta$. The obvious choice when constructing elements in $\pp_e^\ast$ is to consider $\alpha - \sigma^\ast \alpha$ and $\beta - \sigma^\ast \beta$. This is essentially what we do here. 

Define $\alpha$ in the same way as in Section~\ref{JacLoc1}, with $a_i = -a_{i'}$ for all $i \neq i'$, and define $\bar\beta = \beta - \beta'$.
\begin{lem}\label{albeos} We have
\begin{eqnarray*}
\alpha = \frac{1}{2} \sum_{i = i'} a_i(\eta_i^{i,0})^\ast +  \sum_{i < i'}a_i(\eta_i^{i,0})^\ast;\\
\bar\beta = \frac{1}{2}\sum_{i+1 = i'}(\eta_i^{i+1, 0})^\ast + \sum_{i+1 \neq i'}(\eta_i^{i+1,0})^\ast;
\end{eqnarray*}
and in particular $\alpha, \bar\beta \in \pp_e^\ast$.
\end{lem}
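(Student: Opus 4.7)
The plan is to mirror the proof of Lemma~\ref{albeso} almost verbatim, adjusting for the fact that $\epsilon=1$ reverses the parity conditions in Lemma~\ref{nilpotents} and that we are now working with the basis $(\eta_i^{j,s})^\ast = (\xi_i^{j,\lambda_j-1-s})^\ast - \varepsilon_{i,j,s}(\xi_{j'}^{i',\lambda_i-1-s})^\ast$, where the relative sign between the two summands is $-\varepsilon$ rather than $+\varepsilon$. The membership statement $\alpha,\bar\beta\in\pp_e^\ast$ will then fall out for free from part 6 of Lemma~\ref{spanningdetails}, since the right-hand sides of the asserted identities are manifestly linear combinations of the $(\eta_i^{j,s})^\ast$.

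For the formula for $\alpha$, I would expand $(\eta_i^{i,0})^\ast$ in the two cases dictated by the involution. Recall that in the orthogonal case $i=i'$ forces $\lambda_i$ odd and $i<i'$ forces $\lambda_i$ even, and $\varepsilon_{i,i,0}=(-1)^{\lambda_i}$ by part 1 of Lemma~\ref{spanningdetails}. Hence when $i=i'$, $\varepsilon_{i,i,0}=-1$ and $(\eta_i^{i,0})^\ast=2(\xi_i^{i,\lambda_i-1})^\ast$; when $i<i'$, $\varepsilon_{i,i,0}=1$ and $(\eta_i^{i,0})^\ast=(\xi_i^{i,\lambda_i-1})^\ast-(\xi_{i'}^{i',\lambda_i-1})^\ast$. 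Combined with the constraint $a_i=-a_{i'}$ and $\lambda_i=\lambda_{i'}$, the claimed identity for $\alpha$ follows by grouping the summands of the original definition into self-paired indices and pairs $\{i,i'\}$ with $i<i'$.

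For $\bar\beta$, I would reindex $\beta=\sum_{i=1}^{n-1}(\xi_i^{i+1,\lambda_{i+1}-1})^\ast$ and then expand $(\eta_i^{i+1,0})^\ast$ in the two subcases. When $i+1=i'$, the involution forces $\lambda_i=\lambda_{i+1}$ and (since $i,i+1$ are not self-paired) both are even; using $\varpi_{i\le i'}=1$ and $\varpi_{i+1\le(i+1)'}=-1$ together with part 1 of Lemma~\ref{spanningdetails} gives $\varepsilon_{i,i+1,0}=-1$, whence $(\eta_i^{i+1,0})^\ast=2(\xi_i^{i+1,\lambda_{i+1}-1})^\ast$, so the coefficient $\tfrac12$ recovers the corresponding summand of $\beta$ (and there is no contribution to $\beta'$ since $i+1=i'$). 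When $i+1\ne i'$, the definition of $(\eta_i^{i+1,0})^\ast$ already gives $(\xi_i^{i+1,\lambda_{i+1}-1})^\ast-\varepsilon_{i,i+1,0}(\xi_{(i+1)'}^{i',\lambda_i-1})^\ast$, matching the corresponding summand of $\beta$ minus the corresponding summand of $\beta'$. Summing over $1\le i\le n-1$ reassembles $\beta-\beta'=\bar\beta$.

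The main obstacle is sign bookkeeping: one must keep track of the interaction between $\epsilon=+1$ (swapping the parity rules of Lemma~\ref{nilpotents} relative to the symplectic case) and the switch from $+\varepsilon$ to $-\varepsilon$ in the definition of the $\eta^\ast$, and in particular verify that the two sign flips conspire to give exactly the same combinatorial pattern as in Lemma~\ref{albeso}. A mild additional point is to confirm there is no accidental double counting in the $\bar\beta$ sum: because the convention $i'\in\{i-1,i,i+1\}$ forbids $i'=(i+1)'+1$ when both $i+1\ne i'$ and $j+1\ne j'$, no two distinct indices $i$ contribute the same summand, and the identity is clean.
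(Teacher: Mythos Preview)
Your proposal is correct and takes essentially the same approach as the paper: the paper's own proof simply says to follow Lemma~\ref{albeso} verbatim, replacing each $\zeta_i^{j,s}$ by $\eta_i^{j,s}$ and exchanging the words odd and even, and what you have written is precisely that substitution carried out in detail. Your sign bookkeeping (the two flips from $\epsilon=1$ in Lemma~\ref{nilpotents} and from the $-\varepsilon$ in the definition of $(\eta_i^{j,s})^\ast$ conspiring to reproduce the same pattern) is exactly the content of the paper's one-line instruction.
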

\begin{proof}
Simply follow the proof of Lemma~\ref{albeso} verbatim, replacing each occurrence of $\zeta_i^{j,s}$ with $\eta_i^{j,s}$, and exchanging the words odd and even.
\end{proof}

\p We can now give a version of Lemma~\ref{plane2}. The statement of the lemma is precisely the same, but we remind the reader that now we have $\epsilon = 1$, and our definition of $\bar\beta$ is slightly different.
\begin{lem}
$\K \alpha \oplus \K \bar\beta \cap J = 0$
\end{lem}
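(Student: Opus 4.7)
The plan is to copy the proof of Lemma~\ref{plane2} almost verbatim, since the only structural difference from the symplectic case is the sign appearing in $\bar\beta = \beta - \beta'$ (rather than $\beta + \beta'$), and this sign is invisible to the $\rho(\K^\times)$-weight analysis that drives the argument.

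Take an arbitrary $\gamma = t_1\alpha + t_2\bar\beta \neq 0$ with $t_1,t_2 \in \K$; I want to exhibit $\gamma \in \g_e^\ast \setminus J$. If $t_2 = 0$, then $\gamma = t_1\alpha$ is already an element of $\K\alpha \oplus \K\beta$, and Lemma~\ref{plane1} (the $GL$ case) finishes this subcase directly. So assume $t_2 \neq 0$. The key observation is that the summands of $\alpha$, $\beta$ and $\beta'$ are $\rho$-eigenvectors with very different weights: $\rho(t)\alpha = t\alpha$ because $(\xi_i^{i,\lambda_i-1})^\ast$ has $\rho$-weight $1$; $\rho(t)\beta = \beta$ because $(\xi_{i-1}^{i,\lambda_i-1})^\ast$ has $\rho$-weight $0$; and each summand $(\xi_{(i+1)'}^{i',\lambda_i-1})^\ast$ of $\beta'$ has $\rho$-weight $k_i = (i+1)'-i'+1 \geq 2$ (using the convention following Lemma~\ref{nilpotents} that $i+1\ne i'$ forces $(i+1)' > i'$).

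Applying $\rho(t)$ to $\gamma$ therefore produces the curve
\[
\rho(t)\gamma \;=\; tt_1\,\alpha \;+\; t_2\beta \;-\; t_2\sum_{i+1 \neq i'} \varepsilon_{i,i+1,0}\,t^{k_i}\,(\xi_{(i+1)'}^{i',\lambda_i - 1})^\ast,
\]
which lies inside the one-dimensional affine variety
\[
E \;=\; \Bigl\{s\alpha + t_2\beta - t_2\!\!\sum_{i+1 \neq i'}\!\! \varepsilon_{i,i+1,0}\,s^{k_i}(\xi_{(i+1)'}^{i',\lambda_i-1})^\ast : s \in \K\Bigr\}.
\]
At $s = 0$ this variety contains $t_2\beta$, which by Lemma~\ref{beta} lies in $\g_e^\ast \setminus J$. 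Since $J$ is Zariski-closed, the intersection $E \cap (\g_e^\ast \setminus J)$ is a non-empty open subset of $E$; hence the dense open subset $\rho(\K^\times)\gamma \subseteq E$ must also meet $\g_e^\ast \setminus J$. Finally, part 2 of Lemma~\ref{Jstable} says $J$ (and hence its complement) is $\rho(\K^\times)$-stable, so $\gamma \in \g_e^\ast \setminus J$ as required.

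There is really no serious obstacle here: the three ingredients are already in place. Lemma~\ref{albeos} puts $\alpha,\bar\beta$ in $\pp_e^\ast$; Lemma~\ref{beta} is a statement in the ambient $GL$-setting and so is unaffected by the switch from $\epsilon = -1$ to $\epsilon = 1$; and the $\rho$-weight computation is identical, since altering a sign on each component of $\beta'$ does not change the fact that those summands have strictly larger $\rho$-weight than $\alpha$ and $\beta$. The only minor point worth double-checking is that when $i+1=i'$ the $\beta'$-contribution vanishes in $\bar\beta$ by construction, so no term of $\rho$-weight $\leq 1$ sneaks in from $\bar\beta$ — but this is immediate from the formula $\beta' = \sum_{i+1\neq i'}\varepsilon_{i,i+1,0}(\xi_{(i+1)'}^{i',\lambda_i-1})^\ast$.
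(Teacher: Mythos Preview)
Your proof is correct and follows essentially the same approach as the paper, which explicitly says the argument is identical to Lemma~\ref{plane2} with the sign in the definition of $E$ adjusted to reflect $\bar\beta = \beta - \beta'$. Your handling of the $t_2=0$ case via a direct appeal to Lemma~\ref{plane1} is a harmless shortcut (the paper's Lemma~\ref{plane2} instead routes through Lemma~\ref{alpha}), and your $\rho$-weight analysis and limiting argument at $s=0$ match the paper's reasoning exactly.
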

\begin{proof}
The argument is identical to Lemma~\ref{plane2} except in this instance the correct definition of $E$ is  $\{t\alpha + (\beta - \sum \varepsilon_{i,i+1, 0} t^{k_i}\xi_{(i+1)'}^{i',\lambda_i-1}) : t \in \K\}$, which reflects the fact that $\bar\beta = \beta - \beta'$.
\end{proof}

\p We may now supply the proof of part 2 of Proposition~\ref{jaclocsymorth}. 
\begin{proof}
By Lemmas~\ref{plane2} and \ref{albeos} there is a two dimensional plane contained in $\pp_e^\ast$ intersecting $J$ only at zero. By lemma \ref{jal2} we have $J_{\pp_e^\ast}(x_r : r + d_r \text{ even}) = \pp_e^\ast \cap J_{\g_e^\ast}(x_r : r + d_r \text{ even}) \subseteq \pp_e^\ast \cap J$ so that same plane intersects $J_{\pp_e^\ast}(x_r : r + d_r \text{ even})$ only at zero. As $J_{\pp_e^\ast}(x_r : r + d_r \text{ even})$ is conical and Zariski closed, the proposition follows.
\end{proof}

\section{Generic stabilisers}\label{GenEl}
\renewcommand{\theparno}{\thesection.\arabic{parno}}
\renewcommand{\p}{\refstepcounter{parno}\noindent\textbf{\theparno .} \space} 
\setcounter{parno}{0}

\p In the current section we make a quick detour to discuss the existence of generic stabilisers in certain cases. We recorded the definition of a generic stabiliser in Section~\ref{Index} and proved an elementary but fundamental lemma, which we restate for the reader's convenience.
\begin{lem}\label{genrestate}
Suppose $G \subseteq GL(V)$. Fix $\alpha \in V$ and let $\mathcal{W}_\alpha = \{\gamma \in V : \g_\alpha \subseteq \g_\gamma\}$. If
\begin{eqnarray*}\varphi: G \times \mathcal{W}_\alpha \ra V;\\ \varphi(g,v) = g\cdot v\end{eqnarray*} is a dominant morphism then $\alpha$ is a regular, generic point in $V$.
\end{lem}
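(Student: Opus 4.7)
The plan is to exploit two standard facts: first, by the usual semi-continuity argument (in the style of \cite[1.11.5]{Dix}) there is a dense open subset $\mathcal{O}\subseteq V$ on which every $\g$-stabiliser has dimension exactly $\ind(\g,V)$; second, since $\varphi$ is a dominant morphism between irreducible varieties, Chevalley's theorem on constructible sets guarantees that $\varphi(G\times\mathcal{W}_\alpha)$ contains a dense open subset $\mathcal{P}\subseteq V$. The intersection $\mathcal{O}\cap\mathcal{P}$ is then non-empty, and this will be the source of all the information.

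First I would pick any $\delta\in\mathcal{O}\cap\mathcal{P}$ and write $\delta=g\cdot\delta'$ for some $(g,\delta')\in G\times\mathcal{W}_\alpha$. By the definition of $\mathcal{W}_\alpha$ we have the inclusion $\g_\alpha\subseteq\g_{\delta'}$, and conjugation by $g$ preserves dimensions, so
$$\dim\g_\alpha\;\leq\;\dim\g_{\delta'}\;=\;\dim\g_\delta\;=\;\ind(\g,V).$$
Since the reverse inequality is automatic, equality holds throughout and $\alpha$ is regular.

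To upgrade this to the statement that $\g_\alpha$ is a generic stabiliser, I would repeat the argument at an arbitrary $\gamma'\in\mathcal{O}\cap\mathcal{P}$, writing $\gamma'=h\cdot\gamma''$ with $(h,\gamma'')\in G\times\mathcal{W}_\alpha$. Then $\Ad(h)\g_\alpha\subseteq\Ad(h)\g_{\gamma''}=\g_{\gamma'}$, and since both $\Ad(h)\g_\alpha$ and $\g_{\gamma'}$ have dimension $\ind(\g,V)$, they must coincide as subspaces. Hence every stabiliser on the non-empty open set $\mathcal{O}\cap\mathcal{P}$ is $G$-conjugate to $\g_\alpha$, exhibiting $\g_\alpha$ as a generic stabiliser.

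The only nontrivial ingredient is the appeal to Chevalley's theorem to produce the dense open subset $\mathcal{P}$ from the dominance hypothesis; once both $\mathcal{O}$ and $\mathcal{P}$ are in hand, the remainder is the elementary observation that two subspaces of $\g$, one contained in the other and of the same dimension, must be equal. So there is no real obstacle beyond this standard constructibility argument.
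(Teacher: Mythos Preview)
Your proof is correct and follows essentially the same approach as the paper's own argument: both obtain the open set $\mathcal{O}$ of regular points via semi-continuity \`a la \cite[1.11.5]{Dix}, produce the dense open subset $\mathcal{P}$ in the image from dominance, and then use the inclusion $\g_\alpha\subseteq\g_{\delta'}$ together with dimension counting on $\mathcal{O}\cap\mathcal{P}$ to conclude both regularity and genericity. The only cosmetic difference is that you name Chevalley's theorem explicitly, whereas the paper simply asserts the existence of $\mathcal{P}$.
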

\noindent The remaining results in this section hold in any good characteristic. Some of the results were known previously although the proofs were different. We shall attribute known results where necessary.

\p Retain all notations and conventions of the previous sections. Recall that we have a linear form $\alpha \in \g_e^\ast$ such that $\alpha \in \h_e^\ast$ when $\epsilon = -1$ and $\alpha \in \pp_e^\ast$ for $\epsilon = 1$ (see Lemmas \ref{albeso} and \ref{albeos}). In \cite[$\S 5$]{Yak1} it is shown that $\alpha \in \g_e^\ast$ is a regular element with stabiliser $\mathfrak{h} = \spn\{\xi_i^{j,s} : i = j\}$, and that $\alpha$ is also regular for the coadjoint representation of $\h_e$ when $\epsilon = -1$. Yakimova goes on to prove that $\alpha$ is a generic point over $\C$ in these two cases. Her method makes use of a powerful criterion due to Elashvili, which is particular to characteristic $0$ \cite{Ela}. With a little extra work we can use the same technique to extend this result to the characteristic $p > 0$ case.
\begin{thm}\label{genstabal} The linear form $\alpha$ is
\begin{enumerate}
\item{a generic, regular point for the action of $G_e$ on $\g_e^\ast$;}
\item{a generic, regular point for the action of $K_e$ on $\h_e^\ast$ when $\epsilon = -1$;}
\item{a generic, regular point for the action of $K_e$ on $\pp_e^\ast$ when $\epsilon = 1$.}
\end{enumerate}
\end{thm}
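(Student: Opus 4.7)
The plan in each case is to apply Lemma \ref{genreg}: it suffices to show that the orbit map $\varphi\colon G\times \V_\alpha\to V$ is dominant, where $G$ is the acting group, $V$ the ambient module, and $\V_\alpha=V^{\mathfrak{g}_\alpha}$ is the linear subspace fixed by the Lie algebra stabiliser of $\alpha$. The first step is to identify this stabiliser. I claim $\g_{e,\alpha}=\mathfrak{h}$ in Part 1, from which $\h_{e,\alpha}=\h_e\cap\mathfrak{h}=\HH$ follows automatically in Parts 2 and 3 (the $\h_e$-action on $\h_e^\ast$ or $\pp_e^\ast$ is the restriction of the coadjoint $\g_e$-action on $\g_e^\ast$, via the natural decomposition $\g_e^\ast=\h_e^\ast\oplus\pp_e^\ast$). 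The inclusion $\mathfrak{h}\subseteq\g_{e,\alpha}$ is immediate because $\alpha$ is supported on the top-weight dual vectors $(\xi_i^{i,\lambda_i-1})^\ast$ and the elements of $\mathfrak{h}$ either act on disjoint Jordan block spaces or commute inside the polynomial algebra $\K[e|_{V[i]}]$. The reverse is the characteristic-free calculation of Yakimova \cite[\S 5]{Yak1}: for $i\ne j$ one computes $[\xi_i^{j,s},\xi_j^{i,r}]=\xi_j^{j,r+s}-\xi_i^{i,r+s}$ on appropriate index ranges, and pairing with $\alpha$ yields $a_j-a_i$ when $\lambda_i=\lambda_j$, and similar non-zero values otherwise, using the assumption that the $a_i$ are pairwise distinct.

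Next I would reduce dominance to a linear algebra statement. The differential of $\varphi$ at $(1,\alpha)$ has image $[\mathfrak{g},\alpha]+\V_\alpha$, so $\varphi$ is dominant as soon as this sum fills $V$. Passing to annihilators in the pre-dual $V^\ast$ (which is $\g_e$ for Part 1, $\h_e$ for Part 2, $\pp_e$ for Part 3), one has $\V_\alpha^\perp=[\mathfrak{g}_\alpha,V^\ast]$ by a direct computation, while $[\mathfrak{g},\alpha]^\perp$ turns out to be $\mathfrak{h}\cap V^\ast$. The identification in Part 3 deserves comment: for $v\in\pp_e$, the extended form $\tilde\alpha\in\g_e^\ast$ (equal to $\alpha$ on $\pp_e$ and zero on $\h_e$) satisfies $\tilde\alpha([\pp_e,v])=0$ automatically since $[\pp_e,\pp_e]\subseteq\h_e$, so $\alpha([\h_e,v])=\tilde\alpha([\g_e,v])$; this vanishes precisely when $v\in\g_{e,\tilde\alpha}=\mathfrak{h}$, giving $[\h_e,\alpha]^\perp=\mathfrak{h}\cap\pp_e$. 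The analogous calculation in Part 2 gives $[\h_e,\alpha]^\perp=\HH$, and in Part 1 it is simply $\mathfrak{h}$. In all three cases dominance thus reduces to showing that a subspace of $\mathfrak{h}$ meets $[\mathfrak{g}_\alpha,V^\ast]$ trivially.

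Finally, each intersection $\mathfrak{h}\cap[\mathfrak{h},\g_e]$, $\HH\cap[\HH,\h_e]$, and $(\mathfrak{h}\cap\pp_e)\cap[\HH,\pp_e]$ vanishes by the triangular decomposition $\g_e=\n^-\oplus\mathfrak{h}\oplus\n^+$ recorded in the proof of Lemma \ref{beta}. Since $\mathfrak{h}$ is abelian and $[\mathfrak{h},\n^\pm]\subseteq\n^\pm$ (by direct bracket calculation using that $\xi_i^{i,s}$ preserves $V[i]$), we have $[\mathfrak{h},\g_e]\subseteq\n^-\oplus\n^+$, whence $\mathfrak{h}\cap[\mathfrak{h},\g_e]=0$. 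The two other intersections are dominated by this one via the obvious containments $\HH\subseteq\mathfrak{h}$, $\mathfrak{h}\cap\pp_e\subseteq\mathfrak{h}$ and $\h_e,\pp_e\subseteq\g_e$. Lemma \ref{genreg} then delivers the theorem. In Part 1 the identity $\dim\g_{e,\alpha}=\ind\g_e$ is confirmed by Elashvili's Theorem \ref{elashvili} together with $\dim\mathfrak{h}=N$; similarly $\dim\HH=N/2=\ind\h$ in Part 2. In Part 3 the index $\ind(\h_e,\pp_e^\ast)$ is not known a priori, but the argument shows it equals $\dim\HH=(N+s)/2=m$, where $s$ is the number of odd parts of $\lambda$. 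The main obstacle is the direct verification that $\g_{e,\alpha}\subseteq\mathfrak{h}$, which is notationally heavy but standard and characteristic-free.
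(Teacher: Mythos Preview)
Your proof is correct and takes a genuinely different, more structural route than the paper.  Both arguments invoke Lemma~\ref{genreg} after identifying the stabiliser $(\g_e)_\alpha=\mathfrak{h}$ (and hence $(\h_e)_\alpha=\HH$), but the paper establishes dominance by restricting to the smaller subspace $\mathfrak{h}^\ast\subseteq\V_\alpha$ (respectively $\mathfrak{h}^\ast\cap\h_e^\ast$, $\mathfrak{h}^\ast\cap\pp_e^\ast$) and then computing $\ad^\ast(\xi_i^{j,s})\alpha$ (respectively $\ad^\ast(\zeta_i^{j,s})\alpha$) explicitly, using an induction on the superscript to show that these elements span the off-diagonal piece $(\n^-)^\ast\oplus(\n^+)^\ast$ (Propositions~\ref{shost}, \ref{alphaKegeneric}, \ref{alphaKegeneric2}).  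You instead work with the full $\V_\alpha$, pass to annihilators in the pre-dual, and reduce surjectivity of the differential to the single structural fact $[\mathfrak{h},\g_e]\subseteq\n^-\oplus\n^+$, which follows immediately from $\mathfrak{h}$ being abelian and preserving $\n^\pm$.  This bypasses the explicit formula~(\ref{adform}) and the induction entirely; the paper's computation extracts a little more (an explicit expression for $\ad^\ast(\zeta_i^{j,s})\alpha$ under the extra constraint $l_ia_i\neq l_ja_j$), but for the theorem as stated your argument is shorter and more conceptual.

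One minor slip at the end: in the orthogonal case $\dim\HH=(N-s)/2$, not $(N+s)/2$; you have conflated the index $\ind(\h_e,\pp_e)$ with the quantity $m(\h_e,\pp_e^\ast)=\dim\pp_e-\dim\h_e+\ind(\h_e,\pp_e)$ of Lemma~\ref{noinv}.  This is only a remark and does not affect the proof.
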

\p The method is to satisfy the assumptions of Lemma~\ref{genrestate}. Much as was the case in Subsection~\ref{JacLoc2} the proof of part 2 and part 3 is almost identical. In order to prove 3 we give a detailed account of how to modify the argument in part 2. In \cite{Yak1} some of these statements from parts 1 and 2 were proven: the regularity was demonstrated over fields of good characteristic and the the generic stabilisers were exhibited over $\C$. 

\p Recall that $\g_e = \mathfrak{n}^- \oplus \mathfrak{h} \oplus \mathfrak{n}^+$ where $\mathfrak{n}^- = \spn\{\xi_i^{j,s} : i < j\}$, $\mathfrak{h} = \spn\{\xi_i^{j,s} : i = j\}$ and $\mathfrak{n}^+ = \spn\{\xi_i^{j,s} : i > j\}$. We have an induced decomposition $\g_e^\ast = (\mathfrak{n}^-)^\ast \oplus \mathfrak{h}^\ast \oplus (\mathfrak{n}^+)^\ast$ where $\alpha \in \mathfrak{h}^\ast$.
\begin{prop}\label{shost}
The following map is dominant
\begin{eqnarray*}
\vartheta : & G_e \times \mathfrak{h}^\ast \ra \g_e^\ast \\
\vartheta : & (g, \gamma) \mapsto \Ad^\ast(g) \gamma. 
\end{eqnarray*}
\end{prop}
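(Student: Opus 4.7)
The plan is to prove dominance by showing that the differential $d_{(1,\alpha)}\vartheta$ is surjective. In any characteristic, a morphism between smooth varieties whose differential at a point is surjective maps an open neighbourhood of that point submersively, in particular its image contains an open set, so it is dominant.

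First I would identify $T_{(1,\alpha)}(G_e \times \hh^\ast) \cong \g_e \oplus \hh^\ast$ and compute
$$d_{(1,\alpha)}\vartheta(x,\delta) \;=\; \ad^\ast(x)\alpha + \delta \qquad (x \in \g_e,\ \delta \in \hh^\ast),$$
so surjectivity reduces to the equality $\ad^\ast(\g_e)\alpha + \hh^\ast = \g_e^\ast$.

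Next I would exploit that the decomposition $\g_e = \n^- \oplus \hh \oplus \n^+$ is the decomposition by the $\Z$-grading $\deg\xi_i^{j,s} = i - j$, which is a Lie algebra grading; hence $[\hh,\n^{\pm}] \subseteq \n^{\pm}$ and $[\n^{\pm},\n^{\pm}] \subseteq \n^{\pm}$. Since $\alpha \in \hh^\ast$ vanishes on $\n^- \oplus \n^+$, a direct calculation using these bracket relations yields the containments
$$\ad^\ast(\n^+)\alpha \subseteq (\n^-)^\ast \quad\text{and}\quad \ad^\ast(\n^-)\alpha \subseteq (\n^+)^\ast.$$
(An equivalent route is to differentiate the identity $\Ad^\ast(N^+)\alpha \subseteq \alpha + (\n^-)^\ast$ from the proof of Lemma~\ref{alpha}, together with the same argument applied to the subgroup $N^- := 1 + \n^-$, which works since every element of $\n^-$ is also a nilpotent endomorphism of $V$.) Yakimova's result recalled just before the statement gives $\g_{e,\alpha} = \hh$, so $\ad^\ast(\hh)\alpha = 0$ and
$$\dim\ad^\ast(\g_e)\alpha \;=\; \dim\g_e - \dim\hh \;=\; \dim\n^- + \dim\n^+.$$

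A dimension count then forces equality $\ad^\ast(\g_e)\alpha = (\n^-)^\ast \oplus (\n^+)^\ast$, whence $\ad^\ast(\g_e)\alpha + \hh^\ast = \g_e^\ast$ as required, so the differential is surjective and $\vartheta$ is dominant. The main point of care is that the dimension count is tight: one needs the precise equality $\g_{e,\alpha} = \hh$ (not merely $\hh \subseteq \g_{e,\alpha}$) in order to upgrade the inclusion $\ad^\ast(\g_e)\alpha \subseteq (\n^-)^\ast \oplus (\n^+)^\ast$ to an equality. This is supplied by Yakimova's calculation in good characteristic, and is the only non-formal ingredient in the argument.
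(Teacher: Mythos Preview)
Your proof is correct and follows the same overall strategy as the paper: both compute the differential at $(1,\alpha)$ as $(x,\delta)\mapsto \ad^\ast(x)\alpha+\delta$ and reduce dominance to showing $\ad^\ast(\g_e)\alpha\supseteq(\n^-)^\ast\oplus(\n^+)^\ast$.

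The difference lies in how this last inclusion is established. The paper proves it by an explicit inductive computation: using the formula $\ad^\ast(\xi_i^{j,s})\alpha = a_i(\xi_j^{i,\lambda_i-1-s})^\ast - a_j(\xi_j^{i,\lambda_j-1-s})^\ast$ together with the genericity condition $a_i\neq a_j$, it shows directly that each dual basis vector $(\xi_j^{i,s})^\ast$ with $i\neq j$ lies in $\ad^\ast(\g_e)\alpha$. Your argument instead observes that the grading $\deg\xi_i^{j,s}=i-j$ forces $\ad^\ast(\g_e)\alpha\subseteq(\n^-)^\ast\oplus(\n^+)^\ast$, and then upgrades this to equality by a dimension count using Yakimova's result $(\g_e)_\alpha=\hh$. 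This is slightly cleaner and more conceptual, at the cost of importing Yakimova's stabiliser computation as a black box; the paper's version is self-contained at this point (Yakimova is invoked only later, in the proof of Theorem~\ref{genstabal}). Either route is perfectly valid.
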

\begin{proof}
By \cite[Theorem 3.2.20(i)]{Spr} it suffices to show that the differential $d_{(1, \alpha)} \vartheta : \g_e \oplus \hh^\ast \ra \g_e^\ast$ is surjective. In \cite[Lemma~1.6]{TY} the differential is calculated $$d_{(1,\alpha)} \vartheta (x, v) = \ad^\ast(x) \alpha + v.$$ Therefore $\hh^\ast \subseteq d_{(1,\alpha)} \vartheta (\g_e, \hh^\ast)$. To complete the proof we shall show that $(\mathfrak{n}^-)^\ast, (\mathfrak{n}^+)^\ast \subseteq \ad^\ast(\g_e)\alpha =d_{(1,\alpha)} \vartheta (\g_e, 0)$. A quick calculation shall confirm that 
\begin{eqnarray}\label{adform}
\ad^\ast (\xi_i^{j,s}) (\xi_k^{l,r})^\ast = \delta_{ik} (\xi_j^{l,r-s})^\ast - \delta_{jl} (\xi_k^{i,r-s})^\ast
\end{eqnarray}
and that
\begin{eqnarray}\label{eqnn}
\ad^\ast(\xi_i^{j,s}) \alpha = a_i (\xi_j^{i,\lambda_i-1-s})^\ast - a_j (\xi_j^{i,\lambda_j-1-s})^\ast.
\end{eqnarray} Fix $i > j$. If $\lambda_i = \lambda_j$ then the restriction $a_i \neq a_j$ implies that $(\xi_j^{i,s})^\ast \in \ad^\ast(\mathfrak{n}^+)\alpha$ for $s = 0,1,...,\lambda_i$. If $\lambda_i < \lambda_j$ then substituting $s=\lambda_j-1$ into equation (\ref{eqnn}) gives $(\xi_j^{i,0})^\ast \in \ad^\ast(\mathfrak{n}^+)\alpha$. Substituting successively smaller values of $s$ into equation (\ref{eqnn}) we obtain by induction $(\xi_j^{i,s})^\ast \in \ad^\ast(\mathfrak{n}^+)\alpha$ for all $s = 0,1,..., \lambda_i -1$. We conclude that $(\mathfrak{n}^-)^\ast \subseteq \ad^\ast(\mathfrak{n}^+) \alpha$. An identical argument shows that $(\mathfrak{n}^+)^\ast \subseteq \ad^\ast(\mathfrak{n}^-) \alpha$, completing the proof.
\end{proof}

\p In order to prove analogues of the above proposition for classical subalgebras of $\g$ we must first place further restriction on the coefficients $a_i$ which appear in the definition of $\alpha$. Define $$l_i = \left\{ \begin{array}{ll}
         1 & \mbox{if $i=i'$};\\
        2 & \mbox{if $i \neq i'$}.\end{array} \right.$$
and impose the restriction $l_i a_i = l_j a_j$ only if $i = j$.
\begin{prop}\label{alphaKegeneric}
The following map is dominant when $\epsilon = -1$
\begin{eqnarray*}
\vartheta : & K_e \times (\mathfrak{h}^\ast\cap \h_e^\ast) \ra \h_e^\ast \\
\vartheta : & (g, \gamma) \mapsto \Ad^\ast(g) \gamma. 
\end{eqnarray*}
\end{prop}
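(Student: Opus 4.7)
The plan is to mirror the strategy used in Proposition~\ref{shost}: establish dominance by verifying that the differential $d_{(1,\alpha)}\vartheta : \h_e \oplus (\hh^\ast \cap \h_e^\ast) \to \h_e^\ast$ is surjective, which by \cite[Theorem~3.2.20(i)]{Spr} is sufficient. The formula from \cite[Lemma~1.6]{TY} gives
\[
d_{(1,\alpha)}\vartheta(x,v) = \ad^\ast(x)\alpha + v, \qquad x \in \h_e,\ v \in \hh^\ast \cap \h_e^\ast,
\]
so the torus component is immediate and the real task is to show that the two nilpotent pieces $(\n^-)^\ast \cap \h_e^\ast$ and $(\n^+)^\ast \cap \h_e^\ast$ are contained in $\ad^\ast(\h_e)\alpha$.

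First I would exploit the explicit expression $\alpha = \tfrac{1}{2}\sum_{i=i'} a_i (\zeta_i^{i,0})^\ast + \sum_{i<i'} a_i (\zeta_i^{i,0})^\ast$ from Lemma~\ref{albeso} together with the basis description of $\h_e$ in Lemma~\ref{subbasis}. A direct computation using formula~(\ref{adform}) and the definitions $\zeta_i^{j,s} = \xi_i^{j,\lambda_j-1-s} + \varepsilon_{i,j,s}\xi_{j'}^{i',\lambda_i-1-s}$ gives an expression of the form
\[
\ad^\ast(\zeta_i^{j,s})\alpha \;=\; (l_i a_i)\,(\xi_j^{i,\ast})^\ast - (l_j a_j)\,(\xi_j^{i,\ast})^\ast \;+\; \text{partner term under }\sigma^\ast,
\]
i.e.\ a scalar multiple of a dual spanning vector $(\zeta_j^{i,\ast})^\ast$ or $(\zeta_{i'}^{j',\ast})^\ast$ of $\h_e^\ast$, with coefficient a difference $l_ia_i - l_ja_j$ (when $i\ne j,\,j'$) or a single $l_ia_i$ (in the mixed cases where $\{i,i'\} \cap \{j,j'\} \ne \emptyset$). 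The factor $l_i$ arises precisely because $\zeta_i^{i,0} = 2\xi_i^{i,\lambda_i-1}$ when $i=i'$ and $\zeta_i^{i,0} = \xi_i^{i,\lambda_i-1} - \xi_{i'}^{i',\lambda_i-1}$ when $i \ne i'$, which explains the definition of $l_i$.

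Next I would use the assumption $l_ia_i = l_ja_j \Rightarrow i=j$ to conclude that the relevant coefficients never vanish, so for every index pair occurring in $N_0 \sqcup N_1$ the vector $(\zeta_j^{i,0})^\ast$ (and its higher-$s$ analogues, obtained by varying $s$ in (\ref{eqnn}) and running the induction on $s$ used in Proposition~\ref{shost}) lies in $\ad^\ast(\h_e)\alpha$. This exhausts $(\n^-)^\ast \cap \h_e^\ast$; the image $(\n^+)^\ast \cap \h_e^\ast$ is obtained by the symmetric argument with the roles of $i,j$ reversed. Combining with the already-present piece $\hh^\ast \cap \h_e^\ast$, surjectivity of $d_{(1,\alpha)}\vartheta$ follows.

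The main obstacle I foresee is bookkeeping: the involution $i\mapsto i'$, the relation $\zeta_i^{j,s} = \varepsilon_{i,j,s}\zeta_{j'}^{i',s}$ from Lemma~\ref{spanningdetails}, and the parity constraints defining $H,N_0,N_1$ all interact, so one must carefully split into the cases $\{i,i'\}=\{j,j'\}$ and $\{i,i'\}\cap\{j,j'\}=\emptyset$ to guarantee that the resulting linear forms $\ad^\ast(\zeta_i^{j,s})\alpha$ span the nontoral part of $\h_e^\ast$ without accidental cancellations. Once the case analysis is organised so that each dual basis vector of $\h_e^\ast$ is hit by a nonzero scalar multiple via the strengthened restriction on the $a_i$, the induction on $s$ carries over from the proof of Proposition~\ref{shost} essentially verbatim.
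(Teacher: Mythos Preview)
Your proposal is essentially the paper's own argument: surjectivity of the differential at $(1,\alpha)$, the explicit formula $\ad^\ast(\zeta_i^{j,s})\alpha = l_ia_i(\zeta_j^{i,\lambda_j-1-s})^\ast - l_ja_j(\zeta_j^{i,\lambda_i-1-s})^\ast$ derived from Lemma~\ref{albeso} and formula~(\ref{adform}), and then the condition $l_ia_i\ne l_ja_j$ for $i\ne j$ to kill cancellations, followed by the same induction on $s$ as in Proposition~\ref{shost}. One organisational remark: the paper splits the case analysis by whether $\lambda_i=\lambda_j$, $\lambda_i<\lambda_j$, or $\lambda_i>\lambda_j$ (the last reduced via $\zeta_i^{j,s}=\varepsilon_{i,j,s}\zeta_{j'}^{i',s}$), rather than by whether $\{i,i'\}\cap\{j,j'\}$ is empty; this is cleaner because the two terms in the displayed formula coincide or not exactly according to $\lambda_i=\lambda_j$, not according to the orbit intersection.
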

\begin{proof}
The proof is very similar to that of Proposition~\ref{shost}. Once again we may prove that the differential $d_{(1, \alpha)} \vartheta$ is surjective. The differential is $d_{(1,\alpha)} \vartheta (x,v) = \ad^\ast(x) \gamma + v$. Since $d_{(1,\alpha)} \vartheta (0,\hh^\ast\cap \h_e^\ast) = \mathfrak{h}^\ast \cap \h_e^\ast = \spn\{ (\zeta_i^{i,s})^\ast\}$ we may complete the proof by showing that $\spn\{(\zeta_i^{j,s})^\ast : i \neq j\}\subseteq \ad^\ast(\h_e) \alpha$. Once again we shall need explicit calculations. From formula (\ref{adform}) in the proof of the previous proposition it follows that
\begin{eqnarray*}
\ad^\ast (\zeta_i^{j,s}) (\zeta_k^{l,r})^\ast & = & \delta_{ik} (\zeta_j^{l, \lambda_j - 1 + r-s})^\ast - \delta_{jl} (\zeta_k^{i, \lambda_i - 1 +r-s})^\ast \\
& + & \delta_{il'} \varepsilon_{klr}(\zeta_j^{k', \lambda_j - 1+r-s})^\ast - \delta_{jk'} \varepsilon_{klr} (\zeta_{l'}^{i,\lambda_i - 1 +r-s})^\ast.
\end{eqnarray*}
Recall that there is an expression for $\alpha$ in terms of $(\zeta_i^{i,0})^\ast$ derived in Lemma~ \ref{albeso} $$\alpha = \frac{1}{2} \sum_{i = i'} a_i(\zeta_i^{i,0})^\ast +  \sum_{i < i'}a_i(\zeta_i^{i,0})^\ast = \sum_{i \leq i'}a_i (1 - \frac{1}{2} \delta_{i,i'}) (\zeta_i^{i,0})^\ast.$$
By Lemma~\ref{spanningdetails} we have $\varepsilon_{i,i,0} = (-1)^{\lambda_i}$ and as a consequence we have
\begin{eqnarray*}
\ad^\ast(\zeta_i^{j,s}) \alpha & = & (1 - \frac{1}{2}\delta_{i,i'}) a_i  (\zeta_j^{i,\lambda_j - 1 - s})^\ast - (1 - \frac{1}{2}\delta_{j,j'})a_j (\zeta_j^{i,\lambda_i - 1 - s})^\ast\\
& - & \varepsilon_{i',i',0} (1 - \frac{1}{2}\delta_{i,i'}) a_{i'}  (\zeta_j^{i,\lambda_j - 1 - s})^\ast - \varepsilon_{j',j',0}(1 - \frac{1}{2}\delta_{j,j'})a_j (\zeta_j^{i,\lambda_i - 1 - s})^\ast\\
& = & (1 - \frac{1}{2}\delta_{i,i'}) (a_i + (-1)^{\lambda_i}a_{i'})  (\zeta_j^{i,\lambda_j - 1 - s})^\ast \\& - & (1 - \frac{1}{2}\delta_{j,j'})(a_j + (-1)^{\lambda_j}a_{j'}) (\zeta_j^{i,\lambda_i - 1 - s})^\ast.
\end{eqnarray*}
By Lemma~\ref{nilpotents}, $\lambda_i$ is even if and only if $i = i'$. Furthermore $a_i = a_{i'}$ whenever $i=i'$, and $a_i = -a_{i'}$ whenever $i\neq i'$. In either case $a_i + (-1)^{\lambda_i} a_{i'} = 2a_i$. We deduce that $$\ad^\ast(\zeta_i^{j,s}) \alpha = l_i a_i (\zeta_j^{i,\lambda_j-1-s})^\ast - l_j a_j (\zeta_j^{i,\lambda_i-1-s})^\ast.$$ Fix $i\neq j$. Suppose $\lambda_i = \lambda_j$. Then the restrictions imposed on the coefficients $a_i$ and $l_i$ ensure that $(\zeta_j^{i,s})^\ast \in \ad^\ast(\h_e)\alpha$ for $s=0,1,...,\lambda_i-1$. Now suppose $\lambda_i < \lambda_j$. Taking $s=0$ we get $-l_ja_j(\zeta_j^{i,\lambda_i-1})^\ast \in \ad^\ast(\h_e)\alpha$. Taking successively larger values of $s$ we obtain $\zeta_j^{i,\lambda_i-1-s} \in \ad^\ast(\h_e)\alpha$ for $s=0,1,...,\lambda_i-1$ by induction. Now suppose $\lambda_i > \lambda_j$. It follows that  $j' \neq i < j$ so that $i' < j'$. By part 3 of Lemma~\ref{spanningdetails} we have $\zeta_i^{j,s} = \varepsilon_{i,j,s} \zeta_{j'}^{i',s}$ which 
lies in $\ad^\ast(\h_e) \alpha$ by our previous remarks. We conclude that $\spn\{(\zeta_i^{j,s})^\ast : i \neq j\}\subseteq \ad^\ast(\h_e) \alpha$ and $d_{(1, \alpha)} \varphi (\h_e, \mathfrak{h}^\ast \cap \h_e^\ast) = \h_e^\ast$, which completes the proof.
\end{proof}

\p We now state the analogous proposition for orthogonal algebras. The reader will notice that we denote the representation of $K_e$ in $\pp_e^\ast$ by $\Ad^\ast$. Technically this representation is the restriction of the coadjoint representation of $G_e$, although this will cause no confusion.
\begin{prop}\label{alphaKegeneric2}
The following map is dominant when $\epsilon = 1$
\begin{eqnarray*}
\vartheta : & K_e \times (\mathfrak{h}^\ast\cap \pp_e^\ast) \ra \pp_e^\ast \\
\vartheta : & (g, \gamma) \mapsto \Ad^\ast(g) \gamma. 
\end{eqnarray*}
\end{prop}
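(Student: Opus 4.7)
The plan is to mirror the proof of Proposition~\ref{alphaKegeneric} step by step, making only the sign adjustments forced by the passage from $\zeta$'s to $\eta$'s. By \cite[Theorem~3.2.20(i)]{Spr} it suffices to show that the differential
\[
d_{(1,\alpha)}\vartheta : \h_e \oplus (\hh^\ast \cap \pp_e^\ast) \longrightarrow \pp_e^\ast,\qquad (x,v)\longmapsto \ad^\ast(x)\alpha + v,
\]
is surjective. Since $\hh^\ast\cap\pp_e^\ast = \spn\{(\eta_i^{i,s})^\ast\}$ is already contained in the image via the second summand, the whole proof reduces to showing that $\spn\{(\eta_i^{j,s})^\ast : i\neq j\}\subseteq \ad^\ast(\h_e)\alpha$.

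To carry this out I would use the spanning set $\{\zeta_i^{j,s}\}$ of $\h_e$ coming from Section~\ref{basisforthecent} together with the expression for $\alpha$ given in Lemma~\ref{albeos}, and compute $\ad^\ast(\zeta_i^{j,s})\alpha$ using the basic formula~(\ref{adform}). The four-term expansion produced by the two summands of each $\zeta_i^{j,s}$ acting on each of the two summands of each $(\eta_k^{k,0})^\ast$ collapses in the same manner as in the proof of Proposition~\ref{alphaKegeneric}: off-diagonal terms annihilate and one is left with an expression of the shape
\[
\ad^\ast(\zeta_i^{j,s})\alpha = c_i\,(\eta_j^{i,\lambda_j-1-s})^\ast - c_j\,(\eta_j^{i,\lambda_i-1-s})^\ast
\]
for suitable constants $c_i$ depending on $a_i$ and on $\varepsilon_{i,i,0}=(-1)^{\lambda_i}$.

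The crucial sign computation is where the orthogonal case diverges from the symplectic one. For $\epsilon=1$ Lemma~\ref{nilpotents} gives $i=i'\Leftrightarrow \lambda_i$ odd, so $(-1)^{\lambda_i}=-1$ when $i=i'$ and $(-1)^{\lambda_i}=+1$ when $i\neq i'$; combined with the normalisation $a_i = -a_{i'}$ for $i\neq i'$, the expression $a_i - (-1)^{\lambda_i} a_{i'}$ (the minus sign comes from $\eta$ rather than $\zeta$) simplifies in both cases to $l_i a_i$. Thus I obtain the clean formula
\[
\ad^\ast(\zeta_i^{j,s})\alpha = l_i a_i\,(\eta_j^{i,\lambda_j-1-s})^\ast - l_j a_j\,(\eta_j^{i,\lambda_i-1-s})^\ast,
\]
which is formally identical to the one in Proposition~\ref{alphaKegeneric}.

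With this formula in hand, the remainder of the argument is verbatim. Fix $i\neq j$. If $\lambda_i = \lambda_j$ the restriction $l_i a_i \neq l_j a_j$ lets us invert a $2\times 2$ system to extract every $(\eta_j^{i,s})^\ast$ for $0\le s<\lambda_i$. If $\lambda_i<\lambda_j$, taking $s=0$ gives $(\eta_j^{i,\lambda_i-1})^\ast\in\ad^\ast(\h_e)\alpha$ directly, and induction on $s$ yields all of $(\eta_j^{i,\lambda_i-1-s})^\ast$. If $\lambda_i>\lambda_j$ one reduces to the previous case by applying the relation from part~4 of Lemma~\ref{spanningdetails}, $\eta_i^{j,s} = -\varepsilon_{i,j,s}\eta_{j'}^{i',s}$, which in dual form lets us trade $(\eta_j^{i,s})^\ast$ for $(\eta_{i'}^{j',s})^\ast$ where $i'<j'$. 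The only point requiring care, and the main obstacle, is to verify the four-term collapse and the sign $a_i - (-1)^{\lambda_i}a_{i'} = 2a_i = l_i a_i$ in both parities of $\lambda_i$; once this is done, surjectivity of $d_{(1,\alpha)}\vartheta$ follows and the proposition is proved.
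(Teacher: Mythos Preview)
Your proposal is correct and follows essentially the same approach as the paper's proof: reduce to surjectivity of the differential, which amounts to showing $\spn\{(\eta_i^{j,s})^\ast : i\neq j\}\subseteq \ad^\ast(\h_e)\alpha$, then compute $\ad^\ast(\zeta_i^{j,s})\alpha$ via formula~(\ref{adform}) and Lemma~\ref{albeos} to obtain the clean expression $l_i a_i\,(\eta_j^{i,\lambda_j-1-s})^\ast - l_j a_j\,(\eta_j^{i,\lambda_i-1-s})^\ast$, after which the case analysis is verbatim from Proposition~\ref{alphaKegeneric}. One small slip: the expression $a_i - (-1)^{\lambda_i} a_{i'}$ simplifies to $2a_i$, not $l_i a_i$; the factor $l_i$ enters when you multiply by the $(1-\tfrac12\delta_{i,i'})$ coefficient coming from the expression for $\alpha$ in Lemma~\ref{albeos}, so the final formula is still correct.
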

\begin{proof}
Following the previous line of reasoning, the proof hinges on showing that $\spn\{(\eta_i^{j,s})^\ast : i \neq j\}\subseteq \ad^\ast(\h_e) \alpha$. Once again we shall need explicit calculations. From formula (\ref{adform}) it follows that
\begin{eqnarray*}
\ad^\ast (\zeta_i^{j,s}) (\eta_k^{l,r})^\ast & = & \delta_{ik} (\eta_j^{l, \lambda_j - 1 + r-s})^\ast - \delta_{jl} (\eta_k^{i, \lambda_i - 1 +r-s})^\ast \\
& - & \delta_{il'} \varepsilon_{klr}(\eta_j^{k', \lambda_j - 1+r-s})^\ast - \delta_{jk'} \varepsilon_{klr} (\eta_{l'}^{i,\lambda_i - 1 +r-s})^\ast 
\end{eqnarray*}
Recall that there is an expression for $\alpha$ in terms of $(\eta_i^{i,0})^\ast$ derived in Lemma~\ref{albeso}. Using this with the above expression for $\ad^\ast (\zeta_i^{j,s}) (\eta_k^{l,r})^\ast$ and going through a series of calculations almost identical to those in Proposition~\ref{alphaKegeneric} we arrive at the assertion $$\ad^\ast(\zeta_i^{j,s}) \alpha = l_i a_i (\zeta_j^{i,\lambda_j-1-s})^\ast - l_j a_j (\zeta_j^{i,\lambda_i-1-s})^\ast.$$ The proof then concludes by making precisely the same observations as those concluding the previous proposition.
\end{proof}

\p We may now supply a proof of theorem \ref{genstabal}.
\begin{proof}
Thanks to Lemma~\ref{genrestate} we need to show that $\varphi : G_e \times \V_\alpha \ra \g_e^\ast$ is dominant where $\V_\alpha = \{\gamma \in \g_e^\ast : (\g_e)_\alpha \subseteq (\g_e)_\gamma\}$. By \cite[Theorem~1]{Yak1} we have $(\g_e)_\alpha = \mathfrak{h}$. Since $\mathfrak{h}$ is abelian and stabilises $\n^-$ and $\n^+$, any linear form $\gamma \in \mathfrak{h}^\ast$ is annihilated by $\mathfrak{h}$. As such $\mathfrak{h}^\ast \subseteq \V_\alpha$. By Proposition~\ref{shost} the map $\vartheta = \varphi|_{G_e\times \mathfrak{h}^\ast}$ is dominant, and so too is $\varphi$. By Lemma~\ref{genreg}, part 1 of the theorem follows. 

The proofs of part 2 and 3 are similar. We shall reset the definition of $\varphi$, $\vartheta$ and $\V_\alpha$. Let $\epsilon = -1$, let $\V_\alpha = \{ \gamma\in \h_e^\ast : (\h_e)_\alpha \subseteq (\h_e)_\gamma\}$ and let $\varphi : K_e \times \V_\alpha \ra \h_e^\ast$. Since $(\h_e)_\alpha = \hh \cap \h_e$ is abelian and preserves $\spn\{\zeta_i^{j,s} : i\neq j\}$ we have $\hh^\ast\cap \h_e^\ast \subseteq \V_\alpha$. Then by Proposition~\ref{alphaKegeneric} the map $\vartheta = \varphi|_{K_e \times (\hh^\ast\cap\h_e^\ast)}$ is dominant and so is $\varphi$. Then by Lemma~\ref{genreg}, part 2 of the current theorem follows.

For part 3 we set $\epsilon = 1$, $\V_\alpha = \{\gamma \in \pp_e^\ast : (\h_e)_\alpha\subseteq (\h_e)_\gamma\}$ and $\varphi : K_e \times \V_\alpha \ra \pp_e^\ast$. In this case $(\h_e)_\alpha = \hh\cap \h_e$ annihilates $\hh \cap \pp_e$ and stabilises $\spn \{ \eta_i^{j,s} : i\neq j\}$ so $\hh^\ast\cap \pp_e^\ast \subseteq \V_\alpha$ and so by Proposition~\ref{alphaKegeneric2} the map $\varphi$ is dominant. The theorem follows by Lemma~\ref{genreg}.
\end{proof}

\p Finally we calculate the index of $\h_e$ in $\pp_e^\ast$ when $\epsilon = 1$. This comprises a small but vital ingredient of the proof of Theorem \ref{main3}.
\begin{cor}\label{orthindex}
$\ind(\h_e, \pp_e) = \frac{1}{2}(N - |\{i : \lambda_i \text{ \emph{odd}}\}|)$ when $\epsilon = 1$.
\end{cor}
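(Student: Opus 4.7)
The plan is to reduce the index computation to the dimension of a specific subalgebra, using the generic stabilizer just established. By Theorem~\ref{genstabal}(3), the linear form $\alpha\in\pp_e^\ast$ is a regular, generic point for the action of $K_e$ on $\pp_e^\ast$, so
$$\ind(\h_e,\pp_e)\;=\;\dim\,(\h_e)_\alpha.$$
Hence the entire problem collapses to identifying and counting the dimension of $(\h_e)_\alpha$.

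For the identification, I would observe that since $[\h_e,\pp_e]\subseteq\pp_e$, the $\h_e$-action on $\pp_e^\ast$ is simply the restriction of the $\g_e$-coadjoint action, and therefore $(\h_e)_\alpha=\h_e\cap(\g_e)_\alpha$. By Yakimova's result cited at the start of Section~\ref{GenEl} (and used in the proof of Theorem~\ref{genstabal}), the full $\g_e$-stabilizer is $(\g_e)_\alpha=\hh=\spn\{\xi_i^{i,s}\}$, i.e.\ the space of centralizer elements preserving each Jordan block space $V[i]$. Consequently $(\h_e)_\alpha=\h_e\cap\hh$, which is precisely the subspace $\HH$ of Lemma~\ref{subbasis}(1), with explicit basis $H$.

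It then remains to count $|H|$. Splitting the sum by the involution $i\mapsto i'$, the indices with $i<i'$ contribute all of $\lambda_i$, while the fixed points $i=i'$ contribute only those $s$ with $\lambda_i-s$ even. Since $\epsilon=1$, Lemma~\ref{nilpotents} forces $\lambda_i$ to be odd whenever $i=i'$, so each fixed point contributes exactly $(\lambda_i-1)/2$. Letting $O:=|\{i:i=i'\}|=|\{i:\lambda_i\text{ odd}\}|$ and pairing up the non-fixed indices, a direct bookkeeping gives
$$|H|\;=\;\tfrac{1}{2}\!\sum_{i\neq i'}\lambda_i\;+\sum_{i=i'}\tfrac{\lambda_i-1}{2}\;=\;\tfrac{1}{2}\Big(N-\!\sum_{i=i'}\!\lambda_i\Big)+\tfrac{1}{2}\Big(\!\sum_{i=i'}\!\lambda_i-O\Big)\;=\;\tfrac{1}{2}(N-O),$$
which yields the asserted formula. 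I do not anticipate any serious obstacle: the heavy lifting was carried out in Theorem~\ref{genstabal} and Lemma~\ref{subbasis}, and the only delicate point is keeping the parity bookkeeping for the diagonal basis $\zeta_i^{i,s}$ straight, which is guided directly by part~3 of Lemma~\ref{spanningdetails}.
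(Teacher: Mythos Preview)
Your proposal is correct and follows essentially the same route as the paper: both invoke Theorem~\ref{genstabal}(3) to reduce to $\dim(\h_e)_\alpha=\dim(\hh\cap\h_e)$ and then count. The only cosmetic difference is that you cite Lemma~\ref{subbasis} to identify $\hh\cap\h_e=\HH$ and count the ready-made basis $H$, whereas the paper re-derives the count directly from the spanning set $\{\zeta_i^{i,s}\}$ by determining which vanish and which are related via Lemma~\ref{spanningdetails}(3); the arithmetic is identical.
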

\begin{proof}
By part 3 of Theorem~\ref{genstabal} we know that $\ind(\h_e, \pp_e) = \dim\, (\h_e)_\alpha = \dim\, \mathfrak{h}\cap \h_e = \dim\,  \spn\{\zeta_i^{i,s}: 1\leq i \leq n, 0 \leq s < \lambda_i\}$. Now $\zeta_i^{i,s} = 0$ only if $\xi_i^{i,\lambda_i-1-s} = -\varepsilon_{i,i,s} \xi_{i'}^{i',\lambda_i-1-s}$, which is only if $i=i'$ and $\varepsilon_{i,i,s} = 1$. In this case $\lambda_i$ is odd, by Lemma~\ref{nilpotents}, and $\varepsilon_{i,i,s} = (-1)^{\lambda_i-s} = 1$ implies $s$ is odd. Hence for each $i=i'$ we obtain $\frac{\lambda_i-1}{2}$ nonzero maps $\zeta_i^{i,s}$. In case $i\neq i'$ we have the relations $\zeta_i^{i,s} = \varepsilon_{i,i,s} \zeta_{i'}^{i',s}$ by part 3 of Lemma~\ref{spanningdetails}. Since these are the only relations we have $\dim\,  \spn \{\zeta_i^{i,s}, \zeta_{i'}^{i',s} : 0 \leq s < \lambda_i\} = \lambda_i$ for each pair $(i,i')$ with $i\neq i'$. Since $\lambda_i$ is even in this case we conclude that
$$\ind(\h_e, \pp_e) = \sum_{i} \lfloor\frac{\lambda_i}{2}\rfloor = \frac{1}{2}(N - |\{i : \lambda_i \text{ odd}\}|).$$
\end{proof}

\section{The structure of the invariant algebras}
\setcounter{parno}{0}

\p The deductions of sections \ref{elemInv} through \ref{GenEl} aim to satisfy the assumptions of Theorem \ref{Skryabin}. Before we apply the theorem we must clarify one detail. We remind the reader that the first assumption of the theorem is that we have found precisely $$m = m(\tg, X) = \dim\, X - \dim\, \tg + \emph{\ind}(\tg, X)$$ invariants. We shall now confirm that we have the correct number in each case.
\begin{lem}\label{noinv} The following are true:
\begin{enumerate} 
\item{$m(\g_e, \g_e^\ast) = N$;}
\smallskip
\item{$m(\h_e, \h_e^\ast) = |\{x_r : r \text{ even}\}| = N/2$ when $\epsilon = -1$;}
\smallskip
\item{$m(\h_e, \pp_e^\ast) = |\{x_r : r + d_r \text{ even}\}| = \frac{1}{2}(N + |\{i : \lambda_i \text{ odd}\}|)$ when $\epsilon = 1$.}
\end{enumerate}
\end{lem}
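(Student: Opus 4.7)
The plan is to evaluate $m(\tg,X)=\dim X-\dim\tg+\ind(\tg,X)$ separately in each of the three cases, appealing to Elashvili's theorem (Theorem~\ref{elashvili}) and Corollary~\ref{orthindex} for the index, and then checking the claimed combinatorial counts of the invariants.

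Parts~1 and 2 will be quick. Since $\dim\g_e=\dim\g_e^\ast$, part~1 collapses to $m(\g_e,\g_e^\ast)=\ind\,\g_e$, and Theorem~\ref{elashvili} identifies this with $\ind\,\gl_N=N$, matching $|\{x_1,\ldots,x_N\}|=N$. For part~2, $\h=\sp_N$ has rank $N/2$, so the same reasoning gives $m(\h_e,\h_e^\ast)=\ind\,\h_e=N/2$; since $N$ is even, exactly $N/2$ of the indices $r\in\{1,\ldots,N\}$ are even, giving the count $|\{x_r : r\ \text{even}\}|=N/2$.

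Part~3 is the real content. I plan to split it into two ingredients. First, I will compute the difference $\dim\pp_e-\dim\h_e$ by evaluating the trace of $\sigma$ on the basis of $\g_e$ given in Lemma~\ref{gebasis}. Using the formula $\sigma(\xi_i^{j,\lambda_j-1-s})=\varepsilon_{i,j,s}\,\xi_{j'}^{i',\lambda_i-1-s}$, only the basis vectors with $i=j'$ contribute to the trace. For $i\neq i'$ the value $\lambda_i$ is even (since $\epsilon=1$), and summing $\varepsilon_{i,i',s}=-(-1)^s$ over $0\le s<\lambda_i$ gives zero; for $i=i'$ the value $\lambda_i$ is odd, and $\sum_{s=0}^{\lambda_i-1}(-1)^{\lambda_i-s}=-1$. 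Hence $\operatorname{tr}(\sigma|_{\g_e})=-q$ with $q=|\{i:\lambda_i\ \text{odd}\}|$, so $\dim\pp_e-\dim\h_e=q$. Combined with Corollary~\ref{orthindex}, this yields $m(\h_e,\pp_e^\ast)=q+\tfrac12(N-q)=\tfrac12(N+q)$.

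The main obstacle will be showing that the combinatorial count $|\{r:r+d_r\ \text{even}\}|$ actually equals $\tfrac12(N+q)$. My strategy is to evaluate the signed sum
\[
\sum_{r=1}^{N}(-1)^{r+d_r}=\sum_{k=1}^{n}(-1)^k\!\!\!\sum_{r=s_{k-1}+1}^{s_k}(-1)^r,\qquad s_k:=\lambda_1+\cdots+\lambda_k,
\]
and show it equals $q$. The inner sum vanishes whenever $\lambda_k$ is even, so only the odd parts contribute. Let $i_1<\cdots<i_q$ be the positions of the odd parts; by Lemma~\ref{nilpotents} these are precisely the indices with $i=i'$, and the labelling convention $i'\in\{i-1,i,i+1\}$ forces the positions strictly between consecutive odd-part positions (and before the first) to be paired up with $m'\neq m$. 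An easy induction then gives $i_j\equiv j\pmod 2$ and $s_{i_j-1}\equiv j-1\pmod 2$, so each odd-part contribution $(-1)^{i_j}\cdot(-1)^{s_{i_j-1}+1}$ equals $+1$. Summing yields $q$, whence $|\{r:r+d_r\ \text{even}\}|=\tfrac12(N+q)$, matching the index computation and completing part~3.
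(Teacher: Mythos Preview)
Your argument is correct. Parts~1 and~2 are handled the same way as in the paper (the paper cites \cite{Yak1} directly rather than Theorem~\ref{elashvili}, but this is the same result for classical types). For part~3 the overall strategy coincides with the paper's, but two steps are packaged differently. First, the paper obtains $\dim\,\pp_e-\dim\,\h_e=|\{i:\lambda_i\text{ odd}\}|$ by citing the formula $\dim\,\h_e=\tfrac12(\dim\,\g_e-|\{i:\lambda_i\text{ odd}\}|)$ from \cite[3.2(3)]{Jan}, whereas you recover the same number by computing $\Tr(\sigma|_{\g_e})$ on the basis of Lemma~\ref{gebasis}; your route is more self-contained. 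Second, for the combinatorial count the paper partitions $\{1,\ldots,N\}$ into the blocks $\II_i=\{r:d_r=i\}$ and counts case-by-case according to whether $i=i'$ and whether $i$ is even or odd; your signed sum $\sum_r(-1)^{r+d_r}$ is a cleaner repackaging of the same computation, and the key observation in both is the parity statement $i_j\equiv j\pmod 2$ coming from the pairing structure of the involution $i\mapsto i'$.
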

\begin{proof}
We have $m(\g_e, \g_e^\ast) = \ind(\g_e, \g_e^\ast) = N$ by \cite{Yak1}. If we take $\epsilon = -1$ so that $N$ is even then $m(\h_e, \h_e^\ast) = \ind(\h_e, \h_e^\ast) = N/2 = |\{ 1\leq r \leq N : r \text{ even}\}|$ by the same reasoning.

Now let $\epsilon = 1$. By \cite[3.2(3)]{Jan} we have $$\dim\, \h_e = \frac{1}{2}( \dim\, \g_e - |\{ i : \lambda_i \text{ odd}\}|).$$ Since $\dim\, \pp_e^\ast = \dim\, \pp_e = \dim\, \g_e - \dim\, \h_e$ we have $\dim\,  \pp_e^\ast - \dim\,  \h_e = |\{ i : \lambda_i \text{ odd}\}|.$ By Corollary \ref{orthindex} we get $m(\h_e, \pp_e^\ast) = \frac{1}{2}(N + |\{i : \lambda_i \text{ odd}\}|)$. We must show that this number equals $|\{1\leq r \leq N: r + d_r \text{ even}\}|$. Partition the set $\{1,...,N\}$ into disjoint subsets $\II_i = \{1\leq r \leq N : d_r = i\}$ where $i=1,...,n$. Then $|\II_i| = \lambda_i$ and by the definition of the sequence $d_1,d_2,...,d_N$ we have $r \in \II_i$ if and only if
\begin{eqnarray}\label{IIprop}
0 < r - \sum_{k=1}^{i-1} \lambda_k \leq \lambda_i.
\end{eqnarray}
If $i \neq i'$ then $|\II_i| = \lambda_i$ is even by Lemma \ref{nilpotents}. In this case, regardless of the parity of $\lambda_i$ there are exactly $\lambda_i/2$ values $r$ fulfilling (\ref{IIprop}) with $r+d_r$ even. Now suppose $i = i'$ so that $|\II_i| = \lambda_i$ is odd. We consider two cases: $i$ odd or $i$ even. In the first case there must be an even number of indexes $j$ with $1 \leq j < i$ and $j = j'$, since $j' \in \{j-1, j, j+1\}$. Thus there an even number of indexes $1 \leq j < i$ with $\lambda_j$ odd by Lemma \ref{nilpotents}. We deduce that $\sum_{k=1}^{i-1} \lambda_k$ is even. If $r \in \II_i$ so that $d_r = i$, then $r + d_r$ even implies $r$ is odd. Thus there are exactly $(\lambda_i + 1)/2$ values of $r$ fulfilling (\ref{IIprop}) with $r+d_r$ even. Now suppose $i=i'$ and $i$ is even. Similar to the case $i$ odd, there must be an odd number of indexes $1\leq j < i$ with $\lambda_j$ odd. Thus
$\sum_{k=1}^{i-1}\lambda_k$ is odd. For $r \in\II_i$, If $r+d_r$ is even then $r$ is even. Thus there are exactly $(\lambda_i + 1)/2$ values of $r$ fulfilling (\ref{IIprop}) with $r+d_r$ even. With Lemma \ref{nilpotents} in mind we are able to conclude that $$|\{1\leq r \leq N : r + d_r \text{ even}\}| = \sum_{i\neq i'} \lambda_i/2 + \sum_{i=i'} (\lambda_i+1)/2 = \frac{1}{2}(N + |\{i : \lambda_i \text{ odd}\}|).$$
\end{proof}

\p The proofs are identical for both parts of Theorem~\ref{main1} and also for Theorem~\ref{main3}, and so we may unify notation. In the remnant of this subsection, $\tG$ shall be our underlying group, $e \in \tg = \Lie(\tG)$ shall be a choice of nilpotent element, $\epsilon = \pm 1$ will indicate whether $K$ is orthogonal or symplectic (this, of course, is redundant when $\tG$ is of type $\sf A$), $X$ shall be the $Q_e$-module of interest, $\mathcal{I}$ shall be a finite subset of $\K[X]^{\tG_e}$ and $m$ shall be the integer defined in the statement of Skryabin's theorem. The following table explains how our notations are unified:
\begin{center}
  \begin{tabular}{ | c | c | c | c | c | c|}
    \hline
    & $\tG$ & $\epsilon$ & $X$ & $\mathcal{I}$ & $m$\\ \hline\hline
    \text{Case 1} & $G$ & $\text{-}$ & $\g_e^\ast$ & $\{x_1,...,x_N\}$ & $m(\g_e, \g_e)$  \\ \hline
    \text{Case 2} & $K$ & $-1$ &$ \h_e^\ast$ & $\{x_r|_{\h_e^\ast} : r \text{ even}\}$& $m(\h_e, \h_e)$ \\ \hline
    \text{Case 3} & $K$ & $1$ & $\pp_e^\ast$ & $\{x_r|_{\pp_e^\ast} : r + d_r \text{ even}\}$ & $m(\h_e, \pp_e)$\\
    \hline
  \end{tabular}
\end{center}

\p Theorems~\ref{main1} and \ref{main3} are both convenient ways of stating our first results without discussing the explicit constructions of the invariant algebras. Our method tells us more than is expressed by those theorems, especially in the nilpotent case. Using the above notations we may precisely state our result as follows.
\begin{thm}\label{main13}
Suppose that we are in Case 1, 2 or 3. If $e\in \tg$ then $|\mathcal{I}| = m$. Furthermore $\K[X]^{Q_e} = \K[\mathcal{I}]$ is a polynomial algebra generated by $\mathcal{I}$, and $$\K[X]^{\tg_e} = \K[X]^p[\mathcal{I}]$$ is a free $\K[X]^p$-module of rank $p^m$.
\end{thm}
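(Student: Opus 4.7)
The plan is to apply Skryabin's theorem (Theorem~\ref{Skryabin}) uniformly in all three cases and then extract the remaining structural information. First I would verify that its hypotheses hold with the choices of $\tg_e$-module $X$ and invariant set $\mathcal{I}$ recorded in the table. The inclusion $\mathcal{I}\subseteq\K[X]^{\tG_e}$ follows from Corollary~\ref{invariants} in Case 1, and in Cases 2 and 3 is automatic since $K_e\subseteq G_e$ preserves $\h_e^\ast$ and $\pp_e^\ast$, so the restriction of a $G_e$-invariant to either subspace is $K_e$-invariant. The cardinality $|\mathcal{I}|=m$ is exactly Lemma~\ref{noinv}. The codimension 2 bound on the Jacobian locus is supplied by Proposition~\ref{Codimension 2} in Case 1 and Proposition~\ref{jaclocsymorth} in Cases 2 and 3. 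Skryabin's theorem then yields
\[
\K[X]^{\tg_e}=\K[X]^{p}[\mathcal{I}].
\]

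Next I would show that $\mathcal{I}$ is algebraically independent over $\K$. The codimension 2 condition provides a point $\gamma\in X$ at which the restricted differentials $\{d_\gamma f:f\in\mathcal{I}\}$ are linearly independent; an algebraic dependence among the elements of $\mathcal{I}$ would force every differential to degenerate, making the Jacobian locus the whole of $X$, a contradiction. Because each $f\in\mathcal{I}$ satisfies $f^{p}\in\K[X]^{p}$, the algebra $\K[X]^{p}[\mathcal{I}]$ is spanned over $\K[X]^{p}$ by the monomials $\prod_{f\in\mathcal{I}}f^{a_f}$ with $0\leq a_f<p$, and algebraic independence of $\mathcal{I}$ promotes this spanning set to a genuine $\K[X]^{p}$-basis. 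In particular $\K[X]^{\tg_e}$ is free of rank $p^{m}$ over $\K[X]^{p}$, and $\K[\mathcal{I}]$ is polynomial on $m$ generators.

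It remains to identify $\K[X]^{\tG_e}$ with $\K[\mathcal{I}]$. One inclusion is immediate. For the reverse, I would take a homogeneous $f\in\K[X]^{\tG_e}\subseteq\K[X]^{\tg_e}$ and expand it uniquely as $f=\sum_{I}a_{I}\mathcal{I}^{I}$ with $a_{I}\in\K[X]^{p}$ and each multi-index $I$ bounded componentwise by $p-1$. Acting by $g\in\tG_e$ fixes every monomial $\mathcal{I}^{I}$, and uniqueness of the expansion forces $g\cdot a_{I}=a_{I}$, so each $a_{I}$ lies in $\K[X]^{p}\cap\K[X]^{\tG_e}$. Injectivity of the Frobenius on the integral domain $\K[X]$ then produces $a_{I}=b_{I}^{p}$ with $b_{I}\in\K[X]^{\tG_e}$, and since $\deg b_{I}<\deg f$ whenever $\deg f>0$, induction on degree concludes that $f\in\K[\mathcal{I}]$. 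I expect this Frobenius descent to be the main obstacle; everything else amounts to assembling ingredients prepared in Sections~\ref{elemInv}--\ref{GenEl}, whereas here one must verify that the passage from $\tg_e$-invariants to $\tG_e$-invariants does not introduce generators outside $\K[\mathcal{I}]$.
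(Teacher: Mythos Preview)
Your proposal follows essentially the same route as the paper: invoke Skryabin's theorem with the ingredients assembled in Sections~\ref{elemInv}--\ref{GenEl}, then prove $\K[X]^{\tG_e}=\K[\mathcal{I}]$ by the Frobenius descent/induction on degree argument you describe. One small gap: when you assert $|\mathcal{I}|=m$ from Lemma~\ref{noinv} alone, that lemma only counts the index set (e.g.\ $\{r:r\text{ even}\}$), not the set $\mathcal{I}$ of restrictions; in Cases 2 and 3 you also need Corollary~\ref{nonzero1} to ensure the restrictions are pairwise distinct and non-zero, which the paper does cite.
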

\begin{proof}
That $|\mathcal{I}| = m$ follows immediately from Lemma~\ref{noinv} and Corollary~\ref{nonzero1}.

We now prove $\K[X]^{\tg_e} = \K[X]^p[\mathcal{I}]$ is a free $\K[X]^p$-module of rank $p^m$ by applying Skryabin's theorem (Theorem~\ref{Skryabin}). Corollary~\ref{invariants} tells us that the polynomials $\mathcal{I}$ are invariant. Lemma~\ref{noinv} ensures that $\mathcal{I}$ is of the correct size. Propositions \ref{Codimension 2} and \ref{jaclocsymorth} confirm that the condition on codimension of the Jacobian locus is satisfied, and so the the assumptions of Skryabin's theorem are satisfied. The claim follows.

In order to prove that $\K[X]^{Q_e} = \K[\mathcal{I}]$ we use induction on degree. Let $f \in \K[X]^{\tG_e}$. If $\deg(f) < p$ then by the previous paragraph, $f \in \K[\mathcal{I}]$. Suppose $\deg(f) \geq p$ and that all $g \in \K[X]^{\tG_e}$ with $\deg(g) < \deg(f)$ actually lie in $\K[\mathcal{I}]$. We shall call a product of elements of $\mathcal{I}$ a monomial in $\mathcal{I}$, and this terminology shall cause no confusion. Since $f \in \K[X]^{\tG_e} \subseteq \K[X]^{\tg_e}$ we may write $f$ as a sum of monomials in $\mathcal{I}$ with coefficients in $\K[X]^p$. Since $f$ and all monomials in $\mathcal{I}$ are fixed by $\tG_e$ we deduce that each coefficient is also fixed by $\tG_e$. Since $(\K[X]^p)^{\tG_e} = (\K[X]^{\tG_e})^p$ we conclude by induction that each coefficient is a $p^\text{th}$ power of an element of $\K[\mathcal{I}]$. It follows that $f \in \K[\mathcal{I}]$.
\end{proof}
\begin{rem}
\rm{Case 3 of Theorem~\ref{main13} quickly implies Theorem~\ref{main3}. Cases 1 and 2 imply the nilpotent case of Theorem~\ref{main1}. As was noted in the discussion following the statement of that theorem, there is a simple reduction to the nilpotent case. Therefore Theorems~\ref{main1} and \ref{main3} follow immediately from Theorem~\ref{main13}.}
\end{rem}
 
\p Next we prove Theorem~\ref{main2}, using the theory introduced in Section~\ref{Symmetrisation} and a simple filtration argument. Fortunately the proof in type $\sf A$ and $\sf C$ is identical and may be dealt with at once. Unfortunately we will have to reset our notation again. We let $\epsilon = -1$, let $\tG \in \{G, K\}$ and choose $x \in \tg$. Thanks to Theorem~\ref{milmap} there is a filtered $\tG_x$-module isomorphism $$\beta : U(\tg_x) \ra S(\tg_x).$$ According to Theorem~\ref{main1}, $S(\tg_x)^{Q_x}$ is polynomial on $\rank \,\tg$ generators. In keeping with the previous notations we denote a set of homogeneous generators by $\mathcal{I}$. Since $\gr (\beta)$ is the identity, the top graded component of $\beta^{-1} f$ is equal to $f$ for all $f \in \mathcal{I}$. We shall denote by $\widetilde{\mathcal{I}}$ the preimage under $\beta$ of $\mathcal{I}$ in $U(\tg_x)$.  Of course
$\widetilde{\mathcal{I}} \subseteq U(\tg_x)^{\tG_x} \subseteq Z(\tg_x)$. We are now ready to present a proof of Theorem~\ref{main2}
\begin{proof}
The graded algebra of the centre $\gr Z(\tg_x)$ is contained in $S(\tg_x)^{\tg_x}$, which is equal to $S(\tg_x)^p[\mathcal{I}]$ by the previous theorem. Since $\mathcal{I}$ consists of homogeneous polynomials we have $S(\tg_x)^p[\mathcal{I}] = \gr Z_p(\tg_x)[\widetilde{\mathcal{I}}]$ and furthermore $Z_p(\tg_x)[\widetilde{\mathcal{I}}]$ is central in $U(\tg_x)$. Placing all of these inclusions together we get
$$\gr Z(\tg_x) \subseteq S(\tg_x)^{\tg_x} = S(\tg_x)^p[\mathcal{I}] \subseteq \gr Z_p(\tg_x)_[\widetilde{\mathcal{I}}] \subseteq \gr Z(\tg_x).$$
Thence $\gr Z_p(\tg_x)[\widetilde{\mathcal{I}}] = \gr Z(\tg_x)$ and the dimensions of the graded components of $\gr Z_p(\tg_x)[\widetilde{\mathcal{I}}]$ and $\gr Z(\tg_x)$ coincide. It follows that the inclusion $Z_p(\tg_x)[\widetilde{\mathcal{I}}] \subseteq Z(\tg_x)$ is actually an equality. Part 1 of the theorem follows.

Our proof of part 2 is very similar. We denote by $\K[\widetilde{\mathcal{I}}]$ the subalgebra of $Z(\tg_x)$ generated by $\widetilde{\mathcal{I}}$, and similar for $\K[\mathcal{I}]$. We have $$\K[\mathcal{I}] = \gr \K[\widetilde{\mathcal{I}}] \subseteq \gr U(\tg_x)^{\tG_x} \subseteq S(\tg_x)^{\tG_x} = \K[\mathcal{I}]$$ and so we have equality throughout. In particular the dimensions of the graded components of $\gr U(\tg_x)^{\tG_x}$ and $\gr \K[\widetilde{\mathcal{I}}]$ coincide. Since $ \K[\widetilde{\mathcal{I}}]\subseteq U(\tg_x)^{\tG_x}$ we must actually have an equality and part 2 follows.

Part 3 is an immediate consequence of parts 1 and 2, and the fact that the $Z_p(\tg_x)$-basis of $Z(\tg_x)$ is also a $Z_p(\tg_x)^{\tG_x}$-basis of $Z(\tg_x)$.
\end{proof}

\p We remind the reader that the dimensions of simple modules for modular Lie algebras are bounded. This fact essentially follows from the fact that the enveloping algebra is a finite module over its centre, and sits in stark contrast to the characteristic zero case, where each simple algebra has simple modules of arbitrarily high dimension. For each Lie algebra $\tg$ over $\K$ we denote by $M(\tg)$ the maximal dimension of simple modules. The first Kac-Weisfeiler conjecture asserts that $$M(\tg) = p^{\frac{1}{2}(\dim\,\tg - \ind\,\tg)}.$$ This conjecture is extremely general and should be approached with some trepidation.

\p Using Theorem~\ref{kacweisover}, and our results on invariant theory, we now prove Theorem~\ref{KW1} which states that the first Kac-Weisfeiler conjecture holds for centralisers in types $\sf A$ and $\sf C$. We actually give two proofs, very different in nature. The first of these is more general and applies to all centralisers in types $\sf A$ and $\sf C$ whilst the second proof only applies only in the case $x$ nilpotent with $\tg$ of type $\sf A$. We now supply the first proof, which is purely algebraic and makes use of Theorem~\ref{kacweisover}.
\begin{proof}
Let $\tG \in \{G, K\}$ and $x \in \tg$. Define $F := F(\tg_x)$ and $F_p := F_p(\tg_x)$. By Zassenhaus' theorem we may prove that $[F : F_p] = p^{\ind\, \tg}$. In the notation of the previous theorem we let $F' = F_p [\widetilde{\mathcal{I}}] = Z(\tg_x)\otimes_{Z_p(\tg_x)} F_p$. Clearly $F' \subseteq F$. We claim that this inclusion is actually an equality. It will suffice to prove that every element of $F$ can be written in the form $f/g$ where $f \in Z(\tg_x)$ and $g \in Z_p(\tg_x)$. This is actually a very general result which holds whenever we have a module-finite inclusion of $\K$-algebras. We shall recall the proof for the reader's convenience.

Suppose $f/g \in F$. Since $Z(\tg_x)$ is a finite $Z_p(\tg_x)$-module we have $a_n g^n  + a_{n-1} g^{n-1} + \cdots + a_0 = 0$ for some $n \in \N$ and certain coefficients $a_i \in Z_p(\tg_x)$ with $a_0 \neq 0$. We conclude that $$\frac{f}{g} = \frac{f(a_ng^{n-1} + a_{n-1}g^{n-2} + \cdots + a_1)}{g(a_ng^{n-1} + a_{n-1}g^{n-2} + \cdots + a_1)} =  \frac{f(a_ng^{n-1} + a_{n-1}g^{n-2} + \cdots + a_1)}{-a_0} \in F'.$$
Therefore $F = F'$ as claimed. Since $Z(\tg_x)$ is free of rank $p^{\ind\, \tg}$ over $Z_p(\tg_x)$ (Theorem~\ref{main2}) we conclude that $F$ is of dimension $p^{\ind\, \tg}$ over $F_p$. Thanks to \cite{Yak1} this is equal to $p^{\ind \, \tg_x}$ and the theorem follows.
\end{proof}

\p We now supply our second, more geometric proof of Theorem~\ref{KW1}. We remind the reader that the following proof applies only to the case $x$ nilpotent and $\tG = G = GL(V)$. As such we shall retain the notations of \ref{genlinnots}. 
\begin{proof}
Define
\begin{eqnarray*}
\Omega = \{ \chi \in \g_e^\ast : \dim\, W = M(\g_e) \text { for all simple } U_\chi(\g_e) \text{-modules } W \}
\end{eqnarray*}
Let $\chi \in \g_e^\ast$. Every $g \in G_e$ acts on $\g_e^\ast$ and induces an isomorphism of algebras
\begin{eqnarray*}
U_\chi(\g_e) \xrightarrow{\sim} U_{\Ad(g)\chi}(\g_e)
\end{eqnarray*}
Hence $\Omega$ is $G_e$-stable. In \cite[Proposition~4.2(1)]{PS} it was shown that $\Omega$ is a non-empty Zariski open subset of $\mathfrak{g}_e^\ast$. By Theorem~\ref{genstabal} there exists a nonempty open subset $\mathcal{O} \subseteq\g_e^\ast$ such that the $\g_e$-stabilisers of points in $\mathcal{O}$ are $G_e$-conjugate to $\mathfrak{h} = \{\xi_i^{i,\lambda_i - 1-s} :1\leq i\leq n, 0 \leq s < \lambda_i\}$. Since $\mathfrak{h}$ is abelian and stabilises $\mathfrak{n}^-\oplus \mathfrak{n}^+$ the stabilisers of points in $\mathfrak{h}^\ast$ include $\mathfrak{h}$.

Hence we can find $\chi \in \Omega \cap \mathfrak{h}^\ast$, ie. a linear function vanishing on $(\mathfrak{n}^-)^\ast$ and $(\mathfrak{n}^+)^\ast$ with the additional property that every irreducible $U_\chi(\mathfrak{g}_e)$-module has dimension $M(\mathfrak{g}_e)$. Fix such a $\chi$ and identify it with its restriction to $\mathfrak{h}$. The decomposition of $\g_e$ induces a decomposition of enveloping algebras
\begin{eqnarray*}
U_\chi(\mathfrak{g}_e) = U_0(\mathfrak{n}^-) \otimes_\K U_\chi(\mathfrak{h}) \otimes_\K U_0(\mathfrak{n}^+)
\end{eqnarray*}
Let $W$ be a simple $U_\chi(\mathfrak{g}_e)$-module so that $\dim\, W = M(\g_e)$. By Engel's theorem $\mathfrak{n}^+$ has a common eigenvector in $W$ of eigenvalue 0 and $\mathfrak{h}$ acts diagonally on $W^{\mathfrak{n}^+} \neq 0$. We can find a 1-dimensional $U_\chi(\mathfrak{h}) \otimes_\K U_0(\mathfrak{n}^+)$-module $\K w_\mu \subseteq W^{\mathfrak{n}^+}$ where $h w_\mu = \mu(h) w_\mu$ for all $h \in \mathfrak{h}$. 

The induced module $\Ind_{\mathfrak{h} + \mathfrak{n}^+}^{\mathfrak{g}_e} (\K w_\mu) = U_\chi(\mathfrak{g}_e) \otimes_{U_\chi(\mathfrak{h}) \otimes U_0(\mathfrak{n}^+)} \K w_\mu$ has dimension $\dim\, U_0(\mathfrak{n}^-) = p^{\dim\, \mathfrak{n}^-}$. We have $\dim\, \mathfrak{n}^- = \dim\, \mathfrak{n}^+$ and $\dim\, \mathfrak{h} = N = \ind\, \g_e$ by \cite[Theorem~1]{Yak1}. We obtain $\dim\, \g_e = 2\dim\, \mathfrak{n}^- + \ind\, \g_e$. Therefore
$$\dim\, \Ind_{\mathfrak{h} + \mathfrak{n}^+}^{\mathfrak{g}_e} (\K w_\mu) = p^{\frac{1}{2}(\dim\, \mathfrak{g}_e - \ind\, \mathfrak{g}_e)}.$$
By standard theory of induced modules, $W$ is a homomorphic image of $\Ind_{\mathfrak{h} + \mathfrak{n}^+}^{\mathfrak{g}_e} (\K w_\mu)$ so the dimension of the former is bounded above by that of the latter. According to \cite[Theorem~5.4(2)]{PS} it is possible to choose $\chi$ so that all finite dimensional $U_\chi(\g_e)$-modules have dimension divisible by $p^{\frac{1}{2}(\dim\, \mathfrak{g}_e - \ind\, \mathfrak{g}_e)}$. This ensures that $W = \Ind_{\mathfrak{h} + \mathfrak{n}^+}^{\mathfrak{g}_e} (\K w_\mu)$, and that $M(\g_e) = p^{\frac{1}{2}(\dim\, \mathfrak{g}_e - \ind\, \mathfrak{g}_e)}$. 
\end{proof}
\begin{rem}
\rm{Although our second proof is much less general it does have the virtue of implying that the Verma modules for $\g_e$ are generically simple. Part 3 of Lemma~\ref{spanningdetails} ensures that there is, in general, no `triangular' decomposition for classical types other than type $\sf A$, and it seems this proof will extend no further.}
\end{rem}

\p The proof of the KW1 conjecture allows us to make some powerful deductions about the Zassenhaus variety
$\mathcal{Z}(\tg_x) := \Specm\,  Z(\tg_x)$ when $\tg$ is of type $\sf A$ or $\sf C$ and $x \in \tg$. There is a natural map
from the isomorphism classes of simple $\tg_x$-modules to $\Zas(\tg_x)$
defined by taking the kernel of the central character of a given simple module. This map has finite fibres thanks to \cite{Zas}.

\begin{thm}\label{zassen}
If $\tg$ is a simple Lie algebra of type $\sf A$ or $\sf C$ over $\K$ and $x \in \tg$ then $\m$ is a smooth point of $\ZZ(\tg_x)$ if and only if $U(\tg_x)/ \m U(\tg_x) \cong \text{\emph{Mat}}_{M(\tg_x)}(\K)$.
\end{thm}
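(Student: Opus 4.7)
The plan is to follow the scheme sketched in \cite[Remark~5.2]{PS}: reinterpret the statement as an equality of the smooth locus of $\ZZ(\tg_x)$ with the Azumaya locus of the extension $Z(\tg_x)\subseteq U(\tg_x)$, and then apply a standard theorem of Brown--Goodearl type which identifies these two loci under suitable Cohen--Macaulay hypotheses. Set $A:=U(\tg_x)$ and $Z:=Z(\tg_x)$. Since $\tg_x$ is a finite dimensional Lie algebra, $A$ is a prime Noetherian $\K$-algebra; it is module-finite over $Z_p(\tg_x)$, hence over $Z$, and so it is a PI-algebra. By Theorem~\ref{KW1}, the PI-degree of $A$ equals $M(\tg_x)=p^{(\dim\,\tg_x-\ind\,\tg_x)/2}$. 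The Artin--Procesi reformulation of the Azumaya locus states that a maximal ideal $\m\subseteq Z$ is Azumaya if and only if the fibre $A/\m A$ is isomorphic to $\Mat_{M(\tg_x)}(\K)$. Thus the theorem is equivalent to the assertion that the Azumaya locus of $A$ over $Z$ coincides with the smooth locus of $\ZZ(\tg_x)$.

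Next I would verify the Cohen--Macaulay hypotheses needed to invoke the Brown--Goodearl criterion. The $p$-centre $Z_p(\tg_x)$ is a polynomial algebra in $\dim\,\tg_x$ generators, hence regular. By a standard consequence of the PBW theorem for restricted Lie algebras, $A$ is a free $Z_p(\tg_x)$-module of rank $p^{\dim\,\tg_x}$, in particular Cohen--Macaulay. Theorem~\ref{main2}(1) gives that $Z$ is a free $Z_p(\tg_x)$-module of rank $p^{\rank\,\tg}$, so $Z$ is Cohen--Macaulay, and by transitivity of freeness together with Theorem~\ref{main2}, $A$ is a free, and hence Cohen--Macaulay, $Z$-module of rank $M(\tg_x)^2$.

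Once this structural data is in place, I would quote the equivalence of the Azumaya and smooth loci for a prime affine Noetherian PI-algebra that is Cohen--Macaulay over its affine centre. In our situation one direction (smooth $\Rightarrow$ Azumaya) follows because a fibre $A/\m A$ has dimension $M(\tg_x)^2$ by the freeness just established, and at a smooth point this fibre is forced to be a full matrix algebra by a local Morita argument; the reverse direction (Azumaya $\Rightarrow$ smooth) uses that the Azumaya locus is always open and that its complement has the correct codimension, so that at an Azumaya point $Z$ is smooth. Combining this with the Artin--Procesi reformulation yields the stated characterization.

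The main obstacle is a bookkeeping one: one must carefully transfer the Cohen--Macaulay hypothesis of the Brown--Goodearl criterion from $Z_p(\tg_x)$ up to the pair $(A,Z)$ via Theorem~\ref{main2}, and then verify that the matrix size appearing in the Azumaya identification is precisely $M(\tg_x)$ rather than some divisor of it. Both points are automatic once KW1 has been secured for $\tg_x$ (Theorem~\ref{KW1}), so the whole argument hinges on the prior invariant-theoretic work and reduces the geometric statement to an essentially formal application of PI-theoretic machinery.
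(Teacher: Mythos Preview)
Your framework is correct—the theorem is indeed equivalent to the equality of the Azumaya locus with the smooth locus of $\ZZ(\tg_x)$, and the paper also concludes by invoking \cite[Theorem~3.8]{BG}. However, there is a genuine gap. The Brown--Goodearl criterion does \emph{not} give ``Azumaya locus $=$ smooth locus'' from Cohen--Macaulayness alone: it requires as an additional \emph{hypothesis} that the complement of the Azumaya locus has codimension $\geq 2$ in $\Specm Z$. You assert this (``its complement has the correct codimension'') without argument, but it is the entire substance of the proof. The paper establishes it as follows: first $\codim_{\tg_x^\ast}(\tg_x^\ast)_{\text{sing}}\geq 2$ by \cite[Theorems~3.4 \& 3.11]{PPY}; then \cite[Proposition~5.2]{PS} gives $\codim_{\tg_x^\ast}\mathscr{X}_{M(\tg_x)}\geq 2$, where $\mathscr{X}_{M(\tg_x)}$ is the locus of $p$-characters admitting a module of dimension not divisible by $M(\tg_x)$; finally one pulls this back along the finite morphism $\tau:\ZZ(\tg_x)\to\tg_x^\ast$ induced by $Z_p(\tg_x)\hookrightarrow Z(\tg_x)$ to conclude that the non-Azumaya locus $\CC$ has codimension $\geq 2$ in $\ZZ(\tg_x)$. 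Without this geometric input (which ultimately rests on the codimension--2 property of the Jacobian locus proved earlier in the chapter), the Brown--Goodearl machinery does not apply.

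There is also a minor slip: ``transitivity of freeness'' does not run in the direction you use it. From $A$ free over $Z_p$ and $Z$ free over $Z_p$ one cannot conclude that $A$ is free over $Z$. What is true is that $A$ is maximal Cohen--Macaulay over $Z_p$ (being free over a regular ring), and since $Z_p\hookrightarrow Z$ is finite with $\dim Z=\dim Z_p$, the module $A$ is maximal Cohen--Macaulay over $Z$ as well; hence at a smooth point $\m$ of $Z$, Auslander--Buchsbaum gives that $A_\m$ is free over $Z_\m$ of rank $M(\tg_x)^2$. This salvages your ``smooth $\Rightarrow$ Azumaya'' direction, but the other direction still needs the codimension argument above.
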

\begin{proof}
Recall that the singular locus $(\tg_x^\ast)_\text{sing}$ is defined to be the set of all $\chi \in \tg_x^\ast$ such that $\dim \, (\tg_x)_{\chi} > \ind\, \tg_x$. By \cite[Theorems~3.4 \& 3.11]{PPY}, $\codim_{\tg_x^\ast} \, (\tg_x^\ast)_{\text{sing}} \geq 2$ provided $x$ is nilpotent. Strictly speaking they worked over $\C$ there but the assumption was not necessary; indeed \cite{Yak1} works in good characteristic and that article is the basis for the aforementioned two theorems. Now the fact that $\codim_{\tg_x^\ast}\,  (\tg_x^\ast)_{\text{sing}} \geq 2$ when $x$ is not nilpotent may be proven using a reduction to the nilpotent case very similar to the one used for Theorem~\ref{main1}. For a non-negative integer $m$ we let $\mathscr{X}_m$ denote the set of all $\chi \in \tg_x^\ast$ such that $U_\chi(\tg_x)$ has a module of finite dimension not divisible by $p^m$. According to 
\cite[Proposition~5.2]{PS}  $\dim \, \mathscr{X}_{M(\tg_x)} \leq \dim \, \tg^\ast_\text{sing}$, from whence we deduce that $\codim_{\tg_x^\ast} \, \mathscr{X}_{M(\tg_x)} \geq 2$. 

By \cite[Theorems~5 \& 6]{Zas}, we may infer that there is a closed subset $\CC \subseteq \ZZ$ such that $$\ZZ \backslash \CC = \{ \m \in \ZZ : U(\tg_x)/ \m U(\tg_x) \cong \text{Mat}_{M(\tg_x)}(\K)\}.$$ We claim that $\codim_{\ZZ}\,  \CC \geq 2$. The inclusion $Z_p(\tg_x) \ra Z(\tg_x)$ induces as finite morphism of maximal spectra $\ZZ \ra \Specm \, Z_p(\tg_x)$ and by identifying $\Specm \, Z_p(\tg_x)$ and $\tg_x^\ast$ we obtain a map $\tau : \Specm\,  Z(\tg_x ) \ra \tg_x^\ast$. Explicitly, $\m \mapsto \chi$ where $\chi$ is the linear functional such that $\m \cap Z_p(\tg_x) = \m_\chi$ and $U_\chi(\tg_x) = U(\tg_x) / \m_\chi U(\tg_x)$. This map is closed and has finite fibres so $\codim_{\ZZ} \, \CC = \codim_{\tg_x^\ast}\, \tau(\CC)$. If $\chi \in \tau(\CC)$ then $U_\chi(\tg_x)$ has a module of dimension less than $M(\tg_x)$ so that $\chi \in \mathscr{X}_{M(\tg_x)}$. We conclude that $$\codim_{\ZZ} \, \CC = \codim_{\tg_x^\ast}\, \tau(\CC) \geq \codim_{\tg_x^\ast} \, \X_{M(\tg_x)} \geq 2.$$

Pick $\m \in \ZZ \setminus \CC$. Denote by $Z_\mm$ the localisation of $Z(\tg_x)$ at $\m \in \ZZ$ and set $U_\mm = U(\tg_x) \otimes_{Z(\tg_x)} Z_\mm$. Certainly $U_\m$ is a finite module over $Z_\m$. Furthermore the unique maximal ideal of $Z_\m$ is $\m Z_\m$ and $U_\m/\m U_\m \cong U(\tg_x) /\m U(\tg_x)$ is a matrix algebra over its centre $Z / \m Z \cong \K$. Now we may apply \cite[Theorem~7.1]{DI} to see that $U_\m$ is an Azumaya algebra over $Z_\m$. Since $\m \in \ZZ\setminus \CC$ was arbitrary we conclude that the non-Azumaya locus of $\ZZ$ lies inside $\CC$ and therefore has codimension $\geq 2$. Combining these deductions with \cite[2.2, 2.3]{BG}, we have satisfied the assumptions of \cite[Theorem~3.8]{BG} and the result follows.
\end{proof}

\chapter{The Derived Subalgebra and Sheets}\label{derivedchapter}
\setcounter{parno}{0}

\p A full overview of the current chapter is given in Section~\ref{shcnoverview}. Let's recap the important points. Our discussion shall be restricted to centralisers in simple algebras of type $\sf B$, $\sf C$ or $\sf D$. Fix $\epsilon = \pm 1$ and choose $e \in \Ni(\h)$. Our first task is to decompose $[\h_e \h_e]$ into a finite direct sum of subspaces. As a corollary we obtain a formula for the dimension of the maximal abelian quotient $\dim\, \h_e^\text{ab}$ where $\h_e^\ab := \h_e / [\h_e \h_e]$. We go on to define an algorithm which is used for parameterising the sheets of $\h$ which contain a given nilpotent orbit $\Oo_e$, and calculate their ranks. We provide a combinatorial formula for the maximal rank of sheets containing $\Oo_e$ and classify the nilpotent elements lying in a unique sheet in terms of partitions. These elements shall be called non-singular. Our efforts culminate in a proof of a conjecture of Izosimov \cite[Conjecture~4.1]{Izos} which states that for $e$ non-singular the derived 
subalgebra $[\h_e \h_e]$ is the orthogonal complement to the tangent space of the sheet containing $e$ (we actually prove this without the assumption that $e$ is nilpotent).

\section{Decomposing $\h_e$}\label{decomposingh_e}
\setcounter{parno}{0}

\p Our notations in this chapter shall be those laid out in Section \ref{basisforthecent}. In particular $\K$ is an algebraically closed field, $V$ is an $N$-dimensional vector space over $\K$, $G = GL(V)$, $\sigma : \g \ra \g$ is the involution associated to our choice of non-degenerate bilinear form $(\cdot, \cdot) : V \times V \ra \K$ and $\g = \h \oplus \pp$ is the associated $\Z_2$-grading. Our group of study is the orthogonal or symplectic group $K$ with $\Lie(K) = \h$, and so we request that $\chr(\K)\neq 2$. We fix a nilpotent element $e \in \h$ with ordered partition $\lambda = (\lambda_1,...,\lambda_n)$. Thanks to Lemma~\ref{subbasis} we have a very explicit basis for $\h_e$ which decomposes as $H \sqcup N_0 \sqcup N_1$. We shall principally make use of the spans of these sets: $\HH = \spn(H), \NN_0 = \spn(N_0), \NN_1 = \spn(N_1)$. We suggest that the reader keep the definitions of $H$, $N_0$ and $N_1$ close at hand in what follows.

\p\label{H_0inst} It is our intention to decompose $[\h_e \h_e]$ into subspaces. In order to do so we must first decompose $\HH$ and $\NN_1$. Let
\begin{eqnarray*}
&\HH_0 := \spn\{\zeta_i^{i,s}\in \HH : \lambda_i - s \text{ even} \}\\
&\HH_1 := \spn\{\zeta_i^{i,s} \in\HH : \lambda_i - s \text{ odd} \}
\end{eqnarray*}
so that $\HH = \HH_0 \bigoplus \HH_1$. The space $\HH_0$ can be further decomposed as $\bigoplus_{m=1}^{\lfloor\lambda_1/2\rfloor} \HH_0^{m}$ where $$\HH_0^{m} := \spn\{\zeta_i^{i,\lambda_i - 2m} \in \HH: 1 \leq i \leq n\}.$$ 

\p Next we must decompose each $\HH_0^{m}$ into subspaces $\HH_{0,j}^m$ for $j\ge 1$. Fix $0 < m \leq \lfloor \lambda_1/2 \rfloor$, put $a_{1,m}:=1$ and let $1=a_{1,m}< a_{2,m}<\cdots< a_{t(m), m}\le n+1$ be the set of all integers such that $$\lambda_{a_{j,m}-1} - \lambda_{a_{j,m}} \geq 2m, \qquad 2\le j\le t(m).$$ For $1 \leq j < t(m)$ we define $$\HH_{0,j}^m := \spn \{\zeta_i^{i,\lambda_i - 2m} \in \HH: a_{j,m} \leq i < a_{j+1, m}\}$$ and set $$\HH_{0,t(m)}^m := \spn \{\zeta_i^{i,\lambda_i - 2m} \in \HH: a_{t(m),m} \leq i < n+1\}.$$
\begin{lem}
The following are true:
\begin{enumerate}\label{bits}
\item{If $\lambda_{a_{t(m),m}} < 2m$ then $\HH_{0,t(m)}^m = \{0\}$;}

\smallskip

\item{$\HH_0^m = \bigoplus_{j=1}^{t(m)} \HH_{0,j}^m$.}
\end{enumerate}
\end{lem}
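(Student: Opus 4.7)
The plan is to reduce both assertions to the linear independence of the basis $H$ from Lemma~\ref{subbasis}, combined with routine bookkeeping on the index ranges $[a_{j,m}, a_{j+1,m})$. First I would unpack the conventions from Section~\ref{basisforthecent}: the element $\zeta_i^{i,\lambda_i-2m}$ is nonzero only when $0 \leq \lambda_i - 2m < \lambda_i$, that is $\lambda_i \geq 2m$, and when nonzero it is, up to the relation $\zeta_i^{i,s} = \varepsilon_{i,i,s}\zeta_{i'}^{i',s}$ of Lemma~\ref{spanningdetails}(3), a basis vector of $H$. Consequently $\HH_0^m$ coincides with the span of the subset of $H$ consisting of those $\zeta_i^{i,\lambda_i-2m}$ with $i \leq i'$ and $\lambda_i \geq 2m$.

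For part~(1), I would simply invoke the weak decreasingness of $\lambda$: the assumption $\lambda_{a_{t(m),m)}} < 2m$ forces $\lambda_i \leq \lambda_{a_{t(m),m)}} < 2m$ for every $a_{t(m),m} \leq i \leq n$. Hence every generator defining $\HH_{0,t(m)}^m$ is zero, and the subspace collapses.

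For part~(2), note that $a_{1,m} < a_{2,m} < \cdots < a_{t(m),m}$ partitions $\{1,\ldots,n\}$ into consecutive blocks $I_j := [a_{j,m}, a_{j+1,m})$, with the convention $a_{t(m)+1,m} := n+1$. By construction each $\HH_{0,j}^m$ is spanned by the nonzero generators indexed by $i \in I_j$, that is, by a subset $H_j \subseteq H$. To upgrade the evident sum $\HH_0^m = \sum_{j=1}^{t(m)} \HH_{0,j}^m$ to a direct sum, I would verify that the $H_j$ are pairwise disjoint subsets of $H$. Combined with the linear independence asserted in Lemma~\ref{subbasis}, this would immediately yield the required decomposition.

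The main point requiring care is precisely this disjointness check: I must rule out the possibility that a relation $\zeta_i^{i,\lambda_i-2m} = \varepsilon_{i,i,\lambda_i-2m}\,\zeta_{i'}^{i',\lambda_{i'}-2m}$ with $i \neq i'$ straddles two different blocks. Here the convention from Section~\ref{basisforthecent} that $i' \in \{i-1, i, i+1\}$, together with $\lambda_i = \lambda_{i'}$ (Lemma~\ref{nilpotents}), gives $|\lambda_i - \lambda_{i'}| = 0 < 2m$, which prevents any index $a_{j,m}$ from sitting strictly between $i$ and $i'$. Hence $i$ and $i'$ lie in a common block $I_j$, and the identification of generators with basis vectors of $H$ respects the partition. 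This is the only genuine verification; the rest of the argument is pure accounting.
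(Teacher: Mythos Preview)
Your proposal is correct and follows essentially the same approach as the paper: part~(1) is handled identically via the monotonicity of $\lambda$, and part~(2) via the partition of $\{1,\ldots,n\}$ into the blocks $[a_{j,m},a_{j+1,m})$. Your argument is in fact more careful than the paper's, which simply asserts the direct sum without addressing the potential overlap coming from $\zeta_i^{i,\lambda_i-2m}=\pm\zeta_{i'}^{i',\lambda_{i'}-2m}$; your observation that $\lambda_i=\lambda_{i'}$ forces $i$ and $i'$ into the same block cleanly closes that gap.
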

\begin{proof}
If $a_{t(m),m} = n+1$ then certainly $\HH_{0,t(m)}^m = 0$, so assume not. If $\lambda_{a_{t(m),m}} < 2m$ then the ordering $\lambda_1 \geq \cdots \geq \lambda_n$  implies that $\lambda_i - 2m < 0$ for all $i \geq a_{t(m),m}$. Then $\zeta_i^{i,\lambda_i-2m} = 0$ for all $\zeta_i^{i,\lambda_i - 2m} \in \HH_{0,t(m)}^m$ proving (1).
The choice of $m$ (and the fact that $a_{1,m} = 1$) ensures that $\bigoplus_{l=1}^{t(m)} \HH_{0,j}^m = \spn\{\zeta_i^{i,\lambda_i- 2m}\in \HH : 1\leq i \leq n\} = \HH_0^m$. Hence (2).
\end{proof}

\p It should be noted that if $i \neq i'$ then $\varepsilon_{i,i,\lambda_i - 2m} = 1$ by 
Lemma~\ref{spanningdetails}. In this case $\zeta_i^{i,\lambda_i - 2m} = \zeta_{i'}^{i',\lambda_{i'} - 2m}$ by that same lemma. In order to overcome this notational problem and concisely refer to a basis for $\HH_{0,j}^m$ it shall be convenient to use an indexing set slightly different from $\{1,...,n\}$. Extend the involution $i \mapsto i'$ to all of $\Z$ by the rule $i = i'$ for $i> n$ or $i < 1$. We adopt the convention $\lambda_i = 0$ for all $i > n$ or $i < 1$ which immediately implies $\zeta_i^{i,s} = 0$ for any such $i$. We shall index our maps and partitions by the set $\Z/\sim$ where $i\sim j$ if $i = j'$. We denote by $[i]$ the class of $i$ in $\Z /\sim$. We have $\lambda_i = \lambda_{i'}$ for all $i$ so we may introduce the notation $\lambda_{[i]}$. As was observed a moment ago, $\zeta_i^{i,\lambda_i - 2m} = \zeta_{i'}^{i',\lambda_{i'} - 2m}$. Hence we may also use the notation $\zeta_{[i]}^{[i], \lambda_{[i]} - 2m}$. Furthermore, since $i' \in \{i-1, i, i+1\}$ we have a well defined
order on $\Z /\sim$ inherited from $\Z$: let $[i] \leq [j]$ if $i \leq j$. As a result there exists a unique isomorphism of totally ordered sets $\psi : (\Z / \sim) \rightarrow \Z$ with $\psi([1]) = 1$. Using this isomorphism we define analogues of addition and subtraction $+ , - \colon\, (\Z / \sim) \times \Z \mapsto (\Z / \sim)$ by the rules
\begin{eqnarray*}
&[i] + j := \psi^{-1}(\psi(i) + j)\\
&[i] - j := \psi^{-1}(\psi(i) - j)
\end{eqnarray*}
To clarify, $[i] + 1$ is the class in $(\Z/\sim)$ succeeding $[i]$ and $[i]-1$ is that class preceding $[i]$ in the ordering.

\p For $1\leq j < t(m)$, Lemma~\ref{spanningdetails}(3) yields that the set $$\big\{ \zeta_{[i]}^{[i],\lambda_{[i]} - 2m}\in \HH:\, [a_{j,m}] \leq [i] < [a_{j+1, m}]\big\}$$ is a basis for $\HH_{0,j}^m$. Using this basis we may describe an important hyperplane $\HH_{0,j}^{m,+}$ of $\HH_{0,j}^m$. First we define the augmentation map
$\HH_{0,j}^m\twoheadrightarrow \K$ by sending
$\zeta_{[i]}^{[i],\lambda_{[i]} - 2m}$ to $1$ for all  $[a_{j,m}] \leq [i] < [a_{j+1, m}]$ and extending to $\HH_{0,j}^m$ by $\K$-linearity.
Let $\HH_{0,j}^{m,+}$ denote the kernel of this map. It was noted in lemma \ref{bits} that $\HH_{0,t(m)}^m$ might be zero. If this is not the case then a basis for $\HH_{0,t(m)}^m$ is the span of those $\zeta_{[i]}^{[i], \lambda_{[i]} - 2m}$ which are non-zero with $[a_{t(m),m}] \leq [i] \leq [n]$. Using this basis we can define the augmentation map $\HH_{0,t(m)}^m \twoheadrightarrow \K$ and hyperplane $\HH_{0,t(m)}^{m,+}$ of $\HH_{0,t(m)}^m$ in a similar fashion. Make the notation $$\HH_0^+\,:=\, \sum_{m=1}^{\lfloor \lambda_1/2 \rfloor} \Big(\textstyle{\bigoplus}_{j=1}^{t(m)-1}  \HH_{0,j}^{m,+}+ \HH_{0,t(m)}^m\Big)\subseteq \HH_0.$$

\p Before we continue we must decompose $\NN_1$ into a direct sum of two subspaces as follows. For $1 \leq k \leq n$ we define lower and upper column bounds by
\begin{eqnarray*}
L(k) = \min\{ 1 \leq j \leq n : \lambda_j = \lambda_k\} \\ U(k) = \max\{ 1 \leq j \leq n : \lambda_j = \lambda_k\}.
\end{eqnarray*}
Clearly $\lambda_j = \lambda_k$ for $L(k) \leq j \leq U(k)$. We say that a triple $(i,j,s)$ with $1\leq i,j\leq n$ and $0\leq s < \min(\lambda_i, \lambda_j)$ is \emph{tightly nested} when the following conditions hold:
\begin{itemize}
\item{$j = i+1$}

\smallskip

\item{$s = \lambda_{i+1} -1$}

\smallskip

\item{$i = i'$}

\smallskip

\item{$i+1 = (i+1)'$}

\smallskip

\item{$L(i) = i$}

\smallskip

\item{$U(i+1) = i+1$}
\end{itemize}
Finally we make the notation
\begin{eqnarray*}
\NN_1^- = \spn \{\zeta_{i}^{j,s} \in \NN_1: (i,j,s) \text{ is a tightly nested triple} \};\\
\NN_1^+ = \spn \{ \zeta_{i}^{j,s} \in \NN_1 : (i,j,s) \text{ is not a tightly nested triple} \}.
\end{eqnarray*}
so that $$\NN_1 = \NN_1^- \oplus \NN_1^+.$$

\section{Decomposing $[\h_e \h_e]$}\label{decomposingderived}
\setcounter{parno}{0}

\p It is the intention of this section to decompose $[\h_e \h_e]$ into a finite collection of those subspaces of $\h_e$ defined in the previous section. Our calculations shall be quite explicit and depend principally upon the following.
\begin{lem}\label{product} For all indexes
$i,j,s$ and $k,l,r$,
$$[\zeta_i^{j,s}, \zeta_k^{l,r}] = \delta_{il} \zeta_k^{j,r+s - (\lambda_i - 1)} - \delta_{jk} \zeta_i^{l,r+s - (\lambda_j - 1)}+ \varepsilon_{klr}\big(\delta_{k,i'} \zeta_{l'}^{j,r+s - (\lambda_i - 1)} - \delta_{j,l'} \zeta_i^{k',r+s - (\lambda_j - 1)}\big).$$
\end{lem}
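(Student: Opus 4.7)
The proof is essentially a calculation, and the plan is to unfold each $\zeta$ into two $\xi$-components, compute the resulting four commutators of $\xi$'s explicitly, and then re-assemble the eight $\xi$-terms we obtain into four $\zeta$-terms using the $\varepsilon$-identities recorded in Lemma~\ref{spanningdetails}.

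\smallskip

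First I would establish the fundamental multiplication rule for the $\xi$'s. From $\xi_i^{j,a}(e^t w_m) = \delta_{im} e^{t+a} w_j$ (with the convention that everything outside the allowed range is zero), a one-line calculation gives
\[
\xi_i^{j,a}\xi_k^{l,b}\,=\,\delta_{il}\,\xi_k^{j,a+b},
\]
and consequently
\[
[\xi_i^{j,a},\xi_k^{l,b}]\,=\,\delta_{il}\,\xi_k^{j,a+b}\,-\,\delta_{jk}\,\xi_i^{l,a+b}.
\]

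\smallskip

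Next, substituting $\zeta_i^{j,s}=\xi_i^{j,\lambda_j-1-s}+\varepsilon_{i,j,s}\xi_{j'}^{i',\lambda_i-1-s}$ and using bilinearity of the bracket produces four $\xi$-commutators, i.e.\ eight $\xi$-terms in total. I would then group these into the following four pairs, each of which should reassemble into one $\zeta$:
\begin{itemize}
\item $\delta_{il}\,\xi_k^{j,\lambda_j+\lambda_i-2-s-r}$ together with $-\varepsilon_{i,j,s}\varepsilon_{k,l,r}\delta_{l'i'}\,\xi_{j'}^{k',\lambda_i+\lambda_k-2-s-r}$, collapsing to $\delta_{il}\zeta_k^{j,r+s-(\lambda_i-1)}$;
\item $-\delta_{kj}\,\xi_i^{l,\lambda_j+\lambda_i-2-s-r}$ together with $\varepsilon_{i,j,s}\varepsilon_{k,l,r}\delta_{j'k'}\,\xi_{l'}^{i',\lambda_i+\lambda_k-2-s-r}$, collapsing to $-\delta_{jk}\zeta_i^{l,r+s-(\lambda_j-1)}$;
\item $\varepsilon_{k,l,r}\delta_{ik'}\,\xi_{l'}^{j,\lambda_j+\lambda_k-2-s-r}$ together with $\varepsilon_{i,j,s}\delta_{j'l}\,\xi_k^{i',\lambda_i+\lambda_l-2-s-r}$, collapsing to $\varepsilon_{klr}\delta_{k,i'}\zeta_{l'}^{j,r+s-(\lambda_i-1)}$;
\item $-\varepsilon_{k,l,r}\delta_{l'j}\,\xi_i^{k',\lambda_j+\lambda_k-2-s-r}$ together with $-\varepsilon_{i,j,s}\delta_{kj'}\,\xi_{j'}^{l,\lambda_i+\lambda_l-2-s-r}$, collapsing to $-\varepsilon_{klr}\delta_{j,l'}\zeta_i^{k',r+s-(\lambda_j-1)}$.
\end{itemize}

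\smallskip

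The real content of the proof, and the only obstacle worth mentioning, is verifying that the coefficients in front of each pair are consistent with the definition of the $\zeta$'s. For example, in the first pair the identity $\zeta_k^{j,r+s-(\lambda_i-1)} = \xi_k^{j,\lambda_j+\lambda_i-2-s-r}+\varepsilon_{k,j,r+s-\lambda_i+1}\xi_{j'}^{k',\lambda_k+\lambda_i-2-s-r}$ reduces the check to the identity $\varepsilon_{k,j,r+s-\lambda_i+1}=-\varepsilon_{i,j,s}\varepsilon_{k,i,r}$ when $i=l$, which follows immediately from Lemma~\ref{spanningdetails}(1) after cancelling $\varpi_{i\leq i'}^2=1$ and using $(-1)^{\lambda_j+\lambda_i-s-r}\cdot(-1) = (-1)^{\lambda_j+\lambda_i-s-r-1}$. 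The three remaining pairings are verified by entirely analogous applications of parts (1) and (2) of Lemma~\ref{spanningdetails}. Summing the four collapsed terms yields the formula stated in the lemma.
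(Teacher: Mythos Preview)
Your proposal is correct and is precisely the calculation the paper has in mind; the paper's own proof reads in its entirety ``The proof is a short calculation which we leave to the reader,'' and your expansion of each $\zeta$ into its two $\xi$-summands followed by regrouping via Lemma~\ref{spanningdetails}(1) is exactly that calculation carried out in full.
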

\noindent The proof is a short calculation which we leave to the reader. 

\p The following proposition shall be central in the process of decomposing $[\h_e \h_e]$.
\begin{prop}\label{subs}
The following inclusions hold
\begin{eqnarray*}
[\HH,\HH] = \{0\} ,& [\HH, \NN_0]\subseteq \NN_0, & [\HH, \NN_1] \subseteq \NN_1, \\
& [ \NN_0, \NN_0 ] \subseteq \HH, & [ \NN_0, \NN_1 ] \subseteq \NN_1
\end{eqnarray*}
Furthermore, for any two elements $\zeta_i^{j,s}, \zeta_k^{l,r} \in N_1$ the commutator $[\zeta_i^{j,s}, \zeta_k^{l,r}]$ lies in either $\HH$, $\NN_0$ or $\NN^+_1$. More precisely $$[\zeta_i^{j,s}, \zeta_k^{l,r}] \in\, \left\{ \begin{array}{llll}
         \NN^+_1 & \ \mbox{ if $i=l$ or $j=k$;}\\
         \NN_0 \text{ or } \NN_1^+& \ \mbox{ if $k = i'$ or $j = l'$ but not both};\\
         \HH & \ \mbox{ if $k = i'$ and $j = l'$;}\\
         0 & \ \mbox{ otherwise.}\end{array} \right.$$
\end{prop}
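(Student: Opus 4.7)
The strategy is to apply Lemma~\ref{product} term by term and read off each assertion from the Kronecker deltas. Recall that an element $\zeta_a^{b,t}$ lies in $\HH$ iff $b=a$ (or $b=a'$, in which case the index can be renormalised to $b=a$ using Lemma~\ref{spanningdetails}(3)); lies in $\NN_0$ iff $b=a'\ne a$; and lies in $\NN_1$ iff $a<b$ with $b\ne a'$ (again up to the equivalence from Lemma~\ref{spanningdetails}(3)). So each of the four indexed summands in Lemma~\ref{product} can be classified at a glance, and I would simply verify that for inputs in the allowed subspaces the output belongs to the claimed subspace.

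First I would handle $[\HH,\HH]=0$: for $\zeta_i^{i,s}$ and $\zeta_k^{k,r}$, Lemma~\ref{product} reduces to terms that survive only when $i=k$, and those terms cancel in pairs (the $\delta_{il}$ and $\delta_{jk}$ contributions are equal and opposite, and similarly for the $\varepsilon$ terms, since $i=j$ and $k=l$). For $[\HH,\NN_0]$, taking $\zeta_i^{i,s}$ and $\zeta_k^{k',r}$ with $k\ne k'$, the surviving terms are of the form $\zeta_i^{k',\ast}$ or $\zeta_k^{i,\ast}$ with $i\in\{k,k'\}$, and these are in $\NN_0$. For $[\HH,\NN_1]$ and $[\NN_0,\NN_0]$, $[\NN_0,\NN_1]$, the same book-keeping shows the output lies in the predicted subspace; these are all routine inspections.

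The substantive case is the last one: brackets inside $\NN_1$. Taking $\zeta_i^{j,s}, \zeta_k^{l,r}\in\NN_1$ with $i<j\ne i'$ and $k<l\ne k'$, Lemma~\ref{product} produces up to four terms indexed respectively by the conditions $i=l$, $j=k$, $k=i'$, $j=l'$. By matching each condition with the index pattern of the output I would verify: if exactly one of $i=l$ or $j=k$ holds, the output is an off-diagonal $\zeta$ lying in $\NN_1$; if both $k=i'$ and $j=l'$ hold, the two indices become equal and the output lies in $\HH$; if exactly one of $k=i'$ or $j=l'$ holds (but neither $i=l$ nor $j=k$), the output is either in $\NN_0$ or in $\NN_1$. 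In each case where the output lies in $\NN_1$ I still need to show it is in $\NN_1^+$, i.e.\ the resulting triple is not tightly nested. This is where I expect the main obstacle.

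For that, suppose for contradiction that one of the surviving terms in the bracket yielded an element $\zeta_a^{a+1,\lambda_{a+1}-1}$ with $(a,a+1,\lambda_{a+1}-1)$ tightly nested. Unpacking Lemma~\ref{product}, one of the four cases produces this output; in each case, I would trace the relations $a=i, a+1=j$ (or $a=k, a+1=l$, etc.) through the tightly nested conditions $a=a'$, $a+1=(a+1)'$, $L(a)=a$, $U(a+1)=a+1$ and the definition of $\varepsilon_{\cdot,\cdot,\cdot}$. The constraints $a=a'$ and $a+1=(a+1)'$ force the corresponding indices of the original factor to satisfy either $i=i'$ or $j=j'$ (etc.), which combined with the lower/upper bounds $L(a)=a$, $U(a+1)=a+1$ contradicts the hypothesis $j\ne i'$, $l\ne k'$ that makes both factors belong to $\NN_1$ in the first place. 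Working through the four cases separately, this contradiction rules out tightly nested outputs, proving $[\NN_1,\NN_1]\subseteq \HH\oplus\NN_0\oplus\NN_1^+$ and completing the proposition.
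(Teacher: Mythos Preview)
Your overall structure is sound and parallels the paper's approach for the $\NN_1$ bracket, but your mechanism for excluding tightly nested outputs is wrong. You claim that the constraints $a=a'$, $(a+1)=(a+1)'$, $L(a)=a$, $U(a+1)=a+1$ will eventually contradict the membership hypotheses $j\ne i'$, $l\ne k'$. They do not. Take the case $i=l$: the surviving term is $\zeta_k^{j,r+s-(\lambda_i-1)}$, and tight nesting would give $j=k+1$, $k=k'$, $j=j'$, $L(k)=k$, $U(j)=j$. None of this forces $j=i'$ or $l=k'$; indeed $j=j'$ and $j>i$ together give $j\ne i=i'$ is perfectly possible. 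The actual contradiction here is purely an index count: since $i<j$ and $k<l=i$ you get $j>i>k$, hence $j\ge k+2$, which already rules out $j=k+1$.

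The case $k=i'$ (with $j\ne l'$, $j\ne l$) is more delicate and your sketch misses the real obstruction entirely. The output is $\pm\zeta_{l'}^{j,\,r+s-(\lambda_i-1)}$, and what fails is the \emph{superscript} condition for tight nesting, not the self-prime conditions. Tight nesting would force $L(l')=l'$, and since $k=i'$ gives $\lambda_i=\lambda_k$ with $k<l$, one deduces $\lambda_l<\lambda_i$; then $r\le\lambda_l-1<\lambda_i-1$ and $s\le\lambda_j-1$ combine to give $r+s-(\lambda_i-1)<\lambda_j-1$, so the superscript is strictly below the value $\lambda_j-1$ required for a tightly nested triple. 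This is the argument the paper runs, and it is not recoverable from your proposed contradiction.

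A minor stylistic point: for the first four inclusions the paper does not compute with Lemma~\ref{product} at all. It observes that $\HH=\h_e\cap(\g_e)_\alpha$ is the centraliser intersected with the stabiliser of a regular linear form, hence abelian by Dixmier; and it reads off $[\HH,\NN_0]\subseteq\NN_0$, $[\HH,\NN_1]\subseteq\NN_1$, $[\NN_0,\NN_0]\subseteq\HH$ directly from the block descriptions of Lemma~\ref{subbasis} (which $V[i]$ are mapped where). Your computational route works too, but the block argument is quicker and more transparent.
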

\begin{proof}
We shall call on the characterisations of $\HH, \NN_0$ and $\NN_1$ given in lemma \ref{subbasis}. Thanks to \cite[Theorem~1]{Yak1} we have $\HH = \h \cap (\g_e)_\alpha$ where $\alpha$ is a regular element of $\g_e^\ast$. By \cite[1.11.7]{Dix} the stabiliser $(\g_e)_\alpha$ is abelian, hence $[\HH \HH] = 0$. The elements of $\NN_0$ are characterised by the fact that they exchange the spaces $V[i]$ and $V[i']$ with $i \neq i'$. Therefore the elements of $[\HH \NN_0]$ must exchange them also, implying $[\HH \NN_0] \subseteq \NN_0$. Each $\zeta_i^{j,s} \in \NN_1$ transports $V[i]$ to $V[j]$ and $V[j']$ to $V[i']$. Therefore $[\HH \zeta_i^{j,s}]$ does likewise and $[\HH \NN_1] \subseteq \NN_1$. Since each element of $\NN_0$ exchanges the spaces $V[i]$ and $V[i']$ with $i\neq i'$ and anihilates all $V[i]$ with $i=i'$ the poduct $[\NN_0 \NN_0]$ must stabilise all $V[i]$, hence be contained in $\HH$. The inclusion $[\NN_0 \NN_1] \subseteq \NN_1$ is best checked using lemma \ref{product}.
Let $i \neq i'$ and $l > k \neq l'$. Then $[\zeta_i^{i',s} \zeta_k^{l,r}]$ is nonzero only if $i = l$ or $i' = k$. Our restrictions on $i,l$ and $k$ ensure that these two possibilities are mutually exclusive. In the first case $[\zeta_i^{i',s} \zeta_k^{l,r}] = \zeta_k^{l', r + s - (\lambda_i-1)} - \varepsilon_{klr} \zeta_l^{k', r+s- (\lambda_i-1)}$ which lies in $\NN_1$. The second case is very similar.

We consider the final claim. Suppose $j > i\neq j'$ and $l > k \neq l'$. By lemma \ref{product} the bracket $[\zeta_i^{j,s} \zeta_k^{l,r}]$ is only nonzero when one or more of the following equalities hold: $i=l, j=k, i'=k, j'=l$. We shall consider these four possibilities one by one. Since the bracket is anticommutative the reasoning in the case $i=l$ is identical to the case $j=k$ and so we need consider only the first of these two possibilities. If $i=l$ then the relations $i' \neq j > i$ and $l > k \neq l'$ ensure that $j\neq k, i' \neq k$ and $j' \neq l$. Therefore $[\zeta_i^{j,s} \zeta_k^{l,r}] = \zeta_k^{j, r+s - (\lambda_i-1)} \in \NN_1$. In order for this map to lie in $\NN_1^-$ we would require $j = k+1$, however we have $j > i = l > k$ which makes this impossible. Thus $[\zeta_i^{j,s} \zeta_k^{l,r}] \in \NN_1^+$.

By lemma \ref{spanningdetails} we have $\zeta_i^{j,s} = \pm \zeta_{j'}^{i',s}$ and $\zeta_k^{l,r} = \pm\zeta_{l'}^{k',r}$ so the reasoning in case $i=k'$ is identical to the case $j'=l$. Therefore we need only to consider the first of these two possibilities. Suppose $i=k'$. Then certainly $i \neq l$ and $j \neq k$. If $j' = l$ then $[\zeta_i^{j,s} \zeta_k^{l,r}] = \varepsilon_{klr} (\zeta_j^{j,r+s-(\lambda_i - 1)} - \zeta_i^{i,r+s - (\lambda_j-1)}) \in \HH$ so assume from henceforth that $j' \neq l$. Then $$[\zeta_i^{j,s} \zeta_k^{l,r}] = \varepsilon_{klr} \zeta_{l'}^{j,r+s - (\lambda_i-1)}.$$ If $j = l$ then the product lies in $\NN_0$. Assume $j\neq l$. Thanks to the relation $\zeta_{l'}^{j,r+s - (\lambda_i-1)} = \pm \zeta_{j'}^{l,r+s - (\lambda_i-1)}$ from lemma \ref{spanningdetails} we may assume that $j>l'$, and from here it is easily seen that the product lies in $\NN_1$. In order for the product to lie in $\NN_1^-$ we require $L(l') = l'$ which implies $\lambda_l < \lambda_i$ since $i = k < l$. From 
the bounds $0 \leq r < \lambda_l$ and $0 \leq s < \lambda_j$ we deduce that $r + s - (\lambda_i - 1) < \lambda_j - 1$ which confirms that the term $\zeta_{l'}^{j,r+s - (\lambda_i-1)}$ does not lie in $\NN_1^-$.
\end{proof}

\p The following proposition tells us how $\NN$ intersects with $[\h_e \h_e]$.

\begin{prop}\label{Nprop} The following are true:
\begin{enumerate}
\item{$\NN_0 \subset [\h_e \h_e]$;}
\item{$\NN_1 \cap [\h_e \h_e] = \NN_1^+$.}
\end{enumerate}
\end{prop}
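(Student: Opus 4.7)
Both parts will be proved by direct computation using the bracket formula in Lemma~\ref{product}, refined by the coarse containments of Proposition~\ref{subs}.

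For Part~(1), the goal is to exhibit every basis vector $\zeta_i^{i',s}\in N_0$ (so $i\neq i'$ and $\lambda_i-s$ odd) as a commutator in $\h_e$. The cleanest case is when there exists a block index $j\notin\{i,i'\}$, for then all but one term of Lemma~\ref{product} vanishes and one obtains
\[
[\zeta_i^{j,s_1},\,\zeta_j^{i',s_2}]\,=\,-\,\zeta_i^{i',\,s_1+s_2-(\lambda_j-1)};
\]
if $\lambda_j$ is large enough, admissible choices of $s_1,s_2$ produce the prescribed $s$. The degenerate case in which no such $j$ is available (essentially: $\lambda_i$ is realised only at $V[i]$ and $V[i']$) is handled by the diagonal bracket
\[
[\zeta_i^{i,s_1},\,\zeta_i^{i',s_2}]\,=\,-2\,\zeta_i^{i',\,s_1+s_2-(\lambda_i-1)}\qquad(\zeta_i^{i',s_2}\in N_0),
\]
together with the containment $[\HH,\NN_0]\subseteq\NN_0$ from Proposition~\ref{subs} and a short induction on $s$.

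For Part~(2) I would split the argument into two steps. \emph{Step (a):} $\NN_1^+\subseteq[\h_e,\h_e]$. For a non--tightly-nested basis vector $\zeta_i^{j,s}\in N_1^+$, the failure of tight nesting gives enough freedom to select an auxiliary index $k\notin\{i,j,i',j'\}$ and parameters $s_1,s_2$ so that Lemma~\ref{product} collapses to the clean expression $[\zeta_i^{k,s_1},\zeta_k^{j,s_2}]=-\,\zeta_i^{j,\,s_1+s_2-(\lambda_k-1)}$, which realises the target vector. \emph{Step (b):} the $\NN_1^-$-component of any element of $[\h_e,\h_e]$ vanishes. By Proposition~\ref{subs} the $\NN_1$-valued brackets in $\h_e$ come from $[\HH,\NN_1]$, $[\NN_0,\NN_1]$, or the $\NN_1^+$-part of $[\NN_1,\NN_1]$; the last already lies in $\NN_1^+$ by Proposition~\ref{subs}. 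It therefore remains only to check that for a tightly nested triple $(i,i+1,\lambda_{i+1}-1)$, no bracket of the first two types produces $\zeta_i^{i+1,\lambda_{i+1}-1}$. Substituting all admissible inputs into the four terms of Lemma~\ref{product}, and using the symmetry $\zeta_i^{j,s}=\varepsilon_{i,j,s}\zeta_{j'}^{i',s}$ of Lemma~\ref{spanningdetails}(3) to identify equivalent configurations, every putative contribution forces the resulting $\zeta$-element either to be of the form $\zeta_a^{a,\lambda_a-1}$ with $a=a'$ (hence zero, as $\lambda_a-(\lambda_a-1)=1$ is odd), or to have indices incompatible with a tightly nested configuration. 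The conditions $L(i)=i$, $U(i+1)=i+1$, $i=i'$ and $(i+1)=(i+1)'$ are each used essentially to close the analysis, together with the monotonicity of $\lambda$ (which rules out any block index $a$ strictly between the $\lambda_{i+1}$- and $\lambda_i$-clusters).

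The principal obstacle is the case analysis in Step~(b): one must dispatch all four terms of the bracket formula for every admissible choice of input, and the symmetry of Lemma~\ref{spanningdetails}(3) must be exploited to keep the argument from fragmenting into too many subcases. Parts~(1) and Step~(a) are computational but straightforward once the right bracket pattern has been identified.
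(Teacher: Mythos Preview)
Your Step~(b) for Part~(2) is correct and matches the paper's argument (their parts~(vi)--(viii)). Part~(1) is also correct, though you are working harder than necessary: the paper dispenses with the case split entirely and uses your ``degenerate'' bracket universally, computing
\[
[\zeta_i^{i',s},\,\zeta_i^{i,\lambda_i-1}] \,=\, 2\,\zeta_i^{i',s}
\]
for every $\zeta_i^{i',s}\in N_0$ in one line. No third block, no induction on $s$.

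The real problem is your Step~(a). The claim that non-tight-nesting ``gives enough freedom to select an auxiliary index $k\notin\{i,j,i',j'\}$'' is false. Take the simplest counterexample: $n=2$ with $1=1'$, $2=2'$, and $\lambda_1\ge\lambda_2\ge 2$. Then every $\zeta_1^{2,s}$ with $0\le s<\lambda_2-1$ lies in $\NN_1^+$, yet there is no auxiliary block at all. Even when other blocks exist, the failure of tight nesting may be \emph{only} in the condition $s=\lambda_{j}-1$ (with $j=i+1$, $i=i'$, $j=j'$, $L(i)=i$, $U(j)=j$ all holding), and then any auxiliary $k$ satisfies $\lambda_k<\lambda_j$ or $\lambda_k>\lambda_i$; in the former case the range constraint $s_1+s_2\le 2(\lambda_k-1)$ can fail to reach $s+\lambda_k-1$.

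The paper's fix is not to look sideways for a third block but to bracket with diagonal elements of $\HH$. When $l\ne l'$ (or $k\ne k'$) one has $[\zeta_l^{l,\lambda_l-1},\zeta_k^{l,r}]=\zeta_k^{l,r}$ directly. When $l=l'$ and $k=k'$ one uses $\zeta_l^{l,\lambda_l-2}\in H$ to get
\[
[\zeta_l^{l,\lambda_l-2},\,\zeta_k^{l,r}]\,=\,2\,\zeta_k^{l,r-1},
\]
which produces all $\zeta_k^{l,r}$ with $r\le\lambda_l-2$. Your auxiliary-index trick is then only needed for the top value $r=\lambda_l-1$ when $l=l'$, $k=k'$, and there the paper uses it in two guises: an intermediate block when $l\ne k+1$, or a block with the same $\lambda$-value when $L(k)\ne k$ or $U(l)\ne l$. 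These are precisely the remaining ways tight nesting can fail, and in each of them the required auxiliary index is guaranteed to exist.
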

\begin{proof}
Throughout the proof we shall need to evaluate expressions $\varepsilon_{i,j,s}$. When doing so we shall call on part 1 of lemma \ref{spanningdetails}. Likewise, whenever we evaluate expressions $[\zeta_i^{i,s} \zeta_k^{l,r}]$ we shall call on lemma \ref{product}. Assume $i \neq i'$ and $\lambda_i - s$ is odd. We have $\varepsilon_{i,i,s} = (-1)^{\lambda_i-s}$ so
$$[\zeta_i^{i',s}\zeta_i^{i,\lambda_{i}-1}] = \zeta_i^{i',s} - \varepsilon_{i,i,\lambda_i-1}\zeta_i^{i',s} = 2\zeta_i^{i',s} \in [\h_e \h_e].$$ Hence $\NN_0 = [\HH \NN_0] \subseteq [\h_e \h_e]$. Here we use the fact that char$(\K) \neq 2$. This concludes 1. For the sake of clarity we shall divide the proof of part 2 of the current proposition into subsections \emph{(i), (ii),..., (ix).} In parts \emph{(i),...,(v)} we demonstrate that $\NN_1^+ \subseteq [\h_e \h_e]$ by showing that if one of the six criteria for $(i,j,s)$ to be a tightly nested triple fails to hold then some multiple of $\zeta_i^{j,s}$ may be found as a product of two basis elements in $\h_e$. In parts \emph{(v),...,(viii)} we show that the reverse inclusion holds by noting that $\NN_1\cap [\h_e\h_e]$ is the sum of those products $[\zeta_i^{j,s} \zeta_k^{l,r}]$ which lie in $\NN_1$, and showing that all such products actually lie in $\NN_1^+$. For the remainer of the proof we shall fix $l > k \neq l'$ so that
$\min(\lambda_k , \lambda_l) = \lambda_l$.

\emph{(i) If $l \neq l'$ or $k \neq k'$, then $\zeta_k^{l,r} \in [\h_e \h_e]$ for $r = 0,1,..., \lambda_l-1$:} Suppose first that $l \neq l'$. We have
$$[\zeta_l^{l,\lambda_l-1} \zeta_k^{l,r}] = \zeta_k^{l, r} \in [\h_e \h_e]$$
whence we obtain $\zeta_k^{l,r} \in [\h_e \h_e]$ for $r = 0,1,..., \lambda_l-1$. Now suppose $k \neq k'$. Then
$$[\zeta_k^{k,\lambda_k-1} \zeta_k^{l,r}] = - \zeta_k^{l, r} \in [\h_e \h_e]$$
so that $\zeta_k^{l,r} \in [\h_e \h_e]$ for all $r = 0,1,..., \lambda_l-1$.

\emph{(ii) If $l'= l$ and $k = k'$ then $\zeta_k^{l,r} \in [\h_e \h_e]$ for $r=0,1,...,\lambda_l -2$:} With $l$ and $k$ as above $$[\zeta_l^{l,\lambda_{l}-2} \zeta_k^{l,r}] = \zeta_k^{l,r -1} - \varepsilon_{k,l,r} \zeta_{l'}^{k', r-1} \in [\h_e \h_e].$$ By part 3 of lemma \ref{spanningdetails} this final expression is $(1 - \varepsilon_{k,l,r} \varepsilon_{k,l,r-1}) \zeta_k^{l,r-1}$. Since $\varepsilon_{k,l,r} = (-1)^{\lambda_l-r}$ this expression actually equals $2\zeta_k^{l,r-1}$. Allowing $r$ to run from $0$ to $\lambda_l-1$ we obtain the desired result.

\emph{(iii) If $l' = l$, $k = k'$ and $k \neq l-1$ then $\zeta_k^{l,r} \in [\h_e \h_e]$ for $r=0,1,...,\lambda_l-1$:} We may assume there exists $j$ fulfilling $l > j > k$. Then $k \neq l$ and $k' \neq j \neq l'$ so that $$[\zeta_j^{l,r} \zeta_k^{j,\lambda_j-1}] =  \zeta_k^{l,r}\in [\h_e \h_e].$$

\emph{(iv) If $l = l'$, $k = k'$ and either $L(k) \neq k$ or $U(l) \neq l$, then $\zeta_k^{l,r} \in [\h_e \h_e]$ for $r = 0, 1,..., \lambda_l - 1$:} First suppose that $L(k) \neq k$. Since $k = k'$ we have $L(k) = L(k)'$ and $\lambda_{L(k)} = \lambda_k$ so that for $r = 0, 1,..., \lambda_l - 1$ $$[\zeta_{L(k)}^{l,r} \zeta_{L(k)}^{k,\lambda_k-1}] = \varepsilon_{L(k), k, \lambda_k -1} \zeta_{k}^{l,r}\in [\h_e \h_e].$$ Next suppose that $U(l) \neq l$. Since $l=l'$ we have $U(l) = U(l)'$ and $\lambda_{U(l)} = \lambda_l$ so that for $r = 0, 1,..., \lambda_l - 1$ $$[\zeta_{k}^{U(l),\lambda_{U(l)}-1} \zeta_{l}^{U(l),r}] = -\varepsilon_{l, U(l),r} \zeta_k^{l, r}\in [\h_e \h_e].$$

\emph{(v) $\NN_1^+ \subseteq [\h_e \h_e]$: } Combine the deductions of parts \emph{(i) - (iv)}.

\emph{(vi) $[\HH \NN_1] \subseteq \NN_1^+$:} We continue to fix $l > k \neq l'$. The product $[\zeta_i^{i,s} \zeta_k^{l,r}]$ is nonzero only if $i = k$ or $i = l$. Assume $i = l$ (the case $i = k$ is similar). Then $[\zeta_i^{i,s} \zeta_k^{l,r}] = \zeta_k^{l,r+s-(\lambda_i - 1)}$ which lies either in $\NN_1^-$ or $\NN_1^+$. In order for $\zeta_k^{l,r+s-(\lambda_i - 1)} \in \NN^-$ we require that $l=l'$. But in that case $i = i'$ so by definition of $\HH$ we have $\lambda_i - s$ even. In particular, $s \leq \lambda_i - 2$ and $r + s - (\lambda_i-1) \leq r - 1 < \lambda_l - 1$ so $[\zeta_i^{i,s} \zeta_k^{l,r}] \in \NN_1^+$.

\emph{(vii) $[\NN_0 \NN_1] \subseteq \NN_1^+$:} The product $[\zeta_i^{i',s} \zeta_k^{l,r}]$ with $i\neq i'$ is nonzero only if $i = l$ or $i' = k$. Our restrictions on $i,l$ and $k$ ensure that these two possibilities are mutually exclusive. In the first case $$[\zeta_i^{i',s} \zeta_k^{l,r}] = \zeta_k^{l', r + s - (\lambda_i-1)} - \varepsilon_{klr} \zeta_l^{k', r+s- (\lambda_i-1)} = (1 - \varepsilon_{klr} \varepsilon_{k,l',r+s-(\lambda_i-1)})\zeta_k^{l',r+s-(\lambda_i-1)}$$ however $\zeta_k^{l',r+s-(\lambda_i-1)}$ lies in $\NN_1^-$ only if $l = l'$. The assumption $i = l$ implies $i = i'$ contrary to our assumptions. 

Now consider the case $i' = k$. A calculation similar to the above gives $$[\zeta_i^{i',s} \zeta_k^{l,r}] = (\varepsilon_{klr}  \varepsilon_{k,l,r+s - (\lambda_i - 1)} - 1)\zeta_i^{l,r+s-(\lambda_i-1)}.$$ Note that this term lies in $\NN_1^-$ only if $i = i'$ contrary to our assumptions, so \emph{(vii)} follows.

\emph{(viii) $\NN_1 \cap [\NN_1 \NN_1] \subseteq \NN_1^+$:} This follows immediately from the final statement of proposition \ref{subs}.

\emph{(ix) $\NN_1\cap [\h_e \h_e] = \NN_1^+$:} By \emph{(v)} we know that $\NN_1^+ \subseteq \NN_1\cap [\h_e \h_e] $. By proposition \ref{subs}, $\NN_1 \cap [\h_e \h_e]$ is equal to the span of those products $[\zeta_i^{j,s} \zeta_k^{l,r}]$ which lie in $\NN_1$. By that same proposition and parts \emph{(vi) - (viii)} we see that every product $[\zeta_i^{j,s} \zeta_k^{l,r}]$ which lies in $\NN_1$ actually lies in $\NN_1^+$. The claim follows.

\end{proof}

\p This is an analogue of the previous proposition for $\HH$. In particular it allows us to describe $\HH\cap [\h_e \h_e]$ entirely.

\begin{prop}\label{Hprop} The following are true:
\begin{enumerate}
\item{$\HH_1 \subset [\h_e \h_e]$;}
\item{$\HH_0 \cap [\h_e \h_e] = \HH_0^+$.}
\end{enumerate}
\end{prop}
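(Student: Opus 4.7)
The plan for both parts rests on explicit bracket computations using Lemma~\ref{product} combined with Proposition~\ref{subs}, which gives the preliminary identification $\HH \cap [\h_e, \h_e] = \HH \cap ([\NN_0, \NN_0] + [\NN_1, \NN_1])$. First I would analyze $[\NN_0, \NN_0]$. A direct application of Lemma~\ref{product} shows that $[\zeta_i^{i',s}, \zeta_j^{j',r}] = 0$ whenever $\{i,i'\} \neq \{j,j'\}$ (every delta symbol vanishes), and the only productive case $[\zeta_i^{i',s}, \zeta_{i'}^{i,r}]$ with $i \neq i'$ simplifies, after using $\zeta_{i'}^{i',t} = (-1)^{\lambda_i - t}\zeta_i^{i,t}$ from Lemma~\ref{spanningdetails}(3), to a nonzero scalar multiple of $\zeta_i^{i,t}$ with $t = r+s-(\lambda_i-1)$ and $\lambda_i - t$ odd (this parity is forced by the conditions defining $N_0$). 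As $(r,s)$ varies over admissible values, $t$ hits every value in $[0, \lambda_i - 1]$ with $\lambda_i - t$ odd; the resulting $\zeta_i^{i,t}$ with $i < i'$ span $\HH_1$, proving Part 1 and simultaneously establishing $[\NN_0, \NN_0] \subseteq \HH_1$.

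This reduces Part 2 to proving $\HH_0 \cap [\NN_1, \NN_1] = \HH_0^+$. For the inclusion $\subseteq$, the proof of Proposition~\ref{subs} identifies the $\HH$-components of brackets in $[\NN_1, \NN_1]$ as scalar multiples of differences $\zeta_j^{j,t_j} - \zeta_i^{i,t_i}$ coming from $[\zeta_i^{j,s}, \zeta_{i'}^{j',r}]$ with $\lambda_i - t_i = \lambda_j - t_j =: 2m$. The feasibility constraint $r+s = \lambda_i + \lambda_j - 1 - 2m$ combined with $0 \le r, s < \min(\lambda_i, \lambda_j)$ forces $|\lambda_i - \lambda_j| < 2m$, so $i, j$ lie in the same block $\HH_{0,k}^m$. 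If both summands are nonzero, the augmentation of the difference vanishes and the bracket lies in $\HH_{0,k}^{m,+}$; if one summand vanishes it must be because the corresponding $\lambda$ is $< 2m$ (since $2m$ is even, the self-paired parity obstruction is excluded), which by the definition of the $a_{j,m}$'s forces the index into the last block, placing the surviving term in $\HH_{0,t(m)}^m$. Either way the bracket is in $\HH_0^+$.

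For the reverse inclusion $\HH_0^+ \subseteq [\h_e, \h_e]$, I would realize every generator as an explicit bracket. For each non-last block $\HH_{0,k}^m$ ($k < t(m)$) and each pair of adjacent indices $i < j$ in the block, the feasibility condition from the previous paragraph is satisfied and one can take, for instance, $s = \lambda_j - 1$ and $r = \lambda_i - 2m$, producing the difference $\zeta_j^{j,\lambda_j - 2m} - \zeta_i^{i,\lambda_i - 2m}$ up to a nonzero scalar; these pairwise differences span $\HH_{0,k}^{m,+}$ by connectedness within the block. For the last block, nontriviality of $\HH_{0,t(m)}^m$ forces $\lambda_n < 2m$ (otherwise $n+1$ would qualify as a jump of size $\ge 2m$, emptying the block), so there exists $i_0 \in [a_{t(m),m}, n]$ with $\lambda_{i_0} < 2m$; pairing any such $i_0$ with each earlier $j$ in the block in the same type of bracket produces the single basis element $\zeta_j^{j,\lambda_j - 2m}$ (the $i_0$-term vanishing as above), so we recover all of $\HH_{0,t(m)}^m$.

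The main obstacle will be the case analysis by involution structure (whether $i = i'$, $j = j'$, or neither) and the bookkeeping of $\varepsilon$-signs that this entails: in each configuration one must verify that the proposed $r, s$ both lie in range, that the two input $\zeta$'s are nonzero basis elements, and that the claimed output term is likewise nonzero, with separate attention to the parities of $\lambda_i$ and $\lambda_j$.
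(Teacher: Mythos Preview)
Your approach is essentially the same as the paper's: both use $[\NN_0,\NN_0]$ to produce $\HH_1$ (and simultaneously show $[\NN_0,\NN_0]\subseteq\HH_1$), then reduce Part~2 to $\HH_0\cap[\NN_1,\NN_1]$, identify the $\HH_0$-contributions as differences $\zeta_j^{j,\lambda_j-2m}-\zeta_i^{i,\lambda_i-2m}$, and use the feasibility constraint $\lambda_i-\lambda_j<2m$ to confine each difference to a single block $\HH_{0,k}^m$.

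One small gap: in your last-block argument you claim that pairing $i_0$ (with $\lambda_{i_0}<2m$) against \emph{each} earlier $j$ in the block produces $\zeta_j^{j,\lambda_j-2m}$ directly. This requires $\lambda_j-\lambda_{i_0}<2m$, which need not hold when $j$ and $i_0$ are not adjacent (the individual jumps are each $<2m$, but their sum may exceed $2m$). The paper fixes this by first obtaining the consecutive differences in the last block (exactly as you do for the non-last blocks, giving $\HH_{0,t(m)}^{m,+}$), and then producing just \emph{one} extra element by pairing the largest $k$ with $\lambda_k\ge 2m$ against $[k]+1$; together these span all of $\HH_{0,t(m)}^m$. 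Your outline is easily repaired along these lines.
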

\begin{proof}
$\HH_1$ has a basis consisting of vectors $\zeta_i^{i,s}$ with $i < i'$ and $\lambda_i - s$ odd. Fix such a choice of $i$ and $s$, and choose $r$ such that $\lambda_i - r$ is odd. Following lemma \ref{product}, we have $$[\zeta_{i'}^{i, s} \zeta_{i}^{i',r}] = (1 + \varepsilon_{i,i',r}) (\zeta_i^{i,s+r-(\lambda_i-1)} - \zeta_{i'}^{i',s+r-(\lambda_i - 1)}).$$ Since $\varepsilon_{i,i,r+s-(\lambda_i-1)} = (-1)^{\lambda_i-(s+r-(\lambda_i-1))} = (-1)^{(\lambda_i - s) + (\lambda_i-r) + 1} = -1$ it follows that $\zeta_{i'}^{i',s+r-(\lambda_i-1)} = - \zeta_i^{i,s+r-(\lambda_i-1)}$ by part 3 of lemma \ref{spanningdetails}. Furthermore $\varepsilon_{i, i', r} = (-1)^{\lambda_i - r + 1} = 1$. Therefore 
\begin{eqnarray*}
[\zeta_{i'}^{i, s} \zeta_{i}^{i',r}] = 4 \zeta_i^{i,s+r-(\lambda_i-1)}
\end{eqnarray*}
which is nonzero since char$(\K)\neq 2$. We make the observation that the above expression lies in $\HH_1$ for any choice of $r$ and $s$ with $\lambda_i -r$ and $\lambda_i - s$ both odd ($\ast$). Taking $r = \lambda_i -1$ we obtain $\zeta_i^{i,s} \in \HH \cap [\h_e \h_e]$. Since $\HH_1$ is spanned by those $\zeta_i^{i,s}$ such that $i < i'$ and $\lambda_i - s$ is odd we have $\HH_1 \subseteq [\h_e \h_e]$. This completes (1). For the sake of clarity we shall divide the proof of part 2 of the current proposition into subsection \emph{(i), (ii), ..., (vii)}. The approach is much the same as part 2 of proposition \ref{Nprop}. In parts \emph{(i),...,(iv)} we show that a spanning set for $\HH_0^+$ may be found in $[\h_e \h_e]$ and in the subsequent parts \emph{(v), (vi), (vii)} we demonstrate that any product $[\zeta_i^{j,s} \zeta_k^{l,r}]$ which lies in $\HH_0$ actually lies in $\HH_0^+$.

\emph{(i) $\HH_0 \cap [\h_e \h_e] = \HH_0 \cap [\NN_1 \NN_1]$:} By proposition \ref{subs} we see that $\HH \cap [\h_e \h_e] = [\NN_0 \NN_0] + (\HH \cap [\NN_1 \NN_1])$. The argument for part 1 of the current proposition shows that $[\NN_0 \NN_0] \subseteq \HH_1$. Since $\HH = \HH_0 \oplus \HH_1$ we get $\HH_0 \cap [\h_e \h_e] = \HH_0 \cap [\NN_1 \NN_1]$.

\emph{(ii) $\HH_0 \cap [\NN_1\NN_1] = $ \emph{span}$\{ \zeta_{[j]}^{[j], \lambda_{[j]} - 2m} - \zeta_{[i]}^{[i], \lambda_{[i]} - 2m} : [1] \leq [i] < [j] \leq [n], \lambda_i - \lambda_j < 2m < \lambda_j + \lambda_i\}$:} Again by proposition \ref{subs} see that $\HH \cap [\NN_1 \NN_1]$ is spanned by products $[\zeta_i^{j,s} \zeta_{i'}^{j',r}]$ with $[j] > [i]$. In turn $[\zeta_i^{j,s} \zeta_{i'}^{j',r}] = \varepsilon_{i', j', r} [\zeta_i^{j,s} \zeta_{j}^{i,r}] = \varepsilon_{i', j', r}(\zeta_j^{j,r+s-(\lambda_i-1)} - \zeta_i^{i,r+s-(\lambda_j-1)})$. The reader will notice that $$[\zeta_i^{j,s} \zeta_{j}^{i,r}] \in \left\{ \begin{array}{ll}
        \HH_1 & \mbox{$\lambda_i + \lambda_j - (r+s) -1$ odd};\\
        \HH_0 & \mbox{$\lambda_i + \lambda_j - (r+s)-1$ even}.\end{array} \right.$$ so that $\HH_0 \cap [\NN_1 \NN_1] = \spn \{ [\zeta_i^{j,s} \zeta_{j}^{i,r}] :[1] \leq [i] <  [j] \leq [n], 0 \leq s,r < \lambda_i, \lambda_i + \lambda_j - (r+s) -1\text{ even}\}.$ If we pick $[1] \leq [i] <  [j] \leq [n]$ and $0 \leq s,r < \lambda_i$ such that $\lambda_i + \lambda_j - (r+s) -1 = 2m$ then we have $[\zeta_i^{j,s} \zeta_{j}^{i,r}] = \varepsilon_{i',j',r} (\zeta_{[j]}^{[j], \lambda_{[j]} - 2m} - \zeta_{[i]}^{[i], \lambda_{[i]} - 2m})$. The constraints placed on $s$ and $r$ are equivalent to $\lambda_i - \lambda_j < 2m < \lambda_i + \lambda_j$, and \emph{(ii)} follows.

\emph{(iii) Each spanning vector from (ii) lies in a unique $\HH_{0,l}^m$, in particular $\HH_0 \cap [\NN_1 \NN_1] = \oplus_{l,m} (\HH_{0,l}^m \cap [\NN_1 \NN_1])$:} Fix $m$ in the appropriate range and suppose $1\leq i < a_{t(m),m}$ We claim that if $[j] > [i]$ then each $\zeta_{[j]}^{[j], \lambda_{[j]} - 2m} - \zeta_{[i]}^{[i], \lambda_{[i]} - 2m} \in \HH_0 \cap [\NN_1 \NN_1]$ lies in $\HH_{0,l}^m$ where $l$ is the unique integer fulfilling $[a_{l, m}] \leq [i] < [a_{l+1,m}]$. It will suffice to show that given $i,j,l$ and $m$ as above we have $[j] < [a_{l+1,m}]$. To see this, suppose that $[j] \geq [a_{l+1,m}]$. Then by our choice of $a_{l+1,m}$ we have $\lambda_{a_{l+1,m}-1} - \lambda_{a_{l+1,m}} \geq 2m$ which implies $\lambda_i - \lambda_j \geq 2m$ contrary to the restriction $\lambda_i - \lambda_j < 2m$ noted in the statement of \emph{(ii)}. We conculde that $[a_{l,m}] \leq [i] < [j] < [a_{l+1,m}]$ and that the corresponding spanning vector lies in $\HH_{0,l}^m$. In case $a_{t(m),m} \leq i$
we have $\zeta_{[j]}^{[j], \lambda_{[j]} - 2m} - \zeta_{[i]}^{[i], \lambda_{[i]} - 2m} \in \HH_{0,t(m)}^m$ by definition. Thus we have shown that the spanning vectors  of $\HH_0 \cap  [\NN_1 \NN_1]$ each lie in some $\HH_{0,l}^m$, as claimed.

\emph{(iv) $\HH_{0,l}^{m,+} \subseteq \HH_{0,l}^m \cap [\NN_1 \NN_1] $ for all $l,m$:} Suppose $1 \leq i < a_{t(m),m}$. Since $\lambda_{a_{t(m),m} -1} - \lambda_{a_{t(m),m}} \geq 2m$ we know that $\lambda_{a_{t(m),m}-1} \geq 2m$ and so $\lambda_i \geq 2m$. It follows that $\zeta_{[i]}^{[i], \lambda_{[i]} - 2m} \neq 0$ for all such $i$. Fix $[a_{l,m}] < [i] < [a_{l+1,m}]$. By our choice of integers $\{a_{1,m},...,a_{t(m),m}\}$ we know that $\lambda_{[i]-1} - \lambda_{[i]} < 2m$ and since $\lambda_{[i]-1}, \lambda_{[i]} \geq \lambda_{a_{t(m),m}} \geq 2m$ we have $\lambda_{[i]-1} + \lambda_{[i]} > 2m$. By these remarks, using \emph{(ii)}, it follows that $\zeta_{[i]-1}^{[i]-1, \lambda_{[i]-1} - 2m} - \zeta_{[i]}^{[i], \lambda_{[i]} - 2m}$ is a nonzero element of $\HH_{0,l}^m \cap [\NN_1 \NN_1]$. These vectors span all of $\HH_{0,l}^{m,+}$ so \emph{(iv)} follows for $l < t(m)$. The argument for $l = t(m)$ is similar. Let $k = \max\{ i: \lambda_i \geq 2m\}$. Then $\zeta_i^{i,\lambda_i - 2m} \neq 0 $ if and only 
if $i \leq k$ so $\HH_{0,t(m)}^m = \spn\{\zeta_{[i]}^{[i],\lambda_{[i]}-2m} : [a_{t(m),m}] \leq [i] \leq [k]\}$. Fix $[a_{t(m),m}] < [i] \leq [k]$. By our choice of integers $\{a_{1,m},...,a_{t(m),m}\}$ we know that $\lambda_{[i]-1} - \lambda_{[i]} < 2m$ and by our choice of $k$ we have $\lambda_{[i]-1} + \lambda_{[i]} > 2m$. The argument now concludes exactly as above.

\emph{(v) $\HH_{0,l}^m \cap [\NN_1 \NN_1] = \HH_{0,l}^{m,+}$ for all $1\leq l < t(m)$:} The discussion in \emph{(iii)} confirms that $\HH_{0,l}^m \cap [\NN_1 \NN_1] = \spn\{ \zeta_{[j]}^{[j], \lambda_{[j]} - 2m} - \zeta_{[i]}^{[i], \lambda_{[i]} - 2m} : [a_{l,m}] \leq [i] < [j] < [a_{l+1,m}], \lambda_i - \lambda_j < 2m < \lambda_j + \lambda_i\}$. This space is clearly contained in $\HH_{0,l}^{m,+}$. By \emph{(iv)} we have equality.

\emph{(vi) $\HH_{0,t(m)}^m \cap [\NN_1 \NN_1] = \HH_{0,t(m)}^m$:}  Again by \emph{(iv)} we have $\HH_{0,t(m)}^{m,+} \subseteq \HH_{0,t(m)}^m \cap [\NN_1 \NN_1]$. If $\lambda_{a_{t(m),m}} < 2m$ then $\HH_{0,t(m)}^m = 0$ by part 1 of lemma \ref{bits} and the statement holds trivially. Assume $\lambda_{a_{t(m),m}} \geq 2m$. Let $k = \max\{ i : \lambda_i \geq 2m\}$ so that $\HH_{0,t(m)}^m = \spn\{\zeta_{[i]}^{[i],\lambda_{[i]}-2m} : [a_{t(m),m}] \leq [i] \leq [k]\}$. We claim that $[k]+1\leq [n]$. If not then $[k] = [n]$ which implies that $ \lambda_{k} - \lambda_{k+1} = \lambda_k \geq 2m$ implying that $k+1 \in \{a_{1,m},...,a_{t(m),m}\}$, however $k+1 > a_{t(m),m}$ and $a_{1,m} \leq \cdots \leq a_{t(m),m}$, and this contradiction confirms the claim. By the very same reasoning we know that $\lambda_{[k]} - \lambda_{[k]+1} = \lambda_k - \lambda_{k+1} < 2m$ and the inequality $[k]+1\leq n$ gives us $\lambda_{[k]+1} > 0$ which in turn implies $\lambda_{[k]} + \lambda_{[k]+1} > 2m$. By \emph{(ii)} and \emph{(iii)} 
we have $\zeta_{[k]+1}^{[k]+1, \lambda_{[k]+1}-2m} - \zeta_{[k]}^{[k],\lambda_{[k]} - 2m} \in \HH_{0,t(m)}^m$. Since $\lambda_{k+1} < 2m$ we know that $\zeta_{[k]+1}^{[k]+1, \lambda_{[k]+1}-2m} = 0$. Since $\zeta_{[k]}^{[k],\lambda_{[k]} - 2m} \notin \HH_{0,t(m)}^{m,+}$ and $\HH_{0,t(m)}^{m,+}$ has codimension 1 in $\HH_{0,t(m)}^m$ part \emph{(vi)} follows.

\emph{(vii) $\HH_0 \cap [\h_e \h_e] = \HH_0^+$:} By \emph{(i)} and \emph{(iii)} we have $$\HH_0 \cap [\h_e \h_e] = \oplus_{l,m} (\HH_{0,l}^m \cap [\NN_1 \NN_1]).$$ The proposition now follows from \emph{(v)} and \emph{(vi)}.
\end{proof}

\p We may finally state and prove the decomposition theorem for $[\h_e \h_e]$. 
\begin{thm}\label{derived}
The derived subalgebra $[\h_e \h_e]$ is equal to the direct sum $$\NN_0 \oplus \NN_1^+ \oplus \HH_0^+\oplus \HH_1.$$
\end{thm}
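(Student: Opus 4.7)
The plan is to deduce this theorem as an essentially formal consequence of Propositions \ref{subs}, \ref{Nprop} and \ref{Hprop}, each of which has already done the substantive bracket-level work. Recall the direct-sum decomposition $\h_e = \HH_0 \oplus \HH_1 \oplus \NN_0 \oplus \NN_1^+ \oplus \NN_1^-$ obtained from Lemma \ref{subbasis} and the refinements carried out in Section \ref{decomposingh_e}. Since the four summands $\NN_0,\ \NN_1^+,\ \HH_0^+,\ \HH_1$ live in distinct summands of this decomposition (noting that $\HH_0^+\subseteq \HH_0$), their sum is automatically direct, and the task is merely to show set equality with $[\h_e\h_e]$.

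First I would establish the inclusion $\NN_0 \oplus \NN_1^+ \oplus \HH_0^+ \oplus \HH_1 \subseteq [\h_e\h_e]$ by simply collecting the four containments already proven: $\NN_0 \subseteq [\h_e\h_e]$ and $\NN_1^+ \subseteq [\h_e\h_e]$ from the two parts of Proposition \ref{Nprop}, together with $\HH_1 \subseteq [\h_e\h_e]$ and $\HH_0^+ \subseteq [\h_e\h_e]$ from the two parts of Proposition \ref{Hprop}.

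For the reverse inclusion, I would start from Proposition \ref{subs}: reading off its list of containments shows that every bracket of two basis elements of $H\sqcup N_0\sqcup N_1$ lands in $\HH \oplus \NN_0 \oplus \NN_1^+$ (note in particular that $\NN_1^-$ never appears as a possible target), so $[\h_e\h_e] \subseteq \HH \oplus \NN_0 \oplus \NN_1^+$. Given any $w \in [\h_e\h_e]$, decompose it uniquely as $w = h_0 + h_1 + n_0 + n_1^+$ according to the finer splitting $\HH_0 \oplus \HH_1 \oplus \NN_0 \oplus \NN_1^+$. Since $h_1,\ n_0,\ n_1^+$ each lie in $[\h_e\h_e]$ by the first step, their difference $h_0 = w - h_1 - n_0 - n_1^+$ also lies in $[\h_e\h_e]$. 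But then $h_0 \in \HH_0 \cap [\h_e\h_e]$, which equals $\HH_0^+$ by Proposition \ref{Hprop}(2), so $w \in \HH_0^+ \oplus \HH_1 \oplus \NN_0 \oplus \NN_1^+$.

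There is no genuine obstacle remaining: all the explicit bracket computations (via Lemma \ref{product}) and all the combinatorial identifications of the correct ``hyperplane'' subspaces $\HH_0^+$ and $\NN_1^+$ were absorbed into Propositions \ref{subs}, \ref{Nprop} and \ref{Hprop}. The only mild subtlety in the argument above is the extraction of $h_0$ as an element of $[\h_e\h_e]$, and this is precisely what licenses the appeal to $\HH_0\cap[\h_e\h_e]=\HH_0^+$; otherwise we would need a finer analysis of which $\HH_0$-components actually arise from brackets, which is the content already embedded in Proposition \ref{Hprop}.
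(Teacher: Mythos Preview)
Your argument is essentially the paper's, and the overall structure is correct, but there is one misattribution that leaves a small gap as written. You claim that Proposition~\ref{subs} alone shows every bracket of basis elements lands in $\HH\oplus\NN_0\oplus\NN_1^+$, ``in particular $\NN_1^-$ never appears as a possible target''. That is not what Proposition~\ref{subs} says: its list gives only $[\HH,\NN_1]\subseteq\NN_1$ and $[\NN_0,\NN_1]\subseteq\NN_1$, with no refinement to $\NN_1^+$ in those two cases. The sharpening to $\NN_1^+$ is carried out inside the proof of Proposition~\ref{Nprop} (parts (vi)--(viii)) and is recorded in its statement as $\NN_1\cap[\h_e\h_e]=\NN_1^+$. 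So your step~3 needs Proposition~\ref{Nprop} as well, not just Proposition~\ref{subs}.

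The cleanest repair is exactly what the paper does: Proposition~\ref{subs} does guarantee that each individual bracket of basis vectors lies \emph{entirely} in one of $\HH$, $\NN_0$, $\NN_1$, hence
\[
[\h_e\h_e]=(\HH\cap[\h_e\h_e])\oplus(\NN_0\cap[\h_e\h_e])\oplus(\NN_1\cap[\h_e\h_e]),
\]
and then Propositions~\ref{Nprop} and~\ref{Hprop} identify the three intersections as $\HH_0^+\oplus\HH_1$, $\NN_0$, and $\NN_1^+$ respectively. With this phrasing your subtraction trick to isolate $h_0$ becomes unnecessary.
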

\begin{proof}
The sum of the above spaces is direct by construction. By Proposition~\ref{subs} we know that $[\h_e \h_e]$ is the sum of the three spaces $$[\h_e \h_e] = \NN_0 \cap [\h_e \h_e] + \NN_1 \cap [\h_e \h_e] + \HH \cap [\h_e \h_e].$$ By proposition \ref{Nprop} we have $\NN_0 \cap [\h_e \h_e] + \NN_1 \cap [\h_e \h_e] = \NN_0 + \NN_1^+$. By proposition \ref{Hprop} using the fact that $\HH = \HH_0 \oplus \HH_1$ we have $\HH \cap [\h_e \h_e] = \HH_1 + \HH_0^+$. The theorem follows.
\end{proof}

\section{An expression for $\dim\,  \h_e^{\text{ab}}$}
\setcounter{parno}{0}

\p As a corollary to the previous theorem we obtain an expression for the dimension of the maximal abelian quotient $\h_e^\text{ab} = \h_e/[\h_e \h_e]$. Suppose $\lambda = (\lambda_1,...,\lambda_n)$ and recall from the introduction that $\Delta(\lambda)$ is the set of indexes between $1$ and $n-1$ such that $i = i', i+1 = (i+1)'$ and $\lambda_{i-1} \neq \lambda_i \geq \lambda_{i+1} \neq \lambda_{i+2}.$ We then set $$s(\lambda) := \sum_{i=1}^{n} \lfloor \frac{\lambda_{i} - \lambda_{i+1}}{2} \rfloor.$$ Here we have adopted the convention $\lambda_0 = \lambda_{n+1} = 0$. The pairs $(i, i+1)$ with $i \in\Delta(\lambda)$ are the 2-steps for $\lambda$. These notations shall be used repeatedly in all that follows.
 \begin{cor}\label{expression}
 If $\Oo_e$ has partition $\lambda$ then $c(\lambda) := \dim\,  \h_e^{\emph{\text{ab}}} = s(\lambda) + |\Delta(\lambda)|$.
 \end{cor}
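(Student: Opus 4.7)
The plan is to assemble the dimension count directly from the decomposition theorem \ref{derived}. Recall from Lemma~\ref{subbasis} that $\h_e = \HH \oplus \NN_0 \oplus \NN_1$, and by the definitions of Section~\ref{decomposingh_e} this refines to
$$\h_e \,=\, \HH_0 \oplus \HH_1 \oplus \NN_0 \oplus \NN_1^- \oplus \NN_1^+.$$
Theorem~\ref{derived} gives $[\h_e\h_e] = \NN_0 \oplus \NN_1^+ \oplus \HH_0^+ \oplus \HH_1$, so
$$\h_e^{\text{ab}} \,\cong\, (\HH_0/\HH_0^+)\,\oplus\,\NN_1^-,$$
and it suffices to compute the two summand dimensions separately.

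First I would handle $\NN_1^-$. By definition $\NN_1^-$ has basis the $\zeta_i^{j,s}$ with $(i,j,s)$ tightly nested, meaning $j = i+1$, $s = \lambda_{i+1}-1$, $i = i'$, $i+1 = (i+1)'$, $L(i) = i$, $U(i+1) = i+1$. The conditions $L(i) = i$ and $U(i+1) = i+1$ are, by the definitions of $L$ and $U$, equivalent respectively to $\lambda_{i-1} \neq \lambda_i$ and $\lambda_{i+1} \neq \lambda_{i+2}$ (using the conventions $\lambda_0 = \lambda_{n+1} = 0$). Together with the parity conditions $i = i'$, $(i+1)=(i+1)'$, these are precisely the defining conditions for $i \in \Delta(\lambda)$. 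Hence $\dim\,\NN_1^- = |\Delta(\lambda)|$.

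Next I would compute $\dim(\HH_0/\HH_0^+)$. By its definition
$$\HH_0^+ \,=\, \sum_{m=1}^{\lfloor \lambda_1/2\rfloor} \Bigl(\bigoplus_{j=1}^{t(m)-1} \HH_{0,j}^{m,+} \,+\, \HH_{0,t(m)}^m\Bigr),$$
with $\HH_{0,j}^{m,+}$ a hyperplane in $\HH_{0,j}^m$ for $1 \le j < t(m)$, and the last block contributing in full. Since $\HH_0 = \bigoplus_{m,j} \HH_{0,j}^m$, each block $\HH_{0,j}^m$ with $j < t(m)$ contributes exactly $1$ to the codimension of $\HH_0^+$, while $\HH_{0,t(m)}^m$ contributes $0$. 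Therefore
$$\dim(\HH_0/\HH_0^+) \,=\, \sum_{m\ge 1}(t(m)-1).$$
Recalling that $a_{1,m}=1$ and that for $j\ge 2$ the index $a_{j,m}$ runs through all $k\in\{2,\ldots,n+1\}$ with $\lambda_{k-1}-\lambda_k \ge 2m$, we obtain $t(m)-1 = |\{i \in\{1,\ldots,n\} : \lambda_i - \lambda_{i+1}\ge 2m\}|$.

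Finally I would swap the order of summation:
$$\sum_{m\ge 1}(t(m)-1) \,=\, \sum_{m\ge 1}\,\bigl|\{i : \lambda_i-\lambda_{i+1}\ge 2m\}\bigr| \,=\, \sum_{i=1}^n \bigl\lfloor \tfrac{\lambda_i-\lambda_{i+1}}{2}\bigr\rfloor \,=\, s(\lambda).$$
Combining, $\dim\,\h_e^{\text{ab}} = s(\lambda)+|\Delta(\lambda)|$, as required. The main substance of the argument is Theorem~\ref{derived} itself; once that is in hand, the present corollary is essentially a bookkeeping exercise, and the only mildly subtle point is recognising that the tightly nested triples match the 2-steps and that the $\HH_{0,j}^{m,+}$-hyperplanes count up to $s(\lambda)$ after reindexing.
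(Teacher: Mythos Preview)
Your proof is correct and follows essentially the same approach as the paper: both deduce $\h_e^{\text{ab}}\cong(\HH_0/\HH_0^+)\oplus\NN_1^-$ from Theorem~\ref{derived}, identify the tightly nested triples with $\Delta(\lambda)$, and count the hyperplane contributions $\sum_m(t(m)-1)$ as $s(\lambda)$ via the same reindexing (the paper phrases the last step as an explicit bijection $(l,m)\mapsto(a_{l+1,m},m)$ rather than a summation swap, but the content is identical).
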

 \begin{proof}
 Recall that $\h_e = \HH \oplus \NN_0 \oplus \NN_1$, that $\NN_1 = \NN_1^- \oplus \NN_1^+$, and that $\HH = \HH_0 \oplus \HH_1$ with $\HH_0^+ \subseteq \HH_0$. By theorem \ref{derived} we have $\h_e^\text{ab} \cong \NN_1/\NN_1^+ \oplus \HH_0/\HH_0^+$ as vector spaces. We claim that $\dim\, \NN_1/\NN_1^+ = |\Delta(\lambda)|$ and that $\dim\, \HH_0/\HH_0^+ = s(\lambda)$, from whence the theorem shall follow. First of all observe that $\dim\, \NN_1/\NN_1^+ = \dim\, \NN_1^-$. By part 3 of lemma \ref{spanningdetails} the maps $\zeta_{i-1}^{i,\lambda_i-1}$ spanning $\NN_1^-$ are all linearly independent. Define an injection from the set of $\NN_1^-$ spanning vectors $\{\zeta_{i}^{i+1,\lambda_{i+1} - 1} : i = i', i+1 = (i+1)', L(i) = i, U(i+1) = i+1\}$ to $\{1,...,n-1\}$ via $$\zeta_{i}^{i+1,\lambda_{i+1}-1} \mapsto i.$$ We claim that this map is actually a bijection onto $\Delta(\lambda)$. The condition $i = i'$ and $i+1 = (i+1)'$ hold by definition. The condition $L(i) = i$ is equivalent to
 $\lambda_{i-1} \neq \lambda_i$ and $U(i+1) = i+1$ is equivalent to $\lambda_{i+1} \neq \lambda_{i+2}$. The inequality $\lambda_i \geq \lambda_{i+1}$ holds by definition, so the map defined above is indeed a bijection. We deduce that $\dim\, \NN_1/\NN_1^+ = \dim\, \NN_1^- = |\Delta(\lambda)|$.

We must now show that $\dim\, \HH_0/ \HH_0^+ = s(\lambda)$. Observe that $\HH_0 = \oplus_{l,m} \HH_{0,l}^m$ (part 2 of lemma \ref{bits}) and that each $\HH_{0,l}^{m,+}$ has codimension 1 in $\HH_{0,l}^m$. Furthermore if $l < t(m)$ then $\HH_{0,l}^m \neq 0$. We conclude that $\dim\, \HH_0/ \HH_0^+ = |\mathcal{H}|$ where $$\mathcal{H} := \{(l,m) : 1\leq l \leq t(m)-1, 1\leq m\leq \lfloor\frac{\lambda_1}{2} \rfloor\}\} .$$ Now rewrite $s(\lambda) = |\Sigma|$ where $$\Sigma := \{(i,m) \in \{2,...,n+1\}\times \{1,...,\lfloor \frac{\lambda_1}{2} \rfloor\}: \lambda_{i-1} - \lambda_{i} \geq 2m \}.$$ If we define a bijection from $\mathcal{H}$ to $\Sigma$, then the result follows. Define a map from $\mathcal{H}$ to $\{2,...,n+1\}\times \{1,...,\lfloor \frac{\lambda_1}{2} \rfloor\}$ by the rule $$(l,m) \mapsto (a_{l+1,m}, m).$$ By the definition of the integers $\{a_{1,m}, a_{2,m}, ..., a_{t(m),m}\}$ it is a well defined injection into $\Sigma$. Fix $1\leq m \leq \lfloor \frac{\lambda_1}{2} \rfloor$. Since
$\lambda_0 = 0$ and $\lambda_1 \geq \cdots \geq \lambda_n$, we have $a_{1,m} = 1$ and $\{a_{2,m},...,a_{t(m),m}\}$ is the set of all integers $i$ such that $\lambda_{i-1} - \lambda_{i} \geq 2m$. Thus the map is surjective and $\dim\, \HH_0/ \HH_0^+ = s(\lambda)$.
\end{proof}

\section{The Kempken-Spaltenstein algorithm} \label{KSalg}
\setcounter{parno}{0}

\p As we explained in the introduction, the Kempken-Spaltenstein algorithm is a tool for combinatorially enumerating the sheets of $\h$ containing a given nilpotent orbit. By studying the classification of sheets and the partitions associated to induced orbits we are able to construct a bijection between the sheets containing $\Oo_e$ and the so called admissible sequences upto reordering (Corollary~\ref{bijection}). The latter data is sufficiently explicit to classify the nilpotent orbits lying in a unique sheet (Corollary~\ref{izosim}). This section shall introduce the algorithm in its abstract form.

\p First we shall need a notion of rigid partition. The rigid nilpotent orbits in $\h$ are classified by Kempken and Spaltenstein in \cite{K}, \cite{Sp} as follows:
\begin{thm}\label{rigids}
Let $e \in \Ni(\h)$ have partition $\lambda = (\lambda_1, ..., \lambda_n)\in \mathcal{P}_\epsilon(N)$. Then $e$ is rigid if and only if
\begin{itemize}
\item{$\lambda_i - \lambda_{i+1} \in \{0,1\}$ for all $1\leq i\leq n$;}
\item{the set $\{ i \in \Delta(\lambda) : \lambda_i = \lambda_{i+1}\}$ is empty.}
\end{itemize}
\end{thm}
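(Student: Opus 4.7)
The plan is to exploit the combinatorial description of Lusztig--Spaltenstein induction in classical types. By the structural lemma on Levi subalgebras from Section~\ref{jordandecomposition}, every Levi subalgebra of $\h$ is conjugate to one of the form $\gl_\ii\times\mm$ where $\mm$ is simple of the same type as $\h$, and since induction is transitive in stages, it suffices to analyse induction from Levis of the form $\gl_i\times\mm$ acting on nilpotent orbits of the shape $\{0\}_{\gl_i}\times\Oo_\mu$ with $\mu\in\PP(N-2i)$ and $1\le i\le\lfloor N/2\rfloor$.

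The core combinatorial input, due to Kempken in the symplectic case and Spaltenstein in the orthogonal case, describes $\Ind^\h_{\gl_i\times\mm}(\Oo_\mu)$ as follows. Let $j$ be the largest index with $\mu^{t}_j\ge i$ (or $j=0$ if no such index exists); form $\nu^t$ from $\mu^t$ by adding $1$ to each of $\mu^t_j$ and $\mu^t_{j+1}$; if $\nu$ already lies in $\PP(N)$ then the induced partition equals $\nu$, otherwise one performs the minimal $\PP$-collapse to reenter $\PP(N)$. With this rule in hand, both implications of the theorem become combinatorial statements about how adjacent column-heights of $\lambda^t$ can or cannot be manipulated.

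For the necessity direction I would reverse-engineer the recipe. If some $\lambda_i-\lambda_{i+1}\ge 2$, I would strip a vertical domino from two adjacent columns of $\lambda^t$ situated at the jump; if instead there exists $i\in\Delta(\lambda)$ with $\lambda_i=\lambda_{i+1}$, the 2-step configuration $i=i',\,i+1=(i+1)'$ together with $\lambda_{i-1}\neq\lambda_i,\,\lambda_{i+1}\neq\lambda_{i+2}$ allows an analogous removal compatible with the parity constraints defining $\PP(N)$. In both cases one checks that the resulting partition $\mu$ lies in $\PP(N-2j)$ and that applying the KS rule to $\mu$ (with collapse where needed) recovers $\lambda$, witnessing $\Oo_\lambda=\Ind^\h_{\gl_j\times\mm}(\Oo_\mu)$ and hence non-rigidity.

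For sufficiency I would argue by contraposition: assuming $\Oo_\lambda=\Ind^\h_{\gl_j\times\mm}(\Oo_\mu)$ for some $j\ge 1$ and $\mu\in\PP(N-2j)$, the KS rule forces $\lambda$ to exhibit either a column-jump of size $\ge 2$ at the position where the pair of new boxes lands, or, when a $\PP$-collapse is required, a configuration of equal adjacent parts located precisely at an index of $\Delta(\lambda)$. The main obstacle is the second implication combined with the collapse step: the parity conditions defining $\PP(N)$ produce several sub-cases according to the parities of $i$, of $\mu^t_j$ and of $\mu^t_{j+1}$, and one must verify in each sub-case that the ``bad'' configuration of $\lambda$ emerges exactly at the collapse-governed position. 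This case analysis forms the substance of the Kempken--Spaltenstein argument and is essentially unavoidable; once in place, both implications of the theorem follow cleanly.
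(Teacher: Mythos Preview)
The paper does not prove this theorem at all; it is quoted as a classical result of Kempken and Spaltenstein and simply attributed to \cite{K} and \cite{Sp}. There is therefore no proof in the paper to compare your proposal against.

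As for your sketch itself, the overall strategy (reduce to maximal Levis $\gl_i\times\mm$ by transitivity, then use the explicit combinatorics of induction to read off the two conditions) is exactly how Kempken and Spaltenstein proceed, but your statement of the induction rule is not correct. You write that one adds $1$ to two particular entries $\mu^t_j$ and $\mu^t_{j+1}$ of the transpose, which contributes only two boxes in total; induction from $\gl_i\times\mm$ must pass from $\PP(N-2i)$ to $\PP(N)$ and hence add $2i$ boxes. The correct recipe, recorded later in the thesis as Proposition~\ref{indpart} following \cite{CM}, is to add $2$ to each of the first $i$ parts of $\mu$ and then, if the result leaves $\PP(N)$, subtract $1$ from the $i^{\rm th}$ part and add $1$ to the $(i{+}1)^{\rm st}$. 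With this corrected rule your two directions do go through: a gap $\lambda_i-\lambda_{i+1}\ge 2$ is undone by subtracting $2$ from $\lambda_1,\ldots,\lambda_i$; an index $i\in\Delta(\lambda)$ with $\lambda_i=\lambda_{i+1}$ is undone by subtracting $2$ from $\lambda_1,\ldots,\lambda_{i-1}$ and $1$ from each of $\lambda_i,\lambda_{i+1}$, and the $\Delta(\lambda)$ conditions guarantee that the result lies in $\PP(N-2i)$. These two moves are precisely Cases~1 and~2 of the KS algorithm introduced immediately after the theorem, and the contrapositive direction amounts to observing that the (corrected) induction rule always creates one of these two configurations.
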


\p We recover the following, which was first proven in \cite[Theorem~12]{Ya10}
\begin{cor}\label{rigidcod}
$[\h_{e} \h_{e}] = \h_{e}$ if and only if $e$ is rigid.
\end{cor}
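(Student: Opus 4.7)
The plan is to deduce the corollary directly from Corollary~\ref{expression} together with the combinatorial description of rigid partitions in Theorem~\ref{rigids}. Since $\h_e^{\text{ab}} = \h_e/[\h_e\h_e]$, the equality $[\h_e\h_e]=\h_e$ is equivalent to $c(\lambda)=0$, and by Corollary~\ref{expression} this reduces to the simultaneous vanishing
\begin{equation*}
s(\lambda) \;=\; \sum_{i\ge 1}\Big\lfloor\tfrac{\lambda_i-\lambda_{i+1}}{2}\Big\rfloor \;=\;0 \qquad\text{and}\qquad |\Delta(\lambda)|=0,
\end{equation*}
both summands being non-negative integers.

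First I would observe that $s(\lambda)=0$ is exactly the first rigidity condition of Theorem~\ref{rigids}: as $\lambda_i\ge \lambda_{i+1}\ge 0$, the floor $\lfloor(\lambda_i-\lambda_{i+1})/2\rfloor$ vanishes if and only if $\lambda_i-\lambda_{i+1}\in\{0,1\}$ for every $i$. Thus $s(\lambda)=0$ and the first rigidity condition are literally the same statement.

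The main point is then to match $|\Delta(\lambda)|=0$ with the second rigidity condition ``$\{i\in\Delta(\lambda):\lambda_i=\lambda_{i+1}\}=\emptyset$'', given that the first condition already holds. The containment ``$|\Delta(\lambda)|=0\Rightarrow \{i\in\Delta(\lambda):\lambda_i=\lambda_{i+1}\}=\emptyset$'' is trivial. For the converse I would use a parity argument via Lemma~\ref{nilpotents}: if $i\in\Delta(\lambda)$ then $i=i'$ and $i+1=(i+1)'$, which forces $\epsilon(-1)^{\lambda_i}=\epsilon(-1)^{\lambda_{i+1}}=-1$, so $\lambda_i$ and $\lambda_{i+1}$ have the same parity. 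Combined with $s(\lambda)=0$, i.e. $\lambda_i-\lambda_{i+1}\in\{0,1\}$, this forces $\lambda_i=\lambda_{i+1}$. Hence under the assumption $s(\lambda)=0$ we have the tautology $\Delta(\lambda)=\{i\in\Delta(\lambda):\lambda_i=\lambda_{i+1}\}$, and the two conditions coincide.

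Putting these two steps together yields the chain
\begin{equation*}
[\h_e\h_e]=\h_e \;\Longleftrightarrow\; s(\lambda)=0\text{ and }|\Delta(\lambda)|=0 \;\Longleftrightarrow\; \lambda\text{ satisfies Theorem~\ref{rigids}} \;\Longleftrightarrow\; e\text{ is rigid}.
\end{equation*}
There is no real obstacle here; the only mildly delicate point is the parity observation identifying $\Delta(\lambda)$ with its subset $\{i\in\Delta(\lambda):\lambda_i=\lambda_{i+1}\}$ under the first rigidity condition, which is precisely where the choice of classical type ($\epsilon=\pm1$) enters.
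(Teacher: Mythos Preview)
Your proof is correct and follows exactly the approach the paper indicates: reduce to $c(\lambda)=0$ via Corollary~\ref{expression}, then match the vanishing of $s(\lambda)$ and $|\Delta(\lambda)|$ with the two conditions of Theorem~\ref{rigids}. The paper's own proof is the two-line sketch ``apply Corollary~\ref{expression} and Theorem~\ref{rigids}'', and you have simply made explicit the one nontrivial detail left implicit there, namely the parity argument (via Lemma~\ref{nilpotents}) showing that under $s(\lambda)=0$ every $i\in\Delta(\lambda)$ automatically has $\lambda_i=\lambda_{i+1}$.
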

\begin{proof}
Note that $[\h_{e} \h_{e}] = \h_{e}$ if and only if $\dim\, \h_{e}^\text{ab} = 0$. Apply Corollary~\ref{expression} and Theorem~\ref{rigids}.
\end{proof}

\p Given the above classification of rigid orbits we have a well defined notion of a rigid partition. Following \cite{CM} we denote by $\mathcal{P}_\epsilon(N)^\ast$ the subset of rigid partitions. Throughout this section $\lambda = (\lambda_1, \lambda_2, ..., \lambda_n)$ shall remain to denote an element of $\mathcal{P}_\epsilon(N)$ ordered in the usual manner $\lambda_1 \geq \cdots \geq \lambda_n$. Moreau describes an algorithm \cite{Mor} used in the computation of dimensions of sheets for classical Lie algebras. Her algorithm takes $\lambda \in \mathcal{P}_\epsilon(N)$ and returns an element of $\mathcal{P}_\epsilon(M)$ for some $M \leq N$, then iterates. It finally terminates when the output is a rigid partition. Our algorithm extends her method.

\p We say that Case 1 or 2 occur for $\lambda$ at index $i$ as follows:\\
\textbf{Case 1} $$\lambda^{\ii}_{i} \geq \lambda^{\ii}_{i}+2$$
\textbf{Case 2} $$i \in \Delta(\lambda^{\ii}) \text{ and } \lambda_{i}^\ii = \lambda^\ii_{i+1}.$$
In the hope of avoiding any confusion we shall use `Case' when referring to Case 1 or Case 2, and we shall use `case' to refer to a particular situation. We say that an index $i \in \{1,...,n\}$ is \emph{admissible for $\lambda$} if Case 1 or 2 occurs for $\lambda$ at $i$. If Case 1 occurs then we define  
$$\lambda^{(i)} := (\lambda_1 - 2, \lambda_2 - 2, ..., \lambda_{i}-2, \lambda_{i+1},..., \lambda_n)$$
whilst if Case 2 occurs at $i$ then define
$$\lambda^{(i)} := (\lambda_1-2, \lambda_2-2,...,\lambda_{i-1} -2, \lambda_{i} - 1, \lambda_{i+1} - 1, \lambda_{i+2},...,\lambda_n).$$
In each case the reader may check that we have $\lambda^i \in \mathcal{P}_\epsilon(N - 2i)$.

\p Let $\ii=(i_1,...,i_l)$ be a sequence of integers and $\ii'=(i_1,...,i_{l-1})$. We shall now explain what it means for $\ii$ to be an \emph{admissible sequence for $\lambda$}, and construct a partition $\lambda^\ii\in \mathcal{P}_\epsilon(N - 2\sum_{j=1}^l i_j)$ for all admissible sequences $\ii$. We may state both definitions using an inductive process. By convention the empty sequence $\emptyset$ is admissible for $\lambda$ and $\lambda^\emptyset = \lambda$. If $l = 1$ the we say that $\ii$ is an admissible sequence if $i_1$ is an admissible index. In this situation $\lambda^\ii$ is defined in the previous paragraph. For $l>1$ we say that $\ii$ is as admissible sequence for $\lambda$ provided $\ii'$ is admissible and $i_l$ is an admissible index for $\lambda^{\ii'}$. If $\ii$ is admissible then we define $$\lambda^\ii := (\lambda^{\ii'})^{(i)}.$$ 

\p A \emph{maximal admissible sequence for $\lambda$} is one which is not a proper subsequence of an admissible sequence for $\lambda$. The following shows that the KS algorithm always terminates with a rigid partition as the output.
\begin{lem}\label{maxadmiss}
Let \emph{$\ii$} be an admissible sequence for $\lambda$. Then $\bf i$ is maximal admissible if, and only if, $\lambda^\ii$ is a rigid partition.
\end{lem}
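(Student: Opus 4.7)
The plan is to observe that the lemma is really just a direct unwinding of definitions, comparing the negation of ``admissible index exists'' with the rigidity criterion supplied by Theorem~\ref{rigids}.

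First I would record that, by construction, $\ii$ is maximal admissible for $\lambda$ if and only if no integer $i$ is admissible for the partition $\mu := \lambda^\ii$; indeed, appending any admissible index of $\mu$ would produce a strictly longer admissible sequence for $\lambda$, and conversely any extension of $\ii$ is of this form. So the problem reduces to showing that $\mu$ admits no admissible index if and only if $\mu$ is rigid.

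Next I would compare the two Cases in the definition of admissibility against the two bullet points in Theorem~\ref{rigids}. Case 1 at $i$ says $\mu_i \ge \mu_{i+1}+2$; since $\mu$ is a weakly decreasing partition, the negation of Case 1 at every index $i$ is exactly $\mu_i - \mu_{i+1} \in \{0,1\}$ for all $i$, which is the first rigidity condition. Similarly, the negation of Case 2 at every index $i$ asserts that there is no $i \in \Delta(\mu)$ with $\mu_i = \mu_{i+1}$, which is the second rigidity condition. So the absence of admissible indices is equivalent to $\mu$ being rigid.

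Combining the two observations gives the equivalence of the lemma. There is no real obstacle here: the main thing to be careful about is making sure both Cases are simultaneously negated (so that the conjunction of the two rigidity conditions appears), and that one respects the convention $\mu_0 = \mu_{n+1} = 0$ introduced in the definition of $\Delta(\lambda)$, but neither point affects the logical structure. Thus the proof can be completed in a few lines.
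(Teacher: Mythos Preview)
Your proof is correct and follows exactly the same approach as the paper, which simply says the lemma follows from the definition of maximal admissible sequences together with Kempken's classification of rigid orbits (Theorem~\ref{rigids}). You have just made explicit the comparison between the negations of Case~1 and Case~2 and the two rigidity conditions, which is precisely what the paper leaves implicit.
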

\begin{proof}
This follows from the definition of maximal admissible sequences, given Kempken's classification of rigid nilpotent orbits in terms of partitions.
\end{proof}
\begin{rem}\label{remark1}
The algorithm is transitive in the following sense: if $\emph{\ii}$ is an admissible sequence for $\lambda$ and $\emph{\jj}$ is an admissible sequence for $\lambda^{\emph{\ii}}$ then $(\emph{\ii, \jj})$ is an admissible sequence for $\lambda$, where $(\emph{\ii, \jj})$ denotes the concatenation of the two sequences $\emph{\ii}$ and $\emph{\jj}$. Furthermore $\lambda^{(\emph{\ii, \jj})} = (\lambda^\emph{\ii})^\emph{\jj}$.
\end{rem}

\section{Non-singular partitions and preliminaries of the algorithm}\label{KSdetails}
\setcounter{parno}{0}

\p Before placing the algorithm into the geometric context for which it was intended we shall discuss it purely combinatorially. This section will contain  one important definition and several useful lemmas.

\p Let $\lambda = (\lambda_1,...,\lambda_n) \in \PP(N)$. In Section~\ref{definesingular} of the introduction we introduced the 2-steps for $\lambda$, the bad 2-steps and explained what it means for $\lambda$ to be singular. Let us recap these definitions. The 2-steps for $\lambda$ are simply the pairs $(i, i+1)$ with $i \in \Delta(\lambda)$. A 2-step $(i, i+1)$ is said to have \emph{a bad boundary} if either of the following hold:
\begin{itemize}
\item{$\lambda_{i-1} - \lambda_i \in 2\N$;}
\item{$\lambda_{i+1} - \lambda_{i+2} \in 2\N$.}
\end{itemize}
A 2-step is then called \emph{bad} if it has a bad boundary. If $\lambda$ has a bad 2-step it is called \emph{singular} and otherwise it is called \emph{non-singular}. In the next section we shall interpret these singular and non-singular partitions in geometric terms. In particular we shall show that singular partitions correspond precisely to the nilpotent singular points on the varieties $\h^{(m)}$, hence their name.
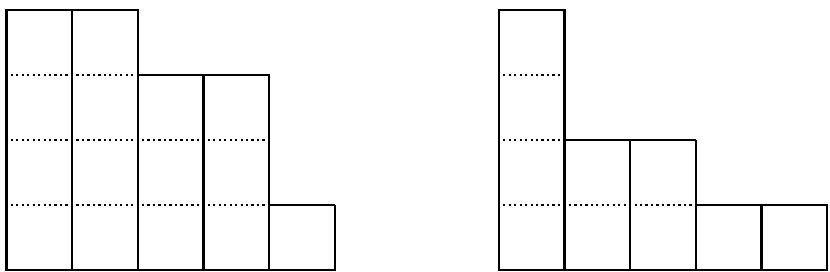
\begin{figure}[htb]
\setlength{\unitlength}{0.017in}
\begin{center}
\begin{picture}(250,80)(0,0)

\put(0,0){\line(0,1){80}}
\put(0,80){\line(1,0){40}}
\put(40,80){\line(0,-1){20}}
\put(40,60){\line(1,0){40}}
\put(80,60){\line(0,-1){40}}
\put(80,20){\line(1,0){20}}
\put(100,20){\line(0,-1){20}}
\put(100,0){\line(-1,0){100}}

\qbezier[24](0,60),(20,60),(40,60)
\qbezier[48](0,40),(40,40),(80,40)
\qbezier[48](0,20),(40,20),(80,20)

\put(20,0){\line(0,1){80}}
\put(40,0){\line(0,1){60}}
\put(60,0){\line(0,1){60}}
\put(80,0){\line(0,1){20}}

\put(150,0){\line(0,1){80}}
\put(150,80){\line(1,0){20}}
\put(170,80){\line(0,-1){40}}
\put(170,40){\line(1,0){40}}
\put(210,40){\line(0,-1){20}}
\put(210,20){\line(1,0){40}}
\put(250,20){\line(0,-1){20}}
\put(250,0){\line(-1,0){100}}

\qbezier[12](150,60),(160,60),(170,60)
\qbezier[12](150,40),(160,40),(170,40)
\qbezier[36](150,20),(180,20),(210,20)

\put(170,0){\line(0,1){40}}
\put(190,0){\line(0,1){40}}
\put(210,0){\line(0,1){20}}
\put(230,0){\line(0,1){20}}
\end{picture}
\end{center}
\caption{The Young diagrams of two singular partitions in $\mathcal{P}_1(15)$ and $\mathcal{P}_{-1}(10)$. The bad 2-steps are  $(3,4)$ and $(2,3)$, respectively.}\label{pikcha_A}
\end{figure}

\p We now collect some elementary lemmas about the behaviour of the algorithm. For the remnant of the subsection we assume that $\lambda \in \mathcal{P}_\epsilon(N)$ has the standard ordering $\lambda_1\geq \cdots \geq \lambda_n$. These first two lemmas tell us how $\Delta(\lambda)$ changes as we iterate the algorithm.
\begin{lemA}\label{deltain2}
Suppose $\emph{\ii} = (i)$ is a sequence of length $1$. If Case 2 occurs for $\lambda$ at index $i$ then $\Delta(\lambda^{\emph{\ii}}) = \Delta(\lambda) \setminus \{i\}$. Furthermore, if $(i,i+1)$ is a good $2$-step of $\lambda$ then $s(\lambda^\emph{\ii})=s(\lambda).$
\end{lemA}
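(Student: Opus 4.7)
The plan is to read off the effect of Case~2 from the explicit formula
\[\lambda^\ii=(\lambda_1-2,\ldots,\lambda_{i-1}-2,\lambda_i-1,\lambda_{i+1}-1,\lambda_{i+2},\ldots,\lambda_n)\]
and compare, index by index, the defining conditions for $\Delta(\lambda)$, $\Delta(\lambda^\ii)$, and the summands of $s$. The single conceptual ingredient I would use is Lemma~\ref{nilpotents}(3): the involution $k\mapsto k'$ depends only on $\epsilon(-1)^{\lambda_k}$, so it is preserved at every index where $\lambda_k$ changes by an even amount and flipped at the two positions $i,i+1$ where it changes by $1$. In particular, in $\lambda^\ii$ the indices $i$ and $i+1$ are no longer self-paired and, by the convention $k'\in\{k-1,k,k+1\}$, they become paired with each other, while the involution on $\{1,\ldots,i-1\}\cup\{i+2,\ldots,n\}$ is unaltered.

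For the first claim, this parity remark immediately forces $i-1,\,i,\,i+1\notin\Delta(\lambda^\ii)$; symmetrically the Case~2 equality $\lambda_i=\lambda_{i+1}$ prevents $i\pm 1$ from lying in $\Delta(\lambda)$ in the first place (each would require $\lambda_i\neq\lambda_{i+1}$). For $j\notin\{i-1,i,i+1\}$ the parity conditions $j=j'$, $j+1=(j+1)'$ agree verbatim in $\lambda$ and $\lambda^\ii$, and the three consecutive-difference conditions involve differences unaltered by Case~2 except when $j\in\{i-2,i+2\}$, in which case exactly one of them is shifted by $-1$. The forward implication $j\in\Delta(\lambda)\Rightarrow j\in\Delta(\lambda^\ii)$ works because the parity conditions (arising from $i,j\in\Delta$) force the affected difference to be even and nonzero, so it remains $\neq 1$ after the shift. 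The reverse implication is immediate: the potentially problematic condition in $\lambda$, namely $\lambda_{i-1}\neq\lambda_i$ when $j=i-2$ (or $\lambda_{i+1}\neq\lambda_{i+2}$ when $j=i+2$), is already part of the hypothesis $i\in\Delta(\lambda)$.

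For the second claim, I would compute $s(\lambda^\ii)-s(\lambda)$ termwise. Only the summands at $k=i-1,\,i,\,i+1$ can possibly change. The $k=i$ summand is zero both before and after, since $\lambda_i=\lambda_{i+1}$. Each of the remaining two summands has its argument shifted by $-1$. The good-2-step hypothesis says neither $\lambda_{i-1}-\lambda_i$ nor $\lambda_{i+1}-\lambda_{i+2}$ belongs to $2\N$; since both are strictly positive by virtue of $i\in\Delta(\lambda)$, both are in fact odd, and $\lfloor m/2\rfloor=\lfloor(m-1)/2\rfloor$ for every odd $m>0$. Hence each shift contributes $0$ and $s(\lambda^\ii)=s(\lambda)$. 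The boundary cases $i=1$ (where $k=0$ is absent from the sum) and $i+1=n$ (where $\lambda_{i+2}=0$) require no separate argument.

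The main obstacle is the case analysis for $j=i\pm 2$ in part one: the correspondence there hinges on juggling the parity conditions in $\lambda$, the partition structure of $\lambda^\ii$, and the hypothesis $i\in\Delta(\lambda)$ simultaneously. Once this bookkeeping is untangled the rest of the proof is a direct computation.
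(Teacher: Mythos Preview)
Your proposal is correct and follows essentially the same index-by-index case analysis as the paper's proof. If anything, your argument is slightly more complete: you explicitly verify both inclusions $\Delta(\lambda^\ii)\subseteq\Delta(\lambda)\setminus\{i\}$ and $\Delta(\lambda)\setminus\{i\}\subseteq\Delta(\lambda^\ii)$ (using the parity flip at $i,i+1$ to dispose of $i-1,i,i+1$ in $\Delta(\lambda^\ii)$ directly), whereas the paper's written argument focuses on one direction and leaves the reverse implicit.
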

\begin{proof}
We shall suppose that there is an index $$j \in \Delta(\lambda)\setminus \big( \Delta(\lambda^{\ii})\cup\{i\}\big)$$ and derive a contradiction. Observe that if $j < i-2$ (resp. $j> i+2$) then for $k \in\{ j-1, j, j+1, j+2\}$ we have that $\lambda_{k}^{\ii}=\lambda_{k}-2$ (resp.
$\lambda_{k}^{\ii}=\lambda_{k}$). So $j \in \Delta(\lambda)$ if and only if $j \in \Delta(\lambda^{\ii})$. It remains to show that if $j = i\pm1$ or $j = i \pm 2$ and $j \in \Delta(\lambda)$ then $j \in \Delta(\lambda^{\ii})$. If $j = i\pm1$ and $j \in \Delta(\lambda)$ then $\lambda_i \neq \lambda_{i+1}$ contradicting the fact that Case 2 occurs for $\lambda$ at index $i$.

Suppose $j = i-2$. Then $j, j+2 \in \Delta(\lambda)$ and hence $\lambda_{j+1} \neq \lambda_{j+2}$ and $(j+1)'=j+1$, $(j+2)'=j+2$.  As a consequence $\lambda_{j+1} - \lambda_{j+2}$ is even implying that $\lambda_{j+1} - \lambda_{j+2}\geq 2$ and  $\lambda_{j+1}^{\ii} \neq \lambda_{j+2}^{\ii}$. Since for $k\in\{j-1, j , j+1\}$ the equality $\lambda_{k}^{\ii}=\lambda_k-2$ holds,
we conclude that $j \in \Delta(\lambda^{\ii})$. A similar argument shows that if $j = i+2$ then $j \in \Delta(\lambda)$ implies $j \in \Delta(\lambda^{\ii})$. We conclude that $\Delta(\lambda^{\rm{\ii}}) = \Delta(\lambda) \setminus \{i\}$.

Now suppose $(i,i+1)$ is a good 2-step of $\lambda$.  Since $\lambda_{i+1}-\lambda_{i+2}$ and $\lambda_{i-1}-\lambda_i$ if $i>1$ are odd we have that
$$\big\lfloor(\lambda_{i+1}^\ii-\lambda_{i+2}^\ii)/2\big\rfloor
=\lfloor((\lambda_{i+1}-1)-\lambda_{i+2})/2\big\rfloor=
\lfloor(\lambda_{i+1}-\lambda_{i+2})/2\big\rfloor$$
and
$$\big\lfloor(\lambda_{i-1}^\ii-
\lambda_{i}^\ii)/2\big\rfloor=
\big\lfloor((\lambda_{i-1}-2)-(\lambda_i-1))/2\big
\rfloor=
\big\lfloor(\lambda_{i-1}-\lambda_{i})/2\big\rfloor$$
if $i>1$.
As $\lambda_j^\ii=\lambda_j$ for $j\not\in\{i,i+1\}$ it
follows that  $s(\lambda^\ii)=s(\lambda)$ as claimed.
\end{proof}

\begin{lemB}\label{deltain}
If $\emph{\ii}$ is an admissible sequence for $\lambda$ then $\Delta(\lambda^{\emph{\ii}}) \subseteq \Delta(\lambda)$.
\end{lemB}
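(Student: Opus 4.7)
The plan is to proceed by induction on the length $l$ of the admissible sequence $\ii = (i_1,\dots,i_l)$. The base case $l=0$ is immediate since $\lambda^\emptyset = \lambda$. For the inductive step, write $\ii = (\ii', i_l)$ with $|\ii'|=l-1$, so that $\lambda^\ii = (\lambda^{\ii'})^{(i_l)}$. By the inductive hypothesis applied to $\ii'$ we have $\Delta(\lambda^{\ii'})\subseteq \Delta(\lambda)$, so it suffices to establish the length-one statement: whenever $i$ is an admissible index for a partition $\mu\in\PP(M)$, we have $\Delta(\mu^{(i)})\subseteq \Delta(\mu)$. The whole induction then collapses onto this single step.

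For the length-one case I would split according to whether Case~1 or Case~2 occurs at $i$. If Case~2 occurs, the desired inclusion follows at once from Lemma~A, which even gives the sharper equality $\Delta(\mu^{(i)}) = \Delta(\mu)\setminus\{i\}$. The substantive step is therefore Case~1, in which $\mu^{(i)}$ is obtained from $\mu$ by subtracting $2$ from the first $i$ parts; in particular $\mu_k^{(i)}$ has the same parity as $\mu_k$ for every $k$. Consequently, by Lemma~\ref{nilpotents}~(3) the condition $k = k'$ is determined by $\epsilon(-1)^{\mu_k}$ alone and hence is preserved by the transition $\mu \rightsquigarrow \mu^{(i)}$; this handles the pairing constraints for both $j$ and $j+1$ in the definition of $\Delta$.

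It then remains, for $j\in\Delta(\mu^{(i)})$, to verify the inequality chain
\[
\mu_{j-1}\ne \mu_j \ge \mu_{j+1}\ne \mu_{j+2}
\]
in $\mu$. The argument splits naturally into three regimes. If $j+2 \le i$ all four relevant entries are uniformly shifted by $-2$, and if $j\ge i+2$ none of them is shifted; in both regimes the inequalities transfer verbatim from $\mu^{(i)}$ to $\mu$. The core of the proof is therefore the three boundary values $j\in\{i-1,i,i+1\}$, where only some entries are shifted. In each boundary case one writes the four entries $\mu^{(i)}_{j-1},\mu^{(i)}_j,\mu^{(i)}_{j+1},\mu^{(i)}_{j+2}$ in terms of the corresponding entries of $\mu$ and reads off what the inequalities impose. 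The key auxiliary fact is the hypothesis of Case~1, namely $\mu_i\ge \mu_{i+1}+2$, which in particular forces $\mu_i\ne \mu_{i+1}$; this furnishes exactly the comparison at the seam that would otherwise threaten to fail.

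I expect the main obstacle to be the bookkeeping at these three boundary values $j=i-1,i,i+1$, where one has to be careful that translating a strict inequality from $\mu^{(i)}$ back to $\mu$ does not accidentally collapse (for instance if $\mu_i - 2 = \mu_{i+1}$, which could a priori break the condition at $j=i-1$, but which in fact rules out $j=i-1$ from $\Delta(\mu^{(i)})$ to begin with). Once these three small checks are dispatched, the Case~1 analysis is complete and, combined with the induction, yields Lemma~B.
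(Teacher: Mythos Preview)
Your proposal is correct and follows essentially the same approach as the paper: reduce by induction (via Remark~\ref{remark1}) to a length-one step, dispose of Case~2 using Lemma~A, and for Case~1 use parity preservation together with a case analysis on the position of $j$ relative to $i$. Your case split $j\le i-2$, $j\ge i+2$, $j\in\{i-1,i,i+1\}$ is in fact slightly more careful than the paper's (which lumps $j=i-1$ into ``$j<i$''), and your observation that the Case~1 hypothesis $\mu_i\ge\mu_{i+1}+2$ is exactly what handles the seam is the right one.
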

\begin{proof}
In view of Lemma \ref{deltain2}(A) and Remark \ref{remark1} it will suffice to prove the current lemma when $\ii= (i)$ and $i$ is an index at which Case 1 occurs for $\lambda$. Suppose $j \in \Delta(\lambda^\ii)$. Then since Case 1 preserves the parity of the entries of $\lambda$ (that is to say $\lambda_k^\ii \equiv \lambda_k \mod 2$ for $1\leq k \leq n$), we deduce that $j'=j$ and $(j+1)'=j+1$.
If $j<i$ or $j>i+1$ then $\lambda_{j-1} - \lambda_j = \lambda_{j-1}^\ii - \lambda_j^\ii$ and  $\lambda_{j+1} - \lambda_{j+2}=\lambda_{j+1}^\ii - \lambda_{j+2}^\ii$ showing that $j \in \Delta(\lambda)$ in these cases. If $j=i+1$ then  $\lambda_{j-1} - \lambda_j = \lambda_{j-1}^\ii - \lambda_j^\ii+2$ and $\lambda_{j+1} - \lambda_{j+2} = \lambda_{j+1}^\ii - \lambda_{j+2}^\ii$. Hence $j\in\Delta(\lambda)$.
Finally, if $j=i$ then  $\lambda_{j-1} - \lambda_j = \lambda_{j-1}^\ii - \lambda_j^\ii+2$ and $\lambda_{j+1} - \lambda_{j+2} = \lambda_{j+1}^\ii - \lambda_{j+2}^\ii$. Thus $j\in\Delta(\lambda)$ in all cases and our proof is complete.
\end{proof}

\p Now we show that good 2-steps are preserved by the algorithm.
\begin{lem}\label{goodinherit}
If $(i, i+1)$ is a good 2-step for $\lambda$, $\emph{\ii}$ is an admissible sequence and $i \in \Delta(\lambda^\emph{\ii})$ then $(i,i+1)$ is a good 2-step for $\lambda^\emph{\ii}$.
\end{lem}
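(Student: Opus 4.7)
I plan to proceed by induction on the length of $\ii$. If $\ii=(\ii',j)$, then by Remark 3.4.1 we have $\lambda^\ii=(\lambda^{\ii'})^{(j)}$ and $\ii'$ is admissible for $\lambda$; Lemma B applied to the final step gives $i\in\Delta(\lambda^\ii)\subseteq\Delta(\lambda^{\ii'})$. The inductive hypothesis then yields that $(i,i+1)$ is a good 2-step for $\mu:=\lambda^{\ii'}$, so the task reduces to the following one-step statement: if $(i,i+1)$ is a good 2-step for a partition $\mu$ and $j$ is an admissible index for $\mu$ with $i\in\Delta(\mu^{(j)})$, then $(i,i+1)$ is a good 2-step for $\mu^{(j)}$. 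Recall that $i\in\Delta(\mu^{(j)})$ already makes the two differences $\mu^{(j)}_{i-1}-\mu^{(j)}_i$ and $\mu^{(j)}_{i+1}-\mu^{(j)}_{i+2}$ nonzero (with the first vacuous if $i=1$), so ``good'' reduces to showing these differences are odd.

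I would treat the one-step statement by splitting on Case 1 versus Case 2 at $j$, and on the position of $j$ relative to $i$. For Case 1 at $j$, every entry of $\mu$ either stays the same or decreases by $2$, so all parities are preserved; a direct inspection in the sub-cases $j\leq i-2$, $j\in\{i-1,i,i+1\}$ and $j\geq i+2$ shows that $\mu^{(j)}_{i-1}-\mu^{(j)}_i$ and $\mu^{(j)}_{i+1}-\mu^{(j)}_{i+2}$ differ from their $\mu$-counterparts by $0$ or $\pm 2$, hence remain odd. For Case 2 at $j$ the crucial step is to establish $|j-i|\geq 3$: the value $j=i$ is forbidden by Lemma A, which gives $\Delta(\mu^{(i)})=\Delta(\mu)\setminus\{i\}$; the values $j=i\pm 1$ are forbidden because Case 2 requires $\mu_j=\mu_{j+1}$, yielding $\mu_{i-1}=\mu_i$ or $\mu_{i+1}=\mu_{i+2}$, contrary to $i\in\Delta(\mu)$; and $j=i\pm 2$ is ruled out by a parity argument. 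Indeed, since $(i,i+1)$ is good, $\mu_{i\pm 1}$ has parity opposite to $\mu_i$, and combined with Lemma 1.3.6 this forces $(i-1)'\neq i-1$ and $(i+2)'\neq i+2$; but $j=i-2\in\Delta(\mu)$ would demand $(i-1)'=i-1$, and $j=i+2\in\Delta(\mu)$ would demand $(i+2)'=i+2$. Once $|j-i|\geq 3$ is known, the four entries $\mu^{(j)}_{i-1},\mu^{(j)}_i,\mu^{(j)}_{i+1},\mu^{(j)}_{i+2}$ all shift by a single common constant (namely $0$ if $j\leq i-3$ and $-2$ if $j\geq i+3$), so both differences at the 2-step $(i,i+1)$ are preserved verbatim.

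The main obstacle will be the Case 2 sub-case $|j-i|=2$: one must carefully combine the parity constraint placed on $\mu_{i\pm 1}$ by the good-2-step condition with the parity constraint placed on $\mu_{j+1}$ (respectively $\mu_j$) by the membership $j\in\Delta(\mu)$, and observe that these parities are incompatible. After this, the rest of the analysis is a routine case-by-case verification tracking how Case 1 and Case 2 alter the differences $\mu_{k-1}-\mu_k$ near position $i$.
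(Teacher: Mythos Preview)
Your proposal is correct and follows essentially the same route as the paper: reduce to a length-one admissible sequence, dispose of Case~1 by parity preservation, and in Case~2 show that $j$ must lie at distance at least~$3$ from $i$, after which the four relevant entries shift by a common constant. The only minor difference is that for $j=i\pm 1$ you invoke the equality $\mu_j=\mu_{j+1}$ built into Case~2 (contradicting $i\in\Delta(\mu)$ directly), whereas the paper treats $j\in\{i-2,i-1\}$ and $j\in\{i+1,i+2\}$ uniformly by the parity argument coming from $j\in\Delta(\lambda)$; both are valid and equally short.
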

\begin{proof}
It suffices to prove the lemma when $\ii = (i_1)$ is an admissible of length 1. Suppose first that Case 1 occurs at index $i_1$. Then $\lambda_j^\ii - \lambda_{j+1}^\ii \equiv \lambda_j - \lambda_{j+1} \mod 2$ for all $j$. Since $(i,i+1)$ is good for $\lambda$ it follows that $\lambda_{i-1}^\ii - \lambda_{i}^\ii$ is odd (or $i=1$) and $\lambda_{i+1}^\ii - \lambda_{i+2}^\ii$ is odd, so that $(i,i+1)$ is a good 2-step for $\lambda^\ii$. Now suppose Case 2 occurs for $\lambda$ at index $i_1$. We may assume that $i_1 \neq i$. If $i_1 = i-1$ or $i_1 = i-2$ then $i_1 \in \Delta(\lambda)$ implies $\epsilon(-1)^{\lambda_{i-1}} = -1$ and $\lambda_{i-1} - \lambda_i$ is even, contrary to the assumption that the 2-step $(i,i+1)$ is good for $\lambda$. Similarly, if $i_1 = i+1$ or $i_1 = i+2$ then $\lambda_{i+1} - \lambda_{i+2}$ is even, contradicting the assumption that $(i,i+1)$ is good. It follows that $i_1<i-2$ or $i_1 > i+2$, from whence it immediately follows that $(i,i+1)$ is a good 2-step for $\lambda^\ii$.
\end{proof}

\begin{cor}\label{singinherit}
If $\lambda$ is non-singular then $\lambda^{\emph{\ii}}$ is non-singular for any admissible sequence $\emph{\ii}$.
\end{cor}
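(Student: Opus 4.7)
The plan is to observe that this corollary is essentially a direct corollary of the two lemmas immediately preceding it (Lemma B and Lemma \ref{goodinherit}), combined in the right order. No new combinatorial input is required.

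More precisely, I would argue as follows. Let $\ii$ be any admissible sequence for $\lambda$, and suppose for contradiction that $\lambda^\ii$ has a bad $2$-step, i.e.\ that there exists $i \in \Delta(\lambda^\ii)$ such that $(i,i+1)$ is bad for $\lambda^\ii$. By Lemma B we have $\Delta(\lambda^\ii) \subseteq \Delta(\lambda)$, so in particular $i \in \Delta(\lambda)$, meaning $(i,i+1)$ is also a $2$-step for $\lambda$. Since $\lambda$ is non-singular by hypothesis, $(i,i+1)$ must be a good $2$-step for $\lambda$. But then Lemma \ref{goodinherit} applies (its hypotheses are exactly that $(i,i+1)$ is a good $2$-step for $\lambda$, that $\ii$ is admissible, and that $i \in \Delta(\lambda^\ii)$) and yields that $(i,i+1)$ is a good $2$-step for $\lambda^\ii$, contradicting our assumption.

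There is no real obstacle here, since all the work has already been done: Lemma B controls which indices can still be in $\Delta$ after applying the KS algorithm, and Lemma \ref{goodinherit} controls the parity conditions on the boundary differences $\lambda_{i-1}-\lambda_i$ and $\lambda_{i+1}-\lambda_{i+2}$ under iteration. The corollary is simply the combination of these two facts, phrased in contrapositive form. I would therefore write it as a one-paragraph proof with no auxiliary calculations.
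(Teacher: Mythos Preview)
Your proposal is correct and matches the paper's proof essentially verbatim: the paper also argues that any $i \in \Delta(\lambda^{\ii})$ lies in $\Delta(\lambda)$ by Lemma~B, that $(i,i+1)$ is then good for $\lambda$ by non-singularity, and that Lemma~\ref{goodinherit} forces it to be good for $\lambda^{\ii}$. The only cosmetic difference is that you frame it as a proof by contradiction, whereas the paper states it directly.
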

\begin{proof}
If $i \in \Delta(\lambda^\ii)$ then $i \in \Delta(\lambda)$ by Lemma~\ref{deltain}(B). Since $\lambda$ is non-singular, $(i,i+1)$ is a good 2-step for $\lambda$. By Lemma~\ref{goodinherit}, $(i,i+1)$ is good for $\lambda^\ii$.
\end{proof}

\section{The maximal length of admissible sequences} \label{KSmaxlength}
\setcounter{parno}{0}

\p In this section we shall give a combinatorial formula for the maximal length of admissible sequences for $\lambda$. The formula shall be of central importance to our results on sheets. First we shall need  some further terminology related to partitions $\lambda=(\lambda_1,\ldots,\lambda_n)\in \mathcal{P}_\epsilon(N)$.

\p \label{2clusters}A sequence $1\leq i_1 < i_2 < \cdots < i_k < n$ with $k\ge 2$ is called a \emph{2-cluster} of $\lambda$ whenever $i_j \in \Delta(\lambda)$ and $i_{j+1} = i_j + 2$ for all $j$.
Analogous to the terminology for 2-steps we say that a 2-cluster $i_1,...,i_k$ \emph{has a bad boundary} if either of the following conditions holds:
\begin{itemize}
\item{$\lambda_{i_1-1} - \lambda_{i_1} \in 2\N$;}
\smallskip
\item{$\lambda_{i_k+1} - \lambda_{i_k+2} \in 2\N$}
\end{itemize}
If $i_1=1$ the the first condition may be omitted, since $\lambda_{i_1-1} - \lambda_{i_1}$ is negative in this case.
A \emph{bad 2-cluster} is one which has a bad boundary, whilst a \emph{good 2-cluster} is one without a bad boundary.
\begin{lem}
A good 2-cluster is maximal in the sense that it is not a proper subsequence of any 2-cluster.
\end{lem}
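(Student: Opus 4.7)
The plan is to argue by contradiction: suppose a good 2-cluster $i_1<i_2<\cdots<i_k$ is a proper subsequence of some 2-cluster. Since consecutive terms of a 2-cluster differ by exactly $2$, any enlargement must adjoin either $i_0:=i_1-2$ on the left or $i_{k+1}:=i_k+2$ on the right (or both), and it suffices to rule out each of these independently.

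First I would handle the right-extension case. The assumption $i_{k+1}=i_k+2\in\Delta(\lambda)$ forces $i_{k+1}=(i_{k+1})'$, so by Lemma~\ref{nilpotents} the entry $\lambda_{i_k+2}$ has parity determined by $\epsilon$. Since $i_k\in\Delta(\lambda)$ we also have $(i_k+1)=(i_k+1)'$, so $\lambda_{i_k+1}$ has the same parity as $\lambda_{i_k+2}$. Hence $\lambda_{i_k+1}-\lambda_{i_k+2}$ is even. The membership $i_{k+1}\in\Delta(\lambda)$ additionally yields $\lambda_{i_{k+1}-1}\neq \lambda_{i_{k+1}}$, i.e.\ $\lambda_{i_k+1}\neq\lambda_{i_k+2}$, and the standard ordering $\lambda_{i_k+1}\ge \lambda_{i_k+2}$ then forces $\lambda_{i_k+1}-\lambda_{i_k+2}\in 2\mathbb{N}$. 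This contradicts the hypothesis that the 2-cluster $i_1,\ldots,i_k$ has a good right boundary.

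The left-extension case is entirely symmetric. Assuming $i_1>1$ and $i_0=i_1-2\in\Delta(\lambda)$, the relation $(i_0+1)=(i_1-1)=(i_1-1)'$ combined with the already known $i_1=i_1'$ (from $i_1\in\Delta(\lambda)$) shows via Lemma~\ref{nilpotents} that $\lambda_{i_1-1}$ and $\lambda_{i_1}$ share the same parity, so $\lambda_{i_1-1}-\lambda_{i_1}$ is even. The condition $\lambda_{i_0+1}\neq \lambda_{i_0+2}$, part of $i_0\in\Delta(\lambda)$, translates to $\lambda_{i_1-1}\neq \lambda_{i_1}$, so by the standard ordering $\lambda_{i_1-1}-\lambda_{i_1}\in 2\mathbb{N}$, contradicting the goodness of the left boundary.

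The argument is short and there is no real obstacle; the only point requiring care is the bookkeeping of the parities coming from Lemma~\ref{nilpotents}, together with the observation that the interior conditions defining a 2-cluster (the equalities $i_{j+1}=i_j+2$ and the memberships $i_j\in\Delta(\lambda)$ for $1\le j\le k$) are already part of the hypothesis, so one only has to examine the boundary indices $i_0$ and $i_{k+1}$.
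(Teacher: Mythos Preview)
Your proof is correct and is essentially the contrapositive of the paper's argument: the paper shows directly that goodness of the boundaries forces $\epsilon(-1)^{\lambda_{i_1-1}}=\epsilon(-1)^{\lambda_{i_k+2}}=1$, hence $i_1-2,\,i_k+2\notin\Delta(\lambda)$, while you assume an extension lies in $\Delta(\lambda)$ and deduce the boundary is bad. The ingredients (parity via Lemma~\ref{nilpotents} and the defining inequalities of $\Delta(\lambda)$) are identical.
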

\begin{proof}
If $i_1,...,i_k$ is a good 2-cluster then $\lambda_{i_1-1} - \lambda_{i_1}, \lambda_{i_k+1} - \lambda_{i_k+2} \notin 2\N$. The fact that $i_1, i_k \in \Delta(\lambda)$ means that $\epsilon(-1)^{\lambda_{i_1}} = \epsilon(-1)^{\lambda_{i_k+1}} = -1$. Combining these few observations we get $\epsilon(-1)^{\lambda_{i_1-1}} = \epsilon(-1)^{\lambda_{i_k+2}} = 1$ and so $i_1-2 \notin \Delta(\lambda)$ and $i_k + 2 \notin \Delta(\lambda)$.
\end{proof}

\p We introduce the notations:
\begin{eqnarray*}
\Delta_{\rm bad}(\lambda) &:=& \{\text{the bad 2-steps of $\lambda$}\};\\
\Sigma(\lambda) &:=& \{\text{the good 2-clusters of $\lambda$}\};
\end{eqnarray*}
and write
$$z(\lambda) = s(\lambda) + |\Delta(\lambda)| - \big(|\Delta_{\rm bad}(\lambda)| - |\Sigma(\lambda)|\big).$$
It is immediate from the definitions that $|\Delta_{\rm bad}(\lambda)| \ge |\Sigma(\lambda)|$ and $|\Delta_{\rm bad}(\lambda)| = |\Sigma(\lambda)|$ if and only if $\Delta_{\rm bad}(\lambda)=\emptyset$.
\begin{lem}\label{stairslemma}
$|\Sigma(\lambda)| \geq |\Sigma(\lambda^{\emph\ii})|$ for length 1 admissible sequences $\emph\ii = (i)$, unless Case 2 occurs at $i$ and $$i-4, i-2, i, i+2, i+4$$ is a subsequence of a good 2-cluster, in which case $|\Sigma(\lambda)| = |\Sigma(\lambda^\ii)| - 1$.
\end{lem}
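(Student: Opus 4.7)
The plan is to trace how $|\Sigma|$ changes by computing the consecutive differences $d_j := \lambda_j - \lambda_{j+1}$ of $\lambda^\ii$, comparing them with those of $\lambda$, and analysing the resulting effect on good 2-clusters. In both Cases the alteration is entirely local, so only 2-clusters near $i$ can be affected.

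I would first dispose of Case~1 at $i$. Here $\lambda_i \geq \lambda_{i+1}+2$, all parities of the $\lambda_j$ are preserved, the involution $j \mapsto j'$ is unchanged, and $d_j^\ii = d_j$ for $j \ne i$ while $d_i^\ii = d_i - 2$. A direct check using the definitions will show that $\Delta(\lambda^\ii)$ equals $\Delta(\lambda)$ or $\Delta(\lambda)\setminus\{i-1,i+1\}$ (the latter exactly when $d_i = 2$), and that the parity of every boundary gap of every 2-cluster of $\lambda$ is preserved. An evident injection $\Sigma(\lambda^\ii) \hookrightarrow \Sigma(\lambda)$ would then deliver $|\Sigma(\lambda^\ii)| \leq |\Sigma(\lambda)|$ in this Case.

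For Case~2 at $i$, Lemma~A gives $\Delta(\lambda^\ii) = \Delta(\lambda)\setminus\{i\}$, and a short calculation will yield $d_{i-1}^\ii = d_{i-1} - 1$, $d_{i+1}^\ii = d_{i+1} - 1$, and $d_j^\ii = d_j$ for every other $j$; so exactly two parities of consecutive differences flip. I would first check that any good 2-cluster of $\lambda$ disjoint from the influenced region contributes unchanged to $\Sigma(\lambda^\ii)$, and that no brand-new good 2-cluster of $\lambda^\ii$ can appear outside the local structure around $i$; this reduces the problem to analysing the (at most one) 2-cluster $C = (i_1 < \cdots < i_k)$ of $\lambda$ containing $i = i_j$. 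Deleting $i$ from $\Delta$ splits $C$ into the candidate sub-clusters $L = (i_1, \ldots, i_{j-1})$ and $R = (i_{j+1}, \ldots, i_k)$, each of which contributes to $\Sigma(\lambda^\ii)$ only when its length is $\geq 2$. The two internal gaps of $C$ adjacent to $i_j$ are $d_{i-1}$ and $d_{i+1}$; these are necessarily even, and they become odd in $\lambda^\ii$, so the new inner boundaries of $L$ and $R$ are automatically good, while the outer boundaries of $L$ and $R$ coincide with those of $C$. Hence, when $C$ is good, $L \in \Sigma(\lambda^\ii)$ iff $j \geq 3$ and $R \in \Sigma(\lambda^\ii)$ iff $j \leq k-2$, giving the net change $[j \geq 3] + [j \leq k-2] - 1 \in \{-1, 0, +1\}$, with the value $+1$ realised precisely when $3 \leq j \leq k-2$. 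This is exactly the condition that $i-4,\, i-2,\, i,\, i+2,\, i+4$ is a subsequence of the good 2-cluster $C$, yielding the exceptional conclusion. The parallel analyses in the remaining scenarios --- $C$ bad, or $i$ isolated in $\Delta(\lambda)$ --- will be verified separately, but are easier because the outer-boundary parities needed for goodness do not change.

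The main obstacle I anticipate is ruling out the creation of spurious new good 2-clusters of $\lambda^\ii$ that do not arise as sub-clusters of $C$. This is delicate because the parity flips at $d_{i \pm 1}$ can turn a previously bad boundary into a good one; I would resolve it by exploiting the maximality of $C$ in $\Delta(\lambda)$ together with the inclusion $\Delta(\lambda^\ii) \subseteq \Delta(\lambda)$, which forces every element of $\Delta(\lambda^\ii)$ adjacent to a potential new cluster to have already been part of $C$ in $\lambda$.
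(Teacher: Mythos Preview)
Your outline tracks the paper's proof closely: both split into Case~1 and Case~2 and, in Case~2, analyse how the maximal $2$-cluster $C$ containing $i$ breaks into the sub-clusters $L$ and $R$ after $i$ is removed from $\Delta$. Your formula $[j\ge 3]+[j\le k-2]-1$ for the net change when $C$ is good is correct and recovers exactly the exceptional condition in the statement.

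The genuine gap is your dismissal of the ``$C$ bad'' scenario as easier. It is true that the \emph{outer} boundary gaps of $C$ are unchanged, but the new \emph{inner} boundaries of $L$ and $R$ (at $d_{i-1}$ and $d_{i+1}$) flip parity from even to odd and so automatically become good. Hence if $C$ is bad with exactly one good outer boundary --- say the left --- and $j\ge 3$, then $L$ is a good $2$-cluster of $\lambda^{\ii}$ while $C$ contributed nothing to $\Sigma(\lambda)$; in that situation $|\Sigma|$ increases by~$1$ even though the hypothesis ``$i-4,i-2,i,i+2,i+4$ lies in a good $2$-cluster'' is not met. A concrete instance with $\epsilon=-1$ is $\lambda=(8,8,6,6,4,4,2,1,1)$ and $i=5$: here $\Delta(\lambda)=\{1,3,5\}$ forms a single bad maximal $2$-cluster and $\Sigma(\lambda)=\emptyset$, yet $\Sigma(\lambda^{(5)})=\{(1,3)\}$. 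The paper's proof makes the same slip (it asserts that ``$\Sigma(\lambda)$ is only affected if $(i,i+1)$ is a bad $2$-step in a good $2$-cluster''), and indeed the only uses of this lemma in the proof of Theorem~\ref{zismax} appeal merely to the weaker bound $|\Sigma(\lambda^{\ii})|\le |\Sigma(\lambda)|+1$; your case analysis does establish that weaker bound once the $C$-bad case is worked out honestly rather than waved past.
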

\begin{proof}
We make the notation $\ii =(i)$. In this first paragraph we deal with the possibility that Case 1 occurs for $\lambda$ at index $i$. Let us consider some necessary conditions for $\Sigma(\lambda) \neq \Sigma(\lambda^{\ii})$. We require that $i-1$ or $i+1$ lie in $\Delta(\lambda)$, that the 2-steps $(i-1,i)$ or $(i+1,i+2)$ (or both) constitute a 2-step in a good 2-cluster, and that $\lambda_i - \lambda_{i+1} = 2$. Let us assume these conditions. If precisely one of the two $i-1, i+1$ lies in $\Delta(\lambda)$ (we may assume $i-1 \in \Delta(\lambda)$) then it follows that the good 2-cluster in question has the form $i_1\leq \cdots \leq i_k = i-1$. But $\lambda_{i_k + 1} - \lambda_{i_k + 2} = 2$ then implies that the 2-cluster has a bad boundary; a contradiction. It follows that both $i-1$ and $i+1$ lie in $\Delta(\lambda)$. Then we have a good 2-cluster $i_1 \leq \cdots \leq i-1 = i_l \leq i_{l+1} = i+1 \leq \cdots \leq i_k$. However the sequences $i_1,i_2,...,i_{l-1}$ and $i_{l+2},...,i_{k-1},i_k$ are either 
of length $\leq 1$, or are bad 2-clusters for $\lambda^\ii$, so $|\Sigma(\lambda)| = |\Sigma(\lambda^\ii)| + 1$.

Now suppose Case 2 occurs at index $i$. Similar to the previous case $\Sigma(\lambda)$ is only affected if $(i,i+1)$ is a bad 2-step in a good 2-cluster. If precisely one of $i-2$ and $i+2$ lie in $\Delta(\lambda)$ (we may assume $i-2\in \Delta(\lambda))$ then such a 2-cluster will take the form $i_1,...,i_k = i$. If $k > 2$ then $i_1,...,i_{k-1}$ is a good 2-cluster for $\lambda^\ii$ so that $|\Sigma(\lambda^\ii)| = |\Sigma(\lambda)|$. If $k = 2$ (we know $k\geq 2$) then the 2-cluster is eradicated by the iteration of the algorithm and $|\Sigma(\lambda^\ii)| = |\Sigma(\lambda)| - 1$.

Suppose that both $i-2$ and $i+2$ lie in $\Delta(\lambda)$. Then $\Sigma(\lambda)$ is unaffected unless $i_1,...,i_j = i,...,i_k$ is a good 2-cluster, which we shall assume from henceforth. Note that $j \geq 2$ and $k - j \geq 1$ by assumption. If $j=2$ and $k-j=1$ then the good 2-cluster is no longer present for $\lambda^\ii$ and $|\Sigma(\lambda)| = |\Sigma(\lambda^\ii)| - 1$. If $j> 2$ and $k-j=1$ then $i_1,...,i_{j-1}$ is a good 2-cluster for $\lambda^\ii$ and $|\Sigma(\lambda)| = |\Sigma(\lambda^\ii)|$. The situation when $j=2$ and $k-j>1$ is very similar. In the final case $j > 2$, $k-j>1$ and $i-4, i-2, i, i+2, i+4$ is a subsequence of a good 2-cluster, as in the statement of the lemma. Here both $i-2j, ..., i-2$ and $i,i+2,..., i+2k$ are good 2-clusters for $\lambda^\ii$ so that $|\Sigma(\lambda)| = |\Sigma(\lambda^\ii)| - 1$ as required.
\end{proof}

\p Before continuing we shall need some notation. We define a construction which takes $\lambda\in \PP(N)$ to $\lambda^S\in \mathcal{P}_\epsilon(N-2k)$ for some $k\geq 0$. It is based entirely on application of the algorithm. The partition $\lambda^S$ is call \emph{the shell of $\lambda$} and is constructed as follows: for all $1\leq i \leq n$ we apply Case 1 repeatedly; if $\lambda_{i} - \lambda_{i+1} \in 2\N$ and if $i-1$ or $i+1$ lie in $\Delta(\lambda)$ then apply Case 1 until $\lambda^\ii_{i} - \lambda^\ii_{i+1} = 2$; if we are not in the previous situation then apply Case 1 until $\lambda^\ii_{i} - \lambda^\ii_{i+1} \in \{0,1\}$; finally apply Case 2 at every index $i$ such that $(i,i+1)$ is a good 2-step. In order to keep the notation consistent we may regard $S$ as the admissible sequence of indices (chosen in ascending order) used to construct $\lambda^S$.
\begin{figure}[htb]
\setlength{\unitlength}{0.025in}
\begin{center}
\begin{picture}(80,70)(0,0)

\qbezier[70](0,0),(0,35),(0,70)
\qbezier[20](0,70),(10,70),(20,70)
\qbezier[10](20,70),(20,65),(20,60)
\qbezier[20](60,10),(70,10),(80,10)
\qbezier[10](80,10),(80,5),(80,0)
\qbezier[80](80,0),(40,0),(0,0)

\put(20,10){\line(0,1){50}}
\put(20,60){\line(1,0){10}}
\put(30,60){\line(0,-1){20}}
\put(30,40){\line(1,0){20}}
\put(50,40){\line(0,-1){20}}
\put(50,20){\line(1,0){10}}
\put(60,20){\line(0,-1){10}}
\put(60,10){\line(-1,0){40}}

\qbezier[42](10,0),(10,35),(10,70)
\qbezier[6](20,0),(20,5),(20,10)
\qbezier[24](30,0),(30,20),(30,40)
\qbezier[24](40,0),(40,20),(40,40)
\qbezier[12](50,0),(50,10),(50,20)
\qbezier[6](60,0),(60,5),(60,10)
\qbezier[6](70,0),(70,5),(70,10)

\qbezier[12](0,10),(10,10),(20,10)
\qbezier[30](0,20),(25,20),(50,20)
\qbezier[30](0,30),(25,30),(50,30)
\qbezier[18](0,40),(15,40),(30,40)
\qbezier[18](0,50),(15,50),(30,50)
\qbezier[12](0,60),(10,60),(20,60)

\end{picture}
\end{center}
\caption{The dotted perimeter represents the Young diagram of the partition $\lambda=(7,7,6,4,4,2,1,1) \in \mathcal{P}_{-1}(32)$. The solid perimeter represents the profile of $\lambda$ of type $(3,7)$.}\label{pikcha_B}
\end{figure}
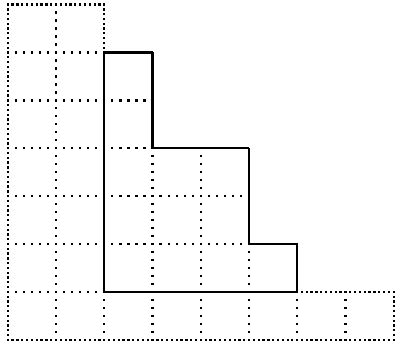

\p Retain the convention $\lambda = (\lambda_1,...,\lambda_n)$ with $\sum\lambda_i = N$. In order to make use of the shell $\lambda^S$ we shall interest ourselves firstly in the set of partitions which equal their own shell $\lambda =   \lambda^S$, and secondly in the relationship between a partition and its shell. It turns out that certain properties of a partition $\lambda = \lambda^S$ are controlled by the properties of certain special partitions constructed from $\lambda$. A \emph{profile} $\mu$ of $\lambda$ is a partition constructed in the following manner: choose indices $(j,k)$ with $0 < j \leq k \leq n+1$ such that $i = i'$ for all $j\leq i < k$, and such that $j-1\neq (j-1)'$ (or $j-1 = 0$) and $k\neq k'$ (or $k = n+1)$. Define $\mu = (\mu_1,...,\mu_{k-j})$ by the rule $$\mu_i = \lambda_{i + (j-1)} - \lambda_k.$$ If $k < n+1$ then in order to preserve the condition $i=i'$ we regard $\mu$ as an element of $\mathcal{P}_{1}(\sum_{i=j}^{k-1} \lambda_i - (k-j)\lambda_k)$. If $k = n+1$ then $\lambda_{k} 
= 0$ and we may regard $\mu$ is an element of $\mathcal{P}_{\epsilon}(\sum_{i=j}^{n} \lambda_i)$. We say that the  profile $\mu$ constructed in this manner is \emph{of type $(j,k)$}, and we include Figure~\ref{pikcha_B} to show what is intended by the definition.

\p Suppose $\mu$ is a profile of $\lambda$ of type $(j,k)$ and $\ii = (i_1,...,i_l)$ is an admissible sequence for $\mu$. Then the \emph{$j$-adjust} of $\ii$ is the sequence
$$\jmath(\ii) = (i_1 + (j-1), i_2 + (j-1), ..., i_l + (j-1)).$$
It is clear that $\jmath(\ii)$ is an admissible sequence for $\lambda$.
\begin{prop}\label{reductionprop}
Suppose $\lambda$ is equal to its shell and let $\mu(1), \mu(2),..., \mu(l)$ be a complete set of distinct profiles for $\lambda$, with $\mu(m)$ of type $(j_m, k_m)$. Then the following hold:
\begin{enumerate}
\item{$z(\lambda) = \sum_{i=1}^l z(\mu(i))$.}

\item{If $\ii(m)$ is admissible sequence for $\mu(m)$ then $$(\jmath_1(\ii(1)), \jmath_2(\ii(2)),...,\jmath_l(\ii(l)))$$ is an admissible sequence for $\lambda$, where this last sequence is obtained by concatenating the sequences $\jmath_m(\ii(m))$.
}
\end{enumerate}
\end{prop}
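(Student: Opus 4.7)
The plan is to exploit the tight structural description of a shell $\lambda = \lambda^S$. The shell construction forces every difference $\lambda_i - \lambda_{i+1}$ arising outside the profile ranges to equal $0$, and every profile-boundary difference (i.e.\ $\lambda_{j_m-1} - \lambda_{j_m}$ or $\lambda_{k_m-1} - \lambda_{k_m}$, when meaningful) to equal $1$: the inter-profile parts have parity opposite to the involution-fixed profile parts, so inter-profile differences are even and get reduced to $0$ by Case~1, while the boundary differences are odd and get reduced to $1$. Consequently each $\lfloor (\lambda_i - \lambda_{i+1})/2 \rfloor$ with $i$ outside every profile range vanishes, and the partial sum $\sum_{i=j_m}^{k_m-1} \lfloor (\lambda_i - \lambda_{i+1})/2 \rfloor$ agrees with $s(\mu(m))$ because the shift by $\lambda_{k_m}$ cancels in the differences and the convention $\mu(m)_{k_m - j_m + 1} = 0$ accommodates the right boundary. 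Thus $s(\lambda) = \sum_m s(\mu(m))$. For the remaining ingredients of $z$, any $i \in \Delta(\lambda)$ satisfies $i = i'$ and $(i+1) = (i+1)'$, forcing both indices into the range of a single profile $(j_m, k_m)$, and the inequalities $\lambda_{i-1} \ne \lambda_i$ and $\lambda_{i+1} \ne \lambda_{i+2}$ either correspond to internal conditions of $\mu(m)$ or are automatic at the profile boundary (where the parities differ). A bad 2-step requires a boundary difference in $2\N$, which rules out any profile-to-inter-profile boundary. Hence $\Delta(\lambda)$, $\Delta_{\textnormal{bad}}(\lambda)$ and $\Sigma(\lambda)$ each decompose as disjoint unions indexed by the profiles, and adding these identities gives $z(\lambda) = \sum_m z(\mu(m))$, establishing part~(1).

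For part~(2), I would prove by induction on $m$ that $(\jmath_1(\ii(1)), \ldots, \jmath_m(\ii(m)))$ is admissible for $\lambda$ and that the resulting partition $\lambda'$ agrees with $\lambda$ at every index $\geq k_m$. The stability on indices $\geq k_m$ is clear because every index appearing in $\jmath_m(\ii(m))$ is at most $k_m - 1$: admissible indices in $\mu(m)$ lie in $\{1, \ldots, k_m - j_m\}$ for Case~1 and in $\{1, \ldots, k_m - j_m - 1\}$ for Case~2, so the shifted indices $i + j_m - 1$ never reach $k_m$. Granted the inductive hypothesis, a further induction on the length $\ell$ of the block $\jmath_{m+1}(\ii(m+1))$ identifies the restriction of the current partition to $[j_{m+1}, k_{m+1}]$ with $\mu(m+1)^{\ii^{(\ell)}}$ translated uniformly by $\lambda_{k_{m+1}}$; since $\lambda_{k_{m+1}}$ has inter-profile parity, the induced involution on this window matches the involution on $\mu(m+1)^{\ii^{(\ell)}}$ exactly, and so both Case~1 and Case~2 admissibility criteria transfer directly from $\mu(m+1)^{\ii^{(\ell)}}$ to the current partition $\lambda'$.

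The one nontrivial check is the Case~2 criterion at the first index $i_\mu = 1$ of $\mu(m+1)$, which requires the boundary inequality $\lambda'_{j_{m+1}-1} \neq \lambda'_{j_{m+1}}$ and thus involves an entry lying outside the profile. By Lemma~\ref{deltain2}(A) this step occurs at most once inside $\ii(m+1)$ (since $1$ exits $\Delta$ immediately after the step, and a direct parity argument shows it cannot re-enter), and a tally of how each preceding step affects the two entries shows that every Case~1 step and every Case~2 step at $i_\mu \ge 2$ decrements both $\lambda'_{j_{m+1}-1}$ and $\lambda'_{j_{m+1}}$ by exactly $2$, so the shell identity $\lambda_{j_{m+1}-1} - \lambda_{j_{m+1}} = 1$ is preserved until the critical step is invoked (and there the difference drops to $0$, which causes no further problem). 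This bookkeeping at the left boundary is the main obstacle; once it is set up, admissibility of the whole concatenation follows from the admissibility of each $\ii(m)$ for $\mu(m)$.
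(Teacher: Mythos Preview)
Your proposal is correct and follows essentially the same route as the paper. For Part~(1) the arguments are virtually identical: both show that $s$, $\Delta$, $\Delta_{\rm bad}$ and $\Sigma$ split over the profiles (the paper streamlines this slightly by noting that in a shell every 2-step is bad, so $|\Delta(\lambda)|=|\Delta_{\rm bad}(\lambda)|$ and these two terms cancel outright). For Part~(2) the paper organises the induction a little differently: it first records, just before the proposition, that each single adjusted sequence $\jmath_m(\ii(m))$ is already admissible for $\lambda$ itself, and then observes that since $\lambda^{\jj(i)}_r=\lambda_r$ for all $r\ge k_i$ (and $k_i\le j_{i+1}-1$, so in particular the entry at $j_{i+1}-1$ is unchanged), admissibility transfers from $\lambda$ to $\lambda^{\jj(i)}$. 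Your explicit left-boundary bookkeeping is precisely what is needed to justify the paper's ``it is clear'' remark, so you have effectively filled in a detail the paper leaves to the reader. One small point: the paper notes that Part~(2) holds even without the hypothesis $\lambda=\lambda^S$; your argument invokes the shell identity $\lambda_{j_{m+1}-1}-\lambda_{j_{m+1}}=1$, but since you only need this difference to be nonzero, the parity observation $(j_{m+1}-1)\ne(j_{m+1}-1)'$ and $j_{m+1}=j_{m+1}'$ already supplies it in general.
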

\begin{proof}
Since $\lambda = \lambda^S$ all differences $\lambda_i-\lambda_{i+1}$ are equal to $0,1,$ or $2$. If $\lambda_i - \lambda_{i+1} = 2$ then necessarily $i-1\in \Delta(\lambda)$ or $i+1 \in \Delta(\lambda)$. In either case $i=i'$, $i+1 = (i+1)'$ (or $i = n$) and it follows that there exists a profile of type $(j,k)$ with $j \leq i$ and $i+1 < k$ (or $i < k$ when $i = n$). Then each index $i$ for which $\lambda_i - \lambda_{i+1} = 2$ contributes 1 to $s(\lambda)$ and 1 to $\sum_{j=1}^l s(\mu(j))$ so that $s(\lambda) = \sum_{j=1}^l s(\mu(j))$. The condition $\lambda = \lambda^S$ also implies that all 2-steps are bad 2-steps so that $|\Delta(\lambda)| = |\Delta_{\rm bad}(\lambda)|$. Similarly $\mu(m) = \mu(m)^S$ so $|\Delta(\mu(m))| = |\Delta_{\rm bad}(\mu(m))|$ for all $m$, and it remains to prove that $|\Sigma(\lambda)| = \sum_{i=1}^l |\Sigma(\mu(i))|$. This follows from the fact that all good 2-clusters $i_1\leq \cdots \leq i_l$ fulfil $i = i'$ for all $i_1\leq i \leq i_l+1$ so for each such 2-cluster there 
exists profile of type $(j,k)$ with $j\leq i_1$ and $i_l +1< k$. Part (1) follows.

The second actually holds even when $\lambda \neq \lambda^S$. For obvious reasons the indices of the distinct profiles do not overlap, and we may assume that $k_m < j_{m+1}$ for $m=1,...,l-1$. Then for $1 \leq i < l$ we set $\jj(i) = (\jmath_1(\ii(1)),...,\jmath_i(\ii(i)))$ and note that $\lambda_r^{\jj(i)} = \lambda_r$ for all $r \geq j_{i+1}$. Using that $\jmath_{i+1}(\ii(i+1))$ is admissible for $\lambda$ we obtain by induction that $\jmath_{i+1}(\ii(i+1))$ is an admissible sequence for $\lambda^{\jj(i)}$. By the transitivity of the algorithm we deduce then that $(\jmath_1(\ii(1)), \jmath_2(\ii(2)),...,\jmath_l(\ii(l)))$ is admissible for $\lambda$ as required.
\end{proof}

\p The previous lemma allows us to obtain admissible sequences of length $z(\lambda)$ by piecing together sequences for the profiles. Now we demonstrate the existence of a sequence of length $z(\lambda)$ for each profile.
\begin{prop}\label{profileprop}
Let $\lambda = (\lambda_1,..,\lambda_n)$ be a partition and suppose that $i=i'$ for all $1\leq i\leq n$. Then there exists an admissible sequence for $\lambda$ of length $z(\lambda)$.
\end{prop}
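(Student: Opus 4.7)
I will build an admissible sequence of length $z(\lambda)$ by direct construction, exploiting the rigid structure of the profile case. The key preliminary observation is that, since all parts $\lambda_i$ share parity, every difference $\lambda_{i-1}-\lambda_i$ is even; hence each 2-step $(i,i+1)$ with $i>1$ has a bad left boundary, and any good 2-cluster must begin at $i_1=1$ and end at $i_k=n-1$. A short case analysis yields $|\Delta(\lambda)|-|\Delta_{\mathrm{bad}}(\lambda)|+|\Sigma(\lambda)|\in\{0,1\}$, so either $z(\lambda)=s(\lambda)$ or $z(\lambda)=s(\lambda)+1$.

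The argument proceeds by strong induction on $|\lambda|=\sum_i\lambda_i$. The base case is when $\lambda$ admits no admissible index: the profile hypothesis together with the non-applicability of Case 1 forces all differences $\lambda_i-\lambda_{i+1}$ with $i<n$ to vanish and $\lambda_n\in\{0,1\}$, so $\lambda=\emptyset$ or $\lambda=(1,\ldots,1)$ (the latter only in odd parity), and in both cases $z(\lambda)=0$ by direct computation.

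When $z(\lambda)=s(\lambda)$, I pick any index $i_0$ at which Case 1 is applicable (e.g.\ $i_0=n$ with $\lambda_n\ge 2$, or any $i_0<n$ with $\lambda_{i_0}\ge\lambda_{i_0+1}+2$). Lemma \ref{deltain}(B) and Lemma~\ref{stairslemma}, combined with a bookkeeping check on the bad-boundary count, show that this Case 1 step drops $z$ by exactly $1$; since Case 1 preserves parities, $\lambda^{(i_0)}$ is again a profile of strictly smaller total size, so the induction hypothesis produces an admissible sequence of length $z(\lambda)-1$ that, prepended with $i_0$, finishes the case.

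When $z(\lambda)=s(\lambda)+1$ the excess unit must come either from the single good 2-step at $i=1$ in the odd-profile $n=2$ case, or from the good 2-cluster $\{1,3,\ldots,n-1\}$ in the odd-profile $n\ge 4$ case with $n$ even. My plan is to apply Case 1 at $i=1$ repeatedly until $\lambda_1=\lambda_2$, and then to apply Case 2 at $i=1$. The Case 2 step drops $s$ by $1$, removes the index $1$ from both $\Delta$ and $\Delta_{\mathrm{bad}}$ with net zero effect on $|\Delta|-|\Delta_{\mathrm{bad}}|$, and preserves $|\Sigma|$: in the cluster case the shifted cluster $\{3,5,\ldots,n-1\}$ inherits an odd left-boundary difference $\lambda_2^{\ii}-\lambda_3$ and hence remains good, while in the $n=2$ case $|\Sigma|$ is trivially preserved at $0$. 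The resulting partition is no longer a profile, but its $i=i'$ indices still form a shorter profile on positions $3,\ldots,n$, and Proposition~\ref{reductionprop} applied to the two-profile decomposition together with the induction hypothesis completes the argument. The main obstacle is the arithmetic verification at the single Case 2 step, for which Lemma \ref{deltain2}(A) and Lemma~\ref{stairslemma} provide the necessary control.
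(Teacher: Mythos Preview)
Your approach in the case $z(\lambda)=s(\lambda)$ is sound: since every profile has $|\Delta|=|\Delta_{\rm bad}|$, one Case~1 step drops $s$ by one and cannot create a good 2-cluster (by Lemma~\ref{deltain}(B)), so $z$ drops by exactly one and induction applies.

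The genuine gap is in the case $z(\lambda)=s(\lambda)+1$. After your Case~1 reductions at $i=1$ and the single Case~2 at $i=1$ you reach $\tilde\lambda=(\lambda_2-1,\lambda_2-1,\lambda_3,\ldots,\lambda_n)$ with $1'=2$. You then appeal to Proposition~\ref{reductionprop} and the inductive hypothesis on the profile $\mu=(\lambda_3,\ldots,\lambda_n)$ sitting on positions $3,\ldots,n$. But Proposition~\ref{reductionprop}(1) requires $\tilde\lambda=\tilde\lambda^{\,S}$, and this fails whenever $\lambda_2-\lambda_3>2$ (since then $\tilde\lambda_2-\tilde\lambda_3=\lambda_2-\lambda_3-1\geq 3$). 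More concretely, a direct count gives
\[
z(\tilde\lambda)=s(\tilde\lambda)+1=\tfrac{\lambda_2-\lambda_3}{2}-1+s(\mu)+1
\quad\text{while}\quad
z(\mu)=s(\mu)+1,
\]
so $z(\tilde\lambda)=z(\mu)+\tfrac{\lambda_2-\lambda_3}{2}-1$. When $\lambda_2-\lambda_3>2$ the $j$-adjusted admissible sequence for $\mu$ is strictly shorter than $z(\tilde\lambda)$, and your argument produces a sequence of length less than $z(\lambda)$. You cannot repair this by applying Case~1 at $i=2$ afterwards, because $\tilde\lambda_1=\tilde\lambda_2$, so such a move would destroy the partition ordering; nor is there a ``second profile'' at positions $1,2$ (since $1'=2$ there is none).

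The paper avoids this by abandoning induction on $|\lambda|$ in the $z=s+1$ case and instead giving an explicit global construction: apply Case~1 at every odd index to equalise pairs, then at every even index to force gaps of~$2$, reducing to the fixed staircase $(n{-}1,n{-}1,\ldots,3,3,1,1)$, and finally apply Case~2 once at each odd index. This uses all indices simultaneously and neatly accounts for the extra unit, whereas your strategy of peeling off only the leftmost 2-step loses track of the contribution from $\lambda_2-\lambda_3$.
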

\begin{proof}
A partition $\lambda$ fulfilling $i=i'$ for all $1\leq i\leq n$ contains a good 2-cluster if and only if $1, 3, 5, ..., n-1$ is good 2-cluster. In this case it is the only good 2-cluster. Suppose that this is the case. Of course this implies that $n$ is even and $\epsilon = 1$, so $\lambda_n$ is odd. Construct a sequence $\ii$ by repeatedly applying Case 1 at indices $2i-1$ for $1\leq i \leq \frac{n}{2}$ so that $\lambda^\ii_{2i-1} - \lambda^\ii_{2i} = 0$ for all such $i$. Then $$|\ii| = \sum_{i=1}^{\frac{n}{2}} \lfloor \frac{\lambda_{2i-1} - \lambda_{2i}}{2} \rfloor.$$ We construct an admissible $\ii'$ by subsequently applying Case 1 at indices $2i$ for $1\leq i\leq n$ so that $\lambda^{\ii'}_{2i} - \lambda^{\ii'}_{2i+1} = 2$ for all such $i$. Our sequence $\ii'$ has length $$|\ii'| = s(\lambda) - (\frac{n}{2}-1).$$ At this point we are able to say precisely what $\lambda^{\ii'}$ looks like. We have $\lambda^{\ii'} = \lambda^S = (n-1,n-1,n-3,n-3,...,3,3,1,1)$. Finally we obtain $\ii''$ by applying Case 2 
precisely once at each index $2i-1$ for $1\leq i\leq \frac{n}{2}$. The partition $\lambda^{\ii''}$ is rigid, so $\ii''$ is maximal (Lemma~\ref{maxadmiss}) and $$|\ii''| = s(\lambda) + 1.$$ In order to complete this part of the proof we must show that $z(\lambda) = s(\lambda) + 1$. Notice that our assumptions on $\lambda$ imply that every 2-step is bad. Therefore $|\Delta(\lambda)| = |\Delta_{\rm bad}(\lambda)|$ and by our original remarks $z(\lambda) = s(\lambda) + 1$ as required.

Now assume that $\lambda$ has no good 2-clusters. Since $i=i'$ for all $i$ we may apply Case 1 repeatedly at all indices to obtain a maximal admissible partition. Clearly $|\ii| = s(\lambda)$. Once again all 2-steps are bad so that $|\Delta(\lambda)| = |\Delta_{\rm bad}(\lambda)|$, and by assumption $|\Sigma(\lambda)| = 0$. Hence $z(\lambda) =s(\lambda) =  |\ii|$ as promised.
\end{proof}

\p Finally we may state and prove the main theorem of this section.
\begin{thm}\label{zismax} We have that
$$z(\lambda) = \max |\emph\ii|$$ where the maximum is taken over all admissible sequences $\emph\ii$ for $\lambda$.
\end{thm}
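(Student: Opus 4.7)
The plan is to establish the two inequalities $\max|\ii| \le z(\lambda)$ and $\max|\ii| \ge z(\lambda)$ separately, combining the combinatorial lemmas of this section with Propositions~\ref{reductionprop} and~\ref{profileprop}.

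For the lower bound I would construct an admissible sequence for $\lambda$ of length exactly $z(\lambda)$ in two stages. First, apply the shell construction to obtain an admissible sequence $S$ for $\lambda$ with $\lambda^S$ equal to its own shell. Next, decompose $\lambda^S$ into its distinct profiles $\mu(1),\dots,\mu(l)$; each $\mu(m)$ is a partition all of whose indices are fixed by the involution, so Proposition~\ref{profileprop} furnishes an admissible sequence $\ii(m)$ for $\mu(m)$ of length $z(\mu(m))$. By Proposition~\ref{reductionprop}(2), the $j$-adjusted concatenation $T=(\jmath_1(\ii(1)),\ldots,\jmath_l(\ii(l)))$ is an admissible sequence for $\lambda^S$, and its length equals $\sum_m z(\mu(m)) = z(\lambda^S)$ by Proposition~\ref{reductionprop}(1). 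The transitivity of the algorithm (Remark~\ref{remark1}) then allows me to concatenate $S$ with $T$, giving an admissible sequence for $\lambda$ of length $|S| + z(\lambda^S)$. To finish this direction I would verify by direct bookkeeping that $|S| + z(\lambda^S) = z(\lambda)$: the Case 1 moves used in $S$ contribute $s(\lambda) - s(\lambda^S)$ to $|S|$ and decrease $s$ by that amount without changing $|\Delta| - \Delta_{\rm bad}|$ or $|\Sigma|$ on the good clusters, while the terminal Case 2 moves of $S$ at the remaining good $2$-steps contribute the remaining $|\Delta(\lambda)|-|\Delta_{\rm bad}(\lambda)|+|\Sigma(\lambda)|-z(\lambda^S)$ terms.

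For the upper bound, the plan is to show by induction on $|\lambda|$ that every admissible length-$1$ move decreases $z$ by at least one; combined with $z(\mu)=0$ for rigid $\mu$ (all four summands in $z$ vanish), this yields $|\ii| \le z(\lambda)$ for every admissible $\ii$. One writes $z = s + (|\Delta|-|\Delta_{\rm bad}|) + |\Sigma|$ and splits into cases. Case 1 drops $s$ by exactly one (immediate from the formula), preserves parities of all differences $\lambda_j-\lambda_{j+1}$ so does not turn any good $2$-step bad, does not enlarge $|\Delta|$ by Lemma~B, and does not increase $|\Sigma|$ by Lemma~\ref{stairslemma}; hence $z$ drops by at least one. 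Case 2 at a good $2$-step drops $|\Delta|$ by one with $s$ unchanged (Lemma~A); Case 2 at a bad $2$-step drops $|\Delta|$ and $|\Delta_{\rm bad}|$ by one while $s$ drops by at least one, counted by the bad boundaries of $(i,i+1)$.

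The main obstacle I foresee is the exceptional clause of Lemma~\ref{stairslemma}, where $|\Sigma|$ actually \emph{increases} by one under a Case 2 move at $i$ with $i-4, i-2, i, i+2, i+4$ a subsequence of a good $2$-cluster. Here one must show that the gain of $1$ in $|\Sigma|$ is absorbed by a corresponding loss of at least $2$ in $|\Delta| - |\Delta_{\rm bad}|$, keeping the net change in $z$ at $-1$. This reduces to checking that after the split of the long cluster at $i$, the two previously-bad $2$-steps at $i-2$ and $i+2$ become good endpoints of the two new clusters; the verification uses the explicit transformation $\lambda_{i-1}\mapsto \lambda_{i-1}-2$, $\lambda_i\mapsto \lambda_i-1$, $\lambda_{i+1}\mapsto \lambda_{i+1}-1$, $\lambda_{i+2}\mapsto \lambda_{i+2}$ and the parity constraints forced by $i\in\Delta(\lambda)$ together with $\lambda_i = \lambda_{i+1}$. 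Modulo this most delicate case, and the closing arithmetic identity $|S|+z(\lambda^S)=z(\lambda)$, both directions reduce to routine case-checking.
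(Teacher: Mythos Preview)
Your overall architecture is exactly the paper's: prove $z(\lambda)\ge z(\lambda^{\ii})+1$ for each length-one move (giving the upper bound by telescoping down to a rigid partition), then construct a sequence of length $z(\lambda)$ via the shell and the profile decomposition. The lower bound is fine; the identity $|S|+z(\lambda^S)=z(\lambda)$ holds because each Case~1 move in $S$ drops $s$ by one and leaves $|\Delta|-|\Delta_{\rm bad}|$ and $|\Sigma|$ unchanged, while each Case~2 move at a good $2$-step (which is isolated from any cluster) drops $|\Delta|-|\Delta_{\rm bad}|$ by one with $s$ and $|\Sigma|$ unchanged.

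There is, however, a genuine gap in your Case~2 bad analysis. The assertion that $|\Delta_{\rm bad}|$ drops by exactly one is false in general: if, say, $i-2\in\Delta(\lambda)$ and $\lambda_{i-3}-\lambda_{i-2}$ is odd, then after the move the boundary $\lambda_{i-1}^{\ii}-\lambda_i^{\ii}$ becomes odd and the $2$-step $(i-2,i-1)$ turns \emph{good}, so $|\Delta_{\rm bad}|$ drops by two and $|\Delta|-|\Delta_{\rm bad}|$ actually \emph{increases} by one. You then need a compensating drop in $|\Sigma|$ (the good $2$-cluster $i-2,i$ disappears), which your sketch does not account for. The paper handles this by a finer subdivision according to whether $i\pm2\in\Delta(\lambda)$ and, if so, the parity of the outer boundaries $\lambda_{i-3}-\lambda_{i-2}$ and $\lambda_{i+3}-\lambda_{i+4}$; in each subcase the changes in $|\Delta_{\rm bad}|$ and $|\Sigma|$ track one another.

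Your treatment of the exceptional clause of Lemma~\ref{stairslemma} is also mislocated. When $i-4,i-2,i,i+2,i+4$ sit in a good $2$-cluster, both outer boundaries of $(i\pm2,i\pm2+1)$ remain even after the move, so these $2$-steps stay bad and $|\Delta|-|\Delta_{\rm bad}|$ is unchanged (both drop by one). The compensation for the gain in $|\Sigma|$ is that $s$ drops by \emph{two} here (both boundaries of $(i,i+1)$ are even), not that $|\Delta|-|\Delta_{\rm bad}|$ drops by two. Once you correct this and carry out the finer case split above, the argument goes through.
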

\begin{proof}
We begin by showing that $z(\lambda) \geq z(\lambda^{\ii}) + 1$ where $\ii = (i)$ is an admissible sequence of length 1 for $\lambda$. First assume Case 1 occurs for $\lambda$ at $i$. Then $s(\lambda^\ii) = s(\lambda) - 1$. Furthermore, if the iteration at $i$ removes a 2-step (ie. if $\lambda_{i} - \lambda_{i+1} = 2$ and either $i-1 \in \Delta(\lambda)$ or $i+1 \in \Delta(\lambda)$ or both) then that 2-step is bad. Therefore $|\Delta(\lambda)| - |\Delta(\lambda^\ii)| = |\Delta_{\rm bad}(\lambda)| - |\Delta_{\rm bad}(\lambda^\ii)|$. It remains to be seen that the number of good 2-clusters does not increase as we pass from $\lambda$ to $\lambda^{\ii}$. This follows from Lemma~\ref{stairslemma}.

Now suppose that Case 2 occurs for $\lambda$ at index $i$. Certainly if $(i,i+1)$ is a good 2-step then $z(\lambda^\ii) = z(\lambda) - 1$, so we may assume that $(i, i+1)$ is a bad 2-step. Suppose first that this 2-step has precisely one bad boundary. We may assume that $\lambda_{i-1} - \lambda_i $ is even and $\lambda_{i+1} - \lambda_{i+2}$ is odd. We can deduce at this point that $s(\lambda^\ii) = s(\lambda) - 1$ and $|\Delta(\lambda^\ii)| = |\Delta(\lambda)| - 1$. If $i-2 \notin \Delta(\lambda)$ then $|\Delta_{\rm bad}(\lambda^\ii)| = |\Delta_{\rm bad}(\lambda)| - 1$. Similarly, if $i-2 \in \Delta(\lambda)$ and $\lambda_{i-3} - \lambda_{i-2}$ is even then $|\Delta_{\rm bad}(\lambda^\ii)| = |\Delta_{\rm bad}(\lambda)| - 1$. In either of these two situations the number of good 2-clusters decreases, thanks to Lemma~\ref{stairslemma}. Hence $z(\lambda) \geq z(\lambda^{\ii}) + 1$ once again. We must now consider the possibility that $i-2 \in \Delta(\lambda)$ and $\lambda_{i-3} - \lambda_{i-2}$ is odd. In this 
situation $s(\lambda^\ii) = s(\lambda) - 1$, $|\Delta(\lambda^\ii)| = |\Delta(\lambda)| - 1$ and $|\Delta_{\rm bad}(\lambda^\ii)| = |\Delta_{\rm bad}(\lambda)| - 2$. Notice that $i-2, i$ is a good 2-cluster for $\lambda$ but not for $\lambda^\ii$, so that $|\Sigma(\lambda^\ii)| = |\Sigma(\lambda)| - 1$ and $z(\lambda^\ii) = z(\lambda) - 1$. A similar argument works when $\lambda_{i-1} - \lambda_i$ is odd but $\lambda_{i+1} - \lambda_{i+2} = 2$.

Now we assume that $(i, i+1)$ is a bad 2-step and that both boundaries are bad. If neither $i-2$ nor $i+2$ lie in $\Delta(\lambda)$ then $s(-)$ decreases by 2, $|\Delta(-)|$ decreases by 1, and $|\Delta_{\rm bad}(-)|$ decreases by 1 upon passing from $\lambda$ to $\lambda^\ii$. Certainly $|\Sigma(-)|$ may only decrease, by lemma \ref{stairslemma}, and so $z(\lambda) \geq z(\lambda^\ii) + 1$ in this situation. Now move on and suppose that precisely one of $i-2$ and $i+2$ lie in $\Delta(\lambda)$. We shall examine the case $i-2\in \Delta(\lambda)$, the other being very similar.

When $\lambda_{i-3} - \lambda_{i-2}$ is odd $s(\lambda^\ii) = s(\lambda) - 2$, $|\Delta(\lambda^\ii)| = |\Delta(\lambda)| - 1$ and $|\Delta_{\rm bad}(\lambda^\ii)| = |\Delta_{\rm bad}(\lambda)| - 2$ (since $(i-2,i-1)$ is no longer a bad 2-step after this iteration). Furthermore $(i,i+1)$ cannot make up a 2-step in a good 2-cluster since $i+2 \notin \Delta(\lambda)$ and $\lambda_{i+1} - \lambda_{i+2}$ is even, therefore $|\Sigma(\lambda)|$ remains unchanged. So consider the possibility that $(i-2,i-1)$ has two bad boundaries: that $\lambda_{i-3} - \lambda_{i-2}$ is even. Then our conclusions are exactly the same as before, except that $|\Delta_{\rm bad}(\lambda^\ii)| = |\Delta_{\rm bad}(\lambda)| - 1$. In either situation $z(\lambda^\ii) \geq z(\lambda) - 1$.

Finally we have the situation $i-2,i+2 \in \Delta(\lambda)$. Once again we must distinguish between the number of bad boundaries attached to the 2-steps $(i-2,i-1)$ and $(i+2,i+3)$. Suppose that both of these 2-steps have a single bad boundary (they have at least 1). Then $i - 2, i, i+2$ is a good 2-cluster. It is immediately clear upon passing from $\lambda$ to $\lambda^\ii$ that $s(\lambda^\ii) = s(\lambda) - 2$, $|\Delta(\lambda^\ii)| = |\Delta(\lambda)| - 1$, $|\Delta_{\rm bad}(\lambda^\ii)| = |\Delta_{\rm bad}(\lambda)| - 3$, and $|\Sigma(\lambda^\ii)| = |\Sigma(\lambda)| - 1$. Once again $z(\lambda^\ii) \geq z(\lambda) - 1$ follows. The last two situations to consider are when precisely one of the two 2-steps $(i-2,i-1)$ and $(i+2, i+3)$ has two bad boundaries, and when both of them have two bad boundaries.

Take the former situation. We may assume that $(i-2, i-1)$ has two bad boundaries, and $(i+2,i+3)$ has one (the opposite configuration is similar). Upon iterating the algorithm, $s(\lambda^\ii) = s(\lambda) - 2$, $|\Delta(\lambda^\ii)| = |\Delta(\lambda)| -1$ and $|\Delta_{\rm bad}(\lambda^\ii)| = |\Delta_{\rm bad}(\lambda)| - 2$. By lemma \ref{stairslemma}, $|\Sigma(\lambda^\ii)| \leq |\Sigma(\lambda)|$. In the final case $(i-2,i-1)$ and $(i+2,i+3)$ both have two bad boundaries. The outcome is that $s(\lambda^\ii) = s(\lambda) - 2$, $|\Delta(\lambda^\ii)| = |\Delta(\lambda)| - 1$ and $|\Delta_{\rm bad}(\lambda^\ii)| = |\Delta_{\rm bad}(\lambda)| - 1$ both decrease by 1 and by Lemma~\ref{stairslemma} either $|\Sigma(\lambda^\ii)| = |\Sigma(\lambda)|$ or $|\Sigma(\lambda^\ii)| = |\Sigma(\lambda)| + 1$.

We have eventually shown that $z(\lambda) \geq z(\lambda^\ii) + 1$. Recall that for any maximal admissible sequence $\ii$ the partition $\lambda^\ii$ is rigid. Also notice that $z(\lambda) = 0$ for any rigid partition $\lambda$. We deduce for any maximal admissible sequence $\ii$ of length $l$, that
$$z(\lambda) \geq z(\lambda^{\ii_2}) + 1 \geq z(\lambda^{\ii_3}) + 2 \geq \cdots z(\lambda^{\ii_{l+1}}) + l = l.$$
Here $\ii_k$ denotes $(i_1,...,i_{k-1})$. In order to complete the proof we shall exhibit a maximal admissible sequence of length $z(\lambda)$. This shall require some reductions.

Notice first that $z(\lambda)$ decreases by 1 at each iteration when we apply Case 1 in constructing the shell $\lambda^S$. Therefore we may assume that $\lambda = \lambda^S$. Let $\mu(1), \mu(2),..., \mu(l)$ be a complete set of distinct profiles for $\lambda$, as in the statement of Proposition~\ref{reductionprop}. By Proposition~\ref{profileprop} we know that for each $1\leq m\leq l$ there is an admissible sequence of length $z(\mu(m))$ for $\mu(m)$. Using Part (2) of Proposition~\ref{reductionprop} we obtain an admissible sequence for $\lambda$ of length $\sum_{i=1}^l z(\mu(i))$, and by Part (1) of the same proposition that length is equal to $z(\lambda)$. Hence a sequence of the correct length exists, and the theorem follows.
\end{proof}

\p The following corollary shall be of some importance to our later work.
\begin{cor}\label{maxseq}
For all $\lambda \in \mathcal{P}_\epsilon(N)$ the following hold:
\begin{enumerate}
\item{$c(\lambda) \geq z(\lambda)$;}

\smallskip

\item{$c(\lambda) = z(\lambda)$ if and only if $\lambda$ is non-singular.}
\end{enumerate}
\end{cor}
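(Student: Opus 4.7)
The plan is to reduce the corollary to a simple comparison of the two defining formulas. By the definition of $c(\lambda)$ given in Corollary~\ref{expression} and the definition of $z(\lambda)$ preceding Lemma~\ref{stairslemma},
$$c(\lambda) - z(\lambda) \,=\, \bigl(s(\lambda) + |\Delta(\lambda)|\bigr) - \bigl(s(\lambda) + |\Delta(\lambda)| - (|\Delta_{\mathrm{bad}}(\lambda)| - |\Sigma(\lambda)|)\bigr) \,=\, |\Delta_{\mathrm{bad}}(\lambda)| - |\Sigma(\lambda)|.$$
Thus both assertions of the corollary will follow at once from the parenthetical claim stated just before Lemma~\ref{stairslemma}: namely that $|\Delta_{\mathrm{bad}}(\lambda)| \ge |\Sigma(\lambda)|$, with equality if and only if $\Delta_{\mathrm{bad}}(\lambda) = \emptyset$. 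Since $\lambda$ is by definition non-singular precisely when $\Delta_{\mathrm{bad}}(\lambda) = \emptyset$, part (1) and part (2) then follow simultaneously.

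The main work is therefore to verify this claim, and the key point is that \emph{every} 2-step occurring inside a good 2-cluster is in fact a bad 2-step. I would argue as follows. Fix a good 2-cluster $i_1 < i_2 < \cdots < i_k$ (so $k \ge 2$), and let $1 \le j \le k$. For each $l$, the condition $i_l \in \Delta(\lambda)$ forces $i_l = i_l'$ and $i_l + 1 = (i_l + 1)'$; by Lemma~\ref{nilpotents} all of the parts $\lambda_{i_l}$ and $\lambda_{i_l + 1}$ then have a common parity (opposite to the sign $\epsilon$). Now suppose $j < k$. Then $i_{j+1} = i_j + 2 \in \Delta(\lambda)$, so $\lambda_{i_j + 2}$ shares this common parity with $\lambda_{i_j+1}$, and the defining condition $\lambda_{i_{j+1}-1} \ne \lambda_{i_{j+1}}$ (combined with the ordering $\lambda_{i_j+1} \ge \lambda_{i_j+2}$) gives $\lambda_{i_j+1} > \lambda_{i_j+2}$. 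Hence $\lambda_{i_j+1} - \lambda_{i_j+2} \in 2\mathbb{N}$, so the 2-step $(i_j, i_j+1)$ has a bad right boundary. By the symmetric argument, if $j > 1$ then $(i_j, i_j+1)$ has a bad left boundary. In either case $(i_j, i_j + 1) \in \Delta_{\mathrm{bad}}(\lambda)$.

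Since good 2-clusters are maximal (by the lemma stated just after the definition of good 2-clusters in \S\ref{2clusters}), distinct good 2-clusters are disjoint, and by the preceding paragraph each contributes at least $k \ge 2$ distinct bad 2-steps to $\Delta_{\mathrm{bad}}(\lambda)$. This gives $|\Delta_{\mathrm{bad}}(\lambda)| \ge 2|\Sigma(\lambda)|$, which immediately implies $|\Delta_{\mathrm{bad}}(\lambda)| \ge |\Sigma(\lambda)|$ with strict inequality whenever $|\Sigma(\lambda)| \ge 1$. Therefore equality $|\Delta_{\mathrm{bad}}(\lambda)| = |\Sigma(\lambda)|$ forces $|\Sigma(\lambda)| = 0$, and then also $|\Delta_{\mathrm{bad}}(\lambda)| = 0$, i.e.\ $\lambda$ is non-singular. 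Conversely, if $\lambda$ is non-singular then both sets are empty and equality holds trivially. This is essentially a bookkeeping argument and I do not anticipate any genuine obstacle; the only subtlety is the parity calculation in the paragraph above, which relies crucially on Lemma~\ref{nilpotents} and on having $\chr(\K) \ne 2$.
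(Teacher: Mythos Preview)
Your proof is correct and follows exactly the same approach as the paper: compute $c(\lambda)-z(\lambda)=|\Delta_{\rm bad}(\lambda)|-|\Sigma(\lambda)|$ and invoke the claim (stated just before Lemma~\ref{stairslemma}) that this quantity is nonnegative with equality iff $\Delta_{\rm bad}(\lambda)=\emptyset$. The paper records that claim as ``immediate from the definitions'' and leaves it at that; you have simply supplied the verification, and in doing so obtained the slightly sharper bound $|\Delta_{\rm bad}(\lambda)|\ge 2|\Sigma(\lambda)|$.
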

\begin{proof}
Part (1) follows from the fact that $|\Delta_{\rm bad}(\lambda)| \geq |\Sigma(\lambda)|$ for all partitions $\lambda$. For Part (2) we observe that $|\Delta_{\rm bad}(\lambda)| - |\Sigma(\lambda)| = 0$ if and only if $\lambda$ is non-singular.
\end{proof}

\section{Conjugacy classes of Levi subalgebras}\label{conjlev}
\setcounter{parno}{0}

\p Before we describe the relationship between admissible sequences and sheets we shall need to classify the Levi subalgebras of the classical Lie algebra $\h$ of type $\sf B$, $\sf C$ or $\sf D$, and give a representative for each class. It is well known that every Levi subalgebra is isomorphic to a standard Levi subalgebra, and that these in turn are constructed from the subsets of the simple roots. It follows that the conjugacy classes of such algebras are 1-1 with subsets of simple roots modulo the action of the Weyl group. For our purposes we shall need a classification in terms of the standard form which we gave in Lemma~\ref{levistruct}.

\p Assume the notations $\hh$, $\Phi$ and $\Pi$ of Section~\ref{rootnotation} and write $\ell = \rank\, \h$. Give the simple roots $\Pi = \{\alpha_1,...,\alpha_\ell\}$ the standard ordering. From every sequence $\ii = (i_1,...,i_l)$ (possibly empty) of natural numbers with $\sum i_j \leq \ell$ we construct a subset $\Pi_\ii \subseteq \Pi$ by excluding the roots labelled by the integers $\sum_{j=1}^k i_j$ with $k = 1,...,l$. In this way we obtain a bijection between sequences with sum bounded by $\ell$, and subsets of $\Pi$. From each subset $\Pi_\ii$ we can define the Levi subalgebra $\li^\ii$ to be the algebra generated by $\hh$ and all roots spaces $\h^{\pm \alpha}$ with $\alpha \in \Pi_\ii$. In \ref{leviint} we denoted this algebra by $\li({\Pi_\ii})$.

\p\label{leviform1} Set $R_\ii = N - 2\sum_j i_j$ and say that $\ii$ is a \emph{restricted sequence} if $\sum_{j} i_j \leq \ell$ and if $R_\ii \neq 2$ when the type of $\h$ is $\sf D$. The advantage of this definition is that for $\ii$ restricted we have $\li^\ii \cong \gl_\ii \times \mm$ where $\gl_\ii := \gl_{i_1} \times \cdots \times \gl_{i_l}$ and $\mm$ is either zero or a classical simple algebra of the same type as $\h$ and natural representation of dimension $R_\ii$. In exchange for this nice description of the isomorphism types we have lost some descriptive power: the sequences in type $\sf D$ with $R_\ii = 2$ also correspond to subsets of $\Pi$ and so to standard Levi subalgebras. We would like to describe these remaining Levi subalgebras in slightly different terms.

\p \label{levparam} Let $\h$ be of type $\sf D$ and let $D$ denote the outer automorphism of $\h$ coming from the diagram automorphism which exchanges the last two nodes of the Dynkin diagram. Let $\ii = (i_1,...,i_l)$ with $R_\ii = 0$ and suppose the set $\Pi_\ii$ excludes one but not both of the last two simple roots of $\Pi$ (this supposition is equivalent to $i_l \neq 1$). Then we obtain a new Levi algebra $D(\li^\ii)$, which corresponds to the sequence $(i_1,...,i_{l-1}, i_l - 1)$. Now all standard Levi subalgebras associated to our choice $\Pi$ of simple roots may be described uniquely as one of the following:
\begin{itemize}
\item{$\li^\ii$ with $\ii$ a restricted sequence;}
\item{$D(\li^\ii)$ with $\ii$ restricted, $R_\ii = 0$ and $i_l \neq 1$.}
\end{itemize}

\noindent This parameterisation has the useful property that every algebra constructed from a sequence $\ii$ is isomorphic to $\gl_\ii \times \mm$.

\p We may classify the conjugacy classes of Levi subalgebras by deciding when two of these standard Levis are conjugate. This classification will follow immediately from the proposition:
\begin{prop}\label{leviconjprop}
Let $\ii$ and $\jj$ be restricted sequences. Then the following are equivalent:
\begin{enumerate}
\item{$\li^\ii$ and $\li^\jj$ are $K$-conjugate;}
\item{$\ii$ and $\jj$ have equal length $l$ and are $\mathfrak{S}_l$-conjugate.}
\end{enumerate}
Suppose $\h$ has type $\sf D$, that $R_\ii = 0$ and $i_l \neq 1$. Then $\li^\ii$ is $K$-conjugate to $D(\li^\ii)$ if and only if $\rank\, \h$ is odd or some term of $\ii$ is odd.
\end{prop}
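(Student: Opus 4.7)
The plan is to treat the two statements of the proposition separately. For the equivalence (1) $\Leftrightarrow$ (2), I will use the standard fact that $\li^\ii$ and $\li^\jj$ are $K$-conjugate if and only if the associated subsets $\Pi_\ii, \Pi_\jj \subseteq \Pi$ are conjugate under the Weyl group $W$ of $K$. The implication (2) $\Rightarrow$ (1) is straightforward: the Weyl groups of types $\sf B_n$, $\sf C_n$, $\sf D_n$ all contain the symmetric group permuting the standard coordinates $\epsilon_1, \ldots, \epsilon_\ell$, so any rearrangement of the integers $i_1, \ldots, i_l$ can be realised by such a permutation, sending $\Pi_\ii$ to $\Pi_\jj$.

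For (1) $\Rightarrow$ (2), one invokes the fact that two subsets of simple roots are $W$-conjugate if and only if their sub-Dynkin diagrams are isomorphic as labelled sub-diagrams of the ambient diagram. The sub-diagram of $\Pi_\ii$ consists of components of type $\sf A_{i_j - 1}$ for $1 \leq j \leq l$ (those with $i_j = 1$ being empty) together with a single distinguished component of the same Dynkin type as $\h$ whose rank is determined by $R_\ii$. These invariants recover the multiset $\{i_j : i_j > 1\}$ and $R_\ii$, and combined with the centre dimension $\dim\, \z(\li^\ii) = l$ they recover the full multiset $\{i_1, \ldots, i_l\}$, showing that $\ii$ and $\jj$ are $\mathfrak{S}_l$-conjugate. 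Note that the restricted-sequence condition $R_\ii \neq 2$ in type $\sf D$ is essential to avoid the degenerate case $\sf D_1$ where the distinguished component would fail to be recognisable.

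For the second statement, the outer automorphism $D$ is induced by conjugation by some $g_0 \in O(V) \setminus SO(V)$, so $\li^\ii$ is $K$-conjugate to $D(\li^\ii) = g_0 \li^\ii g_0^{-1}$ if and only if the coset $g_0 N_{O(V)}(\li^\ii)$ meets $K = SO(V)$, equivalently if and only if $N_{O(V)}(\li^\ii)$ contains an element of determinant $-1$. When $R_\ii = 0$, the Levi $\li^\ii = \gl_{\ii}$ is the stabiliser of a decomposition $V = \bigoplus_{k=1}^l (U_k \oplus U_k')$ into $l$ dual pairs of maximal isotropic subspaces with $\dim U_k = \dim U_k' = i_k$. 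The component group of $N_{O(V)}(\li^\ii)$ modulo $L$ is generated by permutations of dual pairs of equal size, which have determinant $+1$ (decomposing into $2i_k$ basis transpositions), and by involutions $\tau_k$ exchanging $U_k$ with $U_k'$, whose matrices in a suitable basis are block-diagonal with $i_k$ blocks $\left(\begin{smallmatrix} 0 & 1 \\ 1 & 0 \end{smallmatrix}\right)$, giving $\det \tau_k = (-1)^{i_k}$. Hence $N_{O(V)}(\li^\ii)$ contains an element of determinant $-1$ precisely when some $i_k$ is odd.

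The main obstacle will be the careful enumeration of the component group of the $O(V)$-normaliser and verifying that no further elements contribute to its determinant character. Once this is settled, we observe that $R_\ii = 0$ forces $\rank\, \h = \sum_k i_k$, so "$\rank\, \h$ odd" already implies "some $i_k$ is odd"; the two clauses in the proposition together are therefore equivalent to the condition that some $i_k$ is odd, which matches the determinantal criterion above and completes the proof.
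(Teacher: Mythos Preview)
Your approach differs from the paper's in almost every part, and is largely viable, but there are two places where it is under-justified.

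For $(2)\Rightarrow(1)$ your coordinate-permutation argument is correct once made precise: the cyclic shift of the block of $\epsilon$-coordinates corresponding to two adjacent entries $i_k,i_{k+1}$ does carry $\Pi_\ii$ to $\Pi_{\sigma\ii}$ (the excluded root at the junction is sent to a non-simple root, but that root is excluded from both sides, and the restricted hypothesis $R_\ii\neq 2$ in type $\sf D$ keeps the branch roots out of harm's way). The paper instead uses the longest element of the type ${\sf A}_{i_k+i_{k+1}-1}$ subsystem to realise the same transposition; both arguments work.

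For $(1)\Rightarrow(2)$ your invocation of the ``fact'' that $W$-conjugacy of subsets of $\Pi$ is detected by isomorphism of labelled sub-diagrams is not safe as stated: in type $\sf D_\ell$ with $\ell$ even the subsets $\{\alpha_1,\dots,\alpha_{\ell-1}\}$ and $\{\alpha_1,\dots,\alpha_{\ell-2},\alpha_\ell\}$ have the same sub-diagram but are not $W$-conjugate (this is exactly the $D(\li^\ii)$ phenomenon). What actually makes your argument go through is that the abstract isomorphism type of $\li^\ii\cong\gl_\ii\times\mm$, together with the centre dimension $l$, determines the multiset $\{i_j\}$ and $R_\ii$; but this requires checking the low-rank coincidences $\mathfrak{so}_3\cong\sl_2$, $\mathfrak{so}_4\cong\sl_2\times\sl_2$, $\mathfrak{so}_6\cong\sl_4$, $\mathfrak{sp}_2\cong\sl_2$ do not cause collisions (they do not, by centre-dimension counting). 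The paper bypasses this entirely: writing $\li^\ii=\h_h$ for semisimple $h$, the factor $\mm$ is intrinsically the annihilator in $\h$ of the nonzero $h$-eigenspaces in $V$, and conjugacy by $g\in K$ visibly matches the zero eigenspaces, hence $R_\ii=R_\jj$ and $\gl_\ii\cong\gl_\jj$.

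For the second statement your determinant criterion via $N_{O(V)}(\li^\ii)$ is elegant and the ``if'' direction (existence of some $\tau_k$ with $\det\tau_k=-1$ when $i_k$ is odd) is immediate. The paper instead produces explicit Weyl-group representatives: the longest element $w_0$ when $\ell$ is odd, and the longest element of the residual $\sf D_{\ell-i_1}$ subsystem when $i_1$ is odd and $\ell$ is even. For the ``only if'' direction you correctly identify the obstacle --- one must know that every element of $N_{O(V)}(\li^\ii)$ is a product of elements of $L$, dual-pair permutations, and swaps $\tau_k$ --- and you do not close it. The paper avoids this structure theorem altogether: when $\ell$ and every $i_k$ are even it observes (forward-referencing Proposition~\ref{indpart}) that $\Ind_{\li^\ii}^\h(\Oo_0)$ and $\Ind_{D(\li^\ii)}^\h(\Oo_0)$ are very even nilpotent orbits carrying the distinct labels $I$ and $I\!I$, so the two Levis cannot be $K$-conjugate. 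Your reduction of ``$\ell$ odd or some $i_k$ odd'' to ``some $i_k$ odd'' (using $\ell=\sum i_k$) is a nice simplification the paper does not make explicit.
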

\begin{proof}
Suppose that $\li^\ii$ and $\li^\jj$ are $K$-conjugate, and suppose $\Ad(g)$ sends $\li^\ii$ to $\li^\jj$. Set 
$\li^\ii = \gl_\ii \times \mm_\ii$ and $\li^\jj = \gl_\jj \times \mm_\jj$ where $\mm_\ii$ and $\mm_\jj$
are classical algebras described in \ref{leviform1}. By \ref{levicent} there is a semisimple element $h \in \hh$ such that
$\li^\ii = \h_h$ and so $\li^\jj = \Ad(g) \h_h = \h_{\Ad(g) h}$. Then $\mm_\ii$ is the annihilator in $\h$ of the
non-zero eigenspaces of $h$ in $V$ and $\mm_\jj$ is the annihilator in $\h$ of the non-zero eigenspaces for $\Ad(g)h$.
It follows that $\mm_\ii$ and $\mm_\jj$ are isomorphic, and that $\gl_\ii \cong \gl_\jj$. We conclude that
$\ii$ and $\jj$ are $\mathfrak{S}_l$-conjugate, where $l = |\ii|$

We shall show that $(2) \Rightarrow (1)$. The cases $|\ii| = 0$ or $|\ii| = 1$ are trivial. Suppose
that $|\ii| = 2$. If $\ii = \jj$ then there is nothing to prove, so suppose that $\ii = (i_1, i_2)$ and 
that $\jj = (i_2, i_1)$. Set $k = i_1 + i_2 - 1$ and note that the roots $\alpha_1,...,\alpha_k$ span
a root subsystem $\Phi_k \subseteq \Phi$ of type $\sf{A}_k$. Observe furthermore that $\alpha_k$ is
orthogonal with respect to the Killing form to all $\alpha_i$ with $i > k+1$   (if $\h$ has type $\sf D$ then this observation 
relies upon the assumption that $R_\ii \neq 2$). The aforementioned roots form a base
for $\Phi_k$ which we will denote by $\Pi_k$. The Weyl group $W_k$ of $\Phi_k$ includes
canonically into the Weyl group $W$ of $\Phi$. The longest element $w_0$ of $W_k$ acts on $\Pi_k$ by sending
$\alpha_i \mapsto -\alpha_{k+1-i}$ for $i=1,...,k$ and fixes all roots $\alpha_i$ with $i > k+1$.
Let $w_0 = gH \in N_K(H)/H = W$ where $\hh = \Lie(H)$ and $H$ is a closed subgroup of $K$.
According to the above, $\Ad(g)$ sends the simple factors of $[\li^\ii \li^\ii]$ to the simple factors of
$[\li^\jj \li^\jj]$, and preserves the torus $\hh$. It follows that $\Ad(g)$ sends $\li^\ii$ to $\li^\jj$.

The general case is very similar. Let $\ii$ have length $l$. Using the same observations
as above it is possible to construct for every transposition $\sigma \in \mathfrak{S}_l$ an element
$g \in K$ such that $\Ad(g)$ maps $\li^\ii$ to $\li^{\sigma \ii}$. Since $\mathfrak{S}_l$ is generated
by transpositions we get $(2) \Rightarrow (1)$. 

For the remnant of the proof we assume that $\h$ has type $\sf D$. Let $D$ be the automorphism of $\h$ coming from
the non-trivial automorphism $d$ of the Dynkin diagram which exchanges the last two roots. We must show
that $\li^\ii$ is conjugate to $D(\li^\ii)$ if and only if $\ii$ has an odd term or $\rank \, \h$ is odd. When the rank
is odd the longest element $w_0$ of the Weyl group of $(\h, \hh)$ acts by $-d$. If $w_0 = gH \in N_K(H)/H = W$ then
$\Ad(g)$ sends $\li^\ii$ to $D(\li^\ii)$. Now suppose that some term of $\ii$ is odd and that $\rank \, \h$ is even.
Since we have ascertained that $\li^\ii$ is conjugate to all $\li^{\sigma \ii}$ with $\sigma \in \mathfrak{S}_l$
we might assume that $i_1$ is odd. The assumption that $R_\ii \neq 2$ also ensures that $i_1 \neq \rank\, \h - 1$.
Since $\rank \, \h$ is even, $\rank\, \h - i_1$ is odd. It follows that the roots $\alpha_{i_1+1}, \alpha_{i_1+2},...,\alpha_\ell$
span a root system of type ${\sf D}_{\rank\, \h - i_1}$ and the longest element of the Weyl group of that subsystem is
represented by an element $g \in K$ such that $\Ad(g) \li^\ii = D(\li^\ii)$ (arguing as above).

Finally, we must show that when $\rank\, \h$ is even and all parts of $\ii$ are even the algebras $\li^\ii$ and
$D(\li^\ii)$ are not conjugate. Consider the orbit $\Ind^\h_{\li^\ii}(\Oo_0)$ induced from the zero orbit in $\li^\ii$. We shall see later in the text
that the partition of the induced orbit is very even and that the label of the orbit is $I$. This shall follow from
Proposition~\ref{indpart} and is completely independent of our deductions here. It follows from the same proposition
that $\Ind_{D(\li^\ii)}^\h(\Oo_0)$ has the same partition but the label is $I\!I$. Since the orbits labelled $I$ and $I\!I$ are distinct we deduce that the pair $(\li^\ii, \Oo_0)$ is
not $K$-conjugate to $(D(\li^\ii), \Oo_0)$ and so $\li^\ii$ is not conjugate to $D(\li^\ii)$.
\end{proof}

\p In type $\sf{D}_{\ell}$ with $\ell$ even we shall call a restricted sequence $\ii$ \emph{very even} if $R_\ii = 0$ and all parts of $\ii$ are even. 
We shall say that two restricted sequences are equivalent if they are conjugate under the action of a symmetric group.
Let $\mathcal{R}(\ell)$ denote the equivalence classes of restricted sequences and let $\LL(\h)$ denote the set of Levi subalgebras
of $\h$. Finally we can state the classification
theorem.
\begin{thm}\label{leviclassification}
Let $\h$ have rank $\ell$. There is a surjection $$\pi: \LL(\h)/K \xtwoheadrightarrow{} \mathcal{R}(\ell).$$ The fibre above $(\ii/\!\sim) \, \in \mathcal{R}(\ell)$ is a singleton
containing the conjugacy class of $\li^\ii$ unless $\ell$ is even and $\ii$ is very even, in which case the fibre contains two elements:
the conjugacy classes of $\li^\ii$ and $D(\li^\ii)$, which are permuted by the outer automorphism of $\h$.
\end{thm}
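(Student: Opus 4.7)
The plan is to combine the parametrization of standard Levi subalgebras from paragraphs~\ref{leviform1}--\ref{levparam} with the conjugacy analysis already carried out in Proposition~\ref{leviconjprop} to construct the map $\pi$ and compute its fibres directly; very little genuinely new work is required.

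First I would define $\pi$ on a conjugacy class $[\li] \in \LL(\h)/K$ as follows. Every Levi subalgebra is $K$-conjugate to a standard Levi, and by the parametrization in~\ref{levparam} each standard Levi is uniquely either $\li^\jj$ for some restricted sequence $\jj$, or (in type $\sf D$) $D(\li^\jj)$ with $R_\jj = 0$ and $j_l \neq 1$. In either presentation I send $[\li]$ to the equivalence class $[\jj] \in \mathcal{R}(\ell)$. Well-definedness amounts to showing that if two standard representatives of the same $K$-conjugacy class produce sequences $\ii$ and $\jj$, then $\ii \sim \jj$; this is exactly the content of the equivalence (1)$\Leftrightarrow$(2) in Proposition~\ref{leviconjprop}, using that $D(\li^\jj)$ is isomorphic to $\li^\jj$ and hence assigned the same underlying sequence. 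Surjectivity is immediate, because for every restricted $\ii$ the algebra $\li^\ii$ itself is a standard Levi lying above $[\ii]$.

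For the fibre analysis, fix $[\ii] \in \mathcal{R}(\ell)$; then $\pi^{-1}([\ii])$ consists of those $K$-conjugacy classes of standard Levi subalgebras whose associated sequence lies in $[\ii]$. By the implication (2)$\Rightarrow$(1) of Proposition~\ref{leviconjprop}, all algebras $\li^\jj$ with $\jj \sim \ii$ are mutually $K$-conjugate and so contribute a single class $[\li^\ii]$. Outside type $\sf D$ this is the entire fibre. In type $\sf D$ one additional standard Levi can appear, namely $D(\li^\ii)$, but only when $R_\ii = 0$ and $i_l \neq 1$. The second half of Proposition~\ref{leviconjprop} tells us that $D(\li^\ii)$ is $K$-conjugate to $\li^\ii$ unless $\rank\, \h$ is even and every part of $\ii$ is even, i.e.\ unless $\ii$ is very even. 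Consequently the fibre is a singleton in every case except the very even one, where it is precisely $\{[\li^\ii], [D(\li^\ii)]\}$.

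The final assertion, that the two classes in the very even case are permuted by the outer automorphism of $\h$, follows from the construction of $D$ in~\ref{levparam}: it was introduced as the automorphism induced by the graph automorphism exchanging the last two simple roots, which is the unique non-trivial outer automorphism of $\h$. Since all the combinatorial content has been absorbed by Proposition~\ref{leviconjprop}, the only real obstacle is bookkeeping: one must check that the cases $R_\ii \neq 0$, and $R_\ii = 0$ with $i_l = 1$, do not contribute any further standard Levi subalgebras to the fibre, but this is precisely what is ensured by the uniqueness of the standard form in~\ref{levparam}.
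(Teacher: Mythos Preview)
Your proposal is correct and follows exactly the approach of the paper, which proves the theorem in a single sentence: ``The theorem follows immediately from the previous proposition.'' You have simply spelled out the bookkeeping that the paper leaves implicit---defining $\pi$ via a standard representative, invoking (1)$\Leftrightarrow$(2) of Proposition~\ref{leviconjprop} for well-definedness and the singleton fibres, and invoking the second half of that proposition for the very even exception.
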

\begin{proof}
The theorem follows immediately from the previous proposition. 
\end{proof}
\begin{rem}
This classification bears a satisfying resemblance to the classification of nilpotent orbits given in \ref{classsurj}.
\end{rem}

\p\label{labelsonlevis} Extending the conventions of \cite[Lemma~7.3.2(ii)]{CM} we attach labels to the conjugacy classes of
Levi subalgebras which lie in the fibre above a very even restricted sequence. Let $\rank\, \h$ be even and let $\ii$ be very
even. The conjugacy class of $\li^\ii$ is labelled $I$, whilst the conjugacy class of $D(\li^\ii)$ is labelled $I\!I$.
The similarity with the labels for very even nilpotent orbits is no coincidence, as we shall see when we study induced orbits in
Proposition~\ref{indpart} of the next section. 

\section{A geometric interpretation of the algorithm}\label{KScontext}
\setcounter{parno}{0}

\p We would like to characterise the non-singular partitions in geometric terms. This characterisation  shall be given in the corollary to the next theorem. The remainder of this section shall be spent preparing to prove that theorem. The symmetric group $\mathfrak{S}_l$ acts on the set of sequences in $\{1,...,n\}$ of length $l$  by the rule $\sigma (i_1,...,i_l) = (i_{\sigma(1)},...,i_{\sigma(l)})$. Let $$\Phi_\lambda := \{ \text{the maximal admissible sequences for }\lambda \} /\sim$$ where $\ii \sim \mathbf{j}$ if $\ii$ and $\jj$ have equal length and are conjugate under an element of a symmetric group. What follows is the main theorem of this section.
\begin{thm}\label{nosheets}
The following are true for any $\lambda\in\mathcal{P}_\epsilon(N)$ and any nilpotent element $e(\lambda)$ with partition $\lambda$:
\begin{enumerate}
\item{$e(\lambda)$ lies in $|\Phi_\lambda|$ distinct sheets;}

\smallskip

\item{$|\Phi_\lambda| = 1$ if and only if $\lambda$ is non-singular.}
\end{enumerate}
\end{thm}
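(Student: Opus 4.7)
The plan is to interpret the Kempken-Spaltenstein algorithm as the inverse of Lusztig-Spaltenstein induction, thereby establishing an explicit bijection between $\Phi_\lambda$ and the set of sheets of $\h$ containing $e(\lambda)$. By Theorem~\ref{classifsheets}, every such sheet has data $(\li,\Oo)/K$ with $\Oo$ rigid in $\li$ and $\Ind^\h_\li(\Oo)=\Oo_e$. Applying Theorem~\ref{leviclassification} we may take $\li\cong\gl_\ii\times\mm$ for some restricted sequence $\ii=(i_1,\ldots,i_l)$ (plus a label $I$ or $I\!I$ in the very even type $\sf D$ case), and since the only rigid nilpotent orbit in $\gl_i$ is the zero orbit, Theorem~\ref{rigids} forces $\Oo=0\times\Oo_\mu$ with $\mu\in\PP(R_\ii)^\ast$ rigid.

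The first step is to establish the combinatorial identity that the partition of $\Ind_{\gl_i\times\li'}^{\li}(0\times\Oo_{\mu'})$ is obtained from $\mu'$ either by adding $2$ to the first $i$ parts (corresponding to Case~$1$ at index $i$) or by adding $2$ to the first $i-1$ parts and $1$ to each of the parts indexed by $i$ and $i+1$ (corresponding to Case~$2$ at index $i$), these being the only operations producing admissible partitions in $\PP$. Iterating, each maximal admissible sequence $\ii$ for $\lambda$ produces a pair $(\gl_\ii\times\mm,\,0\times\Oo_{\lambda^\ii})$ inducing to $\Oo_e$, with $\lambda^\ii$ rigid by Lemma~\ref{maxadmiss}. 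Reordering the indices of $\ii$ corresponds to permuting the $\gl_i$ factors of the Levi, hence to Weyl group conjugation, matching Proposition~\ref{leviconjprop}; in the very even type $\sf D$ case $\lambda^\ii$ is itself very even and the two labels on the pair match those on $\Oo_e$. This yields Part~(1).

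For Part~(2) I combine (1) with Corollary~\ref{maxseq} and Theorem~\ref{zismax}, after noting that the rank of the sheet with data $(\gl_\ii\times\mm,\,0\times\Oo_{\lambda^\ii})/K$ equals $l=|\ii|$, the dimension of the centre of $\gl_\ii\times\mm$. When $\lambda$ is non-singular, $c(\lambda)=z(\lambda)$, so every maximal admissible sequence has the maximal length $z(\lambda)$; uniqueness up to reordering would then be proven inductively using the shell decomposition of Proposition~\ref{reductionprop}: since all $2$-steps are good, the shell $\lambda^S$ and its profiles are canonically determined, and within each profile the canonical admissible sequence of Proposition~\ref{profileprop} is shown to be the unique maximal one up to $\mathfrak{S}$-action. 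Conversely, when $\lambda$ is singular I would exhibit two inequivalent maximal sequences by resolving a bad $2$-step $(i,i+1)$ in two different ways: one applying Case~$2$ at $i$ first, the other eliminating the bad boundary via Case~$1$ at an adjacent index before applying Case~$2$; a careful bookkeeping of the multiset of indices used confirms that these produce distinct classes in $\Phi_\lambda$.

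The main obstacle will be the uniqueness half of Part~(2): rigorously showing that within a non-singular profile every maximal admissible sequence is $\mathfrak{S}$-conjugate to the canonical one constructed in Proposition~\ref{profileprop}. I expect to carry this out by induction on the length of the profile, exploiting Lemmas~A and B of Section~\ref{KSdetails} to track how $\Delta(\lambda^\ii)$ is carved out of $\Delta(\lambda)$ as the algorithm proceeds, and reducing to the case analysis already carried out in the proof of Theorem~\ref{zismax}.
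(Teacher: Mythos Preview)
Your treatment of Part~(1) is essentially the paper's: the map $\ii\mapsto(\gl_\ii\times\mm,\,0\times\Oo_{\lambda^\ii})/K$ is exactly the bijection $\varphi\colon\Phi_\lambda\to\Psi_\lambda$ of Corollary~\ref{bijection}, built on Proposition~\ref{mainpropref} and Lemma~\ref{contsheets}.

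For Part~(2) your route diverges from the paper's, and the divergence matters. The paper does \emph{not} go through shells, profiles, or the equality $c(\lambda)=z(\lambda)$; instead (Proposition~\ref{phising}) it argues directly. For the non-singular direction: given two maximal admissible sequences $\ii,\jj$, use Corollary~\ref{tau} to reorder each into ascending order; if they now differ first at position $k$ with $i_k<j_k$, then either Case~1 at $i_k$ leaves a gap $\geq 2$ that $\jj$ can never close (non-singularity forbids $i_k+1\in\Delta$), or Case~2 leaves $i_k\in\Delta(\lambda^\jj)$; either way $\jj$ is not maximal. For the singular direction: the paper introduces the invariant $\kappa(\lambda)_i=\lambda_i-\lambda_{i+1}\bmod 2$ and shows, via Lemma~\ref{kappa}, that a bad $2$-step can be resolved in two ways producing rigid outputs with different $\kappa$, hence (by Corollary~\ref{tau}) non-conjugate sequences.

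Your profile-based strategy for uniqueness has a gap. Propositions~\ref{reductionprop} and \ref{profileprop} are \emph{existence} tools: they build one admissible sequence of length $z(\lambda)$. They neither assert nor imply that the sequence built within a profile is the unique maximal one up to $\mathfrak{S}$-action, and nothing in Section~\ref{KSmaxlength} establishes this. Moreover, to even invoke profiles you must first pass to the shell $\lambda^S$, which already uses a particular admissible sequence $S$; showing that an arbitrary maximal sequence is $\mathfrak{S}$-equivalent to one beginning with $S$ is essentially the statement you are trying to prove. Your claim that ``every maximal admissible sequence has the maximal length $z(\lambda)$'' happens to be true in the non-singular case (one can check $z$ drops by exactly $1$ at each step), but it does not follow from $c(\lambda)=z(\lambda)$ alone as you suggest, and in any case equal length does not give equal multiset. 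The paper's ascending-order argument and the $\kappa$ invariant sidestep all of this.
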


\noindent The next corollary explains our choice of terminology.
\begin{cor}\label{izosim} Suppose $\lambda\in\mathcal{P}_\epsilon(N)$.
Then the following are equivalent:
\begin{enumerate}
\item{the partition $\lambda$ is non-singular;}

\smallskip

\item{$c(\lambda) = z(\lambda)$;}

\smallskip

\item{$e(\lambda)$ lies in a unique sheet;}
\end{enumerate}
If $\K$ has characteristic zero then $1$, $2$ and $3$ hold if and only if
$e(\lambda)$ is a non-singular point on the quasi-affine variety $\h^{(\dim\, \h_e)}$.
\end{cor}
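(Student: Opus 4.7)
The strategy is to observe that the first three equivalences are essentially immediate from results already assembled in this chapter, and that the additional characterisation in characteristic zero follows by combining them with the smoothness theorem for sheets in classical types.

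First I would note that the equivalence $(1) \Leftrightarrow (2)$ is precisely the content of Corollary~\ref{maxseq}, so no further work is required there. The equivalence $(1) \Leftrightarrow (3)$ comes directly from parts (1) and (2) of Theorem~\ref{nosheets}: the number of sheets through $e(\lambda)$ is $|\Phi_\lambda|$, and this cardinality is $1$ exactly when $\lambda$ is non-singular. Thus the three combinatorial/algebraic conditions are shown to be equivalent by citing the previous two results, with no new argument needed.

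For the final assertion, assume $\chr(\K) = 0$. By definition the sheets of $\h$ are the irreducible components of $\h^{(\dim\,\h_e)}$, so $e(\lambda)$ lies on finitely many irreducible components of this quasi-affine variety, namely precisely those sheets containing it. The key external input is the theorem of \cite{IH}, recalled in \ref{dimformula}, that every sheet of a classical Lie algebra is smooth. If $(3)$ holds, then there is a unique sheet $\SS$ containing $e(\lambda)$; since the other sheets are closed off from $e$ inside $\h^{(\dim\,\h_e)}$, there is an open neighbourhood of $e$ in $\h^{(\dim\,\h_e)}$ contained in $\SS$, and smoothness of $\SS$ then gives that $e(\lambda)$ is a non-singular point of $\h^{(\dim\,\h_e)}$. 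Conversely, if $e(\lambda)$ lies in two distinct sheets $\SS_1, \SS_2$, then it belongs to the intersection of two different irreducible components of $\h^{(\dim\,\h_e)}$, which forces $e(\lambda)$ to be a singular point of $\h^{(\dim\,\h_e)}$; this is the standard fact that a variety is singular along the intersection of two of its irreducible components.

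The only place where any real content is invoked is the identification of sheets as the irreducible components of the $\h^{(k)}$ and the smoothness result of \cite{IH}; both have already been recorded. I do not anticipate any serious obstacle, since the heavy combinatorial and representation-theoretic work has been absorbed into Corollary~\ref{maxseq} and Theorem~\ref{nosheets}, and the geometric step is routine once smoothness of sheets in classical types is taken for granted.
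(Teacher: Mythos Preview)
Your proposal is correct and follows essentially the same route as the paper: the equivalence of (1), (2), (3) is assembled from Corollary~\ref{maxseq} and Theorem~\ref{nosheets}, and the characteristic-zero addendum is deduced from the smoothness of sheets in classical types \cite{IH} together with the standard fact that a point of a variety is non-singular if and only if it lies on a unique irreducible component and that component is smooth there. The paper cites \cite[Chapter~II, \S2, Theorem~6]{Sh} for this last step rather than sketching it, but the content is identical.
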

\begin{proof}
The statements 1, 2, and 3 are equivalent by
Corollary~\ref{maxseq} and Theorem~\ref{nosheets}. Now suppose that
the characteristic of $\K$ is 0.  Then
it is proven in \cite[Chapter 6]{IH} that the sheets of $\h$
are smooth. In this
situation it follows from \cite[Chapter II, \S2, Theorem 6]{Sh} that
$e$ is a non-singular point of the algebraic variety
$\h^{(m)}$ if and only if $e$ belongs to a unique irreducible
component of $\h^{(m)}$. This completes the proof.
\end{proof}

\p We shall now assemble all of the necessary information required to prove Theorem \ref{nosheets}. Recall that the rigid nilpotent orbits are those which are not induced. We stated the classification of rigid orbits in $\h$ in terms of partitions in Theorem~\ref{rigids}. It is well known that every orbit in the special linear algebra is Richardson (induced from the zero orbit in some Levi subalgebra) so that the zero orbit is the only rigid orbit (see \cite[7.2.3]{CM} for example). The first part of the next lemma follows quickly from these observations. The remaining parts are contained in  \cite{LS}, \cite{BKr}, \cite{Bor}.
\begin{prop}\label{inducedprops}
The following are true:
\begin{enumerate}
\item{If $\li$ is a Levi subalgebra isomorphic to $\gl_\ii \times \mm$ as per $\S\ref{conjlev}$ then the rigid nilpotent orbits in $\li$ take the form $$\Oo = \Oo_0 \times \Oo_{\mu}$$ with $\mu \in \mathcal{P}_\epsilon(N - 2\sum_j i_j)^\ast$ a rigid partition, $\Oo_\mu$ a nilpotent orbit in $\mm$ with partition $\mu$ and $\Oo_0$ the zero orbit in $\gl_\ii$.}

\item{If $\mathcal{S}$ is a sheet with data $(\li, \Oo_\li)$ then \emph{Ind}$_\li^\h(\Oo_\li)$ is the unique nilpotent orbit contained in $\mathcal{S}$;}

\item{If $\li_1$ and $\li_2$ are Levi subalgebras of $\h$, $\Oo$ is a nilpotent orbit in $\li_1$ and $\li_1 \subseteq \li_2$, then $$\text{\emph{Ind}}_{\li_2}^\h(\text{\emph{Ind}}_{\li_1}^{\li_2}(\Oo) ) = \text{\emph{Ind}}_{\li_1}^\h(\Oo).$$}
\end{enumerate}
\end{prop}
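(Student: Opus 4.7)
The plan is to prove the three parts separately. Parts (2) and (3) are classical results of Borho and Lusztig--Spaltenstein respectively; the strategy is to give streamlined arguments using the machinery already introduced, while (1) is a short consequence of the structure of rigid orbits together with Kempken--Spaltenstein's classification (Theorem~\ref{rigids}).

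For part (1), I would first observe that if a reductive Lie algebra decomposes as a direct sum of ideals $\li = \li_1 \oplus \li_2$, then nilpotent $L$-orbits in $\li$ factor as products $\Oo_1 \times \Oo_2$, and induction with respect to a Levi subalgebra $\li_1' \oplus \li_2' \subseteq \li$ commutes with this product decomposition. Hence rigidity is detected componentwise. In each general linear factor $\gl_{i_j}$ every nilpotent orbit is Richardson (one chooses the block Levi whose block sizes match the transpose of the partition), so the zero orbit is the only rigid orbit. In the remaining classical factor $\mm$, Theorem~\ref{rigids} classifies the rigid orbits by rigid partitions. Combining these two statements gives the asserted form.

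For part (2), I would invoke the theory of Jordan classes set up in~\ref{jordanclassif}. The sheet $\SS$ is the closure of a unique dense Jordan class $J$, which under the bijection of (\ref{jordanclassstruct}) corresponds to $(\li, \Oo_\li)/K$; a generic point is of the form $x = x_s + e_0$ with $x_s \in \mathfrak{z}(\li)_\reg$ and $e_0 \in \Oo_\li$. Scaling $x_s$ by $t \in \K$ produces a family inside $\SS$ which specialises at $t=0$ to an element of $\Oo_\li + \n$, where $\n$ is the nilradical of a parabolic with Levi factor $\li$; by the very definition of induction, the generic such specialisation lies in $\Ind_\li^\h(\Oo_\li)$, so this induced orbit is contained in $\SS$. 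For uniqueness, one notes that any nilpotent orbit in $\SS$ is itself a Jordan class in $\overline{J}$, so corresponds to data $(\li', \Oo')/K$ with $\li' \supseteq \li$ and $\Oo' = \Ind_\li^{\li'}(\Oo_\li)$; constancy of $\dim \h_x$ on $\SS$ together with the dimension formula for induced orbits forces $\li' = \li$, collapsing to a single nilpotent orbit.

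For part (3), the transitivity formula, choose nested parabolic subalgebras $\mathfrak{p}_1 \subseteq \mathfrak{p}_2$ of $\h$ with Levi decompositions $\mathfrak{p}_i = \li_i \ltimes \n_i$ (possible precisely because $\li_1 \subseteq \li_2$). Then $\mathfrak{p}_1 \cap \li_2$ is a parabolic subalgebra of $\li_2$ with Levi factor $\li_1$ and nilradical $\n_1 \cap \li_2$, and one has the vector-space decomposition $\n_1 = \n_2 \oplus (\n_1 \cap \li_2)$. By definition $\Ind_{\li_1}^{\li_2}(\Oo)$ is the unique $L_2$-orbit meeting $\Oo + (\n_1 \cap \li_2)$ in a dense open subset, so $\Ind_{\li_1}^{\li_2}(\Oo) + \n_2$ is dense in $\Oo + \n_1$; consequently the unique $K$-orbit dense in $\Ind_{\li_1}^{\li_2}(\Oo) + \n_2$ is also the unique $K$-orbit dense in $\Oo + \n_1$, which is $\Ind_{\li_1}^\h(\Oo)$ by definition. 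The main obstacle in the whole argument is the uniqueness of the nilpotent orbit in a sheet (part 2), which is the deepest input and ultimately rests on Borho's theorem; parts (1) and (3) are essentially formal once the correct parabolic picture and product decomposition are in place.
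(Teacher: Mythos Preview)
Your treatment of part~(1) is exactly what the paper does: it observes that every nilpotent orbit in $\gl_n$ is Richardson (so only the zero orbit is rigid there), invokes Theorem~\ref{rigids} for the $\mm$-factor, and combines these componentwise. The paper does not prove parts~(2) and~(3) at all; it simply cites \cite{LS}, \cite{BKr}, \cite{Bor}. So for those parts you are going beyond the paper, and your sketch of~(3) via nested parabolics and the nilradical decomposition $\n_1 = \n_2 \oplus (\n_1 \cap \li_2)$ is the standard correct argument.

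Your sketch of~(2), however, has two genuine gaps. For existence, the family $t x_s + e_0$ specialises at $t=0$ to $e_0$ itself, which is not a generic point of $\Oo_\li + \n$; this only shows $e_0 \in \overline{J}$, not that the induced orbit lies in $\SS$. The missing step is that for $z \in \z(\li)_\reg$ one has $[\n, e_0 + z] = \n$ (since $\ad(z)$ is invertible on $\n$), so the $K$-orbit of $e_0 + z$ already contains a dense open subset of $e_0 + z + \n$; letting $z \to 0$ then gives $e_0 + \n \subseteq \overline{J}$, and the dimension formula finishes. For uniqueness, your claim that a nilpotent orbit in $\SS$ has Jordan data $(\li', \Oo')$ with $\li' \supseteq \li$ is not right: a nilpotent orbit is its own Jordan class with data $(\h, \Oo')$, so $\li' = \h$ always, and no dimension argument forces $\li' = \li$. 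The actual uniqueness requires Borho's description of $\overline{J} \cap \Ni(\h)$, which is the substantive content of the cited references; you correctly flag this as the deepest input, but the argument as written does not establish it.
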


\p Fix an orbit $\Oo_\lambda$ with partition $\lambda \in \mathcal{P}_\epsilon(N)$. Recall the map $\pi$ of Theorem~\ref{leviclassification}. Let $\Psi_\lambda$ denote the set of all $K$-conjugacy classes of pairs $(\li, \Oo)$ where the class of $\li$ lies in the fibre of $\pi$ above some restricted sequence $\ii$, so that $\li \cong \gl_\ii\times \mm$ is a Levi subalgebra of $\h$, and where $\Oo = \Oo_0 \times \Oo_{\mu}$ a nilpotent orbit in $\li$, such that $\mu$ is a rigid partition and $\Oo_{\lambda} = \Ind_{\li}^\h(\Oo).$ The significance of this set is given by the following.
\begin{lem}\label{contsheets}
$\Oo_\lambda$ lies in $|\Psi_\lambda|$ distinct sheets.
\end{lem}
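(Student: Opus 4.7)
The plan is to identify $\Psi_\lambda$ with the set of sheets of $\h$ containing $\Oo_\lambda$ by reading off the classification of sheets recalled in Theorem~\ref{classifsheets}. That theorem asserts that the sheets of $\h$ are in bijection with $K$-conjugacy classes of pairs $(\li,\Oo)$ in which $\li$ is a Levi subalgebra and $\Oo\subseteq\li$ is a rigid nilpotent orbit. Part (2) of Proposition~\ref{inducedprops} then says that each sheet contains a unique nilpotent orbit, namely $\Ind_\li^\h(\Oo)$. Hence the sheets containing $\Oo_\lambda$ correspond bijectively to the $K$-conjugacy classes of rigid pairs $(\li,\Oo)$ satisfying $\Ind_\li^\h(\Oo)=\Oo_\lambda$.

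The second ingredient is the description of the data $(\li,\Oo)$ in terms familiar from Section~\ref{conjlev} and Proposition~\ref{inducedprops}(1). By Theorem~\ref{leviclassification}, every Levi of $\h$ is $K$-conjugate to some $\li^\ii$ or $D(\li^\ii)$ attached to a restricted sequence $\ii$, and in either case it is isomorphic to a product $\gl_\ii\times\mm$. On the other hand, Proposition~\ref{inducedprops}(1) states that the rigid nilpotent orbits in such a Levi are precisely those of the shape $\Oo_0\times\Oo_\mu$ with $\mu$ a rigid partition in $\mathcal{P}_\epsilon(R_\ii)^\ast$. Plugging these two descriptions into the previous paragraph identifies the bijection class of a sheet containing $\Oo_\lambda$ with exactly the data used to define $\Psi_\lambda$.

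Putting this together, the sheets containing $\Oo_\lambda$ are in bijection with $\Psi_\lambda$ and in particular are $|\Psi_\lambda|$ in number, which is the content of the lemma. I do not anticipate a substantive obstacle: the work has already been carried out in the earlier classification results, and the sole care required is to ensure that the definition of $\Psi_\lambda$ already groups pairs by $K$-conjugacy, so that the possible doubling of Levi classes for very even sequences in type $\sf D$ (fibres of size two in the map $\pi$ of Theorem~\ref{leviclassification}) is correctly accounted for on both sides of the bijection.
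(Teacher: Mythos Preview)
Your proposal is correct and follows essentially the same approach as the paper: both arguments invoke Theorem~\ref{classifsheets} to parametrise sheets by $K$-classes of rigid pairs, Proposition~\ref{inducedprops}(2) to identify which sheet contains $\Oo_\lambda$, and Proposition~\ref{inducedprops}(1) to recognise the form of rigid orbits in a Levi. Your additional remark about the type~$\sf D$ doubling is unnecessary here, since $\Psi_\lambda$ is already defined via $K$-conjugacy classes of pairs, so the fibres of $\pi$ of size two are automatically accounted for.
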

\begin{proof}
Let $\mathcal{S}$ be a sheet of $\h$ with data $(\li, \Oo)/K$. By Proposition~\ref{inducedprops} we see that $\Oo_\lambda \subseteq \mathcal{S}\cap \mathcal{N}(\h)$ if and only if $\Oo_\lambda = \Ind_\li^\h(\Oo)$. By Theorem~\ref{classifsheets} the orbit $\Oo$ is rigid and by part 1 of Lemma~\ref{inducedprops} it takes the form prescribed in the definition of $\Psi_\lambda$.
\end{proof}

\p In light of \ref{contsheets} our method for proving part 1 of Theorem~\ref{nosheets} is now easy to explain: we construct a bijection from $\Phi_\lambda$ to $\Psi_\lambda$. This construction is no miracle as the algorithm was defined with this bijection in mind. This shall be done in Corollary~\ref{bijection}. Given the definition of $\Psi_\lambda$ it is clear that we shall need to develop a precise understanding of the partitions associated to induced orbits. The result stated below may be deduced from \cite[Corollary~7.3.3]{CM}. We warn the reader that when interpreting the proposition for algebras of type $\sf B$ the unique nilpotent orbit in the trivial algebra $\mathfrak{so}_1$ is labelled by the partition $\lambda = (1)$ contrary to the common convention. Furthermore, our description of labels attached to induced orbits does not quite agree with the description in \cite{CM}. This is due to a small misprint in that book which we will explain in the remark below. 
\begin{prop}\label{indpart}
Recall that the natural representation of $\h$ is of dimension $N$. Choose $0 < 2i \leq N$ and set $R = N - 2i$. Let $\li \cong \gl_{i} \times \mathfrak{m}$ be a maximal Levi subalgebra of $\h$ in a conjugacy class constructed from $(i)$ and let $\Oo = \Oo_0\times \Oo_{\mu}$ be a nilpotent orbit in $\li$ where $\Oo_\mu$ has partition $\mu \in \mathcal{P}_\epsilon(R)$. Then $\Oo_{\lambda} = \emph{\Ind}_\li^\h(\Oo)$ has associated partition $\lambda$ where $\lambda$ is obtained from $\mu$ by the following procedure: add 2 to the first $i$ columns of $\mu$ (extending by zero if necessary); if the resulting partition lies in $\mathcal{P}_\epsilon(N)$ then we have found $\lambda$, otherwise we obtain $\lambda$ by subtracting 1 from the $i^\text{th}$ column and adding 1 to the $(i+1)^\text{th}$.

Suppose we are in type $\sf D$. If $\lambda$ is very even then either $\mu$ is very even or $R = 0$ and $\rank\, \h$ is even. If $R \neq 0$ then $\Oo_{\lambda}$ inherits its label from $\mu$, whilst if $R = 0$ then the induced orbit inherits its label from $\li$.
\end{prop}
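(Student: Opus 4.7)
The plan is two-fold: first derive the partition formula from the Lusztig--Spaltenstein description of induced orbits in classical types, and then handle the labels in type $\sf D$ via the equivariance of induction under the diagram automorphism $D$.

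For the partition formula I would argue as follows. Decompose the natural representation as $V = V' \oplus U \oplus U^\ast$ with $\mm$ acting on $V'$ (dimension $R$) and $\gl_i$ acting on the dual pair $U,U^\ast$. A generic element of $\Oo + \n$, where $\n$ is the nilradical of a parabolic with Levi $\li$, has Jordan partition obtained by perturbing $\mu\cup(1^{2i})$ inside $\n$; the Lusztig--Spaltenstein theorem identifies $\Oo_\lambda$ with the corresponding $\epsilon$-collapse. Translating the collapse into the column-language of Young diagrams recovers exactly the operation stated in the proposition: the nilradical contribution absorbs the $2i$ extra boxes into the first $i$ columns of $\mu$, and when the result fails to lie in $\mathcal{P}_\epsilon(N)$ one single boundary swap between columns $i$ and $i{+}1$ suffices to enforce the $\epsilon$-condition. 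This reduction to \cite[Corollary~7.3.3]{CM} is then a short combinatorial check on column heights.

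For the label analysis in type $\sf D$, the crucial identity is
$$ D\!\bigl(\Ind_\li^\h(\Oo)\bigr) \;=\; \Ind_{D(\li)}^\h\!\bigl(D(\Oo)\bigr), $$
which is immediate from the definition of induction, since $D$ permutes parabolic subalgebras containing a given Levi. If $R > 0$ then the $\gl_i$-factor of $\li$ is distinguished inside $\li$ by its dimension, so $D$ preserves the $K$-conjugacy class of $\li$ (arguing as in the proof of Proposition~\ref{leviconjprop}); a short parity analysis of the column-addition recipe shows that the conditions ``$\lambda$ very even'' and ``$R > 0$'' together force ``$\mu$ very even'', and the $I/I\!I$-label of $\Oo_\mu$ then transfers to $\Oo_\lambda$ by the above equivariance. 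If on the other hand $R = 0$, then $\mm$ is trivial, $\mu$ is empty, and the recipe yields $\lambda = (i,i)$; this is very even exactly when $i = \rank\,\h$ is even. In this situation Proposition~\ref{leviconjprop} ensures that $\li^{(i)}$ and $D(\li^{(i)})$ are $K$-inequivalent Levi subalgebras, which by the equivariance induce the two distinct very even orbits; the labelling conventions of \ref{labelsonlevis} then match labels on Levis to labels on orbits.

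The main obstacle will be the parity implication ``$R > 0$ and $\lambda$ very even $\Rightarrow$ $\mu$ very even'': one must check that any $\mu$ carrying an odd part necessarily produces, after column-addition and any collapse, a partition retaining an odd part. The delicate point is confirming that the boundary adjustment, when triggered, cannot simultaneously extinguish all odd-part irregularities when $R > 0$ is present to constrain the column heights below row $i{+}1$.
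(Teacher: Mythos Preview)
The paper provides no proof of this proposition beyond citing \cite[Corollary~7.3.3]{CM} for the partition recipe and then, in Remark~\ref{amendment}, correcting two misprints in \cite{CM} concerning the type ${\sf D}$ labels. Your proposal goes further by sketching an actual argument, and for the partition part your collapse-based reduction to \cite{CM} is in the same spirit as the source you are ultimately citing.

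For the label analysis in type ${\sf D}$, however, your argument is circular within the paper's logical structure. In the $R=0$ case you invoke Proposition~\ref{leviconjprop} to assert that $\li^{(i)}$ and $D(\li^{(i)})$ are $K$-inequivalent when $i=\rank\,\h$ is even. But look at the final paragraph of the proof of Proposition~\ref{leviconjprop}: the paper explicitly defers precisely this non-conjugacy statement to Proposition~\ref{indpart}, writing that it ``shall follow from Proposition~\ref{indpart} and is completely independent of our deductions here.'' So you are citing a statement whose proof in this paper depends on the result you are proving. The non-conjugacy can certainly be established independently---for instance via the known description of the Weyl group orbits on subsets of $\Pi$ in type ${\sf D}$, or by arguing directly that $D$ swaps the two very even orbits with partition $\lambda$ and hence the two induced orbits must be distinct---but you would need to supply such an argument rather than appeal to Proposition~\ref{leviconjprop}.

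A smaller point on conventions: in this paper the parts $\lambda_j$ of a partition are drawn as the \emph{column} heights of the Young diagram (compare Figure~\ref{pikcha_A}, where the partition $(4,4,3,3,1)\in\mathcal{P}_1(15)$ is depicted with five columns), so ``adding $2$ to the first $i$ columns of $\mu$'' means $\mu_j\mapsto\mu_j+2$ for $j\le i$; this is also what is required for consistency with the KS algorithm via Corollary~\ref{case1or2}. With $R=0$ and $\mu$ empty this yields $\lambda=(2^i)$, not $(i,i)$ as you compute. Your conclusion that $\lambda$ is very even precisely when $i$ is even happens to survive, since $(2^i)\in\mathcal{P}_1(2i)$ iff $i$ is even, but the intermediate formula is wrong under the paper's conventions and would mislead you in the $R>0$ parity analysis you flag as the main obstacle.
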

\begin{rem}\label{amendment}
\rm{The above proposition is based on \cite[Corollary~7.3.3]{CM} however the reader will notice that the way in which the labels are chosen does not coincide with that lemma. The reason for this is that the book contains two small misprints which we must now amend.

The first problem stems from a comparison Lemmas~5.3.5 and 7.3.3(ii). We see, given the conventions of 5.3.5, that 7.3.3(ii) should actually state that the label of $\Ind^\h_{\gl_i\oplus\mm}(\Oo)$ is different to the label of $\Oo$ when $(\rank\, \h + \rank\, \mm)/2$ is odd. We could, of course, change 7.3.3(ii) but a better amendment is to change 5.3.5 so that the labelling convention for very even orbits is independent of $n$: in their notation we take $a = 2$ and $b = 0$ regardless of $n$. With this convention the statement of 7.3.3(ii) is correct however Lemma~7.3.3(iii) should now state that the label of the induced orbit coincides with the label of Levi from which it is induced. This is the convention we have taken in the above proposition.

The second misprint regards the number of conjugacy classes of maximal Levis in 7.3.2(ii). That lemma states that in type $\sf{D}_\ell$ there are two conjugacy classes of Levi subalgebras of the form $\gl_{\ell}$. Comparing with our Theorem~\ref{leviclassification} we see that their claim holds provided $\ell$ is even, but not when $\ell$ is odd, where there is a single class isomorphic to $\gl_\ell$.}
\end{rem}

\p In light of the above proposition we may explain the definition of the KS algorithm. We fix an orbit $\Oo_\lambda$ with partition $\lambda$ and want to decide when is it possible to find a pair consisting of a maximal Levi $\li = \gl_{i_1}\oplus \mathfrak{m}\cong\gl_{i_1}\times \mathfrak{m}$ and a nilpotent orbit $\Oo = \Oo_0 \times \Oo_{\mu}$ (with partition $\mu$) such that $\Ind_\li^\h(\Oo) = \Oo_{\lambda}$. It is now clear that this occurs precisely when we have an admissible index $i$ and a Levi subalgebra isomorphic to $\gl_i \times \mm$. In this case $\mu = \lambda^{(i)}$ and if $\Oo_{\mu}$ has a label then it is completely determined by that of $\Oo_{\lambda}$. The precise statement is contained in the following corollary.

Recall that in Theorem~\ref{leviclassification} we constructed a map $\pi$ from restricted sequences upto rearrangement to conjugacy classes of Levi subalgebras. 
\begin{cor}\label{case1or2}
Choose an orbit $\Oo_\lambda$ with partition $\lambda\in \mathcal{P}_\epsilon(N)$. Suppose that $(i)$ is a restricted sequence and that $\li$ is a maximal Levi subalgebra whose conjugacy class lies in the fibre of $\pi$ above $(i)$, so that $\li \cong \gl_i \times \mm$. Then the following are equivalent:
\begin{enumerate}
\item{$i$ is an admissible index for $\lambda$. If $(i)$ is a very even sequence then $\li$ belongs to the conjugacy class with the same label as $\Oo_{\lambda}$ (Cf. \ref{labelsonlevis});}
\item{There exists an orbit $\Oo = \Oo_0 \times \Oo_{\mu}$ with $\Oo_{\lambda} = \Ind_\li^\h(\Oo)$.}
\end{enumerate}
If these two equivalent conditions hold then $\Oo_{\mu}$ has partition $\mu = \lambda^{(i)}$. Furthermore, for every other orbit $\widetilde{\Oo} \subseteq \mathcal{N}(\li)$ with $\widetilde{\Oo} = \Oo_0\times \Oo_{\tilde{\mu}}$ such that $\Oo_{\lambda} = \Ind_\li^\h(\widetilde{\Oo})$, we have $(\li, \Oo)/K = (\li, \widetilde{\Oo})/K$.
\end{cor}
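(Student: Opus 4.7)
The proof hinges on translating the combinatorial recipe of Proposition~\ref{indpart} for the partition of an induced orbit into the language of the Kempken-Spaltenstein algorithm. The first construction there (add 2 to the first $i$ parts of $\mu$) is inverted by Case~1 of the algorithm, and the inequality $\lambda_i \geq \lambda_{i+1}+2$ is exactly what is required so that subtracting 2 from the first $i$ parts of $\lambda$ returns a genuine non-increasing partition $\mu = \lambda^{(i)}$. Similarly the second construction is inverted by Case~2, where the conditions $i \in \Delta(\lambda)$ and $\lambda_i = \lambda_{i+1}$ together encode both the partition-ordering constraints and the parity obstructions which force one to pass to the second construction rather than the first.

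For the direction (2)$\Rightarrow$(1), I would invoke Proposition~\ref{indpart} to read $\lambda$ off from $\mu$ by one of the two constructions and simply observe that $i$ is admissible (via Case~1 or Case~2 accordingly) and that $\mu$ agrees with $\lambda^{(i)}$. In the second construction, the defining clauses of $\Delta(\lambda)$ (namely the inequalities $\lambda_{i-1} \neq \lambda_i$ and $\lambda_{i+1} \neq \lambda_{i+2}$ and the parities $i = i'$, $(i+1) = (i+1)'$) are forced by the fact that the first construction fails to land in $\mathcal{P}_\epsilon(N)$; a short multiplicity-parity calculation shows that these are precisely the conditions under which the first construction would create forbidden odd multiplicities of constrained-parity parts.

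For the reverse direction (1)$\Rightarrow$(2), I would set $\mu := \lambda^{(i)}$ and verify that $\mu \in \mathcal{P}_\epsilon(N-2i)$. In Case~1 the parities of all parts are preserved and the multiplicities of constrained-parity parts interact only through the single value $\lambda_{i+1}$ at the boundary, where the hypothesis $\lambda_i \geq \lambda_{i+1}+2$ keeps the interaction clean; in Case~2 the parities of parts $i$ and $i+1$ flip to produce two new constrained parts of matched multiplicity $2$, while the remaining clauses of $i \in \Delta(\lambda)$ enforce parity compatibility at the two flanking positions. Proposition~\ref{indpart} applied in reverse then yields $\Oo_\lambda = \Ind_\li^\h(\Oo_0 \times \Oo_\mu)$. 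In type $\sf D$ with $(i)$ very even, the same proposition shows that the label of the induced orbit is inherited from the Levi, so the labelling clause attached to~(1) is exactly what pins the correct conjugacy class of $\li$ inside the fibre of $\pi$ above $(i)$ in order to match the label of $\Oo_\lambda$. The uniqueness assertion closing the corollary is then immediate: any $\tilde\mu$ satisfying~(2) must coincide with $\mu$ as a partition, so $\Oo_{\tilde\mu}$ either equals $\Oo_\mu$ or is the other very even orbit in $\mm$ of opposite label, and in the latter situation an outer automorphism of $\mm$ realised inside $N_K(\li)$ interchanges them, giving $(\li,\Oo)/K = (\li,\widetilde\Oo)/K$. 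The main obstacle throughout is the bookkeeping of multiplicity parities across the two constructions, and in particular the identification of the second construction's parity shifts with the data encoded in $i \in \Delta(\lambda)$.
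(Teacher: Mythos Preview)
Your approach is essentially the paper's: both directions and the identification $\mu=\lambda^{(i)}$ are read directly off Proposition~\ref{indpart}, and you spell out more carefully than the paper does why the two constructions there invert to Case~1 and Case~2 of the algorithm. The label bookkeeping in type $\sf D$ is also handled along the same lines.

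The one genuine gap is in your uniqueness argument. You write that when $\tilde\mu$ and $\mu$ are the two very even orbits in $\mm$ with opposite labels, ``an outer automorphism of $\mm$ realised inside $N_K(\li)$ interchanges them''. This is not automatic: for a Levi $\li=\gl_i\times\mm$ with $\mm$ of type ${\sf D}_m$, whether the diagram automorphism of $\mm$ is induced by some element of $N_K(\li)$ depends on the embedding, and in particular on parities. The paper closes this by first observing that the problematic situation ($\mu$ very even but $\lambda$ not) forces $(i,i+1)$ to be the unique $2$-step of $\lambda$, good, with all other parts even; hence $N$ is twice an odd number and $\rank\,\h=N/2$ is odd. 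In type ${\sf D}_\ell$ with $\ell$ odd the longest Weyl group element $w_0$ acts as $-d$ for $d$ the diagram automorphism, so any lift $g\in N_K(T)$ of $w_0$ preserves $\li$ and swaps the two very even orbits in $\mm$. You need this (or an equivalent) argument; simply asserting that the outer automorphism is realised inside $N_K(\li)$ is the statement to be proved, not a step in its proof.

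A smaller omission: in $(2)\Rightarrow(1)$ you do not verify the labelling clause. When $(i)$ is very even one has $\mm=0$, so by Proposition~\ref{indpart} the label of $\Oo_\lambda$ is inherited from $\li$; this forces the labels to match and should be said explicitly.
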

\begin{proof}
Fix $\li$ as in the statement of the lemma. Let $\Oo$ be an abritrary nilpotent orbit of the form $\Oo_0 \times \Oo_{\mu} \subseteq \li$. The previous proposition implies that if $\lambda$ is the partition of $\Ind_\li^\h(\Oo_0 \times \Oo_{\mu})$ then $i$ is admissible for $\lambda$ and $\lambda^{(i)} = \mu$. Suppose (1) holds and let $\Oo_{\lambda^{(i)}}$ be an orbit in $\mm$ with partition $\lambda^{(i)}$. Then the partition of $\Ind_\li^\h(\Oo_0 \times \Oo_{\lambda^{(i)}})$ is $\lambda$. If $\lambda$ is not very even then (2) follows. If we are in type $\sf D$ and $\lambda$ is very even then according to the previous proposition either $\lambda^{(i)}$ is very even or $\li \cong \gl_i$ where $i = N/2 = \rank\, \h$ is even. In the first case the orbit $\Oo_0 \times \Oo_{\lambda^{(i)}}$ with the same label as $\Oo_{\lambda}$ induces to $\Oo_{\lambda}$ whilst in the second case there is a unique orbit of the correct form (the zero orbit) and since the labels of $\li$ and $\Oo_{\lambda}$ coincide ex hypothesis,
 this orbit induces to $\Oo_{\lambda}$.

Now suppose that (2) holds. By the remarks at the beginning of the previous paragraph, $\mu = \lambda^{(i)}$ and the index $i$ is admissible for $\lambda$. If there are two conjugacy classes of Levis then again $\li \cong \gl_i$ and so $\Oo_{\lambda} = \Ind_\li^\h(\Oo)$ implies that the labels of $\li$ and $\Oo_{\lambda}$ coincide by the last part of the previous proposition.

The statement that $\mu = \lambda^{(i)}$ is immediate from the above discussion. Fix $\Oo = \Oo_0\times\Oo_{\mu}$ fulfilling $\Oo_{\lambda} = \Ind_\li^\h(\Oo)$. We must show that for every other orbit of the form $\tilde{\Oo} = \Oo_0 \times\Oo_{\tilde{\mu}}$ fulfilling $\Oo_{\lambda} = \Ind_\li^\h(\tilde{\Oo})$ that the pair $(\li, \tilde{\Oo})$ is $K$-conjugate to $(\li, \Oo)$. Since we know that $\mu = \lambda^{(i)} = \tilde{\mu}$ this is now obvious unless $\mu$ is very even, $\lambda$ is not very even, and the orbits $\Oo_\mu$ and $\Oo_{\tilde{\mu}}$ have opposite labels. So suppose that this is the case. It is clear that in this situation $(i,i+1)$ is the only 2-step for $\lambda$, it is a good 2-step, and all other parts are even. It follows that $\rank\, \h = N/2$ is odd. Now from the Bourbaki tables \cite{Bour} we see that the longest element $w_0$ of the Weyl group for $\h$ is the minus the outer diagram automorphism of the root system of $\h$. Therefore if $gT = w_0 \in W = N_K(T)/T$ then $\Ad(g)$ 
will preserve $\li$ and exchange the orbits with partition $\lambda^{(i)}$ labelled $I$ and $I\!I$. This complete the proof.
\end{proof}

\p The following proposition uses a similar kind of induction as \cite[Proposition~24]{Mor} and is central to our proof of Theorem \ref{nosheets}.
\begin{prop}\label{mainpropref}
Let $\ii = (i_1,...,i_l)$ be a restricted sequence and suppose that the class of $\li$ lies in the fibre of $\pi$ above $\ii$, so that $\li \cong \gl_\ii \times \mm$. Then following are equivalent:
\begin{enumerate}

\item{$\ii$ is an admissible sequence for $\lambda$. If $\ii$ is very even then $\li$ belongs to the conjugacy class with the same label as $\Oo_{\lambda}$ (Cf. \ref{labelsonlevis});}

\item{There exists an orbit $\Oo = \Oo_0 \times \Oo_{\mu}$ with $\Oo_{\lambda} = \Ind_\li^\h(\Oo)$.}
\end{enumerate}
If these two equivalent conditions hold then $\Oo_{\mu}$ has partition $\mu = \lambda^{\ii}$. Furthermore, for every other orbit $\widetilde{\Oo} \subseteq \mathcal{N}(\li)$ with $\widetilde{\Oo} = \Oo_0\times \Oo_{\tilde{\mu}}$ such that $\Oo_{\lambda} = \Ind_\li^\h(\widetilde{\Oo})$, we have $(\li, \Oo)/K = (\li, \widetilde{\Oo})/K$.
\end{prop}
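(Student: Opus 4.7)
The plan is to proceed by induction on the length $l$ of the restricted sequence $\ii=(i_1,\ldots,i_l)$. The case $l=0$ is tautological, since then $\li = \h$, we have $\mu = \lambda$, and induction is trivial. The case $l=1$ is precisely Corollary~\ref{case1or2}. For the inductive step, write $\ii = (i_1,\jj)$ with $\jj = (i_2,\ldots,i_l)$. Crucially, $\jj$ is a restricted sequence for a classical Lie algebra $\mathfrak{m}_1$ of the same type as $\h$ with natural representation of dimension $N-2i_1$, and one can choose an intermediate standard Levi $\li_1 \subseteq \h$ in the fibre of $\pi$ above $(i_1)$ so that $\li_1 \cong \gl_{i_1} \times \mathfrak{m}_1$ and $\li \subseteq \li_1$ is a Levi of $\mathfrak{m}_1$ of the form $\gl_{\jj}\times \mm$ sitting inside the $\mathfrak{m}_1$-factor.

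For $(1)\Rightarrow (2)$, suppose $\ii$ is admissible for $\lambda$, which by definition means $i_1$ is admissible for $\lambda$ and $\jj$ is admissible for $\lambda^{(i_1)}$. Applying Corollary~\ref{case1or2} to $\li_1$ and $\lambda$ yields an orbit $\Oo_0 \times \Oo_{\lambda^{(i_1)}} \subseteq \li_1$ inducing to $\Oo_\lambda$. Applying the inductive hypothesis inside $\mathfrak{m}_1$ to the pair $(\li, \lambda^{(i_1)})$ yields $\Oo = \Oo_0 \times \Oo_{\mu}$ with $\mu = (\lambda^{(i_1)})^\jj = \lambda^\ii$ and $\Ind_\li^{\li_1}(\Oo) = \Oo_0 \times \Oo_{\lambda^{(i_1)}}$. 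Transitivity of induction (Proposition~\ref{inducedprops}(3)) gives $\Ind_\li^\h(\Oo) = \Ind_{\li_1}^\h(\Ind_\li^{\li_1}(\Oo)) = \Oo_\lambda$. The direction $(2)\Rightarrow (1)$ runs in reverse: given $\Oo_0 \times \Oo_\mu$ inducing to $\Oo_\lambda$, observe that $\Ind_\li^{\li_1}(\Oo)$ is forced to be of the form $\Oo_0\times \Oo_\nu$ (since induction in $\gl_{i_1}$ preserves the zero orbit, while induction between classical algebras of the same type sends orbits of the form $\Oo_0\times\Oo_\nu$ to an orbit of the form $\Oo_{\nu'}$ on the $\mathfrak{m}_1$ factor). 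Corollary~\ref{case1or2} then tells us $i_1$ is admissible for $\lambda$ with $\nu = \lambda^{(i_1)}$, and the inductive hypothesis applied in $\mathfrak{m}_1$ shows $\jj$ is admissible for $\lambda^{(i_1)}$ with $\mu=\lambda^\ii$, giving (1) and the formula $\mu = \lambda^\ii$ simultaneously.

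The main obstacle will be the bookkeeping with labels in type $\sf D$ when $\ii$ is very even. Corollary~\ref{case1or2} only gives the label-matching condition when the relevant single-step sequence is very even, whereas we need to propagate label compatibility along the full chain $\h \supseteq \li_1 \supseteq \li$. One must track, using Proposition~\ref{indpart}, how the very-evenness of $\lambda$, $\lambda^{(i_1)}$ and $\mu=\lambda^\ii$ force compatible labels on all of $\Oo_\lambda$, the intermediate orbit in $\mathfrak{m}_1$, $\Oo_\mu$, and the Levi $\li$. The two sub-cases to check are when the \emph{whole} residual partition eventually becomes zero (so the label is inherited from the Levi factor) versus when $\mm \neq 0$ and the label is inherited from $\Oo_\mu$; in each case a careful application of the last assertion of Proposition~\ref{indpart} at every step reconciles the data.

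Finally, for the uniqueness assertion on $(\li,\Oo)/K$, note that any $\widetilde{\Oo} = \Oo_0 \times \Oo_{\tilde\mu}$ with $\Ind_\li^\h(\widetilde\Oo) = \Oo_\lambda$ must have $\tilde\mu = \lambda^\ii = \mu$ by the formula just established. If $\mu$ is not very even or if $\lambda$ is very even, then $\Oo_{\tilde\mu} = \Oo_\mu$ and the pairs coincide. Otherwise, as at the end of the proof of Corollary~\ref{case1or2}, we have $\rank\,\h$ odd (or some factor of $\ii$ contributing an outer symmetry), and an element of $N_K(\li)$ coming from the longest Weyl group element of an appropriate subsystem conjugates the two labelled orbits of partition $\mu$ while preserving $\li$, so $(\li,\Oo)/K = (\li,\widetilde{\Oo})/K$.
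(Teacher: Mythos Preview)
Your approach is essentially the paper's: induction on $l$, with Corollary~\ref{case1or2} as base case and transitivity of induction (Proposition~\ref{inducedprops}(3)) for the step. The one structural difference is that you peel off the \emph{first} index $i_1$ and run the inductive hypothesis inside the smaller classical factor $\mathfrak{m}_1$, whereas the paper peels off the \emph{last} index $i_l$ and applies the inductive hypothesis to the truncated sequence $\ii'=(i_1,\ldots,i_{l-1})$ still inside $\h$. Both schemes are legitimate, and the label bookkeeping in type $\sf D$ is the same nuisance in either order; the paper handles it by checking that the label of $\li$ as a $K$-class coincides with its label as an $M'$-class inside $\mm'$, which is the observation you would also need.

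There is, however, a genuine gap. You assert without comment that $(i_1)$ lies in the domain of $\pi$ and that $\mathfrak{m}_1$ is ``a classical Lie algebra of the same type as $\h$''. In type ${\sf D}$ this fails when $N-2i_1=2$: then $(i_1)$ is \emph{not} a restricted sequence, $\mathfrak{so}_2$ is not a simple algebra of type ${\sf D}$, and there is no intermediate Levi $\li_1\cong\gl_{i_1}\times\mathfrak{m}_1$ of the required shape to launch the induction. Since $\ii$ is restricted and $l\ge 2$, the constraint $\sum_j i_j\le \ell$ forces this to occur only for $\ii=(\ell-1,1)$ with $R_\ii=0$, but it does occur. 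The paper runs into the mirror-image obstruction---$\ii'$ fails to be restricted precisely when $R_\ii=0$ and $i_l=1$---and disposes of it by stepping back two indices to $\ii''=(i_1,\ldots,i_{l-2})$ and computing the relevant induced orbits explicitly via \cite[Theorem~7.2.3, Lemma~7.3.3(i)]{CM}. Your argument needs an analogous patch for its exceptional case.
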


\begin{proof}
The proof proceeds by induction on $l$. When $l = 0$ we have $\li = \h$ and the proposition holds by Part 2 of Proposition \ref{inducedprops} (note that $\lambda^\emptyset = \lambda$). If $\li$ is a proper Levi subalgebra then $l > 0$. The case $l = 1$ is simply the previous corollary. The inductive step is quite similar although to begin with we must exclude the possibility that $R_\ii = 0$ and $i_l = 1$ in type $\sf D$. We will treat this possibility at the end.

Suppose that the proposition has been proven for all $l' < l$. Since we have excluded this anomalous case in type $\sf D$ we may set $\ii' = (i_1,...,i_{l-1})$ and and obtain another restricted sequence. Since $R_{\ii'} > 0$ there is a unique conjugacy class of Levi subalgebras above $\ii'$. Choose and element $\li'$ of this class so that $\li' \cong \gl_{\ii'} \times \mm'$ where $\mm'$ has a natural representation of dimension $R_{\ii'}$ and the same type as $\h$. Let $M'$ be the closed subgroup of $K$ with $\mm' = \Lie(M')$. We may ensure that $\li \subseteq \li'$ by embedding $\gl_{i_l} \times \mm$ in $\mm'$.

Suppose that $\ii$ is admissible and, if possible, that the label of $\li$ coincides with that of $\Oo_{\lambda}$. We deduce that $\ii'$ is also admissible. By the inductive hypothesis we deduce that there exists an orbit $\Oo' = \Oo_0 \times \Oo_{\tau} \subseteq \li'$ with $\Oo_{\lambda} = \Ind_{\li'}^\h(\Oo')$. We also see that $\Oo_\tau$ has partition $\tau = \lambda^{\ii'}\in \mathcal{P}_\epsilon(N- 2\sum_{j=1}^{l-1}i_j)$ and that $(\li', \Oo')/K$ is the unique conjugacy class of pairs containing $\li'$ inducing to $\Oo_{\lambda}$. Clearly $i_l$ is admissible for $\lambda^{\ii'}$ and $(i_l)$ is a restricted sequence for $\mm'$. Studying the labelling conventions for Levi subalgebras in \ref{labelsonlevis} we see that the label of the $K$-conjugacy class of $\li$ equals the label of the $M'$-conjugacy class of $\gl_{i_l} \times \mm \subseteq \mm'$. Therefore we can apply Corollary~\ref{case1or2} to conclude that there exists an orbit
$\Oo = \Oo_0 \times \Oo_{\mu} \subseteq \mathcal{N}(\gl_{i_l} \times \mm)$ with $\Oo_{\tau} = \Ind^{\mm'}_{\gl_{i_l} \times \mm}(\Oo)$. We make use of Proposition \ref{inducedprops} in the following calculation:
\begin{eqnarray*}
\Oo_{\lambda} = \Ind_{\li'}^\h(\Oo') &= &\Ind^\h_{\li'}(\Oo_0 \times\cdots\times\Oo_0\times \Oo_\tau)\\
 &=& \Ind^\h_{\li'}(\Oo_0 \times\cdots\times\Oo_0\times \Ind^{\mm'}_{\gl_{i_l} \times \mm}(\Oo_0\times \Oo_{\mu}))\\& = & \Ind^\h_{\li}(\Oo_0\times\cdots \times \Oo_0 \times \Oo_{\mu})
\end{eqnarray*}
We have shown that $(1) \Rightarrow (2)$. Before proving $(2) \Rightarrow (1)$ we shall take a quick detour to show that the final remarks in the statement of the proposition follow from (1). We certainly have $\mu = \tau^{(i_l)} = (\lambda^{\ii'})^{(i_l)} = \lambda^\ii$ by the transitivity of the algorithm. Suppose $\widetilde{\Oo} = \Oo_0\times\cdots \times\Oo_0\times\Oo_{\tilde{\mu}}$ is another orbit in $\li$ inducing to $\Oo_{\lambda}$. By the inductive hypothesis the partition of $\Ind^{\mm'}_{\gl_{i_l}\times \mm}(\Oo_0 \times \Oo_{\tilde{\mu}})$ is $\lambda^{\ii'}$ and so we get $\tilde{\mu} = \lambda^\ii = \mu$. The uniqueness assertion is therefore obvious unless $\mu$ is very even and $\lambda$ is not. In this case, reasoning in a similar manner to the third paragraph of the proof of Corollary~\ref{case1or2}, we can be sure that some term of the sequence $\ii$ is odd. After conjugating by some element of $K$ we can assume that $i_l$ is odd. The proof of uniqueness then concludes just as with the 
previous corollary, with $\gl_{i_l} \times \mm$ playing the role of our Levi and $\mm'$ playing the role of $\h$.

Now we must go the other way. Keep $\li$, $\li'$, $\mm'$, etc as above. Suppose that there exists an orbit $\Oo = \Oo_0\times\cdots\times\Oo_0 \times \Oo_{\mu}\subseteq \li$ with $\Oo_{\lambda} = \Ind_\li^\h(\Oo)$. Then we set $\Oo_{\tau} := \Ind_{\gl_{i_l} \times \mm}^{\mm'}(\Oo_0 \times \Oo_{\mu})$, $\Oo' := \Oo_0 \times\cdots \times\Oo_0\times \Oo_{\tau} \subseteq \li'$ and conclude that $\Oo_{\lambda} = \Ind^\h_{\li'}(\Oo')$ using a calculation very similar to the above one. Applying the inductive hypotheses we get that $\ii'$ is admissible for $\lambda$. There is no label associated to the conjugacy class of $\li'$ since $R_{\ii'} > 0$. Now Corollary~\ref{case1or2} tells us that $i_l$ is an admissible index for $\tau = \lambda^{\ii'}$ and so $\ii$ is admissible for $\lambda$. The same corollary tells us that if the $M'$-conjugacy class of the Levi $\gl_{i_l}\times \mm\subseteq \mm'$ has a label then it coincides with that of $\Oo_{\tau}$. The inductive hypothesis tells us that this label coincides with 
that of $\Oo_{\lambda}$.

Finally we must turn our attention to those sequences $\ii$ in type $\sf D$ for which $R_\ii = 0$ and $R_{\ii'} = 2$ ($\ii'$ remains to denote $\ii$ with the last term removed). In this case $\ii'$ is not restricted and so there does not exist a Levi subalgebra of the form $\gl_{\ii'}\times \mm$ and the induction falls down. In order to resolve this we define $\ii'' = (i_1,...,i_{l-2})$ and let $\li'' = \gl_{\ii''} \oplus \mm''$. Since $\li$ has the form $\gl_\ii$ we may embed $\gl_{i_{l-1}}\times \gl_{i_l}\subseteq \mm''$ to get $\li \subseteq \li''$. Since $i_l = 1$ there is a unique class of Levi subalgebras conjugate to $\li^\ii \cong \gl_\ii$. Furthermore, since the $\mm$ part is zero, there is only one orbit of the prescribed form in $\li$. We let $\Oo$ equal the zero orbit in $\li$. The proposition in this case is therefore reduced to the statement that $\ii$ is admissible if and only if $\Oo_\lambda = \Ind_\li^\h(\Oo_0)$.

Suppose $\ii$ is admissible for $\lambda$. Then so is $\ii''$ and by the inductive hypothesis there exists and orbit $\Oo'' = \Oo_0\times\cdots\times\Oo_0\times \Oo_{\tau}$ in $\li''$ with $\Oo_{\lambda} = \Ind^\h_{\li''}(\Oo'')$. Since $i_{l-1}$ is an admissible index for $\tau$ and $\tau^{(i_{l-1})}$ is $(1,1)$ we conclude that $\tau = (3,1)$ if $i_l = 1$, that $\tau = (3,3)$ if $i_l = 2$, that $\tau = (3,3,2^{i_{l-1}})$ if $i_{l-1}> 2$ is even or finally that $\tau = (3,3,2^{i_{l-1}-1},1,1)$ if $i_{l-1}>2$ is odd. None of these partitions are very even and so there is a unique orbit with partition $\tau$. According to \cite[Theorem~7.2.3]{CM} the induced orbit $\Ind_{\gl_{i_{l-1}}\times \gl_{i_l}}^{\gl_{i_{l-1}+1}}(\Oo_0\times\Oo_0)$ is the minimal nilpotent orbit in $\gl_{i_{l-1}+1}$, with partition $(2,1,...,1)$. Furthermore, if we induce this orbit into $\mm''$ then \cite[Lemma~7.3.3(i)]{CM} tells us that $\Ind_{\gl_{i_{l-1}+1}}^{\mm''}(\Oo_\text{min}) = \Oo_{\tau}$. Placing these ingredients together 
we get
\begin{eqnarray*}
\Ind^\h_{\li}(\Oo_0) = \Ind^\h_{\li''}(\Ind^{\li''}_{\li}(\Oo_0)) &=& \Ind^\h_{\li''}(\Oo_0\times \cdots \times\Oo_0 \times \Ind^{\mm''}_{\gl_{i_{l-1}}\times\gl_{i_l}}(\Oo_0 \times \Oo_0) \\
&=& \Ind^\h_{\li''}(\Oo'') = \Oo_{\lambda}
\end{eqnarray*}
as required. To go the other way we assume that such an orbit $\Oo$ exists and go backwards through the above deductions. We will conclude that $\tau$ has one of the prescribed forms so that $(i_{l-1}, 1)$ is an admissible sequence for $\lambda^{\ii''}$ and conclude that $\ii$ is admissible for $\lambda$.
\end{proof}

\p The next corollary will be useful in the proof of part 2 of Theorem~\ref{contsheets}
\begin{cor}\label{tau}
Let $\lambda\in\mathcal{P}_\epsilon(N)$. If $\emph\ii=(i_1,\ldots,i_l)$ is a restricted admissible sequence for $\lambda$ then so is $\sigma(\emph\ii)$ for every $\sigma \in \mathfrak{S}_l$. Furthermore,
$$\lambda^\ii = \lambda^{\sigma(\ii)}$$ for all such $\ii$ and $\sigma$.
\end{cor}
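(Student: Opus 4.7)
The plan is to exploit Proposition~\ref{mainpropref} to translate admissibility of a sequence into a purely conjugacy-theoretic statement about pairs $(\li,\Oo)/K$, where the latter is manifestly insensitive to the ordering of $\ii$.

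First, I would reduce to the case where $\sigma$ is an adjacent transposition, since these generate $\mathfrak{S}_l$; once both assertions of the corollary are established for such transpositions, the general result follows by induction on the length of a reduced word for $\sigma$. So assume $\ii = (i_1,\ldots,i_l)$ is a restricted admissible sequence for $\lambda$ and $\sigma(\ii) = (i_{\sigma 1},\ldots,i_{\sigma l})$. By Proposition~\ref{mainpropref} applied to $\ii$, there exists a nilpotent orbit $\Oo = \Oo_0 \times \Oo_\mu \subseteq \li^\ii$ with $\mu = \lambda^\ii$ and $\Oo_\lambda = \Ind_{\li^\ii}^\h(\Oo)$; moreover, if $\ii$ happens to be very even then the label of $\li^\ii$ matches that of $\Oo_\lambda$.

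Next, I would invoke Theorem~\ref{leviclassification} (and Proposition~\ref{leviconjprop}): the subalgebras $\li^\ii$ and $\li^{\sigma(\ii)}$ are $K$-conjugate, say $\Ad(g)\li^\ii = \li^{\sigma(\ii)}$ for some $g \in K$. In the very even case, since $\li^{\sigma(\ii)}$ is obtained directly (not via the outer automorphism $D$), it carries the same label $I$ as $\li^\ii$; hence in all cases the label condition of Proposition~\ref{mainpropref}(1) is satisfied for $\sigma(\ii)$ as well. Transporting the orbit $\Oo$ through $\Ad(g)$ produces an orbit $\tilde\Oo = \Ad(g)\Oo \subseteq \li^{\sigma(\ii)}$ which still decomposes as $\Oo_0 \times \Oo_{\tilde\mu}$ (conjugation permutes the $\gl_{i_j}$ factors according to $\sigma$ and maps $\mm$ to $\mm$), and which satisfies $\Ind_{\li^{\sigma(\ii)}}^{\h}(\tilde\Oo) = \Oo_\lambda$ by functoriality of induction under inner automorphisms. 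Applying the implication $(2)\Rightarrow(1)$ of Proposition~\ref{mainpropref} to the pair $(\li^{\sigma(\ii)}, \tilde\Oo)$ shows that $\sigma(\ii)$ is admissible for $\lambda$.

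For the equality $\lambda^\ii = \lambda^{\sigma(\ii)}$, I would argue that by the last assertion of Proposition~\ref{mainpropref} applied to $\sigma(\ii)$, the partition $\lambda^{\sigma(\ii)}$ equals the partition $\tilde\mu$ of $\Oo_{\tilde\mu}$ in the decomposition $\tilde\Oo = \Oo_0\times\Oo_{\tilde\mu}$. But $\Oo_{\tilde\mu}$ is an $\Ad(g)$-translate of $\Oo_\mu$, and these orbits have equal partitions (conjugation does not alter the Jordan type of a nilpotent element). Hence $\tilde\mu = \mu = \lambda^\ii$, yielding $\lambda^{\sigma(\ii)} = \lambda^\ii$.

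The only genuinely delicate point is the bookkeeping of labels in the very even case in type $\sf D$; away from that case Theorem~\ref{leviclassification} immediately provides the required conjugacy of Levi subalgebras, and the rest is a formal transport argument. Everything else reduces to applying Proposition~\ref{mainpropref} in the forward and reverse directions.
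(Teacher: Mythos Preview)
Your proposal is correct and follows essentially the same approach as the paper: use Proposition~\ref{leviconjprop} to see that $\li^\ii$ and $\li^{\sigma(\ii)}$ are $K$-conjugate, then transport the orbit and apply Proposition~\ref{mainpropref} in both directions. The reduction to adjacent transpositions is unnecessary (Proposition~\ref{leviconjprop} handles arbitrary permutations directly), but this is harmless.
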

\begin{proof}
Since $\li^\ii$ is $K$-conjugate to $\li^{\sigma\ii}$ (Proposition~\ref{leviconjprop}), this follows from Proposition \ref{mainpropref}.
\end{proof}

\p We now define a map $\varphi$ from the set of all restricted admissible sequences for $\lambda$ to the set of all $K$-orbits of pairs $(\li, \Oo)$ where $\li \subseteq \h$ is a Levi subalgebra of $\h$ and $\Oo \subset \li$ is a nilpotent orbit. Let $\ii = (i_1,...,i_l)$ be a restricted admissible sequence for $\lambda$. Let $\li$ be a Levi subalgebra in a conjugacy class lying in $\pi^{-1}(\ii)$. If $|\pi^{-1}(\ii)| = 2$ then it is not hard to see that $\lambda$ is very even, and we request that the class of $\li$ has the same label as $\Oo_{\lambda}$. Let $\varphi(\ii) = (\li, \Oo)/K$ be the unique $K$-pair described in Proposition~\ref{mainpropref}. The next lemma is vital to our later work.
\begin{lem}\label{lenn}
If $\SS$ is a sheet with data $\varphi(\ii)$ then $\rank\, \SS = |\ii|$.
\end{lem}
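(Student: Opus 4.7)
The plan is to combine the definition of sheet rank from \ref{dimformula} with the explicit description of Levi subalgebras in classical type given in Lemma~\ref{levistruct} and the parameterization of \ref{levparam}.

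First, by the very definition of rank of a sheet (\ref{dimformula}), if $\SS$ has data $(\li, \Oo)/K$ then $\rank\, \SS = \dim\, \mathfrak{z}(\li)$. By construction of $\varphi$, the conjugacy class of the Levi factor appearing in $\varphi(\ii)$ lies in the fibre $\pi^{-1}(\ii)$ for the restricted sequence $\ii = (i_1,\ldots,i_l)$, and in either of the two possible classes (in the very even type $\sf D$ case) the underlying Levi is isomorphic to
\[
\li \;\cong\; \gl_\ii \times \mm \;=\; \gl_{i_1} \times \cdots \times \gl_{i_l} \times \mm,
\]
as described in \ref{leviform1}, where $\mm$ is either zero or a simple classical Lie algebra of the same type as $\h$.

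Second, I would compute $\dim\, \mathfrak{z}(\li)$ directly. The centre of a direct product of Lie algebras is the direct sum of the centres, the centre of $\gl_{i_k}$ is one-dimensional (spanned by the identity matrix), and the centre of $\mm$ is zero, since $\mm$ is either $\{0\}$ or a simple Lie algebra. Consequently,
\[
\dim\, \mathfrak{z}(\li) \;=\; \sum_{k=1}^{l} \dim\, \mathfrak{z}(\gl_{i_k}) \,+\, \dim\, \mathfrak{z}(\mm) \;=\; l \;=\; |\ii|,
\]
which yields $\rank\, \SS = |\ii|$.

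There is no real obstacle: the statement is a structural consequence of the definitions, once one notes that the two possible $K$-conjugacy classes in the very even case give isomorphic Levis (both of the form $\gl_\ii$), so the rank computation is unaffected by the label chosen in the construction of $\varphi$.
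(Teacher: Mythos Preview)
Your proof is correct and follows essentially the same approach as the paper's own proof, which simply cites \ref{levparam} to conclude $\li\cong\gl_\ii\times\mm$ and immediately deduces $\rank\,\SS=\dim\,\z(\li)=|\ii|$. You have just made explicit the computation of $\dim\,\z(\gl_\ii\times\mm)$ that the paper leaves to the reader.
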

\begin{proof}
According to \ref{levparam} every $\li \in \mathcal{C} \in \pi^{-1}(\ii)$ is isomorphic to $\gl_\ii \times \mm$ and so $\rank \, \SS = \dim\, \z(\li) = |\ii|$.
\end{proof}

\p The following corollary to \ref{mainpropref} will complete the proof of part 1 of Theorem~\ref{nosheets}. 
\begin{cor}\label{bijection}
 The restriction of $\varphi$ to the set of maximal admissible sequences for $\lambda$ descends to a well defined bijection from $\Phi_\lambda$ onto $\Psi_\lambda$. In particular $|\Phi_\lambda| = |\Psi_\lambda|$.
\end{cor}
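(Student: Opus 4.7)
The plan is to verify three properties of $\varphi$ restricted to maximal admissible sequences: that it takes values in $\Psi_\lambda$ and descends to $\Phi_\lambda$; that the induced map on $\Phi_\lambda$ is surjective; and that it is injective. Each of these will be essentially a repackaging of Proposition~\ref{mainpropref}, Proposition~\ref{leviconjprop} and Corollary~\ref{tau}, so no genuinely new argument is needed.

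For well-definedness and the fact that the image lies in $\Psi_\lambda$, let $\ii$ be a maximal admissible sequence. By Lemma~\ref{maxadmiss} the partition $\mu := \lambda^{\ii}$ is rigid, and by construction $\varphi(\ii)$ is the $K$-conjugacy class of a pair $(\li,\Oo_0\times\Oo_\mu)$ where the conjugacy class of $\li$ lies above the restricted sequence $\ii$ (with the prescribed label in the very even case) and $\Oo_\lambda=\Ind_\li^\h(\Oo_0\times\Oo_\mu)$. This is precisely the data defining an element of $\Psi_\lambda$. To see that $\varphi$ is constant on rearrangement classes, let $\sigma$ lie in the relevant symmetric group; Corollary~\ref{tau} says $\sigma(\ii)$ is again admissible and $\lambda^{\sigma(\ii)}=\lambda^{\ii}$, while Proposition~\ref{leviconjprop} gives $\li^{\ii}$ and $\li^{\sigma(\ii)}$ in the same $K$-conjugacy class, so $\varphi(\ii)=\varphi(\sigma(\ii))$.

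Next I would handle surjectivity. Fix $(\li,\Oo)/K\in\Psi_\lambda$. By Theorem~\ref{leviclassification} the conjugacy class of $\li$ lies in $\pi^{-1}(\ii)$ for some restricted sequence $\ii$, and by definition of $\Psi_\lambda$ the orbit $\Oo$ has the form $\Oo_0\times\Oo_\mu$ with $\mu$ rigid and $\Ind_\li^\h(\Oo)=\Oo_\lambda$. Applying the implication $(2)\Rightarrow(1)$ of Proposition~\ref{mainpropref} yields that $\ii$ is admissible for $\lambda$, that $\mu=\lambda^{\ii}$, and that in the very even case the label of $\li$ agrees with that of $\Oo_\lambda$. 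Since $\mu$ is rigid, Lemma~\ref{maxadmiss} forces $\ii$ to be maximal. The uniqueness clause in Proposition~\ref{mainpropref} then identifies $(\li,\Oo)/K$ with $\varphi(\ii)$, establishing surjectivity.

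For injectivity, suppose $\ii$ and $\jj$ are maximal admissible with $\varphi(\ii)=\varphi(\jj)$. Then in particular $\li^{\ii}$ and $\li^{\jj}$ are $K$-conjugate, so by Proposition~\ref{leviconjprop} they have equal length and are related by a permutation, i.e.\ they coincide in $\Phi_\lambda$. Combining the three steps, the induced map $\Phi_\lambda\to\Psi_\lambda$ is a bijection, and in particular $|\Phi_\lambda|=|\Psi_\lambda|$. The only step requiring some care is the very even bookkeeping: one must check throughout that the label chosen in the definition of $\varphi$ is consistent with the label extracted from a given element of $\Psi_\lambda$, but this is exactly what Proposition~\ref{mainpropref} is set up to deliver, so no real obstacle arises.
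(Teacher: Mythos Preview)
Your proposal is correct and follows essentially the same approach as the paper's own proof: both arguments use Lemma~\ref{maxadmiss} to see that maximal admissible sequences land in $\Psi_\lambda$, Proposition~\ref{leviconjprop} together with the uniqueness clause of Proposition~\ref{mainpropref} for well-definedness and surjectivity, and Proposition~\ref{leviconjprop} alone for injectivity. The only point the paper makes explicit that you leave implicit is that every maximal admissible sequence is automatically restricted (so that $\varphi$ is defined on it), but this is a trivial observation.
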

\begin{proof}
First of all, note that every maximal admissible sequence is restricted. We shall show that $\varphi$ maps the set of maximal admissible sequences for $\lambda$ to $\Psi_\lambda$. Take $\ii$ maximal admissible and $\varphi(\ii) = (\li, \Oo)/K$ with $\Oo = \Oo_0\times\cdots\times\Oo_0\times \Oo_{\mu}$. By Proposition~\ref{mainpropref} we have $\mu= \lambda^\ii$ and so by Lemma \ref{maxadmiss} $\Oo_{\mu}$ is a rigid orbit. By part 1 of Proposition~\ref{inducedprops} the orbit $\Oo$ is also rigid. Furthermore we have that $\Oo_{\lambda} = \Ind_\li^\h(\Oo)$. Hence $\varphi(\ii) \in \Psi_\lambda$.

We now claim that the map is well defined on $\Phi_\lambda$, that is to say that $\varphi(\ii) = \varphi(\jj)$ whenever $\ii \sim \jj$. Let $\varphi(\ii) = (\li_1, \Oo_1)/K$ and $\varphi(\jj) = (\li_2, \Oo_2)/K$ where $\li_1 \cong \gl_\ii \times \mm_1$ and $\li_2 \cong \gl_\jj \times \mm_2$. Since $\ii = \sigma(\jj)$ for some $\sigma \in \mathfrak{S}_{|\ii|}$ and the labels of $\li_1$ and $\li_2$ are the same (if they exist), we conclude that they are $K$-conjugate by Proposition~\ref{leviconjprop}. Thus we may assume that $\li_1 = \li_2$. Now the uniqueness statement at the end of Proposition~\ref{mainpropref} asserts that $(\li_1, \Oo_1)/K = (\li_2, \Oo_2)/K$, so $\varphi$ is well-defined. For the rest of the proof $\varphi$ shall denote the induced map $\Phi_\lambda \ra \Psi_\lambda$.

Let us prove that $\varphi$ is surjective. Suppose $(\li, \Oo)/K \in \Psi_\lambda$ with $\li$ and $\Oo$ as in the definition of $\Psi_\lambda$. Then by Proposition~\ref{mainpropref} the sequence $\ii = (i_1,...,i_l)$ is admissible for $\lambda$ and by Lemma~\ref{maxadmiss} it is a maximal admissible. Therefore $\varphi(\ii) = (\li, \tilde{\Oo})/K$ for some orbit $\tilde{\Oo} = \Oo_0 \times \Oo_{\lambda^\ii}$. Since $\Oo = \Oo_0 \times \Oo_{\mu}$ by construction, the uniqueness statement in Proposition~\ref{mainpropref} tells us that $(\li, \Oo)/K = (\li, \tilde{\Oo})/K$. Hence $\varphi$ sends the equivalence class of $\ii$ in $\Phi_\lambda$ to $(\li, \Oo)/K$.

In order to prove the corollary we must show that $\varphi$ is injective. Suppose that $\ii$ and $\jj$ are maximal admissible for $\lambda$ and $\varphi(\ii) = \varphi(\jj)$. Again we make the notation $\varphi(\ii) = (\li_1, \Oo_{1})/K$ and $\varphi(\jj) = (\li_2, \Oo_{2})/K$. Since $\li_1$ and $\li_2$ are $K$-conjugate the sequences $\ii$ and $\jj$ are $\mathfrak{S}_l$-conjugate by Proposition~\ref{leviconjprop}. This completes the proof.
\end{proof}

\p Part 1 of Theorem \ref{nosheets} follows quickly from the above and Lemma~\ref{contsheets}.

\section{Counting admissible sequences}
\setcounter{parno}{0}

\p We now prepare to prove Part 2 of Theorem~\ref{nosheets}. Before we proceed we shall need two lemmas.  Define a function $\kappa : \PP(N) \longrightarrow (\Z_2)^\N$ by setting
$$\kappa(\lambda)_i := \lambda_i - \lambda_{i+1} \mod 2 \quad\text{ for all }\, i>0.$$
The reader should keep in mind here that $\lambda_i = 0$ for all $i > n$ by convention.
\begin{lem}\label{kappa}
Let $M,N \in \N$. If $\mu \in \mathcal{P}_\epsilon(M)^\ast$ and $\lambda \in \mathcal{P}_\epsilon(N)^\ast$ then $\mu = \lambda$ if and only if $\kappa(\mu) = \kappa(\lambda)$.
\end{lem}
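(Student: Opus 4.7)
The forward direction is immediate from the definition of $\kappa$, so the content is in the converse. My plan is to observe that on rigid partitions the map $\kappa$ is essentially the \emph{difference} map, not merely its mod $2$ reduction. Indeed, by Theorem~\ref{rigids}, any $\lambda \in \mathcal{P}_\epsilon(N)^\ast$ satisfies $\lambda_i - \lambda_{i+1} \in \{0,1\}$ for every $i \geq 1$ (using the convention $\lambda_j = 0$ for $j > n$). For such an element of $\{0,1\}$, reduction modulo $2$ is the identity, so
\[
\kappa(\lambda)_i \;=\; \lambda_i - \lambda_{i+1} \qquad \text{for all } i>0,
\]
and the same identity holds for $\mu$.

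From this the partition is recoverable from $\kappa$ by a telescoping sum: since only finitely many $\lambda_j$ are nonzero, for each $i$
\[
\lambda_i \;=\; \sum_{j \geq i} (\lambda_j - \lambda_{j+1}) \;=\; \sum_{j \geq i} \kappa(\lambda)_j,
\]
and likewise $\mu_i = \sum_{j \geq i} \kappa(\mu)_j$. So the hypothesis $\kappa(\mu) = \kappa(\lambda)$ forces $\mu_i = \lambda_i$ for every $i$, i.e.\ $\mu = \lambda$ as partitions (and in particular $M = N$).

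I do not anticipate any obstacle; the only subtlety is remembering that rigidity guarantees the differences lie in $\{0,1\}$, and consequently the natural loss of information in the map $\kappa$ (reduction modulo $2$) does not occur on $\mathcal{P}_\epsilon(-)^\ast$.
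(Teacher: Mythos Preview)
Your proof is correct and follows the same approach as the paper: both use that rigidity forces $\lambda_i - \lambda_{i+1} \in \{0,1\}$ (Theorem~\ref{rigids}), so the mod~$2$ reduction in $\kappa$ loses no information and the differences determine the partition. You are simply more explicit about the telescoping reconstruction, whereas the paper condenses this to the observation that $\mu = \lambda$ iff $\mu_i - \mu_{i+1} = \lambda_i - \lambda_{i+1}$ for all $i$.
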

\begin{proof}
Evidently $\mu = \lambda$ if and only if $\mu_i - \mu_{i+1} = \lambda_i - \lambda_{i+1}$ for all $i>0$. Since $\lambda$ is rigid
$\lambda_i - \lambda_{i+1} \in \{0,1\}$ by Theorem~\ref{rigids}. The lemma follows.
\end{proof}

\p The following proposition will render the proof of Theorem~\ref{contsheets} complete. We shall use the notation $\ii_l = (i_1,..., i_{l-1})$ for $1\leq l \leq |\ii| + 1$.
\begin{prop}\label{phising}
$|\Phi_\lambda| = 1$ if and only if $\lambda$ is non-singular.
\end{prop}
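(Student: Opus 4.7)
The plan is to prove both directions separately, leveraging the combinatorial machinery developed in Sections \ref{KSalg}--\ref{KSmaxlength}.

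For the forward direction (non-singular $\Rightarrow |\Phi_\lambda| = 1$), I would argue by induction on $|\lambda|$. The base case is $\lambda$ rigid, where the empty sequence is the unique maximal admissible sequence. For the inductive step, let $\ii$ and $\jj$ be two maximal admissible sequences for $\lambda$. The aim is to produce a common integer $k$ appearing in both sequences; once this is done, Corollary~\ref{tau} lets us reorder so that $\ii \sim (k,\ii')$ and $\jj \sim (k,\jj')$, and $\ii',\jj'$ become maximal admissible sequences for $\lambda^{(k)}$. By Corollary~\ref{singinherit}, $\lambda^{(k)}$ is again non-singular, and $|\lambda^{(k)}|<|\lambda|$, so the inductive hypothesis delivers $\ii'\sim\jj'$ and hence $\ii\sim\jj$.

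The heart of the forward argument is establishing the common index $k$. I would first reduce to partitions equal to their own shell via Proposition~\ref{reductionprop}: the Case~$1$ reductions involved in passing from $\lambda$ to $\lambda^{S}$ are forced and commute, so both $\ii$ and $\jj$ must contain them. After that, Proposition~\ref{reductionprop}(1) allows one to work profile by profile, and Proposition~\ref{profileprop} shows each profile has a canonical maximal admissible sequence. Because $\lambda$ is non-singular, every $2$-step in every profile lies in a good $2$-cluster, and the pattern of Case~$2$ applications inside such a cluster is forced up to permutation by the parity bookkeeping of $\kappa$ (Lemma~\ref{kappa}).

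For the reverse direction (singular $\Rightarrow |\Phi_\lambda| \geq 2$), let $(i,i+1)$ be a bad $2$-step; without loss of generality $\lambda_{i-1}-\lambda_i \in 2\mathbb{N}$ (the other case is symmetric). I will exhibit two inequivalent maximal admissible sequences. Sequence $\ii^{A}$ begins with a Case~$1$ application at index $i-1$, which decreases $\lambda_{i-1}-\lambda_i$ by $2$ and effectively destroys the $2$-step at $i$; complete to a maximal admissible sequence by Theorem~\ref{zismax}. Sequence $\ii^{B}$ begins with $(\lambda_i-\lambda_{i+1})/2$ successive Case~$1$ applications at index $i$ (the difference is even because $i=i'$ forces $\lambda_i$ and $\lambda_{i+1}$ to share parity) and then one Case~$2$ at index $i$, after which one completes to maximal admissibility. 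Tracking $\kappa$ through these two processes shows $\kappa(\lambda^{\ii^{A}})$ and $\kappa(\lambda^{\ii^{B}})$ differ at indices $i-1$ or $i+1$, so by Lemma~\ref{kappa} the rigid endpoints $\lambda^{\ii^{A}}$ and $\lambda^{\ii^{B}}$ are distinct partitions. Since Corollary~\ref{tau} says $\ii\sim\jj$ forces $\lambda^{\ii}=\lambda^{\jj}$, it follows that $\ii^{A}\not\sim\ii^{B}$ and thus $|\Phi_\lambda|\geq 2$.

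The main obstacle is the forward direction: one must carefully verify that every pair of maximal admissible sequences shares a first index after permutation. In the shell case this is where the non-singularity hypothesis enters decisively, since any bad $2$-step would provide exactly the choice between a Case~$1$ on its bad side and a Case~$2$ at its centre that breaks uniqueness. The reverse direction is largely combinatorial bookkeeping once the two candidate sequences are written down.
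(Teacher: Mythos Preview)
Your reverse direction has the right skeleton but is missing the ingredients that make it rigorous. First, a single Case~1 at $i-1$ does not ``destroy the $2$-step at $i$'' unless $\lambda_{i-1}-\lambda_i=2$; in general $i$ may remain in $\Delta$ after that move, and then the completion of $\ii^{A}$ could itself perform a Case~2 at $i$, flipping $\kappa_{i-1}$ exactly as $\ii^{B}$ does. You need to apply Case~1 at $i-1$ a total of $(\lambda_{i-1}-\lambda_i)/2$ times so that afterwards $\lambda_{i-1}=\lambda_i$ and $i\notin\Delta$ permanently (by Lemma~\ref{deltain}(B)). Second, even then your $\kappa$-tracking fails if $i-2\in\Delta(\lambda)$ or $i+2\in\Delta(\lambda)$, because Case~2 at those indices can flip $\kappa_{i-1}$ or $\kappa_{i+1}$ during either completion. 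The paper handles this by choosing the bad $2$-step with $i$ \emph{maximal}, which rules out $i+2\in\Delta$ being bad, and then runs a genuine case analysis on whether the other boundary is bad and whether $i-2\in\Delta(\lambda)$. The two boundary conditions are not symmetric (the partition is ordered), so your ``WLOG'' is not free either.

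Your forward direction is more seriously incomplete, and in fact the paper takes a much simpler route that sidesteps your main obstacle entirely. The claim that ``the Case~1 reductions involved in passing from $\lambda$ to $\lambda^{S}$ are forced'' is not true as stated: a maximal admissible sequence may interleave Case~2 moves with Case~1 moves, and there is no reason it must contain the particular multiset of Case~1 indices that defines the shell. The paper avoids induction altogether: it uses Corollary~\ref{tau} to put both maximal sequences $\ii,\jj$ in ascending order, then shows they are literally equal. If $k$ is the first index with $i_k<j_k$, then since $\jj$ is ascending every subsequent entry of $\jj$ exceeds $i_k$; one then argues directly (using that $\lambda^{\ii_k}=\lambda^{\jj_k}$ is non-singular, via Corollary~\ref{singinherit} and Lemma~\ref{deltain}(B)) that the gap $\lambda_{i_k}^{\jj_l}-\lambda_{i_k+1}^{\jj_l}$ can never close, contradicting maximality of $\jj$. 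This is both shorter and avoids the shell and profile machinery, which was designed to compute $z(\lambda)$ rather than to compare arbitrary maximal sequences.
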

\begin{proof}
Suppose $\lambda$ is non-singular. We shall show that all maximal admissible sequences for $\lambda$ are conjugate under the action of the symmetric group. Let $\ii$ and $\jj$ be two such sequences. By Corollary~\ref{tau} we may put them both in ascending order, and still retain the fact that they are admissible sequences. It is not hard to see that they are still maximal after reordering. We shall show that they are now equal. Suppose not. Then either there exists an index $k$ such that $i_k \neq j_k$, or one sequence is shorter than the other, say $|\ii| < |\jj|$, and $i_k = j_k$ for all $k=1,...,|\ii|$. In this latter situation it is clear that $\ii$ is not maximal, so assume the former situation. We may assume without loss of generality that $i_k < j_k$. We shall prove that $\jj$ is not maximal and derive a contradiction.

If Case 1 occurs for $\lambda^{\ii_{k}}$ at index $i_k$ then it follows that $\lambda^{\ii_k}_{i_k} - \lambda^{\ii_k}_{i_{k}+1} > 1$. Since $\lambda$ is non-singular, so too is $\lambda^{\ii_k}$ by Corollary~\ref{singinherit}. This implies that $i_k + 1 \notin \Delta(\lambda^{\ii_k})$ and so $\lambda^{\jj_l}_{i_k} - \lambda^{\jj_l}_{i_k + 1} > 1$ for all $l \geq k$. In particular, $\lambda^{\jj}_{i_k} - \lambda^{\jj}_{i_k + 1} > 1$ which contradicts the maximality of $\jj$. Now we suppose Case 2 occurs for $\lambda^{\ii_{k}}$ at index $i_k$. This implies that $i_k \in \Delta(\lambda^{\jj})$, once again contradicting the maximality of $\jj$. We have now proven that if $\lambda$ is non-singular then all maximal admissible sequences are conjugate, and that $|\Phi_\lambda| = 1$.

In order to prove the converse we assume that $\lambda$ is singular. Let $(i, i+1)$ be a bad 2-step with $i$ maximal. We shall exhibit two maximal admissible sequences, $\ii$ and $\jj$, for $\lambda$ such that $\kappa(\lambda^{\ii}) \neq \kappa(\lambda^{\jj})$. In view of Lemma \ref{kappa} and Corollary~\ref{tau} the proposition shall follow. There are two possibilities: either $\lambda_{i+1} - \lambda_{i+2}$ is even, or $i > 1$ and $\lambda_{i-1} - \lambda_i$ is even. Assume the first of these possibilities, so that $\lambda_{i+1} - \lambda_{i+2}$ is even. Let $$\ii' = (\underbrace{i+1,i+1,...,i+1}_{(\lambda_{i+1} - \lambda_{i+2})/2 \text{ times}}).$$ We have $\lambda_{i+1}^{\ii'} = \lambda_{i+2}^{\ii'}$. Let $\ii$ be any maximal admissible sequence for $\lambda$ extending $\ii'$. Then $\kappa(\lambda^{\ii})_{i+1} = 0$. Now let $\jj' = (i)$ so that $\kappa(\lambda^{\jj'})_{i+1} = 1$. Let $\jj$ be any maximal admissible sequence extending $\jj'$. By Lemmas \ref{deltain2}(A) and \ref{deltain}(B), Case 2 does 
not occur for $\lambda^{\jj_k}$ at any index $j_k = i$ with $k > 1$ and since $(i, i+1)$ is a maximal bad 2-step Case 2 cannot occur at index $j_k = i+2$, so $\kappa(\lambda^{\jj})_{i+1} = \kappa(\lambda^{\jj'})_{i+1} = 1$. We conclude that $\kappa(\lambda^{\ii}) \neq \kappa(\lambda^{\jj})$, $\lambda^{\ii} \neq \lambda^{\jj}$ and hence
$|\Phi_\lambda| > 1$.

The other case is quite similar. This time we assume that $i > 1$, that $\lambda_{i-1} - \lambda_i$ is even and $\lambda_{i+1} - \lambda_{i+2}$ is odd. Our deductions will depend upon whether or not $i-2 \in \Delta(\lambda)$. Let us first assume that $i-2 \notin \Delta(\lambda)$. We take $$\ii' = (\underbrace{i-1,i-1,...,i-1}_{(\lambda_{i-1} - \lambda_{i})/2 \text{ times}}).$$ Let $\ii$ be any maximal admissible sequence extending $\ii'$. Much like before $\kappa(\lambda^\ii)_{i-1} = 0$. Now let $\jj' = (i)$ and let $\jj$ be a maximal admissible extension of $\jj'$. Since $i-2 \notin \Delta(\lambda)$ Lemma \ref{deltain}(B) shows that Case 2 does not occur for $\lambda^{\jj_k}$ at index $j_k = i-2$ for any $k$. The same can be said for $j_k = i$ at any index $k > 1$, so $\kappa(\lambda^\jj)_{i-1} = \kappa(\lambda)_{i-1} - 1 = 1$. It follows that $\lambda^{\ii} \neq \lambda^{\jj}$ and so $|\Phi_\lambda| > 1$ as desired.

To conclude the proof we must consider the final possibility: $i > 1$, $\lambda_{i-1} - \lambda_i$ even, $\lambda_{i+1} - \lambda_{i+2}$ odd and $i-2 \in \Delta(\lambda)$. We let $\ii'$ and $\ii$ be defined exactly as it was in the previous paragraph. We have $\lambda^{\ii'}_{i-1} = \lambda^{\ii'}_{i}$, so that Case 2 cannot occur at index $i_k = i$ for any $k$. Since $(i, i+1)$ is a maximal bad 2-step for $\lambda$ we know that $i+2 \notin \Delta(\lambda)$. Then Lemma \ref{deltain}(B) implies that Case 2 cannot occur at index $i_k = i+2$ for any $k$ yielding $\kappa(\lambda^{\ii})_{i+1} = \kappa(\lambda)_{i+1} = 1$. Let $$\jj' = (i, \underbrace{i+1,i+1,...,i+1}_{(\lambda_{i+1} - \lambda_{i+2})/2 \text{ times}})$$ and $\jj$ be any maximal admissible sequence extending $\jj'$. Since $\lambda_{i+1} - \lambda_{i+2}$ is odd, $\lambda^{\jj_2}_{i+1} - \lambda^{\jj_2}_{i+2}$ is even, and $\lambda^\jj_{i+1} = \lambda^\jj_{i+2}$. Hence $\kappa(\lambda^\jj)_{i+1} = 0$ and $|\Phi_\lambda| > 1$ as before.
\end{proof}
\p We can finally complete the proof of Theorem \ref{nosheets}.
\begin{proof}
Part 1 follows directly from Corollary \ref{bijection} and Lemma \ref{contsheets}. For Part 2 use Part 1 along with Proposition \ref{phising}.
\end{proof}

\section{The orthogonal complement to $[\h_x \h_x]$}
\setcounter{parno}{0}

\p \label{ffr}We shall finish the chapter by furnishing a proof of Izosimov's conjecture. Before we do so, however, we would like to
introduce some notation which will be used in the next chapter. Let $\mathcal{S}$ be a sheet with data $(\li, \Oo)/K$. Recall
that the rank of $\mathcal{S} \subseteq \h^{(k)}$ is defined to be $\dim \, \z(\li)$. Alternatively, by Corollary~\ref{dimformula}, we have
$$\rank \, \SS = \dim \, \SS - \dim\, \h + k.$$
\begin{cor}\label{newz}
If $\lambda \in \mathcal{P}_\epsilon(N)$ and $e \in \Oo$ which has partition $\lambda$, then
$$r(e):=\max_{\ e \in \mathcal{S}}\, \rank \, \mathcal{S} = z(\lambda)$$ where the maximum is taken over all sheets
of $\h$ containing $e$.
\end{cor}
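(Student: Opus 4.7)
The plan is to combine three results already established in the chapter: the bijection between sheets containing $e$ and equivalence classes of maximal admissible sequences, the rank formula for a sheet in terms of its associated sequence, and the combinatorial identification of $z(\lambda)$ as the maximal length of an admissible sequence.

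First I would invoke Corollary~\ref{bijection}, which gives a bijection $\varphi:\Phi_\lambda \longrightarrow \Psi_\lambda$, together with Lemma~\ref{contsheets} identifying $\Psi_\lambda$ with the set of sheets of $\h$ containing $\Oo_e = \Oo_\lambda$. Thus every sheet $\SS$ containing $e$ arises as the sheet with data $\varphi([\ii])$ for a unique equivalence class $[\ii]\in \Phi_\lambda$ of maximal admissible sequences for $\lambda$. Next, by Lemma~\ref{lenn}, if $\SS$ has data $\varphi(\ii)$ then $\rank\, \SS = |\ii|$; the length $|\ii|$ is obviously invariant under reordering, so this assignment descends consistently to $\Phi_\lambda$.

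Putting these together we obtain
\begin{equation*}
r(e) \;=\; \max_{e\in \SS} \rank\, \SS \;=\; \max_{[\ii]\in \Phi_\lambda} |\ii|,
\end{equation*}
i.e.\ the maximum of $|\ii|$ taken over the maximal admissible sequences for $\lambda$. To identify this with $z(\lambda)$, I would appeal to Theorem~\ref{zismax}, which states that $z(\lambda) = \max |\ii|$ as $\ii$ ranges over \emph{all} admissible sequences for $\lambda$. Since every admissible sequence can be extended to a maximal one (by iterating the algorithm, which must terminate in a rigid partition as each step strictly reduces $N-2\sum_j i_j$), the two maxima coincide, and so $r(e) = z(\lambda)$.

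There is no real obstacle here: the work has already been done in establishing the bijection $\varphi$, in checking that admissible sequences keep track of iterated induction from Levi subalgebras (Proposition~\ref{mainpropref}), and in computing the maximal length $z(\lambda)$ combinatorially. The only subtlety worth flagging in the write-up is the passage from ``max over maximal admissible sequences'' to ``max over all admissible sequences,'' which is immediate from the extension property of the algorithm together with Lemma~\ref{maxadmiss}.
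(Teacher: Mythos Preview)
Your proposal is correct and follows essentially the same approach as the paper's proof, which simply cites Lemma~\ref{lenn}, Corollary~\ref{bijection} and Theorem~\ref{zismax}. You have spelled out the one small step the paper leaves implicit---that the maximum of $|\ii|$ over maximal admissible sequences agrees with the maximum over all admissible sequences---and your justification via extension to a maximal sequence is exactly right.
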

\begin{proof}
Use Lemma~\ref{lenn}, Corollary~\ref{bijection} and and Theorem~\ref{zismax}.
\end{proof}

\p Suppose from henceforth that $\chr(\K)=0$. We shall set out to prove Izosimov's conjecture.
The following holds true for an arbitrary semisimple Lie algebra.
\begin{lem}
Suppose $x \in \h$ has Jordan decomposition $x = s + n$ and set $\li = \h_s$. There is a rank preserving bijection between the sheets
of $\h$ containing $x$ and the sheets of $\li$ containing $n$.
\end{lem}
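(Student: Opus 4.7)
My plan is to combine Theorem~\ref{classifsheets} applied in parallel to $\h$ and to the Levi subalgebra $\li = \h_s$, together with the description~(\ref{jordanclassstruct}) of Jordan classes. Using~(\ref{jordanclassstruct}), the Jordan class of $x$ in $\h$ is $J_x = \Ad(K)(n + \z(\li)_{\reg})$, with associated $K$-pair $(\li, \Ad(L)n)/K$. Since sheets are finite unions of Jordan classes and since a sheet $\SS$ with data $(\li',\Oo')/K$ has $J_{(\li',\Oo')}$ as its dense class, the standard Borho--Kraft description of the closure order gives $J_x \subseteq \SS$ iff, after $K$-conjugating $(\li',\Oo')$ so that $\li' \subseteq \li$, one has $\Ind_{\li'}^{\li}(\Oo') = \Ad(L)n$. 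Thus the sheets of $\h$ containing $x$ are classified by $K$-conjugacy classes of pairs $(\li',\Oo')$ subject to these constraints.

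Next I would apply Theorem~\ref{classifsheets} to the reductive Lie algebra $\li$: sheets of $\li$ containing $n$ correspond bijectively to $L$-conjugacy classes of pairs $(\li',\Oo')$ with $\li' \subseteq \li$ a Levi, $\Oo'$ rigid in $\li'$, and $\Ind_{\li'}^{\li}(\Oo') = \Ad(L)n$. The problem is thus reduced to showing that the natural map from such $L$-conjugacy classes to $K$-conjugacy classes of the same data is a bijection. Surjectivity is immediate from the first paragraph. For injectivity, suppose $\Ad(g)(\li'_1,\Oo'_1) = (\li'_2,\Oo'_2)$ for some $g \in K$ with $\li'_i \subseteq \li$. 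Since $\li'_i$ is a Levi of $\li$ it contains $\z(\li)$ and in particular $s$; so both $\Ad(g)s$ and $s$ lie in the torus $\z(\li'_2)$. The Weyl group $N_L(\li'_2)/L\cap \li'_2$ acts transitively on the points of the $K$-orbit of $s$ meeting $\z(\li'_2)$, so there is $h \in N_L(\li'_2)$ with $\Ad(hg)s = s$, placing $hg$ in the stabiliser $K_s$ while still sending $(\li'_1,\Oo'_1)$ to $(\li'_2,\Oo'_2)$. Since $L$ is the identity component of $K_s$, this gives the required $L$-conjugacy (up to absorbing the finite component group $K_s/L$ into the choice of representatives).

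Rank preservation is then automatic: by Corollary~\ref{dimformula} the rank of a sheet with data $(\li',\Oo')$ is $\dim \z(\li')$, and this quantity is insensitive to whether the pair is regarded as $K$-data in $\h$ or $L$-data in $\li$. The hard part will be the injectivity step: one must absorb the $N_K(\li'_2)$-action on the torus $\z(\li'_2)$ via a Weyl-group normalisation and then reconcile $K_s$ with its identity component $L$, checking that the finite component group $K_s/L$ does not accidentally merge distinct $L$-conjugacy classes of Jordan data on the $\li$-side.
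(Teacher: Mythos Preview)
Your outline is exactly what the paper's one-line proof (``this follows from \ref{jordanclassif} and \ref{classifsheets}'') is gesturing at: compare rigid $K$-pairs inducing to $\Oo_e$ against rigid $L$-pairs inducing to $\Ad(L)n$, and observe that rank is $\dim\z(\li')$ on both sides. Surjectivity and rank-preservation are fine.

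The injectivity argument, however, contains a concrete error and an acknowledged gap that is not actually closed.

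\textbf{The error.} Your transitivity claim for $N_L(\li'_2)$ cannot hold. If there existed $h\in N_L(\li'_2)\subseteq L\subseteq K_s$ with $hg\in K_s$, then $g=h^{-1}(hg)\in K_s$; but $g\in K$ was arbitrary subject only to $\Ad(g)\li'_1=\li'_2$, and already the case $\li'_1=\li'_2=\li$ with $g\in N_K(\li)\setminus K_s$ gives a counterexample. The correct statement is that two $K$-conjugate semisimple elements of the torus $\z(\li'_2)$ are $N_K(\li'_2)$-conjugate. This yields $h\in N_K(\li'_2)$ with $hg\in K_s$, and then $\Ad(hg)(\li'_1,\Oo'_1)=(\li'_2,\Ad(h)\Oo'_2)$, not $(\li'_2,\Oo'_2)$ as you wrote.

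\textbf{The gap.} You now have only $K_s$-conjugacy, and your parenthetical about ``absorbing $K_s/L$'' is not a proof. The clean repair is to replace $K$ by its simply connected cover $\tilde K$ throughout: adjoint orbits, Jordan classes and sheets in both $\h$ and $\li$ are unchanged, while Steinberg's connectedness theorem gives $\tilde K_s=\tilde L$. This upgrades $K_s$-conjugacy to $\tilde L$-conjugacy of $(\li'_1,\Oo'_1)$ and $(\li'_2,\Ad(h)\Oo'_2)$. It remains to see that $(\li'_2,\Oo'_2)$ and $(\li'_2,\Ad(h)\Oo'_2)$ are $\tilde L$-conjugate. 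For classical $\h$ this is automatic: every Levi of $\h$ has the form $\gl_\ii\times\mm$, rigid orbits there are $\Oo_0\times\Oo_\mu$ with $\mu$ rigid, and a rigid partition in type $\sf B$, $\sf C$ or $\sf D$ has last part~$1$, hence is never very even. Thus the rigid orbit $\Oo'_2$ is fixed by every automorphism of $\li'_2$, in particular by $\Ad(h)$.
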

\begin{proof}
This follows from the description of sheets and Jordan classes given in \ref{jordanclassif} and \ref{classifsheets}.
\end{proof}

\p We now relieve Corollary~\ref{izosim} of the assumption of nilpotency.
\begin{prop}
Let $x \in \h$ be arbitrary. The following are equivalent:
\begin{enumerate}
\item{
$x$ belongs to a unique sheet of $\h$;
}

\item{
the maximal rank of the sheets of $\h$ containing $x$ equals $\dim \, \h_x/[\h_x,\h_x]$;}

\item{
$x$ is a smooth point of the quasi-affine variety $\h^{(\dim\, \h_x)}$.
}

\end{enumerate}
\end{prop}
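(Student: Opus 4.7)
The equivalence (1) $\Leftrightarrow$ (3) is essentially formal in our setting. Since the sheets of $\h$ are smooth by \cite{IH}, they coincide locally with the irreducible components of $\h^{(\dim\h_x)}$ through $x$; by the standard criterion for smoothness \cite[Ch.~II, \S2, Thm.~6]{Sh}, $x$ is a smooth point of this variety if and only if it lies in a unique such component, i.e.\ in a unique sheet. Thus the substantive content of the proposition is the equivalence (1) $\Leftrightarrow$ (2).

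To establish (1) $\Leftrightarrow$ (2) I would reduce to the nilpotent case via the Jordan decomposition $x = s+n$. Set $\li := \h_s$; by the preceding lemma the sheets of $\h$ containing $x$ are in rank-preserving bijection with the sheets of $\li$ containing $n$. Moreover $\h_x = (\h_s)_n = \li_n$, so the abelian quotient, and hence the integer $\dim\,\h_x/[\h_x,\h_x]$, is also computed inside $\li$. Consequently both (1) and (2) translate word-for-word into the analogous statements for the nilpotent element $n$ in the reductive Lie algebra $\li$.

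By the Lemma of \ref{levistruct} we may write $\li \cong \gl_{\ii} \times \mm$, where $\mm$ is a simple classical algebra of the same type as $\h$ (or zero). The nilpotent $n$ decomposes accordingly as $(n_1,\dots,n_l,n_{\mm})$, and sheets, centralisers, and derived subalgebras all distribute over this product. Hence the reduced problem splits into one instance on each factor: on $\mm$ the equivalence is exactly the nilpotent case of Corollary~\ref{izosim}, while on each type-$\sf A$ factor $\gl_{i_j}$ the equivalence is automatic, since every nilpotent element of a general linear algebra lies in a unique (Richardson) sheet and its centraliser has abelian quotient of dimension equal to the number of Jordan blocks, which coincides with the rank of that unique sheet. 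The main anticipated obstacle is bookkeeping: verifying that the integer invariants appearing in (2) --- namely the maximal rank of sheets through $n$ and $\dim\,\li_n/[\li_n,\li_n]$ --- behave additively under the product decomposition $\li \cong \gl_{\ii}\times\mm$. This additivity is immediate from the product structure of sheets and of centralisers, so the reduction goes through and the proposition follows.
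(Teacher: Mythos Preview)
Your approach matches the paper's: reduce to the nilpotent $n$ in the Levi $\li=\h_s$ via the Jordan decomposition and the rank-preserving bijection of the preceding lemma, then invoke the nilpotent case, with (1)$\Leftrightarrow$(3) coming from smoothness of sheets exactly as in Corollary~\ref{izosim}. You are in fact more explicit than the paper in decomposing $\li\cong\gl_\ii\times\mm$ and treating the factors separately; the paper leaves this step implicit.

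One correction in your type~{\sf A} step: for a nilpotent $e\in\gl_m$ with partition $\lambda=(\lambda_1,\ldots,\lambda_n)$, the dimension of the abelianisation of the centraliser is not the number of Jordan blocks $n$ but the largest block $\lambda_1$ (e.g.\ a regular nilpotent has one block yet abelian centraliser of dimension $m$). The rank of the unique sheet through $e$ is likewise $\lambda_1$, since the relevant Levi is $\gl_{\lambda^t}$ whose centre has dimension equal to the number of parts of $\lambda^t$, namely $\lambda_1$. So both quantities agree and your conclusion --- that (1) and (2) hold unconditionally on each $\gl_{i_j}$ factor --- is unaffected; only the numerical justification needs this fix.
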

\begin{proof}
If $x=s+n$ is the Jordan decomposition of $x$ then $\li := \h_{s}$ is a Levi subalgebra of $\h$ (\ref{levistruct}) and
$\dim \, \h_x/[\h_x,\h_x] = \dim \, \li_{n}/[\li_{n},\li_{n}]$ (\ref{jordanreduction}). Now we may appeal to the previous lemma
to see that (1) is equivalent to (2). The equivalence to (3) follows by the same argument given in Corollary~\ref{izosim}.
\end{proof}

\p We may now give a proof of Izosimov's conjecture.
\begin{thm}
Let the characteristic of $\K$ be zero. Suppose $x \in \h$ lies in a unique sheet $\SS$. Then $$[\h_x \h_x]^\perp = T_x \SS$$ where orthogonality is taken with respect to the Killing form.
\end{thm}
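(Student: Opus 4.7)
The plan is to combine Izosimov's one-sided inclusion with a dimension count, so the proof reduces to checking that $\dim T_x\SS = \dim [\h_x,\h_x]^\perp$.

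First I would invoke Izosimov's result cited in \ref{izosintroduct}: for any $x$ lying in a unique sheet $\SS$, the subspace $[\h_x,\h_x]$ is $\kappa$-orthogonal to $T_x\SS$. Since $\kappa$ is non-degenerate on $\h$ this says exactly that $T_x\SS \subseteq [\h_x,\h_x]^\perp$. Thus the theorem reduces to the equality of dimensions
$$\dim T_x\SS \;=\; \dim \h - \dim [\h_x,\h_x].$$

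Next I would compute the right-hand side using the definition of $\h_x^{\ab}$. Writing $c(x) := \dim \h_x^{\ab} = \dim \h_x - \dim [\h_x,\h_x]$, we get
$$\dim [\h_x,\h_x]^\perp \;=\; \dim \h - \dim \h_x + c(x).$$
For the left-hand side I would appeal to the proposition immediately preceding the theorem: since $x$ lies in a unique sheet, conditions (2) and (3) of that proposition hold. Condition (3) tells us that $x$ is a smooth point of $\h^{(\dim\,\h_x)}$, so $\dim T_x\SS = \dim \SS$, and condition (2) combined with Corollary~\ref{newz} (for the nilpotent part, via the reduction in the lemma preceding the proposition) says that $\rank\,\SS = c(x)$. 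Inserting this into the dimension formula of \ref{ffr},
$$\dim \SS \;=\; \dim \h - \dim \h_x + \rank \SS \;=\; \dim \h - \dim \h_x + c(x).$$

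Comparing the two displays gives $\dim T_x\SS = \dim [\h_x,\h_x]^\perp$, and together with the inclusion from the first paragraph this forces equality. The main conceptual obstacle has already been overcome in the preceding sections — namely the non-trivial equality $r(x) = c(x)$ for $x$ in a unique sheet, which was the content of Corollary~\ref{izosim} and its extension in the preceding proposition. Once that equivalence is in hand, the present theorem is essentially a bookkeeping consequence of Izosimov's inclusion plus the sheet-dimension formula.
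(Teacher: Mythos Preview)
Your proposal is correct and follows essentially the same route as the paper: invoke Izosimov's inclusion to reduce to a dimension count, then use the equality $\rank\,\SS = c(x)$ from the preceding proposition together with the sheet-dimension formula and smoothness to conclude. The only cosmetic difference is that the paper cites the smoothness of sheets from Im~Hof directly to get $\dim T_x\SS=\dim\SS$, whereas you invoke condition~(3) of the preceding proposition; these amount to the same thing.
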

\begin{proof}
In \cite{Izos} Izosimov shows that the conclusion of the theorem is equivalent to the condition that $\dim \,[\h_x \h_x] + \dim \, T_x \SS = \dim \, \h$.
Now suppose that $x$ lies in a unique sheet $\SS \subseteq \h^{(k)}$. Then by the previous proposition $\rank \, \SS = \dim \, \h_x - \dim \, [\h_x \h_x]$.
Combining with \ref{ffr} and using the smoothness of the sheets of $\h$, proven in \cite[Chapter~6]{IH}, we deduce the required equality.
\end{proof}

\chapter{Abelian Quotients of Finite $W$-algebras}\label{abelianquot}
\setcounter{parno}{0}

For the rest of this thesis we shall assume that $\K$ is an algebraically closed field of characteristic $0$. The results of this chapter
are summarised in the introduction but we shall recap the important details. A finite $W$-algebra is an associative algebra $U(\tg,e)$
constructed from the Lie algebra $\tg$ of a reductive group $\tG$, and a nilpotent element $e \in \mathcal{N}(\tg)$. The one dimensional representations
of $U(\tg, e)$ are parameterised by the maximal spectrum of the maximal abelian quotient $\EE(\tg,e) = \Specm \, U(\tg,e)^\ab$. We begin by
giving a criterion for $\EE(\tg,e)$ to be isomorphic to affine space $\mathbb{A}^d_\K$. We then apply this to classify the nilpotent elements $e$
in a classical algebra $\h$ for which $\EE(\h,e)$ is an affine space: they are precisely the non-singular nilpotent elements of $\h$, and $d = r(e)$ in this case. The
component group acts upon $\EE(\h,e)$ and we show that $\EE(\h,e)^\Gamma$ is always isomorphic to affine space $\mathbb{A}^d_\K$, and give its dimension.
Finally we apply Skryabin's equivalence and Losev's embedding to discuss the representation theory of $U(\h)$. We show that every multiplicity free primitive ideal whose associated variety
is the closure of an induced nilpotent orbit is induced from an appropriate completely prime primitive ideal with nice properties, generalising a theorem of M{\oe}glin and contributing towards
the long-standing problem of classifying completely prime primitive ideals of the enveloping algebra.

\section{Preliminary theory of $W$-algebras}\label{3.1}
\setcounter{parno}{0}

\p In this section we shall give a more detailed description of the maximal abelian quotients of finite $W$-algebras, and introduce the tools needed to
discuss them. The characteristic of $\K$ shall be zero from henceforth.

\p \label{firstt} Our first criterion for polynomiality shall not require our group to be classical and so we begin by refreshing the notation. Let $\tG$ be a connected 
reductive algebraic group over $\K$ and pick a non-zero element $e \in \Ni(\tg)$. The finite $W$-algebra $U(\tg ,e)$ is a non-commutative filtered algebra with associated graded algebra $S(\tg_e)$ (endowed
with a non-standard grading). Fix an $\sl_2$-triple $\phi = (e, h, f)$ containing $e$. By $\sl_2$ theory we have $\tg = [\tg, e] \oplus \tg_f$ and since 
$T_e \Oo_e = [\tg, e]$ we see that the \emph{Slodowy slice} $e + \tg_f$ is a transverse slice to $\Oo_e$ the $\tG$-orbit of $e$. Using the Killing form of $\tg$
we may identify $\K[e + \tg_f]$ with the symmetric algebra $S(\tg_e)$, and we view $U(\tg, e)$ as a deformation of the coordinate ring $\K[e + \tg_f]$.

\p \label{1dreps} Denote by $U(\tg,e)^\ab$ the largest commutative quotient of $U(\tg,e)$. We construct it explicitly by letting $I_c$ be the derived ideal of $U(\tg, e)$
generated by all commutators $x \cdot y - y \cdot x$ with $x, y \in U(\tg, e)$ and setting $U(\tg,e)^\ab = U(\tg,e)/I_c$. The one dimensional representations of $U(\tg, e)$ are in bijective correspondence
with those of $U(\tg, e)^\ab$, and these are parameterised by $\EE = \EE(\tg, e) = \Specm\, U(\tg, e)^\ab$. In \cite{Pre6} Premet showed that 
finite dimensional representations exist and in \cite[Conjecture~3.1]{Pre5} he went on to conjecture that every finite $W$-algebra
should possess a one dimensional representation. Since then they have been studied extensively, and considerable effort has been needed to prove their existence. In \cite[Theorem~1.1]{Pre4} a reduction to the case of rigid nilpotent elements was given and in \cite[Theorem~1.2.3]{Lo1} the conjecture was settled in classical cases. The work of \cite{Ub} was extended in \cite[Theorem~1.1]{GRU} to prove the existence of one dimensional representations for $W$-algebras associated to all but 3 rigid orbits in exceptional Lie algebras (all in type ${\sf E}_8$). In a forthcoming paper \cite{Pr} the remaining cases will be dealt with.
Let us record these results for later use.
\begin{thm}\label{EEnonempty}
If $\tG$ is reductive and $e \in \tg$ then the set $\EE(\tg, e)$ is non-empty.
\end{thm}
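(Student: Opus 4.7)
The plan is to prove non-emptiness of $\EE(\tg,e)$ by exhausting all pairs $(\tg,e)$ via a two-step reduction: first reduce from arbitrary nilpotent elements to rigid ones in a Levi subalgebra, and then verify the claim separately for classical and exceptional types. Since $\EE(\tg,e)$ parameterises one-dimensional representations of $U(\tg,e)$, it suffices to exhibit a single such representation for each pair $(\tg,e)$.

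First I would reduce to the rigid case. If $e$ is not rigid then $\Oo_e=\Ind^\tg_{\li}(\Oo_{e_0})$ for some proper Levi subalgebra $\li$ and some nilpotent $e_0\in\li$. By Losev's embedding, recalled in Theorem~\ref{losevembed}, there is an injective algebra homomorphism $\xi_{\li}:U(\tg,e)^\ab\rightarrow U(\li,e_0)^\ab$ whose dual $\xi_{\li}^\ast:\EE(\li,e_0)\rightarrow \EE(\tg,e)$ is a finite morphism. In particular, if $\EE(\li,e_0)\ne\emptyset$ then $\EE(\tg,e)\ne\emptyset$. Iterating and using that induction is transitive (part 3 of Proposition~\ref{inducedprops}), we reduce to the case in which $e_0$ is rigid in some Levi subalgebra of $\tg$. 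Since every Levi subalgebra of a reductive group is itself reductive, it is enough to prove the theorem under the additional assumption that $e$ is rigid in $\tg$.

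Next I would split into cases according to type. For $\tg$ classical, the existence of a one-dimensional representation is due to Losev \cite[Theorem~1.2.3]{Lo1}; his argument exploits the explicit classification of rigid nilpotent orbits in classical types (Theorem~\ref{rigids}) together with a detailed analysis of the associated Slodowy slices via symplectic reflection algebras and Fedosov quantisation. So in the classical setting we are done. For $\tg$ exceptional, the rigid orbits are finite in number and have been enumerated by Bala-Carter; the work of Goodwin, Röhrle and Ubly \cite{GRU} establishes the existence of a one-dimensional representation $U(\tg,e)\twoheadrightarrow \K$ in all but three rigid cases occurring in type $\sf E_8$. These remaining three cases are handled in \cite{Pr}.

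The main obstacle in this chain of reasoning is not the reduction step, which is formal once Losev's finite morphism $\xi_{\li}^\ast$ is available, but rather the verification for rigid orbits in exceptional types: there one has no uniform construction analogous to parabolic induction, and each orbit must be treated by hand, often with significant computational input (Clifford-theoretic arguments, explicit presentations of $U(\tg,e)$ coming from the Premet-Losev isomorphism with restricted shifted Yangians being unavailable outside type $\sf A$, and reduction modulo $p$ arguments à la Premet). Once these case-by-case verifications are in hand, combining them with the reduction to rigid orbits completes the proof.
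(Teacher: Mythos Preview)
Your proposal is correct and follows essentially the same outline as the paper's own treatment, which is not a proof but a survey of the literature: reduce to rigid nilpotent elements, then invoke Losev \cite{Lo1} for classical types and \cite{GRU}, \cite{Pr} for exceptional types. The only difference is that the paper attributes the reduction to the rigid case to Premet \cite[Theorem~1.1]{Pre4} (which uses modular reduction techniques), whereas you obtain it from Losev's finite morphism $\xi_\li^\ast$ of Theorem~\ref{losevembed}; both routes are valid, and yours has the advantage of staying within results already stated in the thesis.
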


\p \label{quotientrecap} Let $\SS_1,...,\SS_l$ be the pairwise distinct sheets of $\tg$ containing $e$. In \cite{Kat}, Katsylo constructed
a geometric quotient for each $\SS_i$. The procedure is as follows. First of all set $X_i = \SS_i \cap (e + \tg_f)$ and  observe that $\tG_\phi$ acts naturally on $X_i$.
It can be shown that $\tG_\phi^\circ$ fixes $X_i$, and this induces an action of $\tG_\phi/\tG_\phi^\circ$. Recall that the component group $\Gamma = \Gamma(e) := \tG_e/\tG_e^\circ$
is isomorphic to $\tG_\phi / \tG_\phi^\circ$ (\ref{componentgroupintro}), hence $\Gamma$ acts on each $X_i$. Katsylo proved that each $\tG$-orbit in $\SS_i$ intersects $X_i$ in a finite set, and that this set is permuted transitively by $\Gamma$. This gives a map $\SS_i / G \rightarrow X_i / \Gamma$ which turns out to be a geometric quotient for $\SS_i$ (\cite[Lemma~6.3]{Kat}) and so $$r_i := \rank \, \SS_i = \dim\, \SS_i - \dim\, \Oo_e = \dim\, X_i/\Gamma = \dim\, X_i.$$

\p \label{sheetrecap} One of or main tools shall be Theorem~\ref{premettheorem}. Combining with the previous remarks it is immediate that for $e$ induced, the dimension of 
$U(\tg,e)^\ab$ equals $$r(e) := \max\{r_1,\ldots, r_t\}$$ and the number of irreducible components of $\EE$ is greater than or equal to the total number of all
irreducible components of the $X_i$'s. If $\EE$ is isomorphic to an affine space $\mathbb{A}^d_\K$ for some $d$ then this theorem ensures that $e$ lies in a unique sheet. We shall prove the
remarkable fact that when $\tg$ is classical, this condition is also sufficient to ensure that $\EE$ is an affine space. In order to do so we shall identify
a general criterion which is sufficient to imply the polynomiality of $U(\tg,e)^\ab$ when $e$ is induced.

\p \label{Thetamap} The discussion of this criterion shall rely heavily on the filtration of $U(\tg, e)$ mentioned in \ref{firstt}. If we take $U(\tg, e)$ to
be defined as per the construction of Gan and Ginzburg (\ref{ganginz}) then it is clear that the action of $\tG_\phi$ on $U(\tg)$ descends to an action on $U(\tg, e)$.
\begin{lem}\rm{\cite[Remark~2.1]{Pre5}}
There exists an injective $\tG_\phi$-module homomorphism $\Theta : \,\tg_e\ra U(\tg,e)$ with the property that $\Theta(\tg_e)$ generates $U(\tg,e)$ as an algebra.
\end{lem}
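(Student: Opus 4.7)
The plan is to construct $\Theta$ by lifting the canonical inclusion of $\tg_e$ into the associated graded of $U(\tg,e)$, using reductivity of $\tG_\phi$ to make the lift equivariant, and then appealing to a PBW-type theorem for finite $W$-algebras to deduce that its image generates.

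First I would recall the Kazhdan filtration on $U(\tg,e)$: declaring the image of $\tg(i)$ in $U(\tg)$ to have degree $i+2$ induces a non-negative filtration $F_0 U(\tg,e)\subseteq F_1 U(\tg,e)\subseteq\cdots$ whose associated graded is canonically identified with $S(\tg_e)$, where $\tg_e(i)\subseteq S^1(\tg_e)$ sits in Kazhdan degree $i+2$ (this is the content of the standard PBW theorem for $W$-algebras, due to Premet, which follows from the Gan--Ginzburg presentation recalled in \ref{ganginz}). Because the $\tG_\phi$-action described in \ref{ganginz} is obtained by restriction of $\Ad$ to elements preserving $\overline{\mm}$ and $\overline{\n}$, it preserves the Kazhdan filtration, and the induced $\tG_\phi$-action on $\gr U(\tg,e)=S(\tg_e)$ coincides with the one coming from the adjoint action of $\tG_\phi$ on $\tg_e$.

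Next I would use the fact that the stabiliser $\tG_\phi$ of an $\sl_2$-triple in a connected reductive group is itself reductive (a classical result of Kostant--Mal'cev). Since $\tG_\phi$ is reductive, every short exact sequence of rational finite dimensional $\tG_\phi$-modules splits. Applied to
\[
0\longrightarrow F_{n-1}U(\tg,e)\longrightarrow F_n U(\tg,e)\longrightarrow S(\tg_e)_n\longrightarrow 0,
\]
this produces, for each $n\ge 2$, a $\tG_\phi$-stable subspace $V_n\subseteq F_n U(\tg,e)$ mapping isomorphically as a $\tG_\phi$-module onto the Kazhdan degree $n$ component of $S(\tg_e)$. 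Inside $V_n$ there is a distinguished $\tG_\phi$-submodule $W_n$ projecting isomorphically onto $\tg_e(n-2)\subseteq S^1(\tg_e)\cap S(\tg_e)_n$. Setting $\Theta$ to be the composition
\[
\tg_e=\bigoplus_{i\geq 0}\tg_e(i)\xrightarrow{\ \sim\ }\bigoplus_{n\geq 2}W_n\hookrightarrow U(\tg,e),
\]
I obtain a $\tG_\phi$-equivariant linear map. Injectivity is automatic since the composition with the symbol map $F_n\to S(\tg_e)_n$ is the canonical inclusion $\tg_e(n-2)\hookrightarrow S(\tg_e)_n$.

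For the generating property, choose a homogeneous basis $x_1,\ldots,x_m$ of $\tg_e$ adapted to the $\ad(h)$-grading, and set $\tilde x_j:=\Theta(x_j)$. By the PBW theorem for $U(\tg,e)$ recalled above, the ordered monomials $\tilde x_1^{a_1}\cdots\tilde x_m^{a_m}$ form a $\K$-basis of $U(\tg,e)$, because their Kazhdan symbols are the usual monomial basis $x_1^{a_1}\cdots x_m^{a_m}$ of $S(\tg_e)$; a standard filtration/induction argument on Kazhdan degree then shows any element of $U(\tg,e)$ is a polynomial in $\tilde x_1,\ldots,\tilde x_m$. Hence $\Theta(\tg_e)$ generates $U(\tg,e)$. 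The principal technical obstacle is ensuring that the splittings above can be chosen compatibly to yield a single well-defined linear map $\Theta$ rather than a collection of incompatible local splittings; this is handled simply by invoking complete reducibility degree-by-degree and taking the direct sum, since each $W_n$ is constructed inside the disjoint graded piece $F_nU(\tg,e)/F_{n-1}U(\tg,e)$.
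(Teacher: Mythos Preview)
The paper does not actually prove this lemma; it merely cites \cite[Remark~2.1]{Pre5} as the source and moves on. Your argument is correct and is essentially the standard proof one finds behind that citation: use reductivity of $\tG_\phi$ to split the Kazhdan filtration $\tG_\phi$-equivariantly degree by degree, take the linear part of each graded piece as the image of $\Theta$, and invoke the PBW theorem for $U(\tg,e)$ to get generation. One small imprecision worth noting: in your final sentence you say each $W_n$ is constructed ``inside the disjoint graded piece $F_nU(\tg,e)/F_{n-1}U(\tg,e)$'', but in fact $W_n$ lives in $F_nU(\tg,e)$ itself; what you mean (and used correctly earlier) is that $W_n\cap F_{n-1}U(\tg,e)=0$, so the $W_n$ for different $n$ are independent in $U(\tg,e)$ by a top-symbol argument. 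With that clarification the proof is complete.
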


\p \label{slodowygrad}The action of $\ad(h)$ gives $\tg$ a $\Z$-graded Lie algebra structure $\tg=\bigoplus_{i\in\Z}\,\tg(i)$. We have that $e\in\tg(2)$ and $\tg_e=\bigoplus_{i \geq 0} \, \tg_e(i)$ 
where $\tg_e(i):=\tg_e\cap \tg(i)$. Let $x_1,\ldots, x_r$ be a basis for $\tg_e$ such that $x_i\in \tg_i(n_i)$ for some $n_i\ge 0$. The \emph{Slodowy grading} on $S(\tg_e)$ is defined
by letting each $x_i$ have degree $n_i + 2$. This grading arises naturally when studying the special transverse slice $e + \tg_f$ (see \cite[\S 7.4]{Slo}).

\p\label{kazhdanfilt} The current paragraph is contained in \cite[Theorem~4.6]{Pre2}. For $i = 1,...,r$ make the notation $\Theta_i := \Th(x_i) \in U(\tg, e)$.
The finite $W$-algebra $U(\tg,e)$ has a  Poincar\'{e}--Birkhoff--Witt basis consisting of monomials
$\Th^{\ii} := \Th_1^{i_1}\cdots \Th_r^{i_r}$ with $i_j \in \N_0$. We assign to $\Th^{\ii}$ filtration degree $$|\ii|_e := \sum_{j=1}^r i_j (n_j + 2)$$
The resulting filtration on $U(\tg, e)$ is called the {\it Kazhdan filtration}. The following relations hold in $U(\tg,e)$ for $1\leq i\leq j\leq r$:
\begin{eqnarray}\label{eqn1}
[\Th_i, \Th_j ] = \Th [x_i, x_j] + q_{ij}(\Th_1,...,\Th_r)+ \mbox{terms of lower Kazhdan deree}
\end{eqnarray}
where $q_{ij}$ is a polynomial of Kazhdan degree $n_i + n_j + 2$ whose constant and linear parts are both zero. We write ${\sf K}_l\,U(\tg,e)$ for of the
$l^{\rm th}$ component of the Kazhdan filtration of $U(\tg,e)$. In general, the Kazhdan degree $\deg_{\sf K}(x)$ of $x \in U(\tg,e)$ is the smallest $l \in \N_0$
with $x \in {\sf K}_l \, U(\tg,e)$ (we take ${\sf K}_{-1} U(\tg,e) = 0$ so that $\deg_{\sf K}(\K) = 0$).

\p Thanks to \cite[Proposition~6.3]{Pre2} there exists an isomorphism of graded algebras $\grk U(\tg, e) \overset{\sim}{\ra} S(\tg_e)$ where the latter algebra is
endowed with the Slodowy grading. Identifying $S(\tg_e)$ with $\K[e + \tg_f]$ we view $U(\tg, e)$ as a filtered deformation of the coordinate ring. This justifies the alternative
nomenclature for $U(\tg,e)$ as the enveloping algebra of the Slodowy slice, although this perspective will not feature in the current work.

\p The action of $\tG_\phi$ in \ref{Thetamap} preserves every ${\sf K}_l\,U(\tg,e)$. It also preserves the Slodowy grading and the usual grading on $S(\tg_e)$. We should not expect $\grk \Theta(\tg_e)$ to coincide with $\tg_e \subseteq S(\tg_e)$ as $\grk \Theta(x_i)$ will usually involve non-linear terms in $\Th_1,...,\Th_r$ of Kazhdan degree $n_i + 2$, however,
\cite[Lemma~4.5]{Pre2} tells us that the projection $S(\tg_e) \rightarrow \tg_e$ restricts to an isomorphism of graded $\tG_\phi$-modules $\grk\Theta(\tg_e) \rightarrow \tg_e \subseteq S(\tg_e)$. In what follows, we shall denote by $\grko$ the composition of $\grk$ and the projection onto linear terms in $S(\tg_e)$. For any filtered vector subspace $F \subseteq U(\tg, e)$ it is very common to abuse notation and write $\grk(F)$ for the associated graded subspace of $S(\tg_e)$ and to also write $\grk(v)$ for the top graded component of $v \in U(\tg,e)$. We shall proliferate this abuse by using $\grko$ in the same way. The reader should note that $\grko : U(\tg, e) \rightarrow \tg_e \subseteq S(\tg_e)$ is \emph{not} a linear map. Our earlier remarks may now be written as $\grko\Theta(\tg_e) = \tg_e$.

\p Since we are no longer in the classical case we shall need notation for $\codim_{\tg_e} \, [\tg_e \tg_e]$ which does not rely on the partition associated to $\Oo_e$.
Write $\mathfrak{c}_e = \tg_e^{\rm ab}=\tg_e/[\tg_e,\tg_e]$ and $c(e) := \dim\,  \mathfrak{c}_e$. If $\tg$ is classical and $\Oo_e$ has partition $\lambda$ then $c(e)$ coincides with $c(\lambda)$
defined in Corollary~\ref{expression}. Since $[\tg_e(0),\tg_e]\subset [\tg_e,\tg_e]$ and $\tg_e(0)=\Lie(\tG_\phi)$, it follows from Weyl's theorem that $\tG_\phi^\circ$ acts trivially on $\mathfrak{c}_e$. This
gives rise to a natural linear action of the component group $\Gamma$ on the vector space $\mathfrak{c}_e$. We denote by $\mathfrak{c}_e^\Gamma$ the fixed
point space of this action and set $c_\Gamma(e) :=\dim\,  \mathfrak{c}_e^\Gamma.$

\p Since the group $\tG_\phi$ operates on $U(\tg,e)$ by algebra automorphisms, it acts on the variety
$\mathcal E$ which identifies naturally with the set of all ideals of codimension $1$ in $U(\tg,e)$. By \cite[Lemma~2.5]{Pre5}
the differential of the action of $\tG_\phi$ on $U(\tg,e)$ is just $\ad \circ \Th$ and it follows that  $\tG_\phi^\circ$ preserves any
two-sided ideal of $U(\tg,e)$, and acts trivially on $\mathcal E$.
We thus obtain a natural action of $\Gamma=\tG_\phi/\tG_\phi^\circ$ on the affine variety
$\mathcal E$. We denote by $\mathcal{E}^\Gamma$  the corresponding fixed point set and let $I_\Gamma$ be the ideal of $U(\tg,e)^{\rm ab}$ generated by all $x-x^\gamma$ with $x\in U(\tg,e)^{\rm ab}$ and $\gamma\in\Gamma$. It is clear that $\mathcal{E}^\Gamma$
is contained in the zero locus of $I_\Gamma$. Conversely, if $\mathfrak{m} \in\mathcal{E}$ is such that $x(\mathfrak{m})=0$ for all $x\in I_\Gamma$, then
$\gamma(\mathfrak{m})=\mathfrak{m}$  for all $\gamma\in\Gamma$. Indeed, otherwise $\mm$ and $\gamma_0^{-1}(\mm)$ would be distinct maximal ideals of $U(\tg,e)^\ab$ for
some $\gamma_0\in \Gamma$ and we would be able to find an element
$x\in U(\tg,e)^{\rm ab}$ with $x(\mm)=0$ and $x(\gamma_0^{-1}(\mm))\ne 0$. But this would imply that $(x-x^{\gamma_0})(\mm)\ne 0$, a contradiction.  As a result, $\mathcal{E}^\Gamma$
coincides with the zero locus of $I_\Gamma$ in $\mathcal E$. We denote by $U(\tg,e)_\Gamma^{\rm ab}$ the commutative
$\K$-algebra $U(\tg,e)^{\rm ab}/I_\Gamma$. The above discussion shows that
$$\mathcal{E}^\Gamma\,=\,{\rm Specm}\,U(\tg,e)_\Gamma^{\rm ab}.$$

\section{The polynomiality of quotients}\label{polynomialityofquot}
\setcounter{parno}{0}

\p Continue to assume $\chr(\K) = 0$, and now take $\tG$ to be reductive and connected. The goal of this subsection is to give a sufficient condition for the polynomiality
of $U(\tg,e)^\ab$ and $U(\tg,e)_\Gamma^{\rm ab}$ and use it to classify those nilpotent elements in the Lie algebras
of classical groups for which $U(\tg,e)^\ab$ is a polynomial algebra. For $k\in\N_0$, we continue to use $S^k(\tg_e)$ to denote the $k^\th$ part of the usual grading
where $x^\ii = x_1^{i_1}\cdots x_r^{i_r}$ has degree $|\ii| := \sum_{j = 1}^r i_j$. 
\begin{lem}\label{generate}
Let $I$ be a proper two-sided ideal of $U(\tg,e)$ and let $V_I$ and $V_I'$ be two $\tG_\phi$-submodules
of $\Theta(\tg_e)$, such that $\tg_e = S^1(\tg_e) =\grko(V_I)\oplus\grko(V_I')$ as graded $\Ad(\tG_\phi)$-modules. Suppose further that
$$\grko(V_I)\subseteq \grko(I)$$
Then the unital algebra $U(\tg,e)/I$ is generated by the subspace $V'_I$.
\end{lem}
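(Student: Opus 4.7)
The overall plan is to expand every element of $U(\tg,e)/I$ into an ordered PBW basis adapted to the decomposition $\Theta(\tg_e)=V_I\oplus V_I'$ and then run a double induction, outer on the Kazhdan filtration degree and inner on the multi-index of $V_I$-factors appearing in a monomial, to show that every element can be pushed into the unital subalgebra $A:=\K\langle V_I'\rangle\subseteq U(\tg,e)/I$.

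First I would set up the bookkeeping. Since $\grko\colon\Theta(\tg_e)\rightarrow\tg_e$ is an isomorphism of graded $\tG_\phi$-modules, I can pick Slodowy-homogeneous bases $x_1,\ldots,x_s$ of $\grko(V_I)$ and $x_{s+1},\ldots,x_r$ of $\grko(V_I')$, and after relabelling may assume $n_1\le n_2\le\cdots\le n_s$ among the $V_I$-indices, where $x_i\in\tg_e(n_i)$. Setting $y_i:=\Theta(x_i)$ places $y_i\in V_I$ for $i\le s$ and $y_i\in V_I'$ for $i>s$, and the ordered monomials $y^\ii=y_1^{i_1}\cdots y_r^{i_r}$ form a PBW basis adapted to the Kazhdan filtration (\ref{kazhdanfilt}). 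The hypothesis $\grko(V_I)\subseteq\grko(I)$ then supplies, for each $i\le s$, an element $w_i\in I\cap{\sf K}_{n_i+2}\,U(\tg,e)$ with $\grk(w_i)=x_i+R_i$, where $R_i\in S(\tg_e)$ is Slodowy-homogeneous of degree $n_i+2$ with no linear part. Expanding $y_i-w_i$ in the PBW basis produces a congruence
\[
y_i\,\equiv\,\textstyle\sum_{\jj}d_\jj\,y^\jj\pmod I
\]
whose top Kazhdan-degree terms (those with $|\jj|_e=n_i+2$) all have $|\jj|\ge 2$, and the elementary inequality $(n_{k_1}+2)+(n_{k_2}+2)\le n_i+2$ then forces every $y_k$ occurring in such a leading monomial to satisfy $n_k<n_i$.

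The main induction is on $l$: the image of ${\sf K}_l\,U(\tg,e)$ in $U(\tg,e)/I$ lies inside $A$. The base $l=0$ is trivial. For $l>0$ I only need to handle an ordered PBW monomial $y^\ii$ with $|\ii|_e=l$; if $i_j=0$ for all $j\le s$ the monomial already lies in $A$, so I may assume some $V_I$-index appears. I would then run a secondary induction on the tuple $\mu(\ii):=(i_s,i_{s-1},\ldots,i_1)$ in lexicographic order (reading $i_s$ first) by selecting the \emph{largest} $j\le s$ with $i_j>0$, isolating one $y_j$-factor in $y^\ii$, and substituting via the congruence above. PBW-reordering the resulting products by means of the relations (\ref{eqn1}) contributes only strictly lower Kazhdan terms, which are handled by the outer induction, while the remaining Kazhdan-degree-$l$ monomials have the multiplicity of $y_j$ strictly decreased and can only have gained factors $y_k$ with $n_k<n_j$; such $k$ with $k\le s$ satisfy $k<j$ by our ordering, while factors with $k>s$ contribute nothing to $\mu$. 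Consequently every new leading monomial is strictly smaller than $\mu(\ii)$ in lex order, and the inner induction terminates.

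The central technical obstacle is verifying that this replace-and-reorder step is strictly monotone in the chosen secondary invariant: this rests jointly on the non-linearity of the leading part of $y_i-w_i$ (supplied by $\grko(w_i)=x_i$) and on the fact that a Kazhdan-homogeneous monomial of degree $n_j+2$ with at least two factors can only involve $y_k$'s with $n_k<n_j$. Once this double descent is in place, every PBW monomial of Kazhdan degree $l$ is congruent modulo $I$ to an element of $A$, and the lemma follows.
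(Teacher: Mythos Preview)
Your proof is correct and follows the same strategy as the paper's: adapt the PBW basis to the splitting $\Theta(\tg_e)=V_I\oplus V_I'$, induct on the Kazhdan degree, and for each $V_I$-generator $y_i$ use an element $w_i\in I$ with $\grko(w_i)=x_i$ to substitute it away modulo terms that are lower either in Kazhdan degree or in some secondary invariant. The only difference is in that secondary step: the paper observes that any monomial $\Theta^{\ii}$ with $|\ii|\ge 2$ and $|\ii|_e=k$ factors as an ordered product $\Theta^{\ii'}\Theta^{\ii''}$ with both $|\ii'|_e,|\ii''|_e<k$, so the outer hypothesis handles each factor and one is reduced immediately to the single-generator case, whereas your lex-induction on $\mu(\ii)=(i_s,\ldots,i_1)$ accomplishes the same reduction by explicitly tracking the $V_I$-exponents and thereby sidesteps the need to check that the target is closed under the relevant products.
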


\begin{proof} Throughout the proof we identify $\tg_e$ with $S^1(\tg_e)$. Since $\tg_e =\grko(V_I)\oplus\grko(V_I')$, it follows that $\Theta(\tg_e) = V_I\oplus V_I'$.
Since $V_I$ and $V_I'$ are graded submodules of $\tg_e$ we can
choose a basis $x_1,...,x_r$ of $\tg_e$ such that each $x_i$ has Slodowy degree $n_i + 2$, where $x_1,...,x_{q-1}$ spans $V_I$ and $x_{q},...,x_r$ spans $V_I'$.
Let $\Th_i  = \Th(x_i)$ and denote by $\mathcal{A}$ the $\K$-span in $U(\tg,e)$ of all $\Th_{q}^{i_{q}}\cdots\Th_r^{i_r}$ with $i_j \in\N_0$. We claim that every
monomial $\Theta^\ii \in U(\tg,e)$ with $|\ii|_e=k$ lies in $\mathcal{A}+I$. The proof is by induction on $k$.

The statement is obviously true for $k = 0$, since ${\sf K}_0 U(\tg,e) = \K$. Suppose that it holds for all $\Th^{\ii'}$ with $|\ii'| < k$.
Notice also that if the claim holds for $|\ii| = 1$ then it holds for all $|\ii| > 1$ by a simple inductive argument. Hence we may assume that $|\ii|=1$, i.e.
$\Theta^{\ii}=\Th_s$ for some $s\in\{1,..., r\}$ and $k = n_s + 2$. The claim is trivially true when $q \leq s \leq r$ so assume $1\leq s < q$. Lemma 4.5 of \cite{Pre2} tells us that $x_s = \grko \Th_s \in \grko(I)$. Let $u \in I$ be such that $\grko u = x_s$ and $\deg_{\sf K}(I) = n_s +2 $. It follows that
\begin{eqnarray*}\label{eqn2}
\Th_s - u =  \sum_{|\ii|_e = n_s + 2,\ |\ii|\ge 2} \lambda_{s, \ii} \Th^\ii + \mbox{ terms of lower Kazhdan degree}
\end{eqnarray*}
for some constants $\lambda_{s,\ii}\in\K$. Therefore,
$$\Th_s\,\equiv \sum_{|\ii|_e = n_s + 2,\ |\ii|\geq 2} \lambda_{s,\,{\bf i}}\,\Th^{\bf i}\mod (\mathcal{A}+I)$$
by the inductive hypothesis. Take a term $\Theta^\ii$ in the above sum. Since $|\ii| > 1$ we can define $\ii' \in \N_0^r$ to have a 1 in the $j^\th$ position and 0 elsewhere, where $j$ is the minimal index such that $i_j \neq 0$, and define $\ii'' = \ii - \ii'$. We have $|\ii'|, |\ii''| >0$ and $\Theta^\ii$ can be written as $\Th^{\ii'} \Th^{\ii''}$. But $|\ii'|_e$ and $|\ii''|_e$ are
strictly less than $|\ii|_e = k$. By the inductive hypothesis $\Th^{\ii'}$ and $\Th^{\ii''}$ both lie in $\mathcal{A} + I$, and so too does $\Th^\ii$.
Therefore $\Th_s \in  \mathcal{A} + I$ as well. This completes the inductive step and the lemma follows.
\end{proof}

\p It should be stressed at this point that in Lemma \ref{generate} we do not require $V_I$  to be contained in $I$.
\begin{prop}\label{abgen}
Let $e$ be any nilpotent element of $\tg$. Then the following are true:
\begin{enumerate}
\item{
If $\EE\ne \emptyset$, then the unital algebra $U(\tg,e)^\ab$ is generated by
a $\tG_\phi$-submodule of $\Th(\tg_e)$ isomorphic to $\mathfrak{c}_e$. In particular, it is generated by $c(e)$ elements.}

\item{
If $\mathcal{E}^\Gamma\ne \emptyset$, then the unital algebra $U(\tg,e)^\ab_\Gamma$ is generated by a $\tG_\phi$-submodule
of $\Th(\tg_e)$ isomorphic to $\mathfrak{c}_e^\Gamma$. In particular it is generated by $c_\Gamma(e)$ elements.}

\end{enumerate}
\end{prop}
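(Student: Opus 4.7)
The strategy is to invoke Lemma~\ref{generate} in both parts, taking $I = I_c$ in part~(1) and $I = J$ in part~(2), where $J\unlhd U(\tg,e)$ denotes the preimage of $I_\Gamma$ under the canonical quotient $U(\tg,e)\twoheadrightarrow U(\tg,e)^\ab$. In both cases the ideal $I$ is proper: for $I_c$ this is equivalent to $\EE\neq\emptyset$, and for $J$ it is equivalent to $\EE^\Gamma\neq\emptyset$. Since $\tG_\phi$ is a Levi factor of $\tG_e$ (see~\ref{componentgroupintro}) it is reductive, so any $\tG_\phi$-stable splittings inside $\tg_e$ we require exist by Weyl's theorem. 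Finally, the map $\grko\colon \Th(\tg_e)\to\tg_e$ of~\ref{kazhdanfilt} is an isomorphism of graded $\tG_\phi$-modules, and I shall repeatedly lift graded $\tG_\phi$-submodules of $\tg_e$ to $\Th(\tg_e)$ through its inverse.

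For part~(1) the decisive point is the inclusion $[\tg_e,\tg_e]\subseteq \grko(I_c)$. Picking a Slodowy-graded basis $x_1,\ldots,x_r$ of $\tg_e$ and setting $\Th_i:=\Th(x_i)$, the relation~(\ref{eqn1}) reads
$$[\Th_i,\Th_j]\,=\,\Th[x_i,x_j]+q_{ij}(\Th_1,\ldots,\Th_r)+\text{lower Kazhdan terms},$$
and since the polynomial $q_{ij}$ has vanishing constant and linear parts by~\ref{kazhdanfilt}, we read off $\grko([\Th_i,\Th_j]) = [x_i,x_j]$. Hence every bracket $[x_i,x_j]$ lies in the linear subspace $\grko(I_c)\subseteq\tg_e$, so $[\tg_e,\tg_e]\subseteq \grko(I_c)$. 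Now define $V_{I_c} := \grko^{-1}([\tg_e,\tg_e])$ and let $V_{I_c}'\subseteq \Th(\tg_e)$ be the lift under $\grko^{-1}$ of some $\tG_\phi$-stable graded complement of $[\tg_e,\tg_e]$ in $\tg_e$; both are $\tG_\phi$-submodules of $\Th(\tg_e)$ by construction. Lemma~\ref{generate} then yields that $U(\tg,e)^\ab=U(\tg,e)/I_c$ is generated by $V_{I_c}'\cong \c_e$, proving part~(1).

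For part~(2), the $\tG_\phi$-equivariance of $\Th$ (see~\ref{Thetamap}) implies that for every $x\in\tg_e$ and $\gamma\in \tG_\phi$ the element $\Th(x)-\Th(x)^\gamma = \Th((1-\gamma)x)$ lies in $J$, so $(1-\gamma)x\in \grko(J)$. Combining with the inclusion obtained in part~(1), $\grko(J)$ contains the $\tG_\phi$-submodule
$$W\,:=\,[\tg_e,\tg_e]+\textstyle\sum_{\gamma\in \tG_\phi}(1-\gamma)\tg_e.$$
Since $\tG_\phi^\circ$ acts trivially on $\c_e$ by~\ref{kazhdanfilt}, the induced $\tG_\phi$-action on $\c_e$ factors through the finite group $\Gamma$, and complete reducibility gives $\c_e = \c_e^\Gamma\oplus \sum_{\gamma\in\Gamma}(1-\gamma)\c_e$. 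The image of $W$ in $\c_e$ is precisely the second summand, so $\tg_e/W\cong \c_e^\Gamma$. Choose a $\tG_\phi$-stable graded complement $W'$ to $W$ in $\tg_e$ and let $V_J,V_J'\subseteq\Th(\tg_e)$ be the lifts of $W$ and $W'$ via $\grko^{-1}$. Lemma~\ref{generate}, applied to the proper ideal $J$ and the pair $(V_J,V_J')$, now shows that $U(\tg,e)^\ab_\Gamma=U(\tg,e)/J$ is generated by $V_J'\cong \c_e^\Gamma$, which proves part~(2).

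The one step that is not formal bookkeeping is the identity $\grko([\Th_i,\Th_j]) = [x_i,x_j]$, and I expect this to be the main obstacle; it rests on a careful accounting of Kazhdan degrees in~(\ref{eqn1}) together with the fact that each $q_{ij}$ has vanishing linear part. Everything that follows is a short diagram chase: lift $\tG_\phi$-stable subspaces of $\tg_e$ to $\Th(\tg_e)$ and feed them into Lemma~\ref{generate}.
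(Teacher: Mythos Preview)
Your proof is correct and follows essentially the same route as the paper: apply Lemma~\ref{generate} with $I=I_c$ (resp.\ the preimage $J$ of $I_\Gamma$), use relation~(\ref{eqn1}) to get $[\tg_e,\tg_e]\subseteq\grko(I_c)$, and for part~(2) use the $\tG_\phi$-equivariance of $\Th$ to get $(1-\gamma)x\in\grko(J)$. The paper phrases things via $\Th$ rather than $\grko^{-1}$ and splits $M\cong\c_e$ into $M^\Gamma\oplus M'$ rather than forming your $W$ directly, but these are purely cosmetic differences; your $W$ coincides with the paper's $[\tg_e,\tg_e]\oplus M'$ once a $\tG_\phi$-stable complement $M$ is fixed.
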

\begin{proof}

(i) Retain the notations of \ref{kazhdanfilt}. The defining ideal $I_c$ of $U(\tg,e)^{\rm ab}$
contains all commutators $[\Theta_i,\Theta_j]$ with $1\leq i, j\leq r$. According to (\ref{eqn1}) the linear part of $\grk [\Theta_i, \Theta_j]$ is
$\grko [\Theta_i, \Theta_j] = [x_i, x_j]$. Setting $V_{I_c} = \Th([\tg_e, \tg_e])$ we have $\grko V_{I_c} = [\tg_e \tg_e]$. It follows
that $\grko(V_{I_c}) \subseteq \grko(I)$. Since $\tG_\phi$ is a reductive group, $\tg_e$ contains
a graded $\Ad(\tG_\phi)$-submodule $M$ of dimension $c(e)$ complementary to the derived subalgebra $[\tg_e,\tg_e]$. If we set
$V_{I_c}' = \Th(M)$ then the pair $(V_{I_c}, V_{I_c}')$ fulfil the assumptions of the previous lemma and we may conclude that $V_{I_c}'$ generates $U(\tg,e)^\ab$.

\smallskip

\noindent
(ii) Let $\widetilde{I}_c$ be the preimage of the ideal $I_\Gamma$ of $U(\tg,e)^{\rm ab}$ under the canonical homomorphism $U(\tg,e)\twoheadrightarrow
U(\tg,e)^{\rm ab}$. Then $\widetilde{I}_c$ is a
two-sided ideal of $U(\tg,e)$ and $U(\tg,e)/\widetilde{I}_c\cong U(\tg,e)^{\rm ab}_\Gamma$ as algebras. Since $[\tg_e(0),M]\subseteq [\tg_e,\tg_e]$ and
$M\cap [\tg_e,\tg_e]=0$, it follows from Weyl's theorem that the connected reductive group $\tG_\phi^\circ$ acts trivially on $M$. Therefore, $M$ has a
natural structure of a $\Gamma$-module. There exists a $\Gamma$-submodule $M'$ of $M$ complementary to $M^\Gamma:=\{x\in M\colon\, \gamma(x)=x\}$.
We choose $V_{\widetilde{I}_c} = \Th(M' \oplus [\tg_e \tg_e])$ and $V'_{\widetilde{I}_c} = \Th(M^\Gamma)$. In order to apply Lemma~\ref{generate} we
need to show that $\grko(V_{\widetilde{I}_c}) \subseteq \grko(\widetilde{I}_c)$. Since $$\grko[\tg_e \tg_e] \subseteq \grko(I_c) \subseteq \grko(\widetilde{I}_c)$$
we must show that $M' \subseteq \grko(\widetilde{I}_c)$. But $M'$ is spanned by elements of the form $x-\gamma(x)$ with $x \in M$. Now for every $x \in \tg_e = S^1(\tg_e)$
the image of $$\Th(x) - \Th(\gamma(x)) = \Th(x) - \gamma(\Th(x))$$ in $U(\tg,e)^\ab$ lies in $I_\Gamma$. Therefore $\Th(x) - \Th(\gamma(x)) \in \widetilde{I}_c$ and $x - \gamma(x) \in \grko(\widetilde{I}_c)$.
Finally we have obtained $\grko(M') \subseteq \grko(\widetilde{I}_c)$. We apply Lemma~\ref{generate} to complete the proof.
\end{proof}
 
\p We now record our criterion for polynomiality. Recall that $r(e)$ denote the maximal rank of the sheets of $\tg$ containing $e$.

\begin{cor}\label{poly} Let $e$ be an induced nilpotent element of $\tg$. Then the following hold:
\begin{enumerate}
\item{If $c(e) = r(e)$, then $U(\tg,e)^\ab \cong S(\mathfrak{c}_e) \text{ and } U(\tg,e)^\ab_\Gamma\cong S(\mathfrak{c}_e^\Gamma)$
as $\K$-algebras and as $\Gamma$-modules.}

\item{ If $\mathcal{E}^\Gamma\ne\emptyset$ and $\dim\,  \mathcal{E}^\Gamma\geq c_\Gamma(e)$, then
$U(\tg,e)^{\rm ab}_\Gamma\,\cong \, S(\mathfrak{c}_e^\Gamma)$ as $\K$-algebras and $\Gamma$-modules.}
\end{enumerate}
\end{cor}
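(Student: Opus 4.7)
The plan is to combine Proposition \ref{abgen} (which gives an upper bound on the number of algebra generators) with Theorem \ref{premetsurj} (which yields a lower bound on $\dim\,\mathcal{E}$ via Katsylo sections), and then argue that the resulting surjection from a polynomial ring must be an isomorphism on dimensional grounds, using that polynomial rings are integral domains. For part (1), Theorem \ref{EEnonempty} guarantees $\mathcal{E}\ne\emptyset$ so Proposition \ref{abgen}(1) applies and produces a $\tG_\phi$-submodule $V\subseteq\Theta(\tg_e)$ with $V\cong \mathfrak{c}_e$ that generates $U(\tg,e)^{\rm ab}$. Since the target is commutative, the universal property of the symmetric algebra upgrades the inclusion $V\hookrightarrow U(\tg,e)^{\rm ab}$ to a surjective $\tG_\phi$-equivariant homomorphism of $\K$-algebras
\[
\pi\colon S(\mathfrak{c}_e)\twoheadrightarrow U(\tg,e)^{\rm ab},
\]
whence $\dim\,\mathcal{E}\le c(e)$. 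On the other hand, since $e$ is induced, Theorem \ref{premetsurj} produces an irreducible component of $\mathcal{E}$ of dimension at least $r(e)$, so under the hypothesis $c(e)=r(e)$ we deduce $\dim\,\mathcal{E}=c(e)=\dim\,S(\mathfrak{c}_e)$. Because $S(\mathfrak{c}_e)$ is an integral domain, any nonzero element of $\ker\pi$ would strictly decrease the Krull dimension, so $\ker\pi=0$ and $\pi$ is the required $\tG_\phi$-equivariant isomorphism.

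To pass to the $\Gamma$-quotient I would identify $I_\Gamma$ with the image under $\pi$ of an ideal $J_\Gamma\subseteq S(\mathfrak{c}_e)$ generated by all $y-\gamma(y)$ with $y\in S(\mathfrak{c}_e)$ and $\gamma\in\Gamma$. Choosing a $\Gamma$-stable complement $\mathfrak{c}_e'$ to $\mathfrak{c}_e^\Gamma$ in $\mathfrak{c}_e$, I would show that $J_\Gamma=S(\mathfrak{c}_e)\cdot\mathfrak{c}_e'$: the inclusion $\supseteq$ is immediate from $y-\gamma(y)\in\mathfrak{c}_e'$ for $y\in\mathfrak{c}_e$ (together with Maschke's theorem applied to the finite group $\Gamma$ in characteristic zero, which produces the splitting), while the inclusion $\subseteq$ follows from a telescoping identity expanding $y_1\cdots y_n-\gamma(y_1)\cdots\gamma(y_n)$ as a sum of products each containing a factor $y_i-\gamma(y_i)$. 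One then has $S(\mathfrak{c}_e)/J_\Gamma\cong S(\mathfrak{c}_e^\Gamma)$ canonically, and the isomorphism $\pi$ descends to $U(\tg,e)^{\rm ab}_\Gamma\cong S(\mathfrak{c}_e^\Gamma)$, which is $\Gamma$-equivariant by construction.

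For part (2) the argument is shorter because Theorem \ref{premetsurj} is no longer needed: Proposition \ref{abgen}(2) (which applies since $\mathcal{E}^\Gamma\ne\emptyset$) produces a $\tG_\phi$-submodule $V'\subseteq\Theta(\tg_e)$ isomorphic to $\mathfrak{c}_e^\Gamma$ generating $U(\tg,e)^{\rm ab}_\Gamma$, and the universal property yields a surjection $S(\mathfrak{c}_e^\Gamma)\twoheadrightarrow U(\tg,e)^{\rm ab}_\Gamma$. This gives $\dim\,\mathcal{E}^\Gamma\le c_\Gamma(e)$, and the hypothesis $\dim\,\mathcal{E}^\Gamma\ge c_\Gamma(e)$ forces equality. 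Since $S(\mathfrak{c}_e^\Gamma)$ is an integral domain of this Krull dimension, the kernel of the surjection must be zero and the isomorphism follows.

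The only genuine obstacle is the dimension lower bound in part (1), for which one must invoke the deep Theorem \ref{premetsurj}; the remaining steps are formal manipulations with surjections out of polynomial rings, plus the identification $J_\Gamma=S(\mathfrak{c}_e)\cdot\mathfrak{c}_e'$, which is a routine finite group calculation in characteristic zero. The $\tG_\phi$-equivariance (hence $\Gamma$-equivariance) of all maps is tracked throughout because the generating submodules coming from Proposition \ref{abgen} are chosen to be $\tG_\phi$-stable.
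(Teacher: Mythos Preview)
Your proof is correct and follows essentially the same approach as the paper's: both combine the surjection $S(\mathfrak{c}_e)\twoheadrightarrow U(\tg,e)^{\rm ab}$ from Proposition~\ref{abgen} with the dimension information coming from Theorem~\ref{premetsurj}, and then pass to the $\Gamma$-quotient by identifying the ideal generated by $f-f^\gamma$ with the one generated by a $\Gamma$-complement to $\mathfrak{c}_e^\Gamma$. The only minor difference is that you invoke Theorem~\ref{EEnonempty} to ensure $\mathcal{E}\ne\emptyset$, whereas the paper gets this for free from $\dim\,\mathcal{E}=r(e)>0$ when $e$ is induced, and you spell out the Krull-dimension and telescoping arguments more explicitly than the paper does.
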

\begin{proof}
(i) By \ref{sheetrecap} we have $\dim\,  U(\tg,e)^{\rm ab} = r(e)$. On the other hand part 1 of Propostion \ref{abgen} implies that there exists a surjective $\tG_\phi$-equivariant algebra homomorphism
$S(\mathfrak{c}_e)\twoheadrightarrow U(\tg,e)^\ab$. Since $c(e) =\dim\,  S(\mathfrak{c}_e)$ equals $r(e) = U(\tg,e)^\ab$, the map $\psi$ must be injective. Since 
$\tG_\phi^\circ$ acts trivially on $\mathfrak{c}_e$ we deduce that $U(\tg,e)^\ab\cong S(\mathfrak{c}_e)$ as $\K$-algebras and $\Gamma$-modules. But then
$\mathcal{E}\cong \mathfrak{c}_e^*$ as $\Gamma$-varieties implying that $\mathcal{E}^\Gamma\cong(\mathfrak{c}_e^*)^\Gamma$. Since the defining ideal in $S(\mathfrak{c}_e)\cong\K[\mathfrak{c}_e^*]$ of the linear subspace $(\mathfrak{c}_e^*)^\Gamma$ is generated by all $f-f^\gamma$ with $f\in S(\mathfrak{c}_e)$ and $\gamma\in\Gamma$, its image under the surjection onto $U(\tg,e)^\ab$ coincides with $I_\Gamma$.
This implies that $S(\mathfrak{c}_e^\Gamma)\cong U(\tg,e)^\ab_\Gamma$ as $\K$-algebras. They are both trivial as $\Gamma$-modules.

\noindent
(ii) As $\mathcal{E}^\Gamma\ne\emptyset$, it follows from Proposition~\ref{abgen}(ii) that there is a surjective algebra homomorphism
$S(\mathfrak{c}_e^\Gamma)\twoheadrightarrow U(\tg,e)^{\rm ab}_\Gamma$.
As a consequence, $c_\Gamma(e)\geq \dim\,  U(\tg,e)^\ab_\Gamma.$ If
 $\dim\,  U(\tg,e)_\Gamma^\ab = \dim\, \mathcal{E}^\Gamma\geq c_\Gamma(e)$, then it must be that $U(\tg,e)^\ab_\Gamma \cong S(\mathfrak{c}_e^\Gamma)$ as $\K$-algebras.
 Once again they are trivially isomorphic as $\Gamma$-modules.
\end{proof}

\section{Computing the varieties $\EE$ and $\EE^\Gamma$}\label{computingvarieties}
\setcounter{parno}{0}

\p In this subsection we are going to apply our results on non-singular nilpotent elements from Chapter~\ref{derivedchapter}
to give a complete description of those nilpotent elements $e$ in classical Lie algebras for which $\EE$ is an affine space. 
Recall that the situation in type $\sf A$ was dealt with by Premet in \cite{Pre4}. He showed that the space $\EE(\sl_n, e)$ is
always isomorphic to affine space of dimension $c(e)$.

\p We shall revert here to the notations designated in \ref{classnots} which we have used to describe classical algebras throughout.
In particular, $K$ shall be the connected simple algebraic group
of type $\sf B$, $\sf C$ or $\sf D$ preserving a form $(\cdot, \cdot) : V \times V \ra \K$ where $V = \K^N$ with $(u, v) = \epsilon(v, u)$. The set of partitions associated to
the nilpotent orbits $\Ni(\h)/K$ is denoted $\mathcal{P}_\epsilon(N)$ and once $e$ is chosen, $\phi = (e,h,f)$ is an $\sl_2$-triple of $\h$ containing $e$.
 
\begin{thm}\label{class}
Let $e \in \Ni(\h)$. Then the following are equivalent:
\begin{enumerate}
\item{
$e$ belongs to a unique sheet of $\h$;}
\item{$U(\h,e)^\ab$ is isomorphic to a polynomial algebra.
}
\end{enumerate}
If these equivalent criteria hold then $U(\h,e)^\ab$ is generated by $c(e)$ variables.
\end{thm}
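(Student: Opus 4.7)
The idea is that all the hard work has been done already: the theorem should follow from assembling Corollary~\ref{izosim} (which characterises unique-sheet membership combinatorially), Corollary~\ref{newz} (which identifies $r(e)$ with the combinatorial quantity $z(\lambda)$), Corollary~\ref{poly}(i) (the general polynomiality criterion $c(e)=r(e)$ for induced orbits) and Theorem~\ref{premetsurj} (which forces reducibility of $\EE$ when $e$ lies in more than one sheet). I would split the argument into the rigid and the induced case, as Corollary~\ref{poly} and Theorem~\ref{premetsurj} both require $e$ to be induced.

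For the rigid case, I would first observe that if $\Oo_e$ is rigid then the only pair $(\li,\Oo)$ with $\Oo$ rigid and $\Ind_\li^\h(\Oo)=\Oo_e$ is $(\h,\Oo_e)$ itself; by Theorem~\ref{classifsheets} this forces $e$ to lie in the unique sheet $\Oo_e$, so (1) holds automatically. On the other hand, by Corollary~\ref{rigidcod} we have $c(e)=0$, so Proposition~\ref{abgen}(1) shows that $U(\h,e)^\ab$ is generated by $0$ elements. Since $\EE$ is non-empty by Theorem~\ref{EEnonempty}, we must have $U(\h,e)^\ab=\K$, which is the polynomial algebra on $c(e)=0$ generators.

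For the induced case, the implication $(1)\Rightarrow(2)$ is pure bookkeeping: if $e$ lies in a unique sheet then Corollary~\ref{izosim} gives $c(\lambda)=z(\lambda)$ where $\lambda$ is the partition of $\Oo_e$, Corollary~\ref{newz} rewrites this as $c(e)=r(e)$, and Corollary~\ref{poly}(i) then yields $U(\h,e)^\ab\cong S(\mathfrak c_e)$, a polynomial algebra on $c(e)$ generators. For $(2)\Rightarrow(1)$, suppose $U(\h,e)^\ab$ is a polynomial algebra; then $\EE=\Specm\,U(\h,e)^\ab$ is an affine space and in particular irreducible. If $e$ were contained in $l\ge 2$ pairwise distinct sheets $\SS_1,\ldots,\SS_l$, then each $X_i:=\SS_i\cap(e+\h_f)$ is a non-empty transverse Katsylo slice and thus contributes at least one irreducible component to $\bigsqcup_{i=1}^l\Comp(X_i)$; by Theorem~\ref{premetsurj} there is then a surjection from $\Comp(\EE)$ onto a set of cardinality $\ge l\ge 2$, contradicting irreducibility of $\EE$.

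The bulk of the argument is therefore combinatorial/geometric and was already carried out in Chapter~\ref{derivedchapter} and Section~\ref{polynomialityofquot}; the only subtlety I anticipate is the dichotomy between rigid and induced $e$, since both Corollary~\ref{poly} and Theorem~\ref{premetsurj} implicitly assume $\Oo_e$ is induced. Once the rigid case is dispatched separately as above, the remaining implications are automatic. The final assertion that the number of generators equals $c(e)$ is covered by Corollary~\ref{poly}(i) in the induced case and trivially ($c(e)=0$) in the rigid case.
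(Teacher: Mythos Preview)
Your proposal is correct and follows essentially the same approach as the paper's proof. The only cosmetic differences are that you invoke Corollary~\ref{poly}(i) (which packages together \ref{sheetrecap} and Proposition~\ref{abgen}) where the paper unpacks this argument inline, and you handle the rigid case explicitly for both implications whereas the paper only singles it out in the direction $(2)\Rightarrow(1)$.
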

\begin{proof}
If $e$ belongs to a unique sheet then then $c(e) = r(e)$ by Corollary \ref{izosim} and Theorem~\ref{zismax}. Thanks to \ref{sheetrecap} we know that $\dim(U(\h,e)^\ab) = c(e)$. Comparing with Proposition~\ref{abgen} we see that $U(\h,e)^\ab$ is generated by $c(e)$ elements. It follows that $U(\h,e)^\ab$ is a polynomial algebra on $c(e)$ generators.

If $U(\h,e)^\ab$ is isomorphic to a polynomial algebra, then the variety $\mathcal{E}$ is irreducible. If $e$ is induced, then after the discussion in \ref{sheetrecap}
we see that $e$ belongs to a unique sheet. If $e$ is rigid, this holds automatically as $\Ad(K)e$ is a sheet of $\h$. This completes the proof.
\end{proof}
\p As an immediate consequence we obtain:
\begin{cor}\label{uniquerigids}
$U(\h,e)$ has a unique one dimensional representation if and only if $e$ is rigid.
\end{cor}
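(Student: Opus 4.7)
The plan is to deduce this corollary as a direct consequence of Theorem~\ref{class} together with the characterisation of rigid orbits via the derived subalgebra in Corollary~\ref{rigidcod} and the sheet-theoretic lower bound provided by Theorem~\ref{premetsurj}. Since the one dimensional representations of $U(\h,e)$ are in bijection with the points of $\mathcal{E} = \Specm\, U(\h,e)^\ab$, the claim is equivalent to the statement that $|\mathcal{E}| = 1$ if and only if $e$ is rigid.

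For the implication $(\Leftarrow)$, suppose $e$ is rigid. Then $\Ad(K)e$ is itself a sheet of $\h$, and it is clearly the only sheet containing $e$, so $e$ lies in a unique sheet. By Theorem~\ref{class}, $U(\h,e)^\ab$ is a polynomial algebra on $c(e)$ generators. But Corollary~\ref{rigidcod} tells us that $[\h_e,\h_e]=\h_e$ precisely when $e$ is rigid, so $c(e)=0$. Thus $U(\h,e)^\ab\cong\K$ and $\mathcal{E}$ is a single point, which is nonempty by Theorem~\ref{EEnonempty}.

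For the converse $(\Rightarrow)$, I would argue by contrapositive: assume $e$ is not rigid, so that $\Oo_e$ is induced from some proper Levi subalgebra $\li\subsetneq\h$. Then there exists at least one sheet $\mathcal{S}$ of $\h$ containing $e$ with data $(\li,\Oo)/K$ for some rigid nilpotent $K$-orbit $\Oo\subseteq\li$, and $\rank\,\mathcal{S}=\dim\,\mathfrak{z}(\li)\geq 1$ because $\li$ is a proper Levi. By Theorem~\ref{premetsurj} (applicable because $\Oo_e$ is induced), there exists an irreducible component $\mathcal{Y}\subseteq\mathcal{E}$ with $\dim\,\mathcal{Y}=\dim(\mathcal{S}\cap(e+\h_f))=\rank\,\mathcal{S}\geq 1$. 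Hence $\mathcal{E}$ is infinite, so $U(\h,e)$ admits infinitely many one dimensional representations, contradicting uniqueness.

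The argument is essentially a one-line deduction from material already at hand, so there is no real obstacle; the only point requiring a moment of care is checking that induced nilpotent elements automatically lie in a sheet of strictly positive rank, which is immediate from the classification of sheets in Theorem~\ref{classifsheets} combined with $\dim\,\mathfrak{z}(\li)\geq 1$ for proper Levis.
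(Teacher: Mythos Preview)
Your proof is correct and follows essentially the same approach as the paper. The paper packages both directions through Theorem~\ref{class} (noting that $\K$ is a polynomial algebra in zero variables, hence $U(\h,e)^\ab=\K$ forces $e$ to lie in a unique sheet with $c(e)=0$), whereas for the reverse direction you bypass Theorem~\ref{class} and invoke the sheet-theoretic dimension bound from Theorem~\ref{premetsurj} directly; this is precisely the ingredient underlying the relevant part of Theorem~\ref{class}, so the two arguments are equivalent.
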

\begin{proof}
$U(\h,e)$ has a unique one dimensional representation if and only if $\EE(\h, e)$ is a point, which is if and only if $U(\h,e)^\ab = \K$. By the previous theorem this is equivalent to
$e$ being non-singular and $c(e) = 0$. Given that every rigid partition is non-singular, this is equivalent to $e$ being rigid by Corollary~\ref{rigidcod}.
\end{proof}

\p In the midst of our discussion of finite $W$-algebras we are able to retrieve some information of a geometric nature on the Katsylo sections of $e$.
\begin{cor}
If $e$ lies in a unique sheet $\SS$ of $\h$ then $X:= (e + \h_f) \cap \SS$ is a smooth, irreducible variety.
\end{cor}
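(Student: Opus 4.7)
The plan is to treat the rigid and induced cases separately. If $e$ is rigid then by Theorems~\ref{rigids} and~\ref{classifsheets} the unique sheet containing $e$ is $\SS = \Oo_e$; since $K_\phi$, and hence the component group $\Gamma$, fixes $e$ while by the Katsylo description recalled in~\ref{quotientrecap} each $K$-orbit of $\SS$ meets $X$ in a single $\Gamma$-orbit, we obtain $X = \{e\}$, which is trivially a smooth irreducible variety. We henceforth assume that $e$ is induced.

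For irreducibility, Corollary~\ref{izosim} tells us that the hypothesis of $e$ lying in a unique sheet is equivalent to $e$ being non-singular, and Theorem~\ref{class} then gives $\EE := \EE(\h, e) \cong \mathbb{A}_\K^{c(e)}$. In particular $\EE$ is irreducible, and Theorem~\ref{premetsurj} produces a surjection $\sigma : \Comp \EE \twoheadrightarrow \Comp X$ which forces $X$ to have at most one irreducible component. Since $e \in X$ the variety $X$ is non-empty, so it is irreducible; combining the dimension bound in Theorem~\ref{premetsurj} with $r(e) = c(e)$ (Corollaries~\ref{izosim} and~\ref{newz}) pins down $\dim X = c(e)$.

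For smoothness, the sheet $\SS$ is a smooth subvariety of $\h$ by \cite{IH}, while $e + \h_f$ is smooth as an affine space. Using $\dim \SS = \dim \h - \dim \h_e + r(e)$ and $\dim \h_f = \dim \h_e$, the dimension of $X$ coincides with the expected dimension $\dim \SS + \dim(e + \h_f) - \dim \h = r(e)$ of the intersection, so that smoothness of $X$ at $x$ is equivalent to the transversality $T_x\SS + \h_f = \h$; since $T_x\SS \supseteq T_x\Oo_x = [\h, x]$ it suffices to show $[\h, x] + \h_f = \h$. At $x = e$ this is the standard $\sl_2$-transversality, while for arbitrary $x \in X$ the dimensional identity $\dim[\h, x] + \dim \h_f = \dim \h$ (which uses $\dim \h_x = \dim \h_e$) reduces the claim to $[\h, x] \cap \h_f = 0$. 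The main technical obstacle is propagating this pointwise transversality from $e$ to every point of $X$; this is the essential geometric content of Katsylo's cross-section theorem \cite{Kat}, and once secured it completes the proof.
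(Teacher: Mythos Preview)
Your irreducibility argument matches the paper's exactly. The separate treatment of the rigid case is harmless but unnecessary: once the contracting $\K^\times$-action is in hand, $X$ is automatically connected (hence irreducible once you know it has one component), and for rigid $e$ one gets $\dim X=0$ so $X=\{e\}$.

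For smoothness you take a genuinely different route. The paper argues only at the point $e$: from $T_e X\subseteq \h_f$ and $T_e\Oo_e=[\h,e]$ it gets $T_e X\oplus[\h,e]\subseteq T_e\SS$, and since $\SS$ is smooth with $\dim\SS=\dim X+\dim\Oo_e$ (Katsylo), equality holds and $\dim T_eX=\dim X$. Smoothness is then propagated from $e$ to all of $X$ by the contracting $\K^\times$-action on $e+\h_f$ (acting by $t^{i-2}$ on $\h(i)$), which stabilises $X$ and sends every point to $e$ in the limit; the smooth locus is open, $\K^\times$-stable, and contains the unique attracting fixed point, hence is all of $X$.

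Your approach instead aims to establish the transversality $T_x\SS+\h_f=\h$ at \emph{every} $x\in X$, reducing to $[\h,x]\cap\h_f=0$. This is a correct strategy, but the final attribution is off: Katsylo's cross-section theorem gives the geometric-quotient description $\SS/K\cong X/\Gamma$ and the finiteness of orbit intersections, not the pointwise transversality you need. The statement $[\h,x]+\h_f=\h$ for all $x\in e+\h_f$ is the standard Slodowy-slice transversality (see \cite[\S7.4]{Slo} or \cite[Lemma~2.1]{GG}), and its proof is precisely the contracting $\K^\times$-action argument. So once the citation is corrected your argument goes through, but it is less self-contained than the paper's: you invoke a lemma whose proof \emph{is} the contracting action, whereas the paper applies that action directly to $X$ after a single tangent-space computation at $e$.
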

\begin{proof}
By Theorem~\ref{class} the algebra $U(\h,e)^\ab$ is polynomial and so $\EE$ is irreducible. In view of Threorem~\ref{premettheorem} the variety $X$ is
irreducible. Since $\SS$ contains both $X$ and $\Oo_e = \Ad(K)e$ the tangent space $T_e\SS$ contains $T_eX + T_e\Oo_e$, however
$T_e X\subset T_e(e+\h_f)=\h_f$, whilst $T_e\Oo_e = [e,\h]$, so that that $T_e(\mathcal{S})$ contains $T_e X\oplus [e,\h]$ by $\sl_2$-theory. Thanks to \cite{IH}
the variety $\SS$ is smooth and by \cite{Kat} we have $\dim\,  \SS = \dim\,  X + \dim\,  \Oo_e$. Therefore $T_e \SS = T_e X \oplus [e,\h]$ and $\dim \, T_e X=\dim\, X$. As a consequence,
$e$ is a smooth point of $X$. But there exists a $\K^\times$-action on $X$, contracting to $e$, induced by letting $t \in \K^\times$ act on $\h(i)$ by $t^{i-2}$ where
$\h = \oplus_{i\in \Z}\h(i)$ is the grading induced by $\ad(h)$ (see \cite{Slo}). It follows that $X$ is smooth, as required.
\end{proof}

\p Our next result relies heavily on Losev's Theorem which we outlined in \ref{losevtheorem}. Together with Corollary~\ref{poly}(ii) this will enable us to describe the variety $\mathcal{E}(\h,e)^\Gamma$
for all nilpotent elements $e\in \h$, even the singular ones which cannot be treated by Theorem~\ref{poly}(i). Strictly speaking the following result works in the full generality of reductive
groups, however we shall not interest ourselves in such possibilities anymore.
\begin{prop}\label{mult1}
Let $P$ be a proper parabolic subgroup of $K$ with Levi decomposition $L\ltimes U$ and corresponding decoposition of Lie algebras $\pp = \li \ltimes \mathfrak{u}$. Suppose that a nilpotent element $e = e_0+e_1\in\pp$ with $e_0\in \li$ and  $e_1\in\mathfrak{u}$ is induced from $e_0$ and that $$K_e\subset P.$$ Let $\mathcal{E}_0=\EE([\li \li], e_0)$ and suppose further that $\EE_0^{\Gamma_0}\neq \emptyset$ where $\Gamma_0=L_{e_0}/(L_{e_0})^\circ$. Then
$\EE^\Gamma\ne\emptyset$ and  $$\dim\, \EE^\Gamma\ge\dim\, \z(\li).$$
\end{prop}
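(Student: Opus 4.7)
The plan is to exploit Losev's finite embedding of Theorem \ref{losevembed} together with the Levi decomposition at the level of $W$-algebras. First I would observe that because $e_0\in [\li,\li]$ and the centre $\z(\li)$ commutes with everything inside $U(\li)$, the $W$-algebra construction gives a natural identification
$$U(\li,e_0)^{\ab}\,\cong\, S(\z(\li))\otimes U([\li,\li],e_0)^{\ab},$$
so that $\EE(\li,e_0)\cong \z(\li)^*\times \EE_0$ as affine varieties. Because $L$ is connected reductive with $\z(\li)=\Lie(Z(L))$, the adjoint action of $L_{e_0}$ on $\z(\li)$ is trivial; consequently $\Gamma_0=L_{e_0}/L_{e_0}^\circ$ acts on the first factor trivially and on the second by its natural action, giving
$$\EE(\li,e_0)^{\Gamma_0}\,=\,\z(\li)^*\times \EE_0^{\Gamma_0}.$$
By hypothesis this set is non-empty and has dimension at least $\dim\,\z(\li)$.

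Next I would apply Theorem \ref{losevembed} to obtain a finite morphism $\xi_\li^*\colon \EE(\li,e_0)\rightarrow \EE$. Finiteness preserves dimensions, so the image of $\z(\li)^*\times\EE_0^{\Gamma_0}$ is an irreducible (or finite union of) subvariety of $\EE$ of dimension at least $\dim\,\z(\li)$. To conclude it will suffice to show this image lies in $\EE^\Gamma$, which requires transferring $\Gamma_0$-invariance across $\xi_\li^*$. Here the assumption $K_e\subseteq P$ enters crucially: writing $g=lu\in L\ltimes U$ for $g\in K_e$, the identity $\Ad(g)(e_0+e_1)=e_0+e_1$ together with $\Ad(u)e_0\in e_0+\mathfrak{u}$ and $\Ad(l)\mathfrak{u}=\mathfrak{u}$ forces $\Ad(l)e_0=e_0$ upon comparing $\li$-components. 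Hence the projection $P\twoheadrightarrow L$ restricts to a group homomorphism $K_e\rightarrow L_{e_0}$, which descends to $\psi\colon \Gamma\rightarrow \Gamma_0$ on component groups.

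The main obstacle is to verify that Losev's embedding $\xi_\li$ is $\Gamma$-equivariant with respect to the $\Gamma$-action on $U(\h,e)^{\ab}$ coming from $K_\phi$ and the $\Gamma_0$-action on $U(\li,e_0)^{\ab}$ pulled back along $\psi$. Granting this naturality --- which one expects from the construction of $\xi_\li$ via Fedosov quantisation, since the restriction of a $K_\phi$-equivariant quantisation to the parabolic data $(\li, e_0)$ is tautologically equivariant for the projected stabiliser $L_{e_0,h}$ --- any point $\chi\in \EE(\li,e_0)^{\Gamma_0}$ satisfies $\gamma\cdot\xi_\li^*(\chi)=\xi_\li^*(\psi(\gamma)\cdot\chi)=\xi_\li^*(\chi)$ for every $\gamma\in\Gamma$. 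Thus $\xi_\li^*\bigl(\z(\li)^*\times \EE_0^{\Gamma_0}\bigr)\subseteq \EE^\Gamma$, and the finiteness of $\xi_\li^*$ yields $\dim\,\EE^\Gamma\geq \dim\,\z(\li)$, completing the proof.
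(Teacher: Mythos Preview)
Your overall strategy---split $U(\li,e_0)^{\ab}\cong S(\z(\li))\otimes U([\li,\li],e_0)^{\ab}$, use Losev's finite morphism $\xi_\li^*$, and push $\Gamma_0$-fixed points forward to $\Gamma$-fixed points---is exactly the paper's. The gap is in the equivariance step, which you correctly flag as ``the main obstacle'' but then wave past.

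The problem is this: the $\Gamma$-action on $U(\h,e)^{\ab}$ comes from the action of $K_\phi$ (the centraliser of the full triple), and the natural group acting on $U(\li,e_0)$ is $L_{\phi_0}$, the centraliser of an $\sl_2$-triple $\phi_0=(e_0,h_0,f_0)$ in $\li$. Losev's embedding is $L_{\phi_0}$-equivariant; it is not automatically equivariant for some action defined via the Levi projection $P\twoheadrightarrow L$. Your map $K_e\to L_{e_0}$, $g=lu\mapsto l$, lands only in $L_{e_0}$, not in $L_{\phi_0}$, and $L_{e_0}$ does not act on $U(\li,e_0)$ in a way that is obviously compatible with $\xi_\li$. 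So the claim that $\xi_\li^*$ intertwines the $K_\phi$-action with the projected $L_{e_0}$-action is unsupported.

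The paper closes this gap by two applications of Mostow's theorem on reductive subgroups of parabolic groups. First, since $K_\phi\subset K_e\subset P$ and $K_\phi$ is reductive, one may conjugate inside $P$ so that the reductive group $\lambda_e(\K^\times)K_\phi$ lies in $L$ itself, not merely projects to it. Then $K_\phi\subset L$ fixes $e$ and preserves both $\li$ and $\mathfrak{u}$, hence fixes $e_0$; a second application of Mostow (now inside $L_{e_0}$) lets one assume $K_\phi\subseteq L_{\phi_0}$. With this genuine containment, the $L_{\phi_0}$-equivariance of Losev's construction immediately yields $K_\phi$-equivariance of $\xi_\li$, and your remaining argument goes through: $\xi_\li^*$ sends $\widetilde{\EE}_0^\Gamma\cong \z(\li)^*\times\EE_0^\Gamma$ into $\EE^\Gamma$, and $\EE_0^\Gamma\supseteq\EE_0^{\Gamma_0}\neq\emptyset$.
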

\begin{proof}
(a) Let $\phi_0 = (e_0,h_0,f_0)$ be an $\mathfrak{sl}_2$-triple of $\li$ containing $e_0$ (if $e_0 = 0$ then $(e_0,h_0,f_0)$ is the zero triple).
By $\mathfrak{sl}_2$-theory, the reductive group $L_{\phi_0}$ is a Levi subgroup of the centraliser $L_{e_0}$. The $\mathfrak{sl}_2$-triple for $e$
is still denoted $\phi = (e,h,f)$ and let $\lambda_e$ be the cocharacter in $X_\ast(K)$ with $h\in \Lie(\lambda_e(\K^\times))$. Note that $K_\phi = K_e\cap K_h = K_e\cap Z_K(\lambda_e(\K^\times))$.
Since $K_\phi\subset P$ by our assumption on $e$, it follows from \cite[Proposition~6.1.2(4)]{Lo3} that the reductive group $\lambda_e(\K^\times)K_\phi$ is contained in $P$. Since any 
reductive subgroup of $P$ is conjugate under $P$ to a subgroup of $L$ by Mostow's theorem \cite{Mos},
we may assume without loss of generality that $ \lambda_e(\K^\times)K_\phi \subseteq L$.
Since $K_\phi \subseteq L$ fixes $e$ and preserves both $\li$ and $\mathfrak{u}$,
it must be that $K_\phi$ fixes $e_0$ too, so $K_\phi\subseteq L_{e_0}$. Since the group $K_\phi$ is reductive, it follows again from \cite{Mos}
that it is conjugate under $L$ to a subgroup of $L_{\phi_0}$. We shall assume for the rest of the proof that, in fact, $$K_\phi \subseteq L_{\phi_0}.$$

\medskip

\noindent (b) Recall from \ref{losevtheorem} that Losev defines a completion $U(\li,e_0)'$ of the finite
$W$-algebra $U(\li,e_0)$ and an injective algebra homomorphism $\Xi\colon\,U(\h,e)\rightarrow U(\li,e_0)'$.
By construction, the reductive group $L_{\phi_0}$ acts on $U(\li,e_0)'$ by algebra automorphisms. One can see by
inspection that all maps involved in Losev's construction are $L_{\phi_0}$-equivariant (a related discussion
can also be found in \cite[2.5]{Lo5}). This implies, in particular, that in our situation Losev's embedding
is $K_\phi$-equivariant. Furthermore, it follows from \cite[Proposition~6.5.1]{Lo3} that the isomorphism
$U(\li, e_0)^\ab \cong U(\li, e_0)'^\ab$ is $L_{\phi_0}$-equivariant. We thus obtain a $K_\phi$-equivariant
algebra homomorphism $$\xi_\li\colon\,U(\tg,e)^\ab\rightarrow U(\li,e_0)^\ab.$$

\noindent (c) Let $\widetilde{\EE}_0=\Specm \, U(\li,e_0)^\ab$. According to Theorem~\ref{losevtheorem} the morphism of affine varieties $\xi^\ast_\li :  \,\widetilde{\EE}_0 \rightarrow\EE$
associated with $\xi_\li$ is finite. In particular, it has finite fibres. Since $\xi_\li$ is
$K_\phi$-equivariant and $K_\phi^\circ$ acts trivially on both $U(\h,e)^\ab$ and $U(\li,e)^\ab$, the morphism $\xi^\ast_\li$
maps $\widetilde{\EE}_0^\Gamma$ into $\mathcal{E}^\Gamma$. It follows that
$$\dim\,  \widetilde{\mathcal{E}}_0^\Gamma = \dim\, \xi^*(\widetilde{\mathcal{E}}_0^\Gamma)\leq \dim\,  \mathcal{E}^\Gamma.$$

\smallskip

\noindent (d) Write $\z$ for the centre $\z(\li)$ of $\li$. Clearly, $\z$ is a toral subalgebra of $\h$ and $\li=\z\oplus [\li,\li]$. It follows that $U(\li,e_0)\cong S(\z)\otimes U([\li,\li],e_0)$. This, implies that $U(\li,e)^\ab\cong S(\z)\otimes U([\li,\li],e_0)^\ab$ as algebras.  Since the subalgebra $U([\li,\li],e_0)$ of $U(\li, e_0)$
is stable under the action of $K_\phi$ on $U(\li,e_0)$, we have a natural action of
$\Gamma$ of the affine variety $\mathcal{E}_0:=\Specm\, U([\li,\li],e_0)^\ab$. Since $K_\phi\subseteq L_{\phi_0}$, and $L_{\phi_0}^\circ$ acts trivially on $\mathcal{E}_0$, the variety  $\mathcal{E}_0^\Gamma$ contains $\EE_0^{\Gamma_0}$ and hence is non-empty by
our assumption on $\EE_0^{\Gamma_0}$. 

\noindent (e) Note that $L$ acts trivially on $\z$ and hence so does $K_\phi\subset L$. It follows that $\widetilde{\EE}_0^\Gamma \cong \z^*\times
\mathcal{E}_0^\Gamma$ as affine varieties. In particular, $$\dim\,  \widetilde{\mathcal{E}}_0^\Gamma
=\dim\,\mathcal{E}_0^\Gamma+\dim\,\z\ge \dim\,\z.$$
But then $\mathcal{E}^\Gamma \supseteq \xi^*(\widetilde{\mathcal{E}}^\Gamma)\neq \emptyset$ and $\dim\,\mathcal{E}^\Gamma\ge\dim\,\widetilde{\mathcal{E}}_0^\Gamma\ge \dim\,\z(\li)$ as claimed.
\end{proof}

\p\label{gammaexplicit} Pick a nilpotent element $e \in \h$ with partition $\lambda$. We shall now describe the component
group $\Gamma$ by giving a canonical set of representatives in $K_e$. The group $K$ is
the connected component of the stabiliser of a non-degenerate form $(\cdot,\cdot)$. We shall denote the full stabiliser
by $\tK$. This group coincides with $K$ if $\epsilon = -1$ but if $\epsilon = 1$ then $\tK = O(V)$ is the full orthogonal group.
Observe that $K = SL(V) \cap \tK$ in either case. Let $\overline{I} =\{1\le i\le n\colon i = i', \lambda_i> \lambda_{i+1}\}$ and set $\nu(\lambda):=|\overline{I}|$. Note that $\nu(\lambda)$ is the
number of distinct values $\lambda_i$ for which $i = i'$. For $i\in \overline{I}$ we let $g_i$ denote the involution of $V$ lying in $\tK_e$ such
that $$g_i(e^sw_j)=(-1)^{\delta_{i,j}}(e^sw_j)$$ for all $1\le j\le n$ and $0\leq s < \lambda_j$. Here
we use Kronecker delta notation. Then $g_i$ acts as $-1$ on $V[i]$ and by $1$ on each $V[j]$ with $j \neq i$.
Define $\widetilde{\Gamma}:= \langle g_i\colon\,i\in I\rangle$, a subgroup of $\tK$.
\begin{lem}
The natural map $\tK_e \twoheadrightarrow \tK_e/\tK_e^\circ$ restricts to an isomorphism from $\widetilde{\Gamma}$ onto $\tK_e / \tK_e^\circ$.
If $K$ has type $\sf C$ then $\tK_e/\tK_e^\circ = \Gamma$ whilst if $K$ has type $\sf B$ or $\sf D$ then $\Gamma$ is generated by the images of
products $\{g_i g_j : i, j \in \overline{I}\}$ in $\tK_e/\tK_e^\circ$.
\end{lem}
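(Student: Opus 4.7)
The plan is to establish the lemma in three stages: check that $\widetilde{\Gamma}\subseteq\tK_e$ has order $2^{\nu(\lambda)}$, identify $\tK_e/\tK_e^\circ$ with $(\Z/2\Z)^{\nu(\lambda)}$ via the Jacobson--Morozov description of the reductive centraliser and match the images of the $g_i$ with a basis, then pass from $\tK_e$ to $K_e$ using the determinant homomorphism.

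First I would verify that $g_i\in\tK_e$ for each $i\in\overline{I}$. Since $g_i$ acts as a scalar on each Jordan block space $V[j]$, it commutes with $e$. Because $i=i'$ by hypothesis, Lemma~\ref{nilpotents}(2) ensures that $(V[i],V[k])=0$ whenever $k\ne i$, so the only pairs of vectors paired non-trivially by $(\cdot,\cdot)$ and supported on $V[i]$ lie entirely in $V[i]$; on such pairs $g_i$ contributes a sign of $(-1)^2=1$, so $g_i$ preserves the form. The $g_i$ for distinct $i\in\overline{I}$ are commuting involutions supported on disjoint blocks, so they generate an elementary abelian $2$-group $\widetilde{\Gamma}$ of order $2^{\nu(\lambda)}$.

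Second I would identify $\tK_e/\tK_e^\circ$ with $\tK_\phi/\tK_\phi^\circ$ via the Levi decomposition $\tK_e=\tK_\phi\ltimes R$ recalled in \ref{componentgroupintro}, and invoke the standard description of $\tK_\phi$ (see \cite[Chapter~6]{CM}, also \cite[3.8]{Jan}) as a product of classical groups indexed by the distinct parts of $\lambda$: each value $k$ occurring with multiplicity $m_k$ in $\lambda$ contributes a connected symplectic factor $Sp(m_k)$ when $\epsilon(-1)^{k-1}=-1$ (i.e.\ when $i\ne i'$ for indices $i$ with $\lambda_i=k$) and an orthogonal factor $O(m_k)$ with component group $\Z/2\Z$ otherwise. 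This yields $\tK_\phi/\tK_\phi^\circ\cong(\Z/2\Z)^{\nu(\lambda)}$, with one $\Z/2\Z$-factor per value $\lambda_i$ with $i\in\overline{I}$. The element $g_i$ lies in the $O(m_{\lambda_i})$-factor corresponding to the value $\lambda_i$ and acts as the reflection along the single block $V[i]$ within it, which has determinant $-1$ and hence represents the non-trivial coset; the $g_i$ for distinct $i\in\overline{I}$ lie in different factors, so their images in $\tK_e/\tK_e^\circ$ span $(\Z/2\Z)^{\nu(\lambda)}$. Comparing cardinalities forces the homomorphism $\widetilde{\Gamma}\to\tK_e/\tK_e^\circ$ to be an isomorphism.

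Finally, since $K=\tK^\circ$ is connected and $K=SL(V)\cap\tK$, one checks directly that $K_e^\circ=\tK_e^\circ$ and that the inclusion $K_e\hookrightarrow\tK_e$ identifies $K_e/K_e^\circ$ with the kernel of the induced homomorphism $\det\colon\tK_e/\tK_e^\circ\to\{\pm1\}$. In type $\sf C$ we have $\epsilon=-1$, so $i=i'$ forces $\lambda_i$ to be even by Lemma~\ref{nilpotents}(3), giving $\det g_i=(-1)^{\lambda_i}=1$ for all $i\in\overline{I}$ and hence $\Gamma=\tK_e/\tK_e^\circ$. In types $\sf B$ and $\sf D$ we have $\epsilon=1$, so $\lambda_i$ is odd and $\det g_i=-1$; the kernel of $\det$ on $\widetilde{\Gamma}\cong(\Z/2\Z)^{\nu(\lambda)}$ is then the index-two subgroup generated by all products $g_ig_j$ with $i,j\in\overline{I}$, which is precisely the final assertion. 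The main obstacle is the second step, where one must carefully match $g_i$ with a generator of the correct $\Z/2\Z$-factor of the reductive centraliser and keep track of how the parity of $\lambda_i$ interacts with $\epsilon$ through Lemma~\ref{nilpotents}; everything else reduces to routine bookkeeping.
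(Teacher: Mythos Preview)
Your proposal is correct and follows essentially the same strategy as the paper: both arguments identify $\widetilde{\Gamma}$ as an elementary abelian $2$-group of order $2^{\nu(\lambda)}$, appeal to the description of $\tK_\phi$ in \cite[3.8]{Jan} (together with \cite[3.13]{Jan} in the paper, or the explicit product-of-classical-groups form in your version) to match that order with $|\tK_e/\tK_e^\circ|$, and then pass to $K_e$ via the determinant and the parity of $\lambda_i$ dictated by Lemma~\ref{nilpotents}(3). Your write-up is slightly more explicit in checking $g_i\in\tK_e$ and in handling type $\sf C$ through $\det g_i=1$ rather than simply invoking $K=\tK$, but these are cosmetic differences.
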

\begin{proof}
As the involutions $g_i$ pairwise commute, $\widetilde{\Gamma}$ is an elementary abelian $2$-group of order $2^{\nu(\lambda)}$.
We claim that the restriction of $\tK_e \twoheadrightarrow \tK_e/\tK_e^\circ$ to $\widetilde{\Gamma}$ is injective. We must show that
the elements $\{g_i : i \in \overline{I}\}$ lie in distinct connected components of $K_e$. This follows from the description of $\tK_\phi$
given in \cite[3.8]{Jan}. Using \cite[3.13]{Jan} it is now evident that the projection $\tK_e \twoheadrightarrow \tK_e/\tK_e^\circ$ restricts to an isomorphism
$\widetilde{\Gamma} \overset{\sim}{\longrightarrow} \tK_e/\tK_e^\circ$ as stated. Since $K = \tK \cap SL(V)$ we see that the restriction of
the map $K_e \twoheadrightarrow \Gamma$ to $\widetilde{\Gamma} \cap SL(V)$ is an isomorphism. For the rest of the proof identify
$\tK_e/\tK_e^\circ$ with $\widetilde{\Gamma}$ .

Suppose $K$ is of type $\sf C$. Then $K = \tK$ and so $\Gamma = \widetilde{\Gamma}$. In case the type is $\sf B$ or $\sf D$ the dimension $\dim\,  V[i]$ is odd
for all $i = i'$ so $\det(g_i) = -1$ for all $i \in \overline{I}$. Therefore the $g_i$ do not lie in $K_e$, however the products $g_i g_j$ with $i, j \in \overline{I}$ do, and it is clear
that they generate all of $\Gamma$.
\end{proof}

\p We call a partition $\lambda \in \mathcal{P}_1(N)$ {\it exceptional} if there exists a unique $i \in \Delta(\lambda)$ such that all parts $\lambda_j$ with $j\not\in \{i, i+1\}$ are even. It is clear that if $\lambda$ is exceptional then the unique 2-step is good. This shows that exceptional partitions are non-singular. Using the KS algorithm it is straightforward to see that every orbit with an exceptional partition is actually Richardson (i.e. is induced from the zero orbit). Define
$$\overline{s}(\lambda) = \left\{ \begin{array}{cc}
s(\lambda) & \lambda \text{ not exceptional}\\
s(\lambda)  +1 & \lambda \text{ exceptional}
\end{array} \right.$$
\smallskip
\p Before we describe the varieties $\EE(\h, e)^\Gamma$ we shall prove two more lemmas.
\begin{lem}\label{c_egamma}
$\dim\, \mathfrak{c}_e^\Gamma = \overline{s}(\lambda)$.
\end{lem}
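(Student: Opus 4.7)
The strategy is to exploit the decomposition of $\h_e^{\ab}$ that drops out of Theorem~\ref{derived} and Corollary~\ref{expression}, namely
$$\mathfrak c_e \;=\; \h_e/[\h_e,\h_e] \;\cong\; \big(\HH_0/\HH_0^+\big) \oplus \big(\NN_1/\NN_1^+\big),$$
of dimensions $s(\lambda)$ and $|\Delta(\lambda)|$ respectively. The summands are characterised intrinsically by the action on the Jordan block spaces $V[i]$ (Lemma~\ref{subbasis}), and since each generator $g_k$ of $\widetilde{\Gamma}$ preserves every $V[i]$, the decomposition is $\widetilde{\Gamma}$-stable; in particular it is $\Gamma$-stable. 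The first summand is pointwise $\widetilde{\Gamma}$-fixed: the basis vectors $\zeta_i^{i,s}$ of $\HH_0$ are endomorphisms respecting each $V[i]$, and conjugation by the $\pm 1$-scalar $g_k|_{V[i]}$ is trivial. This already contributes $s(\lambda)$ to $\dim \mathfrak c_e^\Gamma$, and it remains to compute the $\Gamma$-fixed points in $\NN_1/\NN_1^+$.

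By the proof of Corollary~\ref{expression} a basis of $\NN_1/\NN_1^+$ is given by $\{\zeta_i^{i+1,\lambda_{i+1}-1}\}_{i\in\Delta(\lambda)}$. A direct calculation, using that $\zeta_i^{i+1,\lambda_{i+1}-1}=\xi_i^{i+1,0}+\varepsilon_{i,i+1,0}\,\xi_{i+1}^{i,0}$ maps $V[i]\leftrightarrow V[i+1]$ and is zero on every other $V[j]$, shows that $g_k\in\widetilde{\Gamma}$ acts on this vector by the sign $(-1)^{\delta_{k,i}+\delta_{k,i+1}}$. Define a character $\chi_i\colon \widetilde{\Gamma}\to\{\pm 1\}$ by $g_a\mapsto(-1)^{\delta_{a,i}+\delta_{a,i+1}}$. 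The vector is fixed by $\Gamma$ precisely when $\chi_i|_\Gamma$ is trivial. Note that since $i\in\Delta(\lambda)$ forces $i+1=(i+1)'$ and $\lambda_{i+1}\neq\lambda_{i+2}$, the index $i+1$ automatically lies in $\overline{I}$, so $\chi_i(g_{i+1})=-1$ and $\chi_i$ is \emph{never} trivial on $\widetilde{\Gamma}$.

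Now split by type. In type $\sf C$ the preceding lemma yields $\Gamma=\widetilde{\Gamma}$, so $\chi_i|_\Gamma$ is non-trivial and the $\Gamma$-fixed part of $\NN_1/\NN_1^+$ vanishes, giving $\dim\mathfrak c_e^\Gamma=s(\lambda)=\bar s(\lambda)$ (exceptional partitions lie only in $\mathcal P_1(N)$). In types $\sf B$ and $\sf D$, $\Gamma$ is the kernel of the determinant character on $\widetilde{\Gamma}$, so $\chi_i|_\Gamma$ is trivial iff $\chi_i$ equals this determinant on $\widetilde{\Gamma}$, iff $\overline{I}\subseteq\{i,i+1\}$. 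The heart of the argument is the combinatorial claim that, for $i\in\Delta(\lambda)$, one has $\overline{I}\subseteq\{i,i+1\}$ if and only if $\lambda$ is exceptional with unique $2$-step $(i,i+1)$. The forward direction is clear since any odd part $\lambda_j$ of $\lambda$ away from $\{i,i+1\}$ produces an index in $\overline{I}\setminus\{i,i+1\}$; conversely, if all parts outside $\{i,i+1\}$ are even, one checks using $\lambda_{i-1}\neq\lambda_i$ and $\lambda_{i+1}\neq\lambda_{i+2}$ that $\overline{I}\subseteq\{i,i+1\}$ and that $(i,i+1)$ is the unique $2$-step. Consequently the $\Gamma$-fixed part of $\NN_1/\NN_1^+$ is one-dimensional when $\lambda$ is exceptional and zero otherwise, giving in all cases $\dim\mathfrak c_e^\Gamma = \bar s(\lambda)$. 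The main obstacle is the book-keeping in this final combinatorial equivalence, which needs a careful case split on whether $\lambda_i>\lambda_{i+1}$ or $\lambda_i=\lambda_{i+1}$ (so $i\notin\overline{I}$); everything else is an explicit calculation.
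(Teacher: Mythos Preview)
Your proposal is correct and follows essentially the same approach as the paper: both split $\mathfrak c_e$ into the $\HH_0/\HH_0^+$ piece (shown to be pointwise $\widetilde\Gamma$-fixed) and the $\NN_1/\NN_1^+$ piece spanned by the $\zeta_i^{i+1,\lambda_{i+1}-1}$, compute the $g_k$-eigenvalues on the latter, and then reduce to the combinatorial characterisation of when the action is trivial. Your packaging of the orthogonal case via the character identity $\chi_i=\det$ on $\widetilde\Gamma$ is slightly tidier than the paper's direct manipulation of products $g_kg_{k'}$, but the underlying argument is identical; the one place to tighten is your justification that the decomposition is $\widetilde\Gamma$-stable---Lemma~\ref{subbasis} does not literally characterise $\NN_1^\pm$ intrinsically, so you should instead observe that every $\zeta_i^{j,s}$ is a $g_k$-eigenvector.
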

\begin{proof}
Retain the notation of \ref{gammaexplicit}, identify $\tK_e/\tK_e^\circ$ with $\widetilde{\Gamma}$ and identify $\Gamma$ with the subgroup
of $\widetilde{\Gamma}$ described in Lemma~\ref{gammaexplicit}.

The space $\HH_0$ was defined in \ref{H_0inst} to be the span of all $\zeta_i^{i,\lambda_i-2m}$ with $1\leq i \leq n$ and $0 \leq m <\lfloor\frac{\lambda_i}{2} \rfloor$.
By Lemma~\ref{subbasis} these elements preserve every $V[i]$ and so are fixed by the $g_i$. It follows that $\Gamma$ fixes
$\HH_0$. Let $\overline{\HH}_0$ denote the image of $\HH_0$ in $\mathfrak{c}_e=\tg_e/[\tg_e \tg_e]$. By the inclusion $\Gamma \subseteq
\widetilde{\Gamma}$ we deduce that $\overline{\HH}_0\subseteq \mathfrak{c}_e^\Gamma$. In view of Corollary~\ref{expression} this yields
$$\dim\, \mathfrak{c}_e^\Gamma \geq \dim\, \HH_0/\HH_0^+=s(\lambda).$$

The proof of Corollary~\ref{expression} also shows that the images of $\zeta_i^{i+1,\lambda_{i+1}-1}$ with $i\in\Delta(\lambda)$ in the
quotient space $\overline{\mathfrak{c}}_e:= \mathfrak{c}_e/\overline{\HH}_0$ form a $\K$-basis of $\overline{\mathfrak{c}}_e$.
We recall for the reader's convenience that $\Delta(\lambda)$ is the set of 2-steps for $\lambda$: the indexes $1\leq i < n$ such that
$\lambda_{i-1} \neq \lambda_i \geq \lambda_{i+1} \neq \lambda_{i+2}$ and $i, i+1$ are both fixed by $j \mapsto j'$. Note that $g_{i+1} \in \widetilde{\Gamma}$ for every 2-step
$(i,i+1)$ of $\lambda$ and, moreover, $g_i, g_{i+1}$ both lie in $\widetilde{\Gamma}$ if $(i,i+1)$ is a 2-step of $\lambda$ such that $\lambda_i \neq \lambda_{i+1}$.
If we choose $j\in \Delta(\lambda)$ then direct computations shows us that

\begin{equation}\label{zeta}
g_i\big(\zeta_j^{j+1,\lambda_{j+1}-1}\big)=\begin{cases}\ \, \zeta_j^{j+1,\lambda_{j+1}-1}\ \quad \mbox{ if } j\notin\{i-1,i\},
\\-\zeta_j^{j+1,\lambda_{j+1}-1} \ \,\ \,\mbox{ if } j\in\{i-1,i\}.\end{cases}
\end{equation}

Now we must consider the type of $K$. Suppose $K$ is of type $\sf C$. By Lemma~\ref{gammaexplicit}, $\widetilde{\Gamma} = \Gamma$. By the above calculation
we see that if $0 \neq \alpha \in \overline{\mathfrak{c}}_e$ then $\alpha$ cannot be fixed by all $g_i$, therefore $\overline{\mathfrak{c}}_e^\Gamma = 0$ and
$\dim\, \mathfrak{c}_e^\Gamma = s(\lambda)$ as promised.

To complete the proof we suppose that $K$ is of orthogonal type and show that if $\Gamma$ fixes a non-zero vector in $\overline{\mathfrak{c}}_e$
then $\lambda$ is exceptional, and the fixed point space is one dimensional. If the type of $\h$ is  $\sf B$ or $\sf C$ then $\Gamma = \langle g_ig_j : i,j
\in \overline{I}\rangle \subseteq \widetilde{\Gamma}$ (Lemma~\ref{gammaexplicit}). Suppose that $\Gamma$ has a non-trivial fixed point in $\overline{\mathfrak{c}}_e$. Let $\bar{\zeta}_i^{i+1,\lambda_{i+1}-1}$
denote the image of $\zeta_i^{i+1,\lambda_{i+1}-1}$ in $\overline{\mathfrak c}_e$ and set $$\alpha:=\sum_{k\in\Delta(\lambda)}\,
a_k\bar{\zeta}_k^{k+1,\lambda_{k+1}-1}$$ where $a_k\in\K$ are constants. Fix $k\in \Delta(\lambda)$ such that $a_k \neq 0$ and assume $\Gamma$ fixes $\alpha$.
It follows that $\Gamma$ also fixes $\zeta_k^{k,\lambda_k-1}$. First of all we suppose that $k' \in \Delta(\lambda)$ where $k' +1 < k$. Then
 it follows from (\ref{zeta}) that $g_{k+1}g_{k'+1}(\zeta_k^{k,\lambda_k-1}) \ne\zeta_k^{k,\lambda_k-1}$, a contradiction. Similar
 reasoning shows that all $k' \in \Delta(\lambda)$ fulfil $k' \leq k$. This tells us that $\Delta(\lambda) \subseteq \{k-1, k\}$.
Suppose $k-1 \in \Delta(\lambda)$. Then it follows that $\lambda_{k-1} > \lambda_{k}$, and that $k-1 = (k-1)'$.
But now $g_{k-1} g_{k+1}(\zeta_k^{k,\lambda_k-1}) \neq\zeta_k^{k,\lambda_k-1}$ by (\ref{zeta}).
We conclude that $\Delta(\lambda) = \{k\}$. It follows that $\alpha = a_k \zeta_k^{k+1,\lambda_{k+1}-1}$. It is clear that if any index $i \notin \{k, k+1\}$ fulfils $i = i'$ then (choosing the greatest such index)
we have $g_k g_i \zeta_k^{k,\lambda_k-1} \neq \zeta_k^{k,\lambda_k-1}$. Therefore $\lambda$ is an exceptional partition. In this case we see that
$\zeta_k^{k,\lambda_k-1}$ is fixed by $\Gamma$ and since $\overline{\mathfrak{c}}_e = \K \zeta_k^{k,\lambda_k-1}$ we have the desired result.
\end{proof}

\p We temporarily say that an admissible sequence $\ii = (i_1,...,i_l)$ for $\lambda$ is \emph{of type 1} if Case 1 occurs at index $i_k$ for $\lambda^{\ii_k}$ for every $k=1,...,l$. 
\begin{lem}\label{specialparabolic}
Suppose that $\Oo_e$ has partition $\lambda$ and that $\ii$ is a restricted admissible sequence for $\lambda$ of type 1. Then there exists a parabolic subgroup $P$
such that $K_e \subseteq P$ with the following properties. There is a Levi decomposition $P = U \rtimes L$ with $\widetilde{\Gamma} \cap SL(V) \subseteq L$.
The Lie algebra $\li = \Lie(L)$ lies in the fibre of $\pi$ above $\ii$ so that $\li^\ii \cong \gl_\ii \times \mm$ (see \ref{leviclassification}). Finally there exists an orbit $\Oo \subseteq \li$
such that $\Oo_e = \Ind_\li^\h(\Oo)$ where $\Oo = \Oo_0 \times \Oo_{e_0}$ and $e_0 \in \li$ has partition $\lambda^\ii$.
\end{lem}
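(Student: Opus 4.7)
The plan is to proceed by induction on $l = |\ii|$. The case $l = 0$ is trivial with $P = L = K$ and $\Oo = \Oo_e$, so I focus on the base case $l = 1$ and the inductive step.

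For the base case $\ii = (i_1)$, the Case~1 condition $\lambda_{i_1} \geq \lambda_{i_1+1} + 2$ implies that $\lambda_k \geq 2$ for all $k \leq i_1$, and one checks that the set $\{1, \ldots, i_1\}$ is closed under $k \mapsto k'$ (otherwise $i_1' = i_1 + 1$ would force $\lambda_{i_1} = \lambda_{i_1+1}$ via Lemma~\ref{nilpotents}). Set
$$V_1 \,:=\, \Ker(e) \cap \im(e^{\lambda_{i_1+1}}) \,=\, \spn\{e^{\lambda_k-1}w_k : 1 \leq k \leq i_1\},$$
an intrinsically $K_e$- and $\widetilde{\Gamma}$-stable subspace. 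A direct computation using $(w_k, e^{\lambda_k - 1}w_{k'}) = \pm 1$ and $\lambda_k \geq 2$ shows $V_1$ is isotropic of dimension $i_1$. Pair it with $V_1^* := \spn\{w_k : 1 \leq k \leq i_1\}$ (isotropic and $\widetilde{\Gamma}$-stable), giving a hyperbolic summand. Then $P := \mathrm{Stab}_K(V_1)$ is parabolic, with Levi $L := \mathrm{Stab}_K(V_1) \cap \mathrm{Stab}_K(V_1^*) \cong GL(V_1) \times M$, where $M$ preserves the induced form on $W := (V_1 \oplus V_1^*)^\perp$ of dimension $N - 2i_1$; hence $\li$ lies in the fibre of $\pi$ over $(i_1)$. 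The inclusions $K_e \subseteq P$ and $\widetilde{\Gamma} \cap SL(V) \subseteq L$ hold by construction. Decomposing $e = e_0 + e_1 \in \li \oplus \mathfrak{n}$ and computing on the Jordan basis of Section~\ref{basisforthecent} shows that $e_0$ vanishes on $V_1 \oplus V_1^*$ and that $e_0|_W$ has partition $(\lambda_1 - 2, \ldots, \lambda_{i_1}-2, \lambda_{i_1+1}, \ldots, \lambda_n) = \lambda^{(i_1)}$; Proposition~\ref{mainpropref} then identifies $\Oo_e = \Ind^\h_\li(\Oo_0 \times \Oo_{e_0})$.

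For the inductive step, set $\ii' = (i_1)$ and apply the base case to obtain $P' = U' \rtimes L'$ with $L' \cong GL(V_1) \times M'$, together with $e_1 \in \mm'$ of partition $\lambda^{(i_1)}$. The truncated sequence $(i_2, \ldots, i_l)$ is restricted type 1 admissible for $\lambda^{(i_1)}$, so the inductive hypothesis applied to $(M', e_1)$ yields a parabolic $Q' \subseteq M'$ with Levi $L''$, $\li'' \cong \gl_{i_2} \times \cdots \times \gl_{i_l} \times \mm$, containing $(M')_{e_1}$ and the analogous component group subgroup $\widetilde{\Gamma}_1 \cap SL(W)$. Let $P$ be the preimage in $P'$ of $GL(V_1) \times Q' \subseteq L'$, with Levi $L := GL(V_1) \times L''$. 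The containment $K_e \subseteq P$ follows from the standard fact that the image of $K_e$ under $P' \twoheadrightarrow L'$ is $(L')_{e_1} = GL(V_1) \times (M')_{e_1} \subseteq GL(V_1) \times Q'$. For $g \in \widetilde{\Gamma} \cap SL(V)$ with image $\bar g \in GL(W)$, the identity $\det_V(g) = \det_W(\bar g)$ (the $GL(V_1)$ and $GL(V_1^*)$ contributions cancel by duality) shows $\bar g \in \widetilde{\Gamma}_1 \cap SL(W) \subseteq L''$, whence $\widetilde{\Gamma} \cap SL(V) \subseteq L$. Transitivity of induction (Proposition~\ref{inducedprops}(3)) then gives $\Oo_e = \Ind^\h_\li(\Oo_0 \times \cdots \times \Oo_0 \times \Oo_{e_0})$ with $e_0 \in \mm$ of partition $\lambda^\ii$.

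The main obstacle is the $K_e$-equivariant construction of $V_1$ with dimension exactly $i_1$: the key point is that the Case~1 gap $\lambda_{i_1} > \lambda_{i_1+1}$ forces $\{k : \lambda_k > \lambda_{i_1+1}\}$ to equal precisely $\{1, \ldots, i_1\}$, pinning down the dimension via the intrinsic description $\Ker(e) \cap \im(e^{\lambda_{i_1+1}})$. A secondary technical verification is the determinant identity above, which is essential for ensuring that $\widetilde{\Gamma} \cap SL(V)$ (rather than just $\widetilde{\Gamma}$ itself) descends correctly into $L$ at each stage of the induction.
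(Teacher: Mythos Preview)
Your base case is essentially the paper's construction; the intrinsic description $V_1 = \Ker(e) \cap \im(e^{\lambda_{i_1+1}})$ is a nice touch that gives $K_e$-stability of the flag for free. The problem is in the inductive step for $\widetilde{\Gamma} \cap SL(V) \subseteq L$.

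You assert that for $g \in \widetilde{\Gamma} \cap SL(V)$ the restriction $\bar g = g|_W$ lies in $\widetilde{\Gamma}_1 \cap SL(W)$. Your determinant identity does give $\bar g \in SL(W)$, but $\bar g \in \widetilde{\Gamma}_1$ can genuinely fail. Recall that $\widetilde{\Gamma}_1$ is generated by the involutions $g_k^{(1)}$ only for $k \in \overline{I}_1 = \{k : k = k',\ \lambda^{(i_1)}_k > \lambda^{(i_1)}_{k+1}\}$, whereas $g_k|_W$ is the involution acting as $-1$ on the $k$-th Jordan block of $e_1$ for every $k \in \overline{I}$. If $i_1 = i_1'$ and $\lambda_{i_1} - \lambda_{i_1+1} = 2$ (which Case~1 certainly allows), then after the iteration $\lambda^{(i_1)}_{i_1} = \lambda^{(i_1)}_{i_1+1}$, so $i_1 \in \overline{I} \setminus \overline{I}_1$. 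The element $g_{i_1}|_W$ then acts as $-1$ on block $i_1$ alone and is not a product of the generators $g_k^{(1)}$ with $k \in \overline{I}_1$; it lies in the same coset of $(\tilde{M}')_{e_1}^\circ$ as $g_{i_1+1}^{(1)}$, but not in $\widetilde{\Gamma}_1$ itself. Hence your inductive hypothesis, which only controls $\widetilde{\Gamma}_1 \cap SL(W)$, does not apply to $\bar g$.

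The paper sidesteps this by never descending through the smaller component group. It simply observes that every element of $\widetilde{\Gamma}$ is diagonal with respect to the basis $\{e^s w_k\}$, and that the iterated flag is built entirely from subspaces spanned by subsets of this basis; hence $\widetilde{\Gamma}$ stabilises every piece of the flag at every stage, and $\widetilde{\Gamma} \cap SL(V)$ lands inside the final Levi $L$ directly. You can repair your argument either by making this direct observation, or by strengthening the inductive statement to assert that $L$ contains all diagonal $\pm 1$ elements of $\tK_e$ of determinant $1$ (with respect to the inherited Jordan basis), not merely those in $\widetilde{\Gamma}_1$.
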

\begin{proof}
In this proof we shall assume that $|\ii| = 1$ and, by inspection, the parabolic defined upon iterating this construction shall contain $K_e$, by the transitivity of
induction the orbit will induce to $\Oo_e$, and by the transitivity of the algorithm the partition shall be $\lambda^\ii$. Since Case 1 occurs for $\lambda$ at $i$
we get $\lambda_i-\lambda_{i+1}\geq 2$.

For  each $j\in\{1,\ldots, i\}$ we denote by $V'[j]$ the linear span of all $e^s w_j$ with $0< s<\lambda_j-1$
and set $$V':=\Big(\textstyle{\bigoplus}_{j=1}^k\,V'[j]\Big) \bigoplus\Big(\textstyle{\bigoplus}_{j>k}\,V[j]\Big),$$ a non-degenerate subspace of $V$ with respect
to the form $(\cdot, \cdot)$. Let $W^+$ be the span of $\{e^{\lambda_j-1} w_j : 1\leq j \leq i\}$ and $W^-$ be the span of $\{w_j : 1\leq j \leq i\}$ so that
$W = W^+ \oplus W^-$ is a complement to $V'$ in $V$. Notice that $\dim \, V' = N - 2i$. Since $\ii$ is restricted, this dimension is not equal to 2 in type $\sf D$
and the annihilator of $W$ is a simple Lie algebra $\mm \subseteq \h$ of the same type as $\h$. Meanwhile, the annihilator of $V'$ is isomorphic to $\gl_i$ and
the internal direct sum $\li = \gl_i \oplus \mm$ is a Levi subalgebra lying in the fibre of $\pi$ above $\ii$. Here we use that $\ii$ is restricted.
Set $L$ equal to the Levi subgroup of $K$ with $\li = \Lie(L)$.

The inclusions $$W^+ \subseteq V' \oplus W^+ \subseteq V$$ form a partial flag and there is a parabolic group $P \subseteq K$ stabilising it.
Since $L$ is a maximal Levi subgroup contained in $P$ it is a Levi factor. Let $\mathfrak{u}$ denote the Lie algebra of the unipotent radical of $P$. Then $\mathfrak{u}$ is precisely the
subspace of $\h$ which maps $V$ into $V' \oplus W^+$ and maps $V'\oplus W^+$ into $W^+$.

Let $e_0$ be the nilpotent element of $\mm$ which sends $e^s w_j$ to $e^{s+1} w_j$ for $1\leq j \leq i$ and $0 < s < \lambda_j - 2$, sends $e^{\lambda_j-2}w_j$ to
zero for $1\leq j \leq i$, and acts as $e$ on all $V[j]$ with $j > i$. Set $\Oo = \Ad(L) e_0$. It is not hard to see that the partition of $e$ is $\lambda^{(i)}$.
Since $e - e_0$ lies in $\mathfrak{u}$ we see that $e \in \Oo + \mathfrak{u}$ so that $\Oo_e$ lies in the closure of $\Ind_\li^\h(\Oo)$. However, since the partition of $e_0$ is obtained from $\lambda$
by an iteration of the algorithm at index $i$, the partitions of $\Oo_e$ and $\Ind_\li^\h(\Oo)$ coincide (Corollary~\ref{case1or2}) and it must be that these two orbits are equal
by dimension considerations.

Using the description of $\h_e$ in \ref{basisforthecent} it is easy to see that $\h_e\subset\Lie(P)$ which, in turn, implies that $K_e^\circ\subset P$.
The set $SL(V)\cap \widetilde{\Gamma}$ described in \ref{gammaexplicit} is contained in $L \subseteq P$. Since $K_e = (SL(V)\cap\widetilde{\Gamma}) \cdot K_e^\circ$
we get $K_e \subseteq P$ as required. It is clear that we may repeat this construction any number of times and still conclude that
$K_e \subseteq P$ and $\Oo_e = \Ind_\li^\h(\Oo)$ where $\Oo = \Oo_0 \times \Oo_{\lambda^\ii}$.
\end{proof}

\p We call a nilpotent element $e\in \h$ {\it almost rigid} if the partition $\lambda \in\mathcal{P}_\epsilon(N)$ of $\Oo_e$
fulfils $\lambda_i-\lambda_{i+1}\in\{0,1\}$ for all $i>0$. Recall that this is the first of the two criteria given in Theorem~\ref{rigids} for a partition to be rigid.
Since any such partition has no bad 2-steps every almost rigid nilpotent elements of $\h$ is non-singular.
 \begin{thm}\label{class1}
 If $e$ has partition $\lambda$ then $U(\h,e)^\ab_\Gamma$ is a polynomial algebra in $\overline{s}(\lambda)$ variables.
\end{thm}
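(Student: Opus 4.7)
The plan is to apply Corollary~\ref{poly} together with Lemma~\ref{specialparabolic} and Proposition~\ref{mult1}, splitting into the non-singular and singular cases. By Lemma~\ref{c_egamma} the target dimension equals $c_\Gamma(e)=\overline{s}(\lambda)$, and the upper bound $\dim\,\mathcal{E}^\Gamma\le\overline{s}(\lambda)$ is immediate from Proposition~\ref{abgen}(ii). If $\lambda$ is rigid, then $\mathcal{E}$ is a single point by Corollary~\ref{newz} and Theorem~\ref{EEnonempty}, and $\overline{s}(\lambda)=0$ since rigid partitions are never exceptional: a $2$-step $(i,i+1)$ of a rigid $\lambda$ would satisfy $\lambda_i-\lambda_{i+1}=1$ while also requiring $\lambda_i$ and $\lambda_{i+1}$ to have the same parity, which is impossible. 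If $\lambda$ is non-singular but not rigid, then $c(e)=r(e)$ by Corollary~\ref{izosim}, and Corollary~\ref{poly}(i) directly yields $U(\h,e)^\ab_\Gamma\cong S(\mathfrak{c}_e^\Gamma)$, polynomial in $\overline{s}(\lambda)$ variables.

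For $\lambda$ singular I will argue by induction on $\dim\,\h$, with the two scenarios above handling the base cases. Singularity forces the existence of a bad $2$-step $(j,j+1)$ with, say, $\lambda_{j-1}-\lambda_j$ a positive even integer, so some index $i$ is admissible for Case~$1$ of the algorithm. Applying Lemma~\ref{specialparabolic} with $\ii=(i)$ produces a parabolic $P\supseteq K_e$ whose Levi factor $\li\cong\gl_i\times\mm$ has $\dim\,\z(\li)=1$, together with an element $e_0\in\mm$ of partition $\lambda^{(i)}$ satisfying $\Oo_e=\Ind_\li^\h(\Oo_0\times\Oo_{e_0})$. The inductive hypothesis applied to $(\mm,e_0)$ ensures $\mathcal{E}_0^{\Gamma_0}$ is a non-empty affine space of dimension $\overline{s}(\lambda^{(i)})$. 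Proposition~\ref{mult1}, via the sharper bound $\dim\,\mathcal{E}^\Gamma\ge\dim\,\z(\li)+\dim\,\mathcal{E}_0^\Gamma$ recorded in step~(e) of its proof, then gives $\dim\,\mathcal{E}^\Gamma\ge 1+\overline{s}(\lambda^{(i)})$. Combined with the upper bound and the elementary estimate $\overline{s}(\lambda)\le 1+\overline{s}(\lambda^{(i)})$, this forces equality, and Corollary~\ref{poly}(ii) concludes that $U(\h,e)^\ab_\Gamma\cong S(\mathfrak{c}_e^\Gamma)$.

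The main obstacle is verifying the elementary inequality $\overline{s}(\lambda)\le 1+\overline{s}(\lambda^{(i)})$. Since $s(\lambda^{(i)})=s(\lambda)-1$, this reduces to showing that Case~$1$ preserves the exceptional status of $\lambda$: parities of all parts are manifestly invariant under Case~$1$, and the unique $2$-step $(j,j+1)$ of an exceptional $\lambda$ survives the move, which on case-checking the admissible positions of $i$ relative to $\{j-1,j,j+1\}$ reduces to the defining inequality of Case~$1$ (namely $\lambda_i-\lambda_{i+1}\ge 2$).
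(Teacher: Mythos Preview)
Your argument is correct and follows the same overall strategy as the paper: both proofs combine Lemma~\ref{specialparabolic}, Proposition~\ref{mult1}, Lemma~\ref{c_egamma}, and Corollary~\ref{poly}. The difference is one of granularity. The paper applies Case~1 \emph{maximally} in a single stroke, landing on an almost-rigid $e_0$; since an almost-rigid non-exceptional partition has $\overline{s}=0$, the relevant $\Gamma_0$-fixed locus is a single point, and the plain inequality $\dim\,\mathcal{E}^\Gamma\ge\dim\,\z(\li)=s(\lambda)$ already matches $c_\Gamma(e)$. You instead peel off one Case~1 step at a time and induct, which forces you to invoke the sharper bound $\dim\,\mathcal{E}^\Gamma\ge\dim\,\z(\li)+\dim\,\mathcal{E}_0^{\Gamma_0}$ hidden in part~(e) of the proof of Proposition~\ref{mult1}. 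Both work; the paper's route is slightly more economical because it never needs that sharper form.

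Two small points worth tightening. First, Lemma~\ref{specialparabolic} requires $\ii=(i)$ to be \emph{restricted}; you should note that this is automatic in the singular case, since $(i)$ non-restricted in type~$\sf D$ forces $\lambda^{(i)}=(1,1)$ and hence $\lambda$ exceptional (so non-singular). Second, your final paragraph is unneeded: in the inductive step $\lambda$ is singular and therefore not exceptional, so $\overline{s}(\lambda)=s(\lambda)$, and then $\overline{s}(\lambda)\le 1+\overline{s}(\lambda^{(i)})$ follows immediately from $\overline{s}(\lambda^{(i)})\ge s(\lambda^{(i)})=s(\lambda)-1$ without any discussion of exceptional status. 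Your treatment of the rigid base case should also cite Proposition~\ref{abgen}(i) together with $c(e)=0$ rather than Corollary~\ref{newz}, since Theorem~\ref{premetsurj} (which links $\dim\,\mathcal{E}$ to $r(e)$) is stated only for induced $e$.
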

\begin{proof}
We shall use notation $\Gamma := \Gamma(e)$ and $\EE := \EE(\h,e)$. Let $\ii = (i_1,...,i_l)$ be the admissible sequence for $\lambda$ of type 1 which is
formed by applying Case 1 repeatedly until $s(\lambda^\ii) = 0$, so that $\lambda^\ii$ is almost rigid. We must distinguish the cases where $\lambda^\ii$ is
exceptional or not exceptional.

If $\lambda^\ii$ is exceptional then, since we only applied Case 1 of the KS algorithm to $\lambda$ we see that $\lambda$ must also be exceptional.
But then $e$ is non-singular and so by Corollary~\ref{izosim} and Corollary~\ref{newz} we see that $c(e) = r(e)$. Now we make invoke part 1 of
Theorem~\ref{poly} to see that $U(\h,e)^\ab_\Gamma$ is polynomial in $\dim\, \mathfrak{c}_e^\Gamma$ variables. By Lemma~\ref{c_egamma} that is $\overline{s}(\lambda)$ variables.

Now suppose that $\lambda^\ii$ is not exceptional. We claim that $\ii$ is restricted. Indeed, if it is not restricted then the type of $\h$ is $\sf D$ and $\lambda^\ii = (1,1)$. But
now it is clear that $\lambda$ is exceptional, contrary to our assumptions. Hence we may apply the previous lemma to deduce that there is a parabolic subgroup $P$ containing $K_e$
such that $P = U \rtimes L$ and $\Lie(L) = \li \cong \gl_\ii\times \mm$ with a nilpotent element $e_0 \in \mm$ such that $\Oo = \Ad(L)e_0$ induces to
$\Oo_e$, and $e_0$ has partition $\lambda^\ii$. It is immediate that $\dim\, \z(\li) = |\ii| = s(\lambda)$. Since Case 1 does not occur for $\lambda^\ii$ at any
index, $e_0$ is almost rigid.

Let $M$ be simple subgroup of $K$ with $\Lie(M)=\mm$ and denote by $\Gamma_0$ the component group $M_{e_0}/M_{e_0}^\circ$, and make the
notation $\mathfrak{c}_{e_0} = \mm_{e_0}/[\mm_{e_0} \mm_{e_0}]$. Then combining Corollary~\ref{izosim} with
Corollary~\ref{newz} and Corollary~\ref{poly}(i) we  deduce that $U(\mm, e_0)^\ab_{\Gamma_0}\cong S\big(\mathfrak{c}_{e_0}^{\Gamma_0}\big)$.
Since in the present case $\dim \, \mathfrak{c}_{e_0}^{\Gamma_0} = 0$ by Lemma~\ref{c_egamma} we conclude that $\EE(\mm, e_0)^{\Gamma_0}$
is a single point. In particular it is non-empty. Note that $U([\li \li], e_0) \cong U(\sl_\ii)\otimes U(\mm, e_0)$ as $\K$-algebras and both tensor factors
are stable under the natural action of the reductive part of $L_{e_0}$ on $U([\li \li], e_0)$. Since $U([\li \li], e_0)^\ab \cong U(\mm, e_0)^\ab$
we may deduce that $\EE([\li \li], e_0)^{L_{e_0}/L_{e_0}^\circ} \neq \emptyset$ and so Proposition~\ref{mult1} yields that
$\mathcal{E}^\Gamma\neq\emptyset$ and $\dim\,\mathcal{E}^\Gamma\geq \dim\,\z(\li)$. On the other hand, $\dim\,\z(\li) = s(\lambda) = \dim\,\mathfrak{c}_e^\Gamma$
by Lemma~\ref{c_egamma}. It follows by part 2 of Corollary~\ref{poly} that $U(\h, e)^\ab_\Gamma\cong S(\mathfrak{c}_e^\Gamma)$ is polynomial in $s(\lambda)$ variables.
\end{proof}

\section{Completely prime primitive ideals of $U(\h)$}\label{CPPideals}
\setcounter{parno}{0}

\p Let $e$ be an induced nilpotent element of $\h$. Let $\Prim U(\h)$ be the set of all primitive ideals
of the universal enveloping algebra $U(\h)$ and let $\Primo_\Oo$ be the set of those $I \in \Prim U(\h)$ with $\VA(I)=\overline{\Oo}$.

\p The traditional way to classify the completely prime primitive ideals $I\in\Primo_\Oo$ parallels Borho's classification of the sheets. One aims to show that if the orbit $\Oo$ is induced
from a rigid orbit $\Oo_0$ in a Levi subalgebra $\li$ of $\h$, then
the majority of $I$ as above can be obtained as the annihilators in $U(\h)$ of certain induced $\h$-modules
$$\Ind_\pp^{\h}(E):=U(\h)\otimes_{U(\pp)}E,$$ where $\pp = \li\oplus\n$ is a parabolic subalgebra of $\h$ with nilradical $\n$
and $E$ is an irreducible $\pp$-module acted on trivially by $\n$ and such that the annihilator $I_0:=\Ann_{U(\li)}\, E$ is a completely prime primitive ideal of $U(\li)$
with $\VA(I_0)= \overline{\Oo}_0$. It should be noted that the induced module does not necessarily have to be irreducible and $\Ind_\li^\h(I_0)$ does not have to
be primitive and completely prime, in general, but this holds under the additional assumption that $I_0$ is completely prime thanks to Theorem~\ref{inducedideals}.

\p The multiplicity $\mult_\Oo U(\h)/I$ of a primitive quotient with $I \in \Primo_\Oo$ is defined in \ref{multiplicity}. The results of the previous section
can be applied to describe how primitive ideals $I\in\Primo_{\Oo}$ for which ${\rm mult}_{\Oo}(U(\tg)/I)=1$ may be induced.
The characterisation we obtain can be regarded as a generalisation of M{\oe}glin's theorem \cite{Moe} on completely prime primitive ideals of $U(\mathfrak{sl}_n)$
to simple Lie algebras of other types (that theorem was recently reproved by Brundan \cite{Br} by using the theory of finite $W$-algebras).

\p The rest of this section is devoted to proving the following:
\begin{thm}\label{E} Let $I\in\MF_{\Oo}$ be a multiplicity-free primitive ideal where $\Oo$ is an induced orbit. Then there exists a proper parabolic subalgebra $\mathfrak p$ of $\g$ with a Levi decomposition $\pp = \li \ltimes \n$, a rigid nilpotent orbit $\Oo_{e_0}$ in $\li$ and a completely prime primitive ideal $I_0 \in \Primo_{\Oo_{e_0}}$ such that the following hold:
\begin{enumerate}
\item{$\VA(I_0) = \overline{\Oo}_{e_0}$;}
\item{$\Oo_e = \Ind^\h_\li(\Oo_{e_0})$;}
\item{$I = \Ind^\h_\li(I_0)$.}
\end{enumerate}
\end{thm}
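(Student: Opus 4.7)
The plan is to combine three ingredients: the bijection $\mathcal{E}^\Gamma \longleftrightarrow \MF_{\Oo_e}$ of Theorem~\ref{bigtheoremF}, Losev's embedding (Theorem~\ref{losevembed}), and the Kempken--Spaltenstein parameterisation of sheets (Corollary~\ref{bijection}). First, since $I$ is multiplicity-free, part~(4) of Theorem~\ref{bigtheoremF} forces the corresponding simple $U(\h,e)$-module $W$ to be one-dimensional with $\Gamma_W = \Gamma$, so that $W$ determines a point $\chi \in \mathcal{E}^\Gamma$.

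Next, because $\Oo_e$ is induced, the partition $\lambda$ of $\Oo_e$ is not rigid, so I would select a maximal admissible sequence $\ii$ for $\lambda$. By Corollary~\ref{bijection}, $\ii$ corresponds to a sheet $\mathcal{S}$ of $\h$ containing $e$, with data $(\li, \Oo_{e_0})/K$, and Lemma~\ref{maxadmiss} guarantees that $\lambda^\ii$ is rigid. Writing $\li = \gl_\ii \times \mm$, the nilpotent $e_0$ decomposes as $e_0 = 0 \times e_0'$ with $e_0'$ rigid in $\mm$. I would then produce a proper parabolic $\pp = \li \ltimes \n$ of $\h$ with $K_e \subseteq P$ and $e \in e_0 + \n$, by iterating the construction of Lemma~\ref{specialparabolic} and supplementing it with a parallel construction for the Case~2 steps of the algorithm, so that $\Oo_e = \Ind^\h_\li(\Oo_{e_0})$.

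With these data in place, Losev's theorem supplies the $K_\phi$-equivariant finite morphism $\xi_\li^\ast : \mathcal{E}(\li, e_0) \to \mathcal{E}$. Since $e_0'$ is rigid in $\mm$, Corollary~\ref{uniquerigids} gives $U(\mm, e_0')^\ab = \K$, so $U(\li, e_0)^\ab \cong S(\z(\li))$ is polynomial and $\mathcal{E}(\li, e_0)$ is an affine space of dimension $|\ii|$. Combining Proposition~\ref{mult1} with the equality $\dim \mathcal{E}^\Gamma = \overline{s}(\lambda)$ furnished by Theorem~\ref{class1}, a dimension count would show that the $\Gamma$-equivariant restriction $\xi_\li^\ast \colon \mathcal{E}(\li,e_0)^\Gamma \to \mathcal{E}^\Gamma$ is surjective, which would allow me to lift $\chi$ to some $\chi_0 \in \mathcal{E}(\li, e_0)$.

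Setting $I_0 := \Ann_{U(\li)}\FF_0(\chi_0)$, where $\FF_0$ denotes Skryabin's equivalence for the pair $(\li, e_0)$, the rigidity of $\Oo_{e_0}$ together with parts~(1) and~(3) of Theorem~\ref{bigtheoremF} applied inside $\li$ would ensure that $\VA(I_0) = \overline{\Oo}_{e_0}$ and that $I_0$ is completely prime; then Losev's compatibility of $\xi_\li^\ast$ with parabolic induction of annihilators would yield $\Ind^\h_\pp(I_0) = I$. The principal obstacles will be (i) extending Lemma~\ref{specialparabolic} so that a parabolic $\pp$ with $K_e \subseteq P$ exists for arbitrary maximal admissible $\ii$ rather than only for those of type~1, and (ii) translating Losev's embedding between abelianizations into the precise statement that $\Ind^\h_\pp(I_0) = I$ via Skryabin's equivalence, which is a careful bookkeeping exercise rather than a formal consequence of the constructions.
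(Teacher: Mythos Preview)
Your overall architecture---pass to $\chi\in\EE^\Gamma$ via Theorem~\ref{bigtheoremF}, lift through Losev's map $\xi_\li^\ast$, then cite the compatibility of $\xi_\li^\ast$ with parabolic induction---is exactly the paper's. Your obstacle~(ii) is not an obstacle at all: the commutative square
\[
\begin{array}{ccc}
\EE(\h,e) & \xrightarrow{\ I_{\FF(-)}\ } & \Primo_{\Oo_e}\\[2pt]
\uparrow{\scriptstyle \xi_\li^\ast} & & \uparrow{\scriptstyle \Ind_\li^\h}\\[2pt]
\EE(\li,e_0) & \xrightarrow{\ I_{\FF_0(-)}\ } & \Primo_{\Oo_{e_0}}
\end{array}
\]
is precisely \cite[Corollary~6.4.2]{Lo3}, and the paper simply invokes it.

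The genuine problem is obstacle~(i). You propose to pick an \emph{arbitrary} maximal admissible sequence $\ii$ and then build a parabolic with $K_e\subseteq P$ by extending Lemma~\ref{specialparabolic} to Case~2 iterations. The paper never attempts this extension, and your dimension count would also fail in general: with $e_0'$ rigid one has $\EE(\li,e_0)\cong\z(\li)^\ast$, and since $K_\phi\subset L$ acts trivially on $\z(\li)$ the $\Gamma$-fixed locus is all of $\EE(\li,e_0)$, of dimension $|\ii|$. For a finite morphism into the irreducible variety $\EE^\Gamma$ of dimension $\overline{s}(\lambda)$ to be surjective you need $|\ii|=\overline{s}(\lambda)$, and there is no reason a randomly chosen maximal admissible sequence should have this length.

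The paper's remedy is to avoid the issue entirely by a two-stage reduction. When $\lambda$ is non-singular, $e$ lies in a unique sheet, and Theorem~\ref{class} together with Corollaries~\ref{izosim} and~\ref{newz} force both $\EE(\h,e)$ and $\EE(\li,e_0)$ to be affine spaces of the same dimension $c(e)$; finiteness of $\xi_\li^\ast$ then gives surjectivity onto \emph{all} of $\EE(\h,e)$, so $\EE^\Gamma\subseteq\xi_\li^\ast\EE(\li,e_0)$ with no need for $K_e\subseteq P$ or any $\Gamma$-equivariance. When $\lambda$ is singular, one first applies only Case~1 iterations (exactly where Lemma~\ref{specialparabolic} already works) until $\lambda^\ii$ is almost rigid and hence non-singular; the type-1 sequence has length $s(\lambda)=\dim\EE^\Gamma$, so the $\Gamma$-equivariant map $\xi_\li^\ast:\EE(\li,e_0)^{\Gamma_0}\to\EE^\Gamma$ is surjective by the argument of Theorem~\ref{class1}. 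One then composes with the non-singular step applied inside $\mm$. Replace your arbitrary choice of $\ii$ with this case split and the proof goes through without any new parabolic construction.
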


\p Recall from \ref{losevtheorem} that whenever $\Oo_e = \Ind_\li^\h(\Oo_{e_0})$, Losev's embedding $\Xi : U(\h,e) \hookrightarrow U(\li, e_0)'$ actually induces an embedding
$\xi_\li : U(\h,e)^\ab \hookrightarrow U(\li,e)^\ab$. We study the comorphism $$\xi^\ast_\li : \EE(\li, e_0) \ra \EE(\h, e).$$
Our proof of the above theorem shall hinge upon the following proposition.
\begin{prop}
For every induced nilpotent element $\Oo_e \in \Ni(\h)/K$ there exists a Levi subalgebra $\li$ and a rigid nilpotent element $e_0 \in \Ni([\li \li])$ such that:
\begin{enumerate}
\item{$\Oo_e = \Ind_\li^\h(\Oo_{e_0})$;}
\item{$\EE(\h,e)^\Gamma \subseteq \xi^\ast_\li \EE(\li, e_0)$.}
\end{enumerate}
\end{prop}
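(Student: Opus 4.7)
The plan is to identify the required $(\li,e_0)$ by locating the unique irreducible component of $\EE(\h,e)$ that contains $\EE(\h,e)^\Gamma$ and reading off its associated sheet data, and then showing that the corresponding Losev image covers this component.

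First, by Theorem~\ref{class1} the variety $\EE(\h,e)^\Gamma$ is an affine space of dimension $\overline{s}(\lambda)$, hence irreducible, and so is contained in a unique irreducible component $\mathcal{C}$ of $\EE(\h,e)$. The surjection $\sigma$ of Theorem~\ref{premetsurj} then sends $\mathcal{C}$ to a component of $X:=\SS\cap(e+\h_f)$ for some sheet $\SS$ of $\h$ containing $e$, and $\dim\mathcal{C}\le\dim X=\rank\SS$. By Theorem~\ref{classifsheets} the sheet $\SS$ has data $(\li,\Oo)/K$ with $\Oo$ rigid in $\li$, and Proposition~\ref{inducedprops}(1) provides the factorisation $\Oo=\Oo_0\times\Oo_{e_0}$ with $e_0\in[\li,\li]$ rigid; Proposition~\ref{inducedprops}(2) yields $\Oo_e=\Ind_\li^\h(\Oo_{e_0})$, which is condition (1).

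For condition (2), I would invoke Losev's Theorem~\ref{losevembed} applied to $(\li,e_0)$ to obtain a finite morphism $\xi^*_\li:\EE(\li,e_0)\to\EE(\h,e)$. Since $e_0$ is rigid in $[\li,\li]$, Corollary~\ref{uniquerigids} gives $\EE([\li,\li],e_0)=\{\mathrm{pt}\}$, whence $\EE(\li,e_0)\cong\z(\li)^*$ and the image of $\xi^*_\li$ is an irreducible closed subvariety of $\EE(\h,e)$ of dimension $\dim\z(\li)=\rank\SS\ge\dim\mathcal{C}$.

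The crucial step, and the main obstacle, is to establish the inclusion $\mathcal{C}\subseteq\xi^*_\li\,\EE(\li,e_0)$, which together with $\EE(\h,e)^\Gamma\subseteq\mathcal{C}$ will give~(2). This is essentially a compatibility between the two existing parameterisations of $\Comp\,\EE(\h,e)$: the one via Katsylo sections of sheets underlying Theorem~\ref{premetsurj}, and the one via Losev's slice embeddings $\xi^*_\li$. I would attempt to verify it by using Corollary~\ref{bijection} to choose a maximal admissible sequence for $\lambda$ corresponding to $\SS$, refining the construction of Lemma~\ref{specialparabolic} so as to produce a parabolic $P=L\ltimes U$ with Lie algebra $\li$ containing $K_e$ (incorporating the Case~2 iterations of the Kempken-Spaltenstein algorithm is the technical difficulty, since the existing construction handles only type-1 sequences), and then applying Proposition~\ref{mult1} to identify $\xi^*_\li(\widetilde{\EE}_0^\Gamma)$ as a closed irreducible subset of $\EE(\h,e)^\Gamma$ of the correct dimension $\dim\z(\li)=\rank\SS$; the irreducibility of $\mathcal{C}$ and the dimension inequality $\rank\SS\ge\dim\mathcal{C}$ should then force the desired inclusion.
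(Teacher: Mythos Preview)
Your plan has a genuine gap at exactly the point you flag as ``the main obstacle'': you need $K_e\subset P$ for a parabolic whose Levi factor is the one attached to the sheet $\SS$ picked out by your component $\mathcal C$, and you propose to obtain this by extending Lemma~\ref{specialparabolic} to admissible sequences that involve Case~2 of the KS algorithm. The paper does \emph{not} prove such an extension, and there is no obvious way to do so: a Case~2 iteration alters the Jordan block structure (two blocks of equal length become unequal after subtracting $1$ from each), and there is no evident partial flag in $V$ stabilised by $K_e$ that realises this. Without $K_e\subset P$ you cannot invoke Proposition~\ref{mult1}, so the $K_\phi$-equivariance of $\xi_\li$ and hence the inclusion $\xi_\li^\ast(\widetilde{\EE}_0^\Gamma)\subseteq\EE(\h,e)^\Gamma$ are unavailable. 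Your compatibility statement between Premet's map $\sigma$ and Losev's images is also not established anywhere in the paper and is not obviously true.

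The paper's argument avoids this by never trying to hit a prescribed component. It splits on whether $\lambda$ is non-singular. If it is, $\EE(\h,e)$ is itself an affine space of dimension $c(e)=z(\lambda)$ by Theorem~\ref{class}, and for the unique sheet's data $(\li,e_0)$ one has $\EE(\li,e_0)\cong\z(\li)^\ast$ of the same dimension; the finite morphism $\xi_\li^\ast$ between irreducible varieties of equal dimension is then surjective, giving $\EE(\h,e)^\Gamma\subseteq\EE(\h,e)=\xi_\li^\ast\EE(\li,e_0)$. If $\lambda$ is singular, one first applies only Case~1 as many times as possible (a type-1 sequence $\ii$ of length $s(\lambda)$), so Lemma~\ref{specialparabolic} applies verbatim and yields $K_e\subset P$ with Levi $\li'\cong\gl_\ii\times\mm$ and an almost-rigid $e_0'\in\mm$ with partition $\lambda^\ii$. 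The $K_\phi$-equivariance then gives $\xi_{\li'}^\ast\EE(\li',e_0')^{\Gamma_0}\subseteq\EE(\h,e)^\Gamma$, and since $\lambda$ singular forces $\lambda$ and $\lambda^\ii$ non-exceptional, both sides are irreducible of dimension $s(\lambda)$ by Theorem~\ref{class1}; finiteness gives surjectivity onto $\EE(\h,e)^\Gamma$. Because $e_0'$ is almost rigid it is non-singular, so one now composes with the non-singular case applied inside $\mm$ to pass from $e_0'$ to a rigid $e_0$. The point is that the only parabolic you ever need with $K_e\subset P$ comes from a type-1 sequence, and the passage to a rigid element is deferred to a second stage where non-singularity makes the full $\EE$ irreducible.
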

\begin{proof}
Suppose that $\Oo_e$ has partition $\lambda$. First of all we consider the case where $\lambda$ is non-singular. By Theorem~\ref{nosheets} our element $e$ is contained
in a unique sheet and by Proposition~\ref{inducedprops} there is a unique $K$-pair $(\li, \Oo)/K$ with $\Oo \subseteq \li$ rigid and $\Oo_e = \Ind_\li^\h(\Oo)$. Fix a Levi $\li$ from this
$K$-pair, and choose any $e_0 \in \Oo$. By the theory developed in~\ref{conjlev} and \ref{KScontext} we know that the conjugacy class of $\li$ lies in the fibre of $\pi$ above some
admissible sequence $\ii$ for $\lambda$, and so $\li \cong \li^\ii \cong \gl_\ii \times \mm$, with $e_0 \in \mm$ having partition $\lambda^\ii$.
Now $U(\li, e_0)^\ab \cong U(\gl_\ii, 0)^\ab \otimes U(\mm, e_0)^\ab \cong S(\z(\li))\otimes U(\mm, e_0)^\ab$ as algebras.
Since $e_0$ is rigid we deduce that $U(\mm, e_0)^\ab = \K$ by Corollary~\ref{uniquerigids}.
It follows that $U(\li, e_0)^\ab$ is a polynomial algebra in $\dim \, \z(\li)$ variables. Since $\lambda$ is non-singular we may apply Corollary~\ref{izosim} and \ref{newz} to deduce that
$\dim \, \z(\li) = z(\lambda) = c(\lambda) = c(e)$. By Theorem~\ref{class} the algebra $U(\h,e)^\ab$ is also polynomial in $c(e)$ variables. Now the finite morphism
$\xi^\ast_\li : \EE(\li, e_0) \rightarrow \EE(\h, e)$ is forced to be surjective (bear in mind that $\EE(\h,e)$ is irreducible whilst $\xi^\ast_\li$ is closed and has finite fibres). Therefore
$$\EE(\h,e)^\Gamma \subseteq \EE(\h,e) = \xi^\ast_\li \EE(\li, e_0).$$

Now turn our attention to the case where $\lambda$ is singular. We shall reduce to the non-singular case as follows. Let $\ii$ be the admissible sequence of type 1 for $\lambda$
which is obtained by applying Case 1 of the KS algorithm to $\lambda$ as many times as possible, so that $s(\lambda^\ii) = 0$. By Lemma~\ref{specialparabolic} there is
a special parabolic $P$ containing $K_e$, where the conjugacy class of the Lie algebra of a Levi factor lies in the fibre above $\ii$. Then
$\li \cong \gl_\ii \times \mm$ and there exists a nilpotent element $e_0 \in \mm$ such that $\Oo_e = \Ind^\h_\li(\Oo_{e_0})$.
Set $\Gamma_0 = L_{e_0} / L_{e_0}^\circ$. After the discussion in parts (b) of the proof of Proposition~\ref{mult1} the maps which are used in Losev's construction of $\xi^\ast_\li$ are
$K_\phi$-equivariant and it follows that $\xi_\li^\ast \EE(\li, e_0)^{\Gamma_0} \subseteq \EE(\h,e)^\Gamma$. Since $\lambda$ is singular it is not exceptional and it follows that
$\lambda^\ii$ is not exceptional either. Now $U(\li, e_0)^\ab = S(\z(\li)) \otimes U(\mm, e_0)^\ab$ and so $\dim \, \EE(\li, e_0)^{\Gamma_0} = \dim\, \z(\li) + s(\lambda^\ii) = \dim \, \z(\li)$ by Theorem~\ref{class1}. 
But $\dim \, \z(\li) = s(\lambda)$ by construction, and this coincides with $\dim\, \EE(\h,e)^\Gamma$ by the same theorem. We deduce that $$\xi_\li^\ast : \EE(\li, e_0)^{\Gamma_0} \longrightarrow \EE(\h,e)^\Gamma$$
is a finite morphism between irreducible varieties of the same dimension. In particular, it is surjective.

Finally we may apply the construction from the first paragraph of the proof to $\mm$ and we will obtain a Levi subalgebra and nilpotent orbit with the correct properties.
\end{proof}

\p We can now supply a proof for the Theorem~\ref{E}, the last theorem of the thesis.
\begin{proof}
Let $(\li, e_0)$ be the pair constructed in the previous proposition and pick $I \in \MF_{\Oo_e}$. According to \ref{bigtheoremF} there is a unique $\eta \in \EE(\h,e)^\Gamma$ with
$I = I_{\FF(\eta)}$. Thanks to part 2 of the previous proposition we can find $\eta_0 \in \EE(\li, e_0)$ such that $\xi_\li^\ast \eta_0 = \eta$. Let $I_0 := I_{\FF_0(\eta_0)} \unlhd U(\li)$, where
$\FF_0 : U(\li,e_0)\text{-mod} \rightarrow U(\li)\text{-mod}\chi_0$ is the Skryabin functor for $U(\li,e_0)$. According to part 3 of Theorem~\ref{bigtheoremF} this ideal is completely prime,
whilst part 1 tells us that the associated variety is $\overline{\Oo}_{e_0}$. Thanks to \cite[Corollary~6.4.2]{Lo3}, we have the following commutative diagram:
\begin{center}
\begin{tikzpicture}[node distance=4cm, auto]
 \node (ug) {$\EE(\h,e)$};
 \node (prim1) [right of=ug] {$\Prim_{\Oo_e}$};
 \node (ul) [below of=ug] {$\EE(\li, e_0)$};
 \node (prim2) [below of=prim1] {$\Prim_{\Oo_{e_0}}$};
 \draw[->] (ug) to node {$I_{\FF(\text{-})}$} (prim1);
 \draw[->] (ul) to node {$\xi^\ast_\li(\text{-})$} (ug);
 \draw[->] (ul) to node [swap] {$I_{\FF_0(\text{-})}$} (prim2);
 \draw[->] (prim2) to node [swap] {$\Ind_\li^\h(\text{-})$} (prim1);
\end{tikzpicture}
\end{center}
It follows that $I$ is obtained from $I_0$ by parabolic induction.



\end{proof}



\begin{thebibliography}{999}

\bibitem[1]{BB} {\sc J.~Brown,~J.~Brundan}, Elementary Invariants for Centralizers of Nilpotent Matrices, \emph{J. Aust. Math. Soc.} {\bf86}\, (2009), 1--15. 

\bibitem[2]{Bor} {\sc A.~Borel}, ``Linear Algebraic Groups'', Second edition. Graduate Texts in Mathematics, Vol.~126. Springer Verlag, New York, 1991.

\bibitem[3]{Borho} {\sc W.~Borho}, {\"U}ber Schichten halbeinfacher Lie-Algebren, \emph{Invent. Math.} {\bf 65}, no. 2\, (1981/82) 283--317. 

\bibitem[4]{BGR} {\sc W.~ Borho,~ P.~Gabriel,~ R.~Rentschler}, ``Primideale in Einh{\"u}lenden aufl{\"o}sbarer Lie-Algebren'', Lecture Notes in Mathematics, Vol.~357, Springer Verlag, Berlin, 1973.

\bibitem[5]{BJ} {\sc W.~Borho,~A.~Joseph}, Sheets and topology of primitive spectra for semisimple Lie algebras, \emph{J. Algebra} {\bf 244} \, (2001), 76--167.; Corrigendum in: {\bf 259}\, (2003), 310--311.

\bibitem[6]{BKr} {\sc W.~Borho,~H.~Kraft}, {\"U}ber Bahnen und deren Deformationen bei linearen Aktionen reduktiver Gruppen, {\it Comment. Math. Helv.}, {\bf 54} \, (1979), 61--104.

\bibitem[7]{Bou1} {\sc N.~Bourbaki}, ``Alg{\`e}bre Commutative'', Hermann, Paris, 1968.

\bibitem[8]{Bour} {\sc N.~Bourbaki}, ``Groups et alg{\`e}bres de Lie'', Chapitres~IV, V, VI , Hermann, Paris, 1968.

\bibitem[9]{BG} {\sc K.~A.~Brown,~K.~R.~Goodearl}, Homological Aspects of Noetherian PI Hopf Algebras and Irreducible Modules of Maximal Dimension, \emph{J. Algebra}, {\bf 198}\, (1997), 240--266.

\bibitem[10]{Br} {\sc J.~Brundan}, M{\oe}glin's theorem and Goldie rank polynomials in Cartan type $A$, {\it Compositio Math.}, {\bf 147}\, (2011), 11741--1771.

\bibitem[11]{BK} {\sc J.~Brundan,~A. Kleshchev}, Shifted Yangians and finite $W$-algebras, \emph{Adv. Math.}, {\bf 200}\, (2006), 136--195.

\bibitem[12]{Ca} {\sc R.~Carter}, ``Finite Groups of Lie Type--Conjugacy Classes and Complex Characters'', Second edition, Wiley, New York, 1985.

\bibitem[13]{CM} {\sc D.~Collingwood,~W.~McGovern}, ``Nilpotent Orbits in Semisimple Lie Algebras'', Van Nostrand Reinhold, New york, 1993.

\bibitem[14]{Con} {\sc N.~Conze}, Alg{\`e}bres d'op{\'e}rateurs diff{\'e}rentiels et quotients des alg{\`e}bres enveloppantes, \emph{Bull. Soc. Math. France}, {\bf 102}\, (1974), 379--415.

\bibitem[15]{CMM} {\sc J-V.~Charbonnel,~A. Moreau}, The Index of Centralizers of Elements of Reductive Lie Algebras, \emph{Doc. Math.}, {\bf 15} \, (2010), 347--380.

\bibitem[16]{DC} {\sc A.~D'Andrea,~C.~De~Concini,~A.~De~Sole,~R.~Heluani,~V.~Kac}, Three equivalent definitions of finite $W$-algebras, \emph{appendix to \cite{DK}}.

\bibitem[17]{DI} {\sc F.~DeMeyer,~E.~Ingraham}, ``Separable Algebras over Commutative Rings'', Lecture Notes in Mathematics, Vol.~181, Springer, 1971.

\bibitem[18]{DK} {\sc A.~De~Sole,~V. Kac}, Finite vs affine $W$-algebras, \emph{Japan. J. Math}, {\bf 1}\, (2006), 137--261.

\bibitem[19]{Dix} {\sc J.~Dixmier}, ``Alg\`{e}bres Enveloppantes'', Gauthier-Villars \'{E}ditors, 1974.

\bibitem[20]{Ela} {\sc A.~G.~Elashvili}, Canonical form and stationary subalgebras of points in general position for simple linear Lie groups, \emph{Funkts. Anal. Prilozhen.}, {\bf 6}\, (1972) 51--62; English transl.:  \emph{Functional Anal. Appl.}, {\bf 6}\, (1972) 44--53. 

\bibitem[21]{FP} {\sc E.~M.~Friedlander,~B.~J.~Parshall}, Rational Actions Associated to the Adjoint Representation, \emph{Ann. Scient. Ec. Norm. Sup.}, {\bf20}\, (1987), 215--226.

\bibitem[22]{GG} {\sc W.~L.~Gan,~V.~Ginzburg}, Quantization of Slodowy slices, \emph{Int. Math. Res. Not.}, {\bf 5}\, (2002), 243"1¤75.

\bibitem[23]{Ginz} {\sc V.~Ginzburg}, On primitive ideals, \emph{Selecta Math.} {\bf 9} \, (2003), 379--407.

\bibitem[24]{Ginz1} {\sc V.~Ginzburg}, Harish-Chandra bimodules for quantized Slodowy slices, \emph{Repres. Theory}, {\bf 13}\, (2009), 236--271.

\bibitem[25]{GRU} {\sc S.~M.~Goodwin,~G.~R\"{o}hrle,~G.~Ubly}, On 1-dimensional representations of finite $W$-algebras associated to simple Lie algebras of exceptional type, \emph{LMS J. Comput. Math.}, {\bf 13}\, (2010), 357--369.

\bibitem[26]{GW} {\sc K. R. Goodearl \& R. B. Warfield, Jr.}, ``An Introduction to Noncommutative Noetherian Rings'', Lond. Math. Soc., Student texts {\bf 16}, Cambridge University Press, 1989.

\bibitem[27]{IH} {\sc A.~Im~Hof}, ``The Sheets of a Classical Lie Algebra'', Inauguraldissertation zur Erlangung der W\"urde eines Doktors der Philosophie, 2005, available at
{\it http://edoc.unibas.ch}

\bibitem[28]{Izos} {\sc A.~Izosimov}, The derived algebra of a stabilizer, families of coadjoint orbits and sheets, preprint, \emph{arXiv:1202.1135v2 [math.RT]}, 2012.

\bibitem[29]{Jac} {\sc N.~Jacobson}, ``Lie Algebras'', Dover Publications, New York, 1979.

\bibitem[30]{Jan} {\sc J.~C.~Jantzen}, Nilpotent Orbits in Representation Theory: Chapter 1 of ``J. P. Anker, B. Orsted, \emph{Lie Theory: Lie Algebras and Representations}'', Birkh\"{a}user, 2004

\bibitem[31]{Jan1} {\sc J.~C.~Jantzen}, ``Representations of Lie algebras in positive characteristic'': Representation theory of algebraic groups and quantum groups, \emph{ Adv. Stud. Pure Math.}, {\bf 40} (2004), 175--218

\bibitem[32]{Jos} {\sc A.~Joseph}, On the associated variety of a primitive ideal, \emph{J. Algebra}, {\bf 93}\, (1985), 509--523. 

\bibitem[33]{Kac} {\sc V.~Kac}, Review of \cite{Pre3}.

\bibitem[34]{Kat} {\sc P.~I.~Katsylo}, Sections of sheets in a reductive algebraic Lie algebra, \emph{Izv. Akad. Nauk SSSR Ser. Mat.}, {\bf 46}\, (1982), 477--486.

\bibitem[35]{K} {\sc G.~Kempken}, Induced conjugacy classes in classical Lie algebras, \emph{Abh. Math. Sem. Univ. Hamburg}, {\bf 53}\,(1983), 53--83.

\bibitem[36]{KW} {\sc V. Kac \& B. Weisfeiller}, Coadjoint action of a semisimple algebraic group and the center of the enveloping algebra in characteristic $p$, \emph{Indag. Math.}, {\bf 38}\, (1976), 136--151.

\bibitem[37]{Kos} {\sc B.~Kostant}, On Whittaker vectors and representation theory, \emph{Invent. Math.} {\bf 48} \, (1978), 101--184.

\bibitem[38]{Lo4} {\sc I.~V.~Losev}, Quantizations of nilpotent orbits vs 1-dimensional representations of $W$-algebras, preprint, \emph{arXiv:1004.1669 [math.RT]}, 2010.

\bibitem[39]{Lo1} {\sc I.~V.~Losev}, Quantized symplectic actions and $W$-algebras, \emph{J. Amer. Math. Soc.} {\bf 23} \, (2010), 34--59.

\bibitem[40]{Lo2} {\sc I.~V.~Losev}, Finite dimensional representations of $W$-algebras, \emph{Duke Math. J.}, {\bf 159}\, (2011), 99--143.

\bibitem[41]{Lo3} {\sc I.~V.~Losev}, 1-dimensional representations and parabolic induction for $W$-algebras, \emph{Adv. Math.}, {\bf 226}\, (2011), 4841--4883. 

\bibitem[42]{Lo5} {\sc I.~V.~Losev}, Dimensions of irreducible modules over $W$-algebras and Goldie ranks, preprint, \emph{arXiv:1209.1083v2 [math.RT]}, 2012.

\bibitem[43]{LO} {\sc I.~V.~Losev,~V.~Ostrik}, Classification of finite dimensional irreducible modules over $W$-algebras, preprint, \emph{arXiv:1202.6097 [math.RT]v2}, 2012

\bibitem[44]{LS} {\sc G.~Lusztig,~N.~Spaltenstein}, Induced unipotent classes, \emph{J. London Math. Soc. (2)}, {\bf 19}\,(1979), 41--52.

\bibitem[45]{Lyn} {\sc T.~E.~Lynch}, Generalized Whittaker vectors and representation theory, Doctoral thesis, M.I.T., 1979.

\bibitem[46]{Mil} {\sc A.~A.~Mil'ner}, Maximal Degree of Irreducible Lie Algebra Representations Over a Field of Prime Characteristic, \emph{Funks. Anal., (2)} {\bf 14}\, (1980), 67--68.

\bibitem[47]{McG} {\sc W.~McGovern}, Completely prime primitive ideals and quantization, Mem. Amer. Math. Soc. {\bf 108}\, (1994), no.~519, viii+67

\bibitem[48]{Moe} {\sc C.~M{\oe}glin}, Id{\'e}aux compl{\`e}tement premiers de l'alg{\`e}bre enveloppante de $\mathfrak{gl}_n(\C)$, \emph{J. Algebra}, {\bf 106}\, (1987), 287--366.

\bibitem[49]{Moe1} {\sc C.~M{\oe}glin}, Mod{\'e}ls de Whittaker at id{\'e}aux primitifs compl{\`e}tement premiers dans les alg{\`e}bres enveloppantes de alg{\`e}bres de Lie semi-simples complex II, \emph{Math. Scand.} {\bf 63}\, (1988), 5--35.

\bibitem[50]{Mor} {\sc A.~Moreau}, On the dimension of the sheets of a reductive Lie algebra, {\it J. Lie Theory}, {\bf 18}\,(2008), 671--696.; Corrigendum in: {\it J. Lie Theory}, {\bf 23}\, (2013), 1075--1083

\bibitem[51]{Mos} {\sc G.~D.~Mostow}, Fully reducible subgroups of algebraic groups, \emph{Amer. J. Math.} {\bf 78}\, (1956), 200--221. 

\bibitem[52]{PPY} {\sc D.~Panyushev,~A.~Premet,~O.~Yakimova}, On Symmetric Invariants of Centralisers in Reductive Lie Algebras, \emph{J. Algebra}, {\bf 313}\, (2007), 343--391.

\bibitem[53]{Pom1} {\sc K.~Pommerening}, \"{U}ber die unipotenten Klassen reduktiver Gruppen, \emph{J. Algebra}, {\bf 49}\, (1977), 525--536.

\bibitem[54]{Pom2} {\sc K.~Pommerening}, \"{U}ber die unipotenten Klassen reduktiver Gruppen, II, \emph{ J. Algebra}, {\bf 65}\, (1980), 373--398.

\bibitem[55]{Pre3} {\sc A.~Premet}, Irreducible representations of Lie algebras of reductive groups and the Kac-Weisfeiler conjecture, \emph{Invent. Math.}, {\bf 121}\, (1995), 79--117.

\bibitem[56]{Pre} {\sc A.~Premet}, Complexity of Lie Algebra Representations and Nilpotent Elements of stabilizers, \emph{Math. Z.}, {\bf 228} \, (1998), 255--282.

\bibitem[57]{Pre2} {\sc A.~Premet}, Special transverse slices and their enveloping algebras, \emph{Adv. Math.}, {\bf 170}\, (2002), 1--55.

\bibitem[58]{Pre1} {\sc A.~Premet}, Nilpotent orbits in good characteristic and the Kempf-Rousseau theory, \emph{J. Algebra}, {\bf 260}\, (2003), 338--366.

\bibitem[59]{Pre5} {\sc A.~Premet}, Enveloping algebras of Slodowy slices and the Joseph ideal, \emph{J. Eur. Math. Soc. (JEMS)}, {\bf 9}\, (2007), 487--543.

\bibitem[60]{Pre6} {\sc A.~Premet}, Primitive ideals, non-restricted representations and finite $W$-algebras, \emph{Mosc. Math. J.} {\bf 7} \, (2007), 743--762.

\bibitem[61]{Pre4}  {\sc A.~Premet}, Commutative quotients of finite $W$-algebras, \emph{Adv. Math.}, {\bf 225}\, (2010), 269"1¤76. 

\bibitem[62]{Pre7} {\sc A.~Premet}, Enveloping algebras of Slodowy slices and Goldie rank,  {\it Transf. Groups}, {\bf 16}\,(2011), 857--888.

\bibitem[63]{Pr} {\sc A.~Premet}, One-dimensional representations of finite $W$-algebras and Humphreys' conjecture, work in progress.

\bibitem[64]{PS} {\sc A.~Premet,~S.~Skryabin}, Representations of Restricted Lie Algebras and Families of Associative $L$-Algebras, \emph{ J. Reine. Agnew. Math.}, {\bf 507}\, (1999), 189--218.

\bibitem[65]{PT} {\sc A.~Premet,~R.~Tange}, Zassenhaus varieties of general linear Lie algebras, \emph{J. Algebra}, {\bf 294}\, (2005), 177--195.

\bibitem[66]{PTop} {\sc A.~Premet,~L.~Topley}, Derived subalgebras of centralisers and finite $W$-algebras, preprint, \emph{arXiv:1301.4653v2 [math.RT]}, 2013

\bibitem[67]{Qui} {\sc D.~Quillen}, On the endormorphism ring of a simple module over an enveloping algebra, \emph{Proc. Amer. Math. Soc}, {\bf 21}\, (1969), 171--172.

\bibitem[68]{Ric} {\sc R.~Richardson}, Conjugacy classes in parabolic subgroups of semi-simple algebraic groups, \emph{ Bull. London Math. Soc.}, {\bf 6}\, (1974) 21--24.

\bibitem[69]{Ros} {\sc M.~Rosenlicht}, On Quotient Varieties and the Affine Embedding of Certain Homogeneous Spaces, \emph{Trans. Amer. Math. Soc.}, {\bf 101} \, (1961), 211--223.

\bibitem[70]{Ros1}  {\sc M.~Rosenlicht}, A remark on quotient spaces, \emph{An. Acad. Brasil. Cienc.}, {\bf 35} \, (1963), 487--489.

\bibitem[71]{Sh} {\sc I.~R.~Shafarevich}, ``Basic Algebraic Geometry~1. Varieties in projective space.'' Second edition. Translated from the 1988 Russian edition and with notes by Miles Reid, Springer Verlag, Berlin, 1994.

\bibitem[72]{Slo} {\sc P.~Slodowy}, ``Simple Singularities and Simple Algebraic Groups'', Lecture Notes in Mathematics, Vol.~815, Springer Verlag, Berlin, 1980.

\bibitem[73]{Skr} {\sc S.~Skryabin}, Invariants of Finite Group Schemes, \emph{J. London Math Soc.}, {\bf 65}\, (2002), 339--360.

\bibitem[74]{Smi} {\sc L.~Smith}, ``Polynomial Invariants Finite Groups'', Research Notes in Mathematics, Vol.~6, Peters, 1995.

\bibitem[75]{Sp} {\sc N.~Spaltenstein}, ``Classes unipotentes et sous-groupes de Borel'', Lecture Notes in Mathematics, Vol.~946, Springer Verlag, Berlin/New York, 1982.

\bibitem[76]{Spr} {\sc T.~A.~Springer}, Linear Algebraic Groups, Birkh¬auser, first edition, 1998.

\bibitem[77]{Tan} {\sc R.~Tange}, The Zassenhaus Variety of Lie Algebras in Positive Characteristic, \emph{Adv. in Math.}, {\bf 224}\, (2010), 340--354.

\bibitem[78]{TY1} {\sc P.~Tauvel,~R.~Yu}, Indice et Formes Lin\'{e}ares Stable dans les Alg\`{e}bres de Lie, \emph{J. Algebra}, {\bf 273}\, (2004), 507--516.

\bibitem[79]{TY} {\sc P.~Tauvel,~R.~Yu}, Linear Algebraic Groups, Springer Verlag, 2005.

\bibitem[80]{Top} {\sc L.~Topley} Invariants of centralisers in positive characteristic, \emph{arXiv:1108.2306v3 [math.RT]}, 2011.

\bibitem[81]{Ub} {\sc G.~Ubly}, A computational approach to 1-dimensional representations of finite $W$-algebras associated to simple Lie algebras of exceptional type, Doctoral thesis, University of Southapton, 2010.

\bibitem[82]{Vel} {\sc F.~D.~Veldkamp}, The Centre of the Universal Enveloping Algebra of a Lie Algebra in Characteristic $p$, \emph{Ann. Scient. Ec. Norm. Sup.}, {\bf 4}\, (1972), 217--240.

\bibitem[83]{Yak1} {\sc O.~Yakimova}, The Index of Centralizers of Elements in Classical Lie Algebras, \emph{Funkts. Anal. Prilozhen.}, {\bf 40}\, (2006) 52--64; English transl.: \emph{Functional Anal. Appl.}, {\bf 40}\, (2006), 42--51.

\bibitem[84]{Yak3} {\sc O.~Yakimova}, A Counterexample to Premet's and Joseph's Conjectures, \emph{Bull. London Math. Soc.}, {\bf 39}\, (2007), 749--754.

\bibitem[85]{Yak2} {\sc O.~Yakimova}, Surprising Properties of Centralisers in Classical Lie Algebras, \emph{Ann. Inst. Fourier}, {\bf 59}\, (2009), 903--935.

\bibitem[86]{Ya10} {\sc O.~Yakimova}, On the derived algebra of a centraliser, \emph{Bull. Sci. Math.}, {\bf 134}\, (2010), 579--587.

\bibitem[87]{Zas} {\sc H.~Zassenhaus}, The Representations of Lie Algebras of Prime Characteristic, \emph{Proc. Glasgow Math. Assoc.}, {\bf 2}\, (1954), 1--36.

\end{thebibliography}
\end{document}